 \newif\ifdraft
\newcommand{\Cexp}{C_{\mrm{exp}}}
\newcommand{\Csha}{C_{\mrm{sha}}}
\newcommand{\varLd}{\varphi_{L^{\dagger}}}
\newcommand{\varL}{\varphi_L}
\newcommand{\Sigmad}{\Sigma^{\dagger}}
\newcommand{\Jacu}[1]{\mrm{J}^u_{#1}}
\title[Bounded Cohomology and Statistics]{Thermodynamic Formalism for Quasimorphisms: Bounded Cohomology and Statistics}
\author{Pablo D. Carrasco}
\address{ICEx-UFMG, Avda. Presidente Antonio Carlos 6627, Belo Horizonte-MG, BR31270-90}
\email{pdcarrasco@ufmg.br}
\author{Federico Rodr\'{i}guez-Hertz}
\address{Penn State, 227 McAllister Building, University Park, State College, PA16802}
\email{hertz@math.psu.edu}
 \date{\today}
\keywords{quasimorphisms, bounded cohomology, Central Limit Theorem, invariance principle, Patterson--Sullivan measure}
\begin{document}

\begin{abstract}
   For a compact negatively curved space, we develop a thermodynamic formalism framework to study the space of quasimorphisms of its fundamental group modulo bounded functions. We prove that this space is Banach isomorphic to the space of Bowen functions on the associated Gromov geodesic flow, modulo a weak form of Livšic cohomology.

  We also show that each unbounded quasimorphism is associated with a unique invariant measure for the flow, which uniquely determines the cohomology class. As a consequence, we establish the Central Limit Theorem and the invariance principle for any unbounded quasimorphism with respect to Markov measures, and we prove that the associated equilibrium state has the Bernoulli property.
    \end{abstract}

\maketitle
\setcounter{tocdepth}{1}
\tableofcontents

\section{Introduction and main results}
\label{sec:IntroandMain}

In this article we study the bounded cohomology of compact spaces of negative curvature, and its counterpart on the corresponding fundamental group. The theory of bounded cohomology was introduced by Johnson in the context of Banach algebras \cite{JohnsonBanach}, but gained maturity with the foundational paper of Gromov \cite{GromovBounded}, where it was extended to topological spaces. For a topological space $M$, one considers the usual singular cochain complex $\{C^n(M;\R),\partial_n\}$ and observes that the boundary map $\partial$ sends bounded cochains in $C^n(M;\R)$ to bounded cochains in $C^{n+1}(M;\R)$. The bounded cohomology of $M$ is the cohomology of the cochain complex $\{C_b^n(M;\R),\partial_n\}$ and is denoted by $H_{\mrm b}^*(M;\R)$. For a discrete group $\Gamma$, its bounded cohomology can be defined as $H^{*}_{\mrm{b}}(\Gamma;\R):=H^{*}_{\mrm{b}}(K(\Gamma,1);\R)$. In addition to its algebraic structure, the vector space $H^*_{\mrm{b}}(M,\R)$ carries a natural seminorm, $\norm[\ell^{\oo}]{[f]}\defeq \inf\set{\norm[\ell^{\oo}]{g}:[g]=[f]}$.

 \paragraph{\textbf{Banach Structure of} $\bm{\Ker c_2}$}Of particular interest is the second bounded cohomology group $H^2_{\mrm{b}}(M;\R)$, and the kernel of the map $c_2: H^2_{\mrm{b}}(M;\R)\to H^2(M;\R)$ induced by the inclusion $C^2_{\mrm{b}}(M;\R)\into C^2(M,\R)$. This map is called the comparison map. If $\pi_1(M,*)$ is (word) hyperbolic then $c_2$ is surjective, and in fact it is a result of Mineyev \cite{Mineyev2002} that hyperbolicity of a group is equivalent to surjectivity of the comparison map replacing $\R$ by arbitrary bounded $G$-modules. Moreover, by the work of Matsumoto and Morita \cite{Matsumoto1985}, and independently Ivanov \cite{Ivanov2017}, it follows that the natural seminorm on $H^2_{\mrm{b}}(M;\R)$ is positive definite, hence $H^2_{\mrm{b}}(M;\R)$ is a Banach space. In this setting however, not much is known about the Banach structure besides the fact that $\Ker c_2$ is infinite dimensional. This was originally established by Brooks and Series \cite{BrooksSeries84} for surface groups (see also Mitsumatsu \cite{Mitsumatsu1984}, Barge and Ghys \cite{Barge1988}), extended for non-elementary word hyperbolic groups by Epstein and Fujiwara \cite{Epstein1997}, and then pushed further for non-elementary countable subgroups of the isometry group of a Gromov hyperbolic metric space, assuming some properness condition of the action (Fujiwara \cite{Fujiwara1998}, then Bestvina and Fujiwara \cite{Bestvina2002}, culminating with Hamenstäedt \cite{Hamenstaedt2008}). We point out that these constructions fall into two (essentially equivalent) types, either they directly construct an infinite linearly independent subset in $\Ker c_2$, or they inject linearly some infinite dimensional vector space. In both, no information is given about the norm. Understanding the Banach structure of $H^2_{\mrm{b}}(M;\R)$ and obtaining a concrete characterization of it is one of the main motivations of this paper.

 It is well known that elements of $\Ker c_2$ can be represented by quasimorphisms of $\Gamma=\pi_1(M,*)$, that is, functions $L:\Gamma\to\R$ satisfying $\norm{\delta L}\defeq\sup_{g,h}|L(g\cdot h)-L(g)-L(h)|<\oo$: see \Cref{sec:QuasiMorphismGroups} for details. For the present discussion it suffices to say that the space $\QM{\Gamma}$ of quasimorphisms of $\Gamma$ is a vector space containing both the bounded functions on $\Gamma$ and the real-valued morphisms of $\Gamma$ (denoted by $\ell^{\oo}(\Gamma), \Hom{\Gamma,\R}$, respectively). Moreover, the seminorm $L\mapsto \norm{\delta L}$ induces a norm on 
\[
 \frac{\QM{\Gamma}}{\Hom{\Gamma,\R}\oplus\ell^\oo(\Gamma)}, 
\]
 and with the latter this space is isometric to $\Ker(c_2)$. 

\paragraph{\textbf{Unbounded Cohomology}}It will be convenient (and more natural) to ignore the purely algebraic part $\Hom{\Gamma,\R}$ and work with 
 \[
  \QMt{\Gamma}\defeq\frac{\QM{\Gamma}}{\ell^\oo(\Gamma)}.
 \]

\begin{definition}\label{def:unboundedqm}
	The space $\QMt{\Gamma}$ will be referred to as the space of unbounded quasimorphisms on $\Gamma$.
\end{definition}

The seminorm on quasimorphisms does not induce a norm on $\QMt{\Gamma}$, hence some care is needed. We proceed as follows: assume that $\Gamma$ is finitely generated (which is the case of interest for us), fix some finite generating set $S$, and define 
\[
	\norm{L} = \norm[\ell^{\oo}]{L|S}+\norm{\delta L}.
\]
This defines a complete norm on $\QM{\Gamma}$, which in turn induces a Banach structure on $\QMt{\Gamma}$, and moreover induces the same norm on \(\Ker c_2=\frac{\QMt{\Gamma}}{\Hom{\Gamma,\,\R}}\) as before. This does not seem to be readily available in the literature, so we include a discussion in \Cref{sec:QuasiMorphismGroups}.

\paragraph{\textbf{Unbounded classes as probabilities}}In this article (and its sequel) we propose a different way to understand classes of (un)bounded cohomology. We will show that for some natural classes of groups $\Gamma$, each $\alpha\in\QMt{\Gamma}$ is uniquely determined by a probability measure which is related to a natural dynamical system associated to the group. In this article we focus on the hyperbolic case.

Our results apply to compact negatively curved spaces and some of them even apply to non-elementary hyperbolic groups, but since the formulation of the theorems in these contexts requires the introduction of additional dynamical concepts, we restrict ourselves in the introduction to the following case. Let $M$ be a $d-$dimensional closed hyperbolic manifold, and let $\lie{g}=(g_t)_{t\in\R}:X=T^1M\to X$ be the geodesic flow associated with the metric.

The aforementioned probability measures in this case turn out to be equilibrium measures for some non-regular potentials. To define these potentials we fix some reference measure on $X$ which is invariant under $\lie{g}$. For concreteness, we work with $\mu_X$, the Liouville probability measure on $X$.

 \begin{definition}\label{def:BowenfunctionT1X}
  A Borel function $\phi \in \Lp{\oo}{\mu_X}$ is a weak Bowen function if there exist $C,\eps>0$ so that $(\aee{\mu_X}\ x,y\in X,\forall T> 0):$
 \[
 	 \sup_{t\in [0,T]}\dis[X]{g_tx}{g_ty}\leq \eps\Rightarrow \abs*{\int_0^T \phi(g_tx)\dd t-\int_0^T\phi(g_ty)\dd t }\leq C. 
 \]
 The Bowen constant of $\phi$ is $\norm[B]{\phi} = \inf C$.
 \end{definition}  

 Functions satisfying this condition were introduced by Bowen in his studies of the thermodynamic formalism of hyperbolic systems \cite{Bowen1974}, and received a lot of attention since then. See e.g. \cite{EquSta}, \cite{Climenhaga2012} and references therein. Bowen's definition, however, deals with (typically continuous) functions defined everywhere, and not almost everywhere with respect to some fully supported measure. This difference will be key in what follows.

We denote by $\Bow[][\mu_X]{X}$ the vector space of weak Bowen functions for some a priori fixed $\eps>0$. Note that since $\lie{g}$ is an Anosov flow \cite{AnosovThesis}, it follows without much trouble that $\Bow[][\mu_X]{X}$ contains the space of Hölder continuous functions on $X$.

\paragraph{\textbf{Livšic cohomology}}It will be shown that each $\varphi\in \Bow[][\mu_X]{X}$ has an associated $\lie{g}-$invariant probability measure $\mu_\varphi$, and furthermore $\mu_\varphi=\mu_{\varphi'}$ if and only if $\varphi,\varphi'$ satisfy a cohomology relation that we define below. Ultimately, classes in $\QMt{\Gamma}$ will correspond to cohomology classes of weak Bowen functions.

 \begin{definition}\label{def:LivsiccohomologousLiou}
 We say that $\phi,\psi\in \Bow[][\mu_X]{X}$ are Livšic cohomologous ($\phi\sim\psi$) if $\exists u\in \Lp{\oo}{X,\mu_X}$ which is differentiable along orbits of $\lie{g}$, such that $\phi-\psi = \frac{\dd u\circ g_t}{\dd t}\vert_{t=0}$.  
 \end{definition}

The above definition is well known in ergodic theory, and its importance owes much to the work of Livšic \cite{Livshits1971}, especially for regular (Hölder) functions. An in-depth analysis of this equivalence relation for the general case that we are discussing is carried out in \Cref{sec:quasimorphismsSFT}. In particular, one has that $\Bow[][\mu_X]{X}/\thicksim$ is naturally a Banach space with respect to the norm

 \begin{align}
 \tnorm[B]{[\phi]} = \inf\{\norm[L^{\oo}]{\psi}+\norm[B]{\psi}:\psi\sim\phi\}.
 \end{align}

 There is a natural involution $I:X\to X$ which reverses the direction of geodesics. We say that $\phi \in \Bow[][\mu_X]{X}$ is antisymmetric if $\phi\circ I$ is Livšic cohomologous to $-\phi$, and denote by
\[
	\Bow[a][\mu_X]{X}
\]
the set of antisymmetric weak Bowen functions. 

Antisymmetry arises naturally in our description of quasimorphisms. In contrast with \cite{Hamenstaedt2008}, where a related anti-flip condition appears as part of a specific construction, it serves here to characterize the image of the correspondence. This is formalized in the following theorem.

\begin{maintheorem}\label{thm:A}
	Let $M$ be a closed hyperbolic manifold with fundamental group $\Gamma=\pi_1(M,*)$. Then there is a Banach isomorphism 
\[
	\Psi:\QMt{\Gamma}\to \Bow[a][\mu_X]{X}/\thicksim.
\]
\end{maintheorem}

\begin{remark}\label{rem:othermeasures}
	We say that a probability measure $\mu \in \ProbM[X]$ is invariant under the flow $\lie{g}=(g_t)_{t\in \R}$ if $\forall t\in\R, \mu=\mu\circ g_{t}$.  The set of invariant measures under $\lie{g}$ is denoted by $\PTM{\lie{g}}{X}$.

	The above theorem remains true if $\mu_X$ is replaced by any other invariant measure $\mu\in\PTM{\lie{g}}{X}$ of full support.
\end{remark}

\paragraph{\textbf{Concrete representation of} $\bm{\Ker c_2}$}

This implies almost directly that $\Ker c_2$ has uncountable dimension (a fact already known from the previously cited works), but more importantly, it elucidates its Banach space structure. Let us recall some alternative characterization due to Monod and Shalom \cite{Monod2004}, based on the seminal work of Burger and Monod \cite{Burger2002}. 

A standard Borel $\Gamma$-space $(B,\nu)$ (where $\nu$ is quasi-invariant) is a strong boundary of $\Gamma$ if
\begin{itemize}
	\item $\Gamma\acts B$ is amenable in the Zimmer sense \cite{semisimpleZimmer},
	\item if $E$ is a separable coefficient $\Gamma-$module, then every measurable $\Gamma-$invariant map $\Gamma\times B\to E$ is $\nu-$essentially constant.
\end{itemize}

\begin{theorem}[Corollary $2.6$ in \cite{Monod2004}]
	If $\Gamma$ is a countable discrete group and $(B,\nu)$ is a strong boundary for it, then there exists a canonical isomorphism $H^2_{\mrm{b}}(\Gamma)\cong Z\Lp{\oo}_{\mrm{a}}\paren{B^3}^{\Gamma}$, where the right-hand side is the space of $\Gamma-$invariant, alternating $\nu-$essentially bounded real-valued $2-$cocycles on $B$.  
\end{theorem}

For $\Gamma=\pi_1(M,*)$ with $M$ compact hyperbolic manifold, one can choose $B=\partial\lift{M}$, identified as the Poisson boundary associated to some non-degenerate random walk on $\Gamma$, say for a driving measure with finite first moment \cite{Kaimanovich2000}.

The equivalence $H^2_{\mrm{b}}(\Gamma)\cong Z\Lp{\oo}_{\mrm{a}}\paren{\partial M^3}^{\Gamma}$, although conceptually important, replaces $H^2_{\mrm{b}}(\Gamma)$ by another complicated Banach space, which is difficult to deal with. In contrast, the equivalence given in \Cref{thm:A} replaces $2-$cocycles with equivalence classes of functions, which we will show to be  more manageable objects.  The principal tool for this is a measurable version of Livšic theorem which we prove in \Cref{sec:LivsicSFT}, stating that Livšic cohomology classes of elements in $\Bow[][\mu_X]{X}$ are completely determined by their values on closed geodesics. To be more precise, each closed geodesic $\ga$ in $X$ defines a linear functional $P_{\ga}:\Bow[][\mu_X]{X}\to \R$, and $\varphi\sim\phi$ if and only if for every $\ga$ one has $P_{\ga}(\varphi)=P_{\ga}(\phi)$. For a continuous weak Bowen function $\varphi$, $P_\ga(\varphi)$ coincides with the average of $\varphi$ along $\ga$. However, for general weak Bowen functions the theorem is much more delicate, since in principle $\varphi$ is only an essentially bounded function on $X$, and $\mu_X(\ga)=0$, so one cannot evaluate $\varphi$ along the curve. 

 We point out that the classical version of Livsic cohomology (for H\"older potentials) is insufficient for our purposes, as there are examples of continuous functions that are not continuously cohomologous (meaning, with continuous transfer function $u$) to any Hölder function. See \Cref{rem:onesidedvstwo}.

\paragraph{\textbf{All (un)bounded cohomology classes}} Some authors have previously obtained related  characterizations of $\Ker c_2$ using dynamical systems. The first was probably Picaud in the setting where $M$ is a hyperbolic surface \cite{Picaud1997}, who proved that one can embed $\Ker c_2$ into the space of $\Gamma$-invariant measures on $\partial M$, or alternatively, into the space of geodesic currents in the sense of Bonahon and Sullivan. Then, using symbolic dynamics for the action of $\Gamma$ on $\partial M$ \cite{Bowen1979}, he characterized some regular quasimorphisms\footnote{Essentially, homogeneous quasimorphisms that vanish on the set of generators of $\Gamma=\pi_1(M,*)$} as H\"older functions on a subshift of finite type. The arguments rely heavily on the two-dimensionality of $M$ (although they also work for surfaces of finite type).

However, the restriction on the class of quasimorphisms considered is more serious. On the one hand, there are no general methods available to check whether a given quasimorphism corresponds to a Hölder function, while on the other hand not every quasimorphism corresponds to such functions. 

Restricting to H\"older classes of objects also appears in the work of Hamenstäedt \cite{Hamenstaedt2008}, who showed that (antisymmetric) H\"older functions endowed with the seminorm $\norm[B]{\cdot}$ continuously embed into $\QMt{\Gamma}$, and in other places e.g. \cite{Horsham2009}.

\Cref{thm:A} makes it possible to consider all quasimorphisms. We will apply this to obtain precise statistical properties of these objects.

\paragraph{\textbf{Unbounded quasimorphisms as invariant measures}} The philosophy that drives the previous result is that in fact elements of $\QMt{\Gamma}$ can be understood as a certain type of $\lie{g}-$invariant measures, namely equilibrium states for weak Bowen functions. The perspective of treating quasimorphisms as dynamically defined measures appears to be new, at least in explicit form (see, however, \cite{Picaud1997}), and proves to be useful. In \cite{thermodynamiclattice}, we adopt this approach to provide a new proof of a beautiful result of Burger and Monod, which states that if $\Gamma$ is a uniform lattice in a real semisimple Lie group of non-compact type and rank at least $2$, then $\QMt{\Gamma}={0}$.

Each $\mu \in \PTM{\lie{g}}{X}$ defines a positive linear functional on $\Bow[][\mu_X]{X}$, which extends the usual integral for continuous functions with the Bowen property, and therefore is denoted in the same manner. Below, if $\mu\in\PTM{\lie{g}}{X}$, then $\hmu(\lie{g})=\hmu(g_1)$ denotes the metric entropy of the time-one map $g_1$ with respect to $\mu$.

\begin{maintheorem}\label{thm:B}
Let $M$ be a closed hyperbolic manifold. For each $[\phi]\in  \Bow[][\mu_X]{X}/\thicksim$ there exists a measure $\mu_{[\phi]}=\mu_{\phi} \in \PTM{\lie{g}}{X}$ which can be characterized as the unique element of this space satisfying
\[
	\hmu[\mu_{\phi}]{\lie{g}}+\int \phi\dd \mu_{\phi}=\sup_{\mu\in \PTM{\lie{g}}{X}}\paren*{\hmu[\mu]{\lie{g}}+\int \phi\dd \mu}. 
\]
For every $t\neq 0$, the stochastic process defined by $(\phi\circ g_{nt})_{n\in \N}: X\to\R$ is measure-theoretically isomorphic to a Bernoulli process with finitely many symbols.
\end{maintheorem}

We will show that $\mu_{[\phi]}$ uniquely determines $[\phi]\in \Bow[][\mu_X]{X}/\thicksim$, hence by \Cref{thm:A}, the unique associated unbounded quasimorphism (provided that $\phi$ is antisymmetric).

\begin{remark}
	Again, one can replace the Liouville measure by any other fixed invariant measure of full support. 
\end{remark}

The theory of thermodynamic formalism for non-continuous potentials was first developed by Haydn and Ruelle in \cite{Haydn1992} (see also Ruelle's article \cite{Ruelle_1992}). These works are mainly concerned with the existence of dominant eigenfunctions and eigenmeasures for the corresponding transfer operator, but finer (as well as cohomological) properties of the systems were not addressed. A plausible reason for this is that the arguments involve some arbitrary extensions of the potential, which makes further investigations difficult. Here we give a different concrete construction for weak Bowen functions, but of course we have been inspired by these previous works.

\paragraph{\textbf{Central Limit Theorem for quasimorphisms}} Using the characterization given by \Cref{thm:A} we are able to show that any non-trivial element of $\Ker(c_2)$ has a very rich dynamical behavior.

\begin{maintheorem}\label{thm:C}
Let $M$ be a closed hyperbolic manifold. Then there exists a Markov measure $\mu\in \PTM{\lie{g}}{X}$ such that the following is true. For any $0\neq [\phi]\in \Bow[][\mu]{X}/\thicksim$ there exists a positive constant $\sigma^2=\sigma^2([\phi])$ so that for $\E{[\phi]}=\E{\phi}=\int \phi \dd \mu$, it holds
\[
     \forall c\in\R,\ \mu\paren*{x\in X:\frac{\int_0^T\phi(g_tx)\dd t-T\E{\phi}}{\sigma\sqrt{T}}\leq c}
      \xrightarrow[T\to\oo]{}\frac{1}{\sqrt{2\pi}}\int_{-\oo}^c e^{-\frac{u^2}{2}}\dd u
\]
\end{maintheorem}

Central Limit Theorems for quasimorphisms were first studied by Calegari and Fujiwara in \cite{CALEGARI2009}, and have been thoroughly studied ever since. For a gentle introduction we refer to the notes of Calegari \cite{calegaryhyp}. See also Horsham and Sharp \cite{Horsham2009}, and the more recent article of Cantrell \cite{Cantrell2020}. 

Unlike \Cref{thm:B}, the cited limit theorems are given with respect to some appropriate exit measure for a random walk in the group (for example, the Patterson-Sullivan measure). But more importantly, the most general version \cite{Cantrell2020} assumes a strong form of regularity, essentially that the quasimorphism corresponds to a H\"older continuous function on the subshift associated to some Strong Markov structure of $\Gamma$.

To treat all unbounded cohomology classes of quasimorphisms we prove a general version of the Central Limit Theorem for discrete Markov chains, which to our knowledge is new even in the independent case. The novelty is that we can consider highly non-regular processes given by measurable functions on the corresponding subshift (weak Bowen functions). Observe that in this level of generality even the convergence of the variance for such functions is non-trivial. On the other hand, the method does not use any spectral gap property for the action of the Ruelle-Perron-Frobenius operator on the space of Bowen functions (which is probably not true), and we do not obtain any Barry-Esseen type bound.

\paragraph{\textbf{Random walks and quasimorphisms}} Let us state the previous theorem directly in the group context. For a closed hyperbolic manifold $M$, write as above $\Gamma=\pi_1(M,*)$, and let $\mrm{Conj}(\Gamma)=\Gamma/\Inn(\Gamma)$ denote its set of conjugacy classes. Equip $\Gamma$ with a word metric associated to some finite set of generators $S$. $\Gamma$ is word hyperbolic, therefore any $\gamma\in\Gamma$ can be represented as a word in $S$ of minimal length, and two such words differ by a cyclic permutation. For our purposes it is more convenient to work with $\mc{R}(\Gamma;S)$, the set of reduced cyclic words of $\Gamma$ in $S$, instead of $\mrm{Conj}(\Gamma)$. If $w\in \mc{R}(\Gamma;S)$ let $\abs[S]{w}$ denote its length.

We say that $L,L'\in\QM{\Gamma}$ are cohomologous if $[L]=[L']\in\QMt{\Gamma}$. Every quasimorphism $L$ is cohomologous to a class function $\cl{L}$: as a result, the asymptotic behavior of $L$ on conjugacy classes can be studied via the corresponding analysis of $\cl{L}:\mc{R}(\Gamma;S)\to\R$. Given $n\in\N$ let $B_n\defeq\{w\in \mc{R}(\Gamma;S): \abs[S]{w}\leq n\}$, which is a finite set, and consider the probability measures 
 \[
 	\nu_n^{\Gamma,S}:= \frac{1}{\# B_n}\sum_{w\in B_n}\delta_{w}.
 \]
Ideally, one would like to use a limiting distribution of $(\nu_n^{\Gamma,S})_n$ to study the statistical properties of $\bar{L}$, but since $\mc{R}(\Gamma;S)$ lacks sufficient structure, it is necessary to enlarge it in order to find this distribution.

 \begin{theorem}[Compactification of conjugacy classes]\label{thm:compactification}
 There exists a compact metric space $X_{\Gamma}$, a filtration of finite $\sigma-$algebras $(\mc{F}_n)_{n}$ of $X_{\Gamma}$, an injective map $\Phi:\mc{R}(\Gamma;S)\to X_{\Gamma}$, a constant $D\geq 0$ and a probability $\mu^{\Gamma}\in \ProbM[X_{\Gamma}]$ satisfying:
 \begin{enumerate}
 	\item $\Im(\Phi)$ is dense in $X_{\Gamma}$;
 	\item $\Phi(w)\in \mc{F}_{D+|w|_S}$;
	\item the sequence $(\mu_n^{\Gamma,S}=\nu_n^{\Gamma,S}\circ \Phi^{-1})_{n\in\N}$ converges weakly to $\mu^{\Gamma}$.
 \end{enumerate}
 \end{theorem}

We remark that $\mu_n^{\Gamma,S}$ is a probability measure on $\mc{F}_{D+n}$. If $L\in\QM{\Gamma}$, one first considers the induced function  $\bar{L}:\mc{R}(\Gamma;S)\to\R$, and then uses $\Phi$ as above to induce a sequence of continuous functions $(L^{(n)}:X_{\Gamma}\to \R)_{n}$ which verifies
\begin{enumerate}
 	\item $L^{(n)}(\Psi(w))=\cl{L}(w)$, if $\abs[S]{w}=n$;
     \item $L^{(n)}$ is $\mc{F}_n-$measurable.
 \end{enumerate}
 See \ref{subsec:representationthm} for details. The measure $\mu^{\Gamma}$ does not depend on chosen generating set $S$. On the other hand, the construction of $(L^{(n)})_n$ is not canonical, but can be achieved so that two of such sequences are at uniformly bounded distance from each other. Thus, with no loss of generality, we assume that $L$ itself is a class function.

\setcounter{corollaryL}{2}
 \begin{corollaryL}\label{cor:C}
 Let $M$ be a closed hyperbolic manifold with fundamental group $\Gamma$. If $L\in\QM{\Gamma}$ is unbounded, then there exist constants $\sigma^2=\sigma^2(L)>0, e(L)\in\R$ so that for every $c\in\R$,
 \[
 	\mu^{\Gamma}\paren*{x\in X_\Gamma:\frac{L^{(n)}(x)-ne(L)}{\sigma\sqrt{n}}\leq c}\xrightarrow[n\to\oo]{} \frac{1}{\sqrt{2\pi}}\int_{-\oo}^c e^{-\frac{u^2}{2}}\dd u
 \]
 \end{corollaryL}

In \cite{biharmonic2011} Björklund and Harnick prove a very general Central Limit Theorem for quasimorphisms on locally countable groups when the sequence of words is chosen by a random walk whose driving measure $\mu$ is spread-out. When the corresponding stationary measure on $\partial\Gamma$ is the Patterson-Sullivan measure, the results in \cite{Connell2007} imply that the CLT holds for every quasimorphism with respect to $\mu$.

However, these results concern distributions arising from random walks on the group. In contrast, the measure $\mu^{\Gamma}$ considered here, although reminiscent of the Patterson-Sullivan measure, is not associated to a random walk on $\Gamma$. Instead, it arises as a uniform measure on a natural compactification of $\mrm{Conj}(\Gamma)$. The two settings are conceptually different and consequently, the results of \cite{biharmonic2011,Connell2007} do not imply \Cref{cor:C}. The latter permits one to study the distribution of quasimorphisms on conjugacy classes, which provides a natural framework for the problem.

More generally, the results above show that quasimorphisms admit a natural dynamical 
and probabilistic interpretation, allowing their statistical properties to be studied 
using tools from thermodynamic formalism.

\paragraph{\textbf{Functional limit theorem, Law of iterated logarithm and large deviations}}

 Other limit theorems follow from our approach. We highlight the invariance principle, the law of the iterated logarithm, and concentration inequalities, giving a unified treatment of these results in full generality. 

 If $(J_n)_{n\geq 1}$ is a stationary sequence of zero mean in $(X_{\Gamma},\mu^{\Gamma})$, let $S_n=\sum_{k=1}^n J_k$ and consider the random element $\lie{L}: \N\times X_{\Gamma}\to \mc{C}([0,1])$ 
\[
 	\lie{L}_t(n,x)=\left(\lie{L}(n,x)\right)(t)=\frac{S_{([nt])}(x)+(nt-[nt])J_{[nt]+1}(x)}{\sigma \sqrt{n}}\quad t\in [0,1].
\]
Let $\eta_n=\mu^{\Gamma}\circ (\lie{L}(n,\cdot))^{-1}$ be its distribution. Define
 \[
 	\mc{K}=\set{x\in \mc{C}([0,1]): x(0)=0, x\text{ is absolutely continuous, and }\int_0^1 \dot{x}^2(t)\dd t\leq 1}.
 \]
For $e<t<+\oo$ denote $\phi(t)=\sqrt{2t\log\log t}$.

 \begin{maintheorem}\label{thm:D}
 Under the hypothesis of the previous corollary, suppose that $L$ is unbounded with $e(L)=0$. Then, there exists a stationary ergodic sequence $(J_n)_{n\geq 1}$ so that
 \begin{enumerate}
 	\item $\sup_{n\geq1, x\in X_{\Gamma}}|S_n(x)-L^{(n)}(x)|<\oo$;
 	\item the sequence of distributions $(\eta_n)_n$ converges weakly to the Wiener measure. That is, $(\lie{L}_t(n,\cdot))_n$ converges in distribution to the standard Brownian process.
 	\item For $\aee{\mu^{\Gamma}}\ x$, the sequence $\paren*{\frac{\lie{L}(n,x)}{\phi(n\sigma^2(L))}}_{n\geq \frac{e}{\sigma^2(L)}}$ is relatively compact, and each of its limit points belongs to $\mc K$.
     \item For every $0\leq\delta\leq 1$ there exists $H(\delta)\in (0,1], C>0$ such that for every $n\geq 0$, 
     \begin{align*}
     &\mu^{\Gamma}\left(\frac{L^{(n)}}{n}\geq \delta\right)\leq e^{-nH(\delta)}\\
     &\mu^{\Gamma}\left(\frac{L^{(n)}}{\sqrt{n}}\geq \delta\right)\leq e^{-C\delta^2}.
     \end{align*} 
 \end{enumerate}
 \end{maintheorem}

As in \Cref{thm:C}, this will be a consequence of the corresponding statements for weak Bowen functions. We remark that for regular (edge-combable) quasimorphisms \cite{Cantrell2020} gives a large deviation theorem and, although not explicitly stated, the results in \cite{Cantrell_2023} can be adapted to give the invariance principle and the LIL for these regular quasimorphisms.

\paragraph{\textbf{Asymptotics with respect to the Patterson-Sullivan measure}} When $\Gamma$ is a non-elementary Gromov-hyperbolic group, as in the case that we are discussing, there exists a symbolic coding of $\partial \Gamma$ (the so-called Cannon coding), which allows one to lift the Patterson-Sullivan measure to a Markov measure. It was kindly pointed out to us by S. Cantrell that, combining this with the technique developed for proving \Cref{thm:C}, one obtains remarkable corollaries about the asymptotics of any unbounded quasimorphism with respect to the Patterson-Sullivan measure, and with respect to averaging on spheres in the Cayley graph.

 For $n\in\N$ denote $S_n=\{g\in\Gamma:\abs[S]{g}=n\}$ the sphere of radius $n$. For a ray $r \subset \mrm{Cay}(\Gamma)$ we write $r_n\in S_n$ for the initial segment of size $n$. The class determined by $r$ on $\partial \Gamma$ is denoted by $[r]$. The identity of $\Gamma$ is $1_{\Gamma}$.

\begin{corollaryL}\label{cor:D}
Let $\Gamma$ be a non-elementary Gromov-hyperbolic group, and let $\nu \in \ProbM[\partial \Gamma]$ be the Patterson-Sullivan measure. If $L\in\QM{\Gamma}$ is unbounded, then there exist $\sigma^2=\sigma^2(L)>0$ such that for every $c\in\R$,
\begin{align*}
 &\nu\left([\tilde r]:\exists r\in [\tilde r]\text{ with }r_0=1_{\Gamma}\text{ and } \frac{L(r_n)}{\sigma\sqrt{n}}\leq c\right)\xrightarrow[n\mapsto\oo]{} \frac{1}{\sigma\sqrt{2\pi}}\int_{-\oo}^c e^{-\frac{u^2}{2\sigma}}\dd u\\
 &\frac1{\# S_n}\#\left\{g\in S_n:\frac{L(g)}{\sigma\sqrt{n}}\leq c\right\}\xrightarrow[n\mapsto\oo]{} \frac{1}{\sigma\sqrt{2\pi}}\int_{-\oo}^c e^{-\frac{u^2}{2}}\dd u.
 \end{align*}
\end{corollaryL}

We remark that this corollary does not follow from \cite{biharmonic2011}, which concerns limit theorems for quasimorphisms with respect to the driving measure of a random walk on the group. In contrast, \Cref{cor:D} describes the asymptotic behavior with respect to the Patterson-Sullivan measure on the boundary.

\paragraph{\textbf{On the definition of Dynamical System}}

At the end of this introduction, we would like to draw the reader's attention to some interpretations of the meaning of the previous theorems from the optics of dynamical systems. The notion of classical dynamical systems, used by Poincaré, Gibbs, Boltzmann and many others at that time, consists of (at least) two main components: the law of evolution, which is modeled for example by some flow $(g_t:M\to M)_{t\in\R}$ on the phase space, and an observable, which is represented by some function $\phi:M\to\R$. The evolution is in principle unknown, and the only possible interaction with the physical system is to take measurements at discrete times. To smooth out small fluctuations, it is more natural to record $\phi^{(n)}:M\to\R$, the cumulative sum of the measurements up to time $n$, and then take the average. In this way, what one actually gets is a sequence $\{\phi^{(n)}:M\to\R\}_{n\geq 0}$ which, assuming that measurement errors remain bounded (otherwise no relevant information is extracted), satisfies some condition of the form 
\[ 
 \sup_{n,m}|\phi^{(n+m)}-\phi^{(n)}-\phi^{(m)}\circ g_n|<C<\oo.
 \] 
 In this interpretation, quasimorphisms play the role of approximate observables whose cumulative measurements satisfy the bounded error relation above. What \Cref{thm:A,thm:B,thm:C} and \Cref{thm:D} say is that if the driving evolution is sufficiently chaotic, then the system has an error-correcting mechanism that allows the observable $\phi$ to be recovered, at least for almost every point, and in such a way that the empirical measurements remain at a bounded distance from the sums $\{\sum_{k=0}^n \phi\circ g_k\}_{n\geq 1}$. This potential is essentially determined by some variational principle for the measurements. Moreover, as one would expect from reality, the measurements are normally distributed, provided one averages with respect to some well-behaved law.

\subsection*{Acknowledgements}

The authors would like to thank S. Cantrell for several suggestions  and references, in particular for telling us about \Cref{cor:D}. We also thank M. Burger, C. Dilsavor, L. Flaminio, F. Ledrappier, E. Pujals, D. Thompson and B. Pozzetti for their constructive comments that improved this work.

\section{Geodesic flow on negatively curved spaces and its symbolic representation} 
\label{sec:geodesic_flow_on_negative_curved_spaces_and_its_symbolic_representation}

In this section, we review some dynamical concepts related to geodesic flows and explain their symbolic structure. We also state analogues of the theorems from the introduction in this more general setting.

\subsection{Geodesic flows} 
\label{sub:geodesic_flows}

 Let $M$ be a closed (connected) geodesic space which is negatively curved in the Alexandrov sense. Its universal covering $\lift{M}$ is therefore a proper $\CAT$ space and hence contractible. The group $\Gamma=\pi_1(M,*)$ acts by isometries on $\lift{M}$ by deck transformations. Thus, we can (and do) identify $M=\lift{M}/\Gamma$. By the Milnor-Švarc theorem, $\Gamma$ is finitely generated and quasi-isometric to $\lift{M}$, when equipped with any word metric associated to a finite generating set. In particular, $\Gamma$ is word hyperbolic. We adopt the following nomenclature, which appears in \cite{Constantine2020}.

\begin{definition}\label{def:locallyCAT}
$M$ is referred to as a closed locally $\CAT$ space.
\end{definition}

The limit set of $\Gamma$ on $\lift{M}$ can be naturally identified with its geometric boundary $\partial \Gamma\cong \partial \lift{M}$, and we denote 
\[
	\mc{G}:=\{c:\R\to \lift{M}: c\text{ is a geodesic}, c(-\oo), c(+\oo)\in\partial \Gamma\}.
\]
$\mc{G}$ is equipped with the topology of uniform convergence on compact subsets; this topology is metrizable. On $\mc G$ there are three natural actions, namely:
\begin{enumerate}
	\item $\Gamma$ acts cocompactly by isometries with $\gamma\cdot c=\gamma\circ c$;
	\item $\Z_2$ acts by an involution, $I:c(t)\mapsto c(-t)$;
	\item $\R$ acts by $s\cdot c(t):=c(t+s)$.
\end{enumerate}

Let $\mc{E}:=\mathcal{G}/\Gamma$. Since the $\Gamma-$ and $\R-$ actions commute, there exists an induced $\R$ action on $\mc{E}$. We denote by $\lie{g}=(g_t)_{t\in\R}:\mc E\toit$ the corresponding flow. Likewise, the $\Z_2$ action induces an action on $I:\mc{E}\toit$, with $I(g_t)=g_{-t}, \forall t\in\R$.  

\begin{definition}\label{def:geodesicflow}
$\lie{g}$ is the geodesic flow associated to $\Gamma$.
\end{definition}

The above flow was introduced by Gromov \cite{GromovHypGr} for hyperbolic groups; further details can be found in \cite{Champetier1994,Mineyev2005}. In the setting of a negatively curved  space, the flow is better behaved and can be seen as a direct analogue of the (differentiable) geodesic flow on a hyperbolic manifold. See Bourdon's article \cite{Bourdon1995}. We note that the above construction is by no means canonical, and the resulting flow is susceptible to reparametrization. However, Gromov proved that two different geodesic flows associated with $\Gamma$ are equivalent in the following sense.

\begin{definition}\label{def:equivalentflows}
If $X, Y$ are metric spaces equipped with flows $\phi^X=(\phi_t^X)_t:X\toit,\ \phi^Y=(\phi_t^Y)_t:Y\toit$ we say that a homeomorphism $h:X\to Y$ is an equivalence between the flows if it sends orbits of $\phi^X$ onto orbits of $\phi^Y$. In this case the flows are said to be equivalent.
\end{definition}
By the above, we will refer to any equivalent flow to $\lie{g}$ as a geodesic flow.

\begin{remark}\label{rem:differentiablecase}
For $M$ hyperbolic manifold the space $\mc{E}$ is homeomorphic to its unit tangent bundle, and $\lie{g}$ is flow equivalent to any geodesic flow on $T^1M$ defined by a smooth Riemannian metric.
\end{remark}

In \cite{Constantine2020}, the authors show that for locally $\CAT$ spaces one can find a geodesic flow associated to $\Gamma$ that is also a topologically mixing Smale flow (also called a metric Anosov flow). For our purposes it will suffice to note three dynamical consequences: expansitivity, shadowing and symbolic representation.

\begin{theorem}[Expansivity]\label{thm:expansivity}
There exists a geodesic flow $\lie{g}=(g_t)_{t\in\R}$ and a constant $\Cexp>0$ such that 
\[
	\sup_{t\in\R}\dis[\mc E]{g_t(x)}{g_t(y)}\leq \Cexp\Rightarrow x=g_{t_0}(y)\quad |t_0|<\Cexp. 
\]
\end{theorem}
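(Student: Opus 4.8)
Abstractly, by \cite{Constantine2020} one may take $\mathfrak g$ to be a topologically mixing metric Anosov flow, for which the stated rigid expansivity is a standard consequence of its local product structure; I would instead argue directly from the \CAT\ geometry of $\tilde M$, which also makes $\Cexp$ transparent. So let $\mathfrak g$ be the geodesic flow on $\mathcal E=\mathcal G/\Gamma$, metrize $\mathcal G$ by the $\Gamma$-invariant distance $d_{\mathcal G}(c,c')=\sup_{s\in\Real}e^{-|s|}\min\{1,d_{\tilde M}(c(s),c'(s))\}$ and $\mathcal E$ by $\dis{\mathcal E}{[c]}{[c']}=\inf_{\gamma\in\Gamma}d_{\mathcal G}(\gamma\circ c,c')$, so that $g_t[c]=[c(\cdot+t)]$. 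Fix $x=[c_x]$, $y=[c_y]$ with $\sup_{t\in\Real}\dis{\mathcal E}{g_tx}{g_ty}\le\epsilon$, where $\epsilon$ is small and to be chosen.

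The first step is to promote this fellow-travelling from $\mathcal E$ to $\mathcal G$, by finding a \emph{single} $\gamma\in\Gamma$ realizing all the infima. Since $\Gamma$ acts freely, properly discontinuously and cocompactly on $\mathcal G$ (inherited from its action on $\tilde M$ through the equivariant evaluation $c\mapsto c(0)$), compactness of $\mathcal E$ furnishes $\epsilon_0>0$ with the property that $d_{\mathcal G}(\delta\circ p,p)<\epsilon_0$ for some $p\in\mathcal G$ forces $\delta=1_\Gamma$. Assume $2\epsilon<\epsilon_0$ and $\epsilon<1$. For each $t$, proper discontinuity makes the defining infimum attained, at some $\gamma_t$ with $d_{\mathcal G}\bigl(\gamma_t\circ c_x(\cdot+t),c_y(\cdot+t)\bigr)\le\epsilon$; if $\gamma'_t$ is another minimizer then $d_{\mathcal G}\bigl((\gamma'_t)^{-1}\gamma_t\circ c_x(\cdot+t),c_x(\cdot+t)\bigr)\le 2\epsilon<\epsilon_0$, so $\gamma_t$ is unique. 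Because $t\mapsto c_x(\cdot+t)$ and $t\mapsto c_y(\cdot+t)$ are continuous into $\mathcal G$, uniqueness and the triangle inequality force $t\mapsto\gamma_t$ to be locally constant, hence constant on $\Real$; call its value $\gamma$. Thus $d_{\mathcal G}\bigl(\gamma\circ c_x(\cdot+t),c_y(\cdot+t)\bigr)\le\epsilon$ for all $t$, and evaluating the defining supremum at $s=0$ after each shift gives $d_{\tilde M}\bigl(\gamma c_x(t),c_y(t)\bigr)\le\epsilon$ for every $t\in\Real$.

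Next I would feed this into the negative curvature. The lines $\gamma\circ c_x$ and $c_y$ are complete geodesics of the proper \CAT\ space $\tilde M$ at uniform distance $\le\epsilon$, so their forward rays are asymptotic, and likewise their backward rays, whence they share both ideal endpoints in $\partial\tilde M$; these endpoints are distinct, a bi-infinite geodesic having distinct ends. By the flat strip theorem together with the strict upper curvature bound, the complete geodesic with a prescribed pair of distinct ideal endpoints is unique, so $c_y$ and $\gamma\circ c_x$ are unit-speed parametrizations of one and the same oriented line, i.e.\ $c_y(s)=\gamma c_x(s+t_0)$ for all $s$ and some $t_0\in\Real$. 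Hence $|t_0|=d_{\tilde M}\bigl(\gamma c_x(s+t_0),\gamma c_x(s)\bigr)=d_{\tilde M}\bigl(c_y(s),\gamma c_x(s)\bigr)\le\epsilon$. Taking $\Cexp$ to be any $\epsilon$ with $2\epsilon<\epsilon_0$ and $\epsilon<1$ (shrunk slightly for the strict inequality) yields $|t_0|<\Cexp$, while $c_y=\gamma\circ c_x(\cdot+t_0)$ descends to $y=g_{t_0}(x)$ in $\mathcal E$ --- which is the claim.

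I expect the crux to be the first step: showing the approximating deck transformation is time-independent. Two setup points need care there. First, the uniform injectivity radius $\epsilon_0$ for the free, properly discontinuous, cocompact $\Gamma$-action on $\mathcal G$ --- this is what makes $\gamma_t$ unique once $\epsilon$ is small. Second, the compatibility of the chosen metric on $\mathcal G$ with the $\Real$-action, so that $\mathcal G$-closeness of \emph{all} time-translates of two lines is exactly uniform pointwise closeness of the lines in $\tilde M$; a different metrization of the compact-open topology would serve as well, but one must verify this equivalence. Given those, the \CAT\ inputs at the end --- asymptoticity of bounded-distance rays and uniqueness of the geodesic between two boundary points --- are standard, and close the argument.
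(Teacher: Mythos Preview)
The paper does not actually give a proof: immediately after stating \Cref{thm:expansivity} and \Cref{thm:shadowing} it writes ``The proof of both theorems for Smale flows is the same as for smooth hyperbolic flows, which is standard. See for example Chapter $18$ in \cite{KatHass}.'' In other words, it invokes \cite{Constantine2020} to obtain a metric Anosov geodesic flow and then cites the standard expansivity argument for such flows. This is precisely the route you sketch in your first sentence, so your proposal already contains the paper's proof.

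Your subsequent direct argument from the \CAT\ geometry is a genuinely different and more elementary route. Rather than going through the metric Anosov local product structure, you lift the fellow-travelling from $\mathcal E$ to a \emph{single} $\Gamma$-translate in $\mathcal G$ (using a uniform injectivity radius for the free cocompact action to pin down $\gamma_t$ uniquely, then local constancy by continuity), and finish with the flat strip theorem under a strict upper curvature bound. This has the virtue of being self-contained and of making $\Cexp$ concrete --- essentially half the injectivity radius of the $\Gamma$-action on $\mathcal G$, and at most $1$ --- whereas the Smale-flow route leaves $\Cexp$ buried in the local product structure. On the other hand, the paper needs the full Smale/metric Anosov package anyway for \Cref{thm:shadowing} and \Cref{thm:symbolicrepresentation}, so there is no real economy in bypassing it just for expansivity. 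One minor point: as written, your argument yields $|t_0|\le\Cexp$ rather than the strict inequality in the statement (when the two geodesics coincide with time-shift exactly $\epsilon$, one has $\sup_t d_{\mathcal E}(g_tx,g_ty)=\epsilon$ on the nose in your metric), so ``shrunk slightly'' does not quite do it with a single constant; but this is inessential and is fixed by stating expansivity with two constants or with $\le$ on the conclusion.
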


\begin{notation}
A geodesic $\alpha:\R\to \mc{E}$ is periodic if there exists $T>0$ so that $\alpha(t+T)=\alpha(t), \forall t\in\R$. The smallest of such $T$ is called the period of $\alpha$ and is denoted by $\per(\alpha)$. 
\end{notation}

\begin{theorem}[Shadowing]\label{thm:shadowing}
The geodesic flow given in the previous theorem also satisfies: given $\delta>0$ there exists $\Csha=\Csha(\delta)>0$ so that if $\alpha_1, \alpha_2$ are periodic orbits there exists a periodic orbit $\alpha_3$ such that
\begin{align*}
& \per(\alpha_3)\leq\per(\alpha_1)+\per(\alpha_2)+2\Csha\\
& \sup_{t\in [0,\per(\alpha_1)]}\set{\dis[\mc E]{\alpha_1(t)}{\alpha_3(t)}}<\delta, \sup_{t\in [0,\per(\alpha_2)]}\set{\dis[\mc E]{\alpha_2(t)}{\alpha_3(t+\Csha)}}<\delta.
\end{align*}
\end{theorem}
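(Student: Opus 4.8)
The plan is to deduce the shadowing property for periodic orbits from the fact, established in \cite{Constantine2020}, that the geodesic flow $\mathfrak{g}$ can be chosen to be a topologically mixing Smale (metric Anosov) flow, together with the expansivity stated in Theorem~\ref{thm:expansivity}. The key point is that a metric Anosov flow carries a local product structure and a genuine shadowing lemma for $\epsilon$-pseudo-orbits, and one concatenates the two given periodic orbits into a pseudo-orbit that the shadowing lemma then closes up to a genuine periodic orbit.

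Let me describe the steps in order. First, I would fix $\delta>0$ and invoke the shadowing lemma for the Smale flow: there is $\epsilon=\epsilon(\delta)>0$ such that every $\epsilon$-pseudo-orbit is $\delta$-shadowed by a genuine orbit, and moreover, because the flow is expansive (Theorem~\ref{thm:expansivity}), a shadowing orbit of a periodic pseudo-orbit can be taken periodic with period controlled by the length of the pseudo-orbit up to a bounded error $\Csha=\Csha(\delta)$. Second, given periodic orbits $\alpha_1,\alpha_2$ with base points $p_1=\alpha_1(0)$, $p_2=\alpha_2(0)$, I would use topological mixing (or rather the local product structure: the strong stable and strong unstable plaques through nearby points intersect) to produce two short connecting orbit segments: one of length at most $\Csha$ going from a point $\epsilon$-close to $p_1$ to a point $\epsilon$-close to $p_2$, and a symmetric one going back. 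Concatenating: run along $\alpha_1$ for time $\per(\alpha_1)$, jump (within $\epsilon$) onto the first connecting segment of length $\le\Csha$, run along $\alpha_2$ for time $\per(\alpha_2)$, jump onto the second connecting segment of length $\le\Csha$, and close up. This is a periodic $\epsilon$-pseudo-orbit of total length at most $\per(\alpha_1)+\per(\alpha_2)+2\Csha$. Third, I would apply the shadowing lemma to obtain a genuine periodic orbit $\alpha_3$ which $\delta$-shadows this pseudo-orbit; reparametrizing time so that the shadowing starts at $\alpha_3(0)$ near $\alpha_1(0)$ and is tracked by $\alpha_1$ on $[0,\per(\alpha_1)]$ and, after the first connecting segment, by $\alpha_2$ on a time-shifted interval of length $\per(\alpha_2)$ (this is the source of the $+\Csha$ offset in $\dis{\mathcal E}{\alpha_2(t)}{\alpha_3(t+\Csha)}$), gives exactly the two displayed estimates. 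Finally, the period bound $\per(\alpha_3)\le\per(\alpha_1)+\per(\alpha_2)+2\Csha$ follows because the shadowing orbit has period within the length of the pseudo-orbit, with expansivity ensuring that the period does not drop far below either (and any slack is absorbed into $\Csha$ by enlarging it).

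The main obstacle I expect is the bookkeeping around reparametrization and the precise form of the shadowing lemma for flows as opposed to maps: for flows one must shadow in the sense of orbit-equivalence with a time-reparametrization close to the identity, so that the statement ``$\sup_{t\in[0,\per(\alpha_1)]}\dis{\mathcal E}{\alpha_1(t)}{\alpha_3(t)}<\delta$'' requires choosing the parametrization of $\alpha_3$ compatibly and tracking how the time-rescaling accumulates over the two pieces; this is what forces the constant shift $\Csha$ in the second inequality rather than a uniform matching of parametrizations. A secondary subtlety is ensuring the connecting segments can genuinely be taken of length at most $\Csha$ uniformly in $\alpha_1,\alpha_2$ — this uses that the flow is a metric Anosov flow with uniform local product structure (uniform size of stable/unstable plaques and a uniform ``bracket'' map), which is part of the structure provided by \cite{Constantine2020}, so I would cite that rather than reprove it.
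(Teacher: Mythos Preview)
Your proposal is correct and is precisely the standard argument the paper has in mind: the paper does not give a detailed proof but simply remarks that ``the proof of both theorems for Smale flows is the same as for smooth hyperbolic flows, which is standard'' and refers to Chapter~18 of \cite{KatHass}. Your sketch---concatenating $\alpha_1$ and $\alpha_2$ into a periodic $\epsilon$-pseudo-orbit via short connecting segments furnished by the uniform local product structure of the metric Anosov flow, then invoking the shadowing lemma and expansivity to close it up to a genuine periodic orbit---is exactly that standard argument, including the correct identification of the time-reparametrization bookkeeping as the source of the $\Csha$-shift.
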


The proof of both theorems for Smale flows is the same as for smooth hyperbolic flows, which is standard. See for example Chapter $18$ in \cite{KatHass}. From now on we work with a parametrization $\lie{g}$ satisfying both \Cref{thm:expansivity,thm:shadowing}. Symbolic representation is discussed below.

\begin{notation}
We fix $\delta_0>0$ and write $\Csha$ for the corresponding constant. If $\alpha_1, \alpha_2$ are periodic orbits then we write $\alpha_3=\alpha_1\star\alpha_2$ for any periodic orbit $\alpha_3$ satisfying the conclusion of the previous theorem.
\end{notation}

 \subsection{Weak Bowen Functions and their Livšic cohomology} 
 \label{sub:WeakBowenLivšic}

Recall that $\PTM{\lie{g}}{\mc E}$ denotes the invariant (probability) measures for the flow $\lie{g}$. Given $x\in \mc E$ and $T>0$ we denote
\begin{align}\label{def:Birkhofsum}
S_T\phi(x):=\int_0^T \phi(g_tx)\dd t.
\end{align}

Fix $0<\eps_0<\frac{\Cexp}{4}$. 

\begin{definition}\label{def:weakBowenfunction}
Let $\mu\in \PTM{\lie{g}}{\mc E}$ be an invariant probability measure of full support. We say that $\phi \in \Lp{\oo}{\mc E}$ is a $\mu-$weak Bowen function if there exist $C>0$ so that 
\[
	(\aee{\mu}\ x,y\in \mc E, \forall T>0): \sup_{t\in [0,T]}\dis[\mc E]{g_tx}{g_ty}\leq \eps_0\Rightarrow \abs*{S_T\phi(x)-S_T\phi(y)}\leq C. 
\]
The Bowen constant of $\phi$ is $\norm[B]{\phi}=\inf C$, and its Bowen norm is $\tnorm[B]{\phi}=\norm[L^{\oo}]{\phi}+\norm[B]{\phi}$. The space of $\mu-$weak Bowen functions is denoted by $\Bow[][\mu]{\mc E}$.
 \end{definition}

Let us discuss this norm in more detail.

\begin{notation}
	If $x,y\in \mc E, T>0$ we write
	\[
	\dis[\mc{E},T]{x}{y}=\sup_{t\in [0,T]}\dis[\mc E]{g_tx}{g_ty}
	\]
	and
	\[
	B(x,\eps_0,T)=\{y:\dis[\mc{E},T]{x}{y}\leq \eps_0\};
	\]
	these are the $T-$Bowen distance and the $(\eps_0,T)$-Bowen ball, respectively.
\end{notation}

 Thus,
 \[
 	\tnorm[B]{\phi}=\norm[L^{\oo}]{\phi}+\sup_{n\in\N_{>0}}\sup_{x, y\in B(x,\eps_0,n)}\abs{S_n\phi(x)-S_n\phi(y)}
 \]
 where in the last supremum it is understood that $x,y$ are chosen almost everywhere with respect to $\mu$. Note that this is clearly a norm, and if $(\phi_k)_k \subset \Bow[][\mu]{\mc E}$ verifies $\lim_k \tnorm[B]{\phi_k-\phi}=0$ for some function $\phi$, then $\phi\in \Bow[][\mu]{\mc E}$. 

\begin{proposition}\label{pro:BowenBanach}
    $\paren*{\Bow[][\mu]{\mc E},\tnorm[B]{\cdot}}$ is a Banach space. 
\end{proposition}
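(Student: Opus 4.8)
The plan is to show that $(\mathrm{Bow}_\mu(\mathcal E),\normBB{\cdot})$ is complete, since the text has already observed that $\normBB{\cdot}$ is a genuine norm and that the set of $\mu$-weak Bowen functions is closed under norm limits. So let $(\phi_k)_k$ be a Cauchy sequence in $\mathrm{Bow}_\mu(\mathcal E)$. First I would exploit the fact that $\normL{\phi_k-\phi_j}{\oo}\le\normBB{\phi_k-\phi_j}$, so $(\phi_k)$ is Cauchy in the Banach space $\Lp[\oo](\mathcal E)$; let $\phi$ be its $\Lp[\oo]$-limit. It then remains to check that $\phi\in\mathrm{Bow}_\mu(\mathcal E)$ and that $\normBB{\phi_k-\phi}\to 0$, and for the latter the closedness remark quoted just before the proposition handles membership once the convergence is established, so really the crux is the convergence in $\normBB{\cdot}$.

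Next I would estimate the Bowen part of $\normBB{\phi_k-\phi}$. Fix $\varepsilon>0$ and pick $N$ with $\normBB{\phi_k-\phi_j}<\varepsilon$ for all $k,j\ge N$. For fixed $n\in\Nat_{>0}$ and $\mu$-a.e.\ pair $x,y$ with $\dis{\cE,n}{x}{y}\le\epsilon_0$, the quantity $|S_n(\phi_k-\phi_j)(x)-S_n(\phi_k-\phi_j)(y)|$ is at most $\normB{\phi_k-\phi_j}<\varepsilon$. I want to pass to the limit $j\to\oo$ inside $S_n$; here the key point is that $S_n$ is the integral over the bounded interval $[0,n]$, and $\phi_j\to\phi$ in $\Lp[\oo]$ implies $S_n\phi_j(x)\to S_n\phi(x)$ uniformly in $x$ (indeed $|S_n\phi_j(x)-S_n\phi(x)|\le n\normL{\phi_j-\phi}{\oo}$). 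Hence for each fixed $n$ and the relevant $x,y$, letting $j\to\oo$ gives $|S_n(\phi_k-\phi)(x)-S_n(\phi_k-\phi)(y)|\le\varepsilon$, so $\normB{\phi_k-\phi}\le\varepsilon$ for all $k\ge N$. Combined with $\normL{\phi_k-\phi}{\oo}\le\varepsilon$ for $k$ large, this yields $\normBB{\phi_k-\phi}\le 2\varepsilon$ eventually, i.e.\ $\phi_k\to\phi$ in the Bowen norm. That $\phi$ itself is a $\mu$-weak Bowen function then follows either from the closedness statement already recorded, or directly: $\normB{\phi}\le\normB{\phi_N}+\normB{\phi-\phi_N}<\oo$.

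The one subtlety I would be careful about — and what I expect to be the main obstacle — is the interchange of the $\mu$-almost-everywhere quantifier with the limits. The defining inequality for $\phi_k-\phi_j$ holds off a null set $Z_{k,j}$ that may depend on $k,j$; to conclude the inequality for $\phi_k-\phi$ one works off the countable union $\bigcup_{j}Z_{k,j}$, which is still $\mu$-null, and also one should restrict to the full-measure set where the relevant $\Lp[\oo]$ representatives agree pointwise with their chosen versions. Since only countably many indices and countably many values of $n$ enter, a single $\mu$-conull set serves for everything, so the argument goes through; I would state this bookkeeping explicitly but not belabor it. The rest is the routine two-$\varepsilon$ estimate sketched above.
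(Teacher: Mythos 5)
Your proof is correct, and it takes a genuinely different route from the paper's. The paper constructs an auxiliary Banach space $A=\Lp[\oo](Z,\nu)$, where $Z$ is the disjoint union of $\cE$ and the graphs $U_n=\{(x,y):y\in B(x,\epsilon_0,n)\}$ with the norm $\|u\|_A=\normL{u|\cE}{\oo}+\sup_n\normL{u|U_n}{\oo}$, then isometrically embeds $\mathrm{Bow}_\mu(\cE)$ into $A$ via $\phi\mapsto\bigl(\phi,\,(S_n\phi(x)-S_n\phi(y))_n\bigr)$ and verifies that the image is closed, using pointwise a.e.\ convergence together with the bounded convergence theorem to pass the limit inside $S_n$. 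You instead run the completeness argument directly: a Cauchy sequence in $\normBB{\cdot}$ is Cauchy in $\Lp[\oo]$, so it has a candidate limit $\phi$, and then the quantitative bound $|S_n\phi_j(x)-S_n\phi(x)|\le n\normL{\phi_j-\phi}{\oo}$ lets you pass $j\to\oo$ in the Bowen inequality for each fixed $n$ and conclude $\normB{\phi_k-\phi}\le\varepsilon$ eventually. The paper's embedding is more modular and makes transparent \emph{why} the norm is complete (it is the restriction of a standard $\Lp[\oo]$ norm to a closed subspace), which is also the template it reuses in \Cref{pro:qmareBanach}; your direct argument is shorter and more self-contained, at the cost of the null-set bookkeeping you flag, which your countable-union remark handles correctly. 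Both are valid.
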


 \begin{proof}
 	For $n\in\N_{>0}$ let $U_n=\set{(x,y)\in \mc{E}\times \mc{E}:y\in B(x,\eps_0,n)}$. Note that for $\phi\in \Bow[][\mu]{\mc E}$, denoting $h_n(x,y)=S_n\phi(x)-S_n\phi(y)$, we have $\norm[B]{\phi}=\sup_{n>0}\esssup(h_n|U_n)$, where the essential supremum is taken with respect to $\mu\otimes\mu|U_n$.

	Let 
	\[
	Z=\mc{E}\times\{0\}\cup\bigcup_{n>0} U_n\times\{n\}
	\]
	be the disjoint union of the different $U_n$ and $\mc{E}$. For a given function $u$ on $Z$ we write $u(\cdot,0), u(\cdot,\cdot, n)$ for its restrictions to $\mc{E}\times\{0\}, U_n\times\{n\}$, respectively. Let $\nu$ be the ($\sigma-$finite) measure on $Z$ induced by $\mu$.

    Consider the  Banach space $A$, whose underlying space is $\Lp{\oo}{Z,\nu}$, but with norm 
\[
 	\norm[A]{u} \defeq \norm[L^{\oo}]{\restr{u}{\mc{E}\times\{0\}}}+\sup_{n>0}\norm[L^{\oo}]{\restr{u}{U_n\times\{n\}}}.
 \] 
Notice that this norm is equivalent to the standard $\Lp{\oo}$ norm on $Z$.

 We have that the functions with the Bowen property embed isometrically into $A$ by $\iot[\phi](a)=\phi(x)$ for $a=(x,0)\in \mc{E}\times\{0\}$ and $\iot[\phi](a)=S_n\phi(x)-S_n\phi(y)$ if $a=(x,y,n)\in U_n\times\{n\}$. 

 It is thus enough to show that $\iota(\Bow[][\mu]{\mc E}) \subset A$ is closed. Let $(\phi_k)_k \subset \Bow[][\mu]{\mc E}$ and assume that $u_k\defeq \iot[\phi_k]\xrightarrow[k\mapsto\oo]{} u$; we want to show that there is a function $\phi:D_\phi \subset\mc{E}\to \R$ so that $u(x,y,n)=S_n\phi(x)-S_n\phi(y)$ whenever $\dis[\mc{E},n]{x}{y}\leq \eps_0$. Note that $\phi(x)$ can only be defined as $\phi(x)=\lim_k u_k(x,0)$, which exists $\mu-$almost everywhere.

 On the other hand, we know that for every given $n$ and $k$, $S_n\phi_k(x)-S_n\phi_k(y)=u_k(x,y,n)\xrightarrow[k\mapsto\oo]{} u(x,y,n)$. Also, for $\aee{\mu}$ it holds for every $n$, $S_n\phi_k(x)\xrightarrow[k\mapsto\oo]{} S_n\phi(x)$ (pointwise convergence together with the bounded convergence theorem). Hence 
      \[
      u_k(x,y,n)=S_n\phi_k(x)-S_n\phi_k(y)\xrightarrow[k\mapsto\oo]{} S_n\phi(x)-S_n\phi(y)
      \]
      and so 
      \[
  	S_n\phi(x)-S_n\phi(y)=u(x,y,n)
      \]
      for every $n$, for almost every $x,y$ with $\dis[\mc{E},n]{x}{y}\leq \eps_0$, which means that $u=\iot[\phi]$, thus the image is closed.  
\end{proof}

\begin{definition}\label{def:Livsiccohomologous}
We say that $\phi\in \Bow[][\mu]{\mc E}$ is a Livšic coboundary if there exists $u\in \Lp{\oo}{\mc E,\mu}$ which is differentiable along orbits of $\lie{g}$ and satisfies $\phi=\dert[t=0]{u\circ g_t}$. The space of Livšic coboundaries is denoted by $\Cob[][\mu]{\mc E}$.

 Two functions $\phi, \psi\in \Bow[][\mu]{\mc E}$ are said to be Livšic cohomologous ($\phi\sim\psi$) if their difference is a Livšic coboundary. 
\end{definition}

Note that $\Cob[][\mu]{\mc E}\subset \paren*{\Bow[][\mu]{\mc E},\tnorm[B]{\cdot}}$ is a closed subspace, thus
\[
	\Bow[][\mu]{\mc E}/\thicksim \defeq \frac{\Bow[][\mu]{\mc E}}{\Cob[][\mu]{\mc E}}
\]
is a Banach space with respect to the norm
 \begin{align}\label{def:Bowennorm}
 \tnorm[B]{[\phi]}=\inf\{\tnorm[B]{\psi}:\psi\sim\phi\}.
 \end{align}

 \begin{definition}\label{def:antisymmetric}
 	We say that $\phi\in \Bow[][\mu]{\mc E}$ is antisymmetric if $\phi\circ I\sim -\phi$. We denote by $\Bow[a][\mu]{\mc E}$ the subspace of $\Bow[][\mu]{\mc E}$ consisting of antisymmetric elements.
 \end{definition}

With these definitions we have the following generalization of \Cref{thm:A,thm:B}. 

\begin{theorem}\label{thm:mainAB}
Let $M$ be a closed locally $\CAT$ space, and let $\mu\in \PTM{\lie g}{\mc E}$ be fully supported. Denote by $\Gamma=\pi_1(M,*)$.

\begin{enumerate}
	\item There exists a Banach isomorphism 
     \[
     \Psi:\paren*{\QMt{\Gamma},\norm{\cdot}}\to \paren*{\Bow[a][\mu]{\mc E},\tnorm{\cdot}}.
     \]
    \item For each $[\phi]\in  \Bow[][\mu]{\mc E}/\thicksim$ there exists a measure $\mu_{[\phi]}=\mu_{\phi} \in \PTM{\lie{g}}{\mc E}$ which can be characterized as the unique element of this space satisfying
\[
	\hmu[\mu_{\phi}]{\lie{g}}+\int \phi\dd \mu_{\phi}=\sup_{\mu\in \PTM{\lie{g}}{X}}\paren*{\hmu[\mu]{\lie{g}}+\int \phi\dd \mu}. 
\]
For every $t\neq 0$, the stochastic process defined by $(\phi\circ g_{nt})_{n\in \N}: \mc E\to\R$ is measure-theoretically isomorphic to a Bernoulli process with finitely many symbols.   
\end{enumerate}
\end{theorem}

 An important part of this article is devoted to extending the classical cohomology theory of Livšic \cite{Livshits1971} for non-regular weak Bowen functions. Consider $\phi:D_\phi\to\R\in \Bow[a][\mu]{\mc E}$: with no loss of generality we assume that $D_{\phi}$ is $\lie g$-invariant, and also dense, since $\mu$ is fully supported. Let $x\in\mc E$ be contained in a periodic orbit $\alpha$, and suppose that $y\in D_\phi$ is so that
\[
 	\sup_{t\in [0,\per(\alpha)]}\dis[\mc E]{g_tx}{g_ty}\leq \Cexp.
 \]
Then the limit 
 \[
      \lim_{T\to\+\oo} \frac{S_T\phi(y)}{T}
 \]
 exists, does not depend on the point $y$ chosen with the above rule, and in fact only depends on $\alpha$. These facts are simple to prove: existence follows almost directly from subadditivity of the sequence $(S_n\phi(x))_{n\in\N}$, while independence of the chosen point ($y$ and $x\in \alpha$) is consequence of the weak Bowen property. See \Cref{sec:LivsicSFT} where an analogous fact is proven. 

\begin{definition}\label{def:averageonperiodicorbit}
The average of $\phi$ on the periodic orbit $\alpha$ is
\[
\av_{\alpha}(\phi)\defeq \lim_{T\to +\oo}\frac{S_T\phi(y)}{T},
\]
where $x\in \alpha$ and $y\in D_{\phi}$ satisfy
\[
\sup_{t\in [0,\per(\alpha)]}\dis[\mc E]{g_tx}{g_ty}\leq \Cexp.
\]
\end{definition}

This definition is natural, since if $D(\phi)=\mathcal E$, then $\av_{\alpha}(\phi)$ is just the integral of $\phi$ with respect to the homogeneous probability measure supported on $\alpha$. We will prove:

\begin{maintheorem}\label{thm:E}
Let $\mu\in \PTM{\lie g}{\mc E}$ be fully supported. Then $\phi,\psi\in \Bow[][\mu]{\mc E}$ are Livšic cohomologous if and only if for every periodic orbit $\alpha$ of $\lie g$ it holds $\av_{\alpha}(\phi)=\av_{\alpha}(\psi)$.
\end{maintheorem}

This immediately implies the claim stated in the introduction that $\Ker c_2$ is infinite-dimensional \cite{BrooksSeries84, Mitsumatsu1984, Barge1988, Epstein1997, Fujiwara1998, Bestvina2002, Hamenstaedt2008}.

\begin{corollary}\label{cor:infinitedimensionalkerc}
For $M,\mu$ as in \Cref{thm:mainAB}, the kernel $\Ker \paren*{c_2: H^2_{\mrm{b}}(M,\R)\to H^2(M;\R)}$ is infinite dimensional.
 \end{corollary}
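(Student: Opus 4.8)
The plan is to combine \hyperlink{theoremAa}{Theorem A'} with \hyperlink{theoremE}{Theorem E} and classical information about the abundance of closed geodesics on $M$. By \hyperlink{theoremAa}{Theorem A'}, $\widetilde{\Qm}(\Gamma)$ is Banach isomorphic to $\mathrm{Bow}_\mu(\mathcal E)/\!\thicksim$, and (as recalled in the introduction) $\widetilde{\Qm}(\Gamma)$ contains $\ker(c_2)$ isometrically as the quotient $\Qm(\Gamma)/(\Hom(\Gamma,\Real)\oplus\ell^\infty(\Gamma))$; since $\Gamma=\pi_1(M,*)$ is word hyperbolic, $c_2$ is surjective and so $\ker(c_2)$ has finite codimension in $\widetilde{\Qm}(\Gamma)$ precisely when $H^2(M,\Real)$ is finite dimensional, which it is ($M$ compact). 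Hence it suffices to exhibit an infinite linearly independent family in $\mathrm{Bow}_\mu(\mathcal E)/\!\thicksim$. Equivalently, by \hyperlink{theoremE}{Theorem E}, it suffices to produce infinitely many H\"older functions $\phi_1,\phi_2,\dots$ on $\mathcal E$ (these lie in $\mathrm{Bow}_\mu(\mathcal E)$ since the flow is metric Anosov, as noted after \Cref{def:weakBowenfunction}) whose classes are linearly independent, i.e.\ such that no nontrivial linear combination $\sum_i a_i\phi_i$ has $\operatorname{av}_\alpha(\sum_i a_i\phi_i)=0$ for every periodic orbit $\alpha$.

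The key step is therefore a \emph{period-separation} argument: I would choose periodic orbits $\alpha_1,\alpha_2,\dots$ of $\mathfrak g$ (closed geodesics on $M$; there are infinitely many, with periods going to infinity, by topological mixing together with \Cref{thm:shadowing}) and H\"older functions $\phi_1,\phi_2,\dots$ such that the infinite matrix $\big(\operatorname{av}_{\alpha_j}(\phi_i)\big)_{i,j}$ has the property that for every $i$ the system of equations "$\operatorname{av}_{\alpha_j}(\sum_k a_k\phi_k)=0$ for all $j$" forces $a_i=0$. One concrete way: build the $\phi_i$ with shrinking supports localized near $\alpha_i$ (bump functions along the flow, smoothed to be H\"older) so that $\operatorname{av}_{\alpha_i}(\phi_i)>0$ while $\operatorname{av}_{\alpha_j}(\phi_i)=0$ for $j\neq i$, whenever $\alpha_i$ and $\alpha_j$ are disjoint and at positive distance — using that distinct primitive closed geodesics have disjoint (compact) images and can be chosen $\epsilon_0$-separated. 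Then the "average matrix" is diagonal with nonzero diagonal, giving immediate linear independence of the classes. A triangular version of the same idea works if one prefers to fix a single exhaustion of orbits in advance.

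The main obstacle I anticipate is the interface between \hyperlink{theoremE}{Theorem E} and the candidate functions: one must verify that a localized H\"older bump $\phi_i$ genuinely has $\operatorname{av}_{\alpha_j}(\phi_i)=0$ for $j\neq i$, which requires that the orbit $\alpha_j$ (and, more delicately, the nearby comparison point $y\in D(\phi_i)$ from \Cref{def:averageonperiodicorbit}) stays in the region where $\phi_i\equiv 0$. Since $\operatorname{av}_{\alpha}$ only sees the ergodic average, this is immediate once $\alpha_j$ avoids $\operatorname{supp}(\phi_i)$ and the supports are chosen with a margin larger than the expansivity constant $\Cexp$; the (standard but nontrivial) input is that $\mathfrak g$ has infinitely many closed orbits which can be taken pairwise far apart, which follows from the density of periodic orbits for topologically mixing metric Anosov flows plus the shadowing from \Cref{thm:shadowing}. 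Everything else — that H\"older functions lie in $\mathrm{Bow}_\mu(\mathcal E)$, that $\ker(c_2)$ has finite codimension in $\widetilde{\Qm}(\Gamma)$, and that a Banach isomorphism preserves infinite-dimensionality — is routine.
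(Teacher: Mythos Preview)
Your proposal is correct and follows essentially the same route as the paper: choose infinitely many distinct periodic orbits $\alpha_n$, build Lipschitz bump functions $\phi_n$ supported in pairwise disjoint neighborhoods of the $\alpha_n$, use \hyperlink{theoremE}{Theorem E} to see that the average matrix $(\operatorname{av}_{\alpha_j}(\phi_i))_{i,j}$ is diagonal with nonzero diagonal (hence the classes are linearly independent in $\mathrm{Bow}_\mu(\mathcal E)/\thicksim$), and transfer via \hyperlink{theoremAa}{Theorem A'}. Two minor corrections: $\ker(c_2)$ is a \emph{quotient} of $\widetilde{\Qm}(\Gamma)$ by the finite-dimensional $\Hom(\Gamma,\Real)$ rather than a subspace (this is how the paper phrases it, and the conclusion is the same), and your concern about the comparison point $y\in D(\phi_i)$ from \Cref{def:averageonperiodicorbit} is unnecessary since your $\phi_i$ are H\"older, hence $D(\phi_i)=\mathcal E$ and $\operatorname{av}_{\alpha}(\phi_i)$ is just the ordinary time-average along $\alpha$.
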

 \begin{proof}
  Take $(\al_n)_n$ an infinite sequence of different (oriented) periodic orbits for the flow $\lie{g}$, and choose a pairwise disjoint open family $(U_n)_n$ so that $U_n=I(U_n)$ is a neighborhood of $\al_n$ in $\mc E$. Standard methods allow one to construct for each $n$ a Lipschitz function $\phi_n:\mc E\to \R$ so that $\restr{\phi_n}{\al_n}>0$ and $\restr{\phi_n}{U_n^c}=0$. Define $\phi_n\circ I=-\phi_n$.
  \Cref{thm:E} directly implies that $\set{[\phi_n]}_n \subset \Bow[][\mu]{\mc E}/\thicksim$ is linearly independent, therefore the target space is infinite dimensional, which implies the same for $\QMt{\Gamma}, \Gamma=\pi_1(M,*)$, by \Cref{thm:mainAB}. Since $\Ker c_2$ is isomorphic to $\QMt{\Gamma}/\Hom{\Gamma,\R}$, the claim follows.
 \end{proof}

There exist analogous versions of the Limit Theorems and their corresponding Corollaries for the general setting discussed in \Cref{thm:mainAB}. The proofs will be given in this framework.

\subsection{Symbolic Representation of the Geodesic flow} 
\label{sub:symbolic_representation_of_the_geodesic_flow}

A key fact in the differentiable setting is that the geodesic flow of a negatively curved metric is an Anosov flow, and in particular it admits Markov partitions \cite{RatnerMarkov,SymbHyp}. This allows one to use the powerful machinery of symbolic dynamics to study the flow. In a recent work of Constantine, Lafont and Thompson \cite{Constantine2020} the same type of structure is established for compact locally $\CAT$ spaces, refining a previous construction due to Pollicott \cite{Pollicott1987}. We explain this below. 

Given $\ms A$ a finite set (the alphabet) and $R:\ms A\times\ms A\to\{0,1\}$ consider
\[
	\lift\Sigma=\lift\Sigma_R=\set*{\seq{x}=(x_n)_{n\in \Z}:x_n\in\ms A, R(x_n,x_{n+1})=1, \forall n\in\Z}.
\]
By identifying $\ms A=\{1,\cdots,d\}$ we get that $R=(R_{ij})_{1\leq i, j\leq d}\in\Mat_{d}\paren{\set{0,1}}$.  From now on we assume that $R$ is irreducible and aperiodic, and in particular, there exists $M>0$ such that for all $k\geq M$, $R^k$ is a positive matrix. If $M=1$ ($R_{ij}=1$ for all $i,j$) then $\lift \Sigma$ is called the full shift.

\begin{definition}\label{def:SFT}
By a subshift of finite type (SFT) we mean a space $\Sigma_R$ as described above (in particular, $R$ is irreducible and aperiodic). The number $M\in\N$ such that $R^M$ is positive will be referred to as the specification constant of the subshift.
\end{definition}

The space $\lift \Sigma$ is topologized as a subset of $\ms{A}^{\Z}$, where the latter is given the product topology induced by the discrete one on $\ms A$. It follows that $\lift\Sigma \subset \ms{A}^{\Z}$ is closed, and homeomorphic to a Cantor space (Moore-Kline theorem). The metric on $\ms{A}^{\Z}$ defined as
\[
	\dis[\lift\Sigma]{\seq{x}}{\seq{y}}=\frac{1}{2^{N(\seq{x},\seq{y})+1}},\quad N=\max\{n: x_i=y_i, i=-N,\ldots N\}
\] 
is compatible with the product topology, therefore $\ms{A}^{\Z}$ and $\lift\Sigma$ are metrizable. In the rest of the article it is assumed that these spaces are equipped the above metric. 

The shift map on $\lift\Sigma$ is the (bi-Lipschitz) homeomorphism $\tau:\lift\Sigma\toit$, $\tau(\seq{x})=(x_{n+1})_{n\in \Z}$. 

Fix a SFT $\lift\Sigma$, and let $f:\lift\Sigma\to\R_{>0}$ be a continuous bounded function. Here and in other parts of this article we write
\begin{align}
 S_n^{\tau}f=\begin{dcases}
 0 & n=0\\
 \sum_{k=0}^{n-1} f\circ\tau^k & n>0\\
 \sum_{k=n}^{-1} f\circ \tau^{k} &n<0
 \end{dcases}
 \end{align}
(Birkhoff's sums of $f$ associated to the dynamics $\tau$): then $(S_n^{\tau}f)_{n\in\Z}$ is an additive cocycle over $\Z$. On $\lift\Sigma\times \R$ consider the $\Z-$action given by $n\cdot (\seq{x},t)=(\tau^n \seq{x},t-S_n^{\tau}f(\seq{x}))$, and denote by $\lift\Sigma_f=\lift\Sigma\times \R/\Z$. The translation flow on $\lift{\Sigma}\times \R$ induces a continuous flow $(\tau_t)_{t\in\R}$ on $\lift{\Sigma}_f$ by
\begin{align*}
 \tau_t([\seq{x},s])=[\seq{x},s+t].
 \end{align*}

\begin{definition}\label{def:suspensionflow}
The flow $\lie{t}=(\tau_t)_{t\in\R}:\lift\Sigma_f\toit$ is the suspension flow of $\tau$ under the function $f$.
\end{definition}

Recall that an invariant probability for a flow is ergodic if every (Borel) set that is invariant under flow has zero or full measure.

\begin{theorem}\label{thm:symbolicrepresentation}
Let $M$ be a closed locally $\CAT$ space. Then there exist a reparametrization $\lie{g}=(g_t:\mc E\toit)_{t\in\R}$ of the associated geodesic flow, a SFT $\lift\Sigma$, a Hölder function $f:\lift\Sigma\to\R_{>0}$, and a Hölder map $h:\lift\Sigma_f\to\mc E$ satisfying:
\begin{enumerate}
	\item $h$ is surjective, and $h\circ\tau_t=g_t\circ h$ for every $t\in\R$;
	\item there exists a closed nowhere dense set $B \subset \mc E$ so that $h$ is one-to-one on $h^{-1}(\bigcup_{t\in\R} g_t(B))$;
	\item there exists $k\in\N$ so that $h$ is at most $k$-to-one;
	\item $h$ sends periodic orbits of $\lift\Sigma_f$ to periodic orbits of $\lie g$;
    \item if $\nu\in  \PTM{\lie g}{\mc E}$ has full support and is ergodic, then there exists a unique $\nu_{\lie t}\in \PTM{\lie{t}}{\lift\Sigma_f}$ so that $\nu_{\lie t}\circ h^{-1}= \nu$.
\end{enumerate}
\end{theorem}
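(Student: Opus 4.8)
Items (1)--(4), together with the existence of the reparametrization $\mathfrak g$, of the SFT $\Sigmab$, of the Hölder roof function $f$ and of the Hölder semiconjugacy $h$, are a repackaging of the symbolic coding of the geodesic flow on a compact locally \CAT{} space constructed in \cite{Constantine2020} (refining \cite{Pollicott1987}), and the plan for this part is to quote that reference and recall why the listed properties hold. Concretely: fix the topologically mixing Smale parametrization $\mathfrak g$ of \Cref{thm:expansivity,thm:shadowing} and a \emph{proper} Markov family of cross-sections $\mathcal R=\bigsqcup_i\mathcal R_i$; let $\Sigmab$ be the associated two-sided shift of finite type, which is irreducible and aperiodic precisely because $\mathfrak g$ is topologically mixing; let $f$ be the first-return time to $\mathcal R$, which is Hölder and bounded away from $0$ and $+\infty$ by compactness and the uniform lower bound on periods; and let $h\colon\Sigmab_f\to\cE$ be the tautological map $[\ux,s]\mapsto g_s(\pi(\ux))$, with $\pi\colon\Sigmab\to\mathcal R$ the coding of the cross-section. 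Hölder continuity of $\pi$, hence of $h$, is read off from the expansivity--shadowing estimates; surjectivity and $\mathfrak g$-equivariance are immediate; the bounded-to-one property (3) and the periodic$\to$periodic property (4) are the classical consequences of the Markov structure, exactly as in the smooth hyperbolic case (see e.g.\ \cite{Bowen1974,RatnerMarkov,SymbHyp}); and (2) holds with $B=\bigcup_{n\in\Z}P^n(\partial\mathcal R)$, $P$ the Poincaré return map, which is closed and nowhere dense because the sections are proper. The only genuinely new assertion is (5), which I treat next.

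\textbf{Existence of the lift.} Fix $\tilde\nu\in\PTM{\mathfrak g}{\cE}$ fully supported and ergodic. The pushforward $h_*\colon\PM[\Sigmab_f]\to\PM[\cE]$ is weak$^*$-continuous and affine; its image is compact, convex and contains every Dirac mass $\delta_{h(z)}$, hence (as $h$ is onto, so these exhaust the extreme points of $\PM[\cE]$) equals $\PM[\cE]$ by Krein--Milman. In particular $L:=\{\nu\in\PM[\Sigmab_f]:h_*\nu=\tilde\nu\}$ is a nonempty compact convex set. Since $h\circ\tau_t=g_t\circ h$, each $(\tau_t)_*$ maps $L$ into itself, and $\{(\tau_t)_*\}_{t\in\Real}$ is a commuting family of continuous affine self-maps of $L$; the Markov--Kakutani fixed point theorem produces $\nu\in L$ with $(\tau_t)_*\nu=\nu$ for all $t$, i.e.\ $\nu\in\PTM{\mathfrak t}{\Sigmab_f}$ and $\nu\circ h^{-1}=\tilde\nu$.

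\textbf{Uniqueness of the lift.} Set $\mathcal B=\bigcup_{t\in\Real}g_t(B)\subset\cE$ and $\mathcal B'=h^{-1}(\mathcal B)\subset\Sigmab_f$; both are flow invariant, and for \emph{any} lift $\nu$ one has $\nu(\mathcal B')=(\nu\circ h^{-1})(\mathcal B)=\tilde\nu(\mathcal B)$. The claim is that $\tilde\nu(\mathcal B)=0$: by ergodicity $\tilde\nu(\mathcal B)\in\{0,1\}$, and $\mathcal B$ cannot have full measure because it is contained in a countable union of local weak-stable pieces and local weak-unstable pieces of $\mathfrak g$ — the orbit of $\partial^u\mathcal R$ consists of pieces of weak-stable leaves and is $P$-forward invariant, dually for $\partial^s\mathcal R$ — on which a fully supported ergodic invariant measure of a Smale flow cannot concentrate (distinct strong-stable fibres on a weak-stable leaf are forward asymptotic, which is incompatible with Poincaré recurrence on a positive-measure set together with full support). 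Granting this, $\nu(\mathcal B')=0$ for every lift, so $h$ is injective on the co-null Borel set $\Sigmab_f\setminus\mathcal B'=h^{-1}(\cE\setminus\mathcal B)$; by the Lusin--Souslin theorem it has there a Borel inverse $s\colon\cE\setminus\mathcal B\to\Sigmab_f$, and hence every lift $\nu$ coincides with $s_*\tilde\nu$. This gives uniqueness. Equivalently, via the Ambrose--Kakutani correspondence between $\mathfrak t$-invariant measures on $\Sigmab_f$ and $\tau$-invariant measures on $\Sigmab$, the whole statement reduces to: there is a unique $\tau$-invariant $m$ on $\Sigmab$ with $\pi_* m$ equal to the (normalized) cross-section measure of $\tilde\nu$, which again hinges on that same set having zero measure.

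\textbf{Main obstacle.} The one delicate point — the one I expect to occupy the actual proof of (5) — is the assertion $\tilde\nu(\mathcal B)=0$; everything else is bookkeeping with the coding of \cite{Constantine2020} and soft functional-analytic facts. The difficulty is that $\mathcal B$ is only an $F_\sigma$ meager set, not closed, so meagerness alone does not rule out full measure for a fully supported measure, and one must genuinely exploit the hyperbolic geometry of the Markov family (the weak-stable/weak-unstable position of the section boundary and its $P$-invariance) in the metric, non-smooth \CAT{} setting, where one works with the Smale-flow local product structure in place of genuine invariant manifolds.
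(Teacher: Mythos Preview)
Your proposal is correct and aligns with the paper's approach: items (1)--(4) are quoted from \cite{Constantine2020}, and (5) is derived from them. The paper's actual proof is a single sentence (``everything is proven in \cite{Constantine2020} except for the last point, that follows from the others''), so you have supplied considerably more detail than the authors do.

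Your argument for (5) is sound. Existence via Markov--Kakutani is clean; for uniqueness, the crux is indeed $\tilde\nu(\mathcal B)=0$, and your sketch via the weak-stable/weak-unstable structure of $\partial\mathcal R$ together with ergodicity and full support is the standard route (essentially Bowen's argument for Axiom~A flows, transplanted to the Smale-flow setting of \cite{Constantine2020}). One minor simplification: rather than arguing surjectivity of $h_*$ on all of $\PM[\cE]$ via Krein--Milman, you can directly use the bounded-to-one property (3) and a measurable selection to lift $\tilde\nu$, then average over the flow; this avoids the detour through extreme points. Your identification of the ``main obstacle'' is accurate --- that is exactly where the geometric input lives --- though the paper evidently regards it as routine enough not to spell out.
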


\begin{proof}
This is classical for hyperbolic flows \cite{SymbHyp}. In the previous generality, everything is proven in \cite{Constantine2020}, except for the last point, that follows from the others. 
\end{proof}

\begin{corollary}\label{cor:BowengBowent}
Let $\nu\in  \PTM{\lie g}{\mc E}$ be an ergodic measure of full support and $\nu_{\lie t}\in \PTM{\lie{t}}{\lift\Sigma}$ as in the previous theorem. Then $h$ induces Banach isomorphisms  $h_*:\Bow[][\nu_{\lie t}]{\lift\Sigma_f}\to \Bow[][\nu]{\mc E}$, $h_*:\Bow[][\nu_{\lie t}]{\lift\Sigma_f}/\thicksim\to \Bow[][\nu]{\mc E}/\thicksim$.
\end{corollary}

The above theorem allows us to reduce the study of some dynamical properties of $\lie g$ to the corresponding ones in $\lie t$, which typically are more manageable. As an illustration, if $\nu_{\lie t}\in \PTM{\lie{t}}{\lift\Sigma}$, then there exists some $\tau$-invariant measure $\nu_{\tau}$ such that
\begin{align}\label{eq:medidasuspension}
H\in \Cr{\lift\Sigma_f} \Rightarrow \nu_{\lie t}(H)=\frac{\int_{\lift\Sigma}\ \paren*{\int_0^{f(\seq x)} H([\seq x,t])\dd t}\dd\nu_{\tau}(\seq x)}{\int f(\seq x) \dd\nu_{\tau}(\seq x)},
\end{align}
and conversely, for $\nu_{\tau}\in \PTM{\tau}{\lift \Sigma}$ the above formula defines an invariant measure for $\lie{t}$. It follows that there exists a one-to-one correspondence between invariant measures on $\PTM{\lie t}{\lift\Sigma}$ and $\PTM{\tau}{\lift\Sigma}$. Moreover, $\nu$ is ergodic if and only if $\nu_{\tau}$ is ergodic (the definition of ergodicity for $\tau$ is analogous to the case of flows). See for example \cite{zeta}, Chapter $6$.

\begin{corollary}\label{cor:ergodiccorrespondence}
In the same hypotheses as in the previous theorem, there exists a one-to-one correspondence between fully supported ergodic probability measures for $\lie g$, and fully supported ergodic measures for $\tau$. 
\end{corollary}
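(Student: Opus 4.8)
The plan is to obtain the stated bijection by composing two correspondences that are already in place: the one furnished by the H\"older coding $h$ of \Cref{thm:symbolicrepresentation}, and the classical identification between the invariant measures of the suspension flow $\mathfrak t$ on $\Sigmab_f$ and those of the base shift $\tau$ on $\Sigmab$ provided by \eqref{eq:medidasuspension}. Neither correspondence is new; the content of the proof is to check that ergodicity and full support survive each passage.

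From $\mathfrak g$ to $\mathfrak t$: given a fully supported ergodic $\tilde\nu\in\PTM{\mathfrak g}{\cE}$, item $(5)$ of \Cref{thm:symbolicrepresentation} produces a unique $\nu\in\PTM{\mathfrak t}{\Sigmab}$ with $\nu\circ h^{-1}=\tilde\nu$. First I would note that $\nu$ is ergodic: writing $\nu=\tfrac12(\nu_1+\nu_2)$ with the $\nu_i$ flow invariant gives $\tilde\nu=\tfrac12(h_*\nu_1+h_*\nu_2)$, and since $\tilde\nu$ is an extreme point of $\PTM{\mathfrak g}{\cE}$ we get $h_*\nu_1=h_*\nu_2=\tilde\nu$, whence $\nu_1=\nu=\nu_2$ by the uniqueness clause of $(5)$. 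Conversely, for any ergodic $\nu\in\PTM{\mathfrak t}{\Sigmab}$ the pushforward $h_*\nu$ is ergodic for $\mathfrak g$ (preimages under the semiconjugacy $h$ of $\mathfrak g$-invariant sets are $\mathfrak t$-invariant), and $\nu$ is then automatically the lift of $h_*\nu$ supplied by $(5)$. Hence $\tilde\nu\mapsto\nu$ and $\nu\mapsto h_*\nu$ are mutually inverse bijections between the fully supported ergodic measures of $\mathfrak g$ and the ergodic measures $\nu$ of $\mathfrak t$ for which $h_*\nu$ has full support. From $\mathfrak t$ to $\tau$: by the discussion after \eqref{eq:medidasuspension}, $\wmu\mapsto\nu_{\wmu}$ is a bijection $\PTM{\tau}{\Sigmab}\to\PTM{\mathfrak t}{\Sigmab}$ taking ergodic measures to ergodic measures, and $\mathrm{supp}\,\nu_{\wmu}$ is exactly the $f$-suspension of $\mathrm{supp}\,\wmu$; consequently $\nu_{\wmu}$ is fully supported iff $\wmu$ is. Composing the two steps yields a bijection between the fully supported ergodic measures of $\mathfrak g$ and those ergodic $\wmu$ of $\tau$ for which $h_*\nu_{\wmu}$ is fully supported.

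What remains is to identify this last family with all fully supported ergodic measures of $\tau$, i.e.\ to prove that for $\nu\in\PTM{\mathfrak t}{\Sigmab}$ one has $h_*\nu$ fully supported $\iff$ $\nu$ fully supported. One direction is immediate since $h$ is a continuous surjection. For the converse, assume $\tilde\nu=h_*\nu$ is fully supported but $K:=\mathrm{supp}\,\nu$ is a proper closed $\mathfrak t$-invariant subset of $\Sigmab_f$. Then $h(K)$ is a closed $\mathfrak g$-invariant set with $\tilde\nu(h(K))\ge\nu(K)=1$, so $h(K)=\cE$; therefore for every $y$ in the dense invariant locus $\cE_0\subseteq\cE$ on which $h$ is injective (item $(2)$ of \Cref{thm:symbolicrepresentation}, together with density of $\cE_0$) the unique point of $h^{-1}(y)$ must lie in $K$, i.e.\ $K\supseteq h^{-1}(\cE_0)$. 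Using that $h$ is a finite-to-one quotient map, $h^{-1}(\cE_0)$ is dense in $\Sigmab_f$, which forces $K=\Sigmab_f$, a contradiction. I expect this last implication — the transfer of full support across $h$, which is the only non-formal ingredient and rests on the essential injectivity encoded in items $(2)$–$(3)$ of \Cref{thm:symbolicrepresentation} (and on the density of the injectivity locus, which must be extracted from the construction) — to be the main obstacle; the rest is the bookkeeping of composing the two standard correspondences.
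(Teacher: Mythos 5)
Your proof is essentially the one the paper has in mind: compose item $(5)$ of \Cref{thm:symbolicrepresentation} with the suspension correspondence \eqref{eq:medidasuspension}, checking that ergodicity and full support survive each passage. The ergodicity checks (extreme-point argument one way, factor-of-ergodic-is-ergodic the other) and the suspension step are fine, and you correctly identify the only substantive point: showing that the unique lift $\nu$ of a fully supported $\tilde\nu$ is itself fully supported.

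The one place where the justification as written is insufficient is the claim that ``$h^{-1}(\cE_0)$ is dense in $\Sigmab_f$'' follows from ``$h$ is a finite-to-one quotient map.'' That implication is false in general: a continuous finite-to-one surjection between compact metric spaces can collapse a nonempty open set onto a nowhere dense set, in which case the preimage of a dense set avoiding that image misses an open set. What actually makes the density true here is the structure of the Markov coding: $h^{-1}(\cE_0)$ is the set of sequences coding orbits that avoid $\bigcup_t g_t(B)$, and in Bowen/Constantine--Lafont--Thompson-type constructions this set is a dense $G_\delta$ in $\Sigmab_f$ (equivalently, $h$ does not collapse any open set, because every symbolic cylinder contains a sub-cylinder whose $h$-image is the interior of a Markov rectangle). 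You do flag that this ``must be extracted from the construction,'' which is exactly right; but the phrase ``finite-to-one quotient map'' should be replaced by an appeal to that specific property of the coding (or, equivalently, to the fact that the non-injectivity locus $h^{-1}(\bigcup_t g_t(B))$ is a closed nowhere dense invariant set in $\Sigmab_f$). With that substitution the argument is complete.
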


After discussing some of the structure of SFTs we will improve \Cref{cor:BowengBowent}, establishing that $\Bow[][\nu]{\mc E}$ is in fact isomorphic to the space $\Bow[][\nu_{\tau}]{\lift\Sigma_f}$.

\paragraph{\textbf{Other possible extensions}} To apply our arguments we rely on the existence of the symbolic model for the flow, but once one reaches this setting the rest of the theory goes through. In particular, the same applies to Axiom A flows, which is the case of the geodesic flow restricted to its non-wandering set, when $M$ is a convex compact $\CAT$ space: this fact is also proven in \cite{Constantine2020}. We also mention that the content of this article gives a coarse version of thermodynamic formalism for classical hyperbolic systems, as Anosov diffeomorphisms and flows.

The idea of studying equilibrium states for the Gromov-Mineyev flow of a locally $\CAT$ space also appears in the recent work of Dilsavor and Thompson \cite{GibbsCAT} (based on the first author Ph.D. thesis \cite{thesisDilsavor}). The focus there is on constructing equilibrium states for some families of continuous sub-additive potentials (in our terminology, continuous quasicocycles for the flow), while the arguments are more dependent on the geometry of the group, e.g. constructing some Patterson-Sullivan type density and using it to assemble the equilibrium state. In the compact case, the results here give another proof of existence and uniqueness of equilibrium states for the families that they consider, but their geometrical approach allows them to consider non-compact cases as well. It seems likely that further synergy between their results and ours can be exploited.

\section{Quasimorphisms on hyperbolic groups}
\label{sec:QuasiMorphismGroups}

Given a finitely generated group $\Gamma$, one can consider bounded real-valued cochains and proceed analogously to the case of topological spaces in order to define its bounded cohomology. It is a result of Gromov (page 257 of \cite{GromovBounded}) that if $M$ is a $\mrm{K}(\Gamma,1)$, the bounded cohomology of $M$ and $\Gamma$ are naturally isomorphic. This is useful because, as mentioned in the introduction, to study the $\Ker c_2$ one can study instead the space $\QM{\Gamma}$. We remind the reader that, by the Cartan--Hadamard theorem, if $M$ is a locally $\CAT$ space then it is a $\mrm{K}(\pi_1(M,*),1)$, 

\begin{notation} if $X$ is a set we denote $\ell^{\oo}(X)=\{f:X\to\R:(\exists C>0)\ \forall x\in X,\ |f(x)|\leq C\}$, and we write $\ell^{\oo}=\ell^{\oo}(\N)$. The $\ell^{\oo}$ norm on $\ell^{\oo}(X)$ is defined as usual.
\end{notation}

Recall that $L, L'\in \QM{\Gamma}$ are said to be cohomologous if $L-L'\in \ell^{\oo}(\Gamma)$.  Note that $L\mapsto \norm{\delta L}$ is a seminorm on $\QM{\Gamma}$, hence it induces naturally a norm on $\QMt{\Gamma}$. However, this norm is not complete, and it is more convenient to use a different one. Fix a finite generating set $S$ of $\Gamma$, and denote $S_e=S\cup \{1_{\Gamma}\}$. For $L \in\QM{\Gamma}$ and $a,b\in\Gamma$ we are writing 
\[
     \delta L(a,b)=L(ab)-L(a)-L(b).
 \]
 Define
 \begin{align}
   \norm{L}=\norm[S]{L}=\norm[\ell^\oo]{\restr{L}{S_e}}+\norm[\ell^\oo]{\delta L}.
 \end{align}  

\begin{proposition}\label{pro:qmareBanach}
    $\paren{\QM{\Gamma},\norm{\cdot}}$ is a Banach space. Moreover, if $S'$ is another finite generating set of $\Gamma$ then the norms $\norm[S]{\cdot}, \norm[S']{\cdot}$ are equivalent.
\end{proposition}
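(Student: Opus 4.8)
The plan is to establish completeness first and then the equivalence of norms for different generating sets. For completeness, suppose $(L_k)_k$ is Cauchy in $(\Qm(\Gamma),\norm{\cdot}_S)$. Then $(\norm{\delta L_k}_\infty)_k$ is Cauchy in $\Real$, and for each fixed pair $(a,b)$ the sequence $(\delta L_k(a,b))_k$ converges; moreover $(L_k|S_e)_k$ converges in $\ell^\infty(S_e)$, in particular $(L_k(s))_k$ converges for each generator $s$. The first step is to upgrade this to pointwise convergence of $(L_k(g))_k$ for \emph{every} $g\in\Gamma$: writing $g=s_1\cdots s_n$ as a word in $S_e$, one expresses $L_k(g)$ in terms of the values $L_k(s_i)$ and the defects $\delta L_k$ along the parenthesization of the product, so that a telescoping estimate
\[
  |L_k(g)-L_m(g)|\le \sum_{i=1}^{n}|L_k(s_i)-L_m(s_i)| + (n-1)\,\norml{\delta L_k-\delta L_m}{\oo}
\]
shows $(L_k(g))_k$ is Cauchy. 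Define $L(g)=\lim_k L_k(g)$. Passing to the limit in $\delta L_k(a,b)$ shows $|\delta L(a,b)|\le \sup_k\norm{\delta L_k}_\infty<\infty$ uniformly in $(a,b)$, so $L\in\Qm(\Gamma)$; and passing to the limit in the inequalities defining the Cauchy condition gives $\norm{L-L_k}_S\to 0$. This proves $(\Qm(\Gamma),\norm{\cdot}_S)$ is a Banach space.

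For the equivalence of $\norm{\cdot}_S$ and $\norm{\cdot}_{S'}$, the key observation is that both norms are complete norms on the \emph{same} vector space $\Qm(\Gamma)$, so by the open mapping theorem (bounded inverse theorem) it suffices to show the identity map is continuous in one direction — say, that there is a constant $C$ with $\norm{L}_{S'}\le C\norm{L}_S$ for all $L$ — and then the reverse inequality follows automatically. The defect term $\norml{\delta L}{\oo}$ is the same in both norms, so only $\norml{L|S'_e}{\oo}$ needs to be controlled by $\norm{L}_S$. Each generator $s'\in S'$ can be written as a word $s_1\cdots s_{\ell(s')}$ in $S_e$ of some fixed length $\ell(s')$ (taking the max over the finitely many $s'\in S'$ gives a uniform bound $N=\max_{s'\in S'}\ell(s')$), and the same telescoping estimate as above yields $|L(s')|\le N\,\norml{L|S_e}{\oo} + (N-1)\,\norml{\delta L}{\oo}\le (2N-1)\norm{L}_S$. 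Taking $C=2N-1$ (with the $e$ term trivially controlled since $|L(e)|\le\norml{\delta L}{\oo}$) gives $\norm{L}_{S'}\le C\norm{L}_S$, and symmetry plus the open mapping theorem finishes the argument.

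The main obstacle is the bookkeeping in the telescoping estimate: writing $L(s_1\cdots s_n)$ in terms of $L(s_i)$ and the defects requires choosing a parenthesization and carefully counting how many defect terms appear (it is $n-1$, one per multiplication), and one should also handle the degenerate cases $L(e)$ and possibly that $S_e$ rather than $S$ is the relevant set. None of this is deep, but it is the one place where care is needed. An alternative to invoking the open mapping theorem would be to prove both inequalities $\norm{L}_{S'}\le C\norm{L}_S$ and $\norm{L}_S\le C'\norm{L}_{S'}$ directly by the symmetric telescoping argument — this is equally short and avoids citing a functional-analytic black box, though the open mapping route is cleaner to write once completeness is in hand; I would likely present the direct symmetric argument since it is self-contained.
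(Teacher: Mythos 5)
Your proof is correct and uses essentially the same core idea as the paper's: reconstruct $L$ on all of $\Gamma$ from $L|S_e$ and $\delta L$ by iterating $L(s\gamma)=\delta L(s,\gamma)+L(s)+L(\gamma)$, which powers both the completeness step and the norm-equivalence estimate. The paper packages completeness as showing that the isometric embedding $L\mapsto(L|S_e,\delta L)$ of $\Qm(\Gamma)$ into $\ell^{\infty}(S_e)\times\ell^{\infty}(\Gamma\times\Gamma)$ has closed image rather than running a direct Cauchy argument, and it dismisses the norm equivalence as obvious where you supply the telescoping bound explicitly, but the substance is the same.
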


\begin{proof}
Clearly $\norm{\cdot}$ is a semi-norm, and if $\norm{L}=0$ then, on one hand $\delta L=0$ and hence $L$ is a homomorphism, and on the other hand it vanishes on a generating set, so it is the $0$ homomorphism. Thus $\norm{\cdot}$ is norm. The equivalence of the norms for finite $S, S'$  is obvious, so it remains to show completeness. We proceed similarly as in \Cref{pro:BowenBanach}.

Consider the space $\mc{S}=\ell^{\infty}(S_e)\times\ell^{\infty}(\Gamma\times \Gamma)$ with the norm 
\[
      \norm[S]{(l,u)}=\norm[\ell^{\infty}(S_e)]{l}+\norm[\ell^{\infty}(\Gamma\times \Gamma)]{u}.
\]
$\mc{S}$ is a Banach space, and $\QM{\Gamma}$ embeds isometrically in it as $L\to (\restr{L}{S_e},\delta L)$. We claim that the image of this map is closed. Let $(l_n,u_n)_n=(\restr{L_n}{S_e},\delta L_n)$ be a sequence in $\mc{S}$ and assume that $l_n \xrightarrow[n\mapsto\oo]{} l$ and $u_n=\delta L_n \xrightarrow[n\mapsto\oo]{} u$, thus in particular $\norm[S]{(l,u)}<\oo$: we want to show that $l=L|S$ and $\delta L=u$ for some $L:\Gamma\to\R$ (which necessary will satisfy $\norm{\delta L}<\oo$, hence $L\in\QM{\Gamma}$). Let us show inductively that $L_n$ converges pointwise in $B_k=\{w\in\Gamma:\abs[S]{w}\leq k\}$.

For $k=0$ and $k=1$ this is true since $\restr{L_n}{S_e}=l_n \xrightarrow[n\mapsto\oo]{}l$. Assume it is convergent for $k$, take $x\in B_{k+1}$ and consider $s\in S, \gamma\in S^k$, so that $x=s\gamma$. It follows that
 \[
 L_n(x)=L_n(s\gamma)=u_n(s,\gamma)+l_n(s)+\restr{L_n}{B_k}(\gamma).
 \]
 It is no loss of generality to suppose $\abs[S]{x}=k+1$, otherwise we are already in the induction hypotheses. Note also that $x=s_1\gamma_1$, then 
 if $x=s_1\gamma_1$, then 
  \[
  u_n(s,\gamma)+l_n(s)+\restr{L_n}{B_k}(\gamma)=L_n(x)=u_n(s_1,\gamma_1)+l_n(s_1)+\restr{L_n}{B_k}(\gamma)(\gamma_1),
 \]
 Since $u_n, l_n$ and $\restr{L_n}{B_k}$ are convergent, we get that $L_n(x)$ is Cauchy, hence convergent, so $\restr{L_n}{B_{k+1}}$ converges pointwise to a map that we denote $\restr{L}{B_{k+1}}$. 
 This way we have $L$ defined on $\bigcup_{k\geq 0} B_k=\Gamma$. From the pointwise convergence we also get that if $a\in B_k, b\in B_{k'}$ ($\Rightarrow ab\in B_{k+k'}$)
   \begin{align*}
     &\delta L_n(a,b)=u_n(a,b)\xrightarrow[n\mapsto\oo]{} u(a,b)\\
     &\delta L_n(a,b)=L_n(ab)-L_n(a)-L_n(b)\xrightarrow[n\mapsto\oo]{} L(ab)-L(a)-L(b)=\delta L(a,b).
     \end{align*}
   This concludes the proof.
   \end{proof}

\begin{remark}
    In the proof above, note that $\delta L(a,a^{-1})=L(1_{\Gamma})-L(a)-L(a^{-1})$, hence $\delta L$ and $\restr{L}{S_e}$ determine $L$ on $S^{-1}$. We deduce that we can assume that $S$ is symmetric in what follows.
    
    In other geometrical problems it would be important to consider distances in the space of quasimorphisms that do not possess this feature.  
\end{remark}

Since $\ell^{\oo}(\Gamma) \subset \QM{\Gamma}$ is closed with respect to $\norm[S]{\cdot}$, from the above it follows directly that $\QMt{\Gamma}$ is a Banach space with respect to the quotient norm
\begin{align}
 \norm[S]{[L]}=\inf\{\norm[S]{L'}:L'\sim L\}.
 \end{align}
 Moreover, the induced norm on $\frac{\QM{\Gamma}}{\Hom{\Gamma,\R}\oplus\ell^\oo(\Gamma)}\approx\frac{\QMt{\Gamma}}{\Hom{\Gamma,\R}}$ is given by
 \begin{align}\label{eq:normakerc2}
 \norm{[L]}=\inf\{\norm{\delta L'}:L'\sim L\},
 \end{align}
 which in particular does not depend on $S$.

We collect the relation between quasimorphism and bounded cohomology in the following.

\begin{proposition}\label{pro:exactsequence}
If $\Gamma$ is finitely generated then there exists a short exact sequence of  linear maps
\[
    0\rightarrow \Hom{\Gamma,\R}\rightarrow \frac{\QM{\Gamma}}{ \ell^\oo(\Gamma)} \rightarrow H^2_{\mrm b}(\Gamma;\R) \xrightarrow[]{c_2} H^2(\Gamma;\R)\rightarrow 0.
\]
Moreover, there exists a linear isometry
\[
     \frac{\QM{\Gamma}}{\Hom{\Gamma,\R}\oplus\ell^\oo(\Gamma)}\approx \Ker c_2.
\]
\end{proposition}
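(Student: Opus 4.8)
The plan is to construct the three maps in the sequence explicitly and verify exactness and the isometry statement by direct computation with cochains. First I would recall the standard dictionary between quasi-morphisms and bounded $2$-cocycles: for $L \in \Qm(\Gamma)$ the coboundary $\delta L(a,b) = L(ab) - L(a) - L(b)$ is a bounded function on $\Gamma^2$, and a direct check shows it is a $2$-cocycle on the bar resolution (the cocycle identity $\delta L(b,c) - \delta L(ab,c) + \delta L(a,bc) - \delta L(a,b) = 0$ is automatic since $\delta\circ\delta = 0$ at the level of functions on $\Gamma$, even unbounded ones). This gives a linear map $\Qm(\Gamma) \to Z^2_b(\Gamma;\Real)$, hence $\Qm(\Gamma) \to \Hb[\Gamma]$. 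Its kernel consists of those $L$ with $\delta L = \partial u$ for some bounded $u \in \ell^\oo(\Gamma)$ (a bounded $1$-cochain); rewriting, $L - u$ is then an honest homomorphism, so $\ker$ is exactly $\Hom(\Gamma,\Real) + \ell^\oo(\Gamma)$. Passing to the quotient by $\ell^\oo(\Gamma)$ (which maps to $0$ since $\delta u$ for bounded $u$ is a bounded coboundary — wait, more precisely $\ell^\oo(\Gamma)$ maps into $Z^2_b$ but its image is a subset of $B^2_b$, as $\delta u = \partial u$ is a bounded coboundary) yields the middle map $\frac{\Qm(\Gamma)}{\ell^\oo(\Gamma)} \to \Hb[\Gamma]$ with kernel the image of $\Hom(\Gamma,\Real)$.

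Next I would verify that the image of this middle map is precisely $\ker(c_2)$. The inclusion "image $\subseteq \ker c_2$" is immediate: $c_2$ sends $[\delta L]_b$ to the class of $\delta L$ in ordinary cohomology $H^2(\Gamma;\Real)$, and $\delta L = \partial L$ is an ordinary coboundary (with the possibly unbounded cochain $L$), so it vanishes in $H^2$. For the reverse inclusion, take a bounded $2$-cocycle $\omega$ with $c_2[\omega] = 0$, i.e.\@ $\omega = \partial u$ for some (a priori unbounded) $1$-cochain $u : \Gamma \to \Real$; setting $L = u$ we get $\delta L = \omega$, so $[\omega]_b$ is in the image. This also shows surjectivity of the map $\frac{\Qm(\Gamma)}{\ell^\oo(\Gamma)} \to \ker c_2 \subseteq \Hb[\Gamma]$. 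Combining with the kernel computation, I get the short exact sequence
\[
0 \to \Hom(\Gamma,\Real) \to \frac{\Qm(\Gamma)}{\ell^\oo(\Gamma)} \to \Hb[\Gamma] \xrightarrow{c_2} H^2(\Gamma;\Real) \to 0,
\]
where exactness at $\Hb[\Gamma]$ is the statement $\Im = \ker c_2$ just proven, exactness at $\frac{\Qm(\Gamma)}{\ell^\oo(\Gamma)}$ is the injectivity of $\Hom(\Gamma,\Real) \hookrightarrow \frac{\Qm(\Gamma)}{\ell^\oo(\Gamma)}$ (a nonzero homomorphism is unbounded since $\Gamma$ has elements of infinite order — or, more carefully, a bounded homomorphism into $\Real$ is zero), and surjectivity of $c_2$ for word hyperbolic $\Gamma$ is Mineyev's theorem quoted in the introduction.

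Finally, for the isometry $\frac{\Qm(\Gamma)}{\Hom(\Gamma,\Real) \oplus \ell^\oo(\Gamma)} \approx \ker(c_2)$, I would use that the quotient norm on the source, computed in \eqref{eq:normakerc2}, is $\norm{[L]} = \inf\{\norm{\delta L'} : L' \sim L\}$, while the seminorm on $\Hb[\Gamma]$ restricted to $\ker c_2$ is $\norml{[\omega]}{\oo} = \inf\{\norml{\omega'}{\oo} : \omega' \text{ cohomologous to } \omega \text{ as bounded cocycles}\}$. The content is that these match under the bijection $[L] \mapsto [\delta L]_b$: given $L' \sim L$, $\delta L'$ is a bounded cocycle representing $[\delta L]_b$ with $\norml{\delta L'}{\oo} = \norm{\delta L'}$, so $\norml{[\delta L]_b}{\oo} \le \norm{[L]}$; conversely any bounded cocycle $\omega'$ cohomologous to $\delta L$ differs from $\delta L$ by $\partial v$ with $v$ bounded (this uses that a bounded $1$-cochain realizing the cohomology can be chosen bounded — which holds because the difference $\delta L - \omega'$ is a bounded coboundary $\partial v$, and here the naive bound on $v$ is what one must control), so $\omega' = \delta(L - v)$ with $L - v \sim L$ and $\norm{\delta(L-v)} = \norml{\omega'}{\oo}$, giving $\norm{[L]} \le \norml{[\delta L]_b}{\oo}$. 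The main obstacle I anticipate is precisely this last point: one must be careful that in the coboundary relation $\delta L - \omega' = \partial v$ the $1$-cochain $v$ can be taken bounded with $\norml{v}{\oo}$ controlled — i.e.\@ that cohomology of bounded cocycles through bounded cochains computes $\Hb$ and that the seminorm is the infimum over such — which is exactly where the Matsumoto–Morita / Ivanov positivity of the seminorm and the passage to the $K(\Gamma,1)$ model enter. Everything else is bookkeeping with the bar resolution.
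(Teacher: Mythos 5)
Your argument follows essentially the same route the paper takes: the map $L\mapsto[\delta L]_b$ into $\ker c_2$, the identification of its kernel with $\Hom(\Gamma,\Real)\oplus\ell^\oo(\Gamma)$, the surjectivity via $\omega=\partial u\Rightarrow u\in\Qm(\Gamma)$, and the two-sided norm comparison. The paper is just terser, citing Frigerio for the exact sequence and Ivanov for the formula $\norm{[B]}=\inf\{\norml{B+\delta b}{\oo}:b\in\ell^\oo(\Gamma)\}$, whereas you reconstruct the details by hand; they agree, and your details are correct.

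The one thing worth correcting is the obstacle you flag at the end. You worry that when $\omega'$ is a bounded $2$-cocycle cohomologous to $\delta L$ in $\Hb$, the transfer $1$-cochain $v$ with $\omega'-\delta L=\partial v$ might not be boundable, and you suggest that Matsumoto--Morita / Ivanov is what guarantees this. That is not so: by the very definition of $\Hb[\Gamma]=Z^2_b/B^2_b$, with $B^2_b=\partial\bigl(C^1_b(\Gamma;\Real)\bigr)=\partial\bigl(\ell^\oo(\Gamma)\bigr)$, equality of classes $[\omega']_b=[\delta L]_b$ \emph{means} that $\omega'-\delta L=\partial v$ for some $v\in\ell^\oo(\Gamma)$. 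No estimate on $\norml{v}{\oo}$ is required, only its finiteness, and that is built in; this is exactly what the displayed formula from Ivanov encodes. The Matsumoto--Morita / Ivanov theorem is about a different point entirely, namely positive definiteness of the seminorm on $\Hb$, which the proposition does not use --- it is an isometry of seminormed spaces regardless. So your proof is complete as written once you drop that caveat; nothing further is needed there.
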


 \begin{proof}
     The existence of the short exact sequence is well known, and can be found for example in \cite{Frigerio2017}: the linear map $\Gamma:\QM{\Gamma}\to \Ker c_2$ given by $\Gamma(L)=[\delta L]$
     is surjective and vanishes precisely on $\Hom{\Gamma, \R}\oplus\ell^\oo(\Gamma)$. On the other hand, the norm on $\Ker c_2$ is of the form
     \[
     \norm{[B]}=\inf\{\norm[\ell^{\oo}]{B+\delta b}:b\in\ell^\oo(\Gamma)\}
     \]
     (see \cite{Ivanov2017}), thus 
    \[
     \norm{\Gamma([L])}=\inf\{\norm[\ell^{\oo}]{\delta(L+b)}:b\in\ell^\oo(\Gamma)\}=\inf\{\norm{\delta(L')}:L'\sim L\}=\norm{[L]}.
     \]
 \end{proof}

Due to the above, we will focus on the space of unbounded quasimorphisms $\QMt{\Gamma}=\frac{\QM{\Gamma}}{\ell^\oo(\Gamma)}$. Each cohomology class of quasimorphism has a well behaved representative, namely a (unique) quasimorphism $\cl{L}$ so that for every $g\in\Gamma, n\in\Z$, $\cl{L}(g^n)=nL(g)$. From here it follows that 
 \[
     \cl{L}(g):=\lim_{n\to\oo}\frac{L(g^n)}{n}
\]
See \cite{Frigerio2017}, or \Cref{lem:homquasimorphism} where an analogous statement is discussed. 

\begin{definition}\label{def:homogeneous}
$L\in\QM{\Gamma}$ is homogeneous if for every $g\in\Gamma, n\in\Z$ it holds $L(g^n)=nL(g)$. The set of homogeneous quasimorphisms on $\Gamma$ is denoted by $\HQM{\Gamma}$.
\end{definition}

 \begin{corollary}\label{cor:homogeneousclass}
 If $L\in \HQM{\Gamma}$, then it is constant on conjugacy classes of $\Gamma$.
 \end{corollary}

\begin{proof}
 Indeed, $|L(h\cdot g h^{-1})-L(g)|\leq 2\norm{\delta L}$ for every $g,h$. It follows that for $h$ fixed the map $L'(g)=L(h\cdot g\cdot h^{-1})-L(g)$ is a bounded homogeneous quasimorphism, hence identically zero.
\end{proof}

Consider the inclusion $\iota: \HQM{\Gamma}\to \QM{\Gamma}$, This is a continuous map and has a left inverse $p(L)=\cl{L}$. Note that $|p(L)(g)-L(g)|\leq \norm{\del L}$ for every $g$ and so $\norm[S]{p(L)}\leq \norm[S]{L}$, thus $p:\HQM{\Gamma}\to \QM{\Gamma}$ is continuous.

Since any bounded homogeneous quasimorphisms is necessarily trivial, we obtain:

\begin{corollary}\label{cor:homogeneousqmareiso}
    The spaces $\QMt{\Gamma}$ and $\HQM{\Gamma}$ are Banach isomorphic when equipped with the norm associated to some finite symmetric generating set $S \subset \Gamma$.
\end{corollary}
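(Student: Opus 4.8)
The plan is to combine two observations already in place: first, that the homogenization map $p\colon \Qm(\Gamma)\to\Qm_h(\Gamma)$, $p(L)=\bar L$, is a continuous left inverse of the inclusion $\iota\colon \Qm_h(\Gamma)\hookrightarrow\Qm(\Gamma)$ (the preceding lemma gives $\|p(L)\|_S\le 4\|L\|_S$); and second, that a bounded homogeneous quasi-morphism vanishes identically. The strategy is therefore to factor $p$ through the quotient $\widetilde{\Qm}(\Gamma)=\Qm(\Gamma)/\ell^\oo(\Gamma)$ and check that the resulting map is a bicontinuous bijection onto $\Qm_h(\Gamma)$.

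\begin{proof}
By \Cref{pro:qmareBanach}, $(\Qm(\Gamma),\norm{\cdot}_S)$ is a Banach space and, as already noted, $\ell^{\oo}(\Gamma)\subset\Qm(\Gamma)$ is a closed subspace, so $\widetilde{\Qm}(\Gamma)$ is a Banach space with the quotient norm $\norm{[L]}_S=\inf\{\norm{L'}_S:L'\sim L\}$; since $\Qm_h(\Gamma)$ is a closed subspace of $\Qm(\Gamma)$ (it is the kernel of the continuous linear map $L\mapsto L-p(L)$, which lands in $\ell^\oo(\Gamma)$ by the estimate $|p(L)(g)-L(g)|\le\|\delta L\|$), it is a Banach space as well.

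First I claim the continuous linear map $p\colon\Qm(\Gamma)\to\Qm_h(\Gamma)$ descends to a continuous linear map $\bar p\colon\widetilde{\Qm}(\Gamma)\to\Qm_h(\Gamma)$. Indeed, if $L\in\ell^\oo(\Gamma)$ then $p(L)=\bar L=\lim_n L(g^n)/n$ is a bounded homogeneous quasi-morphism, hence $0$; so $p$ vanishes on $\ell^\oo(\Gamma)$ and factors through the quotient. Continuity of $\bar p$ follows from that of $p$ together with the universal property of the quotient norm: for any $L'\sim L$ one has $\|\bar p([L])\|_S=\|p(L')\|_S\le 4\|L'\|_S$, and taking the infimum over $L'\sim L$ gives $\|\bar p([L])\|_S\le 4\|[L]\|_S$.

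Next, the inclusion $\iota\colon\Qm_h(\Gamma)\to\Qm(\Gamma)$ composed with the quotient projection $\pi\colon\Qm(\Gamma)\to\widetilde{\Qm}(\Gamma)$ gives a continuous linear map $\bar\iota=\pi\circ\iota\colon\Qm_h(\Gamma)\to\widetilde{\Qm}(\Gamma)$, with $\|\bar\iota(L)\|_S\le\|L\|_S$. I claim $\bar p$ and $\bar\iota$ are mutually inverse. On one hand, for $L\in\Qm_h(\Gamma)$ we have $\bar p(\bar\iota(L))=p(\iota(L))=p(L)=\bar L=L$, since $L$ is already homogeneous; thus $\bar p\circ\bar\iota=\mathrm{id}_{\Qm_h(\Gamma)}$. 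On the other hand, for $L\in\Qm(\Gamma)$ we have $\bar\iota(\bar p([L]))=\pi(p(L))=\pi(\bar L)=[\bar L]$, and $\bar L\sim L$ by \Cref{pro:homogeneousquasimor}, so $[\bar L]=[L]$ in $\widetilde{\Qm}(\Gamma)$; hence $\bar\iota\circ\bar p=\mathrm{id}_{\widetilde{\Qm}(\Gamma)}$. Therefore $\bar p\colon\widetilde{\Qm}(\Gamma)\to\Qm_h(\Gamma)$ is a continuous linear bijection with continuous inverse $\bar\iota$, i.e.\ a Banach isomorphism. Finally, if $S'$ is another finite symmetric generating set, \Cref{pro:qmareBanach} shows $\norm{\cdot}_S$ and $\norm{\cdot}_{S'}$ are equivalent on $\Qm(\Gamma)$, hence the induced norms on the subspace $\Qm_h(\Gamma)$ and on the quotient $\widetilde{\Qm}(\Gamma)$ are equivalent as well, so the isomorphism is independent of the chosen generating set up to equivalence of norms.
\end{proof}

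There is no real obstacle here; the only point requiring a little care is the bookkeeping of which maps vanish on $\ell^\oo(\Gamma)$ versus which are already identities on $\Qm_h(\Gamma)$, i.e.\ verifying that $\bar p$ and $\bar\iota$ are genuinely two-sided inverses rather than just one-sided, and invoking the fact that a bounded homogeneous quasi-morphism is zero at the one place it is needed.
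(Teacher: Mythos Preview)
Your proof is correct and follows precisely the approach the paper intends: the paper states the corollary immediately after noting that $p$ is continuous and that bounded homogeneous quasi-morphisms vanish, leaving the verification that $\bar p$ and $\bar\iota$ are mutual inverses implicit. You have simply written this out in full, including the observation that $\Qm_h(\Gamma)$ is closed, which the paper does not make explicit.
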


\noindent\textbf{Assumption:} From now on we work with homogeneous quasimorphisms.

\subsection{quasimorphisms in negative curvature} 
\label{sub:quasi_morphisms_in_negative_curvature}

Now let $M$ be a closed locally $\CAT$  space, and let $\lie{g}: \mathcal{E}\toit$ be a geodesic flow associated to $\Gamma=\pi_1(M,*)$, as in \Cref{sub:geodesic_flows}. In this part we will show that elements of $\QMt{\Gamma}$ can be uniquely represented by some equivalence classes of functions defined on the set of periodic orbits of $\lie g$. We introduce the notation
\begin{align}
\Per{\lie g}:=\{\alpha:\text{ oriented closed orbit of $\lie{g}$}\}
\end{align}
and for $\alpha\in \Per{\lie g}$, we denote its minimal period by $\per(\alpha)$. We recall that for $\alpha_1, \alpha_2\in \Per{\lie g}$ we write $\alpha_3=\alpha_1\star\alpha_2$ if $\alpha_3$ verifies the conclusion of \Cref{thm:shadowing}. From now on we choose $\delta$ sufficiently small so that if $c,d$ are curves in $M$ whose Hausdorff distance is less than $2\delta$, then they are homotopic. By compactness of $M$ we can also guarantee the existence of some $E>0$ so that if $\alpha_3=\alpha_1\star\alpha_2, \alpha_4=\alpha_1\star\alpha_2$ then
\begin{align}\label{eq:boundstar}
 \sup_{t\in[0,\per(\alpha_3)=\per(\alpha_4)]}\{\dis[\mc E]{\alpha_3(t)}{\alpha_4(t)}\}<E.
\end{align}

We fix $S$ a finite symmetric set of generators for $\Gamma$ as before. By word-hyperbolicity of $\Gamma$, any $g\in\Gamma$ is conjugate to some element that can be written in a reduced cyclic word on $S$, and if $g_1, g_2$ are two such elements, then $g_1$ is conjugate to some cyclic permutation on the letters of $g_2$. See Chapter 4 in \cite{Coornaert1990}. For $g\in\Gamma$ its translation length is defined as
 \[
     \tlength_{\Gamma}(g)=\lim_{n\to\oo}\frac{\dis[\Gamma]{g^n}{1}}{n}.
 \]
 It is well known that this is a class function; moreover, $\tlength_{\Gamma}(g)=\dis[\Gamma]{\lift g}{1}$, where $\lift{g}$ is conjugate to $g$ and can be written as a reduced cyclic word on $S$ of minimal size. Word-hyperbolicity of $\Gamma$ also implies that any of its elements $g$ fixes a unique geodesic $c_g:\R\to\lift{M}$ (called the axis of $g$), and for $x\in \Im(c_g)$ it holds that $|\tlength_{\Gamma}(g)-\dis[\lift M]{\lift g\cdot x}{x}|<C$ , where $\lift g$ is as in the previous line and $C$ does not depend on $g, x$. For $\phi:\Per{\lie g}\to\R$ define
\begin{align*}
&|\delta \phi(\alpha_1,\alpha_2)|=\inf_{\alpha_1\star\alpha_2}\set*{|\phi(\alpha_1\star\alpha_2)-\phi(\al_1)-\phi(\al_2)|},\\
&\norm{\delta \phi}:=\sup_{\alpha_1,\alpha_2}\set{|\delta \phi(\alpha_1,\alpha_2)|},
\end{align*}
and let
\begin{align*}
 \QM{\Per{\lie g}}:=\{\phi:\Per{\mathfrak g}\to\R:\norm{\delta \phi}<\oo\}.
 \end{align*}

\begin{definition}\label{def:cohomologousperiodicorbits}
Two functions $\phi_1, \phi_2\in \QM{\Per{\lie g}}$ are cohomologous if their difference is uniformly bounded. We denote
\begin{align}\label{eq:setperiodicqm}
\QMt{\Per{\lie g}}\defeq \frac{\QM{\Per{\lie g}}}{\ell^{\oo}(\Per{\lie g})}.
\end{align}
\end{definition}

To define a norm on $\QM{\Per{\lie g}}$ choose $N\in\N$ so that (the finite set) $V=V_N=\{\al\in\Per{\lie g}: \tlength_{\Gamma}(g)\leq N\}$ satisfies
 \[
     \mc E=\bigcup_{\al\in V}\bigcup_{x\in\al} B(x,\frac{\eps_0}{2},\per(\al)):
 \]
we say that $V$ is generating. Let 
 \[
     \norm[V]{\phi} = \norml{\restr{\phi}{V}}+\norm{\delta \phi}.
 \]
Proceeding as in the proof of \Cref{pro:BowenBanach,pro:qmareBanach} one verifies that this is a Banach norm, and for different $N', V_{N'}$ the corresponding norms are equivalent. Moreover, with
\[
     \norm[V]{[\phi]} \defeq \inf\{\norm[V]{\phi'}:\phi\sim \phi'\}
\]
the space $\QMt{\Per{\lie g}}$ inherits the Banach structure. Denote by
\[
     \QM[a]{\Per{\lie g}}
 \]
the (closed) subspace of $\QM{\Per{\lie g}}$ consisting of functions $\phi$ that are cohomologous to $-\phi\circ I$, and by
\[
    \QMt[a]{\Per{\lie g}}
 \]
the corresponding quotient.

The relevance of periodic orbits comes from the following observation. Since homogeneous quasimorphisms are constant on conjugacy classes, and conjugacy classes correspond to closed geodesics in $M$, it is natural to interpret a quasimorphism as a function on the periodic orbits of the geodesic flow. The quasimorphism relation
\[
|L(gh)-L(g)-L(h)|\leq C
\]
translates into the following: although periodic orbits cannot be concatenated, the shadowing property produces a periodic orbit that successively follows two given ones, and the value assigned to this shadowing orbit differs from the sum of the individual values by a uniformly bounded error.

\begin{theorem}\label{thm:quasimorphismgrouptoporbit}
There exists a bijection $B:\mrm{Conj}(\Gamma)\to \Per{\lie g}$ and constants $b_1, b_2$ so that
\[
   \forall g\in\Gamma,\  b_1\cdot \tlength_{\Gamma}(g)-b_2\leq \per(B([g]))\leq b_1\cdot \tlength_{\Gamma}(g)+b_2. 
\]
If $S \subset \Gamma, V \subset \Per{\lie g}$ are corresponding generating sets, then this bijection induces a Banach isomorphism 
\[
   B_{*}:\left(\QMt{\Gamma},\norm[S]{\cdot}\right)\to \left(\QMt[a]{\Per{\lie g}},\norm[V]{\cdot}\right). 
\]
\end{theorem}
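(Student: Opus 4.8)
The plan is to construct the bijection $B$ from the combinatorics of conjugacy classes and then transport the quasi-morphism structure across it. First I would use the well-known dictionary between conjugacy classes in a word-hyperbolic group $\Gamma$ and periodic orbits of its geodesic flow: given $[g]\in\mathtt{Cla}(\Gamma)$, pick the reduced cyclic word representing it; the axis $c_g\subset\tilde M$ descends to a closed orbit $\alpha_{[g]}$ in $\mathcal E=\mathcal G/\Gamma$, and this assignment is a bijection onto $\Per(\mathfrak g)$ (distinct conjugacy classes give distinct orbits because the axis determines $g$ up to conjugacy, and every periodic geodesic in $\mathcal E$ lifts to the axis of some nontrivial $g$; surjectivity uses that $\Gamma$ acts cocompactly on $\mathcal G$). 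Define $B([g])=\alpha_{[g]}$. For the period estimate, recall from the excerpt that $\mathscr{tl}_\Gamma(g)=\dis{\Gamma}{\tilde g}{1}$ for the minimal cyclic representative $\tilde g$, and that $|\mathscr{tl}_\Gamma(g)-\dis{\tilde M}{\tilde g\cdot x}{x}|<C$ for $x$ on the axis; since the geodesic flow $\mathfrak g$ is a fixed reparametrization of the Gromov flow, the orbit-length of $\alpha_{[g]}$ is comparable to the $\tilde M$-translation distance $\dis{\tilde M}{\tilde g\cdot x}{x}$ by bi-Lipschitz constants coming from the (Hölder, hence locally Lipschitz on compacts) reparametrization, which yields the two-sided affine bound $b_1\mathscr{tl}_\Gamma(g)-b_2\le\per(B([g]))\le b_1\mathscr{tl}_\Gamma(g)+b_2$.

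Next I would promote $B$ to a map on quasi-morphisms. For $L\in\Qm(\Gamma)$, by \Cref{cor:homogeneousclass} its homogeneous representative $\bar L$ is a class function, so $B_*(L):=\bar L\circ B^{-1}:\Per(\mathfrak g)\to\Real$ is well-defined on the level of cohomology classes. The key point is that the $\star$-operation on periodic orbits corresponds, up to bounded error, to multiplication in $\Gamma$: if $\alpha_i=B([g_i])$, then $\alpha_1\star\alpha_2$ is a periodic orbit whose associated conjugacy class is represented by a word obtained by concatenating (cyclically reduced) representatives of $g_1,g_2$ and re-reducing — this follows from the shadowing theorem \Cref{thm:shadowing} together with the choice of $\delta$ making nearby curves homotopic (so that the orbit $\alpha_1\star\alpha_2$ is forced into the homotopy/conjugacy class of the concatenation), and the constant $E$ from \eqref{eq:boundstar} controls the ambiguity in choosing $\alpha_1\star\alpha_2$. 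Consequently $\bar L(\alpha_1\star\alpha_2)$ differs from $\bar L(g_1g_2)$ — equivalently, in the homogeneous world, from $\bar L(g_1)+\bar L(g_2)$ — by at most a constant times $\norm{\delta L}$ plus a term controlled by $E$ and the modulus of continuity of $\bar L$ on the relevant Bowen balls. This gives $\norm{\delta(B_*L)}\lesssim\norm{\delta L}$ and hence $B_*L\in\Qm(\Per(\mathfrak g))$; boundedness of $L$ is clearly preserved, so $B_*$ descends to $\widetilde{\Qm}(\Gamma)\to\widetilde{\Qm}(\Per(\mathfrak g))$.

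Finally I would check that $B_*$ is a bounded linear bijection with bounded inverse. Linearity and injectivity on $\widetilde{\Qm}$ are immediate from injectivity of $B$ and the fact that a quasi-morphism vanishing on all conjugacy classes up to bounded error is bounded (again by homogeneity, \Cref{pro:homogeneousquasimor}). For surjectivity: given $\phi\in\Qm(\Per(\mathfrak g))$, define $L(g)=\phi(B([g]))$ on $\Gamma$; one must verify $\norm{\delta L}<\oo$, which again uses the shadowing correspondence between group multiplication and $\star$ (now in the reverse direction), plus the estimate that $|\per(B([g]))-\per(B([g']))|$ and the geometry of the word metric let one relate $\delta L(g,h)$ to $\delta\phi$ evaluated on the corresponding orbits. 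The norm equivalence $\norm{\cdot}_S\asymp\norm{\cdot}_V$ then comes from: (i) the period estimate, which matches the generating set $S$ (elements of word-length $\le$ some bound) with a generating set $V=V_N$ of periodic orbits as defined before the theorem; (ii) the two displayed comparisons $\norm{\delta(B_*L)}\lesssim\norm{\delta L}$ and $\norm{\delta(B_*^{-1}\phi)}\lesssim\norm{\delta\phi}$; together with the open mapping theorem once $B_*$ is shown to be a continuous bijection between Banach spaces (the Banach property of both having been established earlier).

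The main obstacle I expect is the precise quantitative control in the step identifying $\star$ with group multiplication: one needs that $\bar L$ evaluated on $\alpha_1\star\alpha_2$ is close to $\bar L(g_1g_2)$, and although $\alpha_1\star\alpha_2$ and the "concatenation axis" are $E$-close orbit segments, turning that $C^0$-closeness of orbits into closeness of the values of $\bar L$ (which is merely a homogeneous quasi-morphism with no regularity) requires care — it should follow from the definition of $\bar L(\alpha)$ as an orbital average and the weak-Bowen-type control, but making the error uniformly bounded (independent of $g_1,g_2$) and handling the cyclic re-reduction when the concatenation is not already reduced is the delicate part. A secondary technical point is verifying that $V_N$ is indeed "generating" in a way compatible with $S$, i.e. that the period bound in the first half of the theorem is exactly what is needed to match the two notions of generating set appearing in the two norms.
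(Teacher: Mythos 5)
Your proposal follows essentially the same route as the paper: construct $B$ from the axis/closed-geodesic correspondence, get the affine period comparison, transport homogeneous (class-function) quasi-morphisms across $B$, and show the $\star$-operation matches group multiplication up to bounded conjugation, with the norm equivalence and open mapping theorem finishing the Banach isomorphism. The obstacle you flag at the end---turning $C^0$-closeness of $\alpha_1\star\alpha_2$ and the ``concatenation axis'' into closeness of $\bar L$-values, with no regularity available---does not actually arise: the paper's explicit homotopy factorization shows $B([d_3])=\alpha_1\star\alpha_2$ with $[d_3]=[g_1\cdot(k\,g_2\,k^{-1})]$ for a uniformly bounded $k$, so $\bar L(\alpha_1\star\alpha_2)=\bar L\bigl(g_1\cdot(k g_2 k^{-1})\bigr)$ exactly, and the estimate $|\bar L(\alpha_1\star\alpha_2)-\bar L(g_1)-\bar L(g_2)|\le\norm{\delta\bar L}$ is purely algebraic (class invariance plus the quasi-morphism defect), requiring no Bowen-ball or modulus-of-continuity argument at all.
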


\begin{proof}
We divide the proof into three steps.

\noindent\textbf{Step 1: correspondence between conjugacy classes and periodic orbits.}

Since $M$ is negatively curved, every free homotopy class of loops in $M$ contains a unique closed geodesic. Due to compactness of $M$, every $g\in \Gamma=\pi_1(M,*)$ is represented by a curve of the form $h_g\ast c_g\ast h_g^{-1}$, where $c_g$ is a closed (parametrized) geodesic, and $h_g$ is a geodesic segment of length bounded by $a$, where $a>0$ does not depend on $g$. This defines a bijection between $\mrm{Conj}(\Gamma)$ and the set of closed geodesics in $M$, $[g]\mapsto c_g$.

If $\lift c_g:\R\to\lift M$ is a lift of $c_g$, then $\lift c_g$ determines a periodic orbit of the geodesic flow $\lie g$. Denote this orbit by $\alpha_g$. Altogether, this defines a bijection
\[
B:\mrm{Conj}(\Gamma)\to \Per{\lie g}, \quad B([g])=\al_g.
\]

\noindent\textbf{Step 2: comparison of periods and translation lengths.}

Let $g\in\Gamma$ and let $c_g$ be the closed geodesic corresponding to $[g]$. Since $\lie g$ is a reparametrization of the Gromov geodesic flow, the period of $\alpha_g$ is uniformly comparable to the length of $c_g$. Using the relation between the length of $c_g$ and the translation length $\tlength_\Gamma(g)$, we obtain constants $b_1,b_2>0$ such that
\[
b_1\,\tlength_\Gamma(g)-b_2
\le
\per(B([g]))
\le
b_1\,\tlength_\Gamma(g)+b_2.
\]

\noindent\textbf{Step 3: correspondence of quasimorphisms.}

Let $L\in\HQM{\Gamma}$ and define, for a closed geodesic $\al$,
\[
B_*(L)(\alpha)=L(B^{-1}(\alpha)).
\]
Since homogeneous quasimorphisms are constant on conjugacy classes (see \Cref{cor:homogeneousclass}), this definition is well posed.

Consider $g_i\in \Gamma, c_i=c_{g_i}, \al_i=B(g_i), i=1,2$ and let $\al_3=\al_1\star \al_2$: recall that (\Cref{thm:shadowing}) this denotes a periodic orbit of $\lie g$ that remains at short distance of $\al_1,\al_2$, with bounded transitions between them. It follows that the corresponding parametrized geodesic $c_3$ is (freely) homotopic to a closed curve of the form
\[
    \gamma=u\ast c_1\ast  v\ast c_2\ast w
\]
where the length of $u,v$ and $w$ are uniformly bounded, independently of $g_1,g_2$. Choose any $g_3$ in the conjugacy class of $\gamma$ and note that $B(g_3)=\alpha_3$: in particular the translation length of $g_3$ is uniformly comparable with 
\[
 \per(c_3)\approx \per(\alpha_3)\approx \per(\al_1)+\per(\al_2)\approx \tlength(g_1)+\tlength(g_2).  
\]
Moreover, $|L(g_3)-L(g_1)-L(g_2)|$ is uniformly bounded by a multiple of $\norm[S]{L}$, hence the same is true for $|B_*(L)(\alpha_1\star\alpha_2)-B_*(L)(\alpha_1)-B_*(L)(\alpha_2)|$. Note also that by construction, 
\begin{equation}\label{eq:IBandL}
    B_{*}(L)(I(\alpha))=-B_{*}(L)(\alpha)
\end{equation}

Recall that $\HQM{\Gamma}$ is Banach isomorphic to $\QMt{\Gamma}$ (\Cref{cor:homogeneousqmareiso}). We thus have constructed a continuous linear map 
 $B_{*}:\QMt{\Gamma}\approx\HQM{\Gamma}\to \QMt[a]{\Per{\lie g}}$, which is easily seen to be injective: if $L-L'\in\ell^{\oo}(\Gamma)$ then 
 $B_{*}(L)-B_{*}(L')\in \ell^{\oo}(\Per{\lie g})$. 

Let us prove that it is also surjective. Take $K\in \QM[a]{\Per{\lie g}}$ and define $L:\mrm{Conj}(\Gamma)\to\R$ by
\[
    g,h\in\Gamma\Rightarrow L(hgh^{-1}) \defeq K(\al_g). 
\]
We consider $R>\Csha$ so that any two pair of points in $M$ can be joined by a broken geodesic of size $R$, and say that $g\in \Gamma$ is small if 
there exists a closed geodesic $c$ and paths $u,v,w$ of size $\leq R$ so that $g\approx ucvc^{-1}w$. The fact that $K$ is bounded and antisymmetric implies that $L$ is bounded on small words. Now suppose that we are given $g_1,g_2\in\Gamma$, that we write in terms of the set of generators $S$,
\begin{align*}
g_1=s_1\cdots s_k\\
g_2=r_1\cdots r_l.
\end{align*}
Let $g_3=g_1\ast g_2$ with corresponding closed geodesic $c_3$, and consider the closed curve $\gamma=u\ast c_1\ast  v\ast c_2\ast w$ as above. If $s_k\neq r_1^{-1}$ then $\gamma$ and the closed geodesic corresponding to $g_3$ differ by (a uniformly bounded amount of) small words, hence
\[
    L(g_3)\approx K(c_3)\approx K(c_1)+K(c_2).
\]
In general, $g_1\ast g_2=s_1\cdots s_i r_j\cdots r_l$ with $s_i\neq r_j^{-1}$ and by the previous part and antisymmetry of $K$,    
\begin{align*}
L(g_3) &=L(s_1\cdots s_i)+L(r_j\cdots r_l)\approx  L(s_1\cdots s_i s_{i+1}\cdots s_k)+ L((s_i\cdots s_k)^{-1} r_{i+1}\cdots r_l)\\
&=L(g_1)+L(g_2).
\end{align*}
We thus get that $L$ defines an homogeneous quasimorphism and $B_{*}L=K$. 

By the open mapping theorem (or just by following the constants along in the construction of $L$ above) one gets that $B$ is a Banach isomorphism.
\end{proof}

At this point the natural idea would be to use \Cref{thm:symbolicrepresentation} and try to pass from $\QMt[a]{\Per{\lie g}}$ to an analogous space defined for the suspension flow $\lie t$. Since the periodic orbits of $\lie{t}$ are in one-to-one correspondence with the periodic points of the shift $\tau:\Sigma\toit$, we could reduce even further and work at the symbolic level. Unfortunately, this approach has a significative drawback. The semi-conjugacy between $\lie{t}$ and $\lie{g}$ is one-to-one only for periodic points in an open and dense subset of $\mc E$, hence a given $\phi\in \QM{\Per{\lie g}}$ could be lifted only on a subset of its domain, and indeed this set of problematic periodic points is typically exponentially large. This would make it very difficult to achieve a lift that preserves the  ``quasimorphism'' property on periodic points of $\lie{\tau}$, which is central to what we are doing. Instead, we work at a coarser level and show that a certain cohomology class of $\phi$ is well defined even with this partially available information. To achieve this we now study the corresponding concept of quasimorphism on a SFT.


\section{Quasimorphisms on subshifts and their cohomology}
\label{sec:quasimorphismsSFT}


 Given $\ms A$ a finite set (the alphabet) and $R:\ms A\times\ms A\to\{0,1\}$ consider the one-sided shift
\[
 	\Sigma=\Sigma(R)=\{\seq{x}=(x_n)_{n\in \N}:x_n\in\ms A, R(x_n,x_{n+1})=1, \forall n\in\N\}.
 \]
As in \Cref{def:SFT} we identify $\ms A=\{1,\cdots,d\}, R=(R_{ij})_{1\leq i, j\leq d}\in\Mat_{d}{\{0,1\}}$, and suppose that $R$ is irreducible and aperiodic. The product topology on $\Sigma$ is induced by the metric
\[
 	\dis[\Sigma]{\seq x}{\seq y}=\frac{1}{2^{N(\seq x,\seq y)+1}},\quad N=\max\{n: x_i=y_i, i=0\ldots N\}.
 \] 
The set of finite allowed words is $\ms{W}=\ms{W}_R=\bigcup_{n\geq 0}\ms{W}_n$, where 
\begin{align*}
 \mathscr{W}_n=\begin{dcases}
 \{*\}\quad (*\text{ is the empty word})& n=0\\
 \{\bm a\in \ms{A}^n: \exists\ \seq x\in \Sigma, x_i=a_{i-1}\ \forall 0\leq i\leq n-1\} & n\geq 1.
 \end{dcases}
 \end{align*}
We also write $\ms{W}_{\leq n}=\bigcup_{k=0}^n \ms{W}_k$. If $\bm a\in \ms{W}$ we define the quantity $|\bm a|$ to be $n$ if and only if $\bm a\in \ms{W}_n$; $|\bm a|$ is the length of $\bm a$. Given finite words $\bm a=a_1\cdots a_{n},\bm b=b_1\cdots b_{m}\in \ms{W}$ of lengths $n,m\geq 1$ and such that $R(a_{n},b_1)=1$ the concatenated word $\bm{a}\bm{b}$ is,
\[
	\bm{a}\bm{b}=a_1\cdots a_{n} b_1\cdots b_{m}.
\]
Concatenation with the empty word is defined in the obvious way. It is direct to check that $\bm{a}\bm{b} \in \ms{W}$.

 \begin{definition}\label{def:periodicword}
 A word $\bm{a}\in \ms{W}\setminus\{*\}$ is said to be periodic if $R(a_{|\bm{a}|},a_1)=1$. For a such a word we let $\seq{p}(\bm{a})=\bm{a}\bm{a}\bm{a}\cdots\in \Sigma$, and write
 \begin{align*}
 &\Fix[][n]{\ms W}=\{\bm{a}\text{ is periodic and }|\bm{a}|\mid n\}\\
 &\Fix{\ms W}=\bigcup_{n\geq 1} \Fix[][n]{\ms W}.
 \end{align*}
 \end{definition}

If $\bm{a}\in\Fix{\ms W}, n\geq 1$ we denote $\bm{a}^n=\underbrace{\bm{a}\cdots \bm{a}}_{n \text{ times}}$, and set $\bm{a}^0=*, \forall \bm{a}\in \ms{W}$. The shift map on $\Sigma$ is the continuous endomorphism $\tau:\Sigma\toit$, $\tau(\seq x)=(x_{n+1})_{n\geq 0}$. 

\begin{remark}\label{rem:periodicpoint}
If $\bm{a}\in \Fix[][n]{\ms W}$ then $\tau^n(\seq{p}(\bm{a}))=\seq{p}(\bm{a})$. We will write 
\[
\Fix[][n]{\tau}=\{\seq{p}(\bm a):\bm{a}\in \Fix[][n]{\ms W}\},	
\]
and likewise $\Fix{\tau}=\bigcup_{n\geq 1} \Fix[][n]{\tau}$, the set of periodic points of $\tau$.
\end{remark}

\begin{lemma}\label{lem:dinbasicashift}
 It holds
 \begin{enumerate}
 	\item $\Fix{\tau}$ is dense in $\Sigma$.
 	\item If $i,j\in\ms{A}$, then there exists $\bm{u}^{(ij)}\in \ms{W}_{M-1}$ so that $i\bm{u}^{(ij)}j\in \ms{W}$. As a consequence, for every pair of words $\bm{a},\bm{b}\in \ms{W}$ there exist $\bm{u},\bm{v}\in \ms{W}_{M}$ so that $u_1=a_1, v_1=b_m$ and $\bm{a}\bm{u}\bm{b}\bm{v}\in\Fix{\ms W}$.
 \end{enumerate}
 \end{lemma}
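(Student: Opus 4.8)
The plan is to prove \Cref{lem:dinbasicashift} in two independent parts, using irreducibility and aperiodicity of $R$ in an essential way.

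\textbf{Part (1): density of $\Fix(\tau)$.} I would start from an arbitrary point $\ux\in\Sigma$ and an arbitrary precision $2^{-(N+1)}$, and produce a periodic point agreeing with $\ux$ on the coordinates $0,\dots,N$. Consider the finite admissible word $\a=x_0x_1\cdots x_N\in\mathscr W_{N+1}$. Since $R$ is aperiodic, $R^M$ is a positive matrix for the specification constant $M$; in particular there is an admissible word $\bw$ of length $M-1$ beginning right after the letter $x_N$ and ending right before $x_0$, i.e. a word realizing a path from $x_N$ to $x_0$ in the transition graph. Then the finite word $\a\bw$ is periodic in the sense of \Cref{def:periodicword} (because $R(w_{\text{last}},x_0)=1$ by construction), and the periodic point $\up(\a\bw)=\a\bw\a\bw\cdots$ agrees with $\ux$ on coordinates $0,\dots,N$, so it lies within distance $2^{-(N+1)}$ of $\ux$. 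Hence $\Fix(\tau)$ is dense.

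\textbf{Part (2): connecting letters and the concatenation statement.} For the first assertion, fix $(i,j)\in\cA^2$. Since $R^M>0$, the $(i,j)$ entry of $R^M$ is positive, which means there is an admissible path $i=c_0,c_1,\dots,c_M=j$ of length exactly $M$; setting $\bu^{(ij)}=c_1\cdots c_{M-1}\in\mathscr W_{M-1}$ gives $i\bu^{(ij)}j\in\mathscr W$ as claimed. For the consequence, take $\a=a_1\cdots a_n$ and $\b=b_1\cdots b_m$ in $\mathscr W$. I want $\bu,\bv\in\mathscr W_M$ with $u_1=a_1$, $v_1=b_m$, and $\a\bu\b\bv\in\Fix$. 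The natural construction: first connect the last letter $a_n$ of $\a$ to the first letter $b_1$ of $\b$, then connect the last letter $b_m$ of $\b$ back to the first letter $a_1$ of $\a$, so that the whole concatenation closes up into a periodic word. Using the previous bullet applied to the pair $(a_n,b_1)$ gives a word $\bu'$ with $a_n\bu' b_1$ admissible and $|\bu'|=M-1$; but I need $\bu$ to have length $M$ and to begin with $a_1$, not to interpolate between $a_n$ and $b_1$ — so I must re-read the indexing. The intended reading is presumably that $\bu$ is inserted between $\a$ and $\b$ with a specified \emph{first} letter that itself is forced to match up via an internal loop through $a_1$; concretely, prepend to the connecting word a short excursion of length $1$ realizing a transition out of $a_1$. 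The cleanest way: apply the connecting-word statement to the pair $(a_1,b_1)$ to get an admissible word of the form $a_1\,\bu''\,b_1$ with $|\bu''|=M-1$, and since $\a$ ends in $a_n$ while $a_n\,(\text{some admissible continuation})\,a_1$ is admissible by positivity of $R^M$ again... — I would reconcile these index conventions carefully and pick $\bu$ of length exactly $M$ starting with $a_1$ so that $\a\bu$ is admissible and ends at $b_1$'s predecessor, then symmetrically choose $\bv$ of length $M$ starting with $b_m$ so that $\b\bv$ is admissible and $\bv$ ends at $a_1$'s predecessor, making $\a\bu\b\bv$ periodic, i.e. in $\Fix$.

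\textbf{Main obstacle.} The only real subtlety is bookkeeping: matching the required lengths ($M-1$ versus $M$) and the required boundary letters ($u_1=a_1$, $v_1=b_m$) with the admissibility constraints, while ensuring the final word genuinely satisfies $R(\text{last letter},\text{first letter})=1$ so that it lies in $\Fix$. This is purely combinatorial and follows from positivity of $R^M$ together with the freedom to pad paths (since $R^k>0$ for all $k\ge M$, one can always realize a loop of any sufficiently large prescribed length through a given vertex); there is no analytic content. I expect the write-up to consist of choosing the connecting words explicitly and then verifying the three conditions, with the length-padding remark ($R^k>0$ for $k\ge M$) doing the work whenever an exact length is needed.
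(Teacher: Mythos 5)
Parts (1) and the first assertion of (2) are correct and are the standard argument behind the paper's citation to Lemma $1.3$ of \cite{EquSta}: positivity of $R^M$ (aperiodicity, not mere irreducibility) gives a path of length $M$ from $x_N$ to $x_0$, hence a closing word $\bw$ of length $M-1$ with $\a\bw\in\Fix$, and the periodic point generated by $\a\bw$ agrees with $\ux$ on coordinates $0,\dots,N$. The paper itself gives no argument (it only cites Bowen), so you are supplying the substance.

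For the ``consequence'' in (2) you correctly sensed that something is off, but you should resolve it rather than leave the ``reconcile these index conventions'' step open. The side-conditions $u_1=a_1$ and $v_1=b_m$ as written are not in general satisfiable: admissibility of $\a\bu$ together with $u_1=a_1$ forces $R(a_n,a_1)=1$, i.e.\ that $\a$ already close up cyclically, and $v_1=b_m$ forces $R(b_m,b_m)=1$; neither is automatic (take $\a$ a single letter without a self-loop). These look like a misstatement; they are not used where the lemma is invoked, where only $\a\bu\b\bv\in\Fix$ with $|\bu|,|\bv|\le M$ is needed (compare \Cref{def:starwords} and \Cref{def:qmenperiodico}). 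Dropping them, your construction goes through directly: by $R^{M+1}>0$ choose $\bu\in\mathscr{W}_M$ with $a_n\bu b_1$ admissible and $\bv\in\mathscr{W}_M$ with $b_m\bv a_1$ admissible; then $\a\bu\b\bv$ is admissible, and since $R(v_M,a_1)=1$ it lies in $\Fix$. Commit to that reading and state it plainly rather than hedging.
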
 
This is a direct consequence of irreducibility. See Lemma $1.3$ in \cite{EquSta}. 

For $\bm{a}\in \ms{W}_n, n\geq 1$ the cylinder in $\Sigma$ corresponding to this word is
\begin{equation}
 	[\bm{a}]=\{\seq x\in \Sigma: x_i=a_i, 0\leq i\leq n-1\},
 \end{equation}
and for $\seq x\in [\bm{a}]$ we also denote $[\seq{x}]_n=[\bm{a}]$. Observe that 
\[
	[\seq{x}]_n=\{\seq{y}\in\Sigma:\dis[\Sigma]{\tau^i \seq{x}}{\tau^i \seq{y}}<1/2, i=0,1,\ldots n-1\};
\]
the set $[\seq{x}]_n$ will be referred to as the $n^{th}-$Bowen ball of $\seq{x}$. To conclude, we write $\PTM{\tau}{\Sigma}$ for the set of Borel probability measures $\mu$ on $\Sigma$ that are invariant under $\tau$ (meaning, $\mu(A)=\mu(\tau^{-1}A)$ for every Borel set $A$). We remind the reader that $\mu\in \PTM{\tau}{\Sigma}$ is ergodic if every Borel set $A$ satisfying $A=\tau^{-1}(A)$ also verifies $\mu(A)\mu(A^c)=0$. 

Here are the main definitions of this part. 

\begin{definition}\label{def:quasimorphismSFT}
For $L:\ms{W}\to \R$ denote
\begin{align*}
&\delta L(\bm{a},\bm{b})=L(\bm{a}\bm{b})-L(\bm{a})-L(\bm{b})\\
&|\delta L_{n,m}|=\max\{|\delta L(\bm{a},\bm{b})|:|\bm{a}|=n,|\bm{b}|=m\}\\
&\norm{\delta L}=\sup_{n,m}\{|\delta L_{n,m}|\}. 
\end{align*}
$L$ is a quasimorphism if $\norm{\delta L}<\oo$: in this case $\norm{\delta L}$ is the defect of $L$. The set of quasimorphisms is denoted by $\QM{\ms W}$.
\end{definition}

 $\QM{\ms W}$ forms naturally a vector space and $L\mapsto \norm{\delta L}$ is a semi-norm on it; moreover $\norm{\delta L}=0$ if and only if for every pair of concatenable words $\bm{a}, \bm{b}$, 
 $L(\bm{a}\bm{b})=L(\bm{a})+L(\bm{b})$.  

 From the discussion in the last part of the previous session it follows that we are interested in quasimorphisms that are defined in principle only on periodic points of $\tau$. To make this precise we rely on the second part of \Cref{lem:dinbasicashift}. 

\begin{definition}\label{def:starwords}
If $\bm{a}\in \ms W$ we denote by $\lift{\bm a}$ any word of the form $\bm{a}\bm{u}\in \Fix{\ms W}$, with $\bm{u}\in \ms{W}_{\leq M}$.

For $\bm{a},\bm{b}\in\Fix{\ms W}$ we denote by $\bm{c}=\bm{a}\star\bm{b}$ any word of the form
\[
	\bm c=\bm{a} \bm{u} \bm{b} \bm{v}\in\Fix{\mc W},
\]
where $\bm{u},\bm{v}\in\ms{W}_{\leq M}$.
\end{definition}

\begin{remark}
By \Cref{lem:dinbasicashift}, if $\bm{a}\in\Fix[][n]{\ms W}, \bm{b}\in\Fix[][m]{\ms W}$ then there exists $\bm{a}\star\bm{b}\in \Fix[][n+m+2M]{\ms W}$.
\end{remark}

The next step is to restrict the theory from arbitrary admissible words to periodic words, which are the symbolic counterparts of periodic orbits.

\begin{definition}\label{def:qmenperiodico}
By a quasimorphism on $\Fix{\ms W}$ we mean a map $L:\Fix{\ms W}\to\R$ verifying: given $\bm{a},\bm{b}\in \ms W$, it holds 
 \[
 	\norm{\delta L} \defeq \sup\set*{|L(\bm{c})-L(\lift{\bm{a}})-L(\lift{\bm{b}})|:\bm a,\bm b\in \ms W, \lift{\bm{a}},\lift{\bm{b}}\in\Fix{\ms W}, \bm c=\bm a\star\bm b}<\oo.
 \]
The set of such functions is denoted by $\QM{\Fix{\ms W}}$, and we call $\norm{\delta L}$ the defect of $L$.
\end{definition}

Note that $r:\QM{\ms W}\to \QM{\Fix{\ms W}}, r(L)=\restr{L}{\Fix{\ms W}}$ verifies 
\[
\norm{\delta r(L)}\leq \norm{\delta L}+2\norml{\restr{L}{\ms{W}_{\leq M}}}.	
\]
We equip $\QM{\ms W}, \QM{\Fix{\ms W}}$ with the norms
\begin{align}\label{eq:normaqm}
&\norm{L}=\norml{\restr{L}{\ms{W}_{\leq M}}}+\norm{\delta L}\\
&\norm{L}=\norml{\restr{L}{\Fix{\ms{W}_{\leq M}}}}+\norm{\delta L},
\end{align}
and leave it to the reader to verify that these are complete norms (see \Cref{pro:BowenBanach,pro:qmareBanach}).

As in the case of groups, we will be concerned with cohomology classes of quasimorphisms.

 \begin{definition}\label{def:cohomologicallytrivialquasimorphism}
 The quasimorphism $L$ is said to be cohomologically trivial if $\norml{\restr{L}{\ms W}}<\oo$. Two quasimorphisms $L, L'$ are cohomologous ($L\sim L'$) if their difference is cohomologically trivial.

 We write $[L]:=\{L':L'\sim L\}$ for the cohomology class of $L$, and denote
 \[
 	\QMt{\ms W}\defeq \frac{\QM{\ms W}}{\ell^{\oo}(\ms W)}.
 \]
 Similarly, 
\[
 	\QMt{\Fix{\ms W}}\defeq \frac{\QM{\Fix{\ms W}}}{\ell^{\oo}(\Fix{\ms W})}.
 \]
\end{definition}

It is clear that $\QMt{\ms W},\QMt{\Fix{\ms W}}$ are vector spaces over $\R$. We equip them with the (Banach) norm
\begin{align}
 \norm{[L]}:=\inf\{\norm{L'}:L'\sim L\}.
 \end{align}

 The following holds true.

 \begin{proposition}\label{pro:cohomologyvscohomologyfixed}
 The spaces $\QMt{\ms W}$ and $\QMt{\Fix{\ms W}}$ are Banach isomorphic. 
 \end{proposition}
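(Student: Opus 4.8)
The plan is to build the isomorphism out of two natural maps. In one direction, restriction $r:\Qm(\mathscr{W})\to\Qm(\Fix)$, $L\mapsto L|\Fix$, is already available: the discussion just before \eqref{eq:normaqm} shows $r$ is well-defined and bounded, with $\norm{\delta(L|\Fix)}\leq \norm{\delta L}+2\sup_{n\leq M}\norml{L|\mathscr{W}_n}{\oo}$, and it clearly sends cohomologically trivial quasi-morphisms to cohomologically trivial ones, hence descends to a bounded linear $\bar r:\widetilde{\Qm}(\mathscr{W})\to\widetilde{\Qm}(\Fix)$. The work is to produce a bounded inverse, i.e.\ to extend a $\Fix$-quasi-morphism to all of $\mathscr{W}$. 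The key tool is the second part of \Cref{lem:dinbasicashift}: given any $\a\in\mathscr{W}_n$ we can choose (once and for all, measurably/deterministically — e.g.\ lexicographically minimal) words $\bu(\a),\bv(\a)\in\mathscr{W}_{\leq M}$ with $u(\a)_1=a_1$, $v(\a)_1=a_n$, so that $\widehat\a:=\a\,\bu(\a)\,\bv(\a)\in\Fix$ and $|\widehat\a|\leq n+2M$. First I would \emph{define} the extension $E(L):\mathscr{W}\to\Real$ by $E(L)(\a):=L(\widehat\a)$ for $L\in\Qm(\Fix)$ (with $E(L)(\ast)=0$).

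The main obstacle — and the heart of the proof — is verifying that $E(L)$ is a genuine quasi-morphism, i.e.\ that $\norm{\delta E(L)}<\oo$ with a bound controlled by $\norm{L}$. Given $\a,\b\in\mathscr{W}$ with $\a\b\in\mathscr{W}$, I need to compare $E(L)(\a\b)=L(\widehat{\a\b})$ with $E(L)(\a)+E(L)(\b)=L(\widehat\a)+L(\widehat\b)$. The point is that all three of $\widehat{\a\b}$, $\widehat\a$, $\widehat\b$ are periodic words obtained from $\a$ and $\b$ by appending suffixes of bounded length ($\leq 2M$), so $\widehat\a$ and $\widehat\b$ are admissible choices of ``$\a\bu$'' and ``$\b\bv$'' and $\widehat{\a\b}$ is an admissible choice of ``$\a\bu'\b\bv'$'' \emph{provided} the middle junction $\b$ starts where it should — more precisely, $\widehat{\a\b}=\a\b\,\bu(\a\b)\,\bv(\a\b)$, and I want to view this as $\a\,(\text{short word})\,\b\,(\text{short word})$. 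Here the subtlety is that the ``short word'' separating $\a$ from $\b$ inside $\widehat{\a\b}$ is empty, while inside the pair $(\widehat\a,\widehat\b)$ the tail $\bu(\a)\bv(\a)$ sits between them; but $\bu(\a)\bv(\a)$ has length $\leq 2M$, so by iterating the defining inequality of $\Qm(\Fix)$ a bounded ($\leq 2$, after splitting off words of length $\leq M$) number of times, together with the uniform bound $\sup_{n\leq 2M}\norml{L|\Fix_n}{\oo}\leq C\norm{L}$ coming from quasi-morphism estimates applied to $\Fix_M$ and concatenation, one gets $|\delta E(L)(\a,\b)|\leq C'\norm{L}$ for a constant $C'=C'(M,\#\cA)$ independent of $\a,\b$. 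I expect the bookkeeping of ``how many junctions of length $\leq M$ must be inserted, and in which order'' to be the fussiest point, but it is purely combinatorial and finite.

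Once $E:\Qm(\Fix)\to\Qm(\mathscr{W})$ is shown bounded, the remaining steps are routine. Cohomological triviality is preserved: if $\sup_n\norml{L|\Fix_n}{\oo}<\oo$ then $E(L)(\a)=L(\widehat\a)$ is uniformly bounded, so $E$ descends to a bounded $\bar E:\widetilde{\Qm}(\Fix)\to\widetilde{\Qm}(\mathscr{W})$. Finally I check $\bar r\circ\bar E=\mathrm{id}$ and $\bar E\circ\bar r=\mathrm{id}$ at the level of cohomology classes: for $L\in\Qm(\Fix)$, $r(E(L))(\a)=L(\widehat\a)$ for $\a\in\Fix$, which differs from $L(\a)$ by at most $\norm{\delta L}$ (applying the $\Fix$-quasi-morphism inequality to the near-trivial splitting of $\widehat\a$ coming from $\a$ itself), hence $r(E(L))\sim L$; and for $L\in\Qm(\mathscr{W})$, $E(r(L))(\a)=L(\widehat\a)$ differs from $L(\a)$ by $\norm{\delta L}\cdot O(1)$ via $\norm{\delta L}$ applied to the bounded-length appended suffixes, so $E(r(L))\sim L$. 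Both composites being the identity on the quotients, $\bar r$ is a Banach isomorphism by the open mapping theorem (or directly, since we have explicit norm bounds in both directions). This mirrors, and can cite, the strategy already used in \Cref{pro:BowenBanach} and \Cref{pro:qmareBanach}.
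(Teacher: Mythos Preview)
Your approach is correct and essentially the same as the paper's: extend $L\in\Qm(\Fix)$ by closing each word into a periodic one via a bounded-length connector, then invoke the defining inequality of $\Qm(\Fix)$ directly. The only difference is that the paper uses a \emph{single} connector: for each pair $(i,j)\in\cA^2$ it fixes $\bu^{(ij)}\in\mathscr{W}_M$ with $i\bu^{(ij)}j\in\mathscr{W}$, sets $\tilde\a=\a\bu^{(a_na_1)}$, and defines $\tilde L(\a)=L(\tilde\a)$. Then $\widetilde{\a\b}=\a\cdot\emptyset\cdot\b\cdot\bu^{(b_ma_1)}$ already matches the template $\a\bu'\b\bv'$ with $|\bu'|,|\bv'|\leq M$, so \Cref{def:qmenperiodico} gives $\norm{\delta\tilde L}\leq\norm{\delta L}$ in one shot---no iteration or ``fussy bookkeeping'' needed. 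Your two-connector $\widehat\a=\a\,\bu(\a)\,\bv(\a)$ is harmless but forces the suffix length up to $2M$, which is precisely what creates the extra splitting step you anticipate; dropping $\bv(\a)$ removes it.
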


\begin{proof}
Fix $s:\ms{W}\to \Fix{\ms W}$ so that $s(\bm a)=\lift{\bm a}=\bm a\bm{u}$, where $\bm u\in\ms{W}_{\leq M}$, and furthermore $s(\bm a)=\bm a$ if $\bm{a}\in\Fix{\ms W}$. Given $L\in \QM{\Fix{\ms W}}$ define $\lift{L}(\bm{a})=L(s(\bm a))$. It is direct that $\lift{L}\in \QM{\ms W}$, with $\norm{\delta \lift{L}}\leq \norm{\delta L}$, $\norml{\restr{\lift{L}}{\ms{W}_{\leq M}}}\leq \norml{\restr{L}{\Fix{\ms{W}_{\leq M}}}}$. 

Clearly $L\sim L'$ implies $\lift{L}\sim \lift{L}'$, and conversely. From here one constructs a continuous linear bijection between the two Banach spaces $\QMt{\ms W}$ and $\QMt{\Fix{\ms W}}$, hence a continuous isomorphism.
\end{proof}

\subsection{Basic properties of quasimorphisms}\label{sec:basicquasimorphisms}

 For later use, in this part we establish some properties of quasimorphisms and introduce notation. Fix $L\in\QM{\ms W}$.

\begin{lemma}\label{lem:boundswords}
Let $\{\bm{a}_1,\cdots, \bm{a}_k\}\subset \ms{W}\setminus\{*\}$ be such that $\bm{a}_1\cdots\bm{a}_k\in \ms{W}$. Then
\[
 |L(\bm{a}_1\cdots\bm{a}_k)-\sum_{i=1}^{k} L(\bm{a}_i)|
 \leq
 \sum_{j=1}^{k-1} \left|\delta L_{\sum_{i=1}^{j}|\bm{a}_i|,\sum_{i=j+1}^{k}|\bm{a}_i|}\right|
 \leq
 (k-1)\norm{\delta L}.
\]
\end{lemma}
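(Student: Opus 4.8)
The plan is to prove both inequalities by induction on $k$. The second inequality $\sum_{j=1}^{k-1}|\delta L_{\,\cdot\,,\,\cdot\,}| \leq (k-1)\|\delta L\|$ is immediate from the definition of $\|\delta L\|$ as the supremum of the $|\delta L_{n,m}|$, so the content is the first inequality. For $k=1$ there is nothing to prove (the sum on the right is empty), and for $k=2$ the statement reads $|L(\a_1\a_2)-L(\a_1)-L(\a_2)| = |\delta L(\a_1,\a_2)| \leq |\delta L_{|\a_1|,|\a_2|}|$, which holds by definition of $|\delta L_{n,m}|$ as a maximum over words of the prescribed lengths.

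For the inductive step, assume the bound for $k-1$. Given $\a_1,\dots,\a_k$ with $\a_1\cdots\a_k\in\mathscr{W}$, write $\bbeta = \a_1\cdots\a_{k-1}$, which lies in $\mathscr{W}$ (being an initial segment, it is an admissible word), and has length $|\bbeta| = \sum_{i=1}^{k-1}|\a_i|$. Then I would split
\[
L(\a_1\cdots\a_k)-\sum_{i=1}^k L(\a_i) = \Bigl(L(\bbeta\a_k)-L(\bbeta)-L(\a_k)\Bigr) + \Bigl(L(\bbeta)-\sum_{i=1}^{k-1}L(\a_i)\Bigr).
\]
The first parenthesis is $\delta L(\bbeta,\a_k)$, whose absolute value is at most $|\delta L_{|\bbeta|,|\a_k|}| = |\delta L_{\sum_{i=1}^{k-1}|\a_i|,\,|\a_k|}|$, which is exactly the $j=k-1$ term of the target sum. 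The second parenthesis is bounded in absolute value by the inductive hypothesis, namely by $\sum_{j=1}^{k-2}|\delta L_{\sum_{i=1}^{j}|\a_i|,\,\sum_{i=j+1}^{k-1}|\a_i|}|$. Adding the two contributions via the triangle inequality gives the claimed sum over $j=1,\dots,k-1$; combining with $|\delta L_{n,m}|\leq\|\delta L\|$ for every $n,m$ yields the final $(k-1)\|\delta L\|$ bound.

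I do not anticipate a genuine obstacle here; the only mild point of care is the indexing (the paper writes $|\a_j|$ in a couple of places where $|\a_i|$ under the running index is meant), so I would be careful to state the inner sums as $\sum_{i=1}^{j}|\a_i|$ and $\sum_{i=j+1}^{k}|\a_i|$ and to match the new $j=k-1$ term produced in the inductive step against the right summand. One should also note that all the intermediate words $\a_1\cdots\a_j$ are indeed in $\mathscr{W}$, so that $L$ and $\delta L$ are defined on them — this is automatic since any subword of an admissible word is admissible.
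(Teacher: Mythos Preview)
Your proposal is correct and follows exactly the paper's approach: induction on $k$, splitting off $\a_k$, bounding $|\delta L(\a_1\cdots\a_{k-1},\a_k)|$ by the $j=k-1$ term, and invoking the inductive hypothesis on $\a_1,\ldots,\a_{k-1}$. Both your argument and the paper's share a minor indexing slip in the intermediate sum (the inductive hypothesis produces second subscript $\sum_{i=j+1}^{k-1}|\a_i|$ rather than $\sum_{i=j+1}^{k}|\a_i|$ for $j\le k-2$), but this is harmless since each such term is still bounded by $\|\delta L\|$ and only the final bound $(k-1)\|\delta L\|$ is ever used.
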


\begin{proof}
We argue by induction on $k$. For $k=2$ the claim is immediate from the definition. Assume it holds for $k-1$. Then
\begin{align*}
&|L(\bm{a}_1\cdots\bm{a}_k)-\sum_{i=1}^k L(\bm{a}_i)| \\
&=
\abs{
L(\bm{a}_1\cdots\bm{a}_k)-L(\bm{a}_1\cdots\bm{a}_{k-1})-L(\bm{a}_k)
+
L(\bm{a}_1\cdots\bm{a}_{k-1})-\sum_{i=1}^{k-1}L(\bm{a}_i)
} \\
&\le
\left|\delta L_{\sum_{j=1}^{k-1}|\bm{a}_j|,\,|\bm{a}_k|}\right|
+
|L(\bm{a}_1\cdots\bm{a}_{k-1})-\sum_{i=1}^{k-1}L(\bm{a}_i)| \\
&\le
\left|\delta L_{\sum_{j=1}^{k-1}|\bm{a}_j|,\,|\bm{a}_k|}\right|
+
\sum_{j=1}^{k-2}
\left|\delta L_{\sum_{i=1}^{j}|\bm{a}_i|,\sum_{i=j+1}^{k-1}|\bm{a}_i|}\right| \\
&=
\sum_{j=1}^{k-1}
\left|\delta L_{\sum_{i=1}^{j}|\bm{a}_i|,\sum_{i=j+1}^{k}|\bm{a}_i|}\right|.
\end{align*}
This proves the result.
\end{proof}

\begin{corollary} Let $\{\bm{a}_1,\cdots, \bm{a}_k\}, \{\bm{b}_1,\cdots, \bm{b}_k\} \subset \ms{W}\setminus\{*\}$ such that $\bm{a}_1\cdots\bm{a}_k, \bm{b}_1\cdots\bm{b}_k\in \ms{W}$. It holds 
\[
   |L(\bm{a}_1\cdots\bm{a}_k)-L(\bm{b}_1\cdots\bm{b}_k)|\leq 2(k-1)\|\delta L\|+\abs{\sum L(\bm{a}_i)-\sum L(\bm{b}_i)}.
   \]
 \end{corollary}

 We turn our attention to quasimorphisms defined on $\Fix{\ms W}$. For $\bm{a}\in\Fix{\ms W}$ its cyclic permutation group is denoted by $\mc{C}_{\bm{a}}$ ($\Rightarrow |\mc{C}_{\bm{a}}|=|\bm{a}|$). Define 
\begin{align}\label{def:Lciclico}
 L_{\mrm{cyc}}(\bm{a})=\frac{1}{|\bm{a}|}\sum_{\pi\in\mc{C}_{\bm{a}}}L(\pi(\bm{a})).
 \end{align}
We say that $L$ is cyclic if $\forall\bm{a}$ periodic, $L_{\mrm{cyc}}(\bm{a})=L(\bm{a})$.

\begin{remark}\label{rem:cyclicquasimorphism}
 From the definition of quasimorphism we get that for $\bm{a}\in\Fix{\ms W}, \pi\in \mc{C}_{\bm{a}}$ then $|L(\pi(\bm{a}))-L(\bm{a})|\leq 2\norm{\delta L}$, and thus $|L_{\mrm{cyc}}(\bm{a})-L(\bm{a})|\leq 2\norm{\delta L}$. Note also that for any $m\in\N$,
 \[
   |L_{\mrm{cyc}}(\bm{a}^m)-mL_{\mrm{cyc}}(\bm{a})|\leq \norm{\delta L}.
 \]
 \end{remark}

For $L\in\QM{\ms W}\cup \QM{\Fix{\ms W}}$ define
\begin{align}\label{eq:Lhomogoeneo}
\cl{L}(\bm a) \defeq \lim_{n\to\infty}\frac{1}{n}L(\lift{\bm{a}}^n)=\inf_{n} \frac{L(\lift{\bm{a}}^n)+\norm{\delta L}}{n}.
\end{align}
The limit exists since $(L(\lift{\bm{a}}^n)+\norm{\delta L})_n$ is a sub-additive sequence. 

 \begin{definition}\label{def:homogeneo}
 A quasimorphism $L\in\QM{\ms W}\cup \QM{\Fix{\ms W}}$ is called homogeneous if for all $\bm{a}\in \Fix{\ms W}, n\geq 1$ it holds $L(\bm{a}^n)=nL(\bm{a})$. 

 The set of homogeneous quasimorphisms on $\QM{\ms W}, \QM{\Fix{\ms W}}$ is denoted by $\HQM{\ms W}$, $\HQM{\Fix{\ms W}}$, respectively.
 \end{definition}

 \begin{lemma}\label{lem:homquasimorphism}
 The function $\cl{L}:\ms{W}\to \R$ is a homogeneous quasimorphism. Moreover, 
   \begin{itemize}
      \item $\norm{\cl{L}}\leq 32\norm{L}$;
      \item $\norml{\restr{(L-\cl{L})}{\Fix{\ms W}}}\leq 2\norm{L}$.
    \end{itemize} 
 It follows that $\cl{L}\sim L$, and is the unique homogeneous quasimorphism in that cohomology class.
\end{lemma}

 \begin{proof}
Let $\bm{a},\bm{b}\in \ms{W}, \bm{u},\bm{u}',\bm{v},\bm{v}'\in\ms{W}_M$ such that $\lift{\bm{a}}=\bm{a}\bm{u}, \lift{\bm{b}}=\bm{b}\bm{v}, \bm{c}=\lift{\bm{a}}\star \lift{\bm{b}}=\bm{a}\bm{u}'\bm{b}\bm{v}'\in\Fix{\ms W}$. By definition of quasimorphism, 
\begin{align*}
& |L\left(\lift{\bm{a}}\star\lift{\bm{b}}\right)-L(\lift{\bm{a}})-L(\lift{\bm{b}})|\leq 4\norm{L}\\
& |L\left((\lift{\bm{a}}\star\lift{\bm{b}})^{2}\right)-L((\lift{\bm{a}})^{2})-L((\lift{\bm{b}})^{2})|\leq 16\norm{L}.
\end{align*}
Arguing by induction we get 
\begin{align*}
|L\left((\bm{c})^{n}\right)-L(\lift{\bm{a}})^{n})-L((\lift{\bm{b}})^{n})|=|L(\bm{a}\bm{u}'(\bm{b}\bm{v}'\bm{a}\bm{u}')^{n-1}\bm{b}\bm{v}')-L(\bm{a}\bm{u}(\bm{a}\bm{u})^{n-1})-L(\bm{b}\bm{v}(\bm{b}\bm{v})^{n-1})|\\
\leq 16\norm{L}+|L((\bm{b}\bm{v}'\bm{a}\bm{u}')^{n-1})-L((\lift{\bm{a}})^{n-1})-L((\lift{\bm{b}})^{n-1})|
\leq 16\norm{L}+16(n-1)\norm{L}=16n\norm{L}.
\end{align*}
Hence $\cl{L} :\ms W\to\R$ is a quasimorphism with $\norm{\delta \cl{L}}\leq 16\norm{L}$, and by the same token, $\norm{\cl{L}}\leq 32\norm{L}$. Define $L' :\Fix{\ms W} \to\R$, $L'(\bm{a})=L(\bm{a})+\norm{\delta L}$. Then $\cl{L'}=\cl{L}$ and $\cl{L'}(\bm{a})\leq L'(\bm{a})+\norm{\delta L}=L(\bm{a})+2\norm{\delta L}$. Applying the same $-L$, and noting that $\cl{-L}=-\cl{L}$ we get
\[
	\norml{\restr{(L-\cl{L})}{\Fix{\ms W}}}\leq 2\norm{L}.
\] 

Uniqueness is obvious.
 \end{proof}

 \begin{remark}\label{rem:homogeneoesciclico}
 Notice that $\cl{L_{\mrm{cyc}}}=\cl L$, and in particular $\cl L$ is cyclic.
 \end{remark}

One sees directly that  $\HQM{\ms W},\HQM{\Fix{\ms W}}$ are Banach isomorphic. 

\begin{corollary}
	The space $\QMt{\ms W}$ is Banach isomorphic to $\HQM{\Fix{\ms W}}$. 
\end{corollary}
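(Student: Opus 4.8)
The plan is to combine the two isomorphisms already established, namely \Cref{pro:cohomologyvscohomologyfixed} ($\widetilde{\Qm}(\mathscr{W})\cong\widetilde{\Qm}(\Fix)$ as Banach spaces) and the map $L\mapsto\bar L$ from \Cref{lem:homquasimorphism}, and to check that the homogenization descends to a Banach isomorphism $\widetilde{\Qm}(\Fix)\to\Qm_h(\Fix)$. So it suffices to prove the latter. First I would define $\pi:\Qm(\Fix)\to\Qm_h(\Fix)$ by $\pi(L)=\bar L$; by \Cref{lem:homquasimorphism} this is well-defined, linear (homogenization commutes with linear combinations because it is a pointwise limit of the linear operations $L\mapsto \tfrac1n L(\a^n)$), and bounded, since $\norm{\bar L}\leq 4\norm{L}$. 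The same lemma gives $\norml{\bar L-L|\Fix}{\oo}\leq 2\norm{\delta L}$, so $\bar L\sim L$ in $\widetilde{\Qm}(\Fix)$; in particular $\pi$ vanishes on $\ell^\oo(\Fix)$ and on the cohomologically trivial quasi-morphisms, hence factors through a bounded linear map $\bar\pi:\widetilde{\Qm}(\Fix)\to\Qm_h(\Fix)$.

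Next I would check $\bar\pi$ is a bijection. For surjectivity: if $L\in\Qm_h(\Fix)$ then $\bar L=L$ (homogeneity forces $\tfrac1n L(\a^n)=L(\a)$ for all $n$), so $\bar\pi([L])=L$. For injectivity: if $\bar\pi([L])=\bar L=0$ then $L\sim\bar L=0$ by the estimate above, so $[L]=0$ in $\widetilde{\Qm}(\Fix)$; the uniqueness clause in \Cref{lem:homquasimorphism} can also be invoked here. Thus $\bar\pi$ is a bounded linear bijection between Banach spaces — $\widetilde{\Qm}(\Fix)$ is Banach by the completeness remark after \eqref{eq:normaqm} together with the quotient construction, and $\Qm_h(\Fix)$ is closed in $\Qm(\Fix)$ (a limit of homogeneous quasi-morphisms is homogeneous, since $L_k(\a^n)=nL_k(\a)$ passes to the limit) hence Banach — so by the open mapping theorem $\bar\pi$ is a Banach isomorphism.

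Finally I would assemble the chain
\[
\widetilde{\Qm}(\mathscr{W})\;\xrightarrow{\ \cong\ }\;\widetilde{\Qm}(\Fix)\;\xrightarrow{\ \bar\pi\ }\;\Qm_h(\Fix),
\]
the first arrow being \Cref{pro:cohomologyvscohomologyfixed}, to conclude. I do not expect any genuine obstacle here: every ingredient is already in place, and the only points requiring a line of verification are the linearity and continuity of homogenization (both immediate from $\norm{\bar L}\le 4\norm{L}$ and the pointwise-limit description) and the closedness of $\Qm_h(\Fix)$ in $\Qm(\Fix)$; the mild subtlety, if any, is simply to remember to invoke the open mapping theorem rather than trying to bound the inverse of $\bar\pi$ directly.
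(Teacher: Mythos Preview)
Your proposal is correct and follows essentially the same approach as the paper: the paper's proof simply says ``This is completely analogous to \Cref{cor:homogeneousqmareiso}, using \Cref{pro:cohomologyvscohomologyfixed},'' and what you have written is precisely the spelled-out version of that analogy, using the homogenization map from \Cref{lem:homquasimorphism} together with the open mapping theorem.
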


\begin{proof}
	This is completely analogous to \Cref{cor:homogeneousqmareiso}, using \Cref{pro:cohomologyvscohomologyfixed}.
\end{proof}

 \subsection{Quasicocycles}\label{subsec:representationthm}

 When dealing with quasimorphisms $L:\Gamma\to\R$ defined on groups the basic problem concerning bounded cohomology is to determine whether or not $L$ is at bounded distance from a zero defect quasimorphism (i.e.\@ a group morphism). To translate the same question to quasimorphisms defined on a SFT we have to face the difficulty that the zero defect quasimorphism will typically be ill-defined, or trivial. To get around this, we will use that quasimorphisms can be used to induce a sequence of functions $(L^{(n)}:\Sigma\to\R)_{n\geq 0}$ with certain properties, and translate the corresponding cohomological problems into this setting.

\begin{definition}\label{def:quasicocycleL}
 For $L\in\QM{\ms W}$ and $n\geq 0$ we define $L^{(n)}:\Sigma\to\R$ by $L^{(n)}(\seq{x})=L(x_0\cdots x_{n-1})$. The family of continuous functions $(L^{(n)})_{n\geq 0}$ is the quasicocycle associated to $L$.
 \end{definition}

Given a a sequence of bounded Borel functions $\bm{B}=(B_n:\Sigma\to\R)_{n\geq 1}$ we denote by 
\begin{align}
&|\delta \bm{B}_{n,m}|=\sup_{\seq{x}} |B_{n+m}(\seq{x})-B_{n}(\seq{x})-B_m(\tau^n\seq{x})|\\
&\norm{\delta \bm{B}}=\sup_{n,m}|\delta \bm{B}_{n,m}|\\
&\norm[B]{\bm{B}}=\sup_{n\geq 1} \var_n(B_n)
\end{align}
where for a function $B:\Sigma\to\R$ we write $\var_n(B)=\sup\{|B(\seq{x})-B(\seq{y})|:[\seq{x}]_n=[\seq{y}]_n\}$.

\begin{definition}\label{def:quasicocycles}
A (Bowen) quasicocycle is a sequence $\bm{B}=(B_n)_{n\geq 1}$ of bounded Borel functions satisfying $\norm{\delta \bm{B}}+\norm[B]{\bm{B}}<\oo$. The set of all Bowen quasicocycles is denoted by $\QCB{\Sigma}$.

The quasicocycle is continuous if it consists of continuous functions, and is locally constant if $[\seq{x}]_n=[\seq{y}]_n$ implies $B_n(\seq{x})=B_n(\seq{y})$.

We denote $\QCB[c]{\Sigma}, \QCB[l]{\Sigma}$ the subspaces of continuous and locally constant quasicocycles, respectively. 
\end{definition}

In the literature sequences satisfying $\norm{\delta \bm{b}}<\oo$ are also called almost additive sequences. Note that $\norm{\delta \bm{B}}=0$ if and only if $B_n=\sum_{k=0}^{n-1}B_1\circ \tau$, and $\norm[B]{\bm{B}}=0$ if and only if $B_n$ is locally constant. Denote
\begin{align*}
\tnorm[B]{\bm{B}}=\norm{\delta \bm{B}}+\norm[B]{\bm{B}}+\norml{B_1}.
\end{align*}
It follows that $\left(\QCB{\Sigma},\tnorm[B]{\cdot}\right)$ is a Banach space.

We remark the following consequence of \cite{Cuneo2020}.

\begin{theorem}\label{thm:cuneo}
If $\bm{B}\in \QCB[c]{\Sigma}$ then there exists $\phi:\Sigma\to\R$ continuous so that
\[
\lim_n \frac{1}{n}\norml{B_n-S_n\phi}{\oo}=0.	
\]
\end{theorem}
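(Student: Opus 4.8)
The plan is to deduce this from the equivalence theorem of Cuneo~\cite{Cuneo2020}, after matching the relevant classes of objects. First I would record the dictionary: an element $\bB=(B_n)_{n\geq1}\in\QCB^c$ consists of continuous functions with
\[
    \sup_{n,m\geq1}\ \sup_{\ux\in\Sigma}\bigl|B_{n+m}(\ux)-B_n(\ux)-B_m(\tau^n\ux)\bigr|=\norm{\delta\bB}=:C<\oo ,
\]
which is exactly the (two-sided) almost-additivity condition; the extra requirement $\normB{\bB}<\oo$ only makes the sequence more regular and plays no role below. Cuneo's main result asserts that over a topologically mixing subshift of finite type every continuous almost-additive sequence is \emph{asymptotically equivalent} to an additive one, i.e.\ there is a continuous $\phi:\Sigma\to\Real$ with $\tfrac1n\norml{B_n-S_n\phi}{\oo}\to0$, which is precisely the assertion. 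So the task essentially reduces to this citation; still, I would indicate the mechanism behind it, which has two steps.

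\emph{Step 1 (almost additive $\Rightarrow$ asymptotically additive).} For each fixed $k\geq1$ set $\phi_k:=\tfrac1kB_k$, a continuous function. A straightforward telescoping of the defect bound along the arithmetic progression of step $k$ (as in \Cref{lem:boundswords}) gives, for $n=mk$,
\[
    \Bigl|B_{mk}(\ux)-\sum_{j=0}^{m-1}B_k(\tau^{jk}\ux)\Bigr|\leq(m-1)C .
\]
Splitting $S_{mk}\phi_k(\ux)=\tfrac1k\sum_{r=0}^{k-1}\sum_{j=0}^{m-1}B_k\bigl(\tau^{jk}(\tau^r\ux)\bigr)$ by residues modulo $k$, applying the same estimate inside each residue class, and using the crude bounds $|B_{mk}(\tau^r\ux)-B_{mk}(\ux)|\leq 2\norml{B_r}{\oo}+2C$ and $\norml{B_r}{\oo}\leq k(\norml{B_1}{\oo}+C)$ for $0\leq r<k$, one obtains $|S_{mk}\phi_k(\ux)-B_{mk}(\ux)|=O\bigl(mC+k(\norml{B_1}{\oo}+C)\bigr)$. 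Passing from $n=mk$ to a general $n=mk+r$ costs only $O\bigl(k(\norml{B_1}{\oo}+C)\bigr)$ in both $B_n$ and $S_n\phi_k$, so dividing by $n$ and letting $n\to\oo$ yields $\limsup_{n}\tfrac1n\norml{B_n-S_n\phi_k}{\oo}\leq C'/k$ with $C'$ depending only on $C$. Thus $\bB$ is asymptotically additive in the sense of Feng--Huang.

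\emph{Step 2 (collapsing $(\phi_k)_k$ to a single potential).} This is the substantive part and the heart of~\cite{Cuneo2020}: one must produce one continuous $\phi$ that works for all large $n$ at once, even though the family $\phi_k\approx\tfrac1kS_kB_1$ typically oscillates and does not converge uniformly. The key is that the obstruction is visible only to invariant measures: for continuous $g$,
\[
    \limsup_{n\to\oo}\tfrac1n\,\norml{S_ng}{\oo}=\max\Bigl(\ \sup_{\mu\in\PTM{\tau}{\Sigma}}\int g\,\der\mu,\ \ -\inf_{\mu\in\PTM{\tau}{\Sigma}}\int g\,\der\mu\ \Bigr),
\]
so Step 1 forces $\bigl|\int\phi_k\,\der\mu-\int\phi_{k'}\,\der\mu\bigr|\leq C'(1/k+1/k')$ for every $\tau$-invariant $\mu$; in other words $(\phi_k)$ is Cauchy modulo coboundaries and uniformly small errors, and Cuneo assembles the limiting $\phi$ by a careful telescoping/diagonal construction on $\Sigma$. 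I would invoke this rather than reprove it.

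The main obstacle is exactly Step 2: Step 1 is a routine Fekete-type averaging, but upgrading from a sequence of approximate potentials to a single honest continuous $\phi$ is precisely Cuneo's theorem and admits no shortcut in this setting; in particular the uniform-variation bound $\normB{\bB}<\oo$, which one might hope to exploit for a direct construction of $\phi$, does not by itself force the $\phi_k$ to converge.
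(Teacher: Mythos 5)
Your proposal is correct and takes essentially the same approach as the paper, which also simply cites Cuneo~\cite{Cuneo2020} for this result without reproducing the argument. Your Steps~1 and~2 are a faithful sketch of the mechanism in that reference (and your Step~1 telescoping estimate is a sound verification of almost-additive $\Rightarrow$ asymptotically additive), so the extra detail is accurate but not a departure from the paper's route.
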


The previous theorem however does not guarantee that $(S_n\phi)_n$ has also finite norm (i.e. that $\phi$ has the Bowen property).

Next we address cohomology of quasicocycles.

 \begin{definition}\label{def:cohomologyqc}
The quasicocycle $\bm{B}$ is said to be cohomologically trivial if $\sup_n \norml{B_n}<\oo$. Two quasicocycles $\bm{B}, \bm{B}'$ are cohomologous ($\bm{B}\sim\bm{B}'$) if their difference is cohomologically trivial.

We denote $[\bm{B}]:=\{\bm{B}':\bm{B}'\sim \bm{B}\}$ the cohomology class of $\bm{B}$, and write
\[
\QCBt{\Sigma}=\left\{[\bm{B}]:\bm{B}\in \QCB{\Sigma}\right\}.
 \]
 \end{definition}

The vector space $\QCBt{\Sigma}$ is equipped with the (complete) norm 
\begin{align}
 \tnorm[B]{[\bm{B}]}=\inf\{\tnorm[B]{\bm{B}'}:\bm{B}\sim\bm{B}'\}.
 \end{align}

\begin{proposition}\label{pro:cohqmigualcohqc}
The map $\Gamma: \QMt{\ms W}\to \QCBt{\Sigma}$ defined by $\Gamma([L])=[(L^{(n)})_n]$ is a continuous linear isomorphism, with $\frac{1}{M}\leq \norm[\mrm{OP}]{\Gamma}\leq 1$.

As a consequence, the spaces $\QCBt{\Sigma}, \QMt{\ms W}, \QMt{\Fix{\ms W}}$ are Banach isomorphic.
 \end{proposition}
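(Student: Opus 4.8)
The plan is to verify directly that $\Gamma$ is a well-defined bounded linear map, exhibit an explicit bounded inverse, and deduce the norm bounds. \emph{Well-definedness and linearity.} First I would check that if $L\in\Qm(\mathscr{W})$ then $\bB_L:=(L^{(n)}_n)_n$ is a genuine Bowen quasi-cocycle. The quantity $\normB{\bB_L}$ is controlled because $L^{(n)}(\ux)=L(x_0\cdots x_{n-1})$ depends only on the first $n$ coordinates of $\ux$, so in fact $\var_n(L^{(n)})=0$ and $\normB{\bB_L}=0$; thus $\bB_L\in\QCB^l$. The defect estimate $\norm{\delta\bB_L}\le\norm{\delta L}$ follows from the identity $L^{(n+m)}(\ux)-L^{(n)}(\ux)-L^{(m)}(\tau^n\ux)=L(\a\b)-L(\a)-L(\b)$ with $\a=x_0\cdots x_{n-1}$, $\b=x_n\cdots x_{n+m-1}$, and $\norml{B_1}{\oo}=\norml{L|\mathscr{W}_1}{\oo}\le\norml{L|\mathscr{W}_M}{\oo}$. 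Hence $\normBB{\bB_L}\le\norm{L}$, and since $L\sim L'$ (cohomologically trivial difference, i.e.\ $\sup_n\norml{(L-L')|\mathscr{W}_n}{\oo}<\oo$) clearly gives $\bB_L\sim\bB_{L'}$, the induced map $\Gamma$ on cohomology classes is well defined with $\normOP{\Gamma}\le 1$. Linearity is immediate from the definitions.

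\emph{Construction of the inverse.} Given a Bowen quasi-cocycle $\bB=(B_n)_n$, define $L_{\bB}:\mathscr{W}\to\Real$ on words of length $n\ge 1$ by choosing, for each word $\a\in\mathscr{W}_n$, a point $\ux^{\a}\in[\a]$ and setting $L_{\bB}(\a):=B_n(\ux^{\a})$ (and $L_{\bB}(\ast)=0$). Because $\normB{\bB}<\oo$, the value $B_n(\ux^{\a})$ is independent of the chosen point up to an additive error $\le\normB{\bB}$, so a different system of choices yields a cohomologous $L_{\bB}$ and the class $[L_{\bB}]$ is well defined. To see $L_{\bB}\in\Qm(\mathscr{W})$: for $\a\in\mathscr{W}_n$, $\b\in\mathscr{W}_m$ with $\a\b\in\mathscr{W}$, pick $\ux\in[\a\b]$; then
\[
|L_{\bB}(\a\b)-L_{\bB}(\a)-L_{\bB}(\b)|\le |B_{n+m}(\ux)-B_n(\ux)-B_m(\tau^n\ux)|+2\normB{\bB}\le\norm{\delta\bB}+2\normB{\bB},
\]
using that $\ux\in[\a]$, $\tau^n\ux\in[\b]$. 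Thus $\norm{\delta L_{\bB}}\le\norm{\delta\bB}+2\normB{\bB}$, and $\norml{L_{\bB}|\mathscr{W}_M}{\oo}\le\norml{B_M}{\oo}\le\norml{B_1}{\oo}+(M-1)\norm{\delta\bB}$ by the telescoping estimate in \Cref{lem:boundswords} applied to the cocycle; altogether $\norm{L_{\bB}}\le M\normBB{\bB}$ (crudely). This gives a bounded linear map $\Delta:\widetilde{\QCB}\to\widetilde{\Qm}(\mathscr{W})$.

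\emph{Inverse identities and norm bounds.} It remains to check $\Gamma\circ\Delta=\mathrm{id}$ and $\Delta\circ\Gamma=\mathrm{id}$ on the respective quotient spaces. For $\Gamma\circ\Delta$: starting from $\bB$ we get $L_{\bB}$, then $\bB_{L_{\bB}}=(L^{(n)}_{\bB,n})_n$ with $L^{(n)}_{\bB}(\ux)=L_{\bB}(x_0\cdots x_{n-1})=B_n(\ux^{[\ux]_n})$, which differs from $B_n(\ux)$ by at most $\normB{\bB}$ uniformly in $n$ and $\ux$; hence $\bB_{L_{\bB}}\sim\bB$. For $\Delta\circ\Gamma$: starting from $L$ we get $\bB_L$, then $L_{\bB_L}(\a)=B_{|\a|}(\ux^{\a})=L^{(|\a|)}(\ux^{\a})=L(\a)$ exactly (since $L^{(n)}$ is $n$-locally constant), so $L_{\bB_L}=L$ on the nose. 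Thus $\Gamma$ is a linear bijection with bounded inverse $\Delta$, $\normOP{\Gamma^{-1}}=\normOP{\Delta}\le M$, whence $\frac1M\le\normOP{\Gamma}$. The open mapping theorem (both spaces being Banach, by \Cref{pro:BowenBanach}-style arguments already invoked for $\QCB$ and $\widetilde{\Qm}(\mathscr{W})$) confirms $\Gamma$ is a topological isomorphism. Finally, the stated consequence that $\widetilde{\QCB}$, $\widetilde{\Qm}(\mathscr{W})$, $\widetilde{\Qm}(\Fix)$ are all Banach isomorphic is obtained by composing this isomorphism with the one of \Cref{pro:cohomologyvscohomologyfixed}.

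\emph{Main obstacle.} The routine parts are the defect and variation estimates; the only genuinely delicate point is the \emph{well-definedness of $\Delta$ at the level of cohomology classes}, i.e.\ checking that the ambiguity in choosing the representative points $\ux^{\a}$ and in choosing a representative $\bB$ of its cohomology class both wash out to a cohomologically trivial change of $L_{\bB}$ — and, more subtly, that this is uniform enough to keep $\Delta$ \emph{bounded}. This is exactly where the hypothesis $\normB{\bB}<\oo$ (as opposed to mere almost-additivity) is used in an essential way, and is the analogue, on the symbolic side, of the care taken in \Cref{pro:BowenBanach}; I expect the write-up to spend most of its length there, with the sharper constant $\normOP{\Gamma}\ge\frac1M$ extracted by tracking the $\mathscr{W}_M$ versus $\mathscr{W}_1$ comparison through \Cref{lem:boundswords}.
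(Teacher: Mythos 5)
Your argument is essentially the paper's own proof: same defect/variation estimates showing $\normOP{\Gamma}\leq 1$, same point-selection construction $L_{\bB}(\a)=B_{|\a|}(\ux^{\a})$ for the inverse with the same $2\normB{\bB}+\norm{\delta\bB}$ defect bound and $M\normBB{\bB}$ norm bound, and the same appeal to \Cref{pro:cohomologyvscohomologyfixed} for the final consequence. Your explicit verification of both $\Gamma\circ\Delta\sim\mathrm{id}$ and $\Delta\circ\Gamma=\mathrm{id}$ is a slightly more careful presentation than the paper's (which only notes injectivity of $\Gamma$ and surjectivity via the inverse), and as a small caution your intermediate estimate $\norml{B_M}{\oo}\leq\norml{B_1}{\oo}+(M-1)\norm{\delta\bB}$ should read $M\norml{B_1}{\oo}+(M-1)\norm{\delta\bB}$, though this does not affect the stated bound $\norm{L_{\bB}}\lesssim M\normBB{\bB}$.
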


\begin{proof}
It is direct to check that $\Gamma$ is an injective linear map, and since for every $L\in\QM{\ms W}$
it holds that $\norm[B]{(L^{(n)})_n}=0, \norm{\delta (L^{(n)})_n}\leq \norm{\delta L}, \norml{L^{(1)}}=\norml{\restr{L}{\ms{W}_1}}$, we deduce that $\norm[\mrm{OP}]{\Gamma}\leq 1$. It remains to show that $\Gamma$ is surjective and compute the norm of its inverse.

Let $\bm{B}$ be a given quasicocycle. For each $\bm{a}\in \ms{W}_n$ choose a point $\seq{x}_{\bm{a}}\in [\bm{a}]$ and define
\[
   L^{\bm{B}}(\bm{a})=B_{|\bm{a}|}(\seq{x}_{\bm{a}}).
 \]
If $\bm{a}\in \ms{W}_n, \bm{b}\in \ms{W}_m$ are concatenable words, then  
 \begin{align*}
 \begin{rcases}
 L^{\bm{B}}(\bm{a}\bm{b})=B_{n+m}(\seq{x}_{\bm{a}\bm{b}})\Rightarrow \left|L^{\bm{B}}(\bm{a}\bm{b})-B_{n}(\seq{x}_{\bm{a}\bm{b}})+B_{m}(\tau^{n}\seq{x}_{\bm{a}\bm{b}})\right|\leq |\delta\bm{B}_{n,m}|\\
|B_{n}(\seq{x}_{\bm{a}\bm{b}})-L^{\bm{B}}(\bm{a})|,\ |B_{m}(\tau^n\seq{x}_{\bm{a}\bm{b}})-L^{\bm{B}}(\bm{b})|\leq \norm[B]{\bm{B}}
 \end{rcases}\Rightarrow 
  \end{align*}
$\left|L^{\bm{B}}(\bm{a}\bm{b})-L^{\bm{B}}(\bm{a})-L^{\bm{B}}(\bm{b})\right|\leq 2\norm[B]{\bm{B}}+\norm{\delta\bm{B}}$ and $L^{\bm{B}}$ is a quasimorphism. Moreover $\Gamma([L])=[\bm{B}]$. Since $\norml{\restr{L^{\bm{B}}}{\ms{W}_{\leq M}}}\leq \norml{B_M}{\oo}\leq M\norml{B_1}$, it follows that
 \[
 	\norm{L^{\bm{B}}}\leq M\tnorm[B]{\bm{B}}\Rightarrow \norm[\mrm{OP}]{\Gamma^{-1}}\leq  M,
 \]
and this proves the first part. The second is direct from \Cref{pro:cohomologyvscohomologyfixed}.
\end{proof}

Thus we see that the cohomology class of any quasimorphism on a SFT is uniquely determined by a zero defect quasicocycle. Compared to the group case, this may seem surprising at first sight. However, a closer look reveals that the non-triviality of these cohomology classes is due to the fact that they are represented by locally constant functions (which necessarily carry some choices in their construction). At this point we have reached the point where we need further technology to understand the space $\QMt{\ms W}$.  For this, we introduce a coarse version of Livsic cohomology for dynamical systems, and show that both theories are isomorphic.


\section{Livšic cohomology on SFTs}\label{sec:LivsicSFT}

We start with an extension of a concept introduced by Bowen \cite{Bowen1974}. Throughout this section, $\Sigma$ denotes a fixed SFT.

\begin{definition}\label{def:Bowenpropertyfunction}
  A bounded function $\varphi:D_{\varphi}\subset\Sigma\to\R$ is said to be a weak Bowen function if 
    \begin{enumerate}
      \item $D_\varphi$ is dense and $\tau$-invariant.
      \item There is $C>0$ such that if $n\geq 1$ and $\seq{x},\seq{y}\in D_\varphi$ then
  \[
  \seq{y}\in[\seq{x}]_n\Rightarrow \left|S_n\varphi(\seq{x})-S_n\varphi(\seq{y})\right|\leq C.
  \]
  \end{enumerate}
We denote by $\norm[B]{\varphi}=\inf C$, and call it the Bowen constant of $\varphi$.   

If, in addition,

 \begin{enumerate}
  \item[3] $D_\varphi$ contains a full measure set for some $\mu\in \PTM{\tau}{\Sigma}$ of full support, we say that $\varphi$ is a $\mu$-weak Bowen function. In this case the Bowen norm of $\varphi$ is defined as $\tnorm[B]{\varphi}=\norm[B]{\varphi}+\norml{\varphi}$.
  \item[4] $D_\varphi=\Sigma$ then we say that $\varphi$ has the Bowen property.
 \end{enumerate}
\end{definition}

 \begin{notation}
 We denote by $\Bow{\Sigma} (\text{ resp. }\Bow[][\mu]{\Sigma})$ the set of weak ($\mu-$weak) Bowen functions on $\Sigma$. 
 \end{notation}

Usually, the literature deals with Bowen functions which are furthermore continuous. Let us justify introducing the weaker definition here by stating two theorems (to be proven in the next section) which will be used to establish \Cref{thm:A}. 

\begin{theorem}\label{thm:representation1}
Let $R\in\Mat_{d}(\{0,1\})$ be an aperiodic and irreducible matrix and consider $\Sigma$ the SFT that it determines. Given $\bm{B}\in \QCB{\Sigma}$ and $\mu\in \PTM{\tau}{\Sigma}$, there exist $E=E_{\bm{B}}>0$ and $\varphi_{\bm{B},\mu}\in \Lp{\oo}{\mu}$ defined on a full measure set $\Sigma_0(\mu)$ so that
\[
(\forall \seq{x}\in\Sigma_0(\mu), n\geq 1):\     |B_n(\seq{x})-S_n\varphi_{\bm{B},\mu}(\seq{x})|<E.
\]
\end{theorem}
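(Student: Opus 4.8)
The statement asks, given a Bowen quasi-cocycle $\bB=(B_n)_{n\ge 1}$ and an invariant measure $\mu$, for a single function $\varBmu\in\Lp[\oo](\mu)$ such that the Birkhoff sums $S_n\varBmu$ track $B_n$ with a uniform error. The natural candidate for $\varBmu(\ux)$ is the ``increment'' $B_1$ measured along the orbit, corrected by the defect; more precisely I would try to define $\varBmu$ as a limit of telescoped increments. Concretely, first let me pass to a homogeneous-type normalization: using the sub-additivity coming from $\norm{\delta\bB}<\oo$, the sequence $(B_n(\ux)+\norm{\delta\bB})_n$ is sub-additive along the orbit in the sense that $B_{n+m}(\ux)\le B_n(\ux)+B_m(\tau^n\ux)+\norm{\delta\bB}$, so one expects $\frac1n B_n(\ux)$ to converge $\mu$-a.e.\ by the sub-additive ergodic theorem to an invariant function $\bar\beta$. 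The plan is to set $\varBmu(\ux)$ to be the ``derivative'' of the corrected cocycle, i.e.\ roughly $\varBmu(\ux)=\lim_k\big(B_{n_k+1}(\ux)-B_{n_k}(\tau\ux)\big)$ along a suitable subsequence, or—cleaner—to build it via a martingale/conditional expectation argument: put $\varphi^{(k)}(\ux)=B_{k+1}(\ux)-B_k(\tau\ux)$ and show $(\varphi^{(k)})_k$ is Cauchy in an appropriate sense so its limit $\varBmu$ exists a.e., then check $|B_n(\ux)-S_n\varBmu(\ux)|<E$ by telescoping.

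The key steps, in order, are: (1) Record the almost-additivity inequalities for $\bB$ and the consequence that $B_{n+m}-B_n-B_m\circ\tau^n$ is uniformly bounded; deduce that along any orbit the ``coboundary discrepancies'' $r_k(\ux):=B_{k+1}(\ux)-B_k(\tau\ux)-B_1(\ux)$ are uniformly bounded by a constant depending only on $\norm{\delta\bB}$. (2) Show that the sequence $\big(B_1(\ux)+\sum_{j=1}^{k-1}r_j(\tau^j\ux)\big)$ stabilizes up to bounded error — this is where one must extract a genuine a.e.\ limit rather than just a bounded oscillation; I would use that the Bowen variation bound $\normB{\bB}<\oo$ forces the increments to depend less and less on the tail, so a Cauchy estimate in $\Lp[\oo]$ (or pointwise a.e.\ via Borel–Cantelli along cylinders, using $\var_k(B_k)\le\normB{\bB}$) produces the limit function $\varBmu$, bounded because $|B_1|\le\norml{B_1}{\oo}$ and the tail sums are uniformly bounded. (3) Define $\Sigma_0(\mu)$ as the $\tau$-invariant full-measure set where this limit exists along the whole orbit, and verify the telescoping identity $S_n\varBmu(\ux)=\sum_{j=0}^{n-1}\varBmu(\tau^j\ux)$ differs from $B_n(\ux)$ by a sum of discrepancy terms, each bounded, but—crucially—the partial sums of the discrepancies must themselves be uniformly bounded, not merely each term; this comes from re-summing using $\norm{\delta\bB}$ again (a single almost-additivity application comparing $B_n(\ux)$ to $\sum_{j=0}^{n-1}B_1(\tau^j\ux)$ plus lower-order Bowen corrections). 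Then set $E=E_{\bB}$ to be the resulting explicit bound in terms of $\normB{\bB}$ and $\norm{\delta\bB}$.

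The main obstacle I anticipate is step (2): producing an honest function $\varBmu$ defined \emph{pointwise a.e.}\ (so that $S_n\varBmu$ even makes sense) rather than just an equivalence class or a bounded-oscillation object. The almost-additivity alone gives boundedness of discrepancies but not convergence; one genuinely needs the Bowen variation hypothesis $\normB{\bB}<\oo$ to get that the local increments converge, and one must be careful that the exceptional null set where convergence fails is $\tau$-invariant so that $S_n\varBmu$ is well defined on all of $\Sigma_0(\mu)$. A secondary subtlety is that the error bound $E$ must be uniform in both $n$ and $\ux\in\Sigma_0(\mu)$: this forces the telescoping comparison in step (3) to be organized so that one applies the defect bound $\norm{\delta\bB}$ a \emph{bounded} number of times (e.g.\ a dyadic regrouping $B_n\approx B_{n/2}+B_{n/2}\circ\tau^{n/2}$ giving an $O(\log n)$ naive bound, then improved to $O(1)$ using the Bowen bound to control the accumulated local errors), rather than $n$ times. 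If a clean $O(1)$ bound resists this route, I would instead invoke the structure already available: by \Cref{pro:cohqmigualcohqc} the class $[\bB]$ corresponds to a quasi-morphism on $\mathscr W$, pass to its homogeneous representative via \Cref{lem:homquasimorphism}, and transfer the uniform bounds from the word-combinatorial side, where the $(k-1)\norm{\delta L}$ estimates of \Cref{lem:boundswords} can be upgraded using cyclicity to the needed uniform form.
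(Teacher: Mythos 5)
Your instinct that the construction should come from orbit increments is sound, and your step (1) — that the discrepancies $r_k(\ux)=B_{k+1}(\ux)-B_k(\tau\ux)-B_1(\ux)$ are uniformly bounded by $\norm{\delta\bB}$ — is correct and immediate from the definition. But the plan stalls exactly where you suspect it might, at step (2): the local increments $\varphi^{(k)}(\ux)=B_{k+1}(\ux)-B_k(\tau\ux)$ need \emph{not} converge as $k\to\infty$, even $\mu$-a.e. The Bowen hypothesis gives only $\sup_k\var_k(B_k)\leq\normB{\bB}<\infty$; it is a bound, not a summability condition, so there is no Cauchy estimate in $k$ and the Borel--Cantelli argument you sketch does not apply. (Contrast a H\"older potential, where $\var_k$ decays geometrically and the increments genuinely do converge.) Already for $B_n=L(x_0\cdots x_{n-1})$ with $L\in\Qm(\mathscr{W})$ arbitrary, the quantity $L(x_0\cdots x_k)-L(x_1\cdots x_k)$ is uniformly bounded but can oscillate forever along a fixed $\ux$. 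Your fallback via the homogeneous representative $\bar L$ (\Cref{lem:homquasimorphism}) does not repair this: homogeneity normalizes the values on powers of periodic words but says nothing about pointwise convergence of increments along a general orbit.

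The paper (\Cref{sub:potentialothermeasures}) resolves this with a genuinely different device, modelled on Burkholder's proof of Kingman's subadditive ergodic theorem. After passing from $\bB$ to a quasi-morphism $L$ via \Cref{pro:cohqmigualcohqc}, one does not take local increments but the \emph{Ces\`aro-averaged} increments
\[
\zeta_n(\ux)=\frac{1}{n}\sum_{k=1}^{n}\bigl(L(x_0\cdots x_k)-L(x_1\cdots x_k)\bigr),
\]
which are uniformly bounded by $\norml{L|\cA}{\oo}+\norm{\delta L}$. Re-summing the double sum $S_T\zeta_n$ (\Cref{lem:estimatezn}) telescopes it down to boundary terms and gives
\[
|S_T\zeta_n(\ux)-L(x_0\cdots x_{T-1})|\leq \norm{\delta L}\Bigl(\frac{2T}{n}+1-\frac{T}{n}\Bigr)+\frac{T}{n}\norml{L|\cA}{\oo},
\]
which tends to $\norm{\delta L}$ as $n\to\infty$ for each fixed $T$. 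This also eliminates your secondary worry about the telescoping error: there is no $O(\log n)$ to improve, since the averaging makes the defect enter $O(1)$ many times, and the Bowen variation bound is not used in this estimate at all. Finally, to turn the uniformly bounded sequence $(\zeta_n)$ into an actual pointwise function, the paper invokes Komlos' theorem, which supplies a subsequence $(r(n))$ with $\frac1N\sum_{n=1}^N\zeta_{r(n)}\to\varLmu$ $\mu$-a.e. This is the ingredient your outline is missing: a compactness principle that delivers pointwise a.e.\ convergence; weak-$*$ compactness of $(\zeta_n)$ in $\Lp[\oo]$ would not suffice, since a weak-$*$ limit cannot be evaluated along orbits to form $S_n\varBmu$. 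The estimate then passes to the limit by bounded convergence, yielding $E_{\bB}=\norm{\delta L}$, and the remaining part of your outline (the $\tau$-invariant full-measure domain $\Sigma_0(\mu)$) matches \Cref{pro:potentialarbitrary}.
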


 \begin{corollary}\label{cor:Bowenpotential}
 In the same hypotheses as the previous theorem, if $\mu\in \PTM{\tau}{\Sigma}$ has full support, then $\varphi_{\bm{B},\mu}$ is a $\mu-$weak Bowen function. 
 \end{corollary}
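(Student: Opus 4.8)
The plan is to verify the three defining properties of a $\mu$-weak Bowen function from \Cref{def:Bowenpropertyfunction} for the function $\varBmu$ furnished by \Cref{thm:representation1}, after passing to a slightly smaller full-measure domain. The only ingredient that requires an actual estimate is the Bowen bound; everything else is bookkeeping about the domain, and that bookkeeping is where the (mild) care is needed.

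First I would shrink the domain so that $\varBmu$ is genuinely bounded and the domain is forward $\tau$-invariant. Since $\varBmu\in\Lp[\oo](\mu)$, the set $\Sigma_0'=\Sigma_0(\mu)\cap\{|\varBmu|\leq\normL{\varBmu}{\oo}\}$ still has full $\mu$-measure; put $D:=\bigcap_{n\geq 0}\tau^{-n}\Sigma_0'$. Because $\mu$ is $\tau$-invariant, $\mu(\tau^{-n}\Sigma_0')=1$ for every $n$, hence $\mu(D)=1$; by construction $\tau(D)\subseteq D$, so the Birkhoff sums $S_n\varBmu(\ux)=\sum_{k=0}^{n-1}\varBmu(\tau^k\ux)$ are defined pointwise for every $\ux\in D$, and $|\varBmu|\leq\normL{\varBmu}{\oo}$ on $D$. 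Moreover $D$ is dense: a full-measure set must meet every nonempty open set since $\mu$ is fully supported. Finally $D\subseteq\Sigma_0(\mu)$, so the conclusion of \Cref{thm:representation1} still applies: $|B_n(\ux)-S_n\varBmu(\ux)|<E$ for all $\ux\in D$, $n\geq 1$.

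It remains to check the Bowen estimate. Let $\ux,\uy\in D$ with $\uy\in[\ux]_n$, i.e.\ $[\ux]_n=[\uy]_n$. Inserting $B_n(\ux)$ and $B_n(\uy)$ and using the triangle inequality,
\[
|S_n\varBmu(\ux)-S_n\varBmu(\uy)|\leq |S_n\varBmu(\ux)-B_n(\ux)|+|B_n(\ux)-B_n(\uy)|+|B_n(\uy)-S_n\varBmu(\uy)|.
\]
The outer two terms are each $<E$ by \Cref{thm:representation1}, and the middle term is at most $\var_n(B_n)\leq\normB{\bB}<\oo$ because $\bB\in\QCB$. Hence $\varBmu$ has finite Bowen constant, bounded by $2E+\normB{\bB}$, and all conditions of \Cref{def:Bowenpropertyfunction} hold; in particular $\varBmu\in\mathrm{Bow}_\mu(\Sigma)$.

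I do not expect any genuine obstacle here — the statement really is a corollary of \Cref{thm:representation1} — but the one place worth being careful is ensuring the domain of $\varBmu$ can be taken $\tau$-invariant (so the Birkhoff sums are defined everywhere on it, not merely $\mu$-a.e.) while still sitting inside $\Sigma_0(\mu)$; this is exactly why I would take the intersection $\bigcap_{n\geq 0}\tau^{-n}\Sigma_0'$ rather than work with $\Sigma_0(\mu)$ directly.
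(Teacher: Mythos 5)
Your proof is correct, and it is the argument the paper has in mind when it says the corollary "follows directly" from Theorem~\ref{thm:representation1}: the triangle inequality $|S_n\varBmu(\ux)-S_n\varBmu(\uy)|\leq |S_n\varBmu(\ux)-B_n(\ux)|+|B_n(\ux)-B_n(\uy)|+|B_n(\uy)-S_n\varBmu(\uy)|<2E+\normB{\bB}$ is exactly the needed estimate. Your extra care in replacing $\Sigma_0(\mu)$ by $D=\bigcap_{n\geq 0}\tau^{-n}\Sigma_0'$ to make the domain forward-invariant (and hence have the Birkhoff sums defined pointwise rather than merely $\mu$-a.e.) is a small but genuine tightening of bookkeeping the paper leaves implicit.
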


In this part we will look at how unique the function constructed in the previous theorem is: this will lead us to develop some version of Livšic cohomology \cite{Livshits1971} for $\Lp{\oo}-$functions. Complementary to the above, we also have.

\begin{theorem}\label{thm:representation2}
 Let $\Sigma$ be as above. If $\varphi :D_\varphi\to\R$ is a weak Bowen function then there exists a canonically defined measure $\mu_\varphi\in \PTM{\tau}{\Sigma}$ such that $D_\varphi^c$ is a $\mu_\varphi$-null set. In particular, $\mu_\varphi$ has full support.

 Moreover, this measure is ergodic.
 \end{theorem}

This shows that there is essentially no difference between weak Bowen functions and locally constant quasicocycles. For if $\varphi :D_\varphi\to\R$ is a weak Bowen function, then we can use the measure $\mu_\varphi$ to average over cylinders the sequence $(S_n\varphi)_n$ and obtain a locally constant quasicocycle associated with $\varphi$. Since this procedure (conditioning with respect to some increasing filtration of $\sigma-$algebras) will be used repeatedly, we recall its construction below.

\paragraph{\textbf{Conditional expectation}} We consider the partitions $\xi^{n}$ defined by the Bowen balls of length $n+1$ together with the associated filtration of $\sigma-$algebras in $\BorelM[\Sigma]$,  
\[
   \ms{B}^n=\begin{dcases}
   \{\emptyset,\Sigma\} & n=-1\\
   \sigma-\text{algebra generated by }\xi^{n} & n\geq 0.
   \end{dcases}
\]
 We write $\xi=\xi^{0}=\{[a]:a\in\ms{A}\}$, and hence $\xi^{n}=\bigvee_{k=0}^{n-1}\tau^{-k}\xi$. It follows that for any probability measure $\mu\in \ProbM[\Sigma]$,
 \[
    \Emu{\mu}{f \given \xi^n}=\Emu{\mu}{f\given \ms{B}^n}=\sum_{\substack{A\in\xi^n\\ \mu(A)\neq 0}}\frac{1}{\mu(A)}\int_A f \dd\mu\cdot \one_A
  \]
where $\one_A$ is the characteristic function of $A$.

Consider $\varphi:D_\varphi\to\R$ with the weak Bowen property, and let $\mu_{\varphi}$ be the $\tau$-invariant measure associated to it given in \Cref{thm:representation2}. Define $\bm{B}^{\varphi}=(B_n^{\varphi}):\Sigma\to\R$ with

 \begin{align}\label{eq:Basociadovarphi}
 B_n^{\varphi}=\Emu{\mu_{\varphi}}{S_n\varphi \given \xi^n}.
 \end{align}

\begin{lemma}\label{lem:boundonpotential}
 It holds that $\bm{B}^{\varphi}\in \QCB[l]{\Sigma}$, and furthermore $\norm{\delta \bm{B}^{\varphi}}\leq 6\norm[B]{\varphi}$. 
 \end{lemma}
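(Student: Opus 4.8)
The plan is to observe that the local-constancy of $\bB^{\varphi}$ is built into its construction, so the only real content of the lemma is the defect bound; I would obtain it as a soft consequence of the weak Bowen property once \Cref{thm:representation2} is available. First I would use that $\muL[\varphi]$ is fully supported to fix the canonical everywhere-defined representative of $B_n^{\varphi}=\ie{\muL[\varphi]}{S_n\varphi|\xi^n}$, namely the function whose value at $\ux$ is the $\muL[\varphi]$-average of $S_n\varphi$ over the atom of $\xi^n$ containing $\ux$ (every such atom has positive measure). This is a bounded Borel function ($\norml{B_n^{\varphi}}{\oo}\le n\norml{\varphi}{\oo}<\oo$), constant on the atoms of $\xi^n$, hence $\var_n(B_n^{\varphi})=0$ for every $n$; so $\bB^{\varphi}$ is locally constant and $\normB{\bB^{\varphi}}=0$.

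Next I would record that the weak Bowen inequality passes to cylinder averages. Given $C>\normB{\varphi}$, one can pick a $\tau$-invariant full-measure set $G\subset D(\varphi)$ on which $|S_n\varphi(\ux)-S_n\varphi(\uy)|\le C$ for all $n$ and all $\ux,\uy\in G$ with $\uy\in[\ux]_n$ (using that $D(\varphi)$ is $\tau$-invariant and $\muL[\varphi](D(\varphi)^c)=0$ by \Cref{thm:representation2}). Since each $\xi^n$ is finite, $\muL[\varphi](G^c)=0$ forces $\muL[\varphi](A\cap G^c)=0$ for every atom $A$ of every $\xi^n$; hence for $\ux\in G$ the average defining $B_n^{\varphi}(\ux)$ is effectively taken over $A\cap G$, where the values of $S_n\varphi$ all lie within $C$ of $S_n\varphi(\ux)$. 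This yields $|B_n^{\varphi}(\ux)-S_n\varphi(\ux)|\le C$ for all $\ux\in G$ and all $n\ge 1$.

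For the defect, I would fix $n,m$ and write, for $\ux\in G$ (so $\tau^n\ux\in G$ too), using the Birkhoff cocycle identity $S_{n+m}\varphi(\ux)=S_n\varphi(\ux)+S_m\varphi(\tau^n\ux)$ valid on $D(\varphi)$,
\[
	B_{n+m}^{\varphi}(\ux)-B_n^{\varphi}(\ux)-B_m^{\varphi}(\tau^n\ux)=\bigl[B_{n+m}^{\varphi}-S_{n+m}\varphi\bigr](\ux)-\bigl[B_n^{\varphi}-S_n\varphi\bigr](\ux)-\bigl[B_m^{\varphi}-S_m\varphi\bigr](\tau^n\ux),
\]
and bound each of the three brackets by $C$ via the previous step, obtaining $3C$ for all $\ux\in G$. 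The left-hand side is $\xi^{n+m}$-measurable (since $B_n^{\varphi}$ is $\xi^{n}$-measurable and $B_m^{\varphi}\circ\tau^n$ is $\tau^{-n}\xi^{m}$-measurable, and both $\xi^{n}$ and $\tau^{-n}\xi^{m}$ are coarser than $\xi^{n+m}$), hence constant on atoms of $\xi^{n+m}$; as $G$ meets each such atom in full measure and $\muL[\varphi]$ is fully supported, the bound $3C$ holds everywhere on $\Sigma$. Letting $C\downarrow\normB{\varphi}$ gives $|\delta\bB^{\varphi}_{n,m}|\le 3\normB{\varphi}$ for all $n,m$, so $\norm{\delta\bB^{\varphi}}\le 3\normB{\varphi}\le 6\normB{\varphi}$, and combined with the first paragraph, $\bB^{\varphi}\in\QCB^{l}$.

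I do not expect a genuine obstacle here. The only point requiring care is the measure-theoretic bookkeeping in the middle steps: arranging the good set $G$ to be simultaneously $\tau$-invariant and of full measure in every atom of every $\xi^n$, and then using full support of $\muL[\varphi]$ to upgrade the almost-everywhere bound on the (locally constant) defect to a genuine pointwise bound. If one prefers to avoid passing through the representative of $B_n^{\varphi}$, the same conclusion follows by comparing $\ie{\muL[\varphi]}{S_n\varphi|\xi^{n+m}}$ with $B_n^{\varphi}$ and the $\tau^n$-shifted analogue directly, at the cost of the weaker constant $4\normB{\varphi}$ — still within the stated bound.
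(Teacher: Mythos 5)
Your proof is correct and follows essentially the same route as the paper: compare each of $B^{\varphi}_{n+m}$, $B^{\varphi}_n$, $B^{\varphi}_m\circ\tau^n$ with the corresponding Birkhoff sum via the weak Bowen property, then use the cocycle identity $S_{n+m}\varphi=S_n\varphi+S_m\varphi\circ\tau^n$. You in fact obtain the sharper bound $3\normB{\varphi}$; the paper uses the looser per-term estimate $|B_n^{\varphi}(\uy)-S_n\varphi(\ux)|\le 2\normB{\varphi}$ (a more conservative handling of the almost-everywhere issues you treat via the invariant good set $G$) and so lands on $6\normB{\varphi}$.
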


\begin{proof}
 By construction $\bm{B}^{\varphi}$ consists of locally constant functions. If $\bm{a}\in \ms{W}_n, \seq{x}\in[\bm{a}]\cap D_\varphi, \seq{y}\in [\bm a]$, then $|B_n^{\varphi}(\seq{y})-S_n\varphi(\seq{x})|\leq 2\norm[B]{\varphi}$. From here the bound on the norm follows directly, and thus $\bm{B}^{\varphi}\in \QCB[l]{\Sigma}$. 
\end{proof}

 There is a natural notion of cohomology associated with weak Bowen functions which, in case of functions with the Bowen property, coincides with that given by the Livšic theorem, i.e., two functions with the Bowen property are cohomologous in the Livšic sense if and only if they have the same integral with respect to any $\tau$-invariant measure. To explain this we first define the integral for this more general class of functions. 

\paragraph{\textbf{Integration}}We define integrals of quasicocycles with respect to invariant measures. If $\bm{B}=(B_n)_n\in\QCB{\Sigma}$ and $\mu\in \PTM{\tau}{\Sigma}$, we define
\[
 \mu(\bm{B}):=\lim_{n\to\infty}\frac{1}{n}\int B_n \dd\mu.
 \]
 Existence of this limit follows from sub-additivity of the real valued sequence $(\int B_n \dd\mu)_n$.

 \begin{example}\label{ex:integraldeL}
 Let $L\in\QM{\Fix{\Sigma}}$ and construct an associated quasicocycle $\bm{B}^L=(L^{(n)})_n\in\QCB{\Sigma}$ by proceeding first as in \Cref{pro:cohomologyvscohomologyfixed}, and then considering the associated quasicocycle. If $\mu\in \PTM{\tau}{\Sigma}$ then
 \[
 	\mu(L) \defeq \mu(\bm{B}^L)=\lim_{n}\frac{1}{n}\sum_{\bm{a}\in \ms{W}_n} \mu([\bm{a}]) L(\bm{a}).
 \]
 It is worth noticing that $\mu(L)$ does not depend on the choices made to extend $L$ to a fully defined quasimorphism, and likewise, it is invariant in the cohomology class $[L]\in \QMt{\Fix{\Sigma}}$.
\end{example}

\begin{proposition}\label{pro:integracioncontinua}
If $\bm{B}=(B_n)_n \in \QCB[c]{\Sigma}$ then the map $\PTM{\tau}{\Sigma}\ni\mu\to \mu(\bm{B})$ is weakly continuous. As a consequence, if $L$ is a quasimorphism of a mixing SFT then $\PTM{\tau}{\Sigma}\ni\mu\to \mu(L)$ is weakly continuous.
\end{proposition}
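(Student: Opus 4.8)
The plan is to reduce the continuity statement to a uniform-in-$n$ comparison between the averaged integrals $\frac1n\int B_n\,\der\mm$ and their limit $\mm(\bB)$, exploiting the continuity of the finitely many functions $B_n$ together with the almost-additive (quasi-cocycle) structure. First I would record the elementary estimate coming from subadditivity and Definition~\ref{def:quasicocycles}: if $\bB=(B_n)_n\in\QCB^c$, then for all $n,k\geq 1$ one has $|B_{nk}-\sum_{j=0}^{k-1}B_n\circ\tau^{jn}|\leq (k-1)\norm{\delta\bB}$ (iterating the defect bound), hence for every $\mm\in\PTM{\tau}{\Sigma}$,
\[
\left|\frac{1}{nk}\int B_{nk}\,\der\mm-\frac1n\int B_n\,\der\mm\right|\leq \frac{k-1}{nk}\,\norm{\delta\bB}\leq \frac{\norm{\delta\bB}}{n}.
\]
Letting $k\to\infty$ and using the definition of $\mm(\bB)$ gives the key uniform bound
\[
\left|\mm(\bB)-\frac1n\int B_n\,\der\mm\right|\leq \frac{\norm{\delta\bB}}{n}\qquad\text{for all }n\geq 1,\ \text{all }\mm\in\PTM{\tau}{\Sigma}.
\]
Crucially, the right-hand side does not depend on $\mm$.

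Now fix $\eps>0$ and choose $n$ with $\norm{\delta\bB}/n<\eps/3$. Since $\bB$ is a \emph{continuous} quasi-cocycle, $B_n\in\cC(\Sigma)$, so $\mm\mapsto\frac1n\int B_n\,\der\mm$ is weakly continuous on $\PTM{\tau}{\Sigma}$ by the very definition of weak convergence of measures. Hence there is a weak neighborhood $U$ of any given $\mm_0$ such that $\mm\in U$ implies $\big|\frac1n\int B_n\,\der\mm-\frac1n\int B_n\,\der\mm_0\big|<\eps/3$. Combining with the uniform bound above applied to both $\mm$ and $\mm_0$ yields $|\mm(\bB)-\mm_0(\bB)|<\eps$ for $\mm\in U$, which is exactly weak continuity of $\mm\mapsto\mm(\bB)$.

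For the consequence about quasi-morphisms of a mixing SFT: given $L\in\Qm(\mathscr{W})$, form the associated quasi-cocycle $\bB^L=(L^{(n)})_n$ as in Definition~\ref{def:quasicocycleL}; each $L^{(n)}$ is locally constant, hence continuous, so $\bB^L\in\QCB^c$, and $\mm(L)=\mm(\bB^L)$ by Example~\ref{ex:integraldeL}. Applying the first part to $\bB^L$ gives weak continuity of $\mm\mapsto\mm(L)$. (Mixing of the SFT is not strictly needed for this argument, but it guarantees $\PTM{\tau}{\Sigma}$ is the natural simplex on which one wants the statement; if the paper uses mixing elsewhere to ensure $\bB^L$ genuinely extends to a Bowen quasi-cocycle, that is already subsumed in Proposition~\ref{pro:cohqmigualcohqc}.)

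The only potential obstacle is making sure the subadditive comparison is two-sided and truly uniform in $\mm$ — i.e., that $\norm{\delta\bB}$, not some $\mm$-dependent constant, controls the gap; this is immediate from the sup-norm definition of $\norm{\delta\bB}$ in Definition~\ref{def:quasicocycles}, since $|\delta\bB_{n,m}|$ bounds the integrand pointwise regardless of $\mm$. Beyond that the proof is a routine $\eps/3$ argument, so I do not anticipate any real difficulty.
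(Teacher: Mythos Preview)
Your proof is correct but takes a genuinely different route from the paper. The paper invokes Theorem~\ref{thm:cuneo} (Cuneo's result) to produce a single continuous $\phi\in\CM[\Sigma]$ with $\lim_n\frac1n\norml{B_n-S_n\phi}{\oo}=0$; this immediately gives $\mm(\bB)=\int\phi\,\der\mm$ for every invariant $\mm$, and weak continuity then reduces to the trivial continuity of $\mm\mapsto\int\phi\,\der\mm$. Your argument instead extracts the uniform bound $\bigl|\mm(\bB)-\frac1n\int B_n\,\der\mm\bigr|\leq\norm{\delta\bB}/n$ directly from the defect condition and runs an $\eps/3$ argument with the continuous $B_n$ themselves. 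Your approach is more self-contained (it avoids the nontrivial external input of Cuneo's theorem and works from the quasi-cocycle axioms alone), while the paper's route has the conceptual payoff of exhibiting $\mm\mapsto\mm(\bB)$ as integration against a \emph{single} continuous function, which is a slightly stronger structural statement. Both handle the second part identically, via the locally constant quasi-cocycle $(L^{(n)})_n$.
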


\begin{proof}
The first part follows from \Cref{thm:cuneo}: if $\phi\in \Cr{\Sigma}$ is so that $\lim_n \frac{1}{n}\norml{B_n-S_n\phi}=0$, then for every $\mu\in \PTM{\tau}{\Sigma}$ it holds $\mu(\bm{B})=\int \phi\dd\mu$.

The second part follows by observing that the quasicocycle associated to a quasimorphism consists of continuous (locally constant) functions.
\end{proof}

\paragraph{Periodic points} Let us investigate the behavior at periodic points. For $\seq{x}\in \Fix[][N]{\tau}$, let $\mu_{\seq{x}}=\frac{1}{N}\sum_{j=0}^{N-1}\delta_{\tau^j\seq{x}}$ be the invariant probability measure supported on the orbit of $\seq{x}$. Each periodic point $\seq{x}$ is associated to a periodic word $\bm{a}\in\Fix{\ms W}$ ($\seq{x}=\seq{p}(\bm{a})$), thus we write $\mu_{\bm a}=\mu_{\seq{x}}$.

 \begin{lemma}
   If $\seq{x}\in \Fix[][N]{\tau}$ and $m>0$ then 
   \[
   \left|\frac{B_{m N}(\seq{x})}{m}-B_{N}(\seq{x})\right|\leq \norm{\delta \bm B}.
   \]
 \end{lemma}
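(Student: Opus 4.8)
The plan is to exploit the almost-additivity of $\bB$ together with the periodicity $\tau^N\ux=\ux$, and then telescope. First I would record the one-step estimate: for any $k\geq 2$ the defining bound $|\delta\bB_{(k-1)N,N}|\leq\norm{\delta\bB}$ gives
\[
\bigl|B_{kN}(\ux)-B_{(k-1)N}(\ux)-B_N(\tau^{(k-1)N}\ux)\bigr|\leq\norm{\delta\bB},
\]
and since $\ux\in\Fix_N(\tau)$ we have $\tau^{(k-1)N}\ux=\ux$, so $B_N(\tau^{(k-1)N}\ux)=B_N(\ux)$, whence
\[
\bigl|B_{kN}(\ux)-B_{(k-1)N}(\ux)-B_N(\ux)\bigr|\leq\norm{\delta\bB}\qquad(k=2,\dots,m).
\]

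Next I would telescope: writing $B_{mN}(\ux)-B_N(\ux)=\sum_{k=2}^{m}\bigl(B_{kN}(\ux)-B_{(k-1)N}(\ux)\bigr)$ and subtracting $(m-1)B_N(\ux)$ from both sides, the triangle inequality applied to the previous display yields
\[
\bigl|B_{mN}(\ux)-mB_N(\ux)\bigr|\leq(m-1)\norm{\delta\bB}.
\]
Dividing by $m$ gives $\bigl|B_{mN}(\ux)/m-B_N(\ux)\bigr|\leq\frac{m-1}{m}\norm{\delta\bB}\leq\norm{\delta\bB}$, which is the asserted inequality (with the slightly sharper factor $(m-1)/m$ when $\norm{\delta\bB}>0$). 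The case $m=1$ is trivial.

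There is essentially no obstacle here; the only point requiring a little care is the index bookkeeping, namely running the telescope from $k=2$ so that every term $B_{(k-1)N}$, $B_{kN}$, $B_N$ carries a positive index and the quasi-cocycle bound $|\delta\bB_{(k-1)N,N}|\leq\norm{\delta\bB}$ applies verbatim; equivalently one may simply set $B_0\equiv 0$ and start the sum from $k=1$.
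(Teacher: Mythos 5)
Your proof is correct, and since the paper only writes ``Straightforward,'' your telescoping argument is exactly the intended one: apply the defect bound with $n=(k-1)N$, $m=N$, use $\tau^{(k-1)N}\ux=\ux$, and sum. You even obtain the slightly sharper constant $\frac{m-1}{m}\norm{\delta\bB}$.
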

The proof is straightforward.

\begin{lemma}\label{lem:medidasperiodicasac}
 	If $\seq{x}\in \Fix[][N]{\tau}$ then 
    \[
    \mu_{\seq{x}}(\bm{B})=\lim_{n\to\infty}\frac{B_{n}(\seq{x})}{n}.
    \]
\end{lemma}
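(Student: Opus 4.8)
The plan is to compare the orbit average $\tfrac1n\int B_n\,\der\mm_{\ux}$ with the single quantity $\tfrac{B_n(\ux)}{n}$ directly and to check that the two differ by $O(1/n)$. Since $\mm_{\ux}\in\PTM{\tau}{\Sigma}$ (periodicity of $\ux$ is used precisely here, to make $\mm_{\ux}$ invariant), the quantity $\mm_{\ux}(\bB)=\lim_n\tfrac1n\int B_n\,\der\mm_{\ux}$ exists by the subadditivity remark preceding \Cref{ex:integraldeL}; the $O(1/n)$ comparison will then force $\lim_n\tfrac{B_n(\ux)}{n}$ to exist as well and to equal $\mm_{\ux}(\bB)$, which is exactly the assertion.

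Concretely, with $N=\per$ we have $\int B_n\,\der\mm_{\ux}=\tfrac1N\sum_{j=0}^{N-1}B_n(\tau^j\ux)$, so it suffices to produce a constant $C$, independent of $n$, with $|B_n(\tau^j\ux)-B_n(\ux)|\le C$ for $0\le j\le N-1$. For this I would write $B_{n+j}(\ux)$ in the two ways allowed by the quasi-cocycle inequality $|\delta\bB_{p,q}|\le\norm{\delta\bB}$: on one hand $B_{n+j}(\ux)=B_j(\ux)+B_n(\tau^j\ux)+e_1$, on the other $B_{n+j}(\ux)=B_n(\ux)+B_j(\tau^n\ux)+e_2$, with $|e_1|,|e_2|\le\norm{\delta\bB}$. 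Subtracting gives $|B_n(\tau^j\ux)-B_n(\ux)|\le |B_j(\tau^n\ux)|+|B_j(\ux)|+2\norm{\delta\bB}\le C:=2\max_{1\le j\le N-1}\norml{B_j}{\oo}+2\norm{\delta\bB}$, which is finite since the $B_j$ are bounded and $j$ ranges over a finite set (the case $N=1$ being trivial, as then $\mm_{\ux}=\delta_{\ux}$). Summing over $j$ and dividing by $nN$ yields $\bigl|\tfrac1n\int B_n\,\der\mm_{\ux}-\tfrac{B_n(\ux)}{n}\bigr|\le \tfrac{C}{n}\to 0$, and letting $n\to\infty$ finishes the proof.

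There is no real obstacle here; the only points needing a word of care are invoking the earlier subadditivity remark to know $\mm_{\ux}(\bB)$ is defined, and noting that $C$ is finite. Should one prefer to avoid relying on the former, an alternative is to first handle the subsequence $n=mN$, where $\tau^{mN}\ux=\ux$ makes $(B_{mN}(\ux)+\norm{\delta\bB})_m$ genuinely subadditive and hence $\tfrac{B_{mN}(\ux)}{mN}$ convergent, and then pass to arbitrary $n=mN+r$ via $B_n(\ux)=B_{mN}(\ux)+B_r(\tau^{mN}\ux)+O(\norm{\delta\bB})=B_{mN}(\ux)+O(1)$; this also recovers the (previously only implicit) existence of $\lim_n\tfrac{B_n(\ux)}{n}$.
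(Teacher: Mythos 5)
Your proof is correct, and it takes a mildly different but arguably cleaner route than the paper's. The paper first invokes the preceding lemma to argue that for each fixed $j$ the sequence $\tfrac1n B_n(\tau^j\ux)$ converges, and then uses the one-sided quasi-cocycle estimate $|B_{n+j}(\ux)-B_j(\ux)-B_n(\tau^j\ux)|\le\norm{\delta\bB}$ to identify all $N$ of those limits with $\lim_n\tfrac1n B_n(\ux)$. You instead invoke the general existence of $\mm_{\ux}(\bB)$ (subadditivity of $\int B_n\,\der\mm$ for any invariant $\mm$) and then establish a \emph{uniform in $n$} bound $|B_n(\tau^j\ux)-B_n(\ux)|\le C$ by expanding $B_{n+j}(\ux)$ two ways — once starting with the first $j$ steps, once starting with the first $n$ steps — which is a decomposition the paper does not use. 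The net effect is the same, but your version quantifies the error as $O(1/n)$ and, via the alternative you sketch at the end (subadditivity along the subsequence $n=mN$, using $\tau^{mN}\ux=\ux$), supplies an argument for why $\lim_n\tfrac1n B_n(\ux)$ exists in the first place; the paper leaves this step implicit, since its appeal to the previous lemma only yields a bound, not convergence. So your write-up is, if anything, a little more self-contained than the original.
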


\begin{proof} By definition,
\[
 \mu_{\seq{x}}(\bm{B})=\frac{1}{N}\lim_{n\to\infty}\sum_{j=0}^{N-1}\frac{1}{n}B_{n}(\tau^j\seq{x}).
 \]
 For $j$ fixed the previous lemma implies that $(\frac{1}{n}B_n(\tau^j\seq{x}))_{n}$ is convergent, and since $|B_{n+j}(\seq{x})-B_j(\seq{x})-B_n(\tau^j\seq{x})|\leq \norm{\delta \bm{B}}$ we get that $\lim_{n\to\infty}\frac{1}{n}B_n(\tau^j\seq{x})=\lim_{n\to\infty}\frac{1}{n}B_n(\seq{x})$, which finishes the proof.
 \end{proof}

 \begin{example}\label{ex:integralLrespperiodicas}
 Let $L\in\QM{\Fix{\ms W}}$ and let $\bm{B}^L$ be an associated quasicocycle as in the previous example. For $\bm{a}\in\Fix[][N]{\ms W}$, $n\geq 1$ we have $L^{nN}(\seq{p}(\bm{a}))=L(\bm{a}^n)$, we deduce
 \[
 	\mu_{\bm a}(L)=\lim_n \frac{1}{n}L(\bm{a}^n)=\cl{L}(\bm{a}).
 \]
 In particular, if $L\in\HQM{\ms W}$, $\mu_{\bm a}(L)=L(\bm{a})$.
 \end{example}

\begin{proposition}\label{pro:integralalmostbowenfunctions}
Let $\varphi:D_\varphi\to\R$ be a weak Bowen function. Then for every $\mu\in \PTM{\tau}{\Sigma}$ such that $D_\varphi$ contains a full $\mu$-set it holds $\int \varphi\dd\mu=\mu(\bm{B}^{\varphi})$.
\end{proposition}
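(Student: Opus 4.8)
The plan is to reduce the statement to the ergodic theorem applied to the locally constant quasi-cocycle $\bB^{\varphi}=(B_n^{\varphi})_n$ constructed in \eqref{eq:Basociadovarphi}, exploiting the fact (\Cref{lem:boundonpotential}) that $\bB^{\varphi}\in\QCB^l$ together with the uniform estimate relating $B_n^{\varphi}$ and $S_n\varphi$. First I would recall that, by definition of $\mm(\bB^{\varphi})$, one has $\mm(\bB^{\varphi})=\lim_n\frac1n\int B_n^{\varphi}\,\der\mm$, so it suffices to prove $\lim_n\frac1n\int B_n^{\varphi}\,\der\mm=\int\varphi\,\der\mm$. From the proof of \Cref{lem:boundonpotential} we know that for $\mm$-almost every $\ux$ (in fact for every $\ux$ whose cylinder $[\ux]_n$ meets $D(\varphi)$, which is $\mm$-conull since $D(\varphi)$ is dense and $\mm$ need not see it — here is the one subtlety, see below) we have $|B_n^{\varphi}(\ux)-S_n\varphi(\ux')|\le 2\normB{\varphi}$ for a suitable $\ux'\in[\ux]_n\cap D(\varphi)$; but $\varphi$ has the weak Bowen property, so $|S_n\varphi(\ux')-S_n\varphi(\uy)|\le\normB{\varphi}$ for any other $\uy\in[\ux]_n\cap D(\varphi)$, hence in particular $B_n^{\varphi}$ is within $3\normB{\varphi}$ of $S_n\varphi(\uy)$ for \emph{every} such $\uy$.

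The key step is then the following: since $D(\varphi)$ has full measure for the canonical measure $\muL[\varphi]$, but we are integrating against an \emph{arbitrary} $\mm\in\PTM{\tau}{\Sigma}$ for which $D(\varphi)$ is also assumed $\mm$-conull, I would integrate the bound $|B_n^{\varphi}(\ux)-S_n\varphi(\ux)|\le 3\normB{\varphi}$ directly over the full-measure set $D(\varphi)$ with respect to $\mm$, obtaining
\[
\left|\int B_n^{\varphi}\,\der\mm-\int S_n\varphi\,\der\mm\right|\le 3\normB{\varphi}.
\]
Dividing by $n$ and letting $n\to\infty$ kills the right-hand side, so $\lim_n\frac1n\int B_n^{\varphi}\,\der\mm=\lim_n\frac1n\int S_n\varphi\,\der\mm$. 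Finally, by $\tau$-invariance of $\mm$ we have $\int S_n\varphi\,\der\mm=n\int\varphi\,\der\mm$ (note $\varphi\in\Lp[\oo](\mm)$ since it is bounded and $D(\varphi)$ is $\mm$-conull, so each term is finite), whence $\frac1n\int S_n\varphi\,\der\mm=\int\varphi\,\der\mm$ for all $n$ and the limit is $\int\varphi\,\der\mm$. Combining, $\int\varphi\,\der\mm=\mm(\bB^{\varphi})$, as claimed.

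The main obstacle — really the only place that requires care — is the measurability/finiteness bookkeeping when passing from the almost-everywhere-defined $\varphi$ to the everywhere-defined $B_n^{\varphi}$ against a measure $\mm$ that a priori has no relation to $\muL[\varphi]$. The hypothesis that $D(\varphi)$ contains a full $\mm$-set is exactly what makes $\int\varphi\,\der\mm$, $\int S_n\varphi\,\der\mm$ well defined and the pointwise bound $|B_n^{\varphi}-S_n\varphi|\le 3\normB{\varphi}$ hold $\mm$-a.e.; I would state this explicitly and observe that $B_n^{\varphi}$, being the conditional expectation $\ie{\muL[\varphi]}{S_n\varphi|\xi^n}$, is a genuine bounded Borel (indeed locally constant) function on all of $\Sigma$, so there is no issue integrating it against $\mm$. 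Everything else is the routine subadditivity/invariance argument sketched above; I do not expect any further difficulty.
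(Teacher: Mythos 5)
Your proposal is correct and takes essentially the same route as the paper: the paper's proof writes $\int\varphi\,\der\mm=\tfrac1n\int S_n\varphi\,\der\mm=\tfrac1n\int B_n^{\varphi}\,\der\mm+\tfrac1n\varepsilon_n$ with $|\varepsilon_n|\leq 2\normB{\varphi}$ (from \Cref{lem:boundonpotential}) and lets $n\to\infty$, which is precisely your invariance-plus-uniform-bound argument. The only cosmetic difference is that you carry a constant $3\normB{\varphi}$ rather than $2\normB{\varphi}$; either is harmless since the bound is divided by $n$.
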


\begin{proof}
For each $n\ge1$ one has
\[
\int \varphi\dd\mu
=
\frac1n\int S_n\varphi \dd\mu
=
\frac1n\int B_n^\varphi\dd\mu+\frac{\varepsilon_n}{n},
\]
where $|\varepsilon_n|\le 2\norm[B]{\varphi}$. Letting $n\to\infty$ gives the result.
\end{proof}

 \begin{notation}
  Due to the above, if $\varphi:D_\varphi\to\R$ is a weak Bowen function and $\mu\in \PTM{\tau}{\Sigma}$, we write $\mu(\varphi)=\mu(\bm{B}^{\varphi})$.
  \end{notation}

\subsection{A Livšic type theorem for quasicocycles.}
 \label{ssec:Livsicquasicocycle}

Observe the following simple but important fact.

\begin{lemma}\label{lem:cohtrivialimplieszerointegral}
 If $\bm{B}\in\QCB{\Sigma}$ is cohomologically trivial then for every $\mu\in \PTM{\tau}{\Sigma}$ it holds $\mu(\bm B)=0$. 
\end{lemma}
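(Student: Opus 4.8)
The statement is immediate from unwinding the two relevant definitions, so the plan is simply to do that carefully. Recall that $\bB=(B_n)_n$ being cohomologically trivial means precisely $K:=\sup_{n\ge 1}\norml{B_n}{\oo}<\oo$, and that for $\mm\in\PTM{\tau}{\Sigma}$ the quantity $\mm(\bB)$ is \emph{defined} as the limit $\lim_{n\to\oo}\frac1n\int B_n\der\mm$ (which exists by subadditivity of $(\int B_n\der\mm)_n$, as already noted). The first step is to record the pointwise-in-$n$ estimate
\[
\left|\frac1n\int B_n\der\mm\right|\le \frac1n\int |B_n|\der\mm\le \frac1n\norml{B_n}{\oo}\le \frac{K}{n},
\]
using only that $\mm$ is a probability measure. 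The second step is to let $n\to\oo$: the right-hand side tends to $0$, hence the already-existing limit $\mm(\bB)$ equals $0$. Since $\mm\in\PTM{\tau}{\Sigma}$ was arbitrary, this proves the lemma.

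There is essentially no obstacle here: the content of the lemma is entirely in the bookkeeping of the normalization by $\tfrac1n$ in the definition of $\mm(\bB)$, and the uniform bound $K$ is exactly the hypothesis. (One does not even need invariance of $\mm$ for this direction; it will be the converse-type statements — recovering cohomological information from vanishing of all the functionals $\mm(\cdot)$, including on periodic orbits via \Cref{lem:medidasperiodicasac} — where the real work lies.)
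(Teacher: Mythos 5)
Your proof is correct and is exactly the straightforward unwinding of the definitions that the paper has in mind (the paper states the lemma without proof as a "simple but important fact"). The estimate $|\tfrac1n\int B_n\,\der\mm|\le K/n\to 0$ is precisely the intended argument, and your aside that invariance of $\mm$ is not actually needed for this direction is also accurate.
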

In this part we establish the converse.

 \begin{example}\label{ex:potentialescohomologoaL}
Let $L\in\QM{\Fix{\Sigma}}$ be a quasimorphism defined on a mixing SFT, and consider the potential $\varphi_L:\Sigma_0\to\R$ associated to $L$ as given in \Cref{thm:representation2}. Then $\bm{B}^{\varphi_L}\in \QCB[l]{\Sigma}$ is cohomologous to $\bm{B}^L$.  It follows directly from \Cref{pro:integralalmostbowenfunctions} and the previous lemma that $\int \varphi_L\dd\mu_{\varphi_L}=\mu_{\varphi_L}(\bm{B}^L)$.
\end{example}

Let us spell some basic considerations.

\begin{lemma}\label{lem:integralBnper}
Let $\seq{x}\in [\bm a], \bm a\in\Fix[][n]{\ms W}$. Then 
\[
\left|B_{n}(\seq{x})-n\cdot\mu_{\bm a}(\bm{B})\right|\leq \tnorm[B]{\bm B}.
\]
\end{lemma}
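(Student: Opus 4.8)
The plan is to split the quantity $B_n(\ux)-n\,\mm_\a(\bB)$ into two pieces: first compare $B_n(\ux)$ with $B_n(\upa)$, where $\upa\in\Fix_n(\tau)$ is the periodic point attached to $\a$, and then compare $B_n(\upa)$ with $n\,\mm_\a(\bB)$. The first comparison is controlled by the variation $\var_n(B_n)$, the second by the defect $\norm{\delta\bB}$, and adding the two estimates will give the bound $\normBB{\bB}$.

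For the first piece I would note that since both $\ux\in[\a]$ and $\upa\in[\a]$ we have $[\ux]_n=[\upa]_n$, so by the definitions of $\var_n$ and of $\normB{\cdot}$,
\[
|B_n(\ux)-B_n(\upa)|\leq \var_n(B_n)\leq \normB{\bB}.
\]
For the second piece I would apply the (unlabelled) lemma stated just before \Cref{lem:medidasperiodicasac} to $\upa\in\Fix_n(\tau)$: for every $m>0$,
\[
\left|\tfrac{B_{mn}(\upa)}{m}-B_n(\upa)\right|\leq\norm{\delta\bB},
\qquad\text{hence}\qquad
\left|\tfrac{B_{mn}(\upa)}{mn}-\tfrac{B_n(\upa)}{n}\right|\leq\tfrac{\norm{\delta\bB}}{n}\leq\norm{\delta\bB}.
\]
Letting $m\to\infty$ and invoking \Cref{lem:medidasperiodicasac} (with $N=n$), which identifies the limit of $B_{mn}(\upa)/(mn)$ with $\mm_{\upa}(\bB)=\mm_\a(\bB)$, yields $|B_n(\upa)-n\,\mm_\a(\bB)|\leq\norm{\delta\bB}$.

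Combining the two estimates via the triangle inequality gives
\[
|B_n(\ux)-n\,\mm_\a(\bB)|\leq\normB{\bB}+\norm{\delta\bB}\leq\normBB{\bB},
\]
the last step simply discarding the $\norml{B_1}{\oo}$ summand in $\normBB{\bB}=\normB{\bB}+\norm{\delta\bB}+\norml{B_1}{\oo}$. I do not expect any genuine obstacle here: the statement is an assembly of the periodic-orbit lemmas already proved in this subsection, and the only thing to keep track of is that the two error constants fit inside $\normBB{\bB}$, which they do.
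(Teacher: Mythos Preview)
Your proof is correct and follows essentially the same route as the paper: split via the periodic point $\upa$, bound $|B_n(\ux)-B_n(\upa)|$ by $\normB{\bB}$ using that both lie in $[\a]$, and bound $|B_n(\upa)-n\,\mm_\a(\bB)|$ by $\norm{\delta\bB}$ via the preceding lemma and \Cref{lem:medidasperiodicasac}. The paper's proof is just a terser rendition of exactly these two steps.
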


\begin{proof} Direct computation:
\[
|B_n(\seq{p}(\bm{a}))-B_n(\seq{x})|\leq \norm[B]{\bm B}
\]
and
\[
\left|n\mu_{\bm a}(\bm B)-B_n(\seq{p}(\bm{a}))\right|=\left|n\lim_{m\to\infty}\frac{B_{m n}(\seq{p}(\bm{a}))}{m n}-B_n(\seq{p}(\bm{a}))\right|\leq \norm{\delta \bm B}.
\]
\end{proof}

\begin{corollary}\label{cor:medidaceroimplicaacotado}
   If $\bm B\in\QCB{\Sigma}$ and $\mu(\bm B)=0$ for every $\mu\in\PTM{\tau}{\Sigma}$, then 
  \[
   \sup_n \norml{B_n}\leq \norm[B]{\bm B}+\norm{\delta \bm B}.
   \]
  \end{corollary}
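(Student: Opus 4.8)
The plan is to establish the stated estimate by bounding $|B_n(\ux)|$ uniformly over $\ux\in\Sigma$ and $n\geq 1$; here $\Cref{lem:cohtrivialimplieszerointegral}$ gives one direction and we are after the quantitative converse, with $\norml{\bB}{\oo}$ read as (a constant multiple of) $\normBB{\bB}$. The mechanism is to approximate an arbitrary point $\ux$ by a periodic point agreeing with it on a long prefix, and to exploit that the average of $\bB$ along every periodic orbit vanishes, since by hypothesis $\mm(\bB)=0$ for all $\mm\in\PTM{\tau}{\Sigma}$ and in particular for the orbital measures $\mm_{\a}$; this last fact enters through $\Cref{lem:integralBnper}$ (and, if one prefers, $\Cref{lem:medidasperiodicasac}$).

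Concretely, fix $\ux\in\Sigma$ and $n\geq 1$. First I would use the aperiodicity of $R$ through the second part of $\Cref{lem:dinbasicashift}$ to produce a word $\bw$ of length $k\leq M$ such that $\a:=x_0\cdots x_{n-1}\bw$ is periodic, say $\a\in\Fix_N$ with $N=n+k$, and let $\upa\in\Fix_N(\tau)$ be the associated periodic point. By construction $\upa$ agrees with $\ux$ on the first $n$ coordinates, so the bounded variation estimate $\var_n(B_n)\leq\normB{\bB}$ yields $|B_n(\ux)-B_n(\upa)|\leq\normB{\bB}$. Next, $\Cref{lem:integralBnper}$ applied to the periodic word $\a$ of length $N$ and the point $\upa\in[\a]$ gives $|B_N(\upa)-N\,\mm_{\a}(\bB)|\leq\normBB{\bB}$, and since $\mm_{\a}(\bB)=0$ by hypothesis we obtain $|B_N(\upa)|\leq\normBB{\bB}$. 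Finally, the quasi-cocycle relation furnishes $|B_N(\upa)-B_n(\upa)-B_k(\tau^n\upa)|\leq\norm{\delta\bB}$, while comparing $B_k$ with $\sum_{j=0}^{k-1}B_1\circ\tau^j$ as in $\Cref{lem:boundswords}$ bounds the tail term by $k\norml{B_1}{\oo}+(k-1)\norm{\delta\bB}\leq M\,\normBB{\bB}$. Assembling these four estimates gives $|B_n(\ux)|\leq(M+3)\,\normBB{\bB}$, and taking the supremum over $\ux$ and $n$ completes the argument.

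The only genuinely delicate point is that a prefix word $x_0\cdots x_{n-1}$ need not itself be periodic, so one cannot directly insert a period-$n$ periodic point into the cylinder $[\ux]_n$; this is precisely where the specification constant $M$ is needed, to close the word into a periodic one, after which the cost of the at most $M$ extra letters must be absorbed into the final constant. Everything else is routine bookkeeping with the three seminorms $\normB{\cdot}$, $\norm{\delta\,\cdot}$ and $\norml{\cdot}{\oo}$ comprising $\normBB{\cdot}$, and no new idea beyond $\Cref{lem:integralBnper}$ is required.
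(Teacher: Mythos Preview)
Your argument is correct and is essentially what the paper intends: the corollary is stated without proof, as an immediate consequence of \Cref{lem:integralBnper}, and your reasoning makes that deduction explicit. You are also right to flag the subtlety that for an arbitrary $\ux\in\Sigma$ the prefix $x_0\cdots x_{n-1}$ need not lie in $\Fix_n$, so one must first close it up to a periodic word of length $n+k$ with $k\le M$ before invoking the lemma; the paper suppresses this step, and the resulting constant $(M+3)\normBB{\bB}$ in place of the stated $\normBB{\bB}$ is immaterial for every subsequent use (notably \Cref{pro:weakLivsic} and \Cref{thm:livsicWeakBowen}).
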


 \begin{corollary}\label{cor:wBowenboundedsum}
 Let $\varphi\in\Bow{\Sigma}$ be such that $\mu(\bm{B}^{\varphi})=0$ for every $\mu\in\PTM{\tau}{\Sigma}$. Then, for every $\seq{x}$ on its domain
  \[
   |S_n\varphi(x)|\leq 6\norm[B]{\varphi}.
   \] 
 \end{corollary}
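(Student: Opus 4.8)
The plan is to transport the whole question to the locally constant quasi-cocycle $\bB^{\varphi}=(B_n^{\varphi})_n$ attached to $\varphi$ in \eqref{eq:Basociadovarphi}, for which everything is already packaged. By \Cref{lem:boundonpotential} we have $\bB^{\varphi}\in\QCB^{l}$ with $\norm{\delta\bB^{\varphi}}\le 6\normB{\varphi}$, and, reading off its proof, for every $\a\in\mathscr{W}_n$ and every $\ux\in[\a]\cap D(\varphi)$ one has $|S_n\varphi(\ux)-B_n^{\varphi}(\ux)|\le 2\normB{\varphi}$ (note $B_n^{\varphi}$ is constant on $[\a]$). Moreover every periodic-orbit measure $\mm_{\up(\c)}$ lies in $\PTM{\tau}{\Sigma}$, so the hypothesis gives $\mm_{\up(\c)}(\bB^{\varphi})=0$ for all periodic words $\c$; indeed $\mm(\bB^{\varphi})=0$ for all $\mm\in\PTM{\tau}{\Sigma}$. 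Consequently it suffices to prove the purely symbolic statement $\sup_n\normL{B_n^{\varphi}}{\oo}\le 4\normB{\varphi}$, since combined with the displayed inequality this yields $|S_n\varphi(\ux)|\le 6\normB{\varphi}$ for every $\ux\in D(\varphi)$.

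To bound $B_n^{\varphi}$ on an arbitrary cylinder $[\a]$ ($\a\in\mathscr{W}_n$), first invoke \Cref{cor:medidaceroimplicaacotado} applied to $\bB^{\varphi}$, which already gives $\sup_n\normL{B_n^{\varphi}}{\oo}<\oo$ (equivalently, $\bB^{\varphi}$ is cohomologically trivial). For the sharp constant, use the second part of \Cref{lem:dinbasicashift} to pick $\bu$ with $|\bu|\le M$ and $\a\bu\in\Fix_{n+|\bu|}$; set $N=n+|\bu|$ and $\uy=\up(\a\bu)\in[\a]$, so $B_n^{\varphi}(\a)=B_n^{\varphi}(\uy)$ by local constancy. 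Since $\uy$ is a fixed point of $\tau^{mN}$ for every $m\ge 1$, \Cref{lem:medidasperiodicasac} gives $\tfrac1{mN}B_{mN}^{\varphi}(\uy)\to\mm_{\uy}(\bB^{\varphi})=0$, and the cocycle estimate $\bigl|\tfrac1k B_{kmN}^{\varphi}(\uy)-B_{mN}^{\varphi}(\uy)\bigr|\le\norm{\delta\bB^{\varphi}}$ forces $|B_{mN}^{\varphi}(\uy)|\le\norm{\delta\bB^{\varphi}}$; the same applies to $\tau^{n}\uy$, to $\up(\bu\a)$, and to all cyclic rearrangements of $\a\bu$, whose orbit measures coincide with $\mm_{\uy}$. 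Splitting $B_{mN}^{\varphi}(\uy)$ along the quasi-cocycle relation according to the decomposition $mN=n+\bigl((m-1)N+|\bu|\bigr)$ isolates $B_n^{\varphi}(\uy)$ up to a few defects and the ``buffer'' contribution $B_{|\bu|}^{\varphi}(\tau^{n}\uy)$; running the identical bookkeeping along a second rearrangement so that the buffer sits in the interior of a long $\uy$-window cancels that contribution, and optimizing the constants yields $|B_n^{\varphi}(\a)|\le 4\normB{\varphi}$.

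The main obstacle is exactly this last point. An arbitrary cylinder $[\a]$ need not contain a periodic point whose period divides $n$, so the clean periodic-orbit bound $|B_{N}^{\varphi}(\cdot)|\le\norm{\delta\bB^{\varphi}}$ only becomes available after inserting a bounded buffer word, and a short Birkhoff-type sum over that buffer is a priori of size $O(M\normL{\varphi}{\oo})$ rather than $O(\normB{\varphi})$; periodicity and zero average alone do not control an initial segment of a periodic orbit's Birkhoff sum. What rescues the constant is that the hypothesis concerns \emph{all} invariant measures and not merely the periodic-orbit measures: by running the estimate simultaneously on several cyclic rearrangements of $\a\bu$ (which share the same orbit measure) and along long windows in which the buffer appears away from the endpoints, the buffer terms cancel against each other. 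I would isolate this as a short lemma — for locally constant $\bB\in\QCB^{l}$ with $\mm(\bB)=0$ for every $\mm\in\PTM{\tau}{\Sigma}$ one has $\sup_n\normL{B_n}{\oo}\le c\,\norm{\delta\bB}$ with $c<1$ — from which \Cref{cor:wBowenboundedsum} follows immediately.
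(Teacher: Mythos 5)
Your overall strategy — pass to the locally constant quasi-cocycle $\bB^{\varphi}$, feed the zero-integral hypothesis into \Cref{cor:medidaceroimplicaacotado}, and then come back to $S_n\varphi$ via the estimate $|S_n\varphi(\ux)-B_n^{\varphi}(\ux)|\leq 2\normB{\varphi}$ from the proof of \Cref{lem:boundonpotential} — is exactly the chain the paper's one-line proof is pointing at (``Use \Cref{lem:boundonpotential}'' together with the immediately preceding \Cref{cor:medidaceroimplicaacotado}). Up to that point you are on the intended route, and you are also right that the delicate point is bounding $B_n^{\varphi}$ on a cylinder $[\a]$ whose word $\a$ need not be periodic, which forces one to insert a buffer $\bu$ with $\a\bu\in\Fix$.

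The problem is the step you leave as a ``short lemma.'' Your proposed cancellation does not work as described. All cyclic shifts $\tau^j\uy$ of the periodic point $\uy=\underline{p}(\a\bu)$ carry the same orbit measure, and the quasi-bounded-sequence argument (\Cref{cor:presioninseq} applied to $k\mapsto B_{kN}^{\varphi}(\tau^j\uy)$) yields $|B_N^{\varphi}(\tau^j\uy)|\leq\norm{\delta\bB^{\varphi}}$ for every $j$, with $N=n+|\bu|$. Splitting any of these along the cocycle relation always reproduces the \emph{same} linear combination $B_n^{\varphi}(\uy)+B_{|\bu|}^{\varphi}(\tau^n\uy)$ up to $O(\norm{\delta\bB^{\varphi}})$; running the bookkeeping ``along a second rearrangement'' does not give a second, independent linear relation that would let the buffer term drop out. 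Consequently you control the sum $B_n^{\varphi}(\uy)+B_{|\bu|}^{\varphi}(\tau^n\uy)$, not $B_n^{\varphi}(\uy)$ alone, and the buffer term must be bounded separately by a term involving $\normL{\varphi}{\oo}$ (via $\norml{B_1^{\varphi}}{\oo}$) and the specification constant $M$, not purely by $\normB{\varphi}$. The lemma you propose, $\sup_n\normL{B_n}{\oo}\leq c\,\norm{\delta\bB}$ with $c<1$ for every locally constant $\bB\in\QCB^l$ killed by all invariant measures, is not supported by this argument and is most likely false as stated: even on a single periodic cylinder the clean constant is $1$, and on non-periodic cylinders one cannot in general avoid the buffer contribution. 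Finally, the remark that the hypothesis ``concerns all invariant measures and not merely the periodic-orbit ones'' cannot rescue the constant: since $\bB^{\varphi}$ is continuous (locally constant), $\mm\mapsto\mm(\bB^{\varphi})$ is weak-$*$ continuous (\Cref{pro:integracioncontinua}), and periodic-orbit measures are weak-$*$ dense, so vanishing on periodic orbits is equivalent to vanishing on all of $\PTM{\tau}{\Sigma}$ and no extra leverage is gained. In short: your reduction to $\bB^{\varphi}$ and your diagnosis of the obstruction are both correct, but the cancellation mechanism you sketch does not exist, and the proof as written is incomplete precisely where you flag the gap.
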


\begin{proof}
Use \cref{lem:boundonpotential}.
\end{proof}

After these preparations we can make a meaningful remark.

\begin{proposition}[Livšic theorem for quasicocycles]\label{pro:weakLivsic}
 Let $\bm B, \bm B'\in \QCB{\Sigma}$. Then $\bm B\sim \bm B'$ if and only if for every $\bm a\in\Fix{\Sigma}$ it holds $\mu_{\bm a}(\bm B)=\mu_{\bm a}(\bm B')$.  
 \end{proposition}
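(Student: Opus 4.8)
The plan is to prove the nontrivial direction: if $\mm_\a(\bB)=\mm_\a(\bB')$ for every periodic word $\a\in\Fix$, then $\bB\sim\bB'$. Setting $\bA=\bB-\bB'$, this is a quasi-cocycle with $\mm_\a(\bA)=0$ for all $\a\in\Fix$, and we must show $\sup_n\norml{A_n}{\oo}<\oo$. The strategy is to upgrade the vanishing of $\mm_\a(\bA)$ on periodic measures to vanishing of $\mm(\bA)$ on \emph{all} invariant measures, and then invoke \Cref{cor:medidaceroimplicaacotado}.

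First I would recall (\Cref{thm:cuneo}) that since $\bA\in\QCB$, applying the result to a continuous representative is not immediate because $\bA$ need not be continuous; instead I would work directly with the functional $\mm\mapsto\mm(\bA)$. The key structural fact is that the periodic measures $\{\mm_\a:\a\in\Fix\}$ are weak-$*$ dense in $\PTM{\tau}{\Sigma}$ — this is the classical Sigmund/Parthasarathy density theorem for SFTs (equivalently, specification). So it would suffice to know that $\mm\mapsto\mm(\bA)$ is weakly continuous. By \Cref{pro:integracioncontinua} this holds when $\bA$ is a \emph{continuous} quasi-cocycle, but in general $\bA$ is only Borel with the Bowen-type regularity $\normB{\bA}<\oo$. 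Here is where I would replace $\bA$ by its ``cylinder average'': using any fixed fully supported invariant measure (e.g.\ the measure of maximal entropy $\mu$ of the SFT), set $\widetilde A_n=\ie{\mu}{A_n|\xi^n}$. As in \Cref{lem:boundonpotential}, $\widetilde{\bA}=(\widetilde A_n)_n$ is a \emph{locally constant} quasi-cocycle, hence continuous, with $\norm{\widetilde A_n-A_n}$ controlled by $\normB{\bA}$ uniformly in $n$; consequently $\mm(\widetilde{\bA})=\mm(\bA)$ for every invariant $\mm$, and in particular $\mm_\a(\widetilde{\bA})=0$ for all $\a\in\Fix$. Now $\widetilde{\bA}\in\QCB^c$, so $\mm\mapsto\mm(\widetilde{\bA})$ is weakly continuous by \Cref{pro:integracioncontinua}; combined with density of periodic measures we conclude $\mm(\bA)=\mm(\widetilde{\bA})=0$ for every $\mm\in\PTM{\tau}{\Sigma}$. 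Then \Cref{cor:medidaceroimplicaacotado} gives $\sup_n\norml{A_n}{\oo}\le\norml{\bA}{\oo}<\oo$, i.e.\ $\bA$ is cohomologically trivial, which is exactly $\bB\sim\bB'$.

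For the converse direction, if $\bB\sim\bB'$ then $\bB-\bB'$ is cohomologically trivial, so $\mm(\bB-\bB')=0$ for every invariant measure by \Cref{lem:cohtrivialimplieszerointegral}; specializing to the periodic measures $\mm_\a$ and using linearity of $\mm\mapsto\mm(\cdot)$ (which follows from the defining limit $\mm(\bB)=\lim_n\frac1n\int B_n\der\mm$ together with $\norm{\delta(\bB+\bB')}\le\norm{\delta\bB}+\norm{\delta\bB'}$) yields $\mm_\a(\bB)=\mm_\a(\bB')$ for all $\a\in\Fix$. This direction is routine.

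The main obstacle I anticipate is the passage from periodic data to all invariant measures, which genuinely needs two ingredients working together: weak-$*$ density of periodic orbit measures (requiring mixing/specification of the SFT, which is in force since $R$ is assumed irreducible and aperiodic) and weak continuity of $\mm\mapsto\mm(\bA)$. The latter fails for a general Borel quasi-cocycle, so the ``smoothing by conditional expectation'' step — replacing $\bA$ by the locally constant $\widetilde{\bA}$ at bounded cost, exactly as was done in \Cref{lem:boundonpotential} — is the crucial move that makes \Cref{pro:integracioncontinua} applicable. Everything else reduces to bookkeeping with the already-established estimates $\norm{B_n(\upa)-B_n(\ux)}\le\normB{\bB}$ and $\norm{n\,\mm_\a(\bB)-B_n(\upa)}\le\norm{\delta\bB}$ from \Cref{lem:integralBnper}.
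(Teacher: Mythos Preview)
Your proof is correct, and the converse direction matches the paper's one-line appeal to \Cref{lem:cohtrivialimplieszerointegral}. For sufficiency the paper is terser: it simply cites \Cref{cor:medidaceroimplicaacotado}, but the real engine is the pointwise estimate of \Cref{lem:integralBnper}, which already involves only periodic measures. That lemma gives $|B_n(\ux)-n\,\mm_\a(\bB)|\le\normBB{\bB}$ for every $\ux\in[\a]$ with $\a\in\Fix_n$, so under the hypothesis $\mm_\a(\bB-\bB')=0$ one obtains $|B_n(\ux)-B'_n(\ux)|\le\normBB{\bB-\bB'}$ directly whenever the initial segment $x_0\cdots x_{n-1}$ is itself a periodic word; a short specification step (pad by at most $M$ symbols to make the word periodic, then absorb the discrepancy using $\norm{\delta(\bB-\bB')}$ and $\normB{\bB-\bB'}$) handles the remaining points. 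No passage through the full space of invariant measures is required.

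Your route---smooth the difference to a locally constant cocycle via conditional expectation, invoke \Cref{pro:integracioncontinua} for weak continuity of $\mm\mapsto\mm(\cdot)$, use Sigmund's density of periodic measures to upgrade to $\mm(\bB-\bB')=0$ for every invariant $\mm$, and only then apply \Cref{cor:medidaceroimplicaacotado} as literally stated---is also valid and arguably more modular: it cleanly separates the step ``periodic data determines all invariant integrals'' from the step ``vanishing on all invariant measures implies boundedness''. The trade-off is that it imports Cuneo's theorem (through \Cref{pro:integracioncontinua}) and Sigmund's density theorem, neither of which the direct pointwise argument via \Cref{lem:integralBnper} actually needs.
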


\begin{proof}
 The necessary condition is spelled in \Cref{lem:cohtrivialimplieszerointegral}, while the sufficient condition follows directly from \Cref{cor:medidaceroimplicaacotado}. 
 \end{proof}

 \begin{corollary}[Livšic theorem for quasimorphisms]\label{cor:Livsicqm}
 Let $\Sigma$ be a mixing SFT and $L, L'\in \QM{\ms W}$. Then $L\sim L'$ if and only if for every $\bm a\in \Fix{\ms W}$ it holds $\cl{L}(\bm a)=\cl{L}'(\bm a)$. It follows that $L\sim L'$ if and only if $\restr{L}{\Fix{\ms W}}\sim\restr{L'}{\Fix{\ms W}}$.
\end{corollary}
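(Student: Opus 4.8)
The plan is to deduce this corollary from \Cref{pro:weakLivsic} (the Livsic theorem for quasi-cocycles) together with the identification, already carried out in \Cref{ex:integralLrespperiodicas} and \Cref{ex:integralLrespperiodicas}, between the values $\bar L(\a)$ and the averages $\mm_\a(\bB^L)$ of the associated quasi-cocycle. First I would recall that by \Cref{pro:cohqmigualcohqc} the assignment $L\mapsto \bB^L=(L^{(n)})_n$ (after extending $L$ to a fully defined quasi-morphism as in \Cref{pro:cohomologyvscohomologyfixed}) gives a Banach isomorphism $\Gamma:\widetilde{\Qm}(\mathscr{W})\to\widetilde{\QCB}$; in particular $L\sim L'$ in $\Qm(\Sigma)$ if and only if the quasi-cocycles $\bB^L$ and $\bB^{L'}$ are cohomologous in $\QCB$. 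Then, by \Cref{pro:weakLivsic}, $\bB^L\sim\bB^{L'}$ is equivalent to $\mm_\a(\bB^L)=\mm_\a(\bB^{L'})$ for every $\a\in\Fix$.

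Next I would identify $\mm_\a(\bB^L)$ with $\bar L(\a)$. This is exactly the computation in \Cref{ex:integralLrespperiodicas}: for $\a\in\Fix_N$ and $\upa=\a\a\cdots\in\Fix_N(\tau)$ one has $L^{(nN)}(\upa)=L(\a^n)$, so by \Cref{lem:medidasperiodicasac},
\[
\mm_\a(\bB^L)=\lim_{n\to\infty}\frac{L^{(n)}(\upa)}{n}=\lim_{n\to\infty}\frac{L(\a^n)}{nN}\cdot N \;=\;\bar L(\a),
\]
where the last equality uses \eqref{eq:Lhomogoeneo} together with the subadditivity bound (one restricts the limit along multiples of $N$, which suffices since the full sequence converges). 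The same identity holds for $L'$. Combining the three equivalences gives: $L\sim L'$ $\iff$ $\bB^L\sim\bB^{L'}$ $\iff$ $\mm_\a(\bB^L)=\mm_\a(\bB^{L'})\ \forall\a$ $\iff$ $\bar L(\a)=\bar L'(\a)\ \forall\a\in\Fix$.

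Finally, for the last sentence, I would observe that $\bar L$ depends only on the restriction $L|\Fix$ (indeed $\bar L(\a)=\lim_n L(\a^n)/n$ only involves values of $L$ on periodic words), so the criterion ``$\bar L(\a)=\bar L'(\a)$ for all $\a$'' is insensitive to replacing $L,L'$ by $L|\Fix, L'|\Fix$; hence $L\sim L'$ in $\Qm(\Sigma)$ iff $L|\Fix\sim L'|\Fix$ in $\Qm(\Fix)$, which is also consistent with \Cref{pro:cohomologyvscohomologyfixed}. I expect the only mildly delicate point to be the bookkeeping in the chain of isomorphisms — making sure that the extension of a $\Fix$-quasi-morphism to $\mathscr{W}$ used to form $\bB^L$ does not affect any of the periodic averages (which is precisely the content of the ``does not depend on the choices'' remark in \Cref{ex:integraldeL}), and that the limit defining $\bar L$ may be taken along the subsequence of multiples of the period; neither is a serious obstacle.
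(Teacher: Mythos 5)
Your proposal is correct and follows essentially the same route as the paper: apply \Cref{pro:weakLivsic} to the locally constant quasi-cocycles $\bB^L,\bB^{L'}$ and identify the periodic averages $\mm_{\a}(\bB^L)$ with $\bar L(\a)$ via \Cref{ex:integralLrespperiodicas} (the paper phrases the first reduction as ``WLOG $L,L'$ homogeneous'' rather than invoking \Cref{pro:cohqmigualcohqc}, but these are interchangeable). One small bookkeeping point, which you inherit from \Cref{ex:integralLrespperiodicas} itself: since $L^{(nN)}(\upa)=L(\a^n)$ and $\mm_{\a}(\bB^L)=\lim_k L^{(k)}(\upa)/k$, the correct identity is $\mm_{\a}(\bB^L)=\bar L(\a)/|\a|$, not $\bar L(\a)$ --- the extra factor $N$ you insert in the display is not justified. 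This is harmless for the corollary, because $|\a|$ does not depend on $L$, so $\mm_{\a}(\bB^L)=\mm_{\a}(\bB^{L'})$ is still equivalent to $\bar L(\a)=\bar L'(\a)$ for every $\a$.
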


\begin{proof}
It is no loss of generality to assume that $L,L'$ are homogeneous; the statement of the corollary is direct from the previous proposition and the considerations given in \cref{ex:integralLrespperiodicas}.
\end{proof}

\paragraph{\textbf{A general Livšic theorem}}Next we deal with cohomology for weak Bowen functions. The central theorem that we will establish is the following.

\begin{theorem}[Livšic's Theorem for weak Bowen functions]\label{thm:livsicWeakBowen}
Let $\varphi\in\Bow[][\nu]{\Sigma}$ for some $\nu\in \PTM{\tau}{\Sigma}$, and assume that for every $\mu\in \PTM{\tau}{\Sigma}$ it holds $\mu(\varphi)=0$. Then there exists $u\in \Lp{\oo}{\nu}$ such that 
\[
  u-u\circ \tau=\varphi \qquad  \aee{\nu}
\]
Moreover, $\norm[\Lp{\oo}{\nu}]{u}\leq 6\norm[B]{\varphi}$.
\end{theorem}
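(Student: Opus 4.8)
The plan is to reduce the statement to the already-established Livsic theorem for quasi-cocycles (\Cref{pro:weakLivsic}) applied to the locally constant quasi-cocycle $\bB^{\varphi}=(B_n^{\varphi})_n$ associated to $\varphi$, and then to convert the resulting boundedness of the partial sums into an honest transfer function. First I would recall from \Cref{lem:boundonpotential} that $\bB^{\varphi}\in\QCB^l$ with $\norm{\delta \bB^{\varphi}}\leq 6\normB{\varphi}$, and from \Cref{pro:integralalmostbowenfunctions} that $\mm(\bB^{\varphi})=\mm(\varphi)$ for every $\mm\in\PTM{\tau}{\Sigma}$ whose full-measure set meets $D(\varphi)$; together with \Cref{lem:medidasperiodicasac} this gives $\mm_{\a}(\bB^{\varphi})=\operatorname{av}_{\a}(\varphi)$ at periodic points. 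Since by hypothesis $\mm(\varphi)=0$ for \emph{all} $\mm\in\PTM{\tau}{\Sigma}$, \Cref{cor:medidaceroimplicaacotado} (or directly \Cref{cor:wBowenboundedsum}) yields $|S_n\varphi(\ux)|\leq 6\normB{\varphi}$ for every $\ux\in D(\varphi)$ and every $n\geq 1$.

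The second step is the classical Livsic-type construction of the transfer function, adapted to the $\Lp[\oo]$ setting. I would fix a point $\ux_0\in D(\varphi)$ and, for a ``generic'' $\ux\in D(\varphi)$, want to set $u(\ux)=\lim_{k}\big(S_{n_k}\varphi(\uy_k)-\text{correction}\big)$ along a sequence of preimages $\uy_k$ of $\ux$ under $\tau^{n_k}$ that shadow a reference orbit; the weak Bowen property bounds the differences of Birkhoff sums along shadowing orbits by $\normB{\varphi}$, and the uniform bound $|S_n\varphi|\leq 6\normB{\varphi}$ prevents the sequence from escaping to infinity, so a subsequential limit exists. More concretely, I would work on the natural extension / two-sided SFT $\Sigmab$, use \Cref{lem:dinbasicashift} to produce, for $\mu$-a.e. $\ux$, periodic points whose forward itineraries agree with that of $\ux$ on longer and longer cylinders, and define $u(\ux)$ as the limit of $-\sum_{j\geq 0}\big(\varphi(\tau^j\ux)-\varphi(\tau^j \upa_k)\big)$-type expressions; the cohomological equation $u-u\circ\tau=\varphi$ then holds on $D(\varphi)$ by construction, and the bound $|S_n\varphi|\leq 6\normB{\varphi}$ gives $\|u\|_{\infty}\leq 3\normB{\varphi}$ after optimizing the base point (splitting a sum of length $n$ as a forward piece and a backward piece each contributing at most $\tfrac32\normB{\varphi}$, or symmetrizing the construction). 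Measurability of $\ux\mapsto u(\ux)$ follows because $u$ is a pointwise (a.e.) limit of the Borel functions $\ux\mapsto S_{n_k}\varphi(\uy_k(\ux))$, using that the choice of $\uy_k$ can be made in a Borel way.

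The main obstacle I anticipate is precisely this measurability and the well-definedness of the limit: in the classical Hölder Livsic theorem one gets an honest convergent series because $\sum \var_n(\varphi)<\infty$, whereas here $\varphi$ is only bounded with the weak Bowen property, so the ``series'' need not converge and one only has relative compactness of the partial sums. I would get around this by not insisting on a canonical limit: use the uniform bound to extract, via a measurable selection theorem (e.g. on the compact interval $[-6\normB{\varphi},6\normB{\varphi}]$ and a diagonal/ultrafilter or Banach-limit argument applied measurably, or more cleanly by conditioning $S_n\varphi$ against the decreasing filtration of Bowen balls and passing to a weak-$*$ limit in $\Lp[\oo](\mu)$), a measurable function $u$ on a full-measure $\tau$-invariant set satisfying $u\circ\tau - u = -\varphi$ a.e. The key simplification here is that we already know $(S_n\varphi)_n$ is uniformly bounded, so the cocycle $S_n\varphi$ is an $\Lp[\oo]$-coboundary for soft functional-analytic reasons (a bounded $\Z$-cocycle over a measure-preserving transformation with $\mm(\text{cocycle})=0$ for all invariant $\mm$ is a coboundary; one realizes the transfer function as a weak-$*$ cluster point of the Cesàro-type averages $\frac1N\sum_{j=0}^{N-1} S_j\varphi\circ\tau^{-?}$, which lies in the ball of radius $3\normB{\varphi}$). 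Finally I would record the norm estimate $\|u\|_{\Lp[\oo](\mu)}\leq 3\normB{\varphi}$ by choosing the additive normalization of $u$ to have essential oscillation at most $6\normB{\varphi}$ and essential mean zero, and check that $u$ is differentiable along orbits in the required sense is automatic in the discrete-time SFT setting (there ``differentiable along orbits'' is vacuous; the analogous smoothness for the suspension flow is handled later via the coboundary relation), so the statement as written follows.
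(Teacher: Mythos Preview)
Your first step is correct and matches the paper exactly: from $\mm(\varphi)=0$ for all invariant $\mm$ and \Cref{cor:wBowenboundedsum} one obtains $|S_n\varphi(\ux)|\leq 6\normB{\varphi}$ uniformly on $D(\varphi)$.

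The second step, however, has a genuine gap. The shadowing/periodic-point construction you first propose cannot be completed for a merely weak Bowen $\varphi$: as you yourself observe, the relevant series need not converge, and the attempts to rescue it via measurable selection or Banach limits create exactly the measurability problems you anticipate, without ever verifying the cohomological equation. The alternative you mention at the very end --- taking a weak-$*$ cluster point of Ces\`aro-type averages --- is the right idea, but you treat it as a black box (``a bounded $\Z$-cocycle \ldots\ is a coboundary for soft functional-analytic reasons'') without checking that such a cluster point actually satisfies $u-u\circ\tau=\varphi$. That verification is the entire content of the step.

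The paper carries this out cleanly via \Cref{lem:livsicL2} and \Cref{cor:GHin}, with no shadowing, no natural extension, and no selection theorem. Set $u_N=\frac{1}{N}\sum_{k=1}^N S_k\varphi$ and compute
\[
u_N-u_N\circ\tau \;=\; \varphi-\frac{1}{N}\sum_{k=1}^N\varphi\circ\tau^k.
\]
Boundedness of $(u_N)$ forces $\int_A\varphi\,d\mu=0$ for every invariant set $A$, so by Von Neumann's theorem the average on the right tends to $0$ in $L^2$; pairing against test functions shows that any weak $L^2$ accumulation point $u$ solves $u-u\circ\tau=\varphi$. Then the identity $u-u_N=\frac{1}{N}\sum_{k=1}^N u\circ\tau^k$ together with Birkhoff's theorem upgrades this to $\mu$-a.e.\ (and $L^2$) convergence of the full sequence $(u_N)$. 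Since $\|u_N\|_{L^\infty}\leq 6\normB{\varphi}$ uniformly, the pointwise limit lies in $L^\infty(\mu)$ with the stated bound. Your proposed ``optimizing the base point / symmetrizing'' to reach the constant $3$ is not needed and, as stated, not justified.
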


\begin{notation}
 The Koopman operator induced by $\tau$ with respect to some invariant measure $\nu$ is $T:\Lp{2}{\nu}\toit$, $T\phi=\phi\circ \tau$. We write $T^*$ for its adjoint.
\end{notation}

\begin{lemma}\label{lem:livsicL2}
  Let $\nu\in \PTM{\tau}{\Sigma}, \phi\in \Lp{2}{\nu}$ and define 
    \[
    u_N=\frac{1}{N}\sum_{k=1}^NS_k\phi
    \]

 Assume that 
 \[
\liminf_{N\to\oo} \norm[L^2]{u_N}<\infty,
 \]
 
 (this is true for example if $\sup_k \norm[L^2]{S_k\phi}<\infty$). Then the sequence $(u_N)_N$ converges both in $\Lp{2}{\nu}$ and $\aee{\nu}$ to some function $u\in \Lp{2}{\nu}$ which is orthogonal to the invariant $\sigma$-algebra of $\tau$. That is, for every invariant set $A$, it holds true $\int_A u\dd\nu=0$. Moreover
 \begin{enumerate}
 	\item $\phi=u-Tu$.
 	\item \(u-\frac{1}{N}S_Nu=u_N.\)
 	 \end{enumerate}
\end{lemma}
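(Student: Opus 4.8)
The plan is to recognize this as a mean-ergodic-type statement combined with the standard telescoping identity, and to extract $L^2$-convergence from the hypothesis $\liminf\|u_N\|_2<\infty$ via a weak-compactness argument. First I would record the algebraic identity that drives everything: writing $S_k\phi=\sum_{j=0}^{k-1}T^j\phi$, a direct computation gives
\[
u_N=\frac1N\sum_{k=1}^N S_k\phi=\sum_{j=0}^{N-1}\Bigl(1-\frac jN\Bigr)T^j\phi,
\]
and from this one checks the key telescoping relation
\[
u_N-Tu_N=\frac1N\sum_{k=1}^N\bigl(S_k\phi-S_k\phi\circ\tau\bigr)=\phi-\frac1N S_N\phi.
\]
Equivalently $(I-T)u_N=\phi-\frac1N S_N\phi$. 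This already isolates item (1) and item (2) \emph{once convergence is known}: if $u_N\to u$ in $L^2$ and also $\frac1N S_N\phi\to 0$, then passing to the limit gives $u-Tu=\phi$, and the identity $u-\frac1N S_N u=u_N$ is just a rearrangement of the definition of $u_N$ together with the same telescoping (I would verify $u_N=u-\frac1N S_N u$ directly from $u=\sum(1-j/N)T^j\phi+\cdots$, or more cleanly by induction / Abel summation on partial sums of $u$).

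The substantive step is the convergence of $(u_N)_N$ in $L^2$ and $\mu$-a.e. Here I would argue as follows. Since $\liminf_N\|u_N\|_2<\infty$, pass to a subsequence $N_i$ along which $\|u_{N_i}\|_2$ is bounded; by weak compactness of balls in the Hilbert space $L^2(\mu)$, extract a further subsequence with $u_{N_i}\rightharpoonup u$ weakly for some $u\in L^2(\mu)$. Applying the bounded operator $I-T$ (weakly continuous) to $u_{N_i}-Tu_{N_i}=\phi-\frac1{N_i}S_{N_i}\phi$ and using that $\frac1{N_i}S_{N_i}\phi\to 0$ in $L^2$ (this is the von Neumann / Birkhoff mean ergodic theorem applied to $\phi\in L^2$, valid since $\phi$ has zero projection onto the invariant subspace — which itself follows because $\mathfrak{m}(\mathbf B^\varphi)=0$ forces the invariant-part integral to vanish; more elementarily, $\frac1N S_N\phi$ is bounded in $L^2$ by the mean ergodic theorem and its limit, the conditional expectation on the invariant $\sigma$-algebra, must be $0$ since all ergodic averages integrate to $0$), we obtain $u-Tu=\phi$ weakly, hence in $L^2$. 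Then $u$ is orthogonal to the invariant $\sigma$-algebra: for any invariant $A$, $\int_A u=\int_A Tu+\int_A\phi=\int_A u+\int_A\phi$ after using $T$-invariance of $\one_A$, so $\int_A\phi=0$ — and conversely one reads off $\int_A u=0$ from $u=(I-T)^{-1}\phi$ on the orthocomplement. Once $u-Tu=\phi$ is established for \emph{this} $u$, the identity $u_N=u-\frac1N S_N u$ holds for every $N$ (pure algebra), and since $\frac1N S_N u\to 0$ both in $L^2$ and $\mu$-a.e. by the mean ergodic theorem and Birkhoff's theorem applied to $u\in L^2(\mu)$ (its limit being the invariant conditional expectation, which is $0$ by orthogonality), we conclude $u_N\to u$ in $L^2$ and $\mu$-a.e. along the full sequence, not just the subsequence. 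Finally, to get the parenthetical sufficient condition, note $\|u_N\|_2\le\frac1N\sum_{k=1}^N\|S_k\phi\|_2\le\sup_k\|S_k\phi\|_2$.

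The main obstacle I anticipate is bookkeeping around the \emph{full-sequence} convergence versus the subsequence used to produce $u$: the clean way out is exactly the observation that once $u$ is \emph{any} $L^2$ solution of $u-Tu=\phi$, the exact identity $u_N=u-\frac1N S_N u$ pins down $u_N$ for all $N$, so Birkhoff's theorem applied to $u$ upgrades convergence to the whole sequence automatically and also delivers the a.e. statement for free. A secondary point requiring a line of care is justifying $\frac1N S_N\phi\to0$: this is immediate from the mean ergodic theorem \emph{provided} $\phi$ is orthogonal to the $T$-invariant functions, which is where the standing hypothesis $\mathfrak m(\varphi)=0$ for all invariant $\mathfrak m$ enters — I would phrase it as: the $L^2$-limit of $\frac1N S_N\phi$ equals $\mathbb E_\mu[\phi\mid\mathcal I]$, and integrating this conditional expectation against $\one_A$ for invariant $A$ gives $\int_A\phi\,d\mu=0$ for all such $A$, hence it is the zero function.
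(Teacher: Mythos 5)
Your structure matches the paper's: extract a weak $\Lp[2]$-accumulation point $u$ of $(u_N)_N$, pass the telescoping identity $u_N - Tu_N = \varphi - \frac1N\sum_{k=1}^N\varphi\circ\tau^k$ to the limit to get $u - Tu = \varphi$, then use the pure algebra $u_N = u - \frac1N\sum_{k=1}^N u\circ\tau^k$ together with Birkhoff/von Neumann applied to $u$ to upgrade to full-sequence convergence in $\Lp[2]$ and $\mu$-a.e. That skeleton is right.

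There is, however, a genuine gap at the one step where all the work happens: you need $\frac1N S_N\varphi\to 0$ in $\Lp[2]$, equivalently $\mathbb E_\mu[\varphi\,|\,\mathcal I]=0$, equivalently $\int_A\varphi\,\der\mu=0$ for every invariant set $A$. You justify this by appealing to ``the standing hypothesis $\mathfrak m(\varphi)=0$ for all invariant $\mathfrak m$,'' and alternatively by asserting ``all ergodic averages integrate to $0$.'' Neither is a hypothesis of this lemma. The assumption $\mathfrak m(\varphi)=0$ for all $\mathfrak m\in\PTM{\tau}{\Sigma}$ appears in \Cref{thm:livsicWeakBowen}, where the lemma is later \emph{applied}, but \Cref{lem:livsicL2} itself assumes only $\varphi\in\Lp[2](\mu)$ and $\liminf_N\|u_N\|_{\Lp[2](\mu)}<\infty$; in general $\mathbb E_\mu[\varphi\,|\,\mathcal I]$ has no reason to vanish, and your argument ``$\int_A u=\int_A Tu+\int_A\varphi$ so $\int_A\varphi=0$'' is circular since it already presupposes $u-Tu=\varphi$, which in turn needed $\frac1N S_N\varphi\to 0$. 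The missing idea is the short computation the paper uses: for invariant $A$, $T$-invariance of $\one_A$ and $\mu$ give $\langle u_N,\one_A\rangle=\frac1N\sum_{k=1}^N k\,\langle\varphi,\one_A\rangle=\frac{N+1}{2}\langle\varphi,\one_A\rangle$, and since $|\langle u_{N_i},\one_A\rangle|\le\|u_{N_i}\|_{\Lp[2]}\|\one_A\|_{\Lp[2]}$ stays bounded along your weakly convergent subsequence, this forces $\langle\varphi,\one_A\rangle=0$. That single computation is what extracts the orthogonality from the $\liminf$ hypothesis, yields $\frac1N S_N\varphi\to 0$ by von Neumann, and also gives $\langle u,\one_A\rangle=\lim_i\langle u_{N_i},\one_A\rangle=0$ directly. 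With it in place, the rest of your proposal goes through as written.
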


\begin{proof}
Take $u$ to be a weak accumulation point of $(u_N)_N$ in $\Lp{2}{\nu}$: for some sub-sequence $(u_{N_i})_i$ and for every $w\in \Lp{2}{\nu}$, it holds $\inner{u_{N_i}}{w}\xrightarrow[i\to\oo]{} \inner{u}{w}$. Notice that if $A$ is invariant, $\inner{u_N}{\one_A} =\frac{1}{N}\sum_{k=1}^Nk \inner{\phi}{\one_A}=\frac{N+1}{2}\inner{\phi}{\one_A}$. 

To satisfy the convergence condition, necessarily $\inner{\phi}{\one_A}=0$ for every invariant set, hence due to the von Neumann theorem,
   \[
    \frac{1}{N}\norm*[\Lp{2}]{\sum_{k=1}^N\phi\circ \tau^k} \xrightarrow[N\mapsto\oo]{} 0.
    \]
 Now, 
 \begin{align*}
 u_N-u_N\circ \tau=\frac{1}{N}\sum_{k=1}^N(S_k\phi-S_k\phi\circ \tau)=\frac{1}{N}\sum_{k=1}^N(\phi-\phi\circ \tau^k)=\phi-\frac{1}{N}\sum_{k=1}^N\phi\circ \tau^k,
 \intertext{therefore for $w\in \Lp{2}{\nu}$,}
 \lim_{i\to\oo} \inner{u_{N_i}}{w}-\inner{u_{N_i}\circ \tau}{w}=\lim_{i\to\oo} \inner{u_{N_i}}{w}-\inner{u_{N_i}}{T^*w}=\inner{u-u\circ \tau}{w} \intertext{and}
 \inner{u-u\circ \tau}{w}=\lim_{i\to\oo} \inner{\phi-\frac{1}{N_i}\sum_{k=1}^{N_i}\phi\circ \tau^k}{w}=\inner{\phi}{w}.
 \end{align*}
 This gives
 \[
   \inner{u-u\circ \tau-\phi}{w}=0\quad\forall w\in \Lp{2}{\nu}
 \]
 which shows that any accumulation point $u$ of $(u_N)_N$ solves the cohomological equation for $\phi$.

Next, for $u$ as above we have $u-u\circ \tau^k=S_k\phi$ and hence summing over $k=1,\ldots, N$ we get that 
 \[
 u-\frac{1}{N}\sum_{k=1}^Nu\circ \tau^k=u_N\Rightarrow u-u_N=\frac{1}{N}\sum_{k=1}^Nu\circ \tau^k.		
 \]
Since we showed that $\int_A u\dd\nu=0$ for every invariant set $A$, the Birkhoff and von Neumann ergodic theorems applied to $u$ give the pointwise and $\Lp{2}-$convergence. 
\end{proof}

\begin{remark}
In the previous lemma, and assuming $\sup_k \norm[L^2]{S_k\phi}<\infty$, it is classical to show that the cohomological equation has a solution in $\Lp{2}$: consider $\lie{X}$ the set of convex combinations of $(S_k\varphi)$ and note that its weak $\Lp{2}$ closure is compact, and of course convex. The map $u\in\cl{\lie{X}}\to \phi+u\circ T$ preserves $\cl{\lie{X}}$ and is continuous, so by the Schauder-Tychonoff theorem, there exists $u\in \cl{\lie{X}}$ satisfying $u=\phi+u\circ T$, hence $u$ satisfies the cohomological equation in $\Lp{2}$. It follows that there exists $(v_n)_n \subset \lie{X}$ such that $\norm[\Lp{2}]{u-v_n}\xrightarrow[n\mapsto\oo]{}0$. Note however that this argument does not give the pointwise convergence statements, which will allow us to improve the regularity of $u$. 
\end{remark}

\begin{corollary}\label{cor:GHin}
  Under the same hypotheses as in the previous theorem, if 
  \[
  \liminf_{N\to\oo} \norm[\Lp{\oo}]{u_N} <\infty,
  \]
  (for example, if $\sup_k \norm[\Lp{\oo}]{S_k\phi}<\infty$), then there exists $u\in \Lp{\oo}(\nu)$ with $\norm[\Lp{\oo}]{u}\leq \liminf_N\norm[\Lp{\oo}]{u_N}$ so that $\lim_N u_N=u$ both $\aee{\nu}$ and in $\Lp{\oo}$. Moreover, $\phi=u-u\circ\tau$.
\end{corollary}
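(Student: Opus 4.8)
The plan is to get \Cref{cor:GHin} as a short boundedness upgrade of \Cref{lem:livsicL2}. First I would observe that the new hypothesis is stronger than what that lemma needs: since $\mu$ is a probability measure, $\normL{u_N}{2}\le\normL{u_N}{\oo}$ for every $N$, so $\liminf_N\normL{u_N}{\oo}<\oo$ forces $\liminf_N\normL{u_N}{2}<\oo$, and \Cref{lem:livsicL2} applies. It produces $u\in\Lp[2](\mu)$, orthogonal to the invariant $\sigma$-algebra $\mathcal{I}$ of $\tau$, with $u_N\to u$ both $\mu$-a.e.\ and in $\Lp[2](\mu)$, satisfying $\phi=u-Tu=u-u\circ\tau$ together with the identity $u-\frac{1}{N}\sum_{k=1}^{N}u\circ\tau^{k}=u_N$ for every $N$.

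Next I would promote $u$ to an $\Lp[\oo]$ function carrying the asserted bound. Pick a subsequence $(N_i)_i$ along which $\normL{u_{N_i}}{\oo}\to C:=\liminf_N\normL{u_N}{\oo}$. On the full-measure set where $u_{N_i}\to u$ and where, for every $i$, $|u_{N_i}|\le\normL{u_{N_i}}{\oo}$ holds pointwise, letting $i\to\oo$ yields $|u(x)|\le C$ at each such $x$; hence $u\in\Lp[\oo](\mu)$ with $\normL{u}{\oo}\le C=\liminf_N\normL{u_N}{\oo}$.

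Finally I would repeat the averaging step of \Cref{lem:livsicL2}, now using that $u$ is bounded. From $u-u_N=\frac{1}{N}\sum_{k=1}^{N}u\circ\tau^{k}$ and the $\mathcal{I}$-orthogonality of $u$, Birkhoff's ergodic theorem gives $\frac{1}{N}\sum_{k=1}^{N}u\circ\tau^{k}\to\ie{\mu}{u|\mathcal{I}}=0$ $\mu$-a.e.; as these averages are dominated by the constant $\normL{u}{\oo}$, the bounded convergence theorem upgrades this to convergence in $\Lp[p](\mu)$ for every finite $p$ and to weak-$*$ convergence in $\Lp[\oo](\mu)$. Thus $u_N\to u$ in all of these senses, which together with $\phi=u-u\circ\tau$ gives the corollary.

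I do not expect a real obstacle here: the analytic work is already done in \Cref{lem:livsicL2}, and what remains is the bounded-convergence argument above. The one thing worth flagging is that $\Lp[\oo]$-norm convergence of $u_N$ to $u$ cannot be expected --- the Birkhoff averages $\frac{1}{N}\sum_{k=1}^{N}u\circ\tau^{k}$ of a merely bounded $u$ need not go to $0$ uniformly --- so ``convergence in $\Lp[\oo]$'' is to be read in the dominated (equivalently weak-$*$) sense just obtained, which suffices for the later applications.
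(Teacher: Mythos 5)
Your proof is correct and expands the paper's one-line argument (``Since there is convergence $u_N\to u$ for $\mu$-a.e.\ point, the result is automatic'') into its constituent steps: reduce $\liminf\normL{u_N}{\oo}<\oo$ to $\liminf\normL{u_N}{2}<\oo$ via $\mu$ being a probability, invoke \Cref{lem:livsicL2} to get $u\in\Lp[2]$ with $u_N\to u$ $\mu$-a.e., $\phi=u-u\circ\tau$ and $u-u_N=\frac{1}{N}\sum_{k=1}^{N}u\circ\tau^k$, then extract the $\Lp[\oo]$ bound by passing to the pointwise limit along a subsequence realizing the liminf of norms. This is the same route; you are simply filling in what ``automatic'' compresses.

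Your closing caveat is well placed and worth keeping in mind: for a merely bounded $u$ with vanishing conditional expectation on the invariant $\sigma$-algebra, the Birkhoff averages $\frac{1}{N}\sum_{k=1}^{N}u\circ\tau^{k}$ need not tend to $0$ uniformly, so ``$\lim u_N=u$ in $\Lp[\oo]$'' cannot be read as $\Lp[\oo]$-norm convergence in general; the correct reading is the dominated (equivalently weak-$*$, or $\Lp[p]$ for all finite $p$) convergence you derive. The paper's statement is loose on this point, but the downstream applications (e.g.\ the proof of \Cref{thm:livsicWeakBowen}) only use the a.e.\ convergence and the $\Lp[\oo]$ bound on $u$, which your argument supplies.
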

\begin{proof}
Since $\liminf_{N\to\oo} \norm[\Lp{2}]{u_N} <\infty$, we can apply the previous theorem and deduce $\aee{\nu}$ convergence $u_N\xrightarrow[N\to\oo]{} u$. The dominated convergence theorem implies the rest.
\end{proof}

We now are ready to prove \Cref{thm:livsicWeakBowen}.

\begin{proof}
 We can assume that the domain of $\varphi$ is a measurable, invariant $\nu-$full measure set. Let $u_N=\frac{1}{N}\sum_{k=1}^NS_k\varphi$. By hypotheses and \Cref{cor:wBowenboundedsum} we get that $|u_N(x)|\leq 6\norm[B]{\varphi}$, for every $x\in D_\varphi$ and $N\geq 1$, therefore the result follows from \Cref{cor:GHin}.
 \end{proof}

 \begin{definition}\label{def:Livsiccoboundaries}
 Let $\nu\in \PTM{\tau}{\Sigma}$. A function $\phi\in\Lp{\oo}{\nu}$ is a $\Lp{\oo}$-coboundary if $\phi=u-u\circ\tau$ for some $u\in \Lp{\oo}{\nu}$. The space of coboundaries in $\Lp{\oo}{\nu}$ is denoted by $\Cob[\oo][\nu]{\Sigma}$.
 \end{definition}

Note that $\Cob[\oo][\nu]{\Sigma} \subset \left(\Bow[][\nu]{\Sigma},\tnorm{\cdot}\right)$ is a closed subspace. We equip 
 \begin{align}\label{def:modcoboundary}
  \Bow[][]{\Sigma}/\thicksim \defeq \frac{\Bow[][\nu]{\Sigma}}{\Cob[\oo][\nu]{\Sigma}}
 \end{align}
 with the induced norm.

\begin{theorem}\label{thm:isomorphismqmBowmeasure}
Let $\nu\in \PTM{\tau}{\Sigma}$ be a fully supported measure. Then there exists a Banach isomorphism between $\QMt{\ms W}$ and $\Bow[][\nu]{\Sigma}/\thicksim$.
\end{theorem}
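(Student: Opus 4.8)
\emph{Plan of proof.} By Proposition~\ref{pro:cohqmigualcohqc}, $\widetilde{\Qm}(\Sigma)\,(=\widetilde{\Qm}(\mathscr{W}))$ is Banach isomorphic to $\widetilde{\QCB}$, so it suffices to construct a Banach isomorphism $\Theta\colon \mathrm{Bow}_{\mu}(\Sigma)/\thicksim\ \longrightarrow\ \widetilde{\QCB}$. I will build $\Theta$ from the conditioning construction \eqref{eq:Basociadovarphi} and identify its inverse via Theorem~\ref{thm:representation1}; since both spaces are Banach, the open mapping theorem will then promote the resulting continuous linear bijection to an isomorphism.

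Given a $\mu$-weak Bowen function $\varphi$, put $B_n^{\varphi}=\ie{\mu}{S_n\varphi|\xi^n}$, i.e.\ the averaging of \eqref{eq:Basociadovarphi} performed with respect to the fixed measure $\mu$ rather than the canonical $\mu_{\varphi}$ of Theorem~\ref{thm:representation2}; this is legitimate because $D(\varphi)$ carries full $\mu$-measure. Exactly as in Lemma~\ref{lem:boundonpotential}, $\bB^{\varphi}=(B_n^{\varphi})_n\in\QCB^l$ with $\norm{\delta\bB^{\varphi}}\leq 6\normB{\varphi}$; since $\bB^{\varphi}$ is locally constant $\normB{\bB^{\varphi}}=0$, and $\normL{B_1^{\varphi}}{\oo}\leq\normL{\varphi}{\oo}$, so that $\normBB{\bB^{\varphi}}\leq 6\normBB{\varphi}$. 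The estimate used repeatedly is that for $\mu$-a.e.\ $\ux\in D(\varphi)$ and all $n\geq 1$,
\[
\bigl|S_n\varphi(\ux)-B_n^{\varphi}(\ux)\bigr|\leq\normB{\varphi},
\]
because $B_n^{\varphi}(\ux)$ is an average over $[\ux]_n$ of quantities $S_n\varphi(\cdot)$ each within $\normB{\varphi}$ of $S_n\varphi(\ux)$, by the weak Bowen property.

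Define $\Theta([\varphi])=[\bB^{\varphi}]$. Well-definedness and injectivity come from one mechanism. If $\varphi\thicksim\varphi'$, say $\varphi-\varphi'=u-u\circ\tau$ with $u\in\Lp[\oo](\mu)$, then $S_n(\varphi-\varphi')=u-u\circ\tau^{n}$ is bounded in $n$, whence $\sup_n\normL{B_n^{\varphi}-B_n^{\varphi'}}{\oo}<\oo$ and $\bB^{\varphi}\thicksim\bB^{\varphi'}$; conversely, if $\sup_n\normL{B_n^{\varphi}-B_n^{\varphi'}}{\oo}<\oo$, the displayed estimate gives $\sup_n\normL{S_n(\varphi-\varphi')}{\oo}<\oo$, and Corollary~\ref{cor:GHin} applied to $\phi=\varphi-\varphi'$ (again a $\mu$-weak Bowen function) produces $u\in\Lp[\oo](\mu)$ with $\varphi-\varphi'=u-u\circ\tau$, i.e.\ $\varphi\thicksim\varphi'$. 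Thus $\Theta$ is a well-defined bounded linear injection. For surjectivity, given $\bB\in\QCB$ let $\varBmu\in\Lp[\oo](\mu)$ be as in Theorem~\ref{thm:representation1}, so $|B_n(\ux)-S_n\varBmu(\ux)|<E$ for $\mu$-a.e.\ $\ux$ and all $n$; by Corollary~\ref{cor:Bowenpotential}, $\varBmu\in\mathrm{Bow}_{\mu}(\Sigma)$. Combining $|B_n-S_n\varBmu|<E$, the displayed estimate for $\varphi=\varBmu$, and $\var_n(B_n)\leq\normB{\bB}$ (so $|B_n(\ux)-B_n(\uy)|\leq\normB{\bB}$ when $[\ux]_n=[\uy]_n$), one gets $\sup_n\normL{B_n^{\varBmu}-B_n}{\oo}<\oo$, i.e.\ $\Theta([\varBmu])=[\bB^{\varBmu}]=[\bB]$. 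Hence $\Theta$ is a continuous linear bijection between Banach spaces, and by the open mapping theorem it is a Banach isomorphism; its inverse is the map $[\bB]\mapsto[\varBmu]$, which by the above is well defined independently both of the (non-canonical) choices in Theorem~\ref{thm:representation1} and of the representative of $[\bB]$.

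The step requiring the most care is the systematic replacement of the canonical measure $\mu_{\varphi}$ used throughout Section~\ref{sec:LivsicSFT} by the fixed ambient measure $\mu$: one must re-examine the conditioning construction and the bounds of Lemma~\ref{lem:boundonpotential} and Corollary~\ref{cor:GHin} to confirm that they use only that $D(\varphi)$ has full $\mu$-measure and the weak Bowen inequality on Bowen balls, both of which hold in the present situation. The genuinely substantial inputs, Theorems~\ref{thm:representation1} and~\ref{thm:representation2}, are taken as given here (they are proved in the following section).
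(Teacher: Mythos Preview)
Your proof is correct and uses the same ingredients as the paper; the only organizational difference is the direction in which you build the isomorphism. The paper defines $\Gamma:\widetilde{\Qm}(\Sigma)\to\mathrm{Bow}_\mu(\Sigma)/\thicksim$ by $[L]\mapsto[\varphi_{L,\mu}]$ (via Theorem~\ref{thm:representation1}) and checks that $L$ is trivial iff $\varphi_{L,\mu}$ is a coboundary, invoking Theorem~\ref{thm:livsicWeakBowen} for the nontrivial direction. You instead build $\Theta:\mathrm{Bow}_\mu(\Sigma)/\thicksim\to\widetilde{\QCB}$ by conditioning, and for injectivity you appeal directly to Corollary~\ref{cor:GHin} after bounding $\sup_n\|S_n(\varphi-\varphi')\|_{L^\infty}$; this is exactly how Theorem~\ref{thm:livsicWeakBowen} is proved (via Corollary~\ref{cor:wBowenboundedsum} and Corollary~\ref{cor:GHin}), so you are inlining that argument rather than citing it. Your explicit remark about replacing $\mu_\varphi$ by the fixed $\mu$ in the conditioning is apt and handled correctly.
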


\begin{proof}
Given $L\in\QM{\ms W}$ consider its associated quasicocycle $\bm B^{L}$, and the corresponding associated potential $\varphi_{L}:\Sigma_0\to \R$ as given in \Cref{thm:representation1}. Using \Cref{pro:cohqmigualcohqc} and \Cref{lem:boundonpotential} we get that $\bm{B}^{\varphi_L}\sim \bm{B}^L$, and since $\bm{B}^L$ is cohomologically trivial if and only if $L$ is cohomologically trivial in $\QM{\ms W}$, we deduce that $L$ is cohomologically trivial if and only if $\bm B^{\varphi_L}$ is uniformly bounded. Due to \Cref{thm:livsicWeakBowen}, this happens if and only if $\varphi_L=u-u\circ \tau$, for some $u\in \Lp{\oo}(\nu)$. 

The above shows that the linear map $\Gamma: \QMt{\ms W}\to \Bow[][\nu]{\Sigma}/\thicksim, \Gamma([L])=[\varphi_L]$ is bijective, and it is direct to check that it is continuous, hence a Banach isomorphism.
\end{proof}

\subsection{Livšic cohomology for suspension flows}

\paragraph{\textbf{Bowen functions in the natural extension}}

 Consider $\lift\Sigma$, a two-sided SFT. The notion of weak Bowen ($\mu-$ weak Bowen function) is exactly the same in this context as in the one-sided shift. Note that the proof of \Cref{thm:livsicWeakBowen} applies to this type of function without modifications.

 Let $\pi:\lift\Sigma\to\Sigma$ be the projection onto the non-negative coordinates, $\pi(\seq{x})=\seq{x}^{+}=(x_n)_{n\in\N}$: this map is continuous, surjective and verifies $\pi\circ \tau=\tau\circ\pi$ (that is, it is a semi-conjugacy between $\tau:\lift\Sigma\toit$ and $\tau:\Sigma\toit$). Moreover, it induces an isomorphism $\pi_{\#}:\PTM{\tau}{\lift \Sigma}\to \PTM{\tau}{\Sigma}$ which can be characterized as follows (cf.\@ page $28$ in \cite{zeta}). Fix $\mu\in \PTM{\tau}{\Sigma}$. Given $f\in \Cr{\Sigma}$, it induces naturally an element $\lift{f}\in \Cr{\lift \Sigma}$ by $\lift{f}(\seq{x})=f(\seq{x}^+)$. Define, for $n\geq 0$,
\[
   \lift \mu(\lift{f}\circ \tau^{-n}) \defeq \mu(f).
 \]
This determines a bounded positive linear functional $\lift\mu$ on the dense subspace $\{\lift{f}\circ \tau^{-n}:n\geq 0, f\in \Cr{\Sigma}\} \subset \Cr{\lift \Sigma}$, hence it extends uniquely to a bounded linear functional $\lift\mu:\Cr{\lift \Sigma}\to \R$ with $\norm[OP]{\lift\mu}=\norm[OP]{\mu}=\mu(1_{\Sigma})=1$, therefore it is a probability on $\lift\Sigma$.  Furthermore, $\mu$ is ergodic if and only if $\lift\mu$ is ergodic.

\begin{notation}
  If  $\mu \in \PTM{\tau}{\Sigma}$, $\lift\mu$ denotes its unique lift to $\PTM{\tau}{\lift\Sigma}$.
\end{notation}

\begin{proposition}\label{pro:Bowenonesidedcohtwosided}
  If $\mu\in \PTM{\tau}{\Sigma}$ is of full support, then the spaces $\Bow[][\mu]{\Sigma}/\thicksim$ and $\Bow[][\lift\mu]{\lift\Sigma}/\thicksim$ are Banach isomorphic.
 \end{proposition}

\begin{proof}
Consider the linear continuous mappings 
\begin{align*}
&\Gamma_1: \Bow[][\lift\mu]{\lift\Sigma}\ni\phi\mapsto \left(\bm B=(\Emu{\lift\mu}{S_n\phi \given \xi^n})_n\right)\in\QCB{\Sigma} \mapsto \varphi_{\bm B,\mu}\in \Bow[][\mu]{\Sigma}\\
&\Gamma_2:  \Bow[][\mu]{\Sigma}\ni \phi\to \phi\circ \pi\in \Bow[][\lift\mu]{\lift\Sigma}.
\end{align*}
By definition we have that $\lift\mu(\phi)=\lift\mu(\bm B)$, and since $\bm B$ is constant on $\xi^n$, $\lift\mu(\bm B)=\mu(\bm B)$. Now $\bm{B}\sim (S_n\varphi_{\bm B,\mu})_n$, and thus $\lift\mu(\phi)=\mu(\varphi_{\bm B,\mu})$. From this and \Cref{thm:livsicWeakBowen} one gets that $\Gamma_1$ induces a continuous injective linear map $\Bow[][\lift\mu]{\lift\Sigma}/\thicksim\to\Bow[][\mu]{\Sigma}/\thicksim$. Using $\Gamma_2$ it follows that this map is surjective as well, hence the result.
\end{proof}

\begin{remark}\label{rem:onesidedvstwo}
It is a classical result of Sinai that any Hölder function on $\lift\Sigma$ is continuously cohomologous to a Hölder function on $\Sigma$. On the other hand, even for well behaved measures (as the entropy maximizing measure of $\tau$) there are $\Lp{\oo}-$coboundaries that are not continuously cohomologous to any function depending only on the non-negative coordinates, thus, not continuously cohomologous to any Hölder function \cite{Quas_1997}. In particular, if $\phi$ is such a function, then $\phi$ is not continuously cohomologous to $\Gamma_1(\phi)\circ\pi$.
This shows the necessity of working with these broader cohomology theories instead of the classical versions using continuous functions.
\end{remark}

\paragraph{\textbf{Flows}}Consider $\lie{t}=(\tau_t)_{t\in\R}:\lift{\Sigma}_f\toit$ a suspension flow as in \Cref{thm:symbolicrepresentation}, and for $\nu \in \PTM{\tau}{\lift{\Sigma}}$ let $\nu_{\lie t} \in \PTM{\lie t}{\lift{\Sigma}_f}$ be the unique invariant measure that it determines (see \Cref{eq:medidasuspension}). Since $\lie{t}$ is a metric Anosov flow, one can define $\nu_{\lie t}-$Bowen functions and their Livšic cohomology in the same way as in \Cref{def:weakBowenfunction,def:Livsiccohomologous}.

It is convenient to define  $P:\lift\Sigma_f\toit$ the return map to the section $S=\{[\seq{x},0]:\seq{x}\in\lift\Sigma\} \subset \lift\Sigma$,
 \[
   P([\seq{x},t])=[\tau(\seq{x}),0].
 \]
 Note that $P$ preserves $\nu_{\lie t}$. If $\psi\in\Bow[][\nu_{\lie t}]{\Sigma_f}$ define
\begin{align}\label{eq:potencialintegrado}
&\hat \psi([\seq{x},t])=\int_0^{f(\seq{x})-t} \psi([\seq{x},s])\dd s\\
&\hat \psi(\seq{x})=\hat \psi([\seq{x},0])=\int_0^{f(\seq{x})} \psi([\seq{x},s])\dd s.
 \end{align}

 \begin{lemma}\label{lem:Livsicreturnmap}
 Let $\psi\in \Bow[][\nu_{\lie t}]{\Sigma_f}$. Then $\psi$ is a Livšic coboundary with respect to $\lie t$ if and only if $\hat \psi$ is a Livšic coboundary for the return map $P$. 
 \end{lemma}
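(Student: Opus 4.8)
\emph{Plan.} Identify the cross-section $S=\{[\ux,0]:\ux\in\Sigmab\}\subset\Sigmab_f$ with $\Sigmab$, so that $P$ becomes the shift $\tau:\Sigmab\toit$, and let $\wmu\in\PTM{\tau}{\Sigmab}$ be the unique invariant measure whose suspension is $\nu$; thus $\nu$-a.e.\ point of $\Sigmab_f$ is of the form $[\ux,t]$ with $\ux$ in a $\wmu$-full set and $0\le t<f(\ux)$, and $\|f\|_{\oo}<\oo$ since $f$ is H\"older on a compact space. By construction $\tilde\psi(\ux)=\int_0^{f(\ux)}\psi([\ux,s])\der s$, and $S_n^P\tilde\psi(\ux)=\sum_{k=0}^{n-1}\tilde\psi(\tau^k\ux)$ equals the Birkhoff integral of $\psi$ along the $\mathfrak t$-orbit of $[\ux,0]$ over the time $S_n^{\tau}f(\ux)$ of the $n$-th return to $S$; a routine check using $\psi\in\mathrm{Bow}_\nu(\Sigmab_f)$ and the H\"older continuity of $f$ gives $\tilde\psi\in\mathrm{Bow}_{\wmu}(\Sigmab)$, so the statement is meaningful. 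The plan is to pass directly between a bounded transfer function for $\mathfrak t$ and one for $P$.

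\emph{From $\mathfrak t$ to $P$.} Suppose $\psi=\frac{\der u\circ\tau_t}{\der t}\vert_{t=0}$ with $u\in\Lp[\oo](\Sigmab_f,\nu)$ differentiable along orbits, and set $v(\ux):=u([\ux,0])$. Since $\psi$ is bounded and $t\mapsto u([\ux,t])$ has derivative $\psi([\ux,t])$, one has $|u([\ux,t])-v(\ux)|\le t\,\|\psi\|_{\oo}\le\|f\|_{\oo}\|\psi\|_{\oo}$ for $0\le t<f(\ux)$; combined with $u\in\Lp[\oo](\nu)$ and Fubini for the product structure of $\nu$, this yields $v\in\Lp[\oo](\Sigmab,\wmu)$. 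The fundamental theorem of calculus along the orbit of $[\ux,0]$, which reaches $[\tau\ux,0]$ at time $f(\ux)$, gives
\[
\tilde\psi(\ux)=\int_0^{f(\ux)}\psi([\ux,s])\der s=u([\tau\ux,0])-u([\ux,0])=v(\tau\ux)-v(\ux),
\]
so $\tilde\psi=(-v)-(-v)\circ\tau$ is an $\Lp[\oo](\wmu)$-coboundary, i.e.\ a Livsic coboundary for $P$.

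\emph{From $P$ to $\mathfrak t$.} Conversely, suppose $\tilde\psi=w-w\circ\tau$ with $w\in\Lp[\oo](\Sigmab,\wmu)$, and define, for $\wmu$-a.e.\ $\ux$ and $0\le t<f(\ux)$,
\[
u([\ux,t]):=-w(\ux)+\int_0^{t}\psi([\ux,s])\der s.
\]
As $t\uparrow f(\ux)$ the right-hand side tends to $-w(\ux)+\tilde\psi(\ux)=-w(\tau\ux)=u([\tau\ux,0])$, the value imposed by the identification $[\ux,f(\ux)]=[\tau\ux,0]$, so $u$ is well defined $\nu$-a.e.\ on $\Sigmab_f$; it is bounded, $\|u\|_{\oo}\le\|w\|_{\oo}+\|f\|_{\oo}\|\psi\|_{\oo}$, hence $u\in\Lp[\oo](\Sigmab_f,\nu)$; and for each $\ux$ the function $t\mapsto u([\ux,t])$ is Lipschitz with derivative $\psi([\ux,t])$, so $u$ is differentiable along orbits and $\frac{\der u\circ\tau_t}{\der t}\vert_{t=0}=\psi$. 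Thus $\psi$ is a Livsic coboundary for $\mathfrak t$, which completes the proof.

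\emph{Main obstacle.} The substantive point is the measure-theoretic bookkeeping across the suspension: transferring $\Lp[\oo]$-bounds between $\nu$ on $\Sigmab_f$ and $\wmu$ on $S$, and verifying that ``differentiable along orbits'' supplies enough regularity to apply the fundamental theorem of calculus. Both are controlled by the a priori bound $|u([\ux,t])-u([\ux,0])|\le t\|\psi\|_{\oo}$, the boundedness of $\psi$, and Fubini for $\nu\propto\wmu\times\mathrm{Leb}$. (Alternatively one could invoke a Livsic theorem for $\mathfrak t$ of the type of \hyperlink{theoremE}{Theorem E} together with the bijection between periodic $\mathfrak t$-orbits and periodic points of $\tau$, under which $\operatorname{av}_\alpha(\psi)=0$ iff $S_n^P\tilde\psi=0$ on the corresponding orbit; the direct argument above is shorter and self-contained.)
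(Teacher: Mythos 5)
Your proof is correct and follows the same fundamental-theorem-of-calculus approach as the paper, which labels the lemma ``standard'' and only sketches the direction from a coboundary for $P$ back to one for the flow. You supply both directions in full, and in particular you make explicit the Fubini/$\Lp[\oo]$-bookkeeping across the suspension (transferring essential-sup bounds between $\nu$ on $\Sigmab_f$ and $\wmu$ on the section via the estimate $|u([\ux,t])-u([\ux,0])|\le t\|\psi\|_{\oo}$), which the paper leaves implicit.
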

The proof is standard, and is essentially an application of the fundamental theorem of Calculus.

 On the other hand, if $\hat\psi$ is a Livšic coboundary with respect to the dynamics $\tau$, $\hat\psi(\seq{x})=u(\seq{x})-u(\tau x)$ let
\[
 U([\seq x,t])=u(\seq{x})-\int_0^t \psi([\seq{x},s])\dd s.   
 \]
$U\in\Lp{\oo}{\lift \nu}$ and by direct computation $U([\seq{x},t])-U(P[\seq{x},t])=\hat\psi([\seq{x},t])$, which shows that $\hat\psi$ is a coboundary with respect to $P$, hence $\psi$ is a Livšic coboundary for the flow $\lie t$.

If $\alpha$ is a periodic orbit of $\lie t$ we denote, as in the case of the geodesic flow,
\[
   \av_\alpha(\phi)=\lim_{T\to\oo}\frac{S_T\phi([\seq{x},0])}{T}
 \]
where $[\seq{x},0]\in\alpha$: of course, $\av_\alpha(\phi)$ is just the integral of $\phi$ with respect to the periodic measure supported on $\alpha$. We remark the following simple fact.

\begin{lemma}\label{lem:bijectionperiodic}
There exists a bijection between the periodic orbits of $\lie t$ and the periodic orbits of $\tau:\Sigma\toit$.
\end{lemma}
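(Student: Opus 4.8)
The plan is to build the bijection through the well-known correspondence between periodic orbits of a suspension flow and periodic points of the base dynamics. First I would recall that a point $[\ux,t]\in\Sigmab_f$ lies on a periodic orbit of $\mathfrak t$ of period $T$ if and only if $\tau_T([\ux,t])=[\ux,t]$; unwinding the definition of the $\Z$-action used to form $\Sigmab_f=\Sigmab\times\Real/\Z$, this forces $\tau^n(\ux)=\ux$ for some $n\geq 1$ with $T=S_n^{\tau}f(\ux)$, i.e. $\ux\in\Fix(\tau)$ (viewed in $\Sigmab$). Conversely, every $\ux\in\Fix_n(\tau)$ produces a periodic orbit of $\mathfrak t$, namely the image of $\{\ux\}\times[0,S_n^{\tau}f(\ux))$, whose period is $S_n^{\tau}f(\ux)$ since $f>0$. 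Thus to each periodic orbit of $\mathfrak t$ one associates the base orbit $\{\ux,\tau\ux,\dots\}$ of the (unique up to the $\Z$-identification) representative with $t=0$, and this is well defined because two representatives $[\ux,0]$, $[\uy,0]$ on the same $\mathfrak t$-orbit differ by $\uy=\tau^k\ux$.

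Next I would invoke \Cref{lem:bijectionperiodic}'s natural companion, namely that periodic orbits of $\tau:\Sigmab\toit$ (two-sided) are in bijection with periodic orbits of $\tau:\Sigma\toit$ (one-sided) via the semi-conjugacy $\pi:\Sigmab\to\Sigma$ restricted to periodic points: indeed $\pi$ is injective on $\Fix(\tau)$ because a point of $\Sigmab$ whose forward coordinates are eventually periodic and which is itself $\tau$-periodic is determined by its non-negative coordinates, and $\pi$ is surjective onto $\Fix(\tau:\Sigma\toit)$ since any periodic word $\upa\in\Sigma$ with $\a\in\Fix$ lifts to the bi-infinite periodic sequence $\dots\a\a\a\dots\in\Sigmab$. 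Composing the two correspondences — periodic orbits of $\mathfrak t$ $\leftrightarrow$ periodic orbits of $\tau:\Sigmab\toit$ $\leftrightarrow$ periodic orbits of $\tau:\Sigma\toit$ — gives the desired bijection.

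Since the construction is entirely formal, I do not expect any genuine obstacle; the only point requiring a little care is the bookkeeping of the $\Z$-identification in $\Sigmab_f$, making sure that distinct $\mathfrak t$-orbits give distinct $\tau$-orbits (injectivity of the first correspondence) and that the period of the flow orbit corresponds, under this bijection, to the ``$f$-weighted length'' $S_n^{\tau}f$ of the base orbit rather than to its combinatorial length $n$. This distinction is irrelevant for the statement as written — which concerns only the sets of orbits — but it is worth recording, since later sections use the average $\operatorname{av}_\alpha(\phi)=\lim_T S_T\phi([\ux,0])/T$, whose value is governed by this weighting.
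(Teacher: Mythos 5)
Your proof is correct, and since the paper states this lemma as a ``simple fact'' without argument, you have simply filled in the standard proof any reader would supply: periodic orbits of a suspension flow over a strictly positive roof function correspond to periodic orbits of the base, and the forgetful map $\pi:\Sigmab\to\Sigma$ restricts to a bijection on periodic points (a periodic bi-infinite word is determined by its non-negative part). Your closing remark about the distinction between combinatorial period $n$ and $f$-weighted period $S_n^{\tau}f(\ux)$ is a good one to keep in mind, as it is precisely what enters later in \Cref{cor:Livsicsuspensions} when passing between $\operatorname{av}_{\alpha}(\phi)$ and $\mm_{\tilde\alpha}(\tilde\phi)$.
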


In view of the above it makes sense to consider $\av_{\bm a}$ for $\bm a\in\Fix{\ms W}$ 

\begin{corollary}\label{cor:Livsicsuspensions}
The map $\Gamma: \Bow[][\nu_{\lie t}]{\Sigma_f}\to \Bow[][\lift\nu]{\lift\Sigma}$ given by $\Gamma(\phi)=\hat \phi$ induces a Banach isomorphism $\Gamma_*: \Bow[][\nu_{\lie t}]{\Sigma_f}/\thicksim\to \Bow[][\lift\nu]{\lift\Sigma}/\thicksim$.

Moreover, $\psi,\psi'\in \Bow[][\nu_{\lie t}]{\lift{\Sigma}_f}$ are Livšic cohomologous if and only if for every $\bm a\in\Fix{\Sigma}$, $\av_{\bm a}(\hat\psi)=\av_{\bm a}(\hat\psi')$. 
\end{corollary}

\begin{proof}
It is direct to check that $\Gamma$ is linear and continuous. Note that given $\varphi \in \Bow[][]{\lift\Sigma}$, $\varphi=\Gamma(\psi)$, where
$\psi([\seq x,0])=\frac{1}{f(\seq x)}\varphi(\seq x)$. Hence by the discussion above induces a bijective linear isomorphism $\Gamma_*$ between the quotients. Note that for $\seq{y}\in [\seq{x}]_n$,
\[
  |S_n\psi([\seq{x},0])-S_n\psi([\seq{y},0])|=|S_n\hat \psi(\seq{x})-S_n\hat \psi(\seq{y})|.
\]
From here one deduces that $\Gamma_*$ is continuous with respect to the corresponding Bowen norms, hence a Banach isomorphism.

The second part is a consequence of \Cref{thm:livsicWeakBowen}: necessity is clear, while for sufficiency, $\av_{\bm a}(\psi)=\av_{\bm a}(\psi')$ implies that $\hat\psi\sim\hat\psi'$, hence $\psi\sim\psi'$.
\end{proof}

\paragraph{\textbf{Back to geodesic flows}} Finally, let $\lie g=(g_t)_{t\in\R}:\ms E\toit$ be a geodesic flow corresponding to a closed locally $\CAT$ space, and consider a suspension flow $\lie{t}= (\tau_t):\lift{\Sigma}_f\toit$ as in \Cref{thm:symbolicrepresentation}; the corresponding semi-conjugacy is $h:\lift{\Sigma}_f\to\ms E$.

 \begin{corollary}\label{cor:Livsicgeodesicflo}
In the hypotheses above, consider a fully supported measure $\nu_{\lie g}\in \PTM{\lie{g}}{\ms E}$. Then $\phi \in \Bow[][\nu_{\lie g}]{\ms E}$ is a Livšic coboundary if and only if for every $\alpha$ periodic orbit of $\mathfrak g$, $\av_{\alpha}(\phi)=0$.
 \end{corollary}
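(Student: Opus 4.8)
The plan is to prove the two implications separately. The ``only if'' direction will be a direct estimate on $\cE$ itself, whereas for the ``if'' direction I will transport the problem to the symbolic suspension of \Cref{thm:symbolicrepresentation} and invoke \Cref{cor:Livsicsuspensions} together with \Cref{cor:BowengBowent}.

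For the forward implication, suppose $\phi=\frac{\der u\circ g_t}{\der t}\vert_{t=0}$ with $u\in\Lp[\oo](\tilde\nu)$ differentiable along $\mathfrak g$-orbits. Given a periodic orbit $\beta$ of $\mathfrak g$ I would pick $x\in\beta$ and $y\in D(\phi)$ with $\sup_{t\in[0,\per(\beta)]}\dis{\cE}{g_tx}{g_ty}\le\Cexp$ (possible because $D(\phi)$, being a full-measure set for the fully supported measure $\tilde\nu$, is dense). By the fundamental theorem of calculus $S_T\phi(y)=u(g_Ty)-u(y)$, so $|S_T\phi(y)|\le 2\normL{u}{\oo}$ for all $T>0$, and therefore $\operatorname{av}_\beta(\phi)=\lim_{T\to\oo}S_T\phi(y)/T=0$.

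For the converse, let $\mathfrak t=(\tau_t)_{t\in\Real}:\Sigmab_f\toit$ and $h:\Sigmab_f\to\cE$ be as in \Cref{thm:symbolicrepresentation}, and let $\nu\in\PTM{\mathfrak t}{\Sigmab_f}$ be the measure with $\nu\circ h^{-1}=\tilde\nu$. Put $\psi:=\phi\circ h$, with domain $D(\psi):=h^{-1}(D(\phi))$. I would first check that $\psi\in\mathrm{Bow}_\nu(\Sigmab_f)$: the domain is $\nu$-full, dense and $\mathfrak t$-invariant, $\psi$ is bounded, and since $h$ is Hölder and $h\circ\tau_t=g_t\circ h$, a point $\hat y$ in a small Bowen ball of $\hat x$ is sent to a point $h(\hat y)$ in a Bowen ball of $h(\hat x)$ of comparable (at most polynomially distorted) radius, while $S_T\psi(\hat x)=S_T\phi(h(\hat x))$; thus the weak Bowen inequality for $\phi$ passes to $\psi$, with $\normB{\psi}\le\normB{\phi}$. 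Next, for each periodic orbit $\alpha$ of $\mathfrak t$, \Cref{thm:symbolicrepresentation}(4) says $\beta:=h(\alpha)$ is a periodic orbit of $\mathfrak g$, and I claim $\operatorname{av}_\alpha(\psi)=\operatorname{av}_\beta(\phi)$: picking $\hat x\in\alpha$ and $\hat y\in D(\psi)$ shadowing $\hat x$ over $[0,\per(\alpha)]$ closely enough that $x:=h(\hat x)\in\beta$ and $y:=h(\hat y)\in D(\phi)$ satisfy $\sup_{t\in[0,\per(\beta)]}\dis{\cE}{g_tx}{g_ty}\le\Cexp$, and using $S_T\psi(\hat y)=S_T\phi(y)$ together with the fact that the average on a periodic orbit does not depend on the shadowing point used to compute it, I obtain
\[
\operatorname{av}_\alpha(\psi)=\lim_{T\to\oo}\frac{S_T\psi(\hat y)}{T}=\lim_{T\to\oo}\frac{S_T\phi(y)}{T}=\operatorname{av}_\beta(\phi)=0
\]
by hypothesis. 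As this holds for every periodic orbit of $\mathfrak t$, \Cref{cor:Livsicsuspensions} shows that $\psi$ is a Livsic coboundary for $\mathfrak t$; then \Cref{cor:BowengBowent}, whose Banach isomorphism $\mathrm{Bow}_\nu(\Sigmab_f)/\thicksim\to\mathrm{Bow}_{\tilde\nu}(\cE)/\thicksim$ is induced by $h$ and carries $[\phi\circ h]$ to $[\phi]$, forces $[\phi]=0$, i.e.\ $\phi$ is a Livsic coboundary. (Equivalently, one can push the $\Lp[\oo]$ transfer function of $\psi$ down along $h$, using that $h$ is $\tilde\nu$-a.e.\ injective.)

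The step I expect to be the main obstacle is precisely the identity $\operatorname{av}_\alpha(\phi\circ h)=\operatorname{av}_{h(\alpha)}(\phi)$, hand in hand with the verification that $\phi\circ h$ genuinely lies in $\mathrm{Bow}_\nu(\Sigmab_f)$. The difficulty is that $\phi$ is defined only off a $\tilde\nu$-null set, so the periodic orbit $\beta$ itself typically misses $D(\phi)$; one is then forced to evaluate the averages at shadowing points of the domain and to carry those through $h$, relying on $h$ being Hölder (so that Bowen balls distort at most polynomially) and on it intertwining the two flows exactly. The remaining measure-theoretic bookkeeping about the coding — in particular the precise sense in which $h$ induces the isomorphism of \Cref{cor:BowengBowent} — is already absorbed into that corollary.
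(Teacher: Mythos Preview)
Your proof is correct and follows the same core strategy as the paper: transport $\phi$ to the symbolic suspension via $h$, invoke the Livsic theorem there (\Cref{cor:Livsicsuspensions}), and push the conclusion back through the isomorphism of \Cref{cor:BowengBowent}. Your treatment of the ``only if'' direction and of the verification $\phi\circ h\in\mathrm{Bow}_\nu(\Sigmab_f)$ is more explicit than the paper's, which simply declares necessity obvious and absorbs the Bowen-function check into \Cref{cor:BowengBowent}.

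There is one small but genuine difference in the ``if'' direction worth noting. You argue directly that $\operatorname{av}_\alpha(\phi\circ h)=\operatorname{av}_{h(\alpha)}(\phi)=0$ for \emph{every} periodic orbit $\alpha$ of $\mathfrak t$, using that $h$ sends all periodic orbits to periodic orbits (property~(4) of \Cref{thm:symbolicrepresentation}) and that the average is computed via shadowing points. The paper instead only claims this vanishing on periodic orbits lying in an open dense set (where $h$ is injective), and then inserts an extra step: zero averages on a dense set of periodic orbits force the associated quasi-cocycle to be uniformly bounded, hence to have zero integral against every invariant measure, which then feeds into \Cref{cor:Livsicsuspensions}. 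Your route is shorter and avoids this detour; the paper's route is perhaps more cautious about what $h_*^{-1}$ means on periodic orbits outside the injectivity set, but your direct computation via shadowing handles that case too.
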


\begin{proof}
Consider the isomorphism $h_*:\Bow[][\nu_{\lie t}]{\lift{\Sigma}_f}\to\Bow[][\nu_{\lie g}]{\ms E}$ (\Cref{cor:BowengBowent}), which preserves cohomology classes. If $\av_{\alpha}(\phi)=0$ for every periodic orbit of $\lie{g}$, then $\av_{\alpha'}(h^{-1}_*\phi)=0$ for every periodic orbit $\alpha'$ of $\lie t$ that lies in some open dense set $D \subset \lift{\Sigma}_f$.

This is enough to guarantee that the quasicocycle defined by $h^{-1}_*\phi$ is uniformly bounded, hence its integral with respect to every invariant measure of $\lie t$ is zero. Due to \Cref{cor:Livsicsuspensions}, we deduce that $h^{-1}_*\phi$ is a Livšic coboundary, hence $\phi$ is a Livšic coboundary as well. Necessity is obvious.
\end{proof}

The previous Corollary in fact implies \Cref{thm:E}, where no assumptions on ergodicity are made. This is consequence of the fact that any invariant measure can be written as an integral of ergodic measures (Ergodic Decomposition Theorem). The proof of this theorem can be found for example in \cite{Petersen1983}.

\section{Ergodic theory of quasimorphisms}

In this section we develop a thermodynamic formalism for quasimorphisms on SFTs (equivalently, quasicocycles), and use it to characterize their cohomology classes by means of certain invariant measures for the shift. These measures are uniquely associated with Livšic cohomology classes of weak Bowen functions (this is the content of \Cref{thm:representation1}, which we prove here), and we explain how to construct them and analyze their dynamical properties.

\subsection{Pressure for quasimorphisms}
\label{ssec:pressureSFT}

\paragraph{\textbf{Quasi-bounded sequences}}Given a sequence of real numbers $\seq{a}=(a_n)_n$, denote
 \begin{align}
  &\delta_{n,m} (\seq{a})=a_{n+m}-a_n-a_m\\
  &\norm{\delta \seq a}=\sup_{n,m} |\delta_{n,m} \seq a|.
 \end{align}

Recall that $\seq a$ is subadditive if $\forall n, m$, $\delta(\seq{a})_{n,m}\leq 0$ (i.e.\@ $a_{n+m}\leq a_n+a_m$). We denote by $\mc{S}_{sa}$ the set of all sub-additive sequences.  If $\seq a \in \mc{S}_{sa}$, then for every $m=kn+r, 0\leq r<n$ we have
\[
  \frac{a_m}{m}-\frac{a_n}{n}\leq \frac{1}{r}a_r.
\]
It follows that for such a sequence,
\[
  \exists \lim_n \frac{a_n}{n}=\inf \frac{a_n}{n}\in \R\cup\{-\oo\}.
\]
 
\begin{corollary}\label{cor:subadditivepositive}
 If $\seq a \in \mc{S}_{sa}$ and $\lim_n \frac{a_n}{n}=0$ then $a_n\geq 0$ for every $n$.
\end{corollary}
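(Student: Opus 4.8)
The statement is an immediate consequence of the displayed facts recorded just above it. Recall that for a subadditive sequence one has, for every $m = kn + r$ with $0 \le r < n$,
\[
\frac{a_m}{m} - \frac{a_n}{n} \le \frac{1}{k}\,a_r ,
\]
and consequently $\lim_m \frac{a_m}{m} = \inf_m \frac{a_m}{m} \in \Real \cup \{-\oo\}$. The plan is simply to combine these two observations: since by hypothesis $\lim_m \frac{a_m}{m} = 0$, we get $\inf_m \frac{a_m}{m} = 0$, and therefore $\frac{a_n}{n} \ge 0$ for every $n$, i.e.\ $a_n \ge 0$.

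If one prefers a self-contained argument that does not invoke the ``$\lim = \inf$'' identity, I would instead iterate subadditivity to get $a_{kn} \le k\, a_n$ for all $k, n \ge 1$ (a one-line induction on $k$), divide by $kn$ to obtain $\frac{a_{kn}}{kn} \le \frac{a_n}{n}$, and then let $k \to \infty$: the left-hand side is a subsequence of $\big(\frac{a_m}{m}\big)_m$, hence tends to $0$, yielding $0 \le \frac{a_n}{n}$ for each fixed $n$. There is no real obstacle here; the only point worth stating carefully is that the hypothesis $\lim_m a_m/m = 0$ (rather than merely $\liminf$) is what makes the conclusion hold for \emph{every} $n$, not just infinitely many.
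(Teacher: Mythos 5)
Your proof is correct and the first argument is precisely the one the paper intends: since for a subadditive sequence $\lim_n a_n/n = \inf_n a_n/n$ (the Fekete fact recorded just above the corollary), the hypothesis forces $\inf_n a_n/n = 0$ and hence $a_n \ge 0$ for all $n$. The second, self-contained argument via $a_{kn} \le k\,a_n$ is also fine; one small remark is that your closing caveat about $\lim$ versus $\liminf$ is moot in this setting, because for a subadditive sequence the limit of $a_n/n$ always exists, so $\liminf$ and $\lim$ coincide automatically.
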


\begin{definition}\label{def:quasiboundedseq}
A sequence $\seq a$ is quasi-bounded if $\norm{\delta \seq a}<\oo$. The set of quasi-bounded sequences is denoted by $\mc{S}_{qb}$.
\end{definition}

For $\seq a\in \mc{S}_{qb}$ the sequence $\seq b=(b_n=a_n+\norm{\delta\seq a})_n$ satisfies
\begin{align*}
b_{n+m}=a_{n+m}+\norm{\delta\seq a}=\delta_{n,m}\seq a+a_n+a_m+\norm{\delta \seq a}\leq b_n+b_m
\end{align*}
and therefore there exists $P(\seq a)=\lim_n\frac{a_n}{n}$. Note that for any $C\geq \norm{\delta \seq(a)}$, $P(a)=\inf \frac{a_n+C}{n}$.

\begin{corollary}\label{cor:presioninseq}
If $\seq a\in \mc{S}_{qb}$  then $\sup_n |a_n-nP(\seq a)|\leq \norm{\delta\seq a}$. In particular, if $P(\seq a)=0$ then $\seq a\in\ell^{\oo}$.
\end{corollary}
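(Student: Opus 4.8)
The plan is to deduce the estimate from \Cref{cor:subadditivepositive} by two applications, one to a normalization of $a$ and one to $-a$.

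First I would fix $C=\norm{\delta(a)}$ and recall from the discussion preceding the statement that $b_n:=a_n+C$ defines a sub-additive sequence with $P(a)=\lim_n b_n/n$. Then I would set $c_n:=b_n-nP(a)=a_n+C-nP(a)$ and check that $(c_n)_n\in\cS_{sub}$: the affine term contributes nothing to the second difference, so $\delta(c)_{n,m}=\delta(b)_{n,m}\le 0$, while $c_n/n=b_n/n-P(a)\to 0$. \Cref{cor:subadditivepositive} then gives $c_n\ge 0$ for all $n$, i.e.
\[
a_n-nP(a)\ge -C=-\norm{\delta(a)}.
\]

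Next I would apply exactly the same argument to $-a=(-a_n)_n$, which again lies in $\cS_{qb}$ with $\norm{\delta(-a)}=\norm{\delta(a)}$ and $P(-a)=-P(a)$; the lower bound just obtained, written now for $-a$, says $-a_n+nP(a)\ge-\norm{\delta(a)}$, that is $a_n-nP(a)\le\norm{\delta(a)}$. Combining the two inequalities yields $|a_n-nP(a)|\le\norm{\delta(a)}$ for every $n$, which is the assertion; the particular case $P(a)=0$ is then immediate, since it reads $|a_n|\le\norm{\delta(a)}$, so $a\in\ell^\oo$. I do not expect a genuine obstacle here: the only point deserving a line of care is verifying that subtracting the linear term $nP(a)$ both preserves sub-additivity and kills the Cesàro limit, and that is a one-line computation.
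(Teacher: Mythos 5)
Your proof is correct and follows essentially the same route as the paper: both arguments apply \Cref{cor:subadditivepositive} twice, once to a shifted sub-additive sequence to get the lower bound $a_n - nP(a) \geq -\norm{\delta(a)}$ and once to its negation (your $-a$, the paper's $\norm{\delta(a)} - a_n$) to get the matching upper bound. The only cosmetic difference is that the paper first handles the case $P(a) = 0$ and then reduces the general case to it via $\tilde a_n = a_n - nP(a)$, whereas you absorb the linear correction into the sequence from the start; the substance is identical.
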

\begin{proof}
Suppose first that $P(\seq a)=0$. Applying \Cref{cor:subadditivepositive} to $\seq b=(a_n+\norm{\delta\seq a})_n$ we get $b_n\geq 0\Rightarrow a_n\geq -\norm{\delta\seq a}$, and by considering
$\seq c=(a_n-\norm{\delta\seq a})_n$ we get $a_n\leq \norm{\delta\seq a}$ and the claim follows in this case. In general, the sequence $\lift{\seq a}=(a_n-nP(\seq a))_n$ satisfies $\norm{\delta \lift{\seq a}}=\norm{\delta\seq a}$ and $\lim_n\frac{\lift{a}_n}{n}=0$, hence the result.
\end{proof}

Fix $L\in \QM{\ms W}$ and denote  

\begin{align}
&Z_n(L)=\sum_{\bm a \in \Fix[][n]{\ms W}} e^{L(\bm a)}\\
&P_n(L)=\log Z_n(L).
\end{align}

 Recall that $M\in\N$ is so that $R^k>0$, for every $k\geq M$, and we remark that given $\bm a=a_0\cdots a_{n-1}, \bm b=b_0\cdots b_{m-1}\in \ms W$ the number of words $\bm c\in \ms{W}_{k}$ such that $\bm a\bm c\bm b\in \ms W$ is $R^{k+1}_{a_{n-1}b_{0}}$.

\begin{convention} Recall that for $\bm a\in\ms W$ we are denoting $\lift{\bm a}$ any periodic word starting with $\bm a$ of size at most $|\bm a|+M$. To avoid carrying constants, given a quasimorphism on $\ms W$ we re-define $\norm{\delta L}$ so that for any $\bm a, \bm b\in\ms W$
\begin{align*}
|L(\lift{\bm a}\star \lift{\bm b})-L(\lift{\bm a})-L(\lift{\bm b})|\leq \norm{\del L}
\end{align*}
and furthermore $\norm{\delta L_{\mrm{cyc}}}\leq \norm{\del L}$.
\end{convention}

\begin{lemma}\label{lem:Znsubad}
There exists some $D>0$ so that for every $n,m\geq 3M$ it holds
\begin{align*}
D^{-1}\leq \frac{Z_{n+m}(L)}{Z_n(L)\cdot Z_m(L)}\leq D.
\end{align*}
\end{lemma}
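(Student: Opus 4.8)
The plan is to prove the two inequalities separately, each by a surgery argument that passes between periodic words of length $n+m$ and pairs of periodic words of lengths $n$ and $m$; the defect of $L$ along the surgery is controlled by the convention preceding the statement, and the multiplicity of the surgery by aperiodicity of $R$. As a preliminary I would replace $L$ by its cyclization $\Lcyc$: by \Cref{rem:cyclicquasimorphism}, $|L(\a)-\Lcyc(\a)|\le 2\norm{\delta L}$ for $\a\in\Fix$, so $Z_n(L)$ and $Z_n(\Lcyc)$ agree up to the fixed factor $e^{2\norm{\delta L}}$ and it suffices to prove the estimate for $\Lcyc$. One also reduces to summing over periodic words of length exactly $n$; the contribution of the proper divisors of $n$ is lower order, and we suppress this routine point.

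For the lower bound, given $\a\in\Fix_n$ and $\b\in\Fix_m$ write $\a=\a'\a_0$ and $\b=\b'\b_0$ with $|\a_0|=|\b_0|=M$ — here $n,m\ge 3M$ ensures $\a',\b'$ are long, non-empty blocks. By irreducibility and aperiodicity of $R$ (cf.\ \Cref{lem:dinbasicashift} and \Cref{def:starwords}) there are $\bu,\bv\in\mathscr{W}_M$ for which $\c:=\a'\bu\b'\bv$ is admissible and periodic, so $\c\in\Fix_{n+m}$; fixing such a choice defines a map $\Psi\colon\Fix_n\times\Fix_m\to\Fix_{n+m}$. The triple $(\a,\b,\c)$ has exactly the shape in the convention, whence $|\Lcyc(\c)-\Lcyc(\a)-\Lcyc(\b)|\le\norm{L}$, and $\Psi$ is at most $d^{2M}$-to-one, since $\c$ determines $\a'$ and $\b'$ and hence $\a,\b$ up to their last $M$ letters. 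Therefore
\begin{align*}
Z_n(\Lcyc)\,Z_m(\Lcyc)&=\sum_{\a\in\Fix_n}\sum_{\b\in\Fix_m}e^{\Lcyc(\a)+\Lcyc(\b)}\le e^{\norm{L}}\sum_{\a,\b}e^{\Lcyc(\Psi(\a,\b))}\\
&\le e^{\norm{L}}d^{2M}\sum_{\c\in\Fix_{n+m}}e^{\Lcyc(\c)}=e^{\norm{L}}d^{2M}\,Z_{n+m}(\Lcyc),
\end{align*}
which gives $Z_{n+m}(\Lcyc)\ge D^{-1}Z_n(\Lcyc)Z_m(\Lcyc)$.

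For the upper bound I would run the surgery backwards: given $\c=c_0\cdots c_{n+m-1}\in\Fix_{n+m}$, use the specification property ($R^k>0$ for $k\ge M$) to pick $\bu,\bv\in\mathscr{W}_M$ with $\a:=(c_0\cdots c_{n-M-1})\bu\in\Fix_n$ and $\b:=(c_n\cdots c_{n+m-M-1})\bv\in\Fix_m$; the triple $(\a,\b,\c)$, now with $\c=\a'\c_0\b'\c_1$ and $|\c_0|=|\c_1|=M$, again satisfies $|\Lcyc(\c)-\Lcyc(\a)-\Lcyc(\b)|\le\norm{L}$ by the convention. A fixed such choice defines $\Phi\colon\Fix_{n+m}\to\Fix_n\times\Fix_m$, which is at most $d^{2M}$-to-one because a pair $(\a,\b)$ pins down $\c$ outside the two length-$M$ blocks that were altered. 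Hence
\[
Z_{n+m}(\Lcyc)=\sum_{\c\in\Fix_{n+m}}e^{\Lcyc(\c)}\le e^{\norm{L}}d^{2M}\sum_{\a\in\Fix_n}\sum_{\b\in\Fix_m}e^{\Lcyc(\a)+\Lcyc(\b)}=e^{\norm{L}}d^{2M}\,Z_n(\Lcyc)Z_m(\Lcyc),
\]
and undoing the passage to $\Lcyc$ yields the lemma with, say, $D=e^{6\norm{\delta L}+\norm{L}}d^{2M}$.

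The step I expect to be the main obstacle is the combinatorial bookkeeping behind the two ``at most $d^{2M}$-to-one'' claims: one must commit once and for all to a rule selecting the connector words $\bu,\bv$ for each configuration so that $\Psi$ and $\Phi$ are genuine maps, and verify that admissibility and periodicity are preserved throughout the cutting and regluing — this is exactly where the slack of length $\ge 2M$ on each side (coming from $n,m\ge 3M$) and the aperiodicity of $R$ are used. The only other slightly delicate point is the reduction to words of full length in $Z_n$, which I would settle with a dominant-term estimate.
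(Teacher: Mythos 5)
Your argument is correct and runs on the same engine as the paper's proof — cut-and-paste surgery on periodic words, with the quasi-morphism defect absorbed by the Convention preceding the lemma and the multiplicity of the surgery bounded combinatorially — but the routing is different and somewhat leaner. The paper first establishes $Z_n(L)\asymp Z_{n+M}(L)$, bounding the multiplicity of a truncation map $\Fix_{n+M}\to\Fix_n$ via Bowen's periodic-orbit count $\#\Fix_{n+M}\asymp e^{M\htop(\tau)}\#\Fix_n$ from \cite{BowenPeriodic}; it then concatenates $\a,\b$ with connectors to land in $\Fix_{n+m+2M}$ and transfers back. You instead pre-trim $\a,\b$ to length $n-M$, $m-M$ (here is precisely where $n,m\geq 3M$ gives the needed slack) and glue directly into $\Fix_{n+m}$, replacing the entropy estimate by the crude uniform bound $d^{2M}$, which is all that is needed. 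This avoids the external input from \cite{BowenPeriodic} entirely and makes the roles of the surgery blocks transparent, at the cost of a worse constant $D$. Two details worth tightening. First, the connector $\bu\in\mathscr{W}_M$ you extract from \Cref{lem:dinbasicashift} really uses $R^{M+1}>0$ rather than the lemma as stated (which gives a connector of length $M-1$); harmless, since $R^k>0$ for all $k\geq M$, but it should be said. Second, the reduction to words of length exactly $n$ that you flag as routine is in fact the same step the paper's proof takes silently — as written, \Cref{def:periodicword} has $\Fix_n=\{\a\text{ periodic}, |\a|\mid n\}$, yet both proofs manipulate $\a\in\Fix_n$ as though $|\a|=n$. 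The dominant-term dismissal is not automatic when the per-letter size of $L$ is large relative to $\htop(\tau)$, so this point merits either a short estimate or an explicit convention that $Z_n$ is taken over words of length exactly $n$ before the surgery is set up.
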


\begin{proof}

\noindent{\textbf{Step 1:}} comparison between $Z_n(L)$ and $Z_{n+M}(L)$.

For $i,j\in\ms{A}$ we fix $\bm{u}^{i,j}\in \ms{W}_M$ so that $i\bm{u}^{i,j}j\in\ms{W}$. Given $\bm a=a_1\cdots a_n\in\Fix[][n]{\ms W}$ we get
\[
	Z_n(L)\leq e^{\norm{L}} \sum_{\bm\in\Fix[][n]{\ms W}} e^{L(\bm a\bm{u}^{a_n,a_1})}\leq e^{\norm{L}}Z_{n+M}(L).
\]
Assume that $n>M$: if $\bm a\bm v=a_1\cdots a_{n-M}v_1\cdots v_{2M}\in\Fix[][n+M]{\ms W}$, then $\bm a\bm{u}^{a_{n-M},a_1}\in\Fix[][n]{\ms W}$, and $|L(\bm a\bm v)-L(\bm a\bm{u}^{a_{n-M},a_1})|\leq \norm{L}$. We can thus define a function $\kappa:\Fix[][n+M]{\ms W}\to \Fix[][n]{\ms W}$ by 
\[
\kappa(\bm a\bm v)=\begin{dcases}
\bm a v_1\cdots v_M & \text{if }\bm a v_1\cdots v_M\in\Fix[][n]{\ms W}\\
\bm a \bm{u}^{a_{n-M},a_1} & \text{ otherwise.}
\end{dcases}	
\]
By construction $\kappa$ is surjective. According to \cite{BowenPeriodic}, there is some constants $C_1, \htop(\tau)>0$ ($\htop(\tau)$ is the topological entropy of $\tau$) so that
\[
  C_1^{-1}e^{M\htop(\tau)}\leq \frac{\#\Fix[][n+M]{\ms W}}{\#\Fix[][n]{\ms W}}\leq C_1e^{M\htop(\tau)},
\]
which implies that $\kappa$ is at most $C_2=[C_1e^{M\htop(T)}+1]$-to-one. Therefore  
\begin{align*}
Z_{n+M}(L)\leq e^{\norm{L}}\sum_{\bm a\bm v\in\Fix[][n+M]{\ms W}}e^{L(\kappa(\bm a\bm v))}\leq C_2\cdot e^{\norm{L}} Z_n(L).
\end{align*}

\noindent{\textbf{Step 2:}} comparison between $Z_{n+n}(L)$ and $Z_n(L)\cdot Z_m(L)$.

We fix $n,m>M$ and argue as above to compute (using the previous equation)
\begin{align*}
 Z_n(L)\cdot Z_m(L)\leq e^{\norm{L}}Z_{n+m+2M}(L)\Rightarrow Z_n(L)\cdot Z_m(L)\leq C_2^2e^{3\norm{L}}Z_{n+m}(L). 
 \end{align*} 
For the reverse inequality we use again \cite{BowenPeriodic} and \Cref{lem:dinbasicashift} to construct surjective map $\kappa: \Fix[][n+m+2M]{\ms W}\to \Fix[][n]{\ms W}\times \Fix[][m]{\ms W}$, which is $C_3$-finite to one, where $C_3$ does not depend on $n,m$. We thus get
\begin{align*}
Z_{n+m}(L)\leq e^{2\norm{L}} Z_{n+m+2M}(L)\leq C_3 e^{3\norm{L}}Z_n(L)Z_m(L). 
\end{align*}
Re-arranging the constants we obtain the claimed inequalities.
\end{proof}

\begin{corollary}\label{cor:Presion}
For $L\in \QM{\ms W}$ it holds $(P_n(L))_n\in\mc{S}_{qb}$.
\end{corollary}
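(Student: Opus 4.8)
The plan is to obtain this immediately from \Cref{lem:Znsubad} by passing to logarithms. Write $P(L)=(P_n(L))_n$ for the sequence in question; since $P_n(L)=\log Z_n(L)$, for every $n,m\geq 3M$ its defect at $(n,m)$ is
\[
\delta(P(L))_{n,m}=P_{n+m}(L)-P_n(L)-P_m(L)=\log\frac{Z_{n+m}(L)}{Z_n(L)\,Z_m(L)},
\]
so \Cref{lem:Znsubad} gives $|\delta(P(L))_{n,m}|\leq\log D$. Hence the defect is already uniformly bounded on all pairs with $\min(n,m)\geq 3M$.

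What remains is bookkeeping for small indices. First, for $n\geq M$ the matrix $R^n$ is positive, so there is an admissible closed loop of length $n$; thus $\Fix_n\neq\emptyset$, $Z_n(L)>0$ and $P_n(L)\in\Real$ for all $n\geq M$ (this is in any case the only range relevant for the pressure $P(L)=\lim_n P_n(L)/n$). For the finitely many pairs with $M\leq n,m<3M$ the three numbers $P_{n+m}(L),P_n(L),P_m(L)$ are finite reals, so $|\delta(P(L))_{n,m}|$ is trivially bounded. For $M\leq n<3M$ and $m\geq 3M$ (and, symmetrically, for $n\geq 3M$, $M\leq m<3M$) one simply re-runs the proof of \Cref{lem:Znsubad}: the estimates used there — inserting connecting words of length at most $M$ from \Cref{lem:dinbasicashift}, and comparing $\#\Fix_{k+M}$ with $\#\Fix_k$ via \cite{BowenPeriodic} — require only $k>M$, so they still produce a two-sided comparison between $Z_{n+m}(L)$ and $Z_n(L)Z_m(L)$ with a constant depending only on $M$, $D$ and $\norm{L}$. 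Equivalently, the same estimates yield $|P_{k+M}(L)-P_k(L)|\leq K$ for all $k>M$ and a fixed $K$, and telescoping lets one replace a small index $n$ by $n+jM\geq 3M$ (with $j\leq 3$) at the cost of an additive $O(MK)$, reducing to the case already treated.

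Putting these bounds together gives $\norm{\delta(P(L))}=\sup_{n,m}|\delta(P(L))_{n,m}|<\oo$, i.e.\ $(P_n(L))_n\in\cS_{qb}$. I expect no real obstacle here: the heart of the statement is \Cref{lem:Znsubad}, and the only point deserving a remark is the passage from indices $\geq 3M$ to all sufficiently large indices, which is a routine finiteness-and-telescoping argument.
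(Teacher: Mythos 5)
Your proof is correct and takes the same route the paper intends: Corollary~\ref{cor:Presion} is stated without proof in the text precisely because it is meant to follow by taking logarithms in \Cref{lem:Znsubad}, which is exactly your main step. Your extra bookkeeping for indices $n,m<3M$ (finiteness of $P_n(L)$ for $n\geq M$, the finitely many small cases, and the telescoping via the bound on $|P_{k+M}(L)-P_k(L)|$ extracted from the proof of \Cref{lem:Znsubad}) is sound and does fill a small gap the paper glosses over — the lemma is stated for $n,m\geq 3M$ while the definition of $\cS_{qb}$ requires a uniform bound over all pairs of indices where the sequence is defined — so this is a worthwhile clarification rather than a deviation.
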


\begin{definition}
The pressure of $L\in \QM{\ms W}\cup \QM{\Fix{\ms W}}$ is 
\begin{align}\label{eq:topologicalpressure}
\Ptop(L)=\lim_n \frac{P_n(L)}{n}
\end{align} 
\end{definition}

By \Cref{cor:presioninseq} it holds
\begin{align}
\sup_n |P_n-n\Ptop(L)|<\oo;
\end{align}
we write
\begin{align}\label{eq:qmnormalsft} 
\hat L(\bm a)=L(\bm a)-P_{|\bm a|}(L), \bm a\in \Fix[][n]{\ms W}.
\end{align} 
Naturally,  
\begin{align*}
Z_n(\hat L):=\sum_{\bm a\in \Fix[][n]{\ms W}} e^{\hat L(\bm a)}=1
\end{align*}
and $\Ptop(\hat L)=0$.

\subsection{The invariant measure associated to a quasimorphism}
\label{ssub:invariantmeasureassociated}

Recall that if $\bm a\in \Fix{\ms W}$ then $\mu_{\bm a}\in \PTM{\tau}{\Sigma}$ denotes the measure supported on the orbit of $\seq{p}(\bm a)=\bm a\bm a\cdots$. We write $\delta_{\bm a}=\delta_{\seq{p}(\bm a)}$, so $\mu_{\bm a}=\frac{1}{|\bm a|}\sum_{\pi\in \mc{C}_{\bm{a}}} \delta_{\pi\bm a}$.

Given a quasimorphism $L$ we consider the ($\tau$-invariant) measures
\begin{align}\label{eq:medidaspersft}
\mu_L^N=\frac{1}{Z_N(L)} \sum_{\bm a\in\Fix[][N]{\ms W}}e^{L(\bm a)}\delta_{\bm a}=\sum_{\bm a\in\Fix[][N]{\ms W}}e^{\hat L(\bm a)}\delta_{\bm a}.
\end{align}

 \begin{lemma}
 There exists $E>0$ so that for any $N$ sufficiently large and any $\bm a\in \ms{W}_n$ it holds
 \begin{align*}
 E^{-1}\leq \frac{\mu_L^{n+N}([\bm a])}{\exp\left(L(\lift{\bm a})-P_n(L)\right)}\leq E.
 \end{align*}
  \end{lemma}

\begin{proof}
Let $A=\{\bm c=\bm a\bm u\bm b\bm v\in\Fix[][n+2M+N]:|\bm u|=|\bm v|=M\}$. It follows that there exists some constant $C>0$ which does not depend on $n$, so that $C^{-1}\leq \frac{\# A}{\# \Fix[][N]{\ms W}}\leq C$. If $\bm c\in A$, then
\[
  |L(\bm c)-L(\lift{\bm a})-L(\lift{\bm b})|\leq \norm{\delta L}
\]
and thus 
\begin{align*}
\mu_L^{n+N}([\bm a])Z_{n+N+2M}(L) &=\sum_{\bm c\in A}e^{L(\bm c)}=e^{L(\lift{\bm a})}\sum_{\bm c\in A}e^{L(\bm c)-L(\lift{\bm a})-L(\lift{\bm b})}e^{L(\lift{\bm b})}
\end{align*}
is uniformly comparable to $e^{L(\lift{\bm a})}Z_{N+M}(L)$. By \Cref{lem:Znsubad},
\begin{align*}
\mu_L^{n+N}([\bm a])\approx e^{L(\lift{\bm a})}\frac{Z_{N+M}(L)}{Z_{n+N+2M}(L)}\approx e^{L(\lift{\bm a})-P_n(L)}.
\end{align*}
\end{proof}

This directly implies.

\begin{corollary}
  For every $n\geq 1$ and $N$ sufficiently large it holds: for every $\bm a\in\ms W$ 
  \[
  E^{-1}\leq \frac{\mu_L^{n+M}([\bm a])}{\mu_L^{n+M}([\lift{\bm a}])}\leq E.
  \]
\end{corollary}

\begin{corollary}\label{cor:decaimientomedidasft}
For every $n,m\geq 1$ and $N$ sufficiently large, it holds: if $\bm a\bm b\in \ms{W}_{n+m}, \bm a\in \ms{W}_n, \bm {b}\in \ms{W}_m$ then
\[
  E^{-1}\leq \frac{\mu_L^{N+n+m}([\bm a\bm b])}{\mu_L^{N+n+m}([\bm a])\cdot \mu_L^{N+n+m}([\bm b])}\leq E.
\]
\end{corollary}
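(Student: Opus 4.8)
The plan is to reduce the statement to the estimate established in the lemma immediately above, applied three times to one and the same measure $\muL^{N+n+m}$. The crucial arithmetic is the identity $N+n+m=(n+m)+N=n+(N+m)=m+(N+n)$: choosing $N$ above the threshold of that lemma (a threshold that does not depend on word length), I may evaluate $\muL^{N+n+m}$ on the length-$(n+m)$ cylinder $[\a\b]$, on the length-$n$ cylinder $[\a]$, and on the length-$m$ cylinder $[\b]$, each time with that lemma's constant $E$ (and with $N$, $N+m$, $N+n$ all large enough, since $n,m\ge 1$). This gives simultaneously
\[
\frac{\muL^{N+n+m}([\a\b])}{\exp(L(\widetilde{\a\b})-P_{n+m}(L))},\quad
\frac{\muL^{N+n+m}([\a])}{\exp(L(\widetilde{\a})-P_{n}(L))},\quad
\frac{\muL^{N+n+m}([\b])}{\exp(L(\widetilde{\b})-P_{m}(L))}\ \in\ [E^{-1},E].
\]
Dividing the first ratio by the product of the other two, the quantity in the corollary equals, up to a multiplicative factor in $[E^{-3},E^{3}]$, the number
\[
\exp\Bigl(\bigl(L(\widetilde{\a\b})-L(\widetilde{\a})-L(\widetilde{\b})\bigr)-\bigl(P_{n+m}(L)-P_{n}(L)-P_{m}(L)\bigr)\Bigr).
\]

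It then remains to bound this exponent uniformly in $\a,\b,n,m$. The pressure part is immediate: by \Cref{cor:Presion} the sequence $(P_{n}(L))_{n}$ is quasi-bounded, so $|P_{n+m}(L)-P_{n}(L)-P_{m}(L)|$ is bounded by a constant (equal to $\log D$ for $n,m\ge 3M$ by \Cref{lem:Znsubad}). For the $L$-part I would invoke the convention preceding \Cref{lem:Znsubad} together with \Cref{rem:cyclicquasimorphism}: by construction $\widetilde{\a}$, $\widetilde{\b}$, $\widetilde{\a\b}$ are the periodic words obtained from $\a$, $\b$, $\a\b$ by appending a bridge word of length at most $M$, so writing $\widetilde{\a}=\a\cdot\a_0$, $\widetilde{\b}=\b\cdot\b_0$ and $\widetilde{\a\b}=\a\b\c_1$ (the middle bridge $\c_0$ being empty) with $|\a_0|,|\b_0|,|\c_1|\le M$, the convention applied to the periodic triple $\widetilde{\a},\widetilde{\b},\widetilde{\a\b}$ yields $|\Lcyc(\widetilde{\a\b})-\Lcyc(\widetilde{\a})-\Lcyc(\widetilde{\b})|\le\norm{L}$; since $|\Lcyc(\cdot)-L(\cdot)|\le 2\norm{\delta L}$ pointwise on $\Fix$ by \Cref{rem:cyclicquasimorphism}, this gives $|L(\widetilde{\a\b})-L(\widetilde{\a})-L(\widetilde{\b})|\le\norm{L}+6\norm{\delta L}\le 7\norm{L}$, uniformly.

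Putting the two bounds together, the exponent above has absolute value at most $7\norm{L}+\log D=:\log E'$, so the quantity in the corollary lies in $\bigl[(E^{3}E')^{-1},\,E^{3}E'\bigr]$; redefining $E:=E^{3}E'$ finishes the argument. I do not anticipate a genuine obstacle: the whole proof is a threefold application of the preceding lemma plus the quasi-boundedness of the partition functions. The only point requiring a little care is arranging the bridge words of $\widetilde{\a}$ and $\widetilde{\b}$ (and the empty middle bridge) so that the convention preceding \Cref{lem:Znsubad} applies verbatim to the triple $\widetilde{\a},\widetilde{\b},\widetilde{\a\b}$; but since every bridge has length $\le M$, all resulting error terms are absorbed into the final constant, exactly as in the proof of \Cref{lem:Znsubad} itself.
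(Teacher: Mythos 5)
Your proof is correct and follows essentially the same route as the paper's: apply the preceding Gibbs-type lemma three times (with shifted $N$), take the quotient, and bound the resulting exponent using the quasi-boundedness of the pressure sequence (i.e. \Cref{lem:Znsubad}) and the quasi-morphism property of $L$. You spell out the bound on $|L(\widetilde{\a\b})-L(\widetilde{\a})-L(\widetilde{\b})|$ in more detail via the convention preceding \Cref{lem:Znsubad} and \Cref{rem:cyclicquasimorphism}, whereas the paper simply asserts the bound $\exp(2\norm{L})$; either way the constant is uniform, which is all that matters.
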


\begin{proof}
Indeed,
\begin{align*}
\frac{\mu_L^{N+n+m}([\bm a\bm b])}{\mu_L^{N+n+m}([\lift{\bm a}])\cdot \mu_L^{N+n+m}([\lift{\bm b}])}&\approx 
e^{L(\bm a\star\bm b)-L(\lift{\bm a})-L(\lift{\bm b})}e^{P_{n+m}(L)-P_n(L)-P_m(L)}
\end{align*}
which is bounded, from above and below independently of $n, m, N$.
\end{proof}

Recall that an invariant measure $\mu\in \PTM{\tau}{\Sigma}$ is mixing if for every $A,B\in \BorelM[\Sigma]$,
\[
  \lim_{k\to\oo} \mu(A\cap\tau^{-k}B)=\mu(A)\mu(B).
\]
Clearly such a measure is ergodic.

\begin{proposition}\label{pro:propiedadesmedidamul}
The sequence $(\mu_L^N)_N$ is weakly convergent to some probability $\mu_L\in \PTM{\tau}{\Sigma}$. Moreover, $\mu_L$ is mixing, has full support, and there exists some uniform constant $E>0$ so that for every pair of concatenable words $\bm a\in \ms{W}_n, \bm b\in \ms{W}_m$ and $k\geq n$, it satisfies
\begin{align}
  \label{eq: gibbs1fs} E^{-1}&\leq \frac{\mu_L([\bm a])}{\exp\left(L(\lift{\bm a})-n\Ptop(L)\right)} \leq  E\\
  \label{eq: gibbs2fs} E^{-1} &\leq \frac{\mu_L([\bm a]\cap \tau^{-k}[\bm b])}{\mu_L([\bm a])\cdot \mu_L([\bm b])}\leq E.
  \end{align} 
\end{proposition}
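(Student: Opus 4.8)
The plan is to deduce everything from the uniform almost-Gibbs estimates for the finite-level measures $\muL^N$ established in the previous lemma and \Cref{cor:decaimientomedidasft}. First I would fix a reference subsequence and note that, since $\PM[\Sigma]$ is weakly compact, $(\muL^N)_N$ has weak accumulation points; call one of them $\muL$. The key observation is that for a fixed cylinder $[\a]$ with $|\a|=n$, the quantity $\muL^{n+N}([\a])$ is pinched between $E^{-1}\exp(L(\widetilde\a)-P_n(L))$ and $E\exp(L(\widetilde\a)-P_n(L))$ for all large $N$ by the preceding lemma; since this bound is \emph{independent of $N$} and $[\a]$ is clopen (hence $\one_{[\a]}$ is continuous), any weak limit $\muL$ satisfies the same two-sided bound. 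Replacing $P_n(L)$ by $n\Ptop(L)$ costs only a further bounded factor because $\sup_n|P_n(L)-n\Ptop(L)|<\oo$ by \Cref{cor:presioninseq}; absorbing this into $E$ gives \eqref{eq: gibbs1fs}. In particular $\muL([\a])>0$ for every admissible word $\a$, so $\muL$ has full support. Invariance of $\muL$ under $\tau$ is automatic: each $\muL^N$ is $\tau$-invariant (it is a convex combination of the periodic measures $\mm_{\upa}$), and weak limits of invariant measures are invariant.

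For the decay/mixing estimate \eqref{eq: gibbs2fs} I would argue similarly but with a two-cylinder test function. Fix $\a\in W_n$, $\b\in W_m$, and $k\geq n$, and let $[\a]\cap\tau^{-k}[\b]$ play the role of the single cylinder above. The point is that $[\a]\cap\tau^{-k}[\b]$ is again a (finite, possibly empty) union of cylinders of bounded combinatorial type, so the lemma — or rather a mild strengthening of it applied to the word obtained by prescribing coordinates $0,\dots,n-1$ and $k,\dots,k+m-1$ — yields, for all $N$ large,
\[
E^{-1}\leq \frac{\muL^{N+\cdots}([\a]\cap\tau^{-k}[\b])}{\muL^{N+\cdots}([\a])\cdot\muL^{N+\cdots}([\b])}\leq E,
\]
exactly as in \Cref{cor:decaimientomedidasft} (which is the case $k=n$, $\a\b$ admissible). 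Passing to the weak limit — legitimate because all the sets involved are clopen — gives \eqref{eq: gibbs2fs} for $\muL$. This simultaneously forces the weak limit to be unique: the values $\muL([\a])$ on cylinders are not literally pinned down, but two accumulation points $\mu_1,\mu_2$ would both satisfy \eqref{eq: gibbs1fs}, hence $\mu_1([\a])/\mu_2([\a])\in[E^{-2},E^2]$ uniformly in $\a$; combined with a standard martingale/Radon–Nikodym argument (the sequence of ratios $\mu_1([\ux]_n)/\mu_2([\ux]_n)$ is a bounded $\mu_2$-martingale and converges to $d\mu_1/d\mu_2$, which must then be $\tau$-invariant once one also uses \eqref{eq: gibbs2fs} to see the tail $\sigma$-algebra is trivial) one concludes $\mu_1=\mu_2$. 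Thus the full sequence $(\muL^N)_N$ converges.

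Finally, mixing: from \eqref{eq: gibbs2fs} one gets that for cylinders $A=[\a],B=[\b]$ the ratio $\muL(A\cap\tau^{-k}B)/(\muL(A)\muL(B))$ stays in $[E^{-1},E]$ for all $k\geq|\a|$; to upgrade boundedness to the limit being exactly $\muL(A)\muL(B)$ I would run the standard argument for almost-Gibbs measures — decompose via the sub-additive/almost-additive structure of $L$ and use that $\sum_{\c\in W_k}\muL([\a]\cap\tau^{-(n+k)}\cdots)$-type sums telescope, or more cleanly invoke that an almost-Gibbs measure for an almost-additive potential with bounded variation is the unique equilibrium state and is mixing (this is the classical Bowen–Ruelle argument, adapted to the almost-additive setting as in \cite{EquSta}, \cite{Cuneo2020}). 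Since cylinders generate $\BM[\Sigma]$ and are an algebra, a density argument extends mixing to all Borel sets. Ergodicity follows from mixing. The main obstacle I anticipate is the uniqueness-of-the-limit step: the Gibbs bounds alone only pinch ratios, not values, so one genuinely needs the ergodic-theoretic input (triviality of the tail $\sigma$-algebra via \eqref{eq: gibbs2fs}, then the martingale convergence theorem) to rule out distinct accumulation points; everything else is a routine passage to the limit along clopen sets plus the already-proven finite-level estimates.
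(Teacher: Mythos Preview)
Your overall strategy matches the paper's: pass the uniform finite-level estimates through weak limits along clopen cylinders to get \eqref{eq: gibbs1fs}, derive \eqref{eq: gibbs2fs}, and then use these to get both mixing and uniqueness of the accumulation point. Two points deserve comment.

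First, for \eqref{eq: gibbs2fs} with general $k\geq n$ you appeal to an unspecified ``mild strengthening'' of the lemma. The paper makes this concrete: write $[\a]\cap\tau^{-k}[\b]=\bigcup_{\c}[\a\c\b]$ over connecting words $\c\in W_{k-n+1}$, apply the $k=n$ case (\Cref{cor:decaimientomedidasft}) to each $[\a\c\b]$, and use the specification constant $M$ to compare $\sum_{\c}\mu([\a\c])$ with $\mu([\a])$ up to a factor $\min\{\mu([\bu]):\bu\in W_M\}$. This is routine but not automatic from the lemma as stated.

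Second, and more substantively, your mixing argument has a gap. You correctly observe that \eqref{eq: gibbs2fs} only pins the correlation ratio in $[E^{-1},E]$, not at $1$, and then propose either a telescoping sum (left vague) or invoking that almost-Gibbs measures for almost-additive potentials are mixing equilibrium states. The latter is circular here: the variational principle and uniqueness of equilibrium states are proved \emph{later} in the paper (\Cref{thm:variationalprinciple}) and rely on the properties of $\muL$ you are trying to establish. The paper instead uses a theorem of Ornstein \cite{rootOrnstein}: from \eqref{eq: gibbs2fs} one gets $\liminf_k\mu(A\cap\tau^{-k}B)\geq E^{-1}\mu(A)\mu(B)$ for all Borel $A,B$, which forces every power of $\tau$ to be ergodic; Ornstein's result then says that total ergodicity together with the two-sided bound on correlations implies mixing. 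This is the missing ingredient.

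For uniqueness, your martingale/Radon--Nikodym route and the paper's route are essentially the same idea phrased differently: the paper simply notes that \eqref{eq: gibbs1fs} makes any two accumulation points mutually non-singular, and two ergodic invariant measures that are not mutually singular must coincide.
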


\begin{proof}
 Due to \Cref{cor:presioninseq}, $\sup_n|P_n-n\Ptop(L)|<\oo$, and thus by the corollary above and re-defining $E$ if necessary, we deduce that property \eqref{eq: gibbs1fs} is true for any accumulation point $\mu_L$ of $(\mu_L^N)_N$. On the other hand, by \Cref{cor:decaimientomedidasft}, since $[\bm a]\cap [\tau^{-n}\bm b]=[\bm a\bm b]$, we deduce that 
\[
   \bm a\in \ms{W}_n, \bm b\in \ms{W}_m, \bm a\bm b\in \ms{W}\Rightarrow E \leq \frac{\mu_L([\bm a]\cap \tau^{-n}[\bm b])}{\mu_L([\bm a])\cdot \mu_L([\bm b])}\leq E.
\]
For $l=k-n+1>0$ we can write $[\bm a\cap \tau^{-k}\bm b]=\bigcup_{\bm c\in A_l} [\bm a\bm c\bm b]$, where
$A_l={\bm c\in \ms{W}_{l}:\bm  a\bm c \bm b\in\ms{W}}$: note that the sum $\sum_{\bm c} \mu_L([\bm a\bm c])$ is uniformly comparable to $\sum_{\bm d\in\ms{W}_{l+M}} \mu([\bm a\bm d])=\mu_L([\bm a])$, thus
\begin{align*}
 \mu_L([\bm a\cap \tau^{-n}\bm b])&=\sum_{\bm c\in \ms{W}_l} \mu_L([\bm a\bm c\bm b])\approx \mu_L([\bm a])\cdot\mu_L([\bm b]).
 \end{align*}
Since the algebra of cylinders is generating, a classical measure theory argument implies the following:  for every $A, B\in \BorelM[\Sigma]$,
\begin{enumerate}
  \item $\liminf_{k\to\oo}\mu(A\cap \tau^{-k}B)\geq E^{-1}\mu_L(A)\cdot\mu_L(B)$;
  \item $\limsup_{k\to\oo}\mu(A\cap \tau^{-k}B)\leq E\mu_L(A)\cdot\mu_L(B)$.
\end{enumerate}
The first condition implies that every power of $\tau$ is ergodic: this fact and $2)$ tell us that we are in the hypotheses of a theorem of D. Ornstein \cite{rootOrnstein} that guarantees that $\mu$ is mixing.

By \eqref{eq: gibbs1fs} we have any pair of accumulation points of $(\mu_L^N)_N$ are non-singular with respect to each other and have full support, hence by ergodicity they have to coincide. It follows that there is only one (necessarily mixing) accumulation point of $(\mu_L^N)_N$. 
\end{proof}

\begin{remark}\label{rem:cohomologousqmhavesameeq}
If $L\sim L'$ then it is direct from the construction that $\mu_L=\mu_{L'}$.
\end{remark}

 \subsection{The potential associated to the quasimorphism}\label{ssec:potential}

Here we will show that to each measure $\mu_L$ as constructed above, it corresponds a weak Bowen function.

Define for $\seq{x}\in \Sigma, k\in\N_{>0}$
\begin{align}
\varphi_L^k(\seq x)=\log \frac{\mu_L([\seq{x}]_{k})}{\mu_L([\tau \seq{x}]_{k-1})}=\log \frac{\mu_L([x_0\cdots x_{k-1}])}{\mu_L([x_1\cdots x_{k-1}])}.
\end{align}

\begin{lemma}\label{lem:potentialbounded}
The sequence $(\varphi_L^k(\seq x))_{k\geq 1}$ is uniformly bounded.
\end{lemma}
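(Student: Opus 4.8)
The plan is to read the bound straight off the Gibbs property of $\muL$ established in \Cref{pro:propiedadesmedidamul}, in its quasi‑Bernoulli form \eqref{eq: gibbs2fs}. First I would fix $\ux\in\Sigma$ and $k\geq 2$, and use the elementary identity $[x_0\cdots x_{k-1}]=[x_0]\cap\tau^{-1}[x_1\cdots x_{k-1}]$: writing $\a=x_0$ (a word of length $1$) and $\b=x_1\cdots x_{k-1}$ (a word of length $k-1\geq 1$), the shift exponent $1$ satisfies $1\geq|\a|$, so \eqref{eq: gibbs2fs} applies and yields
\[
E^{-1}\ \leq\ \frac{\muL([x_0\cdots x_{k-1}])}{\muL([x_0])\,\muL([x_1\cdots x_{k-1}])}\ \leq\ E .
\]
Dividing by $\muL([x_1\cdots x_{k-1}])>0$ (full support) and taking logarithms, this reads $-\log E+\log\muL([x_0])\leq \varL^k(\ux)\leq \log E+\log\muL([x_0])$.

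Next I would bring in full support at one‑letter cylinders. Put $c:=\min\{\muL([a]):a\in\cA\}$; since $\cA$ is finite and $\muL$ has full support, $c>0$, while $\muL([a])\leq 1$ for every $a$. The previous display then gives $|\varL^k(\ux)|\leq \log E+\log(1/c)$ uniformly in $\ux$ and in $k\geq 2$; the single remaining value $k=1$ is trivial, since $\varL^1(\ux)=\log\muL([x_0])\in[\log c,0]$. Hence $\sup_{k\geq 1}\normL{\varL^k}{\oo}\leq \log E+\log(1/c)<\oo$, which is the assertion. (An alternative route uses only \eqref{eq: gibbs1fs}: writing $\widetilde{\a}$ for the periodic completion of a word $\a$, \eqref{eq: gibbs1fs} bounds $\bigl|\varL^k(\ux)-\bigl(L(\widetilde{x_0\cdots x_{k-1}})-L(\widetilde{x_1\cdots x_{k-1}})-\Ptop(L)\bigr)\bigr|$ by $2\log E$, and since $\widetilde{x_0\cdots x_{k-1}}$ and $\widetilde{x_1\cdots x_{k-1}}$ factor as $x_0\cdot(x_1\cdots x_{k-1})\cdot\bu$ and $(x_1\cdots x_{k-1})\cdot\bu'$ with $|\bu|,|\bu'|\leq M$, \Cref{lem:boundswords} bounds this difference by a constant depending only on $\norm{L}$ and $M$.)

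I do not expect a real obstacle here; the only points needing a little care are checking that \eqref{eq: gibbs2fs} is legitimately applicable — which works precisely because the prefix letter $x_0$ has length $1$, so the hypothesis $k\geq|\a|$ is met with $k=1$ — and recording the elementary lower bound $c>0$ on the masses of the finitely many one‑letter cylinders, which is exactly where full support of $\muL$ enters.
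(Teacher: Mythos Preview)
Your proof is correct. Your main route via the quasi-Bernoulli inequality \eqref{eq: gibbs2fs} is slightly different from the paper's, which instead invokes the Gibbs estimate \eqref{eq: gibbs1fs} together with \Cref{cor:Presion} to obtain the bound $2\log E+\log D$ --- this is essentially the alternative you sketch in your parenthetical remark. Your primary argument is arguably cleaner: it reads the bound off in one step from \eqref{eq: gibbs2fs} plus the trivial control $0<c\leq\muL([a])\leq 1$ on one-letter cylinders, and avoids having to estimate $L(\widetilde{x_0\cdots x_{k-1}})-L(\widetilde{x_1\cdots x_{k-1}})$ via the quasi-morphism property. Both routes rest on \Cref{pro:propiedadesmedidamul}, so neither is more elementary in any substantive sense.
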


\begin{proof}
Fix $k\in\N$ and let $\seq{x}\in \Sigma$. The quotient \(\frac{\mu_L([x_0\cdots x_{k-1}])}{\mu_L([x_1\cdots x_{k-1}])}\) can be written as
\begin{align*}
&\frac{\mu_L([x_0\cdots x_{k-1}])}{\exp\paren{L(\lift{\seq{x}}_{(n)})-n\Ptop(L)}}\cdot\frac{\exp\paren{L(\lift{\tau\seq{x}}_{(n-1)})-(n-1)\Ptop(L)}}{\mu_L([x_1\cdots x_{k-1}])}\\
&\cdot \exp\paren*{L(\lift{\seq{x}}_{(n)})-L(\lift{\tau\seq{x}}_{(n-1)})+\Ptop(L)}
\end{align*}
so by \eqref{eq: gibbs1fs},
\[
  |\varphi_L^k(\seq x)|\leq 2E+\norm{\delta L}+\Ptop(L).
\]
\end{proof}

\begin{lemma}\label{lem:Bowenproperties}
For every $n\geq 1, k>n$ and $\seq{x}\in \Sigma$,
\[
  \left|\sum_{l=0}^{n-1}\varphi_L^{k-l}(\tau^l \seq{x})-L(\widetilde{\seq{x}}_{(n)})+n\Ptop(L)\right|\leq 2\log E+\Ptop(L)+\norm{\delta L}.
\]
\end{lemma}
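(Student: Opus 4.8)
The plan is a telescoping identity, then one application of the Gibbs property of $\muL$ (\Cref{pro:propiedadesmedidamul}) and one application of the $\Fix$-quasi-morphism inequality of \Cref{def:qmenperiodico}; collecting the constants $2\log E$, $\log D$, $\norm{\delta L}$ is then purely formal. \emph{Step 1 (telescoping).} Writing $\ux=(x_j)_{j\ge 0}$, unwinding the definition of $\varL^{k}$ gives, for each $0\le l\le n-1$,
\[
\varL^{k-l}(\tau^l\ux)=\log\muL([x_l\cdots x_{k-1}])-\log\muL([x_{l+1}\cdots x_{k-1}]).
\]
Since $k>n$, every exponent $k-l$ occurring here is $\ge 2$, so these are genuine cylinders, and $\muL$ has full support, so every term is finite. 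Summing over $l=0,\dots,n-1$ the intermediate logarithms cancel and
\[
\sum_{l=0}^{n-1}\varL^{k-l}(\tau^l\ux)=\log\muL([x_0\cdots x_{k-1}])-\log\muL([x_n\cdots x_{k-1}]).
\]

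\emph{Step 2 (Gibbs estimate).} Put $\a=x_0\cdots x_{n-1}$, so that $\widetilde{\ux}_{(n)}=\widetilde{\a}$, and $\b=x_n\cdots x_{k-1}$, so that $\a\b=x_0\cdots x_{k-1}$. Applying the Gibbs bound \eqref{eq: gibbs1fs} to the word $\a\b$ (length $k$) and to the word $\b$ (length $k-n$) replaces, up to a multiplicative error $E^{\pm1}$ each, $\muL([x_0\cdots x_{k-1}])$ by $\exp\!\bigl(L(\widetilde{\a\b})-k\,\Ptop(L)\bigr)$ and $\muL([x_n\cdots x_{k-1}])$ by $\exp\!\bigl(L(\widetilde{\b})-(k-n)\,\Ptop(L)\bigr)$. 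Taking logarithms and subtracting, the two pressure terms combine into a single multiple of $n\,\Ptop(L)$, with total additive error at most $2\log E$; so, up to an error $\le 2\log E$, the sum of Step 1 equals $L(\widetilde{\a\b})-L(\widetilde{\b})$ together with that pressure term. (If one uses the version of \eqref{eq: gibbs1fs} written with $P_{|\,\cdot\,|}(L)$ in the exponent instead of $|\,\cdot\,|\,\Ptop(L)$, then $|P_j(L)-j\,\Ptop(L)|\le\log D$ from \Cref{cor:presioninseq} accounts for the extra $\log D$ in the statement.)

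\emph{Step 3 (gluing the periodised words).} It remains to identify $L(\widetilde{\a\b})-L(\widetilde{\b})$ with $L(\widetilde{\a})$ up to $\norm{\delta L}$. By construction of the periodisation $\widetilde{(\cdot)}$, each of $\widetilde{\a},\widetilde{\b},\widetilde{\a\b}$ is obtained from $\a,\b,\a\b$ by appending one block of length $0$ or $M$, and all three lie in $\Fix$. Hence, taking $\a$ and $\b$ as the two core words and the appended blocks (all of length $\le M$) as the connectors, \Cref{def:qmenperiodico} gives at once
\[
\bigl|L(\widetilde{\a\b})-L(\widetilde{\a})-L(\widetilde{\b})\bigr|\le\norm{\delta L}.
\]
Substituting this into the outcome of Step 2 and collecting $2\log E+\log D+\norm{\delta L}$ finishes the proof.

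I do not foresee a real obstacle. Step 1 is formal and Step 2 is a verbatim use of \eqref{eq: gibbs1fs}. The only point that needs genuine care is Step 3: one must verify that, once periodised, the words $x_0\cdots x_{k-1}$, $x_n\cdots x_{k-1}$ and $x_0\cdots x_{n-1}$ really do fit the exact shape $\a\bu'\b\bv'$, $\a\bu$, $\b\bv$ that \Cref{def:qmenperiodico} requires, with every connector of length $\le M$, so that the quasi-morphism inequality applies with no hidden constant.
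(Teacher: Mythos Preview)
Your proof is correct and follows exactly the same approach as the paper: the same telescoping identity for $\sum_{l=0}^{n-1}\varL^{k-l}(\tau^l\ux)$, two applications of the Gibbs bound \eqref{eq: gibbs1fs}, and then the quasi-morphism inequality to pass from $L(\widetilde{\a\b})-L(\widetilde{\b})$ to $L(\widetilde{\a})$. Your parenthetical remark on the origin of the $\log D$ term (via $|P_j(L)-j\,\Ptop(L)|\le\log D$) is actually more explicit than the paper's own write-up, which states the final bound without indicating where that constant enters.
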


\begin{proof}
Let us compute 
\begin{align*}
\log\left( \frac{\mu_L([\seq{x}]_k)}{\mu_L([\tau^n \seq{x}]_{k-n})}\right)&=\log \mu_L([\seq{x}]_k)- \log \mu_L([\tau^n \seq{x}]_{k-n})\\
&=\sum_{l=0}^{n-1} \log \mu_L([\tau^l\seq{x}]_{k-l})- \sum_{l=1}^{n} \log \mu_L([\tau^l\seq{x}]_{k-l})\\
&=\sum_{l=0}^{n-1} \log \mu_L([\tau^l \seq{x}]_{k-l}) - \sum_{l=0}^{n-1} \log \mu_L([\tau^{l+1}\seq{x}]_{k-l-1})\\
&=\sum_{l=0}^{n-1}\log\left( \frac{\mu_L([\tau^l \seq{x}]_{k-l})}{\mu_L([\tau^{l+1}\seq{x}]_{k-l-1})}\right)=\sum_{l=0}^{n-1}\varphi_L^{k-l}(\tau^l \seq{x})
\end{align*}
Using \Cref{eq: gibbs1fs} we get
\begin{align*}
&|\log \mu_L([\seq{x}]_k)-L(\widetilde{\seq{x}}_{(k)})+k\Ptop(L)|\leq \log E
\shortintertext{and}
&|\log \mu_L([\tau^n\seq{x}]_{k-n})-L(\widetilde{\tau^n\seq{x}}_{(k-n)})+(k-n)\Ptop(L)|\leq \log E
\shortintertext{hence}
&\left|\sum_{l=0}^{n-1}\varphi_L^{k-l}(\tau^l \seq{x})- L(\tilde{\seq{x}}_{(n)})+n\Ptop(L)\right|\leq 2\log E+\Ptop(L)+\norm{\delta L}.
\end{align*}
\end{proof}

We are ready to show the existence of a weak Bowen function associated to $L$.

\begin{theorem}\label{thm:potencialloo}
The sequence $(\varphi_L^n)_{n}$ converges both $\aee{\mu_L}$ and in $\Lp{p}, \forall p\in[1,+\oo)$ to some function $\varphi_L\in \Lp{\oo}(\mu_L)$. 
\end{theorem}

\begin{proof}
The sequence of continuous functions $(e^{\varphi_L^n})_{n}$ is bounded by \Cref{lem:potentialbounded}, and one has
\[
  e^{\varphi_L^n}=\sum_{A\in\xi}\Emu{\mu_L}{\one_{A}|\tau^{-1}\ms{B}^n}\one_{A}
\]
By the (increasing) Martingale convergence theorem and Doob's inequality (see for example \cite{NeveuProb}) it follows that this sequence converges almost everywhere and in $\Lp{p}, \forall p\in[1,+\oo)$ to some function $e^{\varphi_L}\in \Lp[\oo](\mu_L)$. 
\end{proof}

\begin{remark}\label{rem:convergeeninvariante}
We can argue similarly and guarantee that the set of points $\seq{x}$ where 
\[
 \exists \lim_{k\to\oo} \log \frac{\mu_L([x_r\cdots x_{k-1}])}{\mu_L([x_{r+1}\cdots x_{k-1}])} 
\]
is of full $\mu_L$ measure for every $r\geq 0$, hence by induction we get that the set $\Sigma_0$ where $(\varphi_L^n)_{n}$ converges is invariant under $\tau$. Replacing $\Sigma_0$ by $\bigcup_{n\geq 0}\tau^{-n}\Sigma_0$ it is no loss of generality to assume that $\Sigma_0=\tau(\Sigma_0)=\tau^{-1}(\Sigma_0)$.   
\end{remark}

\begin{definition}\label{def:potencialasociado}
The function $\varphi_L:\Sigma_0\to\R$ is the potential associated to $L$.
\end{definition}

Due to \cref{lem:Bowenproperties} we have that:

\begin{corollary}\label{cor:SnvsL}
For $\seq{x}\in \Sigma_0, n\in\N$ it holds 
\[
  |S_n\varphi_L(\seq{x})+n\Ptop(L)-L(\lift{\seq{x}}_{(n)})|\leq 2\log E+\log D+\norm{\delta L}.
\]
Therefore $\varphi_L$ has the weak Bowen property.
\end{corollary}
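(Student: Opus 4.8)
The plan is to obtain the estimate by passing to the limit $k\to\infty$ in the inequality of \Cref{lem:Bowenproperties}, and then to read off the weak Bowen property as an immediate consequence. First I would fix $\ux\in\Sigma_0$ and $n\geq 1$. Since $\Sigma_0$ is $\tau$-invariant (\Cref{rem:convergeeninvariante}), every point $\tau^l\ux$ with $0\leq l\leq n-1$ again lies in $\Sigma_0$, so by \Cref{pro:potencialloo} the pointwise limit $\varL^{m}(\tau^l\ux)\xrightarrow[m\to\oo]{}\varL(\tau^l\ux)$ holds. As $k\to\infty$ with $l$ fixed one has $k-l\to\infty$, and the sum over $l$ is finite, hence
\[
\sum_{l=0}^{n-1}\varL^{k-l}(\tau^l\ux)\xrightarrow[k\to\oo]{}\sum_{l=0}^{n-1}\varL(\tau^l\ux)=S_n\varL(\ux).
\]
The right-hand side of the inequality in \Cref{lem:Bowenproperties} is independent of $k$, so letting $k\to\infty$ yields
\[
\left|S_n\varL(\ux)+n\Ptop(L)-L(\widetilde{\ux}_{(n)})\right|\leq 2\log E+\log D+\norm{\delta L},
\]
which is the first assertion.

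For the weak Bowen property I would first record the structural facts: $D(\varL)=\Sigma_0$ is dense, because it has full measure for the fully supported measure $\muL$, it is $\tau$-invariant by \Cref{rem:convergeeninvariante}, and $\varL$ is bounded by \Cref{lem:potentialbounded} together with \Cref{pro:potencialloo}. It then remains to check the defining inequality. Given $\ux,\uy\in\Sigma_0$ with $\uy\in[\ux]_n$, the first $n$ symbols of $\ux$ and $\uy$ agree, so $\ux_{(n)}=\uy_{(n)}$ as words, whence $\widetilde{\ux}_{(n)}=\widetilde{\uy}_{(n)}$ and therefore $L(\widetilde{\ux}_{(n)})=L(\widetilde{\uy}_{(n)})$. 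Applying the bound just proved to $\ux$ and to $\uy$ and subtracting, the terms $n\Ptop(L)$ and $L(\widetilde{\ux}_{(n)})$ cancel and the triangle inequality gives
\[
\left|S_n\varL(\ux)-S_n\varL(\uy)\right|\leq 2\bigl(2\log E+\log D+\norm{\delta L}\bigr),
\]
uniformly in $n$ and in $\ux,\uy$. Hence $\varL$ is a $\muL$-weak Bowen function, with the explicit bound $\normB{\varL}\leq 2(2\log E+\log D+\norm{\delta L})$ on its Bowen constant.

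This argument is essentially routine once \Cref{lem:Bowenproperties} is in hand; the only point that genuinely needs care is that the telescoping identity and the pointwise convergence must be used along the \emph{whole} finite orbit segment $\ux,\tau\ux,\dots,\tau^{n-1}\ux$, which is exactly why it was arranged in \Cref{rem:convergeeninvariante} that $\Sigma_0$ be $\tau$-invariant rather than merely of full measure.
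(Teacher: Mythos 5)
Your proof is correct and follows the paper's intended route: the paper dispatches this corollary with a one-line appeal to \Cref{lem:Bowenproperties}, and you supply the expected details — letting $k\to\infty$ term by term along the finite orbit segment (justified by the $\tau$-invariance of $\Sigma_0$ from \Cref{rem:convergeeninvariante}), then deducing the weak Bowen property by subtracting the estimate at two points sharing the same initial $n$-block, which forces $\widetilde{\ux}_{(n)}=\widetilde{\uy}_{(n)}$. The explicit bound $\normB{\varL}\leq 2(2\log E+\log D+\norm{\delta L})$ that falls out is a sound and useful by-product.
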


\subsection{The equilibrium state associated to the quasicocycle}

We are now interested in determining how unique $\mu_L$ is. Here we establish the following.

\begin{theorem}[Variational principle]\label{thm:variationalprinciple}
It holds that $\Ptop(L)=\sup_{\mu\in \PTM{\tau}{\Sigma}}\{h_{\mu}(\tau)+\mu(L)\}=h_{\mu_L}(\tau)+\mu_L(L)$. Moreover, the measure $\mu_L$ is the unique $\tau$-invariant measure where the equality is attained.
\end{theorem}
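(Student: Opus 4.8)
The plan is to establish the two equalities separately. For the identity $\Ptop(L)=h_{\muL}(\tau)+\muL(\varL)$, I would exploit the Gibbs estimate \eqref{eq: gibbs1fs} from \Cref{pro:propiedadesmedidamul} together with \Cref{cor:SnvsL}, which says $|S_n\varL(\ux)+n\Ptop(L)-L(\tilde\ux_{(n)})|$ is bounded uniformly in $n$ and $\ux$. Combining these, for a cylinder $[\a]$ with $\a\in W_n$ one gets $\muL([\a])\asymp \exp\bigl(S_n\varL(\ux_\a)-n\Ptop(L)\bigr)$ up to a multiplicative constant independent of $n$, where $\ux_\a\in[\a]$ is arbitrary (the choice costs only the Bowen constant of $\varL$). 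This is precisely the statement that $\muL$ is a Gibbs measure for the potential $\varL$ with pressure constant $\Ptop(L)$. From here the Shannon–McMillan–Breiman theorem gives
\[
  -\tfrac1n\log\muL([\ux]_n)\xrightarrow[n\to\oo]{}h_{\muL}(\tau)\quad\muL\text{-a.e.},
\]
while the Gibbs bound gives $-\tfrac1n\log\muL([\ux]_n)=\Ptop(L)-\tfrac1n S_n\varL(\ux)+O(1/n)$, and Birkhoff's ergodic theorem (using ergodicity of $\muL$, already established) gives $\tfrac1n S_n\varL(\ux)\to\muL(\varL)$ a.e. Equating the two limits yields $h_{\muL}(\tau)+\muL(\varL)=\Ptop(L)$, so $\muL$ attains the supremum (once we know the supremum is $\le\Ptop(L)$).

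For the inequality $h_{\mm}(\tau)+\mm(L)\le\Ptop(L)$ for every $\mm\in\PTM{\tau}{\Sigma}$, I would first observe $\mm(L)=\mm(\varL)+\Ptop(L)\cdot 0$... more precisely, by \Cref{cor:SnvsL} and the definition $\mm(L)=\lim_n\frac1n\int L^{(n)}\der\mm$ one has $\mm(L)=\mm(\varL)$ whenever $\varL$ is $\mm$-integrable in the relevant sense; for a general $\mm$ one instead argues directly with the finite partition $\xi$. Apply the classical variational principle for the continuous (locally constant, hence continuous) quasi-cocycle data: using the partition function identity $\sum_{\a\in\Fix_n}e^{L(\a)}=Z_n(L)$ and the standard entropy estimate
\[
  H_{\mm}\Bigl(\bigvee_{k=0}^{n-1}\tau^{-k}\xi\Bigr)+\int S_n\varL\,\der\mm\le\log\sum_{\a\in W_n}\sup_{[\a]}e^{S_n\varL}\le n\Ptop(L)+C,
\]
(the last bound again from \Cref{cor:SnvsL} and \eqref{eq: gibbs1fs}); dividing by $n$ and letting $n\to\oo$ gives $h_{\mm}(\tau)+\mm(\varL)\le\Ptop(L)$, and since $\mm(L)=\mm(\varL)$ this is the desired bound. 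Thus $\sup_{\mm}\{h_{\mm}(\tau)+\mm(L)\}=\Ptop(L)$.

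For uniqueness: suppose $\mm$ also attains the supremum. Then $\mm$ is an equilibrium state for the continuous potential-data $\varL$ (equivalently for the almost-additive sequence $(S_n\varL)_n$, or directly for the quasi-cocycle $\bB^L$). Because $\Sigma$ is a mixing SFT and $\varL$ has the weak Bowen property with the bounded-distortion estimate \eqref{eq: gibbs2fs} available for $\muL$, the standard convexity/differentiability argument applies: if $\mm\ne\muL$ then, writing $\nu_t$ for equilibrium states of $\varL+t(\one_A-\muL(A))$ and differentiating the pressure, one derives a contradiction with strict concavity of entropy; alternatively, and more in the spirit of this paper, one can invoke that $\muL$ is Gibbs and any equilibrium state of a potential admitting a Gibbs measure on a mixing SFT must be absolutely continuous with respect to it, hence equal to it by ergodicity. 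I would carry out the argument in the Gibbs form: from the entropy inequality above, equality $h_{\mm}(\tau)+\mm(\varL)=\Ptop(L)$ forces, via the refinement of the SMB argument, that $-\tfrac1n\log\muL([\ux]_n)+\tfrac1n S_n\varL(\ux)\to 0$ in $L^1(\mm)$, which combined with the Gibbs bound gives $\mm\ll\muL$; ergodicity of $\muL$ then yields $\mm=\muL$. The main obstacle I anticipate is the uniqueness step: the potential $\varL$ is only $L^\infty$ and a priori not continuous, so the textbook equilibrium-state uniqueness theorems do not apply verbatim, and one must instead run the Gibbs-measure argument carefully, using only the bounded-distortion property \eqref{eq: gibbs2fs} and the boundedness of $\varL$ rather than any modulus of continuity.
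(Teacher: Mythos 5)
Your treatment of the identity $\Ptop(L)=h_{\muL}(\tau)+\muL(\varL)$ is exactly the paper's: Gibbs estimate \eqref{eq: gibbs1fs} plus \Cref{cor:SnvsL} give the quasi-product form of $\muL([\ux]_n)$, then Brin--Katok and Birkhoff close the loop. No issue there.

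For the bound $h_{\mm}(\tau)+\mm(L)\le\Ptop(L)$ you choose the classical Jensen/partition-function route, whereas the paper obtains this inequality, \emph{together with} its strictness for singular measures, in a single stroke from a Kullback--Leibler/martingale computation applied to $M_n(\ux)=\muL([\ux]_n)/\mm([\ux]_n)$. Your route can be made to work, but two repairs are needed. First, $\int S_n\varL\,\der\mm$ is not well defined when $\mm\perp\muL$, since $\varL$ lives only on a set of full $\muL$-measure; you should replace $S_n\varL$ by the everywhere-defined locally constant function $B^{\varL}_n=\ie{\muL}{S_n\varL\mid\xi^n}$ (or by $L^{(n)}-n\Ptop(L)$), which differ from $S_n\varL$ on $D(\varL)$ only by a bounded amount. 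Second, the normalization is off: since $\varL$ is already centered so that $S_n\varL(\ux)\approx L(\tilde{\ux}_{(n)})-n\Ptop(L)$ by \Cref{cor:SnvsL}, the partition sum $\log\sum_{\a\in W_n}\sup_{[\a]}e^{S_n\varL}$ is $O(1)$, not $n\Ptop(L)+O(1)$, and correspondingly $\mm(\varL)=\mm(L)-\Ptop(L)$ rather than $\mm(L)=\mm(\varL)$. With these corrections Jensen gives $h_{\mm}(\tau)+\mm(\varL)\le 0$, which is the bound you want.

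The uniqueness step contains a genuine gap. The quantity you propose to control, $-\tfrac1n\log\muL([\ux]_n)+\tfrac1n S_n\varL(\ux)$, vanishes \emph{automatically}: by \eqref{eq: gibbs1fs} and \Cref{cor:SnvsL} it is $O(1/n)$ uniformly on $D(\varL)$, with no reference to $\mm$ whatsoever. Its vanishing in $L^1(\mm)$ therefore carries no information and cannot force $\mm\ll\muL$. What one must control is the log-likelihood ratio $N_n(\ux)=-\log\bigl(\muL([\ux]_n)/\mm([\ux]_n)\bigr)$. Inserting $\int N_n\,\der\mm$ into the exact identity
\[
H_{\mm}(\xi^{(n)})=-\sum_{A\in\xi^{(n)}}\mm(A)\log\muL(A)-\int N_n\,\der\mm,
\]
noting that $\int N_n\,\der\mm\ge0$ always (relative entropy), while $\int N_n\,\der\mm\to+\infty$ when $\mm\perp\muL$ (since $M_n\to0$ $\mm$-a.e.\ and $\int N_n^-\,\der\mm$ stays bounded), produces the strict pressure deficit for singular ergodic $\mm$. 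An ergodic decomposition, using affineness of $\mm\mapsto h_{\mm}(\tau)$ and $\mm\mapsto\mm(L)$, then delivers both the variational principle and uniqueness. You identify the right obstruction (no continuity of $\varL$), but the SMB-based fix you sketch does not overcome it; the relative-entropy/martingale argument is what is actually required, and it is the heart of the paper's proof.
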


We recall that for $\mu\in \PTM{\tau}{\Sigma}$ the number $h_{\mu}(\tau)$ is the metric entropy of $\tau$ with respect to $\mu$. It can be computed as
\[
  h_{\mu}(\tau)=\lim_{n\to\oo} -\frac{H_{\mu}(\xi^{n})}{n}=\lim_{n\to\oo} -\frac{1}{n}\sum_{A\in \xi^{(n)}}\mu(A)\log \mu(A);
\]
this is consequence of the classical Sinai-Kolmogorov theorem (see for example \cite{Cornfeld1982}). If $\mu$ is ergodic, then the Brin-Katok formula \cite{brinkatok} permits to compute the entropy also as
\[
  h_{\mu}(\tau)=\lim_{n\to\oo} -\frac{\log \mu([\seq{x}]_n)}{n}\quad \aee{\mu}\ \seq{x}.
\]
The number $P_{\mu}(L)\defeq h_{\mu}(\tau)+\mu(L)$ will be referred to as the metric pressure of the pair $(\mu,L)$.

\begin{proof}[Proof of the Variational Principle]
Since $(\tau,\mu_L)$ is ergodic, using \eqref{eq: gibbs1fs} and Birkhoff's ergodic theorem we get that for $\aee{\mu_L}\ \seq{x}$,
\begin{align*}
h_{\mu_L}(\tau)=\lim \frac{1}{n}-\log \mu_L([\seq{x}]_n)=\lim_n\frac{-S_n\varphi_L(\seq{x})}{n}=-\int \varphi_L \dd\mu_L=\Ptop(L)-\mu_L(L). 
\end{align*}
Therefore $P_{\mu_L}(L)=\Ptop(L)$. We now show that $\mu_L$ is the unique shift invariant measure satisfying the equality, and that for any other invariant measure $\mu$ the metric pressure of $(\mu,L)$ is strictly smaller than $\Ptop(L)$.

To verify the remaining parts we modify an argument due to P. Spitzer. For $\mu \in \PTM{\tau}{\Sigma}$ given we consider the Borel functions
\[
  M_n(\seq{x}):=\begin{dcases}
  \frac{\mu_L([\seq{x}]_n)}{\mu([\seq{x}]_n)} & \mu([\seq{x}]_n)\neq 0\\
  0 & \mu([\seq{x}]_n)= 0.
  \end{dcases}
\]
Then $(M_n)_n$ is a martingale relative to $\{\ms{B}^n\}_n$ in $(\Sigma,\mu)$, and converges to either the zero function if $\mu\perp\mu_L$, or to $\frac{\dd \mu_L}{\dd \mu}$ if $\mu_L\ll\mu$. In the case when $\mu$ is singular with respect to $\mu_L$, we let $N_n=-\log M_n$: then $(N_n)_n$ is a super-martingale, with
\[
  \int N_n^- \dd \mu=\int (\log M_n\wedge 0)\dd \mu\leq \int (M_n-1\wedge 0)\dd \mu \leq \int M_n \dd \mu=1, \forall n.
\]
Due to the convergence of $(M_n)_n$, it follows that $\lim \int N_n\dd\mu=+\oo$. Note that 
\begin{align*}
\int N_n\dd \mu =\sum_{A\in \xi^{n}}\mu(A)\log \frac{\mu(A)}{\mu_L(A)}
\end{align*}
and therefore by the Gibbs property of $\mu_L$ it holds 
\begin{align*}
&H_{\mu}(\xi^{n})=-\sum_{A\in \xi^{n}} \mu(A)\log \mu_L(A)-\int N_n\dd \mu\leq E+n\Ptop(L)-\int L^{(n)}\dd\mu-\int N_n\dd \mu\\
&\Rightarrow h_{\mu}(\tau)+\mu(L)=\inf_n \frac{H_{\mu}(\xi^{n})}{n}+\mu(L)\leq  \frac{H_{\mu}(\xi^{n})}{n}+\frac{\int L^{(n)}\dd\mu}{n}+\left(\mu(L)-\frac{\int L^{(n)}\dd\mu}{n}\right)\\
&\leq \frac{E+\norm{\delta L}-\int N_n\dd \mu}{n}+\Ptop(L). 
\end{align*}
For $n$ large the quantity $\frac{E+\norm{\delta L}-\int N_n\dd \mu}{n}$ is strictly negative, and therefore if $\mu$ is singular with respect to $\mu_L$ then $h_{\mu}(\tau)+\mu(L)<\Ptop(L)=h_{\mu_L}(\tau)+\mu_L(L)$. 

Finally, one notes that if $\mu=\int \mu_{\alpha}\dd\Psi(\alpha)$ for some $\Psi\in \ProbM[\PTM{\tau}{\Sigma}]$, then
$P_\mu(L)=\int P_{\mu_\al}\dd\Psi(\alpha)$: this follows due to convexity of the metric entropy with respect to such decomposition, plus the fact that $\mu\in \PTM{\tau}{\Sigma}\to \mu(L)$ is convex and continuous (cf. \Cref{pro:integracioncontinua}). As a consequence, applying this to an ergodic decomposition of a given invariant measure, we get to conclude that $\sup_{\mu\in \PTM{\tau}{\Sigma}}\{h_{\mu}(\tau)+\mu(L)\}=\sup_{\mu\in \ETM{\tau}{\Sigma}}\{h_{\mu}(\tau)+\mu(L)\}$, where $\ETM{\tau}{\Sigma} \subset \PTM{\tau}{\Sigma}$ denotes the set of ergodic invariant measures of $\tau$.

Recalling the basic fact that two ergodic invariant measures are either mutually singular or they coincide, using that $\mu_L$ is ergodic and applying the previous reasoning we conclude that it is the unique equilibrium state.
\end{proof}

\begin{remark}
  For the first part of the previous theorem one could use the work of Cao, Feng and Huang \cite{Cao_2008}, where they establish the variational principle for continuous sub-additive functions on compact metric spaces. The proof given above gives simultaneously the variational principle and the  uniqueness of the equilibrium measure in our context.
\end{remark}

We thus see that in fact, for $L\in\QM{\ms W}$ (equivalently, for $\phi \in \Bow{\Sigma}$ or $\bm B\in \QCB{\Sigma}$) the corresponding measure $\mu_L$ is canonically defined to its cohomology class. In fact, there is a one to one correspondence as we show below.

\begin{corollary}\label{cor:muLdeterminesclass}
Let $L_1 ,L_2\in\QM{\ms W}$ be such that they have the same equilibrium state $\mu$, with $\mu(L_1)=\mu(L_2)$. Then, they are cohomologous.
\end{corollary}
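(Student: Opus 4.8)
The plan is to deduce the statement from the Livsic theorem for quasi-morphisms, \Cref{cor:Livsicqm}, by showing that $L_1$ and $L_2$ differ by a bounded amount on $\Fix$. Put $\mu$ for the common equilibrium state, $\mu = \mu_{L_1} = \mu_{L_2}$. The key observation is that $\mu$ satisfies the Gibbs estimate \eqref{eq: gibbs1fs} of \Cref{pro:propiedadesmedidamul} \emph{simultaneously} for $L_1$ and for $L_2$, with respective finite constants $E_1, E_2 > 0$. Evaluating both estimates at a periodic word $\a \in \Fix$ (so that $\widetilde\a = \a$), dividing the one for $L_1$ by the one for $L_2$, and taking logarithms yields
\[
\bigl|\, L_1(\a) - L_2(\a) - |\a|\,(\Ptop(L_1) - \Ptop(L_2)) \,\bigr| \;\le\; \log(E_1 E_2) \qquad \text{for every } \a \in \Fix .
\]

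Next I would check that the two pressures agree. By the variational principle \Cref{thm:variationalprinciple}, $\mu$ is the equilibrium state of each $L_i$, so the supremum is attained at $\mu$ and $\Ptop(L_i) = h_{\mu}(\tau) + \mu(L_i)$; since $\mu(L_1) = \mu(L_2)$ by hypothesis, subtraction gives $\Ptop(L_1) = \Ptop(L_2)$. Substituting back into the displayed inequality we obtain $\sup_{\a \in \Fix} |L_1(\a) - L_2(\a)| \le \log(E_1 E_2) < \infty$, i.e. $(L_1 - L_2)|_{\Fix} \in \ell^{\oo}(\Fix)$; equivalently, applying the bound to the powers $\a^m$ of a fixed periodic word and dividing by $m$ as $m \to \infty$ gives $\bar L_1(\a) = \bar L_2(\a)$ for every $\a \in \Fix$. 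Since our SFT is mixing ($R$ being irreducible and aperiodic), \Cref{cor:Livsicqm} then yields $L_1 \sim L_2$, which is the claim.

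The argument contains no genuine obstacle; the only points requiring a little care are that the common equilibrium measure inherits the Gibbs property from \emph{both} potentials (with, a priori, different finite constants), and the translation -- via the variational principle -- of the hypothesis $\mu(L_1) = \mu(L_2)$ into the equality $\Ptop(L_1) = \Ptop(L_2)$. That hypothesis on the means is indispensable: adding to a quasi-morphism $L$ a non-zero multiple of the length, $\a \mapsto L(\a) + c\,|\a|$, changes the mean but leaves the equilibrium state unchanged, so that the two are no longer cohomologous.
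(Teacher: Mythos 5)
Your proof is correct, and it reaches the conclusion by a route that is conceptually parallel to the paper's but differs in one noteworthy place. The paper's proof observes that the potential $\varL$ is built \emph{solely} from the equilibrium measure $\muL$ (via the Martingale limit of $\log\frac{\mu([\ux]_k)}{\mu([\tau\ux]_{k-1})}$), so that $\mu_{L_1}=\mu_{L_2}$ forces $\varphi_{L_1}=\varphi_{L_2}$ outright; it then invokes \Cref{cor:SnvsL} to bound $L_1^{(n)}-L_2^{(n)}$ on a dense set, and concludes by local constancy of the $L_i^{(n)}$. You instead bypass the constructed potential altogether and work directly with the Gibbs estimate \eqref{eq: gibbs1fs}, which the common measure satisfies for both $L_1$ and $L_2$; dividing the two estimates and using the variational principle to cancel the pressure terms gives the uniform bound on $\Fix$, after which \Cref{cor:Livsicqm} finishes. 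The two arguments rest on the same underlying facts (the Gibbs property and the variational principle), since \Cref{cor:SnvsL} is itself a consequence of \eqref{eq: gibbs1fs}; your version has the small virtue of making the pressure cancellation explicit (the paper leaves $\Ptop(L_1)=\Ptop(L_2)$ implicit), while the paper's version avoids the detour through $\Fix$ by exploiting the canonical nature of $\varL$. Both are complete; there is no gap.
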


\begin{proof}
  Indeed, the hypothesis shows that the potentials associated to $L_1, L_2$ coincide, thus by \Cref{cor:SnvsL} we deduce that $(L^{(n)}_1-L^{(n)}_2)_n$  is uniformly bounded on a  dense set of $\Sigma$. This is enough to show that they are uniformly bounded everywhere, and thus $L_1\sim L_2$.
\end{proof}

\begin{remark}
  For Hölder functions $\phi_1, \phi_2$, the corresponding statement is due to Bowen (Proposition $4.5$ in \cite{EquSta}). Bowen's result includes the fact that that transfer function is Hölder. This can be deduced from the Corollary above, applying \cite{Quas_1997} to obtain a bounded continuous transfer function, and then using \cite{Livshits1971} to improve the regularity.
\end{remark}

At this stage we have essentially proved the statements of \Cref{thm:representation1}. Indeed, defining $\psi_L=\varphi_L+\Ptop(L)$ it follows by the previous inequality that $S_n\psi_L(\seq{x})$ is uniformly close to $L(\lift{\seq{x}}_{(n)})$. Moreover, given any other $\psi_L'$ with the weak Bowen property and satisfying the same, the quasicocycle $\bm{B}^{\psi_L'}$ is cohomologous to $\bm{B}^L$, and therefore it determines $L'\in\QM{\ms W}$ cohomologous to $L$, which then has the same invariant measure $\mu_L$. Applying the previous Corollary we deduce that $\psi_L'=\psi_L+u-u\circ \tau$ for some $u\in \Lp{\oo}{\mu_L}$, due to \Cref{thm:E}.

\subsection{The transfer operator associated to \texorpdfstring{$L$}{L}}\label{ssec:transfer}

Here we give another characterization of $\mu_L$. For this we introduce the transfer operator associated to $\varphi_L$, which will also be used later. Fix $p\in [1,+\oo]$ and define $\Rop[\varphi_L]=\Rop[L]:\Lp{p}{\mu_L}\toit$ by the formula
\[
   \Rop[L]{\psi}(\seq x)=\sum_{\tau\seq y=\seq{x}} e^{\varphi_L(\seq{y})}\psi(\seq{y}).
\]

The following is verified by direct computation.
\begin{lemma}[Transference property]
Let $\psi\in \Lp{p}{\mu_L}$ and $\phi\in \Lp{q}{\mu_L}$ where $p^{-1}+q^{-1}=1$ if $p<\oo$, and $q=\oo$ is $p=\oo$. Then
\begin{align}\label{eq:transferenceLcal}
\Rop[L]{(\psi\cdot \phi\circ \tau)}=\Rop[L]{\psi}\cdot \phi. 
\end{align}
\end{lemma}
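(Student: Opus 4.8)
The plan is to verify the identity pointwise — equivalently $\muL$-almost everywhere — exploiting the single structural fact that $\phi\circ\tau$ is constant along the fibres of the shift.

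First I would dispatch the well-definedness of both sides. Since the alphabet $\cA$ is finite, every $\ux\in\Sigma$ has at most $\#\cA$ preimages under $\tau$, and $e^{\varL}$ is uniformly bounded by \Cref{lem:potentialbounded}; hence $\Lcal$ carries $\Lp[1](\muL)$ into itself (and likewise each $\Lp[p](\muL)$). When $p<\oo$, Hölder's inequality together with $\tau$-invariance of $\muL$ (so that $\phi\circ\tau\in\Lp[q](\muL)$, $q^{-1}=1-p^{-1}$) gives $\psi\cdot(\phi\circ\tau)\in\Lp[1](\muL)$; when $p=\oo$ we have $q=\oo$ and the product lies in $\Lp[\oo](\muL)$. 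In either case $\Lcal\big(\psi\cdot(\phi\circ\tau)\big)$ makes sense, and so does $\Lcal(\psi)\cdot\phi$.

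The computation is then immediate: fixing $\ux\in\Sigma$ and using that $\tau\uy=\ux$ forces $(\phi\circ\tau)(\uy)=\phi(\tau\uy)=\phi(\ux)$, one has
\begin{align*}
\Lcal\big(\psi\cdot(\phi\circ\tau)\big)(\ux)
&=\sum_{\tau\uy=\ux} e^{\varL(\uy)}\,\psi(\uy)\,(\phi\circ\tau)(\uy)
=\sum_{\tau\uy=\ux} e^{\varL(\uy)}\,\psi(\uy)\,\phi(\ux)\\
&=\phi(\ux)\sum_{\tau\uy=\ux} e^{\varL(\uy)}\,\psi(\uy)
=\phi(\ux)\cdot\Lcal(\psi)(\ux),
\end{align*}
and since this holds at every $\ux$ at which the chosen representatives are defined, it holds $\muL$-a.e. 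I do not expect any genuine obstacle here; the only step requiring a line of justification is the well-definedness of the products as elements of the stated $\Lp$-spaces, which is exactly what the hypothesis $p^{-1}+q^{-1}=1$ (together with $\tau$-invariance of $\muL$) is there to guarantee.
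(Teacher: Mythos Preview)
Your proof is correct and is precisely the direct computation the paper alludes to (the paper itself omits the proof, stating only that it ``is verified by direct computation''). The key observation you make---that $\phi\circ\tau$ is constant on each fibre $\tau^{-1}(\ux)$ and hence factors out of the sum---is exactly the content of this standard identity.
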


We are interested in the action of the transpose of $\Rop$ when $p=\oo$. Consider $\mrm{Add}(\mu_L)=\mrm{Add}(\Sigma,\BorelM[\Sigma],\mu_L)$ the set of finitely additive measures on $(\Sigma,\BorelM[\Sigma])$ that vanish on the null-sets of $\mu_L$. Due to the Yosida-Hewitt representation theorem we can identify $\mrm{Add}(\mu_L)=\Lp{\oo}{\mu_L}^{*}$; since $\mu_L(\Sigma)=1$ (finite) it follows that $\mu_L\in \mrm{Add}(\Sigma)$. We can then consider $\Rop^{*}:\mrm{Add}(\mu_L)\toit$ the corresponding adjoint operator, with
\[
  \nu\in \mrm{Add}(\mu_L),\ \varphi\in \Lp{\oo}{\mu_L}\Rightarrow \Rop[L]^*\nu(\varphi)=\nu(\Rop[L]{\varphi}).
\]

\begin{lemma}\label{lem:RPFnormalizado}
 It holds $\Rop[L]{1}=1$ for $\aee{\mu_L}$
 \end{lemma}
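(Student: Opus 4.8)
The plan is to verify the identity pointwise on the full-measure set $\Sigma_0$ on which $\varL$ is defined, using nothing more than the defining formula $e^{\varL^k(\ux)}=\muL([\ux]_k)/\muL([\tau\ux]_{k-1})$ together with the $\tau$-invariance and full support of $\muL$. First I would fix $\ux=(x_0,x_1,\dots)\in\Sigma_0$ and note that its $\tau$-preimages are exactly the finitely many points $\uy^{(a)}=(a,x_0,x_1,\dots)$ with $a\in\cA$ and $R(a,x_0)=1$. For such a preimage and any $k\geq 1$ one has $[\uy^{(a)}]_k=[a]\cap\tau^{-1}[\ux]_{k-1}$ and $[\tau\uy^{(a)}]_{k-1}=[\ux]_{k-1}$, so that
\[
 e^{\varL^k(\uy^{(a)})}=\frac{\muL([\uy^{(a)}]_k)}{\muL([\tau\uy^{(a)}]_{k-1})}=\frac{\muL\big([a]\cap\tau^{-1}[\ux]_{k-1}\big)}{\muL([\ux]_{k-1})},
\]
the denominator being positive because $\muL$ has full support (\Cref{pro:propiedadesmedidamul}).

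Next I would sum over the preimages. Since $\tau^{-1}[\ux]_{k-1}$ is the disjoint union $\bigsqcup_{a\in\cA}\big([a]\cap\tau^{-1}[\ux]_{k-1}\big)$ and the only nonempty terms occur for those $a$ with $R(a,x_0)=1$, the $\tau$-invariance of $\muL$ yields
\[
 \sum_{\tau\uy=\ux}e^{\varL^k(\uy)}=\frac{\muL\big(\tau^{-1}[\ux]_{k-1}\big)}{\muL([\ux]_{k-1})}=1\qquad\text{for every }k\geq 1.
\]
The last step is to let $k\to\oo$: by \Cref{rem:convergeeninvariante} we may take $\Sigma_0$ fully $\tau$-invariant, so each $\tau$-preimage $\uy$ of $\ux\in\Sigma_0$ again lies in $\Sigma_0$, and hence $e^{\varL^k(\uy)}\to e^{\varL(\uy)}$ by \Cref{pro:potencialloo}. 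As the sum ranges over a finite set, this gives $\Lcal_L(\one)(\ux)=\sum_{\tau\uy=\ux}e^{\varL(\uy)}=1$; since $\muL(\Sigma_0)=1$, the claim follows.

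I do not expect a genuine obstacle here: the computation is short and all ingredients are already in place. The only point requiring care is that the $\tau$-preimages of a point in $\Sigma_0$ remain in $\Sigma_0$ (so that the convergence $\varL^k\to\varL$ may be invoked at those preimages), which is precisely why $\Sigma_0$ was arranged to satisfy $\tau^{-1}\Sigma_0=\Sigma_0$ in \Cref{rem:convergeeninvariante}. A minor secondary point is the positivity of the cylinder masses $\muL([\ux]_{k-1})$, guaranteed by the full support of $\muL$.
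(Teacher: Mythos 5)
Your proof is correct and takes essentially the same route as the paper: compute $\Lcal_L(\one_{[a]})(\ux)$ from the defining quotient $e^{\varL^k}=\muL([\cdot]_k)/\muL([\tau\cdot]_{k-1})$, sum the finitely many preimage contributions, observe the sum telescopes to $\muL(\tau^{-1}[\ux]_{k-1})/\muL([\ux]_{k-1})=1$ by invariance, and pass to the limit $k\to\infty$. The paper's proof is a slightly more compressed version of this computation; your extra care in flagging the $\tau$-invariance of $\Sigma_0$ (so that the preimages lie in the domain of $\varL$) makes explicit a point the paper leaves implicit.
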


\begin{proof}
Using that $\Rop[L]{1}=\sum_{a\in\ms A}\Rop[L]{1_{[a]}}$ and linearity we obtain
\begin{align*}
\Rop[L]{1_{[a]}}(\seq{x})&=R_{ax_0}e^{\varphi_L(ax_0x_1\cdots)}=\lim_n\frac{\mu_L([ax_0\cdots x_{n-1}])}{\mu_L([x_0\cdots x_{n-1}])}\\
&\Rightarrow \Rop[L]{1}(\seq{x})=\lim_n \sum_{a\in\ms A} R_{ax_0}\frac{\mu_L([ax_0\cdots x_{n-1}])}{\mu_L([x_0\cdots x_{n-1}])}=1=1(\seq{x}),
\end{align*}
for $\aee{\mu_L}$ point.
\end{proof}

\begin{corollary}
It holds $\Rop[L]^*\mu_L=\mu_L$.
\end{corollary}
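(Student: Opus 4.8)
The plan is to exhibit $\Lcal_L^{\ast}(\muL)$ as a Borel probability measure on $\Sigma$ that agrees with $\muL$ on every cylinder; since cylinders form a $\pi$-system generating $\BM[\Sigma]$, this forces $\Lcal_L^{\ast}(\muL)=\muL$. Concretely, set $\nu(A):=\muL(\Lcal_L\one_A)$. I would first check that $\nu$ is a (countably additive) probability: positivity is immediate from positivity of $\Lcal_L$; the total mass is $\nu(\Sigma)=\muL(\Lcal_L\one)=\muL(\one)=1$ by \Cref{lem:RPFnormalizado}; and countable additivity follows from dominated convergence applied to monotone limits of indicators, using that each $\Lcal_L\one_B(\ux)=\sum_{\tau\uy=\ux}e^{\varL(\uy)}\one_B(\uy)$ is a finite sum and that the functions $e^{\varL^{n}}$ are uniformly bounded (\Cref{lem:potentialbounded}). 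One must also note, via \Cref{rem:convergeeninvariante}, that the domain $\Sigma_0$ of $\varL$ is $\tau$-invariant of full measure, so $a\ux\in\Sigma_0$ whenever $\ux\in\Sigma_0$ and $\Lcal_L\one_B$ is $\muL$-a.e.\ well defined.

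\textbf{Core computation.} The heart of the argument is evaluating $\nu$ on a cylinder $[\a]$ with $\a=a_0a_1\cdots a_{\ell-1}$; put $\a'=a_1\cdots a_{\ell-1}$. Then $\one_{[\a]}=\one_{[a_0]}\cdot(\one_{[\a']}\circ\tau)$, so the transference property \eqref{eq:transferenceLcal} gives $\Lcal_L\one_{[\a]}=\Lcal_L(\one_{[a_0]})\cdot\one_{[\a']}$, while directly from the definition of $\Lcal_L$ one has $\Lcal_L(\one_{[a_0]})(\ux)=R_{a_0x_0}\,e^{\varL(a_0\ux)}$ on $\Sigma_0$, which on $[\a']$ (for $\ell\ge2$) is just $e^{\varL(a_0\ux)}$. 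Using the defining limit $e^{\varL(a_0\ux)}=\lim_{n}\muL([a_0x_0\cdots x_{n-1}])/\muL([x_0\cdots x_{n-1}])$, the uniform bound of \Cref{lem:potentialbounded}, and dominated convergence,
\[
\nu([\a])=\int_{[\a']}e^{\varL(a_0\ux)}\der\muL(\ux)=\lim_{n\to\oo}\int_{[\a']}\frac{\muL([a_0x_0\cdots x_{n-1}])}{\muL([x_0\cdots x_{n-1}])}\der\muL(\ux).
\]
For $n\ge\ell-1$ the integrand is constant on each length-$n$ cylinder $[\c]\subset[\a']$, with value $\muL([a_0\c])/\muL([\c])$, so the $n$-th integral equals $\sum_{[\c]\subset[\a'],\,|\c|=n}\muL([a_0\c])$; and since the words $a_0\c$ then run over exactly the cylinders of length $(n+1)$ contained in $[\a]$, this sum is $\muL([\a])$. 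Hence $\nu([\a])=\muL([\a])$ for all $\a$ (the case $\ell=1$ being identical once the admissibility factor $R_{a_0x_0}$ is retained).

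\textbf{Conclusion and main difficulty.} Since $\nu=\Lcal_L^{\ast}(\muL)$ and $\muL$ are probability measures agreeing on the generating $\pi$-system of cylinders, they coincide, i.e.\ $\Lcal_L^{\ast}(\muL)=\muL$. There is no genuine obstacle here: the one substantive ingredient is the Martingale-type representation of $e^{\varL}$ as $\lim_n\muL([x_0\cdots x_{n-1}])/\muL([x_1\cdots x_{n-1}])$ together with its uniform bound, both already in hand from \Cref{lem:potentialbounded,pro:potencialloo}. The steps requiring care are purely bookkeeping — verifying genuine countable additivity of $\nu$, tracking the full-measure $\tau$-invariant domain $\Sigma_0$ on which $\varL$ and hence $\Lcal_L\one_B$ are defined, and the index shifts in the telescoping sum over finer cylinders.
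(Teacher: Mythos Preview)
Your argument is correct but takes a different, more hands-on route than the paper's. The paper dispatches the corollary in one line using the $\tau$-invariance of $\muL$ together with \Cref{lem:RPFnormalizado} and the transference identity:
\[
\int\phi\,\der\muL=\int\phi\circ\tau\,\der\muL=\int(\phi\circ\tau)\cdot\Lcal_L(\one)\,\der\muL=\int\Lcal_L(\phi)\,\der\muL.
\]
You instead check $\Lcal_L^{\ast}\muL=\muL$ directly on cylinders by unfolding the martingale definition $e^{\varL(\ux)}=\lim_n\muL([\ux]_n)/\muL([\tau\ux]_{n-1})$ and observing that the resulting integral over $[\a']$ telescopes to $\muL([\a])$; this is essentially the computation that proved $\Lcal_L(\one)=\one$ in \Cref{lem:RPFnormalizado}, now carried out for $\one_{[\a]}$ in place of $\one$. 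Your approach never invokes $\tau$-invariance of $\muL$ and makes fully explicit why it is this particular potential $\varL$ (rather than an arbitrary cohomologous one) that renders $\muL$ a fixed point of $\Lcal_L^{\ast}$; the paper's approach is considerably shorter but leans more heavily on the transference/duality formalism.
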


\begin{proof}
By the transference property it follows that for $\phi, \psi \in \Lp{\oo}{\mu_L}$
\begin{align*}
\int \phi \dd\mu_L=\int \phi\circ \tau \dd\mu_L=\int \phi\circ\tau\cdot \Rop[L]{1}  \dd\mu_L=\int \Rop[L]{\phi}\dd\mu_L
\end{align*}
and the claim follows.
\end{proof}

 \begin{remark}
The same argument shows that if $\nu\in \mrm{Add}(\mu_L)$ is invariant under $\Rop[L]^*$, then it is $\tau$-invariant.
 \end{remark}

\begin{corollary}\label{cor:uniquestationary}
If $\nu\in \mrm{Add}(\mu_L)$ is $\sigma-$additive and $\Rop[L]^*$ invariant, then $\nu=\mu_L$.
\end{corollary}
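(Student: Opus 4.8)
The plan is to reduce the statement to the classical fact that an invariant measure which is absolutely continuous with respect to an ergodic invariant measure is a constant multiple of it; here the ergodic measure is $\muL$, whose ergodicity (indeed mixing) is furnished by \Cref{pro:propiedadesmedidamul}. Two preliminary observations are needed. First, any $\nu\in\mathrm{Add}(\muL)$ fixed by $\Lcal_L^{\ast}$ is automatically $\tau$-invariant — this is exactly the content of the Remark preceding the statement, and it is one line: for a Borel set $A$, the transference property \eqref{eq:transferenceLcal} together with \Cref{lem:RPFnormalizado} give $\Lcal_L(\one_A\circ\tau)=\Lcal_L(\one)\cdot\one_A=\one_A$, whence $\nu(\tau^{-1}A)=\nu(\one_A\circ\tau)=\nu\!\left(\Lcal_L(\one_A\circ\tau)\right)=\nu(\one_A)=\nu(A)$. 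Second, since $\nu$ is now assumed $\sigma$-additive and, being in $\mathrm{Add}(\muL)$, vanishes on the $\muL$-null sets, every $\muL$-null set is $\nu$-null; so $\nu$ (or $|\nu|$, if $\nu$ is not assumed positive) is absolutely continuous with respect to $\muL$.

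With these in hand I would argue as follows. Fix $\phi\in\CM[\Sigma]$ and set $A_n\phi=\tfrac1n\sum_{k=0}^{n-1}\phi\circ\tau^k$. On one hand, $\tau$-invariance of $\nu$ gives $\int A_n\phi\,\der\nu=\int\phi\,\der\nu$ for every $n$. On the other hand, Birkhoff's ergodic theorem applied to the ergodic system $(\Sigma,\tau,\muL)$ gives $A_n\phi\to\int\phi\,\der\muL$ on a $\muL$-conull, hence $\nu$-conull, set; since $|A_n\phi|\le\normL{\phi}{\oo}$, dominated convergence yields $\int A_n\phi\,\der\nu\to\nu(\Sigma)\int\phi\,\der\muL$. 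Comparing the two computations, $\int\phi\,\der\nu=\nu(\Sigma)\int\phi\,\der\muL$ for every continuous $\phi$, and since $\Sigma$ is compact metric this forces $\nu=\nu(\Sigma)\,\muL$; as $\nu$ is a probability (a stationary distribution), $\nu(\Sigma)=1$ and $\nu=\muL$.

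I do not anticipate any real obstacle here: the whole argument is a short application of the Birkhoff theorem. The two points needing a little care are (i) the normalization $\nu(\Sigma)=1$ — one should include in the hypothesis that $\nu$ is a probability, or else settle for $\nu=\nu(\Sigma)\,\muL$ — and (ii) the transfer of the $\muL$-almost everywhere convergence to a statement about $\int(\cdot)\,\der\nu$, which is precisely where the extra $\sigma$-additivity hypothesis (beyond the preceding Remark) enters, through absolute continuity. An alternative more in the spirit of the transfer-operator formalism would be to write $\nu=g\,\muL$ with $g=\der\nu/\der\muL\in\Lp[1](\muL)$, deduce $\Lcal_L g=g$ from $\Lcal_L^{\ast}\nu=\nu$, and conclude that $g$ is constant from ergodicity of $\muL$; this works too, but extending the transference identity to the $\Lp[1]$–$\Lp[\oo]$ pairing requires an approximation step, so I prefer the ergodic-theoretic route above.
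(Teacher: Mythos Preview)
Your proposal is correct and is essentially the same as the paper's proof, just spelled out in detail: the paper's entire argument is ``Previous remark and ergodicity of $\muL$,'' and what you have written is precisely the standard justification that a $\tau$-invariant (signed) measure absolutely continuous with respect to an ergodic invariant probability is a scalar multiple of it. Your caveat about the normalization $\nu(\Sigma)=1$ is well taken --- the statement as written does not impose it, so strictly one only gets $\nu=\nu(\Sigma)\,\muL$ --- and your identification of where $\sigma$-additivity is actually used (to pass from ``vanishes on $\muL$-null sets'' to genuine absolute continuity, so that Birkhoff and dominated convergence apply) is exactly the point of the hypothesis.
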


\begin{proof}
Previous remark and ergodicity of $\mu_L$.
\end{proof}

Now take $\psi\in\Bow[][\mu_L]{\Sigma}, \psi\sim \varphi_L$: by \Cref{thm:livsicWeakBowen} there exists $u\in \Lp{\oo}{\mu_L}$ so that $\varphi_L=\psi+u-u\circ T$. It follows that 
\begin{align*}
1=\Rop[\varphi_L](1)=\frac{\Rop[\psi]{e^u}}{e^u}
\end{align*}
and $e^u$ is an eigenvector of $\Rop[\psi]$ corresponding to the eigenvalue $1$. Proposition $4.3$ of \cite{Haydn1992} implies that in fact $e^u$ is a simple eigenfunction of $\Rop[\psi]:\Lp{1}{\mu_L}\toit$.

\subsection{The potential for other reference measures}\label{sub:potentialothermeasures}

In order to address more precise statistical properties of a given quasimorphism (as the Central Limit Theorem), the construction of its associated potential has a major drawback, namely, it is defined only for its equilibrium measure $\mu_L$. In this part we remedy this: given some invariant measure $\mu\in \PTM{\tau}{\Sigma}$ we construct an associated potential $\varphi_{L,\mu} \in \Lp{\oo}{\mu}$
 with the weak Bowen property, and so that $(S_n\varphi_{L,\mu})_{n\geq 1}$ is cohomologous to $(L^{(n)})_{n\geq 1}$. Although this method also works for $\mu_L$, unlike the previous construction, the potentials obtained do not seem to be unique, thus preventing a unique characterization of $\varphi_L$.

 Fix $L\in\QM{\ms W}$ and consider the sequence of locally constant functions $\zeta_n:\Sigma\to\R$,
\begin{align*}
\zeta_n(\seq{x})&=\frac{1}{n}\sum_{k=1}^n L(x_0\ldots x_{k})-L(x_1\ldots x_{k})\\
&=\frac{1}{n}\left(L(x_0x_1)-L(x_1)+L(x_0x_1x_2)-L(x_1x_2)+\cdots L(x_0\ldots x_{n})-L(x_1\ldots x_{n})\right).
\end{align*}
 The following is clear.
 \begin{lemma}
 $(\zeta_n)_{n\geq 1} \subset \Cr{\Sigma}$ is uniformly bounded: $\norml{\zeta_n}\leq \norm{L}$.
 \end{lemma}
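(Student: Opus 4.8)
The claim is that the sequence of locally constant functions $\zeta_n(\ux) = \frac{1}{n}\sum_{k=1}^n\bigl(L(x_0\ldots x_k) - L(x_1\ldots x_k)\bigr)$ is uniformly bounded, with the explicit bound $\norml{\zeta_n}{\oo}\leq \norml{L|\cA}{\oo}+\norm{\delta L}$. The plan is to estimate each summand $L(x_0\ldots x_k) - L(x_1\ldots x_k)$ individually and then average. Write $\a = x_1\cdots x_k \in \mathscr{W}_k$ and $b = x_0 \in \cA = \mathscr{W}_1$, so that $x_0\cdots x_k = b\a$ is a legal word. Then the quasi-morphism inequality from \Cref{def:quasimorphismSFT} gives directly
\[
|L(b\a) - L(b) - L(\a)| = |\delta L(b,\a)| \leq \norm{\delta L},
\]
hence $|L(x_0\cdots x_k) - L(x_1\cdots x_k)| = |L(b\a) - L(\a)| \leq |L(b)| + \norm{\delta L} \leq \norml{L|\cA}{\oo} + \norm{\delta L}$, since $b = x_0 \in \cA$.

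From this the averaging is immediate: by the triangle inequality,
\[
|\zeta_n(\ux)| \leq \frac{1}{n}\sum_{k=1}^n |L(x_0\cdots x_k) - L(x_1\cdots x_k)| \leq \frac{1}{n}\cdot n\cdot\bigl(\norml{L|\cA}{\oo}+\norm{\delta L}\bigr) = \norml{L|\cA}{\oo}+\norm{\delta L},
\]
uniformly in $\ux\in\Sigma$ and in $n\geq 1$. The fact that each $\zeta_n$ is locally constant (indeed $\zeta_n$ depends only on $x_0,\ldots,x_n$, so it is constant on cylinders $[\ux]_{n+1}$) and continuous follows since $\zeta_n$ is a finite combination of functions of the form $\ux\mapsto L(x_0\cdots x_j)$, each of which factors through the projection to the first $j+1$ coordinates; hence $(\zeta_n)_{n\geq 1}\subset\CM[\Sigma]$.

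There is essentially no obstacle here — the lemma is a routine bookkeeping consequence of the definition of a quasi-morphism, and the only mild point to be careful about is that $b\a$ is a valid word in $\mathscr{W}$ whenever $\ux\in\Sigma$ (which holds because $R(x_0,x_1)=1$), so that $\delta L(b,\a)$ is defined and controlled by $\norm{\delta L}$. I would state the one-line computation above and move on.
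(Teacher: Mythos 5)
Your proof is correct and fills in exactly the routine computation the paper leaves implicit (the paper states ``The following is clear'' with no argument): writing $x_0\cdots x_k = b\a$ with $b=x_0\in\cA$ and $\a=x_1\cdots x_k$, the quasi-morphism bound gives $|L(b\a)-L(\a)|\leq |L(b)|+\norm{\delta L}$, and averaging preserves the bound. There is nothing to add.
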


Let us now fix $T\in \N$ and consider $n>T$.

\begin{lemma}\label{lem:estimatezn}
 It holds
 \[
  |S_T \zeta_n(\seq{x})-L(x_0\cdots x_{T-1})|\leq \norm{\delta L}(\frac{2T}{n}+1-\frac{T}{n})+\frac{T}{n}\norml{\restr{L}{\ms A}}.
 \]
 \end{lemma}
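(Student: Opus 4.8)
The plan is to expand the telescoping sum $S_T\zeta_n(\ux) = \sum_{l=0}^{T-1}\zeta_n(\tau^l\ux)$ and rearrange it so that, after cancellations, it is close to $L(x_0\cdots x_{T-1})$ up to the claimed error. First I would write out, for each $l\in\{0,\dots,T-1\}$,
\[
n\,\zeta_n(\tau^l\ux)=\sum_{k=1}^{n}\big(L(x_l\cdots x_{l+k})-L(x_{l+1}\cdots x_{l+k})\big),
\]
so that
\[
n\,S_T\zeta_n(\ux)=\sum_{l=0}^{T-1}\sum_{k=1}^{n}\big(L(x_l\cdots x_{l+k})-L(x_{l+1}\cdots x_{l+k})\big).
\]
The key observation is that for fixed ``right endpoint'' $j=l+k$, the inner terms telescope in $l$: grouping the summands by the value of $j$ (which ranges roughly from $1$ up to $T-1+n$), the sum over the relevant consecutive $l$'s collapses to $L(x_{l_{\min}(j)}\cdots x_j)-L(x_{l_{\max}(j)+1}\cdots x_j)$. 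I expect three regimes: for $j$ small ($j<T$) the telescoped difference is $L(x_0\cdots x_j)-L(x_j\cdots x_j)=L(x_0\cdots x_j)-L(x_j)$ only when the full range of $l$ is available; for the ``bulk'' regime $T\le j\le n$ one gets a complete telescoping of a block of length $T$, yielding $L(x_{j-T+1}\cdots x_j)-L(x_{j-\cdots}\cdots x_j)$; and for $j$ near the top ($j>n$) only a truncated range of $l$ contributes.

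The second step is to estimate each of these telescoped differences using the quasi-morphism property. For a block of $L(x_a\cdots x_b)-L(x_c\cdots x_b)$ with $a\le c$, one writes $x_a\cdots x_b=(x_a\cdots x_{c-1})(x_c\cdots x_b)$ and applies Definition \ref{def:quasimorphismSFT} together with \Cref{lem:boundswords} to conclude that this differs from $L(x_a\cdots x_{c-1})$ by at most $\norm{\delta L}$; moreover when the prefix $x_a\cdots x_{c-1}$ is a single letter the contribution is bounded by $\norml{L|\cA}{\oo}$. Carefully bookkeeping which of the $j$-values contribute a ``full'' block versus a ``partial'' block, and dividing by $n$, the bulk contribution should reassemble (again via \Cref{lem:boundswords}, summing $T/n$-fraction of the blocks) into something within $\frac{T}{n}\norm{\delta L}$ of $L(x_0\cdots x_{T-1})$, while the boundary regimes (of combined size $O(T)$ out of $n$) each contribute an error of order $\frac{T}{n}\norm{\delta L}$ or $\frac{T}{n}\norml{L|\cA}{\oo}$, together with the ``missing letters'' term $\frac{T}{n}\norml{L|\cA}{\oo}$. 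Collecting the three error contributions—one of size $\frac{2T}{n}\norm{\delta L}$ from the incomplete telescopings at both ends, one of size $(1-\frac{T}{n})\norm{\delta L}$ from merging the bulk blocks into $L(x_0\cdots x_{T-1})$, and one of size $\frac{T}{n}\norml{L|\cA}{\oo}$ from the single-letter endpoints—gives exactly the stated bound.

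The main obstacle I anticipate is purely combinatorial bookkeeping: correctly identifying, for each right endpoint $j$, the exact range of $l\in\{0,\dots,T-1\}$ for which the term $L(x_l\cdots x_j)-L(x_{l+1}\cdots x_j)$ appears in the double sum (this range is $\{\max(0,j-n),\dots,\min(T-1,j-1)\}$), and then keeping the number of ``defect'' applications of the quasi-morphism inequality under control so that the constants match the asserted $\frac{2T}{n}+1-\frac{T}{n}$ coefficient rather than something weaker. The analytic content is trivial once this combinatorics is pinned down; everything reduces to the triangle inequality and repeated use of $|L(\a\b)-L(\a)-L(\b)|\le\norm{\delta L}$, so no new ideas beyond \Cref{lem:boundswords} are required.
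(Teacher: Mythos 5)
Your proposal is correct and takes essentially the same route as the paper: the paper's intermediate four-sum identity is exactly the telescoping you describe (grouping by the right endpoint $j=l+k$ and collapsing the $l$-sum), and the subsequent quasi-morphism estimates produce the same three error contributions. (One small slip: in the bulk regime $T\le j\le n$ the telescoped difference is $L(x_0\cdots x_j)-L(x_T\cdots x_j)$, not $L(x_{j-T+1}\cdots x_j)-\cdots$; your later statement that these bulk blocks merge into $L(x_0\cdots x_{T-1})$ with a single application of $\norm{\delta L}$ per block makes clear that this is what you intended.)
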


\begin{proof}
 Since $\zeta_n\circ \tau^l=\frac{1}{n}\sum_{k=1}^n L(x_l\ldots x_{l+k})-L(x_{l+1}\ldots x_{l+k})$, one gets
 \begin{align*}
 S_T \zeta_n(\seq{x})&=\frac{1}{n}\left(\sum_{l=0}^{T-1}\sum_{k=1}^n L(x_l\ldots x_{l+k})-L(x_{l+1}\ldots x_{l+k})\right)\\
 &=\frac{1}{n}\left(\sum_{k=0}^{n-1}L(x_0\cdots x_k)+\sum_{k=0}^{T-1}L(x_k\cdots x_{n+k})-\sum_{k=0}^{T-1}L(x_k)-\sum_{k=0}^{n-1}L(x_T\cdots x_{T+k})\right).
 \end{align*}
 Therefore,
 \begin{align*}
 &|S_T \zeta_n(\seq{x})-L(x_0\cdots x_{T-1})|\leq\\
 &\frac{T}{n}\norml{\restr{L}{\ms A}}+\frac{1}{n}\left|\sum_{k=0}^{T-2}L(x_0\cdots x_k)+L(x_{k+1}\cdots x_{n+k})-
 L(x_0\cdots x_{T-1})-L(x_T\cdots x_{T+n-2})\right|\\
 &+\frac{1}{n}\left|\sum_{k=T}^n L(x_0\cdots x_{k})-L(x_0\cdots x_{T-1})-L(x_T\cdots x_k)\right|\\
 &\leq \norm{\delta L}(\frac{2T}{n}+1-\frac{T}{n})+\frac{T}{n}\norml{\restr{L}{\ms A}}.
 \end{align*}
\end{proof}

To construct the potential we now argue as in Burkholder's proof of Kingman subadditive ergodic theorem, \cite{Burkholder1973}. The following is delicate result due to Komlós.

 \begin{theorem}[Komlos, \cite{Komlos1967}]
 Let $\mu$ be a probability measure on a measurable space $(X,\BorelM[X])$ and suppose that $(f_n)_n\in \Lp{1}{\mu}$ is uniformly bounded in norm. Then there exists $f\in \Lp{1}{\mu}$ and a sub-sequence $(r(n))_{n\in\N} \subset \N$ so that for $\aee{\mu}\ x\in X$ it holds
\[
   \lim_{N\to\oo}\frac{1}{N}\sum_{n=1}^N f_{r(n)}(x)=f(x).
 \]
 \end{theorem}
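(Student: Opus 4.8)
The plan is to deduce this from the Komlós subsequence principle; since we only need one good subsequence (not that every further subsequence works), I would run the truncation-plus-weak-compactness version of the argument rather than Komlós's original maximal-function one. First I would reduce to nonnegative functions: writing $f_n=f_n^+-f_n^-$ with $\|f_n^\pm\|_1\le\|f_n\|_1$, it suffices to treat $(f_n^+)$ and $(f_n^-)$ separately and subtract the two Cesàro limits. So assume $f_n\ge 0$ and put $C=\sup_n\|f_n\|_1<\oo$.

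For each integer $\ell\ge 1$ the truncations $f_n\wedge\ell$ lie in the ball of radius $\ell$ of $L^{\oo}(\mu)\subset L^2(\mu)$; by weak $L^2$-compactness and a diagonal argument over $\ell$, pass to a subsequence along which $f_n\wedge\ell\rightharpoonup\phi_\ell$ weakly in $L^2(\mu)$ for every $\ell$. The $\phi_\ell$ are nonnegative, nondecreasing in $\ell$, with $\|\phi_\ell\|_1\le C$, so $f:=\sup_\ell\phi_\ell\in L^1(\mu)$ will be the limit. For fixed $\ell$, $(f_n\wedge\ell)_n$ is $L^{\oo}$-bounded and weakly $L^2$-convergent to $\phi_\ell$, so by the Banach--Saks property of Hilbert space a further subsequence has Cesàro averages converging to $\phi_\ell$ in $L^2(\mu)$; refining to control the rate (so that along indices $N_j$ with $N_{j+1}/N_j\to 1$ the averages converge $\mu\aep$, then filling gaps with the bound $\ell$) upgrades this to $\mu\aep$ convergence of the full Cesàro sequence, and a diagonalization over $\ell$ yields a single subsequence $(n_k)$ with
\[
\frac1N\sum_{k=1}^N f_{n_k}\wedge\ell\ \xrightarrow[N\to\oo]{}\ \phi_\ell\qquad \mu\aep,\text{ for every }\ell.
\]

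The delicate step, and the main obstacle, is passing from truncated to untruncated functions, because $(f_n)$ need not be uniformly integrable: $\|f_n-f_n\wedge\ell\|_1=\int(f_n-\ell)^+\,d\mu$ does not tend to $0$ uniformly in $n$, and the Cesàro averages of the middle slices $(f_{n_k}\wedge m)-(f_{n_k}\wedge\ell)$ do not vanish for fixed $\ell$, whatever $m\to\oo$ one picks. I would split the overshoot into a harmless large part and a uniformly integrable remainder. For the large part, let the truncation level grow with the running index: Chebyshev gives $\mu(f_m>k^2)\le C/k^2$ for every $m$, so $\sum_k\mu(f_{n_k}>k^2)<\oo$, and by Borel--Cantelli $f_{n_k}(x)\le k^2$ for all large $k$, $\mu\aep x$; hence $\sum_k(f_{n_k}(x)-k^2)^+<\oo$ a.e. and $\tfrac1N\sum_{k\le N}(f_{n_k}-k^2)^+\to 0$ $\mu\aep$. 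For the remaining part $\tfrac1N\sum_{k\le N}\bigl((f_{n_k}\wedge k^2)-(f_{n_k}\wedge\ell)\bigr)$ I would, prior to all of the above, replace $(f_{n_k})$ by a subsequence produced by the Brooks--Chacón biting lemma, writing it as a uniformly integrable sequence plus a sequence supported on sets of summable measure: for the uniformly integrable summand the middle slices are uniformly small in $L^1$, while the residual summand contributes only finitely many nonzero terms $\mu\aep$ (Borel--Cantelli again). Thus the middle-slice averages tend to $0$ in $L^1$ and $\mu\aep$; letting $\ell\to\oo$ and using $\phi_\ell\uparrow f$ gives $\tfrac1N\sum_{k=1}^N f_{n_k}\to f$ $\mu\aep$.

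In short, the weak-compactness and Banach--Saks machinery handles the bounded parts routinely, and the real work is Step~3 --- reconciling a truncation level that grows with the index with the weak-limit analysis, in the absence of uniform integrability --- which is exactly the heart of Komlós's theorem; since the bookkeeping there is substantial, in the body we invoke the theorem as a black box.
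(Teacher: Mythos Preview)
The paper does not prove this theorem at all: it is stated as a quoted result from \cite{Komlos1967} and used as a black box in the proof of the subsequent proposition. So there is no ``paper's own proof'' to compare against, and your concluding recommendation --- to invoke the theorem rather than reprove it --- is exactly what the paper does.

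Your sketch is a reasonable outline of a modern proof (truncation plus Banach--Saks for the bounded parts, Borel--Cantelli for the large overshoots, and the biting lemma to isolate a uniformly integrable piece from a piece supported on sets of summable measure). Two places would need real care if you were to carry it out: the diagonalization over the truncation level $\ell$ must be arranged so that the \emph{final} diagonal subsequence still has Ces\`aro averages of $f_{n_k}\wedge\ell$ converging a.e.\ for \emph{every} $\ell$ (not just the $\ell$-th tail), and the order of operations you flag --- apply the biting lemma before everything else --- is essential, since otherwise the subsequence extracted for the middle slices need not be compatible with the one already chosen for the truncations. These are exactly the bookkeeping issues you identify, and they are the substance of Koml\'os's original argument; citing the result is the appropriate choice here.
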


 \begin{proposition}\label{pro:potentialarbitrary}
 For any $\mu\in \PTM{\tau}{\Sigma}$ there exists $\varphi_{L,\mu}:D_{\mu}\to\R\in \Lp{\oo}{\mu}$ so that
 \[
   \seq{x}\in D_{\mu}, n\in \N\Rightarrow |S_n\varphi_{L,\mu}(\seq{x})-L(x_0\ldots x_{n-1})|\leq \norm{\delta L}.
 \]

In particular, if $\mu$ has full support then $\varphi_{L,\mu}$ has the weak Bowen property. 
\end{proposition}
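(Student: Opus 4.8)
The plan is to realize $\varLmu$ as an almost-everywhere Ces\`aro limit of a subsequence of the locally constant functions $\zeta_n$ introduced above, using Koml\'os' theorem, and then to transfer the estimate of \Cref{lem:estimatezn} to this limit by averaging.

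\emph{Step 1: extracting the potential.} First I would record that $\norml{\zeta_n}{\oo}\le C_0:=\norml{L|\cA}{\oo}+\norm{\delta L}$ for every $n$, so $(\zeta_n)_n$ is bounded in $\Lp[1](\mu)$. By Koml\'os' theorem there are a strictly increasing sequence $(r(n))_n\subset\Nat$ and a function $\varLmu\in\Lp[1](\mu)$ with
\[
\frac1N\sum_{n=1}^N\zeta_{r(n)}(\ux)\xrightarrow[N\to\oo]{}\varLmu(\ux)
\]
for every $\ux$ in a $\mu$-full set. Since all these Ces\`aro averages are bounded by $C_0$, the limit satisfies $\norml{\varLmu}{\oo}\le C_0$, so $\varLmu\in\Lp[\oo](\mu)$. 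Let $D_\mu$ be the set of convergence; replacing it by $\bigcap_{j\ge0}\tau^{-j}D_\mu$, which is still $\mu$-full since $\mu$ is $\tau$-invariant, I may assume $\tau^j\ux\in D_\mu$ for all $\ux\in D_\mu$ and $j\ge0$.

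\emph{Step 2: the cocycle estimate.} Fix $\ux\in D_\mu$ and $T\in\Nat$. Because $S_T$ is a sum of $T$ shifts, each landing again in $D_\mu$, the finite sum commutes with the Ces\`aro limit:
\[
S_T\varLmu(\ux)=\sum_{j=0}^{T-1}\varLmu(\tau^j\ux)=\lim_{N\to\oo}\frac1N\sum_{n=1}^N S_T\zeta_{r(n)}(\ux).
\]
For those $n$ with $r(n)>T$, \Cref{lem:estimatezn} gives $\bigl|S_T\zeta_{r(n)}(\ux)-L(x_0\cdots x_{T-1})\bigr|\le \norm{\delta L}\bigl(\tfrac{2T}{r(n)}+1-\tfrac{T}{r(n)}\bigr)+\tfrac{T}{r(n)}\norml{L|\cA}{\oo}$. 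Averaging over $n\le N$ and letting $N\to\oo$, the finitely many indices with $r(n)\le T$ contribute $O(1/N)\to0$ (since $|S_T\zeta_{r(n)}|\le TC_0$), while $\tfrac1N\sum_{n=1}^N\tfrac{1}{r(n)}\to0$ because $r(n)\to\oo$; hence the right-hand side averages down to $\norm{\delta L}$ and
\[
\bigl|S_T\varLmu(\ux)-L(x_0\cdots x_{T-1})\bigr|\le\norm{\delta L}\qquad(\ux\in D_\mu,\ T\in\Nat).
\]

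\emph{Step 3: weak Bowen property.} If $\mu$ has full support then $D_\mu$ is dense, being a $\mu$-full set that meets every nonempty cylinder, and $\tau$-invariant; moreover $\varLmu$ is bounded, and whenever $\uy\in[\ux]_n$ one has $L(x_0\cdots x_{n-1})=L(y_0\cdots y_{n-1})$, so Step 2 yields $|S_n\varLmu(\ux)-S_n\varLmu(\uy)|\le 2\norm{\delta L}$; thus $\varLmu$ is a $\mu$-weak Bowen function with $\normB{\varLmu}\le 2\norm{\delta L}$. The only non-elementary input is Koml\'os' theorem, which I invoke as a black box; the point requiring care is that the pointwise estimate of \Cref{lem:estimatezn}, whose error tends to $\norm{\delta L}$ (not below it), survives both the passage to a subsequence and the Ces\`aro averaging — this is exactly what $\tfrac1N\sum_{n\le N}\tfrac1{r(n)}\to0$ accounts for.
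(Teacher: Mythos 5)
Your proposal is correct and follows essentially the same route as the paper: apply Koml\'os' theorem to $(\zeta_n)_n$, then transfer the bound of \Cref{lem:estimatezn} to the Ces\`aro limit. Your version is somewhat more careful than the paper's (which compresses the argument by calling $\varLmu$ ``a convex combination of $(\zeta_n)$''), notably in making $D_\mu$ forward-invariant before commuting $S_T$ with the almost-everywhere limit and in tracking that $\tfrac1N\sum_{n\le N}1/r(n)\to 0$ — both of these are exactly the details the paper leaves implicit.
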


 \begin{proof}
Applying Komlós' theorem to $(\zeta_n)_{n\geq 1}$, let $\varphi_{L,\mu}:D_{\mu}\to\R$ and $(r(n))_{n\in\N}$ be as in its conclusion. Fixing $T\in\N, \eps>0$, for sufficiently large $n$ one has that $\sup_{\seq{x}\in\Sigma}|S_T\zeta_n(\seq{x})-L(x_0\ldots x_{T-1})|\leq \norm{\delta L}+\eps$, therefore (since $\varphi_{L,\mu}$ is limit of a convex combination of $(\zeta_n)$),
\[
\sup_{\seq{x}\in\Sigma}|S_T\varphi_{L,\mu}(\seq{x})-L(x_0\ldots x_{T-1})|\leq \norm{\delta L}+\eps.  
\]
This implies the first part, while the second is automatic. 
 \end{proof}

\begin{remark}\label{rem:convergenceLp}
Since $(\zeta_n)_n$ are uniformly bounded in $\Lp{\oo}{\mu}$, we have 
\[
  \lim_{N\to\oo} \frac{1}{N} \sum_{n=1}^N L(x_0\ldots x_{r(n)-1})-L(x_1\ldots x_{r(n)-1})=\varphi_{L,\mu}(\seq{x})
\]
both $\aee$ and in $\Lp{p}{\mu}$ for $1\leq p<\oo$.
\end{remark}

\begin{remark}\label{rem:convergencesequence}
  The following generalization is useful to compare weak Bowen functions associated to different measures. Suppose that $(\mu_k)_{k\in\N} \subset \PTM{\tau}{\Sigma}$ is given: then there exists a $\varphi:D\to\R$ so that
  \begin{enumerate}
    \item $\forall k, \mu_k(D)=1$.
    \item $\seq{x}\in D, n\in \N\Rightarrow |S_n\varphi(\seq{x})-L(x_0\ldots x_{n-1})|\leq \norm{\delta L}.$
    \end{enumerate}
To see this it suffices to apply the previous proposition to $\mu=\sum_{k=1}^{\oo}\frac{\mu_k}{2^k}$. It would be interesting to establish the existence of a cohomologous weak function defined for every invariant measure, but this remains as an open problem. 
\end{remark}

\section{Finer properties of the equilibrium measure associated to a quasimorphism}\label{sec:finer}

In this section we investigate the dynamical properties of the equilibrium state, and elucidate the structure of the set where the potential $\varphi_L$ is well defined. We use this knowledge to deduce the Bernoulli property of the system $(\Sigma,\tau,\mu_L)$. 

This part is more dynamically flavored than others in the article, and contains results that are well known for the case of H\"older functions. For weak Bowen functions we need to give different proofs. These methods also reveal non-obvious structure of weak Bowen functions, and hence of quasimorphisms.  It is also likely that the machinery that we present here will be applicable in other instances of thermodynamic formalism for non-continuous potentials, which is still a largely undeveloped area.

\subsection{The transpose quasimorphism and the natural extension} 

It will be important to work in the natural extension of the shift dynamical system, which can be canonically identified with the two-sided SFT $\lift{\Sigma}=\lift{\Sigma}(R)$. The canonical projection is $\pi:\lift{\Sigma}\to\Sigma, \pi(\seq{x})=\seq{x}^{+}$; as was explained in \Cref{sec:LivsicSFT}, it induces an isomorphism $\PTM{\tau}{\lift{\Sigma}}\ni\lift \mu\to \mu\in \PTM{\tau}{\Sigma}$, preserving ergodicity.

\begin{notation}
For points $\seq{x} \in \lift{\Sigma}$ we use a dot to indicate the zero entry, i.e.\@ $$\seq{x}=(\cdots x_{-n}\cdots x_{-1}\bpto x_0 \cdots x_{n}\cdots ).$$ If $\seq{x}\in \lift{\Sigma}, k\leq 0\leq  l$ we write
\[
  [x_{k}x_{k+1}\cdots x_{-1}\bpto x_0x_1\cdots x_l]=\{\seq{y}\in\lift{\Sigma}:y_i=x_i,\ k\leq i\leq l\}.
\] 
Given $\seq{x}\in \lift{\Sigma}$ we denote $\seq{x}^{+}=(x_n)_{n\geq 0}, \seq{x}^{-}=(x_n)_{n\leq 0}$, and write $\seq{x}=\inner{\seq{x}^{-}}{\seq{x}^{+}}$.
\end{notation}

The space non-positive entries of $\lift{\Sigma}$ is also a one-sided shift space, which can be identified with the subshift of finite type $\Sigma(R^{\dagger})$ corresponding to $R^{\dagger}$, the transpose of $R$. Indeed, we can write
$\Sigma(R^{\dagger})=\{\seq{x}=(x_n)_{n\leq 0}: R_{x_{n-1}x_n}=1,\forall n\geq 0\}$, and observe that in this notation the shift endomorphism is given by $(x_n)_{n\geq 0}\mapsto (x_{n-1})_{n\geq 0}$: we denote this map by $\tau^{-1}$. The projection $\pi^{\dagger}:\lift{\Sigma}(R)\to \Sigma(R^{\dagger})$,
\[
  \pi^{\dagger}(\seq{x})=(x_{-n})_{n\geq 0}
\]
semi-conjugates $\tau^{-1}:\lift{\Sigma}\toit$ and\footnote{Here $\tau^{-1}:\lift{\Sigma}\toit$ is the inverse of the shift map in $\lift{\Sigma}$.} $\tau^{-1}:\Sigma(R^{\dagger})\toit$. It follows in particular that $\PTM{\tau}{\lift{\Sigma}}=\PTM{\tau^{-1}}{\lift{\Sigma}}$ is in bijection with $\PTM{\tau^{-1}}{\Sigmad}$, and therefore there exists a bijection $\mu \in \PTM{\tau}{\Sigma}\to \mu_{\dagger}\in \PTM{\tau^{-1}}{\Sigmad}$.

We now fix $L\in\QM{\ms W}$ and lift the measure $\mu_L$ to obtain $\lift{\mu}_L\in \PTM{\tau}{\lift{\Sigma}}$. Let $\varL:\Sigma_0\to\R$ be the associated potential, where $\Sigma_0$ is completely invariant and $\mu_L(\Sigma_0)=1$. The lifted potential $\lift{\varL}(\seq{x})=\varL(\seq{x}^{+})$ is defined in the full measure invariant set $\lift{\Sigma}_0=\pi^{-1}(\Sigma_0)$. Observe that $\lift{\varL}$ satisfies the weak Bowen property and only depends on the positive coordinates of $\seq{x}$.

\begin{lemma}\label{lem:eeshiftpositivo}
$\lift{\mu}_L$ is the (unique) equilibrium state for $\lift{\varL}$.
\end{lemma}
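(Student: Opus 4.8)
The plan is to reduce the statement to the one–sided shift, where \Cref{thm:variationalprinciple} already identifies $\muL$ as the unique equilibrium state of $\varL$, and then transport the conclusion back through the projection $\pi$. First I would recall the relevant properties of $\pi:\Sigmab\to\Sigma$: as set up in \Cref{sec:LivsicSFT}, it semi–conjugates $\tau:\Sigmab\toit$ to $\tau:\Sigma\toit$ and induces a bijection $\pi_{\#}:\PTM{\tau}{\Sigmab}\to\PTM{\tau}{\Sigma}$ preserving ergodicity; writing $\mu=\pi_{\#}\wmu$, the system $(\Sigmab,\tau,\wmu)$ is the natural extension of $(\Sigma,\tau,\mu)$, so that $h_{\wmu}(\tau)=h_{\mu}(\tau)$. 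Then I would record the point already noted above the lemma, that $\wvarL$ depends only on the non–negative coordinates: by construction $\wvarL(\ux)=\varL(\ux^{+})=\varL(\pi\ux)$ on the completely invariant, $\wmuL$–full set $\widetilde{\Sigma}_0=\pi^{-1}(\Sigma_0)$, hence $S_n\wvarL=(S_n\varL)\circ\pi$ there.

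The key step is to check that the pressure functional $\wmu\mapsto h_{\wmu}(\tau)+\wmu(\wvarL)$ pulls back exactly to $\mu\mapsto h_{\mu}(\tau)+\mu(\varL)$. Since $\wvarL$ is only defined up to a $\wmuL$–null set, for a general invariant $\wmu$ the quantity $\wmu(\wvarL)$ must be read as $\wmu(\bB^{\wvarL})$, where $\bB^{\wvarL}=(\ie{\wmuL}{S_n\wvarL|\xi^n})_n$ is the associated locally constant quasi–cocycle. Because $S_n\wvarL$ is measurable in the non–negative coordinates and $\wmuL$ projects to $\muL$, I would verify the identity $\ie{\wmuL}{S_n\wvarL|\xi^n}=\ie{\muL}{S_n\varL|\xi^n}\circ\pi$, i.e.\ $B_n^{\wvarL}=B_n^{\varL}\circ\pi$ for every $n$; integrating against $\wmu$ then gives $\wmu(\bB^{\wvarL})=\mu(\bB^{\varL})=\mu(\varL)$, which by \Cref{cor:SnvsL} equals $\mu(L)-\Ptop(L)$.

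Combining these with $h_{\wmu}(\tau)=h_{\mu}(\tau)$, for every $\wmu\in\PTM{\tau}{\Sigmab}$ with image $\mu$ one obtains
\[
h_{\wmu}(\tau)+\wmu(\wvarL)=\bigl(h_{\mu}(\tau)+\mu(L)\bigr)-\Ptop(L)\le 0,
\]
with equality if and only if $\mu=\muL$, by the variational principle \Cref{thm:variationalprinciple} (uniqueness as in \Cref{cor:muLdeterminesclass}). As $\pi_{\#}$ is a bijection on invariant measures and $\pi_{\#}\wmuL=\muL$, this says precisely that $\wmuL$ is the unique element of $\PTM{\tau}{\Sigmab}$ at which $h_{\wmu}(\tau)+\wmu(\wvarL)$ attains its supremum, i.e.\ the unique equilibrium state of $\wvarL$.

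I expect the only genuinely delicate point to be the bookkeeping in the second paragraph: making precise that $\wmu(\wvarL)$ is well defined for \emph{all} invariant $\wmu$ (not merely for $\wmuL$), so that one may legitimately speak of the pressure functional of $\wvarL$ on all of $\PTM{\tau}{\Sigmab}$, and that it transports correctly under $\pi$. This reduces to the elementary fact that conditional expectations with respect to $\wmuL$ of functions measurable in the non–negative coordinates are pull–backs under $\pi$ of the corresponding conditional expectations with respect to $\muL$; once the identity $B_n^{\wvarL}=B_n^{\varL}\circ\pi$ is in hand, everything else is a direct invocation of results already established.
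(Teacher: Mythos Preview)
Your argument is correct and follows the same overall architecture as the paper's proof: both establish $h_{\wmu}(\tau)=h_{\mu}(\tau)$ and $\wmu(\wvarL)=\mu(\varL)$ for every $\wmu\in\PTM{\tau}{\Sigmab}$ with $\mu=\pi_{\#}\wmu$, and then conclude via the one–sided variational principle \Cref{thm:variationalprinciple}. The genuine difference lies in how the integral identity is obtained. The paper checks it first on periodic measures, $\mm_{\a}(\wvarL)=\mm_{\a}(\varL)$ for $\a\in\Fix$, and then extends to all invariant measures by density of periodic measures (Sigmund) together with weak continuity of $\mm\mapsto\mm(\bB)$ (\Cref{pro:integracioncontinua}). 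You instead argue measure–theoretically: since $S_n\wvarL$ is $\pi^{-1}(\Bnm{}{n})$–measurable and $\pi_{\#}\wmuL=\muL$, one has $B_n^{\wvarL}=B_n^{\varL}\circ\pi$, whence $\wmu(\bB^{\wvarL})=\mu(\bB^{\varL})$ directly. Your route is more self–contained, avoiding the appeal to Sigmund's theorem; the paper's route has the advantage that it transfers verbatim to the companion \Cref{lem:eeshiftnegativo}, where $\wvarLd$ depends on the \emph{negative} coordinates and is therefore not a pull–back under $\pi$, so your conditional–expectation identity would have to be rewritten with respect to $\pi^{\dagger}$.
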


\begin{proof}
We use the following facts: 
\begin{enumerate}
    \item for every $\mu\in \PTM{\tau}{\Sigma}$, $h_{\mu}(\tau)=h_{\lift{\mu}}(\tau)$; 
    \item for every $\bm a\in \Fix{\ms W}$, 
    \[
    \lift{\mu}_{\bm{a}}(\lift{\varL})=\mu_{\bm a}(\varL).
    \]
    This in turn implies, by density of measures supported on periodic orbits \cite{Sigmund1974} and \Cref{pro:integracioncontinua}, that
    \[
    (\forall \lift{\mu}\in \PTM{\tau}{\lift{\Sigma}}):\ \lift{\mu}(\lift{\varL})=\mu(\varL)=\mu(L).
    \]
\end{enumerate}
The result then follows from the variational principle (note that proof of \Cref{thm:variationalprinciple} applies without modifications to the system $\left(\lift{\Sigma},\tau,\lift{\varL}\right)$).
\end{proof}

Next we look at $\mu_{L^{\dagger}}$. For a word $\bm a=a_1\cdots a_{n}$ write $\bm{a}^{\dagger}=a_{n}\cdots a_1$.  Then $\dagger: \ms{W}=\ms{W}(R)\to \ms{W}^{\dagger}=\ms{W}(R^{\dagger})$ induces a linear isomorphism $\dagger:\QM{\ms W}\to\QM{\ms{W}^{\dagger}}$ (respectively, $\dagger: \QM{\Fix{\ms W}}\to \QM{\Fix{\ms{W}^{\dagger}}}$) by the formula
\[
  L^{\dagger}(\bm a)=L(\bm{a}^{\dagger});
\]
accordingly, $\norm{\delta L^{\dagger}}=\norm{\delta L}, \norm{L^\dagger}=\norm{L}$. 

We define $\varLd:\Sigma^{\dagger}_0\to\R$ as before,
\begin{align}
e^{\varLd(\seq{x}^-)}=\lim_{n\to\oo} \frac{\lift{\mu}_{L^{\dagger}}([x_{-n}\cdots\bpto x_{0}])}{\lift{\mu}_{L^{\dagger}}([x_{-n}\cdots\bpto x_{-1}])}
\end{align}
and consider its lift to $\lift{\Sigma}$, $\lift{\varLd}(\seq{x})=\varLd(\seq{x}^-)$.

\begin{lemma}\label{lem:eeshiftnegativo}
$\lift{\mu}_L$ is the equilibrium state for $\lift{\varLd}$. Therefore $\pi^{\dagger}\lift{\mu}_L=\mu_{L^{\dagger}}$.
\end{lemma}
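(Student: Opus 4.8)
The statement asserts that $\wmuL$ is the equilibrium state for $\wvarLd$, and consequently that $\pi^{\dagger}\wmuL = \muL[L^{\dagger}]$. The plan is to mirror the proof of the previous lemma (\Cref{lem:eeshiftpositivo}), exploiting the variational principle \Cref{thm:variationalprinciple} together with the fact that equilibrium states are unique and are characterized by their values on periodic orbits.

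First I would observe that $\wvarLd$ is a weak Bowen function on $\Sigmab$ which depends only on the non-positive coordinates; hence, passing through the conjugacy $\pi^{\dagger}:\Sigmab(R)\to\Sigmad=\Sigma(R^{\dagger})$ that semi-conjugates $\tau^{-1}$ on $\Sigmab$ with $\tau$ on $\Sigmad$, the function $\wvarLd$ is the lift of $\varLd$, which by construction is precisely the potential associated to $L^{\dagger}\in\Qm(\Sigmad)$ in the sense of \Cref{def:potencialasociado} (applied to the shift $\tau$ on $\Sigmad$, i.e.\@ the map denoted $\tau^{-1}$ there). By \Cref{thm:variationalprinciple} applied on $\Sigmad$, the unique equilibrium state of $\varLd$ is $\muL[L^{\dagger}]$. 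The proof of \Cref{thm:variationalprinciple} applies verbatim to $\left(\Sigmab,\tau^{-1},\wvarLd\right)$ (as already noted in the proof of \Cref{lem:eeshiftpositivo}, the argument only uses the Gibbs property and the martingale/ergodicity input), so the unique equilibrium state for $\wvarLd$ on $\Sigmab$ is the lift of $\muL[L^{\dagger}]$ under $\pi^{\dagger}$, which I will call $\wnu$.

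It then remains to identify $\wnu$ with $\wmuL$. For this I would use the periodic-orbit characterization: by \Cref{cor:muLdeterminesclass} (or directly by \Cref{pro:weakLivsic}/\Cref{cor:Livsicqm}), two weak Bowen functions on $\Sigmab$ with the same averages on all periodic orbits and the same global integral have the same equilibrium state. For $\a\in\Fix(\Sigmab)$, the cyclic word $\a^{\dagger}$ is a cyclic permutation of the reversal, and by \Cref{rem:cyclicquasimorphism} together with \Cref{ex:integralLrespperiodicas} one has $\mm_{\a}(\wvarLd)=\bar{L^{\dagger}}(\a^{\dagger})=\bar L(\a)=\mm_{\a}(\wvarL)$ up to the cohomology-invariant normalization; more precisely, since the averages on periodic orbits of a weak Bowen function are invariant in the cohomology class and $\wvarL,\wvarLd$ both represent (after adding the constant $\Ptop(L)$) the quasi-cocycle of $L$ read forwards resp.\@ backwards on periodic words, and $L$ restricted to $\Fix$ is cyclic up to bounded error, the periodic averages agree. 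By density of periodic measures \cite{Sigmund1974} and \Cref{pro:integracioncontinua}, $\widetilde\mm(\wvarLd)=\widetilde\mm(\wvarL)$ for every $\widetilde\mm\in\PTM{\tau}{\Sigmab}$; hence $\wvarLd$ and $\wvarL$ are Livsic cohomologous on $\Sigmab$ by \Cref{thm:livsicWeakBowen}, and therefore share the equilibrium state $\wmuL$ by \Cref{lem:eeshiftpositivo}. Thus $\wnu=\wmuL$, and applying $\pi^{\dagger}$ gives $\pi^{\dagger}\wmuL=\muL[L^{\dagger}]$.

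The main obstacle I anticipate is the bookkeeping in the periodic-orbit comparison: one must check carefully that reversing a periodic word and reading the associated cocycle backwards genuinely produces the same orbit averages (the reversal $\a\mapsto\a^{\dagger}$ sends a $\tau$-periodic orbit of $\Sigmab$ to the $\tau^{-1}$-periodic orbit through the reversed point, and one needs $\mm_{\a}(\wvarLd)$ to equal $\mm_{\a}(\wvarL)$, not merely to be close to it). This is exactly the cyclicity point from \Cref{rem:homogeneoesciclico} and \Cref{ex:integralLrespperiodicas}: since only the homogeneous (hence cyclic) representative matters for periodic averages, the bounded defects wash out and the averages coincide. Once this identity on periodic orbits is in hand, the rest is a direct citation of the variational principle, \Cref{thm:livsicWeakBowen}, and \Cref{lem:eeshiftpositivo}.
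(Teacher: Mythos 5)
Your proposal is correct and follows essentially the same path as the paper: the core move in both is the periodic-orbit identity $\bar{L^\dagger}(\a^\dagger)=\bar L(\a)$, followed by density of periodic measures, continuity of integration, and the variational principle. Your extra step of first locating $\wvarLd$'s equilibrium state as the $\pi^\dagger$-lift of $\muL[L^\dagger]$ and then matching it to $\wmuL$ via Livsic cohomology is a mild reorganization of the paper's more direct route, which simply equates $\widetilde{\mm}(\wvarLd)=\mm(L)$ for all invariant measures and applies the variational principle once.
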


\begin{proof}
We argue as in the previous Lemma, using that
\begin{enumerate}
  \item for every $\mu\in \PTM{\tau^{-1}}{\Sigmad}$, $h_{\mu}(\tau^{-1})=h_{\lift{\mu}}(\tau^{-1})=h_{\lift{\mu}}(\tau)$. 
 \item For every $\bm a\in \Fix{\ms W}$ it holds that $\bm{a}^{\dagger}\in \Fix{\ms{W}^{\dagger}}$ and 
 \[
 \lift{\mu}_{\bm{a}^{\dagger}}(\lift{\varLd})=\mu_{\bm{a}^{\dagger}}(\varLd)=\mu_{\bm{a}^{\dagger}}(L^{\dagger})=\mu_{\bm a}(L).
 \]
Hence,
 \[
 \forall \nu\in \PTM{\tau}{\lift{\Sigma}},\ \nu(\lift{\varLd})=\nu(\lift\varL).
 \]
\end{enumerate}
The result follows again by the variational principle and the uniqueness of the equilibrium measure for $\lift{\varLd}$.
\end{proof}

During the proof of \Cref{lem:eeshiftpositivo,lem:eeshiftnegativo} we established that $\forall \nu \in \PTM{\tau}{\lift{\Sigma}}$ it holds $\nu(\lift{\varL})=\nu(\lift{\varLd})$. For this reason:

\begin{corollary}\label{cor:liftpotencialessoncoh}
The functions $\lift{\varL}, \lift{\varLd}$ are cohomologous in $\Lp{\oo}{\lift{\mu}_L}$: there exists $u\in \Lp{\oo}{\lift{\mu}_L}$ so that $\lift{\varL}=\lift{\varLd} +u-u\circ \tau$.
\end{corollary}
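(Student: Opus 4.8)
The plan is to apply the Livsic theorem for weak Bowen functions, \Cref{thm:livsicWeakBowen}, which applies verbatim on the two-sided shift $\Sigmab$, to the difference $\varphi:=\wvarL-\wvarLd$, obtaining $u\in\Lp[\oo](\wmuL)$ with $u-u\circ\tau=\varphi$, i.e.\@ $\wvarL=\wvarLd+u-u\circ\tau$. Everything reduces to two points: that $\varphi$ is a $\wmuL$-weak Bowen function, and that $\mm(\varphi)=0$ for every $\mm\in\PTM{\tau}{\Sigmab}$. I will extract both from a single estimate, namely that the Birkhoff sums $S_n^{\tau}\varphi$ are uniformly bounded on a $\tau$-invariant set of full $\wmuL$-measure.

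The real content is the analogue of \Cref{cor:SnvsL} for the past-dependent potential $\wvarLd$. On one hand, \Cref{cor:SnvsL} lifted to $\Sigmab$ (nothing changes, since $\wvarL$ depends only on the non-negative coordinates) gives, on the invariant full-measure domain $\Sigmab_0$ of $\wvarL$,
\[
\bigl|S_n^{\tau}\wvarL(\ux)+n\Ptop(L)-L(x_0\cdots x_{n-1})\bigr|\le C_1 .
\]
On the other hand, a short computation based on the coordinate identity $(\tau^{-m}\ux)^{-}=(\tau^{-1}_{\Sigmad})^{m}(\ux^{-})$ shows that $S_n^{\tau}\wvarLd(\ux)$ equals $S_n^{\tau^{-1}}\varLd$ evaluated at the point of $\Sigmad$ whose first $n$ symbols are $x_{n-1}x_{n-2}\cdots x_0$; feeding this into \Cref{cor:SnvsL} for the quasi-morphism $L^{\dagger}$ on $\Sigmad$, and using $L^{\dagger}(\a^{\dagger})=L(\a)$ for all $\a$ (which also forces $\Ptop(L^{\dagger})=\Ptop(L)$, since $\dagger$ is a value-preserving bijection of $\Fix_n$), one gets
\[
\bigl|S_n^{\tau}\wvarLd(\ux)+n\Ptop(L)-L(x_0\cdots x_{n-1})\bigr|\le C_2
\]
on the corresponding invariant full-measure domain (full $\wmuL$-measure because $\pi^{\dagger}\wmuL=\muL[L^{\dagger}]$, \Cref{lem:eeshiftnegativo}). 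This is the one genuinely non-formal step — making the potential that sees only the past fit the \emph{forward} dynamics of the natural extension — and it is where the careful set-up of $\Sigmad$, $\pi^{\dagger}$ and $L^{\dagger}$ pays off. Subtracting the two displays yields $\sup_n\norml{S_n^{\tau}\varphi}{\oo}\le C_1+C_2$ on the intersection $D$ of the two domains, which is $\tau$-invariant, of full $\wmuL$-measure, and dense (as $\wmuL$ has full support).

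It then remains to assemble the pieces. Boundedness of $(S_n^{\tau}\varphi)$ on $D$ shows at once that $\varphi$ is $\wmuL$-weak Bowen, since $|S_n^{\tau}\varphi(\ux)-S_n^{\tau}\varphi(\uy)|\le 2(C_1+C_2)$ for all $\ux,\uy\in D$; and it shows that the associated locally constant quasi-cocycle $\bB^{\varphi}$ is uniformly bounded, hence cohomologically trivial, so $\mm(\varphi)=\mm(\bB^{\varphi})=0$ for every $\mm\in\PTM{\tau}{\Sigmab}$ by \Cref{lem:cohtrivialimplieszerointegral} — consistently with the equalities $\mm(\wvarL)=\mm(L)=\mm(\wvarLd)$ recorded during the proofs of \Cref{lem:eeshiftpositivo,lem:eeshiftnegativo}. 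Now \Cref{thm:livsicWeakBowen} (equivalently, \Cref{cor:GHin} applied to the uniformly bounded averages $\tfrac1N\sum_{k=1}^{N}S_k^{\tau}\varphi$) furnishes $u\in\Lp[\oo](\wmuL)$ with $u-u\circ\tau=\varphi=\wvarL-\wvarLd$, and in fact $\norml{u}{\oo}\le 3\normB{\varphi}\le 3\bigl(\normB{\varL}+\normB{\varLd}\bigr)$, which completes the argument.
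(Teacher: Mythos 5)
Your proposal is correct, and it reaches the same conclusion via the same basic engine — Theorem~\ref{thm:livsicWeakBowen} applied to $\wvarL-\wvarLd$ — but the way you produce the hypotheses of that theorem is different from the paper and, in one respect, more careful. The paper's own argument is a one-liner: it points out that the proofs of Lemmas~\ref{lem:eeshiftpositivo}--\ref{lem:eeshiftnegativo} already establish $\mm(\wvarL)=\mm(L)=\mm(\wvarLd)$ for every $\mm\in\PTM{\tau}{\Sigmab}$ (first on periodic measures, then by density via Proposition~\ref{pro:integracioncontinua}), and then invokes the Livsic theorem. What it leaves implicit is that Theorem~\ref{thm:livsicWeakBowen} requires $\wvarL-\wvarLd$ to be a $\wmuL$-weak Bowen function with respect to the \emph{forward} shift on $\Sigmab$, and this is not automatic for a function depending only on the past. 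You instead prove a uniform bound $\sup_n\norml{S_n^{\tau}(\wvarL-\wvarLd)}{\oo}<\oo$ directly from Corollary~\ref{cor:SnvsL} applied twice — once to $L$ on $\Sigma$ and once to $L^{\dagger}$ on $\Sigmad$ — using the clean identity $S_n^{\tau}\wvarLd(\ux)=S_n^{\tau^{-1}}\varLd(\uz)$ with $\uz\in\Sigmad$ the ``reversed'' word $x_{n-1}\cdots x_0$, together with $L^{\dagger}(\a^{\dagger})=L(\a)$ and $\Ptop(L^{\dagger})=\Ptop(L)$. This simultaneously furnishes the weak Bowen property of the difference and the vanishing of all its invariant integrals, so your route is both self-contained (no appeal to density of periodic measures or continuity of the integration functional) and makes explicit precisely the point the paper glosses over. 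The two proofs buy slightly different things: the paper's is shorter because it recycles work already done in the two preceding lemmas, while yours is more transparent about why the forward-shift Bowen property holds and gives an explicit $\Lp[\oo]$ bound on the transfer function.
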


\begin{convention}
From now on $\lift{\varL},\lift{\varLd}$ are defined in the same invariant set of full $\lift{\mu}_L$ measure, $\lift{\Sigma}_0=\pi^{-1}(\Sigma_0)\cap (\pi^{\dagger})^{-1}(\lift{\Sigma}_0^{\dagger})$. The transfer function $u$ is defined in the full $\lift{\mu}_L$ measure and invariant set $\lift{\Sigma}_1 \subset \lift{\Sigma}_0$.
\end{convention}

\begin{remark}\label{rem:gmeasuresinverse}
  The previous Corollary evidences further the necessity of working with the coarser version if Livšic cohomology. Indeed, in \cite{BERGHOUT_2018} the authors give an example of a continuous Bowen potential $\varphi:\Sigma\to\R$ for which the reverse measure $\mu_{\dagger}$ is not associated to any continuous potential.
\end{remark}

\subsection{The conditionals of \texorpdfstring{$\lift{\mu}_L$}{muL}}

Our next goal is to prove that $\lift{\mu}_L$ is equivalent to the product $\mu_L\times\mu_{L^{\dagger}}$, and give a dynamical interpretation of the transfer function $u$. First, we recall the basics of the theory of measurable partitions and disintegration of measures in the sense of Rohklin. Given a Borel probability space $(X,\BorelM[X],\mu)$ and a family of partitions (by measurable sets) $(\mrm{P}_i)_{i\in I}$, its join is $\bigvee_{i\in I}\mrm{P}_i=\sigma\paren{\bigcup_{i\in I} \mrm{P}_i}<\BorelM[X]$. A $\sigma$-algebra $\xi$ is called measurable if there exists a countable family of finite partitions $(\mrm{P}_n)_n$ so that $\xi=\bigvee_{n}\mrm{P}_n$, while a partition $\mrm{P}$ is called measurable if the $\sigma$-algebra that it generates is measurable. 

\begin{theorem}[Rohklin]\label{thm:Rohklin}
If $\xi$ is a measurable $\sigma-$algebra for $\mu$, there exists $X_0 \subset X$ of full measure, and a family $\{\mu_{x}^\xi\}_{x\in X} \subset \ProbM[X]$ (called the conditionals of $\mu$ with respect to $\xi$) so that for every $\psi\in \Lp{\oo}{\mu}$ the map $x\in X_0\to \mu_{x}^\xi(\psi)$ is a measurable version of $\Emu{\mu}{\psi|\xi}$. In particular, $\mu(\psi)=\int\left(\int \psi\dd\mu^{\xi}_x \right)\dd\mu(x)$.
\end{theorem}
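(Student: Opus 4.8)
The strategy is to realize the conditionals as pointwise limits of elementary averages along a refining sequence of finite partitions, and then to pass from a countable dense family of test functions to all bounded measurable functions via Riesz representation and a monotone class argument. First, since $\xi$ is measurable, choose finite partitions $(\parP_n)_n$ with $\xi=\bigvee_n\parP_n$; replacing $\parP_n$ by the common refinement $\parP_1\vee\cdots\vee\parP_n$ we may assume $\parP_1\leq\parP_2\leq\cdots$ is refining, and we write $\mathcal F_n$ for the (finite) $\sigma$-algebra it generates, so that $\mathcal F_n\uparrow\xi$. In the applications of this theorem $X$ is a compact metric space (a SFT or a subshift), so we fix such a metric; the general Borel case reduces to this one, since $\xi$ is countably generated and $\mu$ can be transported to a standard Borel model. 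Finally, fix once and for all a countable $\mathbb Q$-linear subspace $\mathcal D\subset\CM[X]$ which is $\norml{\cdot}{\oo}$-dense in $\CM[X]$ and contains $\one$.

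For each $\psi\in\mathcal D$ the functions $E_n(\psi):=\ie{\mu}{\psi|\mathcal F_n}=\sum_{A\in\parP_n,\,\mu(A)>0}\frac{\one_A}{\mu(A)}\int_A\psi\,\der\mu$ form a uniformly bounded martingale relative to $(\mathcal F_n)_n$, so by the increasing martingale convergence theorem (as used in \Cref{pro:potencialloo}) it converges $\mu\aep$ and in $\Lp[1](\mu)$ to $\ie{\mu}{\psi|\xi}$. Let $X_0$ be the set of those $x$ for which $\Lambda_x(\psi):=\lim_n E_n(\psi)(x)$ exists for every $\psi\in\mathcal D$ and, in addition, $\psi\mapsto\Lambda_x(\psi)$ is $\mathbb Q$-linear, positive (i.e. $\psi\geq 0\Rightarrow\Lambda_x(\psi)\geq 0$), satisfies $\Lambda_x(\one)=1$, and obeys $|\Lambda_x(\psi)|\leq\norml{\psi}{\oo}$: each of these is a countable collection of $\mu\aep$ conditions, hence $X_0$ is conull. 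For $x\in X_0$ the functional $\Lambda_x$ is $\norml{\cdot}{\oo}$-Lipschitz on $\mathcal D$, so it extends uniquely to a positive normalized linear functional on $\CM[X]$; by the Riesz representation theorem there is a unique $\mu_x^\xi\in\PM[X]$ with $\mu_x^\xi(\psi)=\Lambda_x(\psi)$ for all $\psi\in\CM[X]$ (for $x\notin X_0$ set $\mu_x^\xi$ equal to some fixed probability).

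It then remains to check the two defining properties and to upgrade them from $\CM[X]$ to $\Lp[\oo](\mu)$. For $\psi\in\mathcal D$ the map $x\mapsto\mu_x^\xi(\psi)$ is a pointwise limit of $\mathcal F_n$-measurable functions, hence $\xi$-measurable, and it coincides $\mu\aep$ with $\ie{\mu}{\psi|\xi}$; by uniform density the same holds for every $\psi\in\CM[X]$, since both $\psi\mapsto\mu_\cdot^\xi(\psi)$ and $\psi\mapsto\ie{\mu}{\psi|\xi}$ are non-expansive for $\norml{\cdot}{\oo}$. The class $\mathcal H$ of bounded Borel functions $\psi$ for which $x\mapsto\mu_x^\xi(\psi)$ is $\xi$-measurable and equals $\ie{\mu}{\psi|\xi}$ $\mu\aep$ is a vector space, is closed under bounded monotone limits (monotone convergence for each $\mu_x^\xi$, together with $\Lp[1]$-convergence of the conditional expectations), and contains $\CM[X]$; by the monotone class theorem $\mathcal H=\Lp[\oo](\mu)$, which is exactly the assertion that $x\mapsto\mu_x^\xi(\psi)$ is a version of $\ie{\mu}{\psi|\xi}$, and integrating over $X$ yields $\mu(\psi)=\int\big(\int\psi\,\der\mu_x^\xi\big)\der\mu(x)$. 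The step I expect to be the main obstacle is precisely this last passage: one must produce a \emph{single} conull set $X_0$ on which $\Lambda_x$ is defined and well behaved for \emph{all} test functions at once, which forces the use of a countable dense family and the separability of $\CM[X]$, and then one must reach all of $\Lp[\oo](\mu)$ through the monotone class theorem rather than treating functions one at a time; this is also where the standardness of $(X,\BM[X],\mu)$ genuinely enters, through Riesz representation.
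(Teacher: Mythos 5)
The paper states this as a classical theorem and offers no proof, simply citing Rokhlin for further discussion, so there is no in-paper argument to compare against. Your construction — passing to a refining filtration $(\mathcal F_n)_n$ with $\mathcal F_n\uparrow\xi$, running bounded martingale convergence on a countable rational-linear dense family $\mathcal D\subset\CM[X]$ containing $\one$, extending the limit functionals $\Lambda_x$ by Riesz representation to probability measures $\mu_x^\xi$, and then upgrading from $\CM[X]$ to $\Lp[\oo](\mu)$ via the functional monotone class theorem — is the standard complete proof of Rokhlin's disintegration theorem, and it is correct. You rightly identify the two places where care is genuinely needed: producing a single conull set $X_0$ on which all countably many a.e.\ conditions hold simultaneously (positivity, rational linearity, normalization, the sup-norm bound), and the appeal to Riesz representation, which is where the standardness of $(X,\BM[X],\mu)$ enters; the paper leaves both implicit. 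One small remark for completeness: since elements of $\Lp[\oo](\mu)$ are a.e.-equivalence classes, one should note that if $\psi_1=\psi_2$ $\mu$-a.e.\ then $\mu_x^\xi(\psi_1)=\mu_x^\xi(\psi_2)$ for $\mu$-a.e.\ $x$, which follows by applying the disintegration identity you just proved to the indicator of $\{\psi_1\neq\psi_2\}$; this makes the statement well posed on $\Lp[\oo](\mu)$ rather than only on bounded Borel functions. The atom properties that the paper records after the theorem statement ($\mu_x^\xi(\xi(x))=1$, and constancy of $x\mapsto\mu_x^\xi$ on atoms of $\xi$) are not part of the result you were asked to prove, but they also drop out of your martingale construction by taking $\psi=\one_A$ for $A\in\parP_n$ and passing to the limit on a further conull set.
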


Since $\xi$ is measurable, for $\mu$-almost every $x\in X$ the set $\xi(x)=\bigcap_{x\in A\in\xi} A$ (called the atom of $\xi$ containing $x$) is in $\BorelM[X]$, and $X_0$ can be chosen so that $x\in X_0$ implies
\begin{itemize}
  \item $\mu_{x}^\xi(\xi(x))=1$,
  \item $y\in X_0\cap\xi(x)\Rightarrow \mu_{x}^\xi=\mu_{y}^\xi$.
\end{itemize}
Define $X_{\xi}$ as the space of atoms of $\xi$ and equip it with the final measure structure induced by the projection $p_{\xi}(x)=\xi(x)$. Due to the above considerations, the family of conditionals can also be thought of as a measurable map $(X/\xi, p_{\xi}\mu)\mapsto \ProbM[X]$ sending $A=\xi(x)\to \mu^{\xi}_A=\mu^{\xi}_x$. The measure $\mu$ is thus completely determined by $\left\{\mu^{\xi}_A\right\}_{A\in X/\xi}$ and the quotient measure $\mu_{X/\xi}=p_{\xi}\mu$. We refer to \cite{Rokhlin} for further discussion.

\paragraph{\textbf{Stable and unstable sets}} For $\seq{x}\in \lift{\Sigma}$ denote
 \begin{align}
 &\Wsloc{\seq{x}} \defeq\{\seq{y}:\seq{y}^+=\seq{x}^+\}\\
 &\Wuloc{\seq{x}} \defeq \{\seq{y}:\seq{y}^-=\seq{x}^-\}
 \end{align}

 \begin{definition}
 $\Wsloc{\seq{x}}, \Wuloc{\seq{x}}$ are the local stable and unstable sets of $\seq{x}$, respectively.
 \end{definition}

These sets can be parametrized as follows. Fix $\seq{x}\in\lift{\Sigma}$ and let 
 \begin{align*}
 &p^{u}_{\seq{x}}:\Sigma_{x_0}=\Sigma\cap [x_0]\to \Wuloc{\seq{x}}&p^{u}_{\seq{x}}(\seq z)=\inner{\seq{x}^-}{\seq z}\\
 &p^{s}_{\seq{x}}:\Sigma^{\dagger}_{x_0}=\Sigma^{\dagger}\cap [x_0]\to \Wsloc{\seq{x}}&p^{s}_{\seq{x}}(\seq z)=\inner{\seq z}{\seq{x}^{+}}.
 \end{align*}
Both $p^{u}_{\seq{x}}, p^{s}_{\seq{x}}$ are homeomorphisms.

Write $\xi^{u}=\{\Wuloc{\seq{x}}:\seq{x}\in \lift{\Sigma}\}$ for the partition into local unstable sets: it holds that
 \begin{itemize}
   \item  $\xi^{u}$ is finer than $\tau\xi^u=\{\tau\left(\xi(\tau^{-1}(\seq{x}))\right):\seq{x}\in\lift{\Sigma}\}$;
   \item  $\bigvee_{n\leq 0} \tau^n\xi^u=\BorelM[\lift{\Sigma}]$;
   \item  $\bigcap_{n\geq 0} \tau^n\xi^u=\{\Wu{\seq{x}}:\seq{x}\in\lift{\Sigma}\}=:\ms{B}^u$, where $\Wu{\seq{x}}=\{\seq{y}:\lim_{n\to \oo}\dis{\lift{\Sigma}}{\tau^{-n}\seq{x}}{\tau^{-n}\seq{y}}=0\}$.  
 \end{itemize}
Similarly for the partition $\xi^{s}=\{\Wsloc{\seq{x}}:\seq{x}\in\lift{\Sigma}\}$, replacing $\tau$ by its inverse. Next we compute the disintegration of $\lift{\mu}_L$ with respect to the partition $\xi^u$.

\begin{notation}
 Denote $\mu^u_{\seq{x}}=(\lift{\mu}_L)^{\xi^u}_{\seq{x}}$, whenever is defined. For $\seq{x}\in\Sigma, \seq{y} \in \Wuloc{\seq{x}}$ and $k\geq 0$ we write
  \begin{align*}
  [y_0\cdots y_k]^u_{\seq{x}}=\{\seq z\in \Wuloc{\seq{x}}: z_n=y_n, \forall n\leq k\}=p^u_{\seq{x}}([x_0 y_1\cdots y_k]).
  \end{align*}
 Likewise if $\seq{y}\in \Wsloc{\seq{x}}$, we write
 \begin{align*}
  [y_{-k}\cdots y_{0}]^s_{\seq{x}}=\{\seq z\in \Wsloc{\seq{x}}: z_n=y_n, \forall n\geq -k\}=p^s_{\seq{x}}([y_{-k}\cdots y_{0}]).
 \end{align*} 
 \end{notation}

If we now assume that $\seq{x}\in\lift{\Sigma}_0$ it follows that
 \begin{align}\label{eq:medidau}
 \nonumber\mu^u_{\seq{x}}([y_0\cdots y_k]^u_{\seq{x}})&=\lim_{n\to \oo} \frac{\lift{\mu}_L([x_{-n}\cdots x_{-1}\bpto x_0 y_1\cdots y_{k}])}{\lift{\mu}_L([x_{-n}\cdots x_{-1} \bpto x_0])}=\lim_{n\to \oo} \frac{\lift{\mu}_L([x_{-n}\cdots x_{-1}x_0 y_1\cdots \bpto y_{k}])}{\lift{\mu}_L([x_{-n}\cdots x_{-1} \bpto x_0])}\\
 &=\exp(S_{k}\lift{\varLd}(\tau\seq{y}))=\exp\left(S_{-k}\varLd(\cdots x_{-n}\cdots x_{-1}x_0 y_1\cdots  y_k)\right)
 \end{align}
with the convention that $S_0\varphi\equiv 0, S_{-k}\psi=\sum_{i=0}^{k-1}\psi\circ\tau^{-i}$. We remark that we have used the invariance of $\lift{\Sigma}_0$ the guarantee the existence of the limit (cf. \Cref{rem:convergeeninvariante}): more generally, this permits to define $\mu^u_{\seq{x}}$ whenever $\seq{x}$ belongs to the set $(\pi^{\dagger})^{-1}(\Sigma_0^{\dagger})$, which is saturated by the partition $\xi^u$. These unstable measures only depend on the past of $\seq{x}$, and we can safely write $\mu^{u}_{\seq{x}}=\mu^{u}_{\seq{x}^-}$.

\begin{corollary}\label{cor:dominiodisintegracionu}
The unstable disintegration can be defined on $\lift{\Sigma}_0$.
\end{corollary}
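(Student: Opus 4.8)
The plan is to observe that the statement is, in essence, already contained in the discussion surrounding \eqref{eq:medidau}, and to make that rigorous by pinning down three things: that $\xi^u$ is a measurable partition so \Cref{thm:Rohklin} applies, that the explicit prescription \eqref{eq:medidau} genuinely defines a Borel probability measure on each atom over a $\xi^u$-saturated set of full measure, and that by the essential uniqueness of the Rohklin disintegration these explicit measures coincide $\wmuL$-a.e.\ with the abstract conditionals, hence may be taken as the definition everywhere on that larger set.

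First I would record that the atoms of $\xi^u$ are precisely the fibres of $\pi^{\dagger}$ (the local unstable set $\Wuloc{\ux}$ depends only on $\ux^-$), so $\xi^u$ is measurable and a set is $\xi^u$-saturated iff it has the form $(\pi^{\dagger})^{-1}(\cdot)$. In particular $\Omega:=(\pi^{\dagger})^{-1}(\Sigma_0^{\dagger})$ is $\xi^u$-saturated; since $\pi^{\dagger}_{*}\wmuL=\muLd$ by \Cref{lem:eeshiftnegativo} and $\muLd(\Sigma_0^{\dagger})=1$, it has full $\wmuL$-measure, and by the convention $\Sigmab_0=\pi^{-1}(\Sigma_0)\cap(\pi^{\dagger})^{-1}(\Sigma_0^{\dagger})$ we have $\Sigmab_0\subseteq\Omega$, so it suffices to produce the disintegration on $\Omega$. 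Then for fixed $\ux\in\Omega$ the formula \eqref{eq:medidau} makes sense: $\Sigma_0^{\dagger}$ is $\tau^{-1}$-invariant (cf.\ \Cref{rem:convergeeninvariante}), so $S_{-k}\varLd$ is defined along $(\cdots x_{-1}x_0y_1\cdots y_k)$ and depends only on $\ux^-$; the resulting numbers are nonnegative, assign mass $\exp(S_0\varLd)=1$ to $[x_0]^u_{\ux}$, and are additive along nested cylinders by the cocycle identity $S_{-(k+1)}\varLd=S_{-k}\varLd+\varLd\circ\tau^{-k}$ together with the normalization $\sum\exp(\varLd)=1$ over one-letter extensions (the $\dagger$-analogue of \Cref{lem:RPFnormalizado}). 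Since $\Wuloc{\ux}$ is compact and totally disconnected with the cylinders $[y_0\cdots y_k]^u_{\ux}$ forming a generating clopen algebra, Carathéodory extension yields a unique Borel probability $\mu^u_{\ux}$ with these values, and $\ux\mapsto\mu^u_{\ux}$ is $\xi^u$-measurable because each cylinder mass is a pointwise limit of continuous functions of $\ux^-$.

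Finally I would identify this family with the Rohklin conditionals $(\wmuL)^{\xi^u}_{\ux}$ of \Cref{thm:Rohklin}: a standard martingale computation along the filtration generated by the cylinders $[x_{-n}\cdots x_{-1}\bpto x_0y_1\cdots y_k]$ (equivalently, the decreasing sequence $\tau^n\xi^u$) shows that for $\wmuL$-a.e.\ $\ux$ the conditional assigns to $[y_0\cdots y_k]^u_{\ux}$ exactly the limit appearing in \eqref{eq:medidau}; hence $\mu^u_{\ux}=(\wmuL)^{\xi^u}_{\ux}$ $\wmuL$-a.e., and since both families are $\xi^u$-saturated, we may and do take the disintegration of $\wmuL$ along $\xi^u$ to be given by \eqref{eq:medidau} on all of $\Omega\supseteq\Sigmab_0$. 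The only mildly delicate point is the verification that \eqref{eq:medidau} yields a countably additive probability on each atom — the consistency-plus-normalization check — which is precisely the $g$-measure bookkeeping provided by (the $\dagger$-version of) \Cref{lem:RPFnormalizado}; the rest is formal.
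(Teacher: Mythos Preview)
Your proposal is correct and follows the same approach as the paper: the corollary is stated without a separate proof, being an immediate consequence of the preceding discussion (the formula \eqref{eq:medidau} only requires $\ux^-\in\Sigma_0^{\dagger}$, and $(\pi^{\dagger})^{-1}(\Sigma_0^{\dagger})\supseteq\Sigmab_0$ is $\xi^u$-saturated of full measure). You have simply made explicit the measure-theoretic bookkeeping (Carath\'eodory extension, normalization via the $\dagger$-analogue of \Cref{lem:RPFnormalizado}, and identification with the Rohklin conditionals) that the paper leaves implicit.
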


Similarly for the stable disintegration we get,
 \begin{align}
\nonumber \mu^s_{\seq{x}}([y_{-k}\cdots y_{0}]^s_{\seq{x}})&=\lim_{n\to \oo} \frac{\lift{\mu}_L([y_{-k}\cdots y_{-1}\bpto x_0 x_1\cdots x_{n}])}{\lift{\mu}_L([x_{-n}\cdots x_{-1} \bpto x_0])}
=\lim_{n\to \oo} \frac{\lift{\mu}_L([\bpto y_{-k}\cdots y_{-1}x_0 x_1\cdots x_{n}])}{\lift{\mu}_L([\bpto x_{0}\cdots x_{n} ])}\\
&=\exp(S_k\lift{\varL}(\tau^{-k}\seq{y}))=\exp\left(S_{k}\varL(y_{-k}\cdots y_{-1}x_0 x_1\cdots x_{n}\cdots)\right).
\end{align}

It follows that $\seq{x}\mapsto \mu^s_{\seq{x}}$ is well defined for $\seq{x}\in \lift{\Sigma}_0$. Next we compute the behavior of these stable/unstable measures under iteration.

\begin{corollary}\label{cor:invarianciadesintegra}
For every $\seq{x}\in \lift{\Sigma}_0$ it holds 
\begin{enumerate}
  \item if $\seq{y}\in \tau^{-1}(\Wuloc{\tau\seq{x}})$, then
  \[
\frac{\dd \tau^{-1}\mu^u_{\tau\seq{x}}}{\dd \mu^u_{\seq{x}}}(\seq{y})=\exp(-\lift{\varLd}(\tau \seq{y})).
  \]
  \item If $\seq{y}\in \tau(\Wsloc{\tau^{-1}\seq{x}})$, then
  \[
\frac{\dd \tau\mu^s_{\tau^{-1}\seq{x}}}{\dd \mu^s_{\seq{x}}}(\seq{y})=\exp(-\lift{\varL}(\tau^{-1} \seq{y})).
  \]
  
\end{enumerate}
\end{corollary}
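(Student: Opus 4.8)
The plan is to prove the two Radon–Nikodym formulas directly from the explicit expressions for the stable and unstable conditional measures computed in \eqref{eq:medidau} and the displayed formula for $\mu^s_{\ux}$ just above, treating the two items as mirror images of one another under the $\dagger$-involution; I will write out only item (1), as item (2) follows by the exact same computation with $\tau$ replaced by $\tau^{-1}$, $\varL$ by $\varLd$, and the one-sided spaces interchanged. First I would fix $\ux\in\Sigmab_0$ and note that, since $\Sigmab_0$ is completely invariant, both $\ux$ and $\tau\ux$ lie in the domain where $\mu^u_{\cdot}$ is defined (this is exactly \Cref{cor:dominiodisintegracionu}). The map $\tau^{-1}$ carries $\Wuloc{\tau\ux}$ homeomorphically onto $\tau^{-1}(\Wuloc{\tau\ux})$, which is the subset of the atom $\Wuloc{\ux}$ consisting of those $\uy$ with $y_1 = x_1$ (i.e.\ the first forward coordinate is prescribed), so both measures $\tau^{-1}\mu^u_{\tau\ux}$ and $\mu^u_{\ux}$ live on comparable atoms and it makes sense to compare them via cylinders.

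The key computation is to evaluate both sides on a cylinder $[y_1\cdots y_k]^u_{\ux}\subset\tau^{-1}(\Wuloc{\tau\ux})$ (with $y_0=x_0$ fixed). On one hand, by definition of pushforward, $\tau^{-1}\mu^u_{\tau\ux}\big([y_1\cdots y_k]^u_{\ux}\big) = \mu^u_{\tau\ux}\big([y_1\cdots y_k]^u_{\tau\ux}\big)$, which by \eqref{eq:medidau} applied at the point $\tau\ux$ equals $\exp\big(S_{k-1}\wvarLd(\tau(\tau\uy))\big)$ where the relevant word of length $k-1$ is $y_2\cdots y_k$ sitting in the atom of $\tau\ux$. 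On the other hand, $\mu^u_{\ux}\big([y_1\cdots y_k]^u_{\ux}\big) = \exp\big(S_k\wvarLd(\tau\uy)\big)$ directly from \eqref{eq:medidau}, using the word $y_1\cdots y_k$. Taking the ratio and unwinding the Birkhoff sums,
\[
\frac{\tau^{-1}\mu^u_{\tau\ux}\big([y_1\cdots y_k]^u_{\ux}\big)}{\mu^u_{\ux}\big([y_1\cdots y_k]^u_{\ux}\big)}
= \exp\Big(S_{k-1}\wvarLd(\tau^2\uy)-S_k\wvarLd(\tau\uy)\Big)
= \exp\big(-\wvarLd(\tau\uy)\big),
\]
since the two Birkhoff sums differ precisely by the single term $\wvarLd(\tau\uy)$ and $\wvarLd$ depends only on negative coordinates, so shifting the base point from $\tau^2\uy$ to $\tau\uy$ in the length-$(k-1)$ sum aligns the summed terms. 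As $k$ ranges over all values and $y_1,\dots,y_k$ over admissible words, these cylinders generate the Borel $\sigma$-algebra on the atom, and the ratio $\exp(-\wvarLd(\tau\uy))$ is a well-defined measurable function on $\tau^{-1}(\Wuloc{\tau\ux})$; hence it is the Radon–Nikodym derivative, establishing (1).

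For item (2) I would run the identical argument with the stable conditionals: $\tau\mu^s_{\tau^{-1}\ux}$ versus $\mu^s_{\ux}$ on cylinders $[y_{-k}\cdots y_{-1}]^s_{\ux}$, using the displayed formula $\mu^s_{\ux}([y_{-k}\cdots y_0]^s_{\ux})=\exp(S_k\wvarL(\tau^{-k}\uy))$ and the fact that $\wvarL$ depends only on non-negative coordinates; the telescoping of Birkhoff sums again leaves a single term $\wvarL(\tau^{-1}\uy)$, with the opposite sign giving the claimed $\exp(-\wvarL(\tau^{-1}\uy))$. The only point requiring a little care — and the main (mild) obstacle — is bookkeeping the index shifts in the Birkhoff sums and confirming that the base-point translations are absorbed precisely because $\wvarLd$ (resp.\ $\wvarL$) ignores the coordinates being changed; once that is pinned down, both formulas drop out. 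I should also remark that since $\mu^u_{\ux}$ depends only on $\ux^-$ and $\mu^s_{\ux}$ only on $\ux^+$ (as noted after \eqref{eq:medidau}), the expressions $\tau^{-1}\mu^u_{\tau\ux}$ and $\tau\mu^s_{\tau^{-1}\ux}$ are unambiguous and everything is defined on the invariant full-measure set $\Sigmab_0$.
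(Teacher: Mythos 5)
Your proposal is correct and follows essentially the same route as the paper: compute $\tau^{-1}\mu^u_{\tau\ux}$ and $\mu^u_{\ux}$ on a generic unstable cylinder via \eqref{eq:medidau}, telescope the Birkhoff sums to isolate $\exp(-\wvarLd(\tau\uy))$, and pass to the Radon--Nikodym derivative via the generating $\pi$-system of cylinders; the second item is the mirror argument, exactly as the paper disposes of it. One small imprecision worth noting: the cancellation $S_{k-1}\wvarLd(\tau^2\uy)-S_k\wvarLd(\tau\uy)=-\wvarLd(\tau\uy)$ is pure telescoping algebra and does not rely on $\wvarLd$ depending only on negative coordinates --- that fact is instead what makes the right-hand side of \eqref{eq:medidau} well defined (independent of the representative $\uy$ in the cylinder) and what makes the density $\exp(-\wvarLd(\tau\uy))$ constant on level-$k$ cylinders for $k\ge 1$, which is what you actually need to conclude.
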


\begin{proof}
Compute
\begin{align*}
\tau^{-1}\mu^u_{\tau\seq{x}}([y_0\cdots y_{k}]^{u}_{\seq{x}})&=\mu^u_{\tau\seq{x}}\left([y_1\cdots y_{k}]^{u}_{\tau\seq{x}}\right)\\
&=\begin{dcases}
\exp(S_{k-1}\lift{\varLd}(\tau^2 \seq{y}))=\exp(-\lift{\varLd}(\tau \seq{y}))\cdot \mu^u_{\seq{x}}([y_0\cdots y_{k}]^{u}_{\seq{x}}) & x_1=y_1\\
0 &x_1\neq y_1.
\end{dcases}
\end{align*}
This shows that $\tau^{-1}\mu^u_{\tau\seq{x}}\sim \mu^u_{\seq{x}}$. By taking $k\mapsto\oo$ and using the Radon-Nikodym theorem we obtain the claim in the first case. The second is analogous.
 \end{proof}

It follows that we can write
 \[
   \mu^u_{\seq{x}}=\sum_{i\in \ms A} R_{x_0i}1_{[x_0i]} e^{\varLd(\seq{x}^{-}i)}\tau^{-1}\mu_{\seq{x} i}^u.
 \]

\paragraph{\textbf{Holonomy.}}Given $\seq{x},\seq{y}\in\lift{\Sigma}$ with $\seq{y}\in\Wuloc{\seq{x}}$ we define $\hu{\seq{x}, \seq{y}}:\Wsloc{\seq{x}}\to\Wsloc{\seq{y}}$ 
 \begin{align*}
 \hu{\seq{x}, \seq{y}}(\inner{\seq z^{-}}{\seq{x}^+})=\inner{\seq z^{-}}{\seq{y}^+}.
 \end{align*}
We call this map the \emph{local unstable holonomy} between $\Wsloc{\seq{x}}$ and $\Wsloc{\seq{y}}$. Note that
\[
   \forall \seq z\in\Wsloc{\seq{x}},\ \hu{\seq{x},\seq{y}}([z_{-k}\cdots z_{0}]_{\seq{x}}^s)=[z_{-k}\cdots z_{0}]_{\seq{y}}^s .
\]

Now suppose that $\seq{x},\seq{y}\in \lift{\Sigma}_0$ and let $\seq z=\inner{\seq z^{-}}{\seq{x}^{+}}\in \Wsloc{\seq{x}}$. Then for every $k\geq 0$ one has

\begin{align}\label{eq:holonomiastos}
\nonumber\hu{\seq{y}, \seq{x}}\mu^s_{\seq{y}}\left([z_{-k}\cdots z_{0}]^{s}_{\seq{x}}\right)&=\exp\left(S_k\lift{\varL}(\tau^{-k}\inner{\seq{z}^{-}}{\seq{y}^{+}}))\right)\\
\nonumber&=\exp\left(S_k\lift{\varL}(\tau^{-k}\inner{\seq{z}^{-}}{\seq{y}^{+}})-S_k\lift{\varL}(\tau^{-k}\seq{z})\right)\mu^s_{\seq{x}}\left([z_{-k}\cdots z_{0}]^{s}_{\seq{x}}\right)\\
&=\prod_{i=1}^{k}\frac{\exp\left(\lift{\varL}(\tau^{-i}\inner{\seq{z}^{-}}{\seq{y}^{+}})\right)}{\exp\left(\lift{\varL}(\tau^{-i}\inner{\seq{z}^{-}}{\seq{x}^{+}})\right)}\cdot \mu^s_{\seq{y}}\left([z_{-k}\cdots z_{0}]^{s}_{\seq{x}}\right)\\
\shortintertext{which if $\seq{z}, \hu{\seq{y},\seq{x}}(\seq{z})\in\lift{\Sigma}_1$ is equal to}
&= \frac{\exp\left(u(\inner{\seq{z}^{-}}{\seq{y}^{+}})\right)}{\exp\left(u(\inner{\seq{z}^{-}}{\seq{x}^{+}})\right)}:\frac{\exp\left(u(\tau^{-k}\inner{\seq{z}^{-}}{\seq{y}^{+}})\right)}{\exp\left(u(\tau^{-k}\inner{\seq{z}^{-}}{\seq{x}^{+}})\right)}\cdot \mu^s_{\seq{y}}\left([z_{-k}\cdots z_{0}]^{s}_{\seq{x}}\right). 
\end{align}

\begin{definition}\label{def:unstablejacobian}
The unstable Jacobian between $\Wsloc{\seq{x}}$ and $\Wsloc{\seq{y}}$ is the function 
\[
  \Jacu{\seq{x},\seq{y}}(\seq{z})=\lim_{n\to\oo} \prod_{k=1}^{n}\frac{\exp\left(\lift{\varL}(\tau^{-k}\inner{\seq{z}^{-}}{\seq{y}^{+}})\right)}{\exp\left(\lift{\varL}(\tau^{-k}\inner{\seq{z}^{-}}{\seq{x}^{+}})\right)}
\]
for $\seq{z}\in \Wsloc{\seq{x}}$ (whenever it exists).
\end{definition}

By the computation above, and since $\lift{\varL}$ has the weak Bowen property, the measures $\hu{\seq{y}, \seq{x}}\mu^s_{\seq{y}}, \mu^s_{\seq{x}}$ are equivalent. The Radon-Nikodym theorem now implies that there exists a full $\mu^s_{\seq{x}}$ measure set where the limit $\Jacu{\seq{x},\seq{y}}(\seq{z})$ exists.

\begin{corollary}\label{cor:Jacobianu}
For every $\seq{x},\seq{y}\in \lift{\Sigma}_0$ with $x_0=y_0$ the measures $\hu{\seq{y}, \seq{x}}\mu^s_{\seq{y}}, \mu^s_{\seq{x}}$ are equivalent: $\hu{\seq{y}, \seq{x}}\mu^s_{\seq{y}}=\Jacu{\seq{x},\seq{y}}\cdot \mu^s_{\seq{x}}$, where $\Jacu{\seq{x},\seq{y}}\in \Lp{\oo}{\mu^s_{\seq{x}}}$ is bounded away from zero and infinity, with constants that do not depend on $\seq{x},\seq{y}$.
\end{corollary}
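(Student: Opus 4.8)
The statement to establish is \Cref{cor:Jacobianu}: for $\ux,\uy \in \Sigmab_0$ with $x_0 = y_0$, the pushed-forward stable conditional $\hu_{\uy,\ux}\mu^s_{\uy}$ is equivalent to $\mu^s_{\ux}$, with Radon--Nikodym derivative $\operatorname{J}_{\ux,\uy}^{u} \in \Lp[\oo](\mu^s_{\ux})$ bounded away from $0$ and $\oo$ by constants independent of $\ux,\uy$. The bulk of the analytic work has already been carried out in the displayed computation \eqref{eq:holonomiastos}: there, for $\uz = \prodi{\uz^{-}}{\ux^{+}} \in \Wsloc{\ux}$ and $k \geq 0$, one finds the exact ratio
\[
\frac{\hu_{\uy,\ux}\mu^s_{\uy}\left([z_{-k}\cdots z_0]^s_{\ux}\right)}{\mu^s_{\ux}\left([z_{-k}\cdots z_0]^s_{\ux}\right)} = \prod_{i=1}^{k}\frac{\exp\left(\wvarL(\tau^{-i}\prodi{\uz^{-}}{\uy^{+}})\right)}{\exp\left(\wvarL(\tau^{-i}\prodi{\uz^{-}}{\ux^{+}})\right)}.
\]
So the proof proceeds in three steps: first, observe that the $\sigma$-algebra on $\Wsloc{\ux}$ generated by the cylinders $\{[z_{-k}\cdots z_0]^s_{\ux}\}_{k\geq 0}$ generates the Borel structure, and that the family of finite-measure-ratios above is exactly the martingale of conditional densities. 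Second, pass to the limit $k \to \oo$ and apply the Radon--Nikodym / martingale convergence theorem along the lines already invoked immediately after \eqref{eq:holonomiastos}, to conclude that $\hu_{\uy,\ux}\mu^s_{\uy} \ll \mu^s_{\ux}$ with density equal to the limit $\operatorname{J}_{\ux,\uy}^{u}(\uz) = \lim_n \prod_{k=1}^{n} \exp\left(\wvarL(\tau^{-k}\prodi{\uz^{-}}{\uy^{+}}) - \wvarL(\tau^{-k}\prodi{\uz^{-}}{\ux^{+}})\right)$, which exists $\mu^s_{\ux}$-a.e. by \Cref{def:unstablejacobian} and the remark following it. Third, and this is the only point requiring genuine input, establish the \emph{uniform} two-sided bound on this density.

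**The uniform bound.** The key point is that the product $\prod_{k=1}^{n}\exp\left(\wvarL(\tau^{-k}\prodi{\uz^{-}}{\uy^{+}}) - \wvarL(\tau^{-k}\prodi{\uz^{-}}{\ux^{+}})\right)$ equals $\exp\left(S_n\wvarL(\tau^{-n}\prodi{\uz^{-}}{\uy^{+}}) - S_n\wvarL(\tau^{-n}\prodi{\uz^{-}}{\ux^{+}})\right)$, i.e.\ a difference of Birkhoff sums $S_n$ of $\wvarL$ evaluated at two points that agree in coordinates $-n+1, \ldots, 0$ (they share $\uz^{-}$ truncated at $0$), and hence lie in the same $n$-th Bowen ball for the inverse dynamics. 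Since $\wvarL$ has the weak Bowen property (it is the lift of $\varL$, which is a $\muL$-weak Bowen function by \Cref{cor:SnvsL}), this difference is bounded in absolute value by $\normB{\wvarL} = \normB{\varL}$, a constant depending only on $L$ and not on $\ux,\uy,\uz$ or $n$. Taking exponentials gives
\[
e^{-\normB{\varL}} \leq \operatorname{J}_{\ux,\uy}^{u}(\uz) \leq e^{\normB{\varL}}
\]
for $\mu^s_{\ux}$-a.e.\ $\uz$, with constants uniform over all admissible $\ux,\uy$. One must verify that the two points $\tau^{-n}\prodi{\uz^{-}}{\uy^{+}}$ and $\tau^{-n}\prodi{\uz^{-}}{\ux^{+}}$ indeed belong to a common Bowen ball of depth $n$ for the two-sided shift in the sense of \Cref{def:Bowenpropertyfunctiontwo}: they agree on coordinates $0, \ldots, n-1$ after the shift $\tau^{-n}$ precisely because $\uz^{-}$ is common to both and $x_0 = y_0$ ensures compatibility; the weak Bowen estimate then applies on the a.e.\ defined invariant full-measure set where everything is well-posed (the set $\Sigmab_0$ is invariant, so $\tau^{-n}$ keeps us inside it).

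**Main obstacle and remaining bookkeeping.** The genuinely delicate point is not the estimate itself but the measure-theoretic care needed to ensure all objects are simultaneously well-defined: the conditionals $\mu^s_{\ux}$ are defined only $\wmuL$-a.e., the transfer function $u$ of \Cref{cor:liftpotencialessoncoh} and the potentials $\wvarL,\wvarLd$ only on $\Sigmab_1 \subset \Sigmab_0$, and one needs the holonomy $\hu_{\uy,\ux}$ to carry a full $\mu^s_{\uy}$-set into a full $\mu^s_{\ux}$-set (or at least a set on which the density limit exists). This is exactly why the hypothesis is phrased for $\ux,\uy \in \Sigmab_0$ and why \Cref{cor:dominiodisintegracionu} is invoked beforehand. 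I would handle this by fixing representatives within $\Sigmab_1$, noting that equivalence of $\hu_{\uy,\ux}\mu^s_{\uy}$ and $\mu^s_{\ux}$ (the first two lines of the conclusion) follows from the finite-level bounds alone via the standard argument that a martingale of densities with uniformly bounded values converges in $\Lp[1]$ to the Radon--Nikodym derivative, and then the uniform bound on $\operatorname{J}^u_{\ux,\uy}$ is inherited from the uniform bound at each finite level. The passage between the product representation and the Birkhoff-sum representation, and the identification of the common Bowen ball, are routine once the indices are written out carefully, so I would not belabor them.
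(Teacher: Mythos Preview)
Your proposal is correct and follows essentially the same approach as the paper: the paper's argument is the single sentence immediately preceding the corollary, namely that the computation \eqref{eq:holonomiastos} together with the weak Bowen property of $\wvarL$ gives a uniform two-sided bound on the cylinder ratios, whence equivalence and the bounded Radon--Nikodym derivative follow. Your write-up simply spells out the index bookkeeping (that $\tau^{-n}\prodi{\uz^{-}}{\uy^{+}}$ and $\tau^{-n}\prodi{\uz^{-}}{\ux^{+}}$ lie in a common $n$-Bowen ball) and the measure-theoretic care (complete invariance of $\Sigmab_0$) that the paper leaves implicit.
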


Similarly, the stable holonomy is the map $\hs{\seq{x},\seq{y}}:\Wuloc{\seq{x}}\to\Wuloc{\seq{y}}$ defined by $\hs{\seq{x},\seq{y}}(\inner{\seq{x}^{-}}{\seq{z}^+})=\inner{\seq{y}^{-}}{\seq{z}^+}$. As before, we compute for $\seq{z}\in \Wuloc{\seq{x}}$
\begin{align}\label{eq:holonomiautou}
\nonumber&\hs{\seq{x},\seq{y}}\mu^u_{\seq{y}}([x_0z_1\cdots z_k]_{\seq{x}}^u)=\exp\left(S_k\lift{\varLd}(\tau\inner{\seq{y}^{-}}{\seq{z}^+})\right)\\
&=\exp\left(S_k\lift{\varLd}(\tau\inner{\seq{y}^{-}}{\seq{z}^+})-S_k\lift{\varLd}(\tau\inner{\seq{x}^{-}}{\seq{z}^+})\right)\cdot\mu^u_{\seq{x}}([x_0z_1\cdots z_k]_{\seq{x}}^u)\\
\shortintertext{which if $\seq{z}, \hs{\seq{y},\seq{x}}(\seq{z})\in\lift{\Sigma}_1$ is equal to}
&=\frac{\exp(u(\tau\inner{\seq{x}^{-}}{\seq{z}^+}))}{\exp(u(\tau\inner{\seq{y}^{-}}{\seq{z}^+}))}:\frac{\exp(u(\tau^{k+1}\inner{\seq{x}^{-}}{\seq{z}^+}))}{\exp(u(\tau^{k+1}\inner{\seq{y}^{-}}{\seq{z}^+}))}\cdot\mu^u_{\seq{x}}([x_0z_1\cdots z_k]_{\seq{x}}^u).
\end{align}

\subsection{The set of convergence of the unstable Jacobian}\label{ssec:thesetofconvergence}

 The previous \Cref{cor:Jacobianu} guarantees that, if $\seq{x},\seq{y}\in \lift{\Sigma}_0$ verify $x_0=y_0$, then for $\mu^s_{\seq{x}}$ almost every $\seq{z}\in \Wsloc{\seq{x}}$ the limit 
\[
  \lim_{k\to\oo} S_k\lift{\varL}(\tau^{-k}\hu{\seq{x},\seq{y}}(\seq{z}))-S_k\lift{\varL}(\tau^{-k}\seq{z})
\]
exists. For this, implicitly we have used that $\pi^{-1}(\Sigma_0)$ is saturated by the partition $\xi^s$. In principle we do not have means to guarantee that the same holds for the domain of the transfer function in order to deduce that $\lim_{k\to\oo} u(\tau^{-k}\hu{\seq{x},\seq{y}}(\seq{z}))-u(\tau^{-k}\seq{z})$ also exists. Nevertheless, by definition of disintegration we get the following:

\begin{corollary}\label{cor:existenciau}
 There exists a full measure invariant set $\lift{\Sigma}_2 \subset \lift{\Sigma}_1$ so that for $\seq{x},\seq{y}\in \lift{\Sigma}_2, x_0=y_0$ it holds
\[
   \exists \lim_{k\to\oo} u(\tau^{-k}\hu{\seq{x},\seq{y}}(\seq{z}))-u(\tau^{-k}\seq{z})\quad \aee{\mu^s_{\seq{x}}}(\seq{z})
 \]
 \end{corollary}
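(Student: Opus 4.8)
The statement in \Cref{cor:existenciau} is precisely the kind of assertion that follows from unpacking the defining property of the conditional measures, so my plan is to extract it mechanically from \Cref{cor:Jacobianu} together with \Cref{thm:Rohklin}.

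\textbf{Setup.} First I would recall the content of \Cref{cor:Jacobianu}: for $\ux,\uy\in\Sigmab_0$ with $x_0=y_0$, the pushed-forward stable measure $\hu_{\uy,\ux}\mu^s_{\uy}$ is equivalent to $\mu^s_{\ux}$, with Radon--Nikodym derivative $\operatorname{J}^u_{\ux,\uy}(\uz)=\lim_{k\to\oo}\prod_{i=1}^k \exp\big(\wvarL(\tau^{-i}\prodi{\uz^-}{\uy^+})-\wvarL(\tau^{-i}\prodi{\uz^-}{\ux^+})\big)$, which exists for $\mu^s_{\ux}$-almost every $\uz$. Taking logarithms, the limit $\lim_{k\to\oo} \big(S_k\wvarL(\tau^{-k}\hu_{\ux,\uy}(\uz))-S_k\wvarL(\tau^{-k}\uz)\big)$ exists $\mu^s_{\ux}$-a.e. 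Next, by \Cref{cor:liftpotencialessoncoh} and the convention fixed right after it, on the invariant full-measure set $\Sigmab_1$ we have $\wvarL=\wvarLd+u-u\circ\sigma$, and the computation in \eqref{eq:holonomiastos} shows that on $\Sigmab_1$ the product $\prod_{i=1}^k \exp(\wvarL(\tau^{-i}\prodi{\uz^-}{\uy^+})-\wvarL(\tau^{-i}\prodi{\uz^-}{\ux^+}))$ telescopes, up to a bounded factor, into $\exp\big(u(\prodi{\uz^-}{\uy^+})-u(\prodi{\uz^-}{\ux^+})\big)\cdot\exp\big(-u(\tau^{-k}\prodi{\uz^-}{\uy^+})+u(\tau^{-k}\prodi{\uz^-}{\ux^+})\big)$. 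Since $u\in\Lp[\oo](\wmuL)$ is bounded and the first factor does not depend on $k$, the convergence of the Jacobian is equivalent to the convergence of $\exp\big(u(\tau^{-k}\hu_{\ux,\uy}(\uz))-u(\tau^{-k}\uz)\big)$ as $k\to\oo$, for $\uz,\hu_{\ux,\uy}(\uz)\in\Sigmab_1$.

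\textbf{Constructing $\Sigmab_2$.} The remaining point is to pass from ``$\mu^s_{\ux}$-a.e.\@ for each fixed pair $(\ux,\uy)$'' to ``simultaneously on a single invariant full-measure set $\Sigmab_2$''. Here I would invoke \Cref{thm:Rohklin}: applying it to the measurable partition $\xi^s$ of $(\Sigmab,\wmuL)$, there is a full-measure set on which the conditionals $\mu^s_{\ux}$ depend only on the atom $\Wsloc{\ux}$ and satisfy $\mu^s_{\ux}(\Wsloc{\ux})=1$. Intersecting with $\Sigmab_1$ and with the $\xi^s$-saturations $\pi^{-1}(\Sigma_0),(\pi^\dagger)^{-1}(\Sigmab_0^\dagger)$, and then replacing the resulting set by $\bigcap_{n\in\Z}\sigma^n(\cdot)$ to make it $\sigma$-invariant, I obtain a full-measure invariant set; call it $\Sigmab_2$. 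For $\ux\in\Sigmab_2$, the set of $\uz\in\Wsloc{\ux}$ for which the limit above fails has $\mu^s_{\ux}$-measure zero by the previous paragraph, and by the Rohklin disintegration formula the union of these bad sets over all atoms has $\wmuL$-measure zero; discarding it (again saturating by $\xi^s$ and by $\sigma$) keeps $\Sigmab_2$ full-measure and invariant. By construction, for $\ux,\uy\in\Sigmab_2$ with $x_0=y_0$ the holonomy $\hu_{\ux,\uy}$ maps $\Wsloc{\ux}\cap\Sigmab_1$ into $\Sigmab_1$ up to a $\mu^s_{\ux}$-null set, so both $\uz$ and $\hu_{\ux,\uy}(\uz)$ lie in $\Sigmab_1$ for a.e.\@ $\uz$, and the equivalence from the second paragraph gives the existence of $\lim_{k\to\oo}\big(u(\tau^{-k}\hu_{\ux,\uy}(\uz))-u(\tau^{-k}\uz)\big)$.

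\textbf{Main obstacle.} The genuinely delicate point is bookkeeping the domains: $u$ is only defined on $\Sigmab_1$, and a priori nothing guarantees that $\tau^{-k}\hu_{\ux,\uy}(\uz)\in\Sigmab_1$ for all $k$ — the holonomy image of a good point need not be good. This is exactly why the statement is phrased with a new set $\Sigmab_2\subset\Sigmab_1$ rather than claiming it on $\Sigmab_1$ itself, and why the argument must route through the disintegration rather than through a pointwise identity: it is the disintegration that lets one say that for a.e.\@ atom, a.e.\@ point of the atom has its holonomy image landing in $\Sigmab_1$. Once that measure-theoretic fact is in place, everything else is the telescoping computation of \eqref{eq:holonomiastos} plus boundedness of $u$, and there is no analytic difficulty.
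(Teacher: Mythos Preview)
Your proposal is correct and follows essentially the same approach as the paper, which simply states that the corollary holds ``by definition of disintegration'' without further detail. You have faithfully unpacked this: the convergence of the Jacobian from \Cref{cor:Jacobianu}, the telescoping identity \eqref{eq:holonomiastos} reducing it to the $u$-difference on $\Sigmab_1$, and then the Rohklin disintegration to produce the full-measure invariant set $\Sigmab_2\subset\Sigmab_1$ on which the stable conditionals give full mass to $\Sigmab_1$ (so that both $\uz$ and its holonomy image, together with all their iterates, lie in the domain of $u$).
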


 In this part we analyze more carefully the previous convergence. For $\phi\in \Cr{\lift{\Sigma}^2}$ let
\begin{align*}
\bb{P}^u(\phi)=\int \Emu{\mu^u_{\seq{x}}}{\phi(\seq{x},\cdot)}\dd\lift{\mu}_L(\seq{x}).
 \end{align*}
Then $\bb{P}^u$ is a probability supported in the fiber bundle $\mrm{E}=\left\{(\seq{x},\seq{y})\in\lift{\Sigma}^2:\seq{y}\in\Wuloc{\seq{x}}\right\}$. If $\mrm{proj}^{(1)}, \mrm{proj}^{(2)}:\lift{\Sigma}^2\to\lift{\Sigma}$ denote the respective projections onto the first and second coordinates, we get in particular $\mrm{proj}^{(1)}\bb{P}^u=\lift{\mu}_L$.

Define the probability measure $\bb{P}^{us}$ on $\lift{\Sigma}^3$ by the formula
 \[
   \phi\in \Cr{\lift{\Sigma}^3},\ \bb{P}^{us}(\phi)=\int \Emu{\mu^s_{\seq{x}}}{\phi(\seq{x},\seq{y},\cdot)}\dd\, \bb{P}^u(\seq{x},\seq{y}).
 \]
Denote $\mrm{proj}:\lift{\Sigma}^3\to\lift{\Sigma}^2$ the projection onto the first two coordinates. From \Cref{cor:existenciau} we deduce:

\begin{lemma}\label{lem:existenciauprus}
There exists a full $\bb{P}^{us}$ measure set $\mathcal{D}^3\subset \lift{\Sigma}^3$ such that $\mrm{proj}(\mathcal{D}^3)=\lift{\Sigma}_2^2$, and so for $(\seq{x},\seq{y},\seq{z})\in\mathcal{D}^3$ it holds 
\[
  \exists \lim_{k\to\oo}  u(\tau^{-k}\hu{\seq{x},\seq{y}}(\seq{z}))-u(\tau^{-k}\seq{z}).
\]
\end{lemma}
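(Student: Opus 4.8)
The statement to prove (Lemma \ref{lem:existenciauprus}) is essentially a disintegration-of-measures bookkeeping statement: Corollary \ref{cor:existenciau} already gives the pointwise limit for $\mu^s_{\ux}$-almost every $\uz$ once $\ux,\uy\in\Sigmab_2$ with $x_0=y_0$, and we must package this into a single full $\pr^{us}$-measure set whose projection to the first two coordinates is exactly $\Sigmab_2^2$.

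\begin{proof}
Let
\[
\mathcal{E}_2=\{(\ux,\uy)\in\mathbb E:\ux,\uy\in \Sigmab_2\}=(\operatorname{proj}^{(1)})^{-1}(\Sigmab_2)\cap(\operatorname{proj}^{(2)})^{-1}(\Sigmab_2).
\]
Since $\operatorname{proj}^{(1)}\pr^u=\wmuL$ and $\wmuL(\Sigmab_2)=1$, we have $\pr^u\big((\operatorname{proj}^{(1)})^{-1}(\Sigmab_2)\big)=1$. Moreover, $\mu^u_{\ux}$ is supported on $\Wuloc{\ux}$, and since $\Sigmab_2$ is invariant and saturated by the partition $\xi^u$ (being a full-measure invariant set containing, after the usual saturation, each local unstable set it meets), we get $\mu^u_{\ux}(\Sigmab_2)=1$ for $\wmuL$-a.e. $\ux$; hence $\pr^u\big((\operatorname{proj}^{(2)})^{-1}(\Sigmab_2)\big)=1$ as well, and so $\pr^u(\mathcal{E}_2)=1$. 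Also $x_0=y_0$ automatically whenever $\uy\in\Wuloc{\ux}$, so on $\mathcal{E}_2$ the hypotheses of Corollary \ref{cor:existenciau} are met.

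For $(\ux,\uy)\in\mathcal{E}_2$ let $G(\ux,\uy)\subset\Wsloc{\ux}$ be the set of $\uz$ for which $\lim_{k\to\oo}u(\tau^{-k}\hu_{\ux,\uy}(\uz))-u(\tau^{-k}\uz)$ exists; by Corollary \ref{cor:existenciau}, $\mu^s_{\ux}(G(\ux,\uy))=1$ for every such $(\ux,\uy)$. The map $(\ux,\uy,\uz)\mapsto \mathbf 1_{G(\ux,\uy)}(\uz)$ is Borel (the limit superior and limit inferior of the sequence of Borel functions $(\ux,\uy,\uz)\mapsto u(\tau^{-k}\hu_{\ux,\uy}(\uz))-u(\tau^{-k}\uz)$ are Borel, and $G$ is the set where they agree and are finite). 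Set
\[
\mathcal{D}^3=\{(\ux,\uy,\uz):(\ux,\uy)\in\mathcal{E}_2,\ \uz\in G(\ux,\uy)\}.
\]
Then, by the defining formula for $\pr^{us}$ and Fubini/Tonelli for the iterated integral,
\[
\pr^{us}(\mathcal{D}^3)=\int \ie{\mu^s_{\ux}}{\mathbf 1_{\mathcal{D}^3}(\ux,\uy,\cdot)}\der\,\pr^u(\ux,\uy)=\int_{\mathcal{E}_2}\mu^s_{\ux}(G(\ux,\uy))\der\,\pr^u(\ux,\uy)=\pr^u(\mathcal{E}_2)=1.
\]
Finally $\operatorname{proj}(\mathcal{D}^3)=\mathcal{E}_2$: the inclusion $\subset$ is immediate, and for $\supset$, given $(\ux,\uy)\in\mathcal{E}_2$ the set $G(\ux,\uy)$ is nonempty (it has full $\mu^s_{\ux}$-measure), so some $\uz$ witnesses membership. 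Since $\mathcal{E}_2$ is, up to the harmless saturation conventions already adopted for $\Sigmab_2$, exactly $\Sigmab_2^2\cap\mathbb E$, which the paper abbreviates as $\Sigmab_2^2$ in this context, this is the claim. For each $(\ux,\uy,\uz)\in\mathcal{D}^3$ the displayed limit exists by construction.
\end{proof}

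The only genuine point requiring care — and the step I would expect to be the main obstacle — is the measurability of $G$ (equivalently of $\mathbf 1_{\mathcal D^3}$) and the precise sense in which $\operatorname{proj}(\mathcal D^3)=\Sigmab_2^2$, i.e. reconciling the ambient fiber bundle $\mathbb E$ with the square notation; everything else is a direct application of Corollary \ref{cor:existenciau} together with the disintegration formula defining $\pr^{us}$.
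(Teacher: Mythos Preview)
Your proof is correct and follows essentially the same approach as the paper: the paper gives no explicit argument at all, writing only ``From \Cref{cor:existenciau} we deduce'' before stating the lemma, and your proof simply unpacks that deduction via the disintegration formula defining $\pr^{us}$. One minor point: you claim $\Sigmab_2$ is $\xi^u$-saturated, which is not asserted in the paper and not needed --- $\mu^u_{\ux}(\Sigmab_2)=1$ for $\wmuL$-a.e.\ $\ux$ follows directly from $\wmuL(\Sigmab_2)=1$ and the disintegration identity $\wmuL(\Sigmab_2^c)=\int\mu^u_{\ux}(\Sigmab_2^c)\,\der\wmuL(\ux)$.
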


Consider the family of functions $v_k:\lift{\Sigma}^2_2\to\R$, $v_k(\seq{x},\seq{y})=u(\tau^{-k}\seq{x})-u(\tau^{-k}\seq{y})$.

\begin{lemma}\label{lem:existev}
 There exists a full $\bb{P}^u$ measure set $\mathcal{D}^2\subset \lift{\Sigma}^2 $ so that
 \[
   (\seq{x},\seq{y})\in \mc{D}^2\Rightarrow \exists v(\seq{x},\seq{y}):=\lim_{k\to\oo} v_k(\seq{x},\seq{y})
 \]
 \end{lemma}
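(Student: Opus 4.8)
The idea is to transfer the almost-everywhere convergence statement of \Cref{lem:existenciauprus}, which lives on the triple space $\Sigmab^3$ with respect to $\pr^{us}$, down to the double space $\Sigmab^2$ with respect to $\pr^u$. First I would unwind the definitions: for $(\ux,\uy,\uz)\in\mathcal D^3$ we have $\uz\in\Wsloc{\ux}$ and $\hu_{\ux,\uy}(\uz)=\prodi{\uz^-}{\uy^+}\in\Wsloc{\uy}$, so that $\tau^{-k}\hu_{\ux,\uy}(\uz)$ and $\tau^{-k}\uz$ agree in all sufficiently negative coordinates (they have the same past $\uz^-$), while $\tau^{-k}\ux$ and $\tau^{-k}\uy$ also agree in all sufficiently negative coordinates once $k$ is large (both have past... no — here is the subtlety). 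The cleaner route is: the quantity $v^k(\ux,\uy)=u(\tau^{-k}\ux)-u(\tau^{-k}\uy)$ does not involve $\uz$ at all, so I want to exhibit, for $\pr^u$-a.e.\ pair $(\ux,\uy)$, some companion point along which the convergence of \Cref{lem:existenciauprus} can be read. Observe that $(\ux,\uy,\ux)\in\Sigmab^3$ has $\uz=\ux\in\Wsloc{\ux}$ trivially, and $\hu_{\ux,\uy}(\ux)=\prodi{\ux^-}{\uy^+}$; this is exactly $\uy$ when $\uy\in\Wuloc{\ux}$, which holds $\pr^u$-a.e. Hence on the "diagonal section" $\{(\ux,\uy,\ux):\uy\in\Wuloc{\ux}\}$ the limit in \Cref{lem:existenciauprus} reads precisely $\lim_k u(\tau^{-k}\uy)-u(\tau^{-k}\ux)=-\lim_k v^k(\ux,\uy)$.

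So the core task is to show that this diagonal section meets the full-measure set $\mathcal D^3$ for $\pr^u$-a.e.\ $(\ux,\uy)$. By the disintegration formula defining $\pr^{us}$, namely $\pr^{us}(\phi)=\int \ie{\mu^s_{\ux}}{\phi(\ux,\uy,\cdot)}\der\,\pr^u(\ux,\uy)$, the set $(\mathcal D^3)^c$ being $\pr^{us}$-null means $\mu^s_{\ux}\big((\mathcal D^3)^c_{(\ux,\uy)}\big)=0$ for $\pr^u$-a.e.\ $(\ux,\uy)$, where $(\mathcal D^3)_{(\ux,\uy)}$ denotes the $\uz$-slice. This tells us the slice is full for $\mu^s_{\ux}$, but the point $\uz=\ux$ is a single point, so I cannot simply conclude it lies in the slice — this is the main obstacle. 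The remedy is to use the $\tau$-invariance that has been built into all the relevant full-measure sets ($\Sigmab_0,\Sigmab_1,\Sigmab_2$ are invariant, and \Cref{cor:existenciau}'s set $\Sigmab_2$ is invariant), together with the weak Bowen property of $u$ along stable leaves. Concretely: by \Cref{cor:existenciau} the limit $\lim_k u(\tau^{-k}\hu_{\ux,\uy}(\uz))-u(\tau^{-k}\uz)$ exists for $\mu^s_{\ux}$-a.e.\ $\uz$; but $v^k(\ux,\uy)$ differs from $u(\tau^{-k}\hu_{\ux,\uy}(\uz))-u(\tau^{-k}\uz)$ only through boundary terms controlled by the transfer relation $\wvarL=\wvarLd+u-u\circ\tau$ and the weak Bowen estimates of \Cref{cor:invarianciadesintegra,cor:Jacobianu}. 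More precisely, from $\uz\in\Wsloc{\ux}$ and $\hu_{\ux,\uy}(\uz)\in\Wsloc{\uy}$ one has, by the holonomy computation \eqref{eq:holonomiastos}, that $u(\hu_{\ux,\uy}(\uz))-u(\uz)-\big(u(\ux)-u(\uy)\big)$ and its $\tau^{-k}$-translate are uniformly bounded (both differences are $u$ evaluated on pairs of points that become forward-asymptotic under $\tau^{-1}$, where $\wvarL$ has bounded Birkhoff differences). Passing to the limit, the existence of $\lim_k u(\tau^{-k}\hu_{\ux,\uy}(\uz))-u(\tau^{-k}\uz)$ for one good $\uz$ forces the existence of $\lim_k v^k(\ux,\uy)=\lim_k u(\tau^{-k}\ux)-u(\tau^{-k}\uy)$, because the two sequences differ by a convergent sequence.

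Assembling: let $\mathcal D^2\subset\Sigmab^2$ be the set of $(\ux,\uy)\in\Sigmab_2^2$ with $\uy\in\Wuloc{\ux}$ for which $\mu^s_{\ux}\big((\mathcal D^3)^c_{(\ux,\uy)}\big)=0$; by Fubini/the disintegration identity this is $\pr^u$-full. For such $(\ux,\uy)$, pick any $\uz$ in the $\uz$-slice of $\mathcal D^3$; then $\lim_k v^k(\ux,\uy)$ exists by the argument above. Hence $\mathcal D^2$ has full $\pr^u$ measure and on it $v:=\lim_k v^k$ is well defined. I expect the only genuinely delicate point to be the uniform control of the boundary terms relating $v^k(\ux,\uy)$ to $u(\tau^{-k}\hu_{\ux,\uy}(\uz))-u(\tau^{-k}\uz)$; this is where the weak Bowen property of $\wvarL$ (equivalently $\varL$) and the identity $\wvarL-\wvarLd=u-u\circ\tau$ from \Cref{cor:liftpotencialessoncoh} must be used carefully, keeping track that the transfer function $u$ is only in $\Lp[\oo](\wmuL)$ and defined on the invariant set $\Sigmab_1$, so all manipulations must stay inside the invariant full-measure sets.
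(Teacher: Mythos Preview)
Your reduction to \Cref{lem:existenciauprus} via the fiberwise Fubini argument is the right opening move, but the step you flag as ``delicate'' is in fact a genuine gap. The difference between $v^k(\ux,\uy)$ and $-\big[u(\tau^{-k}\hu_{\ux,\uy}(\uz))-u(\tau^{-k}\uz)\big]$ equals
\[
\big[u(\tau^{-k}\ux)-u(\tau^{-k}\uz)\big]\;-\;\big[u(\tau^{-k}\uy)-u(\tau^{-k}\hu_{\ux,\uy}(\uz))\big],
\]
i.e.\ two terms of the form $u(\tau^{-k}a)-u(\tau^{-k}b)$ with $a,b$ on the same local \emph{stable} set. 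Under $\tau^{-k}$ such pairs are pushed \emph{apart}, not together; your parenthetical ``forward-asymptotic under $\tau^{-1}$'' has the direction reversed. Boundedness of these terms is trivial from $u\in\Lp[\oo](\wmuL)$, but nothing in the weak Bowen estimate, the transfer identity $\wvarL-\wvarLd=u-u\circ\tau$, or \eqref{eq:holonomiastos} forces them to \emph{converge}. So the claim ``the two sequences differ by a convergent sequence'' is unjustified, and the argument does not close.

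The paper's route avoids this entirely by a change of variables rather than a comparison. Observe that the function in \Cref{lem:existenciauprus} factors as
\[
u(\tau^{-k}\hu_{\ux,\uy}(\uz))-u(\tau^{-k}\uz)\;=\;-\,v^k\big(\uz,\hu_{\ux,\uy}(\uz)\big),
\]
and the pair $\big(\uz,\hu_{\ux,\uy}(\uz)\big)$ lies on a common local unstable. The pushforward of $\pr^{us}$ under $(\ux,\uy,\uz)\mapsto\big(\uz,\hu_{\ux,\uy}(\uz)\big)$ is equivalent to $\pr^{u}$: the $\uz$-marginal is $\wmuL$ by the disintegration identity, and conditional on $\uz$ the second coordinate is obtained from $\uy\in\Wuloc{\ux}$ via the stable holonomy $\hs_{\ux,\uz}$, which is absolutely continuous with bounded Jacobian (the computation \eqref{eq:holonomiautou}, dual to \Cref{cor:Jacobianu}). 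Hence $\pr^{us}$-a.e.\ convergence of the displayed function is the same as $\pr^{u}$-a.e.\ convergence of $v^k$. No boundary terms appear, and the diagonal-section issue you worried about never arises.
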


 \begin{proof}
 Indeed, $\mrm{proj}\bb{P}^{us}=\bb{P}^{u}$. The claim follows from this and the previous lemma.
 \end{proof}

Our next goal is showing that $v$ is the zero function.

 \begin{lemma}\label{lem:vzero}
 It holds that $v(\seq{x},\seq{y})=0$ for $\bb{P}^u$-almost every pair $(\seq{x},\seq{y})$.
 \end{lemma}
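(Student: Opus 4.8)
The function $v(\ux,\uy)=\lim_k v^k(\ux,\uy)$ measures the asymptotic difference of the transfer function $u$ along backward orbits of points on the same local unstable set. The strategy is to show that $v$ is (essentially) a coboundary which is simultaneously invariant and has zero integral, hence vanishes.

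First I would record the basic structural identities of $v$. From the definition $v^k(\ux,\uy)=u(\tau^{-k}\ux)-u(\tau^{-k}\uy)$, a telescoping computation gives the cocycle-type relation
\[
v^k(\ux,\uy)=u(\ux)-u(\uy)-\sum_{i=1}^{k}\bigl(\wvarL(\tau^{-i}\ux)-\wvarL(\tau^{-i}\uy)\bigr)+\text{(bounded corrections)},
\]
using $\wvarL=\wvarLd+u-u\circ\tau$ and the fact that $\wvarLd$ depends only on the negative coordinates, so that $\wvarLd(\tau^{-i}\ux)=\wvarLd(\tau^{-i}\uy)$ whenever $\ux,\uy$ lie on the same local unstable set (their negative coordinates agree after applying $\tau^{-i}$ up to a shift). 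Taking $k\to\oo$, and recognizing that the sum is exactly $\log \operatorname{J}^u_{\ux,\uy}$ evaluated appropriately, one sees $v(\ux,\uy)=u(\ux)-u(\uy)-\log\operatorname{J}^u$, so $v$ exists precisely where \Cref{cor:Jacobianu} guarantees. More importantly, the same telescoping shows the \emph{shift-invariance} $v(\tau^{-1}\ux,\tau^{-1}\uy)=v(\ux,\uy)$ for $\pr^u$-a.e.\ pair, since applying $\tau^{-1}$ to both arguments merely reindexes the defining limit (the extra terms $u(\ux)-u(\uy)$ versus $u(\tau^{-1}\ux)-u(\tau^{-1}\uy)$ are absorbed because $\wvarLd$ is constant on local unstable sets). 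Thus $v$ is a $\tau^{-1}\times\tau^{-1}$-invariant function on the fiber bundle $\mathbb E$, on which $\pr^u$ is an invariant measure with $\operatorname{proj}^{(1)}\pr^u=\wmuL$ ergodic.

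Next I would exploit the disintegration. Since $v$ is invariant and $\pr^u$ projects onto the ergodic measure $\wmuL$ in the first coordinate, I would show that $v$ does not actually depend on the second coordinate: integrating $v(\ux,\cdot)$ against $\mu^u_{\ux}$ produces a $\tau^{-1}$-invariant function of $\ux$ alone, hence constant $\wmuL$-a.e.\ by ergodicity; call it $c$. Then I would argue the antisymmetry $v(\ux,\uy)=-v(\uy,\uy)$... more precisely, the defining formula is antisymmetric in $(\ux,\uy)$ up to the holonomy structure: swapping $\ux$ and $\uy$ (both on the same local unstable set, so the roles are symmetric) sends $v^k$ to $-v^k$, hence $v(\ux,\uy)=-v(\uy,\ux)$ whenever both are defined. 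Combined with $v$ being (a.e.) independent of the second argument and equal to a constant, antisymmetry forces $c=-c$, so $c=0$, and therefore $v\equiv 0$ $\pr^u$-a.e. Alternatively, and perhaps more cleanly, I would note $\int v\,d\pr^u = \int\bigl(\int v(\ux,\uy)\,d\mu^u_\ux(\uy)\bigr)d\wmuL(\ux)$, and use that $\hu_{\ux,\uy}$ together with the quasi-invariance of $\mu^s$ (\Cref{cor:Jacobianu}) makes $\pr^{us}$ symmetric enough under interchanging coordinates that $\int v\,d\pr^u=-\int v\,d\pr^u=0$; since $v$ is constant a.e., it is $0$.

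The main obstacle I anticipate is not the algebra of the telescoping identities but the careful bookkeeping of \emph{where} everything is defined: $u$ is only defined on $\Sigmab_1$, the limit $v$ only on $\mathcal D^2$, and the invariance/antisymmetry manipulations require that the relevant orbit segments and holonomy images all stay inside these full-measure invariant sets — this is exactly why \Cref{cor:existenciau} and \Cref{lem:existenciauprus} were set up with the auxiliary measures $\pr^u,\pr^{us}$ on $\Sigmab^2,\Sigmab^3$. So the real work is to check that $\pr^u$ (resp.\ $\pr^{us}$) is invariant under the maps in question and assigns full measure to the set where all the identities hold simultaneously; once that is in place, ergodicity of $\wmuL$ and the antisymmetry close the argument. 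I would also need to confirm that the "bounded corrections" above genuinely telescope away in the limit — this follows from the weak Bowen property of $\wvarL$ and $\wvarLd$, i.e.\ $\sup_k|S_k\wvarL(\tau^{-k}\ux)-S_k\wvarL(\tau^{-k}\uy)|<\oo$ on local unstable sets, which is precisely what \Cref{cor:Jacobianu} encodes.
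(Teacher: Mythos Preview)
Your approach has a genuine gap at the invariance step. It is true that $v(\tau^{-1}\ux,\tau^{-1}\uy)=v(\ux,\uy)$ (this is just reindexing the defining limit), but the claim that $\bar v(\ux)=\int v(\ux,\uy)\,d\mu^u_\ux(\uy)$ is $\tau^{-1}$-invariant fails: the conditional measures $\mu^u_\ux$ are only \emph{quasi}-invariant. By \Cref{cor:invarianciadesintegra}, $\tau^{-1}_*\mu^u_{\tau\ux}$ differs from $\mu^u_\ux$ by the nontrivial density $e^{-\wvarLd\circ\tau}$ and, moreover, is supported on the strictly smaller set $\tau^{-1}\Wuloc{\tau\ux}\subsetneq\Wuloc{\ux}$. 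So when you change variables in $\bar v(\tau^{-1}\ux)$ you do \emph{not} recover the integral against $\mu^u_\ux$, and likewise $\pr^u$ is not $(\tau^{-1}\times\tau^{-1})$-invariant. Your antisymmetry alternative is correct as far as it goes --- the swap $(\ux,\uy)\mapsto(\uy,\ux)$ does preserve $\pr^u$ (since $\mu^u_\ux=\mu^u_\uy$ for $\uy\in\Wuloc{\ux}$), hence $\int v\,d\pr^u=0$ --- but without first knowing that $v$ is constant $\pr^u$-a.e.\ this yields nothing. And even if $\bar v$ were constant, that would only say the fiberwise average is constant, not that $v$ is independent of $\uy$.

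The paper's argument uses a different, more direct idea that you have not invoked: for $\uy\in\Wuloc{\ux}$ the points $\tau^{-k}\ux$ and $\tau^{-k}\uy$ become arbitrarily close as $k\to\infty$ (contraction of local unstable sets under $\tau^{-1}$). Coupled with Lusin's theorem --- restricting $u$ to a compact set $\hat C$ of $\pr^u$-measure $>1-\eta$ on which it is uniformly continuous --- this forces $|v_k(\ux,\uy)|=|u(\tau^{-k}\ux)-u(\tau^{-k}\uy)|<\epsilon$ whenever $(\tau^{-k}\ux,\tau^{-k}\uy)$ lies in $\hat C$. A short contradiction then rules out $\pr^u(|v_k|>\epsilon)\geq\eta$ for all large $k$, using only the $\wmuL$-invariance of the first marginal. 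None of the telescoping identities or ergodicity-of-a-fiber-bundle machinery is needed once you exploit this contraction.
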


 \begin{proof}
 Suppose not and denote $B=\{(\seq x,\seq y)\in\mc{D}^2:|v(x,y)|>0\}$. Using Egoroff and Lusin's theorems one deduces the existence of constants $\eps,\eta>0$ and $k_0$ so that for $k\geq k_0$, $\bb{P}^u(\hat{B}_k)\geq \eta$, where $\hat{B}_k=\{(\seq{x},\seq{y}):|v_k(\seq{x},\seq{y})|>\eps\}$. Let $\hat{C} \subset \lift{\Sigma}_2^2$ be a compact set so that $(\seq{x},\seq{y})\mapsto u(\seq{x})-u(\seq{y})$ is continuous and $\bb{P}^u(\hat{C})\geq 1-\eta$. Observe that by the definition of $\bb{P}^{u}$ and the defining properties of disintegrations it follows that $\lift{\mu}_L(B)\geq \eta$, with $B=\mrm{proj}^{(1)} \hat{B}$.

 Take $\delta>0$ for which $\dis[C]{\seq{x}}{\seq{y}}<\delta\Rightarrow |u(\seq{x})-u(\seq{y})|<\eps$, and let $k_1\geq k_0$ so that
\[
   \sup\{\diam(\tau^{-k_1}\Wuloc{\seq{x}}):\seq{x}\in\lift{\Sigma}\}<\delta.
 \]
 This implies in particular that $\hat{B}\cap \tau^{k_1}\times \tau^{k_1}(\hat{C})=\emptyset$, therefore $\bb{P}^u(\tau^{k_1}\times \tau^{k_1}(\hat{B}))<\eta$, which in turn implies 
\[
   \lift{\mu}_L(\mrm{proj}^{(1)}\paren{\tau^{-k_1}\times \tau^{-k_1}(\hat{B}))=\lift{\mu}_L(\tau^{-k_1} B})<\eta,
 \]
which is a contradiction.
 \end{proof}

\begin{corollary}\label{cor:jacobianoinestablefull}
There exists $\mc{D} \subset \lift{\Sigma}_2^3 \subset \lift{\Sigma}_0^3$ of full $\bb{P}^{us}$ measure so that $(\seq{x},\seq{y},\seq{z})\in\mc{D}$ then
\begin{align*}
  \Jacu{\seq{y},\seq{x}}(\seq{z})&=\frac{e^{u(\seq{z})}}{e^{u(\inner{\seq{z}^-}{\seq{x}^+})}}=\prod_{k=1}^{\oo} \frac{\exp\left(\lift{\varL}(\tau^{-k}\inner{\seq{z}^{-}}{\seq{x}^{+}})\right)}{\exp\left(\lift{\varL}(\tau^{-k}\inner{\seq{z}^{-}}{\seq{y}^{+}})\right)}\\
  &=\lim_{k\to\oo}\exp\left(S_k\lift{\varL}(\tau^{-k}\inner{\seq{z}^{-}}{\seq{x}^{+}})-S_k\lift{\varL}(\tau^{-k}\inner{\seq{z}^{-}}{\seq{y}^{+}})\right).
\end{align*}
\end{corollary}

\begin{remark}
The previous corollary reveals yet another some subtlety of functions with the (weak) Bowen property. For $\varphi$ to have the weak Bowen property, $\sup_{n\geq 0}|\sum_{k=1}^n\varphi(\tau^{-k}\seq{x})-\varphi(\tau^{-k}\seq{y})|<\oo$ for $\seq{x},\seq{y}\in\Wuloc{\seq{z}}$, it turns out that in fact $\exists\lim_n\sum_{k=1}^n \varphi(\tau^{-k}\seq{x})-\varphi(\tau^{-k}\seq{y})<\oo$ for points $\seq{x},\seq{y}$ chosen with respect to some fully supported measure in the same unstable sets.
\end{remark}

\subsection{New conditionals and the transverse measure}\label{ssec:conditionalsandtransverse}

 We now compare the unstable disintegrations with $\mu_L$, and the the transverse measure with $\mu_{L^{\dagger}}$. The method is based on what is done in \cite{EqStatesCenter} for regular Hölder potentials (see also \cite{Carrasco2023}), which has its origins in the work of  Haydn \cite{localproductstructureHaydn}, Bowen-Ruelle \cite{BowenRuelle}, and Ledrappier-Young \cite{LedYoungI}. Here, on the other hand, we are dealing with a more delicate situation, since we have to compare disintegrations corresponding to a priori different invariant measures, which typically implies that their domain of definition are mutually singular. Another difficulty comes from the fact that we want to construct the unstable disintegration everywhere by using a measure equivalent to $\mu_L$ (which is defined on a different space). In the literature the identification between $\Sigma$ and the local unstable manifold is sometimes not made explicit, which does not pose a serious problem since the Jacobian is defined everywhere. 

 For $i\in\ms{A}$ let $\mu_L^{i}=\one_{[i]}\mu_L$. Recall the definition of the transfer operator given in \Cref{ssec:transfer}.

\begin{lemma}\label{lem:Lonea}
If $a\in \ms A$, $\seq{z}\in\Sigma_0$ it holds $\Rop[L]1_{[a]}(\seq{z})=R_{a x_0}e^{\varL(a \seq{z})}$.
\end{lemma}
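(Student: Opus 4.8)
\textbf{Proof plan for Lemma \ref{lem:Lonea}.} The statement to establish is that for $a\in\cA$ and $\uz\in\Sigma_0$ one has $\Lcal_L(\one_{[a]})(\uz)=R_{ax_0}\,e^{\varL(a\uz)}$, where $a\uz$ denotes the point obtained by prepending the symbol $a$. The approach is a direct unwinding of the definition of the transfer operator $\Lcal_L$ together with the defining formula for the potential $\varL$ as a logarithmic ratio of measures of cylinders.

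First I would recall that by definition $\Lcal_L\psi(\uz)=\sum_{\tau\uy=\uz}e^{\varL(\uy)}\psi(\uy)$. The preimages of $\uz=(z_0,z_1,\dots)$ under $\tau$ are exactly the points $\uy=(b,z_0,z_1,\dots)$ with $b\in\cA$ and $R_{bz_0}=1$; there is no such preimage when $R_{bz_0}=0$. Hence only the single term $\uy=a\uz=(a,z_0,z_1,\dots)$ can contribute to $\Lcal_L(\one_{[a]})(\uz)$, and it does contribute (with value $e^{\varL(a\uz)}$) precisely when $a\uz\in\Sigma$, i.e.\@ when $R_{az_0}=1$; otherwise $a\uz\notin\Sigma$ and the sum is empty. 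This is exactly the content of the factor $R_{ax_0}$ (with the notation $z_0=x_0$), so $\Lcal_L(\one_{[a]})(\uz)=R_{az_0}\,e^{\varL(a\uz)}$, which is the claim. One should note that $\varL$ is defined on $\Sigma_0$, which by \Cref{rem:convergeeninvariante} is completely invariant, so $\uz\in\Sigma_0$ together with $R_{az_0}=1$ forces $a\uz\in\Sigma_0$ as well (it is a $\tau$-preimage of a point of $\Sigma_0$), and thus $\varL(a\uz)$ is well defined; this is the only subtlety to check.

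If one wishes to make the connection with the cylinder-ratio formula fully explicit, one can additionally observe that, by the definition $\varL^k(\ux)=\log\frac{\muL([x_0\cdots x_{k-1}])}{\muL([x_1\cdots x_{k-1}])}$ and the convergence $\varL^k\to\varL$ on $\Sigma_0$, one has for $\uy=a\uz\in\Sigma_0$ that $e^{\varL(a\uz)}=\lim_k \frac{\muL([a z_0\cdots z_{k-2}])}{\muL([z_0\cdots z_{k-2}])}$, which matches the expression used later (e.g.\@ in \Cref{lem:RPFnormalizado}). This reconciles the present lemma with the computations already carried out for $\Lcal_L(\one)$.

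The proof is entirely routine: there is no genuine obstacle, the only point requiring a line of justification being that $a\uz$ lands in the domain $\Sigma_0$ of $\varL$ when $R_{az_0}=1$, which follows from complete invariance of $\Sigma_0$. I would therefore present it as a one-paragraph verification: expand $\Lcal_L(\one_{[a]})(\uz)$, note that $\one_{[a]}(\uy)=1$ iff $\uy=a\uz$, and note that $a\uz$ is a legitimate element of $\Sigma$ (equivalently of $\Sigma_0$) iff $R_{az_0}=1$.
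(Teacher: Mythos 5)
Your proof is correct and takes the same route as the paper, which simply records the result as a direct computation: expand the definition of $\Lcal_L$, observe that the only $\tau$-preimage of $\uz$ lying in $[a]$ is $a\uz$ (which exists iff $R_{az_0}=1$), and note via the complete invariance of $\Sigma_0$ that $a\uz$ is in the domain of $\varL$. The extra care you take about $a\uz\in\Sigma_0$ is a reasonable point to spell out, but does not change the argument.
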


\begin{proof}
Direct computation.
\end{proof}

From this we deduce:

\begin{corollary}\label{cor:iteradomuLa}
It holds $\tau \mu_L^{a}=\sum_i R_{a i} e^{\varL(a\bullet)}\mu_L^{i}= \sum_{i} R_{a i}1_{[i]}e^{\varL(a\bullet)}\mu_L$.
\end{corollary}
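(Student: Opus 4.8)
The statement to be proved is Corollary~\ref{cor:iteradomuLa}, namely that $\tau\muL^a = \sum_i R_{ai}e^{\varL(a\bullet)}\muL^i = \sum_i R_{ai}\one_{[i]}e^{\varL(a\bullet)}\muL$. The strategy is to test both sides against an arbitrary $\psi\in\Lp[\oo](\muL)$ (or against continuous $\psi$, which suffices by density) and reduce everything to the transference property \eqref{eq:transferenceLcal} of the transfer operator together with Lemma~\ref{lem:Lonea} and the fact that $\Lcal_L^\ast(\muL)=\muL$ (the corollary just after Lemma~\ref{lem:RPFnormalizado}). The computation is short, so the main work is simply assembling these pieces in the right order.

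First I would unwind the definition of the push-forward: for $\psi\in\Lp[\oo](\muL)$,
\[
\int \psi \,\der(\tau\muL^a) = \int \psi\circ\tau \,\der\muL^a = \int (\psi\circ\tau)\cdot\one_{[a]}\,\der\muL.
\]
Now apply $\Lcal_L^\ast(\muL)=\muL$ to rewrite $\int(\psi\circ\tau)\one_{[a]}\der\muL = \int \Lcal_L\big((\psi\circ\tau)\cdot\one_{[a]}\big)\der\muL$, and then invoke the transference property \eqref{eq:transferenceLcal} with the roles $\phi=\psi$, and $\one_{[a]}$ in place of the other factor: $\Lcal_L\big(\one_{[a]}\cdot(\psi\circ\tau)\big) = \Lcal_L(\one_{[a]})\cdot\psi$. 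By Lemma~\ref{lem:Lonea}, $\Lcal_L(\one_{[a]})(\uz) = R_{az_0}e^{\varL(a\uz)}$ for $\muL$-a.e.\ $\uz$; writing $z_0$ as a sum over the alphabet this equals $\sum_i R_{ai}\one_{[i]}(\uz)\,e^{\varL(a\uz)}$, where $\varL(a\bullet)$ denotes $\uz\mapsto\varL(a\uz)$ (defined $\muL$-a.e.\ since $\tau^{-1}\Sigma_0\supset\Sigma_0$ up to a null set, and $\Sigma_0$ is $\tau$-invariant by Remark~\ref{rem:convergeeninvariante}). Putting this together,
\[
\int \psi\,\der(\tau\muL^a) = \int \psi\cdot\Big(\sum_i R_{ai}\one_{[i]}e^{\varL(a\bullet)}\Big)\der\muL = \sum_i R_{ai}\int \psi\cdot\one_{[i]}e^{\varL(a\bullet)}\der\muL = \sum_i R_{ai}\int\psi\,e^{\varL(a\bullet)}\der\muL^i,
\]
which is exactly the claimed identity once one recalls $\muL^i = \one_{[i]}\muL$. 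Since $\psi$ was arbitrary in $\Lp[\oo](\muL)$, the two measures coincide.

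There is no real obstacle here; the only points requiring a word of care are (i) that $\varL(a\bullet)$ is well-defined $\muL$-a.e., which follows from the invariance of $\Sigma_0$ noted in Remark~\ref{rem:convergeeninvariante} together with the Gibbs bounds of Proposition~\ref{pro:propiedadesmedidamul} guaranteeing $\varL(a\uz)$ stays in $\Lp[\oo]$; and (ii) that the transference identity \eqref{eq:transferenceLcal} is being applied in the form $\Lcal_L(\phi\cdot g\circ\tau)=\Lcal_L(\phi)\cdot g$ with $\phi=\one_{[a]}\in\Lp[\oo]$ and $g=\psi$, which is the $p=\oo$, $q=1$ (or $p=q=\oo$) case covered by the lemma. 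Thus I expect this to be essentially a two-line verification, and I would present it as such, perhaps remarking afterward that the same computation iterated gives the formula for $\tau^n\muL^a$ in terms of $S_n^\tau\varL$, which is what the later sections on the local product structure will use.
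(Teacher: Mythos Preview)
Your argument is correct and is precisely the verification the paper has in mind: the corollary is stated as an immediate deduction from Lemma~\ref{lem:Lonea}, and your use of the transference identity \eqref{eq:transferenceLcal} together with $\Lcal_L^\ast(\muL)=\muL$ is exactly how one unpacks that deduction. There is nothing to add.
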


If $\seq{x}\in \lift{\Sigma}$ we define 
\begin{align}\label{eq:etax}
\eta_{\seq{x}}=p^{u}_{\seq{x}}\mu_L^{x_0}.
\end{align}
This is a finite measure supported on $\Wuloc{\seq{x}}$, which moreover verifies:

\begin{lemma}\label{lem:muLequivu}
For every $\seq{x}\in \lift{\Sigma}_0$ the measure $\eta_{\seq{x}}$ is equivalent to $\mu^u_{\seq{x}}$. Furthermore, the Jacobian is uniformly bounded from above and below by some constant independent of $\seq{x}$.
\end{lemma}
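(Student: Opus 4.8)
Here is how I would prove \Cref{lem:muLequivu}.

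\smallskip

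\emph{Overall strategy.} The plan is to compare the two finite measures on $\Wuloc{\ux}$ on cylinders, obtain a \emph{uniform} two–sided bound on the ratio of their cylinder masses, and then upgrade this to a bound on the Radon–Nikodym derivative through a martingale argument. Fix $\ux\in\Sigmab_0$, so that by \Cref{cor:dominiodisintegracionu} the conditional $\mu^u_{\ux}$ is defined and given by \eqref{eq:medidau}, and recall $\eta_{\ux}=p^u_{\ux}\muL^{x_0}$ is a finite measure on $\Wuloc{\ux}$ of mass $\muL([x_0])>0$. The cylinders $[y_0\cdots y_k]^u_{\ux}=p^u_{\ux}([x_0y_1\cdots y_k])$ (necessarily $y_0=x_0$) form refining finite partitions generating $\BM[\Wuloc{\ux}]$, so it suffices to produce constants $0<c<C$ depending only on $L$ with $c\le \eta_{\ux}([y_0\cdots y_k]^u_{\ux})/\mu^u_{\ux}([y_0\cdots y_k]^u_{\ux})\le C$ for all such cylinders and all $\ux\in\Sigmab_0$. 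Granting this, the density $f_k$ of $\eta_{\ux}$ restricted to the $\sigma$–algebra generated by the $(k{+}1)$–cylinders, with respect to $\mu^u_{\ux}$, is a martingale taking values in $[c,C]$, hence converges $\mu^u_{\ux}$–a.e.\ and in $L^1$ to some $f\in[c,C]$; passing to the limit in $\eta_{\ux}(D)=\int_D f_k\,\der\mu^u_{\ux}$ over cylinders $D$ gives $\eta_{\ux}=f\cdot\mu^u_{\ux}$, so $\eta_{\ux}\sim\mu^u_{\ux}$ with Jacobian $f\in[c,C]$ and reciprocal Jacobian in $[C^{-1},c^{-1}]$, uniformly in $\ux$.

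\smallskip

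\emph{The estimate for $\eta_{\ux}$.} Since $[x_0y_1\cdots y_k]\subset[x_0]$ one has $\eta_{\ux}([y_0\cdots y_k]^u_{\ux})=\muL([x_0y_1\cdots y_k])$, and by the Gibbs property \eqref{eq: gibbs1fs} this is comparable, up to a uniform multiplicative constant, to $\exp\!\big(L(x_0y_1\cdots y_k)-(k{+}1)\Ptop(L)\big)$. Absorbing $L(x_0)$, the defect $\delta L(x_0,y_1\cdots y_k)$ and the single term $\Ptop(L)$ into the constants (using $\norml{L|\cA}{\oo}+\norm{\delta L}<\oo$), I get $\eta_{\ux}([y_0\cdots y_k]^u_{\ux})\asymp\exp\!\big(L(y_1\cdots y_k)-k\Ptop(L)\big)$, with constants depending only on $L$.

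\smallskip

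\emph{The estimate for $\mu^u_{\ux}$.} Here I will use the middle expression in \eqref{eq:medidau} together with $\pi^{\dagger}\wmuL=\muLd$ (\Cref{lem:eeshiftnegativo}). The projection $\pi^{\dagger}$ sends $[x_{-n}\cdots x_{-1}x_0y_1\cdots\bpto y_k]$ to the cylinder of $\Sigmad$ whose word, read forward from the zero coordinate, is $y_ky_{k-1}\cdots y_1x_0x_{-1}\cdots x_{-n}$, and it sends $[x_{-n}\cdots x_{-1}\bpto x_0]$ to the one with word $x_0x_{-1}\cdots x_{-n}$. Applying the Gibbs property of $\muLd$ — valid since $\muLd$ is built exactly as $\muL$, via \Cref{pro:propiedadesmedidamul} applied to $L^{\dagger}$ on $\Sigmad$ — to numerator and denominator, and using $L^{\dagger}(a_m\cdots a_1)=L(a_1\cdots a_m)$ together with $\Ptop(L^{\dagger})=\Ptop(L)$ (the latter because $\a\mapsto\a^{\dagger}$ is a weight–preserving bijection of $\Fix_n$), the common string $x_{-n}\cdots x_0$ cancels up to a uniformly bounded error coming from the defect $\norm{\delta L}$ and the $\widetilde{(\cdot)}$–completions, so that for every $n$
\[
\frac{\wmuL([x_{-n}\cdots x_{-1}x_0y_1\cdots\bpto y_k])}{\wmuL([x_{-n}\cdots x_{-1}\bpto x_0])}\ \asymp\ \exp\!\big(L(y_1\cdots y_k)-k\Ptop(L)\big),
\]
with constants independent of $n$, $\ux$, $(y_i)$ and $k$. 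Letting $n\to\oo$ in \eqref{eq:medidau} gives $\mu^u_{\ux}([y_0\cdots y_k]^u_{\ux})\asymp\exp\!\big(L(y_1\cdots y_k)-k\Ptop(L)\big)$. Comparing with the previous paragraph yields the required uniform two–sided bound on the cylinder ratio, which finishes the proof. I expect the only delicate point to be precisely this last bookkeeping: tracking the index shift produced by $\pi^{\dagger}$, the reversal relating $L$ and $L^{\dagger}$, and the $\widetilde{(\cdot)}$–completions, so that the $x_{-n}\cdots x_0$ contributions genuinely cancel up to a constant depending only on $L$, uniformly in $n$ and $\ux$.
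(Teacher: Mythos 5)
Your proof is correct, and the overall architecture (two--sided cylinder bound, then martingale upgrade to a bounded Radon--Nikodym derivative) is the standard and appropriate one. The one place where you diverge from the paper's (one--line) argument is the estimate for $\mu^u_{\ux}$: the paper intends you to stay with the first expression in \eqref{eq:medidau}, note that by shift invariance of $\wmuL$ and marginalization to non-negative coordinates one has
\[
\wmuL\big([x_{-n}\cdots x_{-1}\bpto x_0 y_1\cdots y_{k}]\big)=\muL\big([x_{-n}\cdots x_{-1}x_0 y_1\cdots y_{k}]\big),\qquad \wmuL\big([x_{-n}\cdots x_{-1}\bpto x_0]\big)=\muL\big([x_{-n}\cdots x_{-1}x_0]\big),
\]
and then apply the Gibbs property \eqref{eq: gibbs1fs} of $\muL$ directly to both, cancelling the common prefix $x_{-n}\cdots x_0$ via the quasi--morphism property of $L$ (with the $\widetilde{(\cdot)}$--completions contributing only uniformly bounded errors). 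You instead push forward by $\pi^{\dagger}$, invoke the Gibbs property of $\muLd$, and use the reversal identity $L^{\dagger}(\a^{\dagger})=L(\a)$ together with $\Ptop(L^{\dagger})=\Ptop(L)$. This is logically correct and lands at the same estimate $\mu^u_{\ux}([y_0\cdots y_k]^u_{\ux})\asymp\exp(L(y_1\cdots y_k)-k\Ptop(L))$, but it introduces an extra layer (the $\dagger$--side Gibbs measure, the reversal bookkeeping, and the equality of pressures) that the direct route avoids; the paper's version keeps everything on the $\Sigma$--side and requires only the single Gibbs estimate you already use for $\eta_{\ux}$. Either way, the delicate point you flag — uniformity of the prefix cancellation in $n$, $\ux$ — is real, and your identification of it as the crux is apt.
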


\begin{proof}
This follows from \eqref{eq:medidau} and the Gibbs property of $\mu_L$.
\end{proof}

Note that $\seq{x}^{-}=\seq{y}^{-}\Rightarrow \eta_{\seq x}=\eta_{\seq y}$. Given $\seq{x}\in \lift{\Sigma}$ for each symbol $i$ with $R_{x_0 i}=1$ we choose $\seq{x}_{(i)}\in \tau(\Wuloc{\seq{x}})\cap [i]$.

\begin{lemma}\label{lem:invariancemula}
For every $\seq{x}\in \lift{\Sigma}$ it holds $\tau \eta_{\seq{x}}=\sum_{i}R_{x_0 i}e^{\lift{\varL}\circ \tau^{-1}}\eta_{\seq{x}_{(i)}}=\sum_{i}R_{x_0 i}e^{\lift{\varL}\circ \tau^{-1}}\eta_{\seq{x}^{-}_{(i)}}$.
\end{lemma}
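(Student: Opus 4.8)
The statement to prove is that for every $\ux\in\Sigmab$,
\[
\tau\,\eta_{\ux}=\sum_{i}R_{x_0 i}\,e^{\wvarL\circ\tau^{-1}}\,\eta_{\ux^{(i)}}=\sum_{i}R_{x_0 i}\,e^{\wvarL\circ\tau^{-1}}\,\eta_{\ux^{-}i},
\]
where $\eta_{\ux}=p^{u}_{\ux}\muL^{x_0}$ and $\muL^{a}=\one_{[a]}\muL$. The plan is to push this identity down to the one-sided shift $\Sigma$ via the parametrizations $p^{u}_{\ux}$ and then invoke the already established \Cref{cor:iteradomuLa}, which reads $\tau\muL^{a}=\sum_{i}R_{ai}\one_{[i]}e^{\varL(a\bullet)}\muL$.

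First I would fix $\ux$, set $a=x_0$, and unravel what $p^{u}_{\ux}$ does: recall $p^{u}_{\ux}:\Sigma_{x_0}\to\Wuloc{\ux}$, $p^{u}_{\ux}(\uz)=\prodi{\ux^-}{\uz}$, which is a homeomorphism onto the local unstable set. The shift $\tau$ maps $\Wuloc{\ux}$ into $\tau(\Wuloc{\ux})$, and under the identification it corresponds on the $\Sigma$-side to the one-sided shift $\tau$ restricted to $[a]$: concretely, for $\uz\in\Sigma_{a}$ one has $\tau\big(p^{u}_{\ux}(\uz)\big)=p^{u}_{\tau\ux}(\tau\uz)$, and $\tau\uz$ ranges over $\bigcup_{i:R_{ai}=1}\Sigma_i$ as $\uz$ ranges over $\Sigma_a$. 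The key bookkeeping point is that $\tau\ux$ and the chosen representatives $\ux^{(i)}\in\tau(\Wuloc{\ux})\cap[i]$ have the same negative part, namely $(\ux^-)i$ after prepending $a=x_0$; since $\eta$ depends only on the negative coordinates ($\eta_{\ux}=\eta_{\ux^-}$), we get $\eta_{\ux^{(i)}}=\eta_{\ux^- i}=p^{u}_{\ux^{(i)}}\muL^{i}$, which justifies the second equality in the statement once the first is proved.

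Next I would transport \Cref{cor:iteradomuLa} through $p^{u}$. Push $\muL^{a}$ forward by $p^{u}_{\ux}$ to get $\eta_{\ux}$; then $\tau\eta_{\ux}=\tau_{*}(p^{u}_{\ux})_{*}\muL^{a}=(p^{u}_{\tau\ux})_{*}(\tau_{*}\muL^{a})$ using the conjugacy $\tau\circ p^{u}_{\ux}=p^{u}_{\tau\ux}\circ\tau$ on $\Sigma_a$. By \Cref{cor:iteradomuLa}, $\tau_{*}\muL^{a}=\sum_{i}R_{ai}\one_{[i]}e^{\varL(a\bullet)}\muL$, where $\varL(a\bullet)$ denotes the function $\uw\mapsto\varL(a\uw)$ on $[i]\subset\Sigma$. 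Pushing each summand forward by $p^{u}_{\tau\ux}$ and restricting to the appropriate local unstable piece gives $\sum_i R_{ai}$ times $e^{\varL(a\bullet)}\circ(p^{u}_{\tau\ux})^{-1}$ against $p^{u}_{\tau\ux}\,\muL^{i}=\eta_{\ux^{(i)}}$ (up to identifying the relevant unstable leaves, which agree since the negative coordinates match). The final task is to recognize the density $e^{\varL(a\bullet)}\circ(p^{u}_{\tau\ux})^{-1}$ as $e^{\wvarL\circ\tau^{-1}}$ evaluated along $\tau(\Wuloc{\ux})$: indeed, for $\uy=p^{u}_{\tau\ux}(\uw)\in[i]\cap\tau(\Wuloc{\ux})$ we have $\tau^{-1}\uy\in\Wuloc{\ux}$ with positive part $a\uw$, so $\wvarL(\tau^{-1}\uy)=\varL\big((\tau^{-1}\uy)^+\big)=\varL(a\uw)$ because $\wvarL$ depends only on the positive coordinates (this is exactly the point recorded before \Cref{lem:eeshiftpositivo}). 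Assembling these identifications yields the claimed formula.

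The routine parts are the measure-theoretic pushforward manipulations and the change-of-variables under the homeomorphisms $p^{u}$. The one step demanding genuine care — and the main obstacle — is the consistent handling of the base points: matching the chosen representatives $\ux^{(i)}$, the point $\tau\ux$, and the leaves $\tau(\Wuloc{\ux})$ versus $\Wuloc{\ux^{(i)}}$, so that all the pushforward measures $\eta$ live on the correct local unstable sets and the exponent $\varL(a\bullet)$ matches $\wvarL\circ\tau^{-1}$ pointwise; here the fact that both $\eta$ and $\wvarL$ depend only on the negative (resp. positive) coordinates is what makes everything line up cleanly.
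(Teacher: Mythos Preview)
Your proposal is correct and follows essentially the same route as the paper: both arguments push the identity down to $\Sigma$ via the parametrizations $p^u$, invoke \Cref{cor:iteradomuLa}, and then identify the density $e^{\varL(x_0\bullet)}\circ(p^u)^{-1}$ with $e^{\wvarL\circ\tau^{-1}}$. The only cosmetic difference is that the paper writes the conjugacy directly in piecewise form as $\tau\circ p^{u}_{\ux}=\sum_{i}\one_{[x_0 i]}\cdot p_{\ux^{(i)}}^u\circ\tau$, whereas you first write $\tau\circ p^u_{\ux}=p^u_{\tau\ux}\circ\tau$ and then correct the base point from $\tau\ux$ to $\ux^{(i)}$ on each piece using that $\eta$ depends only on the negative coordinates; this is the same identification, just performed in two steps rather than one.
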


\begin{proof}
It is direct to check that $\tau\circ p^{u}_{\seq{x}}=\sum_{i} 1_{[x_0 i]}\cdot p_{\seq{x}_{(i)}}^u\circ \tau$, while
\[
  p_{\seq{x}_{(i)}}^u\left(\one_{[i]}e^{\varL(x_0\bullet)}\mu_L\right)=e^{\varL(x_0\bullet)\circ (p_{\seq{x}_{(i)}}^u)^{-1}}\eta_{\seq{x}_{(i)}}=e^{\lift{\varL}\circ \tau^{-1}}\eta_{\seq{x}_{(i)}}.
\] 
Hence, by \Cref{cor:iteradomuLa},
\begin{align*}
\tau\eta_{\seq{x}}=(\sum_{i} \one_{[x_0 i]}\cdot p_{\seq{x}_{(i)}}^u\circ \tau)\mu_L^{x_0}=\sum_{i} R_{x_0i}e^{\lift{\varL}\circ \tau^{-1}}\eta_{\seq{x}_{(i)}}.
\end{align*}
\end{proof}

For $\seq{x}\in \lift{\Sigma}_1$ the function $\Wuloc{\seq{x}}\cap\lift{\Sigma}_2\ni \seq{z}\to e^{-u(\seq{z})}$ is in $\Lp{\oo}{\eta_{\seq{x}}}$, therefore 
\begin{align}
&\dd \nu_{\seq{x}}:=e^{-u}\dd \eta_{\seq{x}}\\
&\dd \hat{\nu}_{\seq{x}}:=e^{u(\seq{x})-u}\dd \eta_{\seq{x}}=e^{u(\seq{x})}\dd\nu_{\seq{x}}
\end{align}
are well defined finite measures on $\Wuloc{\seq{x}}$. Our next goal is proving that $\{\nu_{\seq{x}}:\seq{x}\in \lift{\Sigma}_2\}$ verifies
\[
  \mu_{\seq{x}}^u=\frac{\nu_{\seq{x}}}{\nu_{\seq{x}}(\Wuloc{\seq{x}})}=\frac{\hat{\nu}_{\seq{x}}}{\hat{\nu}_{\seq{x}}(\Wuloc{\seq{x}})}.
\]

We start analyzing the behavior of $\{\nu_{\seq{x}}:\seq{x}\in \lift{\Sigma}_2\}, \{\hat{\nu}_{\seq{x}}:\seq{x}\in \lift{\Sigma}_2\}$ under iteration.

\begin{lemma}\label{lem:quasiinvariancenux}
 For every $\seq{x}\in \lift{\Sigma}_1$ it holds 
 \begin{align*}
 \tau^{-1}\nu_{\tau\seq{x}}=1_{[x_0x_1]}\exp(-\lift{\varLd}(\seq{x}))\nu_{\seq{x}}
 \end{align*}
 As a consequence,
\begin{align*}
 \nu_{\seq{x}}=e^{\lift{\varLd}(\seq{x})}\tau^{-1}\left(\sum_{i} R_{x_0i}\nu_{\seq{x}_{(i)}}\right)\\
\hat{\nu}_{\seq{x}}=e^{\lift{\varL}(\seq{x})}\tau^{-1}\left(\sum_{i} R_{x_0i}\hat{\nu}_{\seq{x}_{(i)}}\right).\\
 \end{align*}
 \end{lemma}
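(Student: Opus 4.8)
The plan is to derive the first displayed identity by feeding the cohomological relation $\wvarL=\wvarLd+u-u\circ\tau$ of \Cref{cor:liftpotencialessoncoh} into the exact transformation rule for the family $(\eta_{\ux})$ from \Cref{lem:invariancemula}, and then to obtain the two ``consequences'' by summing over the admissible first symbols. First I would isolate in the sum of \Cref{lem:invariancemula} the single branch $i=x_1$: the branches with $i\neq x_1$ are supported on local unstable sets whose $0$-th coordinate is $i\neq(\tau\ux)_0$, hence disjoint from $\Wuloc{\tau\ux}$, while the branch $i=x_1$ is supported on $\Wuloc{\ux^{(x_1)}}=\Wuloc{\tau\ux}$ and, since $\eta$ depends only on the past, equals $e^{\wvarL\circ\tau^{-1}}\eta_{\tau\ux}$. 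Thus $(\tau\eta_{\ux})|_{\Wuloc{\tau\ux}}=e^{\wvarL\circ\tau^{-1}}\eta_{\tau\ux}$. As $\tau$ is a bijection of $\Sigmab$, applying $\tau^{-1}$, using the elementary identities $\tau^{-1}(g\cdot m)=(g\circ\tau)\cdot\tau^{-1}m$ and $\tau^{-1}\bigl((\tau\eta_{\ux})|_{A}\bigr)=\eta_{\ux}|_{\tau^{-1}A}$, together with $\tau^{-1}\Wuloc{\tau\ux}=\Wuloc{\ux}\cap[x_0x_1]$, yields
\[
\tau^{-1}\eta_{\tau\ux}=\one_{[x_0x_1]}\,e^{-\wvarL}\,\eta_{\ux}.
\]

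Next I would insert the weight $e^{-u}$. From $\der\nu_{\ux}=e^{-u}\der\eta_{\ux}$ and the same push/pull identity,
\[
\tau^{-1}\nu_{\tau\ux}=(e^{-u}\circ\tau)\,\tau^{-1}\eta_{\tau\ux}=\one_{[x_0x_1]}\,e^{-u\circ\tau}\,e^{-\wvarL}\,\eta_{\ux}.
\]
Now \Cref{cor:liftpotencialessoncoh} gives $e^{-u\circ\tau}e^{-\wvarL}=e^{-u}e^{-\wvarLd}$ on the invariant full--$\wmuL$--measure set where $u$ is defined, and since $\wvarLd(\cdot)=\varLd((\cdot)^{-})$ is constant along $\Wuloc{\ux}$ with value $\wvarLd(\ux)$, the right--hand side becomes $\one_{[x_0x_1]}e^{-\wvarLd(\ux)}e^{-u}\eta_{\ux}=\one_{[x_0x_1]}e^{-\wvarLd(\ux)}\nu_{\ux}$, which is the first assertion. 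The step I expect to be the main obstacle is the bookkeeping of domains: one must check that all of this is carried out on $\Wuloc{\ux}$ intersected with an invariant full--measure set on which $u$ and the cohomology identity are available, and that this set has full $\eta_{\ux}$--mass --- which holds by the standing construction that makes $e^{-u}\in\Lp[\oo](\eta_{\ux})$ for $\ux\in\Sigmab_1$.

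Finally, for the two ``consequences'' I would rewrite the first identity as $\one_{[x_0x_1]}\nu_{\ux}=e^{\wvarLd(\ux)}\tau^{-1}\nu_{\tau\ux}$ and sum over the admissible first symbols, using the partition $\Wuloc{\ux}=\bigsqcup_{i:\,R_{x_0i}=1}\bigl(\Wuloc{\ux}\cap[x_0i]\bigr)$ and the fact that both $\nu_{\ux}$ and $\wvarLd(\ux)$ depend only on $\ux^{-}$; applying the identity at a point with past $\ux^{-}$ and first symbol $i$ (whose forward image has past $\ux^{-}i$) and interchanging $\tau^{-1}$ with the finite sum of the pairwise singular measures $\nu_{\ux^{-}i}$ gives $\nu_{\ux}=e^{\wvarLd(\ux)}\tau^{-1}\bigl(\sum_{i}R_{x_0i}\nu_{\ux^{-}i}\bigr)$. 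The $\hnu$--version then follows by substituting $\der\hnu=e^{u(\cdot)}\der\nu$ at the base points involved and invoking the cohomological identity once more along the corresponding unstable branch to convert the prefactor $e^{u(\ux)}e^{\wvarLd(\ux)}$ into $e^{\wvarL(\ux)}$; this is a routine computation of exactly the same flavour as the first step.
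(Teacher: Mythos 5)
Your proof is correct and follows essentially the same route as the paper's: isolate the $i=x_1$ branch of \Cref{lem:invariancemula}, push forward by $\tau^{-1}$, insert the weight $e^{-u}$, invoke \Cref{cor:liftpotencialessoncoh} to convert $e^{-u\circ\tau-\wvarL}$ into $e^{-u-\wvarLd}$, and use that $\wvarLd$ is constant along $\Wuloc{\ux}$. Incidentally, your version is cleaner than the paper's: the paper's proof as printed carries a pair of cancelling sign errors (it quotes \Cref{lem:invariancemula} with $e^{-\wvarL\circ\tau^{-1}}$ instead of $e^{\wvarL\circ\tau^{-1}}$, and then again flips sign when applying $\tau^{-1}$), whereas you track the signs correctly at every step and also make explicit the domain bookkeeping on $\Sigmab_1$ that the paper leaves to the standing convention.
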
 

 \begin{proof}
 Write $\seq{w}=\tau\seq{x}$. By \Cref{lem:invariancemula} we have $\tau\left(1_{[x_0 i]}\eta_{\seq{x}}\right)=e^{-\lift{\varL}\circ \tau^{-1}}\eta_{\seq{x}_{(i)}}$, hence in particular $\tau^{-1}\eta_{\seq{w}}=1_{[x_0x_1]}e^{-\lift{\varL}}\eta_{\seq{x}}$. Therefore 
\begin{align*}
 \tau^{-1}\nu_{\seq w}&=\tau^{-1}\left(e^{-u}\eta_{\seq{w}}\right)=1_{[x_0 x_1]}e^{-u\circ \tau-\lift{\varL}}\eta_{\seq{x}}=1_{[x_0 x_1]}e^{-u-\lift{\varLd}}\eta_{\seq{x}}=1_{[x_0 x_1]}e^{-\lift{\varLd}(\seq{x})}\nu_{\seq{x}}.
\end{align*}
\end{proof}

The above lemma allows us to extend each $\nu_{\seq{x}}$ (for $\seq{x}\in\lift{\Sigma}_1$) to a Radon measure on $\Wu{x}$ with some quasi-invariance property.

\begin{lemma}\label{lem:medidanuxglobal}
 There exists a family of measures $\left\{\nu_{\seq{x}}\right\}_{\seq{x}\in\lift{\Sigma}_1}$, where $\nu_{\seq{x}}$ is a Radon measure on $\Wu{\seq{x}}$ satisfying: 
\begin{enumerate}
   \item for every $n\in\N$, $\tau^{-n}\nu_{\tau^n \seq{x}}=e^{-S_n\lift{\varLd}(\seq{x})}\nu_{\seq{x}}$.
   \item $\seq{x}, \seq{z}\in\lift{\Sigma}_1, \seq{z}\in\Wu{\seq{x}}$ implies $\nu_{\seq{x}}=\lim_{n}e^{S_n\lift{\varLd}(\tau^{-n}\seq{x})-S_n\lift{\varLd}(\tau^{-n}\seq{z})}\nu_{\seq{z}}$.
\end{enumerate}
\end{lemma}

\begin{proof}
Fix $\seq{x}\in\lift{\Sigma}_1$, and for $n\geq 0$ write $\seq{x}^{(n)}=\tau^{-n}\seq{x}$ and $\nu^{(n)}_{\seq{x}}\defeq e^{-S_n\lift{\varLd}(\seq{x}^{(n)})}\tau^n \nu_{\seq{x}^{(n)}}$. It follows from the previous Lemma that this family of measures satisfies $\nu^{(n)}_{\seq{x}}=\nu^{(n-1)}_{\seq{x}}$ on $\tau^{n-1}\Wuloc{\seq{x}^{(n-1)}}$, for every $n\geq 1$, and hence $\lim_n\ \nu^{(n)}_{\seq{x}}$ is a well defined Radon measure on $\bigcup_{n\geq 0}\tau^n\Wuloc{\seq{x}^{(n)}}=\Wu{\seq{x}}$, which coincides with $\nu_{\seq{x}}$ on $\Wuloc{\seq{x}}$. From its definition, it is clear that $\nu_{\tau\seq{x}}=e^{-\lift{\varLd}(\seq{x})}\tau\nu_{\seq{x}}$. The last part follows by noticing that if $\seq{z}\in\Wu{\seq{z}}$, there exists some $k\geq 0$ so that $\nu_{\seq{x}^{(k)}}=\nu_{\seq{z}^{(k)}}$, and then using the quasi-invariance.
\end{proof}

We obtain a similar statement for the measures $\hat{\nu}_{\seq{x}}$. We remark the dependence on the base point of these measures, even for points in the same local unstable manifold. 

\begin{lemma}\label{lem:medidahnuxglobal}
There exists a family of measures $\left\{\hat{\nu}_{\seq{x}}\right\}_{\seq{x}\in\lift{\Sigma}_1}$, where $\hat{\nu}_{\seq{x}}$ is a Radon measure on $\Wu{\seq{x}}$ satisfying: 
\begin{enumerate}
  \item for every $n\in\N$, $\tau^{-n}\nu_{\tau^n \seq{x}}=e^{-S_n\varLd(\seq{x})}\hat{\nu}_{\seq{x}}$.
  \item $\seq{x}, \seq{z}\in\lift{\Sigma}_1, \seq{z}\in\Wu{\seq{x}}$ implies $\hat{\nu}_{\seq{x}}=e^{u(\seq{x})-u(\seq{z})}\lim_{n}e^{S_n\lift{\varLd}(\tau^{-n}\seq{x})-S_n\lift{\varLd}(\tau^{-n}\seq{z})}\hat{\nu}_{\seq{z}}$. 
\end{enumerate}
\end{lemma}

\begin{proof}
Use the previous Lemma and define $\hat{\nu}_{\seq{x}}$ globally as $e^{u(\seq{x})}\nu_{\seq{x}}$. Since $\lift{\varLd}(\seq{x})=\lift{\varL}(\seq{x})-u(\seq{x})+u(\tau \seq{x})$, the first equality follows. The others are direct from this. 
\end{proof}

Now we understand the quasi-invariance properties of these measures, and we can compute how they transform under holonomy.

\begin{corollary}\label{cor:holonomynux}
Suppose that $\seq{x},\seq{y}\in\lift{\Sigma}_1$. Then for every $\seq{z}\in\lift{\Sigma}_2\cap\Wuloc{\seq{x}}\cap \hs{\seq{y},\seq{x}}\left(\lift{\Sigma}_2\cap \Wuloc{\seq{y}}\right)$ it holds 
\[
  \frac{\dd  \hs{\seq{y},\seq{x}}\nu_{\seq{y}}}{\dd \nu_{\seq{x}}}(\seq{z})=e^{u(\seq{z})-u(\hs{\seq{x},\seq{y}}\seq{z})}.
\]
\end{corollary}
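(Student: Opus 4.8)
The plan is to bypass the conditional measures $\mu^u_{\ux}$ and argue with the auxiliary measures $\eta_{\ux}=p^u_{\ux}\muL^{x_0}$, exploiting the fact that the latter are \emph{invariant} under stable holonomy while the former are not. First I would observe that, unlike $\mu^u_{\ux}$, which by \eqref{eq:medidau} depends on the whole past $\ux^-$, the measure $\eta_{\ux}$ depends on $\ux$ only through the symbol $x_0$: for every admissible cylinder one has $\eta_{\ux}([x_0z_1\cdots z_k]^u_{\ux})=\muL([x_0z_1\cdots z_k])$, an expression making no reference to $\ux^-$. Since $x_0=y_0$ is forced by the definition of stable holonomy, the homeomorphism $\hs_{\uy,\ux}\colon\Wuloc{\uy}\to\Wuloc{\ux}$ preserves all positive coordinates and therefore carries $[y_0z_1\cdots z_k]^u_{\uy}$ onto $[x_0z_1\cdots z_k]^u_{\ux}$; comparing the two measures on this generating algebra of cylinders yields $(\hs_{\uy,\ux})_{\#}\eta_{\uy}=\eta_{\ux}$ as Borel measures on $\Wuloc{\ux}$. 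This short computation is the heart of the matter.

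Next I would transport the density $e^{-u}$ through this identity. Recall that $\der\nu_{\ux}=e^{-u}\der\eta_{\ux}$ on the full $\eta_{\ux}$-measure set $\Wuloc{\ux}\cap\Sigmab_2$, on which $e^{-u}$ is bounded (as noted before \Cref{lem:medidanuxglobal}), and similarly for $\nu_{\uy}$; moreover $\hs_{\ux,\uy}=(\hs_{\uy,\ux})^{-1}$. Applying the elementary change-of-variables rule $T_{\#}(f\mu)=(f\circ T^{-1})\,T_{\#}\mu$ with $T=\hs_{\uy,\ux}$, $f=e^{-u}$, $\mu=\eta_{\uy}$, and then substituting the identity from the previous step together with $\eta_{\ux}=e^{u}\nu_{\ux}$, one gets
\[
(\hs_{\uy,\ux})_{\#}\nu_{\uy}=\bigl(e^{-u}\circ\hs_{\ux,\uy}\bigr)\,(\hs_{\uy,\ux})_{\#}\eta_{\uy}=e^{-u\circ\hs_{\ux,\uy}}\,\eta_{\ux}=e^{\,u-\,u\circ\hs_{\ux,\uy}}\,\nu_{\ux},
\]
which is exactly the claimed formula $\dfrac{\der\ \hs_{\uy,\ux}\nu_{\uy}}{\der\nu_{\ux}}(\uz)=e^{u(\uz)-u(\hs_{\ux,\uy}\uz)}$. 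It is valid precisely on $\Sigmab_2\cap\Wuloc{\ux}\cap\hs_{\uy,\ux}(\Sigmab_2\cap\Wuloc{\uy})$, the set on which every function appearing is simultaneously defined and on which both measures are mutually absolutely continuous (having strictly positive bounded densities with respect to $\eta_{\ux}$, by \Cref{lem:muLequivu}).

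The main obstacle is not analytic: the weak Bowen property of $\wvarL,\wvarLd$ and the $\mu^s_{\ux}$-a.e.\ convergence of the unstable Jacobian (\Cref{cor:jacobianoinestablefull}) have already been absorbed into the construction of $u$ and of the family $\{\nu_{\ux}\}$. The point that will require genuine care is the bookkeeping of domains: one must stay inside the $\xi^u$-saturated, $\xi^s$-compatible full-measure set $\Sigmab_2$ so that $u$, $u\circ\hs_{\ux,\uy}$ and $u\circ\hs_{\uy,\ux}$ are all defined $\eta$-almost everywhere on the relevant local unstable sets, and one must check that $\hs_{\uy,\ux}$ restricts to a Borel isomorphism between the corresponding full-measure subsets of $\Wuloc{\uy}$ and $\Wuloc{\ux}$ — which is immediate since it is a homeomorphism of the ambient local unstable sets once the null sets are pushed forward correctly. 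Beyond this, the corollary is just the substitution displayed above.
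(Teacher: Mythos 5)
Your proof is correct, and it takes a genuinely different and simpler route than the paper. The pivot of your argument — that $\hs_{\uy,\ux}$ carries $\eta_{\uy}$ exactly onto $\eta_{\ux}$ — can be stated even more tightly: from the definitions one has $\hs_{\uy,\ux}=p^{u}_{\ux}\circ (p^{u}_{\uy})^{-1}$, whence $(\hs_{\uy,\ux})_{\#}\eta_{\uy}=(p^u_{\ux})_{\#}\muL^{y_0}=(p^u_{\ux})_{\#}\muL^{x_0}=\eta_{\ux}$, an algebraic identity with no measure-theoretic content. Combined with $\nu_{\ux}=e^{-u}\eta_{\ux}$ and the transport rule for densities, the claimed Radon--Nikodym formula drops out immediately, and the domain bookkeeping you describe (staying in $\Sigmab_2$ on both sides so that $u$ and $u\circ\hs_{\ux,\uy}$ are defined) is all that remains. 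The paper instead unpacks the quasi-invariance $\nu_{\ux}([x_0\cdots x_n]^u_{\ux})=e^{S_n\wvarLd(\ux)}\nu_{\tau^n\ux}([x_n]^u_{\tau^n\ux})$, inserts the cohomology identity $S_n\wvarLd(\uy)-S_n\wvarLd(\ux)=u(\tau^n\uy)-u(\tau^n\ux)-(u(\uy)-u(\ux))$, and then appeals to \Cref{lem:vzero} (itself an Egoroff/Lusin argument) to kill the residual factor $e^{u(\tau^n\uy)-u(\tau^n\ux)}\cdot\nu_{\tau^n\uy}([\cdot])/\nu_{\tau^n\ux}([\cdot])$ in the limit, finishing with the Lebesgue differentiation theorem on shrinking cylinders. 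Your approach avoids \Cref{lem:vzero} entirely and is both shorter and sharper, giving the identity of measures $(\hs_{\uy,\ux})_{\#}\nu_{\uy}=e^{u-u\circ\hs_{\ux,\uy}}\nu_{\ux}$ directly rather than only the a.e.\ derivative; the paper's route, while heavier, is written to mirror the parallel quasi-invariance computations in \Cref{pro:quasiinvarianciatransversal} and \Cref{cor:jacobianoinestablefull}, which is the only thing it buys that yours does not.
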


\begin{proof}
It is no loss of generality to suppose $\seq{x}\in\lift{\Sigma}_2, \seq{y}=\hs{\seq{x},\seq{y}}(\seq{x})\in\lift{\Sigma}_2$. Observe that 
\[
 \hs{\seq{y},\seq{x}}\nu_{\seq{y}}([x_0\ldots x_n]_{\seq{x}}^u)=\nu_{\seq{y}}([x_0\ldots x_n]_{\seq{y}}^u)=e^{S_n\lift{\varLd}(\seq{y})}\nu_{\tau^n \seq{y}}([x_n]^u_{\tau^n \seq{y}}), 
\]
and likewise $\nu_{\seq{x}}([x_0\ldots x_n]_{\seq{x}}^u)=e^{S_n\lift{\varLd}(\seq{x})}\nu_{\tau^n \seq{x}}([x_n]_{\tau^n\seq{x}}^u)$. Since
\[
  S_n\lift{\varLd}(\seq{y})-S_n\lift{\varLd}(\seq{x})=u(\tau^n\seq{y})-u(\tau^n\seq{x})-(u(\seq{y})-u(\seq{x})),
\]
we get 
\[
  \frac{\hs{\seq{y},\seq{x}}\nu_{\seq{y}}([x_0\ldots x_n]_{\seq{x}}^u)}{\nu_{\seq{x}}([x_0\ldots x_n]_{\seq{x}}^u)}=e^{u(\seq{x})-u(\seq{y})}\left(e^{u(\tau^n\seq{y})-u(\tau^n\seq{x})}\frac{\nu_{\tau^n \seq{x}}([x_n]_{\tau^n\seq{y}}^u)}{\nu_{\tau^n \seq{x}}([x_n]_{\tau^n\seq{x}}^u)}\right).
\]
The term in parentheses can be seen to converge to one: indeed
\begin{itemize}
  \item $\lim_{n\to\oo} u(\tau^n\seq{y})-u(\tau^n\seq{x})=0$ due to \Cref{lem:vzero}, as we are assuming that $\seq{x},\seq{y}\in\lift{\Sigma}_2$, and
  \item $\lim_{n\to\oo} \frac{\nu_{\seq{x}}([x_0\ldots x_n]_{\seq{x}}^u)}{\nu_{\seq{x}}([x_0\ldots x_n]_{\seq{y}}^u)}=1$, again by the same Lemma and arguing as in its proof (use Egoroff's theorem with respect to the reference measure $\mu_L$).
\end{itemize}
By to the Lebesgue-Radon-Nikodym theorem we conclude the claim. 
\end{proof}

\paragraph{The transverse measure} It is time to deal with the transverse measure. For $a\in\ms A$ let $\mu_{L^{\dagger}}^{a}=\one_{[a]}\mu_{L^{\dagger}}$, and if $\seq{x}\in \lift{\Sigma}$ consider the measures
 \begin{align*}
 &\zeta_{\seq{x}}=p^{s}_{\seq{x}}\mu_{L^{\dagger}}^{x_0}\\
 &\theta_{\seq{x}}:=\sum_{j} R_{j x_0}\tau \zeta_{\seq{x}^{j}}
\end{align*}
 where $\tau\seq{x}^{(j)}\in [jx_0]_{\seq{x}}^s$. It is easy to see that $\zeta_{\seq{x}}$ (which is completely analogous to $\eta_{\seq{x}}$) gives the transverse measure of the unstable disintegration on the rectangle that contains $\seq{x}$. Correspondingly, the definition of $\theta_{\seq{x}}$ is made in order to compatibilize the dynamics of $\tau$ and $\tau^{-1}$, in particular, to guarantee the following lemma. 

\begin{lemma}\label{lem:iteradotheta}
For every $\seq{x}\in \lift{\Sigma}$ it holds $1_{[x_0x_1]}\tau^{-1}\theta^{\tau \seq{x}}=e^{\lift{\varLd}}\theta_{\seq{x}}$.
\end{lemma}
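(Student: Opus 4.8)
\textbf{Proof plan for \Cref{lem:iteradotheta}.}

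The plan is to proceed exactly as in the proof of \Cref{lem:invariancemula}, transporting the computation there to the ``dual'' picture governed by $\tau^{-1}$ and $\varLd$ in place of $\tau$ and $\wvarL$. First I would unwind the definitions: for a fixed $\ux\in\Sigmab$ the measure $\zeta_{\ux}=p^s_{\ux}\muLd^{x_0}$ lives on $\Wsloc{\ux}$, and $\theta_{\ux}=\sum_j R_{jx_0}\tau\zeta_{\ux^{(j)}}$ is assembled so that it lives on $\bigcup_j[jx_0]^s_{\ux}\subset\Wsloc{\ux}$; the point of the extra $\tau$ and of indexing by the symbols $j$ preceding $x_0$ is precisely to make $\theta_{\ux}$ compatible with forward iteration, mirroring how $\eta_{\ux}$ was re-expressed via the points $\ux^{(i)}$ in \Cref{lem:invariancemula}. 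The key algebraic input is the dual transfer identity: applying $\dagger$ to \Cref{lem:Lonea}/\Cref{cor:iteradomuLa}, one has $\tau^{-1}\muLd^{a}=\sum_i R_{ia}\one_{[i]}e^{\varLd(\bullet a)}\muLd$ (equivalently $\Lcal_{L^\dagger}(\one_{[a]})(\uz)=R_{az_0}e^{\varLd(a\uz)}$ read on $\Sigmad$), which is the reverse-time analogue of the cocycle relation used before.

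Next I would compute $\one_{[x_0x_1]}\tau^{-1}\theta_{\tau\ux}$ by pushing this identity through the parametrizing homeomorphisms $p^s_{\cdot}$. Writing $\uw=\tau\ux$, one has $\theta_{\uw}=\sum_j R_{jw_0}\tau\zeta_{\uw^{(j)}}$; restricting to the cylinder $[x_0x_1]$ on $\Sigmab$ kills all but the term with $j=x_0$ (since $w_0=x_1$, the surviving piece comes from the unique way to prepend $x_0$), and pulling back by $\tau^{-1}$ turns $\tau\zeta_{\uw^{(x_0)}}$ into $\zeta$ along $\Wsloc{\ux}$. The factor $e^{\wvarLd}$ appears exactly as the Radon--Nikodym cocycle of $\tau^{-1}$ against $\muLd$, using $\tau^{-1}\circ p^s_{\uw}=\sum_i\one_{[\,\cdot\,]}p^s_{\ux^{(i)}}\circ\tau^{-1}$ together with $p^s_{\ux^{(i)}}\!\big(\one_{[i]}e^{\varLd(\bullet x_0)}\muLd\big)=e^{\wvarLd\circ\tau^{-1}}$-type bookkeeping, which collapses after one reindexing to $e^{\wvarLd}\theta_{\ux}$. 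Concretely the computation is the $\tau^{-1}$-transpose of
\[
\tau\eta_{\ux}=\Big(\sum_i\one_{[x_0i]}\cdot p^u_{\ux^{(i)}}\circ\tau\Big)\muL^{x_0}=\sum_i e^{\wvarL\circ\tau^{-1}}\eta_{\ux^{(i)}},
\]
so I would simply record the analogue line by line and read off $\one_{[x_0x_1]}\tau^{-1}\theta_{\tau\ux}=e^{\wvarLd}\theta_{\ux}$.

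I expect no genuine obstacle here — the lemma is a definitional/cocycle identity and the statement ``the proof is straightforward'' or ``direct computation'' is essentially accurate. The one place demanding care is the combinatorics of which symbol survives the indicator $\one_{[x_0x_1]}$ and the matching of the summation index $j$ (the symbol \emph{preceding} $x_0$) with the shift of $\Wsloc{\ux}$: getting the roles of $x_0$, $x_1$, and the summed-over symbol straight is what makes the single term with the correct weight $e^{\wvarLd}$ emerge, rather than a sum. A secondary subtlety is that $\wvarLd$ is only defined on the invariant full-measure set $\Sigmad_0$ (resp.\ its lift), but since $\theta_{\ux}$ and $\zeta_{\ux}$ are defined for \emph{every} $\ux\in\Sigmab$ and the identity is an equality of Radon measures, the cocycle factor $e^{\wvarLd}$ should be understood $\wmuL$-almost everywhere, exactly as in the preceding lemmas; I would state it that way to avoid any issue with the domain of $\wvarLd$.
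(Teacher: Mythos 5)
Your proposal is correct and follows essentially the same route as the paper: the key input is the $\dagger$-analog of \Cref{cor:iteradomuLa} (the reverse-time transfer identity for $\muLd$), the indicator $\one_{[x_0x_1]}$ isolates the single term $j=x_0$ and reveals $\one_{[x_0x_1]}\tau^{-1}\theta_{\tau\ux}=\zeta_{\ux}$, and the weight $e^{\wvarLd}$ arises from the identity $\tau\zeta_{\ux^{(j)}}=\one_{[jx_0]}e^{-\wvarLd}\zeta_{\ux}$, giving $\zeta_{\ux}=e^{\wvarLd}\theta_{\ux}$. Your remark about reading the cocycle factor only $\wmuL$-almost everywhere (since $\wvarLd$ is defined on a full-measure invariant set) is also an appropriate precision.
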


\begin{proof}
By definition, $1_{[x_0x_1]}\tau^{-1}\theta_{\tau \seq{x}}=\zeta_{\seq{x}}$. Arguing analogously as in \Cref{cor:iteradomuLa} one verifies that $\tau \zeta_{\seq{x}^{j}}=1_{[j x_0]}e^{-\lift{\varLd}}\zeta_{\seq{x}}$, which leads us to
\begin{align*}
&\zeta_{\seq{x}}=\sum_{j x_0}e^{\lift{\varLd}}\tau\zeta_{\seq{x}^{j}}=e^{\lift{\varLd}}\theta_{\seq{x}}
\Rightarrow 1_{[x_0x_1]}\tau^{-1}\theta_{\tau \seq{x}}=e^{\lift{\varLd}}\theta_{\seq{x}}.
\end{align*}
\end{proof}

\begin{remark}\label{rmk:otherparametrizproduct}
When defining the product structure $\Sigma^{\dagger}_{x_0}\times\Sigma_{x_0}\xrightarrow[]{\inner{\bullet}{\bullet}} [x_0]\cap \lift{\Sigma}$, one could instead consider a different structure preserving map, changing the first factor for \(\bigcup_{j/ R_{jx_0}=1}[j]\), which also parametrizes $\Wsloc{\seq{x}}$. The measure $\theta_{\seq{x}}$ corresponds to $\mu_{L^{\dagger}}$ in these coordinates. 
\end{remark}

We keep proceeding in like manner as before: for $\seq{x}\in\lift{\Sigma}_1$ let
\begin{align}
\hat\theta_{\seq{x}}=e^{u\circ\tau}\theta_{\seq{x}}.
\end{align}
It follows that $\{\hat\theta_{\seq{x}}:\seq{x}\in\lift{\Sigma}_2\}$ can be extended to a family of Radon measures on stable sets, verifying the quasi-invariance condition 
\begin{equation}\label{eq:quasiinvariancehtheta}
   \tau^{-1}\hat\theta_{\tau\seq{x}}=e^{\varL(\seq{x})}\hat\theta.
 \end{equation}

\begin{proposition}\label{pro:quasiinvarianciatransversal}
Let $\seq{x},\seq{y}\in\lift{\Sigma}_2$ with $x_0=y_0$. Then $\hat\theta_{\seq{x}}=e^{u-u\circ \hu{\seq{y},\seq{x}}}\hu{\seq{y},\seq{x}}\hat\theta_{\seq{y}}$.
\end{proposition}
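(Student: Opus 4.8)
The statement is the ``stable-holonomy'' counterpart of \Cref{cor:holonomynux}, which treated the unstable holonomy. The idea is to mimic that proof line-by-line, exchanging the roles of $\tau$ and $\tau^{-1}$, of $\wvarL$ and $\wvarLd$, and of the families $\{\nu_{\ux}\}$ and $\{\htheta_{\ux}\}$; the function $v$ from \Cref{lem:existev,lem:vzero} (giving $\lim_n u(\tau^{-n}\ux)-u(\tau^{-n}\uy)=0$) has to be replaced by its ``future'' analogue $\lim_n u(\tau^{n}\ux)-u(\tau^{n}\uy)=0$ for points in the same local \emph{stable} set. So the first thing I would do is record that future analogue: applying \Cref{lem:vzero} to the transpose system $(\Sigmad,\tau^{-1},\muLd)$, or equivalently running the Egoroff--Lusin argument of that Lemma with $\tau^{-1}$ in place of $\tau$ and the measures $\pr^{s}$, $\mu^s_{\ux}$, one gets a full measure invariant set (we may enlarge $\Sigmab_2$ so that it also contains this set) on which $\lim_{n\to\oo}u(\tau^n\ux)-u(\tau^n\uy)=0$ whenever $\ux,\uy\in\Sigmab_2$ lie on the same local stable set.

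\textbf{Main computation.} Having that, I would argue exactly as in \Cref{cor:holonomynux}. It is no loss of generality to assume $\ux\in\Sigmab_2$ and $\uy=\hu_{\ux,\uy}(\ux)\in\Sigmab_2$ (so $x_0=y_0$ and $\ux^-=\uy^-$ up to the holonomy identification). Using the quasi-invariance relation \eqref{eq:quasiinvariancehtheta}, iterated $n$ times, one writes for a cylinder $[y_{-n}\cdots y_0]^s_{\ux}$,
\[
  \htheta_{\ux}\bigl([y_{-n}\cdots y_0]^s_{\ux}\bigr)=e^{S_n\varL(\tau^{-n}\ux)}\,\htheta_{\tau^{-n}\ux}\bigl([y_{-n}]^s_{\tau^{-n}\ux}\bigr),
\]
and likewise for $\uy$, with $\tau^{-n}\uy$ in place of $\tau^{-n}\ux$. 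Since $S_n\varL(\tau^{-n}\uy)-S_n\varL(\tau^{-n}\ux)$ telescopes through the cohomology relation $\varL=\wvarLd+u-u\circ\tau$ restricted to the relevant coordinates into $u(\uy)-u(\ux)-\bigl(u(\tau^{-n}\uy)-u(\tau^{-n}\ux)\bigr)$ plus a term controlled by the weak Bowen property (the two points being in the same stable set, the $\wvarLd$-sums differ by a bounded amount, and in fact converge), taking the ratio gives
\[
  \frac{\hu_{\uy,\ux}\htheta_{\uy}\bigl([y_{-n}\cdots y_0]^s_{\ux}\bigr)}{\htheta_{\ux}\bigl([y_{-n}\cdots y_0]^s_{\ux}\bigr)}
  = e^{u(\ux)-u(\uy)}\Bigl(e^{u(\tau^{-n}\uy)-u(\tau^{-n}\ux)}\cdot(\text{ratio of base measures}\to 1)\Bigr),
\]
wait---here the relevant limit is along the \emph{past}, i.e.\ $\tau^{-n}$, so one uses the convergence of $S_n\wvarLd$-differences along unstable holonomies already established, not the new future statement; I would double-check the direction while writing, but in any case exactly one of the two convergence inputs (\Cref{lem:vzero} or its transpose) applies, and the parenthesised term tends to $1$. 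Passing to the limit $n\to\oo$ and invoking the Lebesgue--Radon--Nikodym theorem yields $\dfrac{\der\,\hu_{\uy,\ux}\htheta_{\uy}}{\der\,\htheta_{\ux}}(\uz)=e^{u(\ux)-u(\uy)}=e^{u(\uz)-u(\hu_{\uy,\ux}\uz)}$ for $\htheta_{\ux}$-almost every $\uz$, which is the claim up to swapping $\ux\leftrightarrow$ the representative in $\Wsloc{\ux}$; then one upgrades from the special $\uy=\hu_{\ux,\uy}(\ux)$ to arbitrary $\uz$ on the stable set by the usual cocycle identity for holonomies.

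\textbf{Expected obstacle.} The routine part is the telescoping of Birkhoff sums; the one genuine point requiring care is getting the correct ``side'' of the convergence statement for the transfer function $u$ along holonomies. The transverse (stable) measures $\htheta_{\ux}$ are built from $\muLd$ and propagate under $\tau^{-1}$, yet the holonomy $\hu_{\ux,\uy}$ changes the \emph{unstable} coordinate; so one must verify that the relevant limit $\lim_n\bigl(u(\tau^{-n}\uy)-u(\tau^{-n}\ux)\bigr)$ for $\uy\in\Wsloc{\ux}$ is the quantity already shown to vanish (it is, since $\tau^{-n}\uy$ and $\tau^{-n}\ux$ are then in the same local \emph{unstable} set, which is the situation of \Cref{lem:vzero}). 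Once that bookkeeping is pinned down the proof is a transcription. I would also make sure the enlarged full-measure invariant set on which everything holds is still called $\Sigmab_2$ (or introduce $\Sigmab_3$ and update the convention), so that the statement as written remains literally correct.
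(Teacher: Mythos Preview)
Your approach is correct and is exactly what the paper does (its proof reads ``Completely analogous to \Cref{cor:holonomynux}, using the previous quasi-invariance property of the family''): iterate the relation \eqref{eq:quasiinvariancehtheta} backward, take the ratio on shrinking stable cylinders, telescope via $\wvarL=\wvarLd+u-u\circ\tau$, and pass to the limit using \Cref{lem:vzero} plus the Egoroff argument. One clean-up: the ``future analogue'' you propose in the plan is not needed here---after reducing to $\uy=\hu_{\ux,\uy}(\ux)\in\Wuloc{\ux}$ you iterate \emph{backward}, so the relevant limit is $\lim_n\bigl(u(\tau^{-n}\uy)-u(\tau^{-n}\ux)\bigr)$ for $\uy\in\Wuloc{\ux}$, which is precisely \Cref{lem:vzero} (your ``Expected obstacle'' paragraph reaches this conclusion but with a slip writing $\Wsloc{\ux}$ where you mean $\Wuloc{\ux}$).
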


\begin{proof}
Completely analogous to \Cref{cor:holonomynux}, using the previous quasi-invariance property of the family.
\end{proof}

\paragraph{The product structure of $\lift{\mu}_L$} We now use the families $\{\hat{\nu}_{\seq{x}}:\seq{x}\in\lift{\Sigma}_2\}, \{\hat\theta_{\seq{x}}:\seq{x}\in\lift{\Sigma}_2\}$ to construct a measure on the full two-sided shift. For each $a\in \ms A$ choose $\seq w=\seq w^{a}\in \lift{\Sigma}_2\cap [a]$ and define the measure
\[
  \mm^{\seq w}=\int \hat{\nu}_{\seq{x}}\dd\ \theta_{\seq w}(\seq{x}) 
\]
This is a well defined measure on $[a]$ of full support, therefore $\mm=\sum_{a\in\ms A}\mm^{\seq{w}^{a}}$ has full support in $\lift{\Sigma}$.

\begin{lemma}\label{lem:localmeasureindependent}
If $\seq w,\seq{z}\in \lift{\Sigma}_2\cap [a]$ then $\mm^{\seq w}=\mm^{\seq{z}}$.
\end{lemma}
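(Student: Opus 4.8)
The statement to prove is that for two choices $\uw,\uz\in\Sigmab_2\cap[a]$ the measures $\mm^{\uw}=\int\hnu_{\ux}\,\der\theta_{\uw}(\ux)$ and $\mm^{\uz}=\int\hnu_{\ux}\,\der\theta_{\uz}(\ux)$ coincide. The plan is to test both measures against cylinder functions and use the quasi-invariance properties of the two families $\{\hnu_{\ux}\}$ and $\{\htheta_{\ux}\}$ together with the holonomy transformation rules already established, namely \Cref{cor:holonomynux}/\Cref{lem:medidahnuxglobal} for the unstable family and \Cref{pro:quasiinvarianciatransversal} for the transverse family. Since both $\uw$ and $\uz$ lie in the same cylinder $[a]$, the point $\hu_{\uw,\uz}(\ux)$ (the local-unstable holonomy image inside $\Wsloc{}$, i.e.\@ sliding the base point along the stable direction) is defined, and $\theta_{\uz}$ is, up to the explicit density $e^{u-u\circ\hu_{\uz,\uw}}$, the pushforward of $\theta_{\uw}$ under this holonomy by \Cref{pro:quasiinvarianciatransversal} (after passing to the $\htheta$ normalization and back, or directly at the level of $\theta$, tracking the $e^{u\circ\tau}$ factors).

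\textbf{Key steps.} First I would fix a cylinder $C=[b_{-k}\cdots b_{-1}\bpto b_0\cdots b_k]$ with $b_0=a$ and compute $\mm^{\uw}(C)=\int \hnu_{\ux}(C\cap\Wuloc{\ux})\,\der\theta_{\uw}(\ux)$, observing that $\hnu_{\ux}(C\cap\Wuloc{\ux})$ depends only on $\ux^+$ (in fact on $x_0\cdots x_k$) and that $C\cap\Wuloc{\ux}=[b_0\cdots b_k]^u_{\ux}$ when $\ux\in[b_{-k}\cdots b_0]$, empty otherwise — so the integral over $\theta_{\uw}$ effectively restricts to the stable cylinder $[b_{-k}\cdots b_0]^s_{\uw}$. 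Second, I would use the holonomy map $\hu_{\uw,\uz}:\Wsloc{\uw}\to\Wsloc{\uz}$ to change variables in the $\theta_{\uw}$-integral; by \Cref{pro:quasiinvarianciatransversal} (transported to $\theta$ via the $e^{u\circ\tau}$ factor) this introduces a density $e^{u(\ux)-u(\hu_{\uw,\uz}\ux)}$ at the level of $\htheta$, hence a corresponding explicit density for $\theta$. Third — and this is where the two families must conspire — I would use that $\hnu_{\hu_{\uw,\uz}\ux}$ restricted to $\Wuloc{}$ is related to $\hnu_{\ux}$ via the stable holonomy $\hs$ by \Cref{cor:holonomynux}, which contributes exactly the \emph{reciprocal} density $e^{u(\hu_{\uw,\uz}\ux)-u(\ux)}$ on the overlap $\Sigmab_2$ (here one uses crucially that $v\equiv 0$, \Cref{lem:vzero}, so the two holonomies are genuine inverses up to the transfer function and no extra cocycle survives). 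The two densities cancel, giving $\mm^{\uw}(C)=\mm^{\uz}(C)$ for all such cylinders, and since cylinders generate $\BM[\Sigmab]$ restricted to $[a]$ and both measures are finite, a standard $\pi$-$\lambda$ argument finishes the proof.

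\textbf{Main obstacle.} The delicate point is bookkeeping the domains of definition: $\hnu_{\ux}$ and $\htheta_{\ux}$ are only defined for $\ux\in\Sigmab_1$, the cancellation of the $u$-densities via \Cref{cor:holonomynux} and \Cref{lem:vzero} only holds on the smaller invariant full-measure set $\Sigmab_2$, and the change of variables along $\hu_{\uw,\uz}$ must send a $\theta_{\uw}$-full-measure subset of $\Sigmab_2\cap\Wsloc{\uw}$ to a $\theta_{\uz}$-full-measure subset of $\Sigmab_2\cap\Wsloc{\uz}$. Establishing that these full-measure sets are compatible — i.e.\@ that the holonomy is nonsingular between the relevant conditionals and preserves the $\Sigmab_2$-saturation up to null sets — is the technical heart; it follows from the fact that $\theta_{\uw}$ is equivalent (via $p^s_{\uw}$) to $\muLd$ restricted to a cylinder and that $\Sigmab_2$ has full $\wmuL$-measure, combined with the disintegration of $\wmuL$ along $\xi^s$, but the argument requires care to avoid circularity with the very statement being proven. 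The rest is the routine density cancellation sketched above.
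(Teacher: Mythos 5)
Your overall plan matches the paper's one-line proof: change variables in the transverse integral via the unstable holonomy $\hu_{\uz,\uw}$ using \Cref{pro:quasiinvarianciatransversal}, and then cancel the resulting $e^{u(\ux')-u(\ux)}$ density against the reciprocal factor coming from comparing $\hnu_{\ux'}$ to $\hnu_{\ux}$. The one imprecision is in your third step. The points $\ux\in\Wsloc{\uw}$ and $\ux'=\hu_{\uw,\uz}(\ux)\in\Wsloc{\uz}$ have $\ux^{-}=\ux'^{-}$, so they lie in the \emph{same} local unstable leaf $\Wuloc{\ux}=\Wuloc{\ux'}$, and $\hnu_{\ux},\hnu_{\ux'}$ are two measures on that same leaf. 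No stable holonomy is involved; $\hs_{\ux,\ux'}$ is the identity in this situation, so invoking \Cref{cor:holonomynux} is at best a degenerate application. The correct reference (the one the paper actually uses) is part~$2)$ of \Cref{lem:medidahnuxglobal}, which — because $\wvarLd(\tau^{-k}\ux)=\wvarLd(\tau^{-k}\ux')$ for all $k\geq1$ when $\ux^{-}=\ux'^{-}$ — reduces exactly to $\hnu_{\ux'}=e^{u(\ux')-u(\ux)}\hnu_{\ux}$, which is the density you wrote down. So the formula you claim is right, but the mechanism is not a stable holonomy and \Cref{lem:vzero} is not needed for this step (it is already built into \Cref{pro:quasiinvarianciatransversal} and \Cref{lem:medidahnuxglobal}). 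Finally, your extra bookkeeping to transport \Cref{pro:quasiinvarianciatransversal} from $\htheta$ down to $\theta$ is unnecessary: the paper's proof works directly with $\htheta_{\uw}$ (the displayed definition of $\mm^{\uw}$ with $\theta_{\uw}$ appears to be a typo, as the proof immediately replaces it with $\htheta_{\uw}$).
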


\begin{proof}
Indeed, by $2)$ of \Cref{lem:medidahnuxglobal} and \Cref{pro:quasiinvarianciatransversal} we get
\begin{align*}
\mm^{\seq w}=\int_{\Wsloc{\seq w}} \hat{\nu}_{\seq{x}}\dd\hat\theta_{\seq w}(\seq{x})=\int_{\Wsloc{\seq{z}}} \hat{\nu}_{\seq{x}}e^{u(\seq{x}')-u(\seq{x})}\dd \hat\theta_{\seq{z}}(\seq{x}')=\int_{\Wsloc{\seq{z}}}\hat{\nu}_{\seq{x}'}\dd\hat\theta_{\seq{z}}=\mm^{\seq{z}}. 
\end{align*}
\end{proof}

We write $\mm^{a}=\mm^{\seq w}$, where $\seq w\in \lift{\Sigma}_2\cap [a]$.

\begin{corollary}\label{cor:mminvariant}
 The measure $\mm$ is $\tau$ invariant.
 \end{corollary}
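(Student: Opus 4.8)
The plan is to show $\tau_*\mm=\mm$ by verifying it cylinder by cylinder, exploiting the quasi-invariance properties of the two families $\{\hnu_{\ux}\}$ and $\{\htheta_{\ux}\}$ already established in \Cref{lem:medidahnuxglobal}, \Cref{lem:quasiinvariancenux} (for $\hnu$, via $e^{u(\ux)}\nu_{\ux}$), \eqref{eq:quasiinvariancehtheta}, and \Cref{pro:quasiinvarianciatransversal}. First I would unwind the definition: $\mm=\sum_{a\in\cA}\mm^a$ where $\mm^a=\int \hnu_{\ux}\der\htheta_{\uw^a}(\ux)$ for a chosen $\uw^a\in\Sigmab_2\cap[a]$, and by \Cref{lem:localmeasureindependent} the choice of base point is immaterial, so I may freely re-base the transverse integral to whatever point is convenient after applying $\tau$. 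The key computation is that pushing forward $\mm^a$ under $\tau$ decomposes along the images $\tau\Wsloc{\uw^a}\subset\bigcup_{i: R_{ai}=1}\Wsloc{\cdot}\cap[i]$, and on each piece one reassembles $\mm^i$.

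The main steps, in order: (i) record the factorization of $\mm$ under the local product structure $p^u_{\ux},p^s_{\ux}$ so that integrating against $\htheta_{\uw^a}$ the conditional in the unstable direction is $\hnu_{\ux}$ and the transverse one is $\htheta$; (ii) apply $\tau$ and use that $\tau$ maps $\Wuloc{\ux}$ into $\Wuloc{\tau\ux}$ (more precisely into $\Wu{\tau\ux}$, covering it as the base varies over the finitely many preimages) and $\Wsloc{\ux}$ onto a subset of $\Wsloc{\tau\ux}$; (iii) invoke the two quasi-invariance identities — $\tau^{-1}\htheta_{\tau\ux}=e^{\wvarL(\ux)}\htheta_{\ux}$ from \eqref{eq:quasiinvariancehtheta} and, for the unstable family, the identity from \Cref{lem:medidahnuxglobal}(1) rewritten for $\hnu$, namely $\tau^{-n}\hnu_{\tau^n\ux}=e^{-S_n\varLd(\ux)}\hnu_{\ux}$ together with $\hnu_{\ux}=e^{\wvarL(\ux)}\tau^{-1}(\sum_i R_{x_0i}\hnu_{\ux^{-}i})$ from \Cref{lem:quasiinvariancenux} — so that the Jacobian factors $e^{\wvarL}$ coming from the transverse integration exactly cancel the $e^{-\wvarL}$ coming from the fiberwise pushforward; (iv) collect terms over the admissible symbols $i$ with $R_{ai}=1$ to recognize $\sum_i \mm^i$ restricted appropriately, and sum over $a\in\cA$ to get $\tau_*\mm=\sum_{i}\mm^i=\mm$. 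Concretely I would test against a cylinder $[b_0\cdots b_k]$ and its preimage $\tau^{-1}[b_0\cdots b_k]=\bigcup_{a: R_{ab_0}=1}[ab_0\cdots b_k]$, compute $\mm(\tau^{-1}[b_0\cdots b_k])=\sum_a \mm^a([ab_0\cdots b_k])$ by splitting the integral over $\htheta_{\uw^a}$ into the stable sub-cylinder $[a]_{\uw^a}^s$-portion and the unstable $\hnu$-portion, and match it with $\mm([b_0\cdots b_k])$ after the cancellation.

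The step I expect to be the main obstacle is the bookkeeping in (iii)–(iv): making sure that the Radon–Nikodym factors $e^{\wvarL}$, $e^{u}$, $e^{u\circ\tau}$ that appear in the definitions $\der\hnu_{\ux}=e^{u(\ux)-u}\der\eta_{\ux}$ and $\htheta_{\ux}=e^{u\circ\tau}\theta_{\ux}$ telescope correctly when one composes with $\tau$, rather than the clean statement itself. In particular the transverse family $\theta_{\ux}$ was deliberately defined (see \Cref{lem:iteradotheta}) using the auxiliary points $\ux^{(j)}$ with $\tau\ux^{(j)}\in[jx_0]^s_{\ux}$ precisely so that $\one_{[x_0x_1]}\tau^{-1}\theta_{\tau\ux}=e^{\wvarLd}\theta_{\ux}$, and I will need to use this (upgraded to $\htheta$ via the $u\circ\tau$ weight, giving \eqref{eq:quasiinvariancehtheta}) in tandem with $\hnu$'s backward quasi-invariance so that the product $\hnu\otimes\htheta$ (as integrated in $\mm$) is genuinely $\tau$-equivariant. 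I would also need the domains: all the a.e.\ statements hold on the full-measure invariant set $\Sigmab_2$, which is invariant under $\tau$, so there is no measurability issue; the re-basing lemma \Cref{lem:localmeasureindependent} lets me always assume the base points of the transverse integrals lie in $\Sigmab_2$ after applying $\tau$. Once the cancellation is checked on cylinders, $\tau$-invariance of $\mm$ follows since cylinders generate $\BM[\Sigmab]$.
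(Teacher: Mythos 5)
Your proposal is correct and follows essentially the same route as the paper's proof: test $\tau$-invariance on the cylinders that generate $\BM[\Sigmab]$, re-base the transverse integral using \Cref{lem:localmeasureindependent}, and cancel the Jacobian factors coming from the quasi-invariance identities for $\hnu_{\ux}$ (\Cref{lem:medidahnuxglobal}) against those for $\htheta_{\ux}$ (\eqref{eq:quasiinvariancehtheta}), which is exactly the computation the paper carries out to obtain $\mm^{w_1}(\tau U)=\mm^{w_0}(U)$.
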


\begin{proof}
 Consider $U=[w_{-k}\cdots w_{-1}\bpto w_0 w_1\cdots w_{l}]$: $\tau(U)=[w_{-k}\cdots w_{-1} w_0\bpto w_1\cdots w_{l}]$. By the previous corollary we can write
\begin{align*}
 \mm^{w_1}(\tau U)&=\int_{[w_{-k}\cdots w_0w_1]^s_{\tau \seq w}} \hat{\nu}_{\seq{y}}(\tau [w_0w_1\cdots w_l]^u_{\seq w})\dd\ \hat\theta_{\tau \seq w}(\seq{y})\\
 \intertext{which together with $1)$ of \Cref{lem:medidahnuxglobal} gives}
 &=\int_{[w_{-k}\cdots w_0 w_1]^s_{\tau \seq w}} e^{-\varL(\tau^{-1}\seq{y})}\hat{\nu}_{\tau^{-1}\seq{y}}(\one_{[w_{0}\cdots w_l]^u_{\seq w}}\circ\tau^{-1}(\seq{y}))\dd\hat\theta_{\tau \seq w}(\seq{y}).
\intertext{Using \Cref{eq:quasiinvariancehtheta} we finally get}
 &=\int_{[w_{-k}\cdots w_0]^s_{\seq w}} \nu_{\seq{x}}([w_0\cdots w_l]^u_{\seq w})\dd\hat\theta_{\seq w}(\seq{x})=\mm^{w_0}(U).
\end{align*}
Sets $U$ of the previous type form a basis for the topology of $\lift{\Sigma}$, hence $\tau^{-1}\mm=\mm$.
 \end{proof}

Let $\hat{\mm}=\frac{\mm}{\mm(\lift{\Sigma})}\in \PTM{\tau}{\lift\Sigma}$. By definition, its unstable disintegration is given by normalizing the measures $\hat{\nu}_{\seq{x}}$ (equivalently, normalizing the measures $\nu_{\seq{x}}$). Note that $\hat\mm\ll\lift{\mu}_L$, and since the later is ergodic we conclude:

\begin{corollary}\label{cor:disintegraciondewmuL}
$\lift{\mu}_L=\hat{\mm}$. In particular, for $\lift{\mu}_L$ almost every $\seq{x}$ it holds  $\mu_{\seq{x}}^u=\frac{\nu_{\seq{x}}}{\nu_{\seq{x}}(\Wuloc{\seq{x}})}=\frac{\hat{\nu}_{\seq{x}}}{\hat{\nu}_{\seq{x}}(\Wuloc{\seq{x}})}$.
\end{corollary}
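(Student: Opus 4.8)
The strategy is to promote the measure $\hat{\mm}$ that we have just assembled to the status of an invariant measure whose unstable disintegration is the normalized family $\{\hnu_{\ux}\}$, and then to invoke uniqueness to conclude $\wmuL=\hat{\mm}$. First I would record that $\hat{\mm}\in\PTM{\tau}{\Sigmab}$: this is exactly \Cref{cor:mminvariant}, after normalization (note $\mm(\Sigmab)<\oo$ since each $\mm^{a}$ is a finite measure, being an integral of the finite measures $\hnu_{\ux}$ against the finite measure $\htheta_{\uw}$). Next I would identify the conditional measures of $\hat{\mm}$ with respect to the partition $\xi^{u}$ into local unstable sets. By construction $\mm^{a}=\int_{\Wsloc{\uw}}\hnu_{\ux}\der\htheta_{\uw}(\ux)$ with $\uw\in\Sigmab_2\cap[a]$, and \Cref{lem:localmeasureindependent} shows that this does not depend on the choice of $\uw$ within $[a]$; hence for each atom $\Wuloc{\ux}$ of $\xi^{u}$ the slice of $\mm^{x_0}$ living on it is precisely $\hnu_{\ux}$ (up to the global normalizing constant), while the transverse component along $\Wsloc{\uw}$ is $\htheta_{\uw}$. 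By the essential uniqueness of Rohklin disintegrations (\Cref{thm:Rohklin}) this means
\[
(\wmuL\text{-a.e. }\ux)\qquad (\hat{\mm})^{\xi^u}_{\ux}=\frac{\hnu_{\ux}}{\hnu_{\ux}(\Wuloc{\ux})},
\]
and since $\der\hnu_{\ux}=e^{u(\ux)-u}\der\eta_{\ux}$ differs from $\der\nu_{\ux}=e^{-u}\der\eta_{\ux}$ only by the $\ux$-constant factor $e^{u(\ux)}$, the two normalizations agree.

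Now I would compare $\hat{\mm}$ with $\wmuL$. The cleanest route is via the quasi-invariance/holonomy relations already established: \Cref{lem:medidahnuxglobal}(1) gives $\tau^{-n}\hnu_{\tau^n\ux}=e^{-S_n\varLd(\ux)}\hnu_{\ux}$, which is the same transformation law under $\tau$ satisfied by the unstable conditionals $\mu^u_{\ux}$ of $\wmuL$ (cf.\ \Cref{cor:invarianciadesintegra}), and \Cref{pro:quasiinvarianciatransversal} together with \Cref{cor:holonomynux} shows that the transverse family $\{\htheta_{\ux}\}$ transforms under unstable holonomy exactly as the transverse measures of $\wmuL$ do; equivalently, $\{\hnu_{\ux}\}$ is (up to normalization) invariant under stable holonomy in the same sense as $\{\mu^u_{\ux}\}$. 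Consequently $\hat{\mm}$ and $\wmuL$ have, on a full-measure set, proportional unstable conditionals, i.e.\ they are absolutely continuous with respect to one another on unstable sets, and their transverse measures are mutually equivalent. Since both are $\tau$-invariant of full support, this forces $\hat{\mm}$ and $\wmuL$ to be non-singular; invoking ergodicity of $\wmuL$ (\Cref{pro:propiedadesmedidamul}, \Cref{lem:eeshiftpositivo}), two non-singular ergodic invariant probabilities must coincide, so $\wmuL=\hat{\mm}$.

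Finally, with the identity $\wmuL=\hat{\mm}$ in hand, the second assertion is immediate: by definition the unstable disintegration of $\hat{\mm}$ is obtained by normalizing the $\hnu_{\ux}$ (equivalently the $\nu_{\ux}$), which yields
\[
\mu^u_{\ux}=\frac{\nu_{\ux}}{\nu_{\ux}(\Wuloc{\ux})}=\frac{\hnu_{\ux}}{\hnu_{\ux}(\Wuloc{\ux})}\qquad\text{for }\wmuL\text{-a.e.\ }\ux.
\]
The step I expect to be the main obstacle is the careful bookkeeping of domains of definition: the families $\{\hnu_{\ux}\}$, $\{\htheta_{\ux}\}$, the transfer function $u$, and the potentials $\wvarL,\wvarLd$ are each defined only on a priori different full-measure invariant sets ($\Sigmab_0,\Sigmab_1,\Sigmab_2$), and one must check that their common intersection still projects onto a full $\wmuL$-set and is saturated by the relevant partitions, so that the disintegration identification and the holonomy computations are valid $\wmuL$-almost everywhere rather than merely on a set of positive measure. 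Once the measurable-selection subtleties of \Cref{ssec:thesetofconvergence}--\Cref{ssec:conditionalsandtransverse} are invoked in the right order, the rest is a routine application of uniqueness of disintegrations plus ergodicity.
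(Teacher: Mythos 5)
Your overall strategy matches the paper's: $\hat{\mm}$ is invariant (from \Cref{cor:mminvariant}), its unstable conditionals are the normalized $\hnu_{\ux}$, and ergodicity of $\wmuL$ forces $\hat{\mm}=\wmuL$. But there is a real gap in the middle step, where you argue that $\hat{\mm}$ and $\wmuL$ are non-singular by observing that $\{\hnu_{\ux}\}$ and $\{\mu^u_{\ux}\}$ satisfy the same transformation laws under $\tau$ (comparing \Cref{lem:medidahnuxglobal}(1) with \Cref{cor:invarianciadesintegra}) and under holonomy (\Cref{pro:quasiinvarianciatransversal}, \Cref{cor:holonomynux}). A shared cocycle transformation law does \emph{not} by itself give pointwise equivalence of the two families: without an ``anchor,'' two families of leafwise measures can satisfy identical quasi-invariance relations while being mutually singular on every leaf (e.g.\ by concentrating on two disjoint invariant sets on which the cocycle is still defined). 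So the ``Consequently \ldots\ absolutely continuous with respect to one another on unstable sets'' does not follow from what you have cited.

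The anchor that the paper's terse proof implicitly relies on is \Cref{lem:muLequivu}, which is precisely the pointwise comparison you need: for every $\ux\in\Sigmab_0$ the reference measure $\eta_{\ux}=p^{u}_{\ux}\muL^{x_0}$ is equivalent to $\mu^u_{\ux}$ with Radon--Nikodym density uniformly bounded away from $0$ and $\infty$. Since $\hnu_{\ux}=e^{u(\ux)-u}\eta_{\ux}$ with $u\in\Lp[\oo](\wmuL)$, this gives $\hnu_{\ux}\sim\mu^u_{\ux}$ with uniform bounds, and likewise $\htheta_{\uw}\sim\theta_{\uw}$, where $\theta$ is built from $\muLd=\pi^{\dagger}\wmuL$ (\Cref{lem:eeshiftnegativo}), i.e.\ from the actual transverse measure of $\wmuL$ along $\xi^u$. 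Plugging these two equivalences into the integral formula defining $\mm^{a}$ gives $\hat{\mm}\sim\wmuL$ directly, with a uniformly bounded density, and then ergodicity of $\wmuL$ (together with $\tau$-invariance of both and the elementary fact that an invariant probability absolutely continuous w.r.t.\ an ergodic one must equal it) yields $\hat{\mm}=\wmuL$. Your first and last paragraphs are fine and coincide with the paper; the middle paragraph should be replaced by this direct comparison via \Cref{lem:muLequivu} rather than the transformation-law heuristic.
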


The fact that $u\in \Lp{\oo}{\lift{\mu}_L}$ implies:

\begin{corollary}\label{cor:productstructureofwmuL}
The measure $\lift{\mu}_L$ is equivalent to $\mu_L\times\mu_{L^{\dagger}}$, with uniformly bounded Radon-Nikodym derivative.
\end{corollary}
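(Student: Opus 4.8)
The plan is to unwind the description of $\wmuL$ obtained in \Cref{cor:disintegraciondewmuL} and to match it, one cylinder at a time, with the product of $\muL$ and $\muL[L^{\dagger}]$ in local product coordinates, keeping track of all the Radon--Nikodym derivatives so as to see that they are uniformly bounded.

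Recall from \Cref{cor:disintegraciondewmuL} that $\wmuL$ is the normalization of $\mm=\sum_{a\in\cA}\mm^{a}$, where, for a fixed $\uw=\uw^{a}\in\Sigmab_{2}\cap[a]$, one has $\mm^{a}=\int_{\Wsloc{\uw}}\hnu_{\ux}\,\der\htheta_{\uw}(\ux)$. First I would record the two basic comparisons. By the definition of $\hnu_{\ux}$ and \eqref{eq:etax} one has $\hnu_{\ux}=e^{u(\ux)-u}\,\eta_{\ux}$ with $\eta_{\ux}=p^{u}_{\ux}\muL^{x_{0}}$; since $u\in\Lp[\oo](\wmuL)$ (\Cref{cor:liftpotencialessoncoh}), the density $e^{u(\ux)-u}$ lies in $[e^{-2\normL{u}{\oo}},e^{2\normL{u}{\oo}}]$, so $\hnu_{\ux}$ is equivalent to $p^{u}_{\ux}\muL^{x_{0}}$ with density bounded away from $0$ and $\oo$ by constants independent of $\ux$. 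On the stable side, $\htheta_{\ux}=e^{u\circ\tau}\theta_{\ux}$, and the proof of \Cref{lem:iteradotheta} gives $\zeta_{\ux}=e^{\wvarLd}\theta_{\ux}$, so $\htheta_{\ux}=e^{u\circ\tau-\wvarLd}\,\zeta_{\ux}$ with $\zeta_{\ux}=p^{s}_{\ux}\muLd^{x_{0}}$; as $u\in\Lp[\oo]$ and $\wvarLd$ is uniformly bounded (\Cref{lem:potentialbounded} applied to $L^{\dagger}$), the factor $e^{u\circ\tau-\wvarLd}$ is again bounded away from $0$ and $\oo$ uniformly. Consequently $\mm^{a}$ is equivalent, with uniformly bounded Radon--Nikodym derivative, to $\widehat{\mm}^{a}:=\int_{\Wsloc{\uw}}\eta_{\ux}\,\der\big(p^{s}_{\uw}\muLd^{a}\big)(\ux)$.

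Next I would identify $\widehat{\mm}^{a}$ with a genuine product. Writing $\ux=p^{s}_{\uw}(\uz)=\prodi{\uz}{\uw^{+}}$ for $\uz\in\Sigma^{\dagger}_{a}$, one has $(\prodi{\uz}{\uw^{+}})^{-}=\uz$, hence $\eta_{\ux}=\eta_{\prodi{\uz}{\uw^{+}}}=p^{u}_{\prodi{\uz}{\uw^{+}}}\muL^{a}$, and $p^{u}_{\prodi{\uz}{\uw^{+}}}(\uz')=\prodi{\uz}{\uz'}$ for $\uz'\in\Sigma_{a}$. Since $\eta_{\ux}$ depends only on $\ux^{-}$, Fubini's theorem yields, for $\phi\in\CM[{[a]}]$,
\[
\widehat{\mm}^{a}(\phi)=\int_{\Sigma^{\dagger}_{a}}\int_{\Sigma_{a}}\phi\big(\prodi{\uz}{\uz'}\big)\,\der\muL^{a}(\uz')\,\der\muLd^{a}(\uz)=\big(\Phi_{a}\big)_{*}\big(\muLd^{a}\times\muL^{a}\big)(\phi),
\]
where $\Phi_{a}:\Sigma^{\dagger}_{a}\times\Sigma_{a}\to[a]$, $\Phi_{a}(\uz,\uz')=\prodi{\uz}{\uz'}$, is the product-structure homeomorphism. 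The right-hand side is, by definition, the restriction to $[a]$ of $\muLd\times\muL=\muL[L^{\dagger}]\times\muL$ read through the local product coordinates on $\Sigmab$. Summing over $a\in\cA$ we obtain that $\mm$, and therefore $\wmuL=\mm/\mm(\Sigmab)$, is equivalent to $\muL\times\muL[L^{\dagger}]$ with Radon--Nikodym derivative bounded above and below by constants independent of the cylinder and of the base points $\uw^{a}$.

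The only genuinely delicate point is the bookkeeping: one must check that every density that appears ($e^{u(\ux)-u}$, $e^{u\circ\tau-\wvarLd}$, and, if one wishes to phrase things through $\mu^{u}_{\ux}$ rather than through $\eta_{\ux}$, the Gibbs comparison of \Cref{lem:muLequivu}) carries bounds independent of the base point $\ux$ and of the symbol $a$, so that the final constant is uniform. This holds because $u,\wvarLd\in\Lp[\oo]$ have honest supremum bounds and the Gibbs constant $E$ of \Cref{pro:propiedadesmedidamul} is uniform; everything else is a routine application of Fubini's theorem and the defining properties of disintegrations.
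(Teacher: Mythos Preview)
Your proof is correct and follows essentially the same route as the paper: the paper simply records ``Since $u\in \Lp[\oo](\muL)$ we get'' and leaves the reader to unwind the product representation $\wmuL\propto\int\hnu_{\ux}\,\der\htheta_{\uw}(\ux)$, which is precisely what you have done by peeling off the bounded densities $e^{u(\ux)-u}$ and $e^{u\circ\tau-\wvarLd}$ to reduce to the pushforward of $\muLd^{a}\times\muL^{a}$ under the local product chart. One small remark: in the paper the definition of $\mm^{\uw}$ uses $\theta_{\uw}$ while the proof of \Cref{lem:localmeasureindependent} switches to $\htheta_{\uw}$; since $\htheta_{\uw}=e^{u\circ\tau}\theta_{\uw}$ and $u\in\Lp[\oo]$, this discrepancy is harmless and your argument goes through verbatim with either choice.
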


\paragraph{\textbf{The set of definition of the potential}}Denote 
\[
  c_{\seq{x}}=\nu_{\seq{x}}(\Wuloc{\seq{x}}).
\]
This defines a measurable function on the $u$-saturated set $(\pi^{\dagger})^{-1}(\Sigma_0^{\dagger})$.

 By \Cref{cor:holonomynux} and \eqref{eq:holonomiautou} we get that, for $\seq{x},\seq{y}\in\lift{\Sigma}_2, \seq{y}\in\Wsloc{\seq{x}}$ implies
 \begin{align*}
 &\frac{c_{\seq{y}}}{c_{\seq{x}}}\frac{\dd  \hs{\seq{y},\seq{x}}\mu_{\seq{y}}^u}{\dd \mu_{\seq{x}}^u}(\seq{x})=\frac{\dd  \hs{\seq{y},\seq{x}}\nu_{\seq{y}}}{\dd \nu_{\seq{x}}}(\seq{x})=e^{u(\seq{x})-u(\seq{y})}\\
 &\Rightarrow  \frac{c_{\seq{y}}}{c_{\seq{x}}} e^{u\circ \tau(\seq{x})-u\circ\tau(\seq{y})}=e^{u(\seq{x})-u(\seq{y})}\\
 &\frac{c_{\seq{y}}}{c_{\seq{x}}}=e^{\lift{\varLd}(\seq{y})-\lift{\varLd}(\seq{x})}
 \shortintertext{which together with the quasi-invariance property of $\nu_{\seq{x}}$ yield}
 &\nu_{\tau\seq{x}}(\tau\Wuloc{\seq{x}})=\nu_{\tau\seq{y}}(\tau\Wuloc{\seq{y}}).
 \end{align*}
 It follows that for every $a\in\ms A$ there exists some constant $E_a\in\R$ so that for $\aee{\mu_{L^\dagger}}\seq{x}$, 
\[
  \varLd(\seq{x})=\log c_{\seq{x}}+E_a
 \]
 One sees that defining the value of $\varLd(\seq{x})$ is equivalent to define the normalizing constant $c_{\seq{x}}$. for $\seq{x}\in\Sigma^\dagger$. Similar considerations apply to the function $\varL$.

\subsection{Bernoulli property}

We now reap the consequences of our work. For that, we need some definitions from abstract Ergodic Theory. Let $(X,\BorelM[X],\mu)$ be a Borel probability space and $T:X\toit$ a (measure preserving) automorphism. If $\ms F \subset \BorelM[X]$ is a $\sigma-$algebra or a partition by measurable sets, we write
 \[
   -\oo\leq k\leq l\leq \oo\Rightarrow \ms{F}_{k}^l=\bigvee_{i=k}^l T^{-i}\ms{F}.
 \]
When $\ms F$ is a finite partition generating $\BorelM[X]$, one considers the measure $\nu$ obtained extending 
\[
   \nu(A\cap B):=\mu(A)\mu(B)\quad A\in \ms{F}_{-\oo}^0, B\in \ms{F}_{0}^{\oo}.
 \]
Recall that $\ms F$ is a generator for $T$ if $\ms{F}_{-\oo}^{\oo}=\BorelM[X]$.

 \begin{definition}[Ledrappier]
 The process $(X,T)$ is quasi-Bernoulli if $\nu$ is equivalent to $\mu$.  
 \end{definition}
 A counterpart is given by the notion of Weak Bernoulli process of Ornstein and Friedmann. 

\begin{definition}
 The partition $\ms F$ is said to be Weak Bernoulli if for any $\eps>0$ there exists $N\in\N$ so that for every $n\geq 0$
 \[
   \sum_{\substack{A\in \ms{F}^{n}_0\\ B\in \ms{F}_{-N-n}^{-N}}} |\mu(A\cap B)-\mu(A)\mu(B)|<\eps.
 \] 
In this case $(T,\BorelM[X])$ is said to be a Weak Bernoulli process.
 \end{definition}

\begin{theorem}[Ledrappier, \cite{Ledrappier_1976}]
For a generator $\ms{F}$, $(X,T)$ is quasi-Bernoulli if and only if $\ms{F}$ is a Weak Bernoulli partition.
 \end{theorem}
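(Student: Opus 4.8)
The plan is to move everything to the symbolic picture and then analyse the Radon--Nikodym derivative of $\nu$ with respect to $\mu$ by a martingale argument. Since $\parP$ is a finite generator one identifies $(X,T,\mu)$ with the two--sided shift on $\parP^{\Z}$; then $\parP_{-\infty}^{0}$ is the past $\sigma$--algebra, $\parP_{1}^{\infty}$ the future one, and $\nu$ is the law under which these two $\sigma$--algebras are independent with the corresponding $\mu$--marginals. After a shift of indices the Weak Bernoulli property becomes
\[
\beta_N:=\sup_{m\ge 0}\ \sum_{A\in \parP_{-m}^{0}}\ \sum_{B\in \parP_{N}^{N+m}}\bigl|\mu(A\cap B)-\mu(A)\mu(B)\bigr|\ \longrightarrow\ 0 \qquad (N\to\infty),
\]
i.e.\ absolute regularity of the coding, so the statement to prove is $\nu\sim\mu\iff\beta_N\to0$.

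First I would record the finite--dimensional densities: for $m,n\ge 0$ put $\mathcal G_{m,n}=\parP_{-m}^{0}\vee\parP_{1}^{n}$ and let $g_{m,n}$ be the density of $\nu|_{\mathcal G_{m,n}}$ against $\mu|_{\mathcal G_{m,n}}$, which on a cylinder equals $\mu([x_{-m}\cdots x_{0}])\,\mu([x_{1}\cdots x_{n}])/\mu([x_{-m}\cdots x_{n}])$. As $\mathcal G_{m,n}\uparrow\BM[X]$ the family $(g_{m,n})$ is a martingale over $(X,\mu)$, converging $\mu$--a.e.\ and in $L^1(\mu)$ to $d\nu/d\mu$ precisely when it is uniformly integrable, with the symmetric statement for $1/g_{m,n}$ and $\mu$. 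For $\beta_N\to0\Rightarrow\nu\sim\mu$ I would bound the martingale increment produced by inserting a gap of length $N$ between the past block $\parP_{-m}^{0}$ and the future block $\parP_{N}^{N+m}$ by $O(\beta_N)$ in $L^1(\mu)$ --- this is the usual identification of the $\beta$--coefficient with the $L^1$--average of the total variation between $\mu$ and its conditional on the past, restricted to the future --- and then telescope along a subsequence $N_k$ with $\sum_k\beta_{N_k}<\infty$ to get $L^1(\mu)$--convergence of $g_{m,n}$, hence $\nu\ll\mu$; the same estimates applied to the reversed conditionals, where finiteness of the alphabet and the two--sided Gibbs--type bounds on cylinder measures keep $1/g_{m,n}$ controlled, give $\mu\ll\nu$. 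Conversely, if $h=d\nu/d\mu$ exists with $h>0$ $\mu$--a.e., then $h_N:=\ie{\mu}{h|\parP_{-\infty}^{0}\vee\parP_{N}^{\infty}}$ is the density of ``past $\otimes$ future--from--$N$'' against $\mu$, so that $\beta_N=\frac12\|h_N-1\|_{L^1(\mu)}$; since $\parP_{N}^{\infty}$ decreases to the future tail $\sigma$--algebra, which is $\mu$--trivial because a quasi--Bernoulli system has completely positive entropy, reverse martingale convergence gives $h_N\to\ie{\mu}{h|\parP_{-\infty}^{0}}=1$ in $L^1(\mu)$ (the last equality because $\nu$ and $\mu$ share the same past--marginal), whence $\beta_N\to0$.

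I expect the main obstacle to be the passage from the almost everywhere / $L^1$ martingale convergence to estimates that are \emph{uniform in the truncation length} $m$, which is exactly what Weak Bernoulli demands and what mere absolute continuity does not supply; controlling this uses the genuine equivalence $\nu\sim\mu$ (not just $\nu\ll\mu$) together with the two--sided bounds on cylinder measures in the symbolic model that keep the relevant densities bounded away from $0$ and $\infty$ on arbitrarily long cylinders with length--independent constants. This uniformity is the technical heart of Ledrappier's argument, and is the reason the statement is quoted here rather than reproved.
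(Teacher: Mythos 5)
The paper does not prove this theorem; it is quoted as Ledrappier's 1976 result and only its statement is used. Taken on its own terms, your sketch has two genuine gaps. In the direction Weak Bernoulli $\Rightarrow$ quasi-Bernoulli, you obtain $\mu\ll\nu$ from ``two-sided Gibbs-type bounds on cylinder measures'' keeping $1/g_{m,n}$ controlled, but no such bounds exist in this generality: the theorem is about an arbitrary measure-preserving system with a finite generator, not a Gibbs or Markov state, and for a general $\mu$ the ratios $\mu([a_{-m}\cdots a_0])\mu([a_1\cdots a_n])/\mu([a_{-m}\cdots a_n])$ need not be bounded above or below uniformly in $m,n$. This is not a cosmetic omission; it is exactly where the difficulty lies, because $\beta_N\to 0$ does not directly give uniform integrability of $(g_{m,n})$. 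Relatedly, the proposed telescoping along a summable subsequence $(\beta_{N_k})$ is underspecified: $\beta_N$ measures the departure from independence of the \emph{gapped} algebras $\parP_{-m}^{0}\vee\parP_{N}^{N+m}$, whereas $g_{m,n}$ is the density on the \emph{ungapped} $\parP_{-m}^{0}\vee\parP_{1}^{n}$, and you have not explained how to express the latter as a convergent sum of gapped increments.

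In the converse direction, the reverse-martingale step silently uses the identity $\bigcap_N\left(\parP_{-\infty}^{0}\vee\parP_{N}^{\infty}\right)=\parP_{-\infty}^{0}$ modulo $\mu$. Exchanging a decreasing intersection with a fixed join is false for general families of $\sigma$-algebras (this is a classical pitfall, with well-known counterexamples); it does hold here because under $\nu$ the two factors are independent and $\nu\sim\mu$, but that is precisely where the quasi-Bernoulli hypothesis does its work and it must be made explicit. You also invoke triviality of the future tail via ``a quasi-Bernoulli system has completely positive entropy,'' which as written risks circularity: CPE is usually read off from the Bernoulli property, which in this chain is derived from Ledrappier together with Ornstein--Friedmann, i.e.\ from the very theorem being proved. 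One needs to argue the $K$-property (equivalently, tail triviality) directly from $\nu\sim\mu$, or cite a result that does so without passing through Weak Bernoulli.
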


\begin{theorem}[Ornstein and Friedmann, \cite{FriedOrns}]
If $(X,T)$ is a Weak Bernoulli process, then it is isomorphic to a Bernoulli process, meaning a system having a finite independent generator.
\end{theorem}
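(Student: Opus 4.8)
The plan is to follow the classical route of Friedman and Ornstein, factoring the statement through the \emph{Very Weak Bernoulli} (VWB) property and Ornstein's isomorphism theorem. First I would fix a finite generating partition $\parP$ that realises the Weak Bernoulli property and pass to the symbolic model: identify $(X,T,\parP)$ with a shift-invariant probability $p$ on $\mathcal A^{\Z}$, where $\mathcal A$ is the index set of $\parP$, writing $p_n$ for the law of the $n$-block $\parP_0^{n-1}$ and $p_n(\cdot\mid B)$ for its conditional law given a past atom $B\in\parP_{-N}^{-1}$. The metric I would work with is Ornstein's $\bar d$: for laws $q,q'$ on $\mathcal A^n$, $\bar d_n(q,q')$ is the minimal normalised expected Hamming distance over all couplings of $q$ and $q'$, and $\bar d$ on processes is the corresponding asymptotic quantity.

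Step one is the reduction of Weak Bernoulli to VWB. Recall that $(X,T,\parP)$ is VWB when, for every $\epsilon>0$, there is $N$ such that $\sum_{B\in\parP_{-N}^{-1}} p(B)\,\bar d_n\!\big(p_n(\cdot\mid B),p_n\big)<\epsilon$ for all $n\ge 1$. I would use the elementary estimate $\bar d_n(q,q')\le\tfrac12\|q-q'\|_{TV}$ on the finite set $\mathcal A^n$ (the diagonal coupling carrying the mass $q\wedge q'$ and anything on the complement), which bounds the left-hand side by $\tfrac12\sum_{B\in\parP_{-N}^{-1}}\sum_{A\in\parP_0^{n-1}}|p(A\cap B)-p(A)p(B)|$; by stationarity this is, up to a harmless shift of the two windows, exactly the Weak Bernoulli sum for the blocks $[-N,-1]$ and $[0,n-1]$, hence $<\epsilon$ for $N$ large, \emph{uniformly in} $n$. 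This is the only place where the Weak Bernoulli hypothesis enters.

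Step two is to invoke the isomorphism machinery. By Ornstein's theorem, a process with a finite generating partition that is VWB (equivalently, finitely determined) is measure-theoretically isomorphic to a Bernoulli shift. Concretely, the isomorphism is produced by a telescoping construction: the copying lemma supplies, for any prescribed $\delta>0$, a partition of $X$ whose process is $\bar d$-within $\delta$ of the i.i.d.\ process of entropy $h_p(T)$ and simultaneously within $\delta$ of $\parP$ in the partition metric; iterating yields partitions $\mathcal Q_k$ that Cauchy-converge in $\bar d$ to a partition generating an independent process and generating $\mathcal B_X$, and the associated name map is the desired isomorphism. Since entropy is an isomorphism invariant, the target is the Bernoulli shift of entropy $h_p(T)<\infty$; as $\parP$ is finite this shift can be realised on a finite alphabet, which gives the finite independent generator claimed in the statement.

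The main obstacle is, honestly, Step two: the implication VWB $\Rightarrow$ Bernoulli is the deep core of Ornstein theory (the copying lemma together with the $\bar d$-limit construction of the isomorphism), and I would treat it as an external input rather than reprove it; everything genuinely soft in the argument — the passage Weak Bernoulli $\Rightarrow$ VWB — is the total-variation estimate of Step one. A fully self-contained proof would instead reproduce the Friedman--Ornstein construction directly, but that is a paper in itself, which is why the statement is quoted here with an attribution rather than proved.
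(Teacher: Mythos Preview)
The paper does not prove this statement: it is quoted as a classical result of Friedman and Ornstein with a citation, and is used only as a black box (together with Ledrappier's theorem) to deduce that the equilibrium state is Bernoulli. So there is nothing in the paper to compare your proposal against.

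That said, your outline is the correct one and tracks the actual Friedman--Ornstein argument. Your Step~1 (Weak Bernoulli $\Rightarrow$ VWB via the total-variation bound $\bar d_n\le\tfrac12\|\cdot\|_{TV}$) is exactly the soft reduction, and you are right that Step~2 is where all the depth lies: the copying lemma and the $\bar d$-convergent tower of partitions constitute Ornstein's isomorphism theory, and it is entirely appropriate to treat that as an external input here rather than reproduce it. Your own final paragraph already says this accurately, and the paper's authors evidently agree, since they simply cite \cite{FriedOrns}.
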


Back to $(\lift{\Sigma},\lift{\mu}_L,\tau)$, note that $\xi=\{[a]:a\in\ms A\}$ is a generator.

\begin{corollary}\label{cor:Bernoulli}
The partition  $\xi$ is Weak Bernoulli. Therefore $(\lift{\Sigma},\lift{\mu}_L,\tau)$ is isomorphic to a Bernoulli shift.
\end{corollary}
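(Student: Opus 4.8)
The plan is to prove that the partition $\parP=\{[a]:a\in\cA\}$ is Weak Bernoulli for $\wmuL$, from which the Bernoulli property follows by the cited theorems of Ledrappier and Ornstein--Friedmann. The key input is \Cref{cor:productstructureofwmuL}, which tells us that $\wmuL$ is equivalent to the product $\muL\times\muL[L^{\dagger}]$ with uniformly bounded Radon--Nikodym derivative; call this bound $K$, so that $K^{-1}\leq \frac{\der\wmuL}{\der(\muL\times\muL[L^{\dagger}])}\leq K$ on a full measure set. First I would translate the Weak Bernoulli estimate into the product coordinates: a set $A\in\parP_0^{n}$ is a cylinder on coordinates $0,\dots,n$, i.e.\@ (after the standard identification) a set depending only on the future $\ux^+$; a set $B\in \parP_{-N-n}^{-N}$ is a cylinder on coordinates $-N-n,\dots,-N$, which after applying $\tau^{N}$ is a past cylinder on coordinates $-n,\dots,0$. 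Since $\tau$ preserves $\wmuL$, it suffices to estimate $\sum |\wmuL(A\cap \tau^{-N}C)-\wmuL(A)\wmuL(C)|$ over future cylinders $A$ of length $n+1$ and past cylinders $C$ of length $n+1$.

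The heart of the argument is then a mixing-type decay estimate: using \Cref{cor:productstructureofwmuL} we pass to $\muL\times\muL[L^{\dagger}]$, where $A$ depends only on the future factor and $\tau^{-N}C$ depends only on the past factor but with an overlap of width controlled by $N$. More precisely, writing $\pi,\pi^{\dagger}$ for the two projections, $A=\pi^{-1}(A^+)$ with $A^+$ a $\muL$-cylinder and $\tau^{-N}C$ can be sandwiched between $(\pi^{\dagger})^{-1}$ of $\muL[L^{\dagger}]$-cylinders; the Gibbs property \eqref{eq: gibbs2fs} of both $\muL$ and $\muL[L^{\dagger}]$ gives uniform mixing with a gap. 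The cleanest route is: for $A$ a future cylinder on $[0,n]$ and $C$ a past cylinder on $[-n,0]$, $A\cap \tau^{-N}C$ corresponds under the product identification to a set whose two coordinate-blocks are separated by $N$ empty coordinates, so $\wmuL(A\cap\tau^{-N}C)$ is comparable (within $K^2$) to $\muL(A^+)\cdot\muL[L^{\dagger}]((\tau^{-N}C)^-)$, and the latter equals $\wmuL(A)\wmuL(C)$ up to a factor tending to $1$ as $N\to\oo$, uniformly in $A,C,n$, by the uniform mixing of $\wmuL$ (\Cref{pro:propiedadesmedidamul} applied on $\Sigmab$, noting the lift of $\muL$ is mixing with the same Gibbs constants). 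Summing $|\wmuL(A\cap\tau^{-N}C)-\wmuL(A)\wmuL(C)|$ over the (finitely many per level, but the sum is over all $A,C$) cylinders, one bounds it by $(\text{factor}(N)-1)\cdot\sum_{A,C}\wmuL(A)\wmuL(C)=(\text{factor}(N)-1)$, which is $<\epsilon$ once $N$ is large.

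The main obstacle I anticipate is making the ``separation of coordinate-blocks in the product'' rigorous: the equivalence $\wmuL\sim\muL\times\muL[L^{\dagger}]$ only controls ratios of measures of \emph{rectangles} $[\bullet\,]^s\times[\bullet\,]^u$ uniformly, and one must check that a future cylinder on $[0,n]$ together with a past cylinder on $[-N-n,-N]$ really does decompose, after the product identification, into a union of such rectangles with the Radon--Nikodym bound applied uniformly — in particular that the ``buffer'' coordinates $[-N,0]$ are exactly what the product structure in \Cref{cor:productstructureofwmuL} handles. Once that bookkeeping is set up, the estimate reduces to the uniform mixing rate already established for $\wmuL$, and the conclusion is immediate:

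\begin{proof}
By \Cref{cor:productstructureofwmuL}, $\wmuL$ is equivalent to $\muL\times\muL[L^{\dagger}]$ with Radon--Nikodym derivative bounded between $K^{-1}$ and $K$ for some $K\geq 1$. Fix $\epsilon>0$. Given $n\geq 0$, a set $A\in \parP_0^{n}$ is a cylinder depending on the coordinates $0,\dots,n$ and a set $B\in \parP_{-N-n}^{-N}$ is a cylinder depending on $-N-n,\dots,-N$; since $\tau$ preserves $\wmuL$ it is enough to bound $\sum_{A,C}|\wmuL(A\cap \tau^{-N}C)-\wmuL(A)\wmuL(C)|$, the sum taken over all future cylinders $A$ and past cylinders $C$ of length $n+1$. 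Passing to $\muL\times\muL[L^{\dagger}]$, the set $A$ depends only on the first factor and $\tau^{-N}C$ only on the second, with $N$ ``buffer'' coordinates between the two blocks. By the uniform mixing property of $\wmuL$ (\Cref{pro:propiedadesmedidamul}, applied to the lift to $\Sigmab$, whose Gibbs constants are the same as those of $\muL$), there is a function $\rho(N)\to 1$ as $N\to\oo$, independent of $A,C,n$, with
\[
  \rho(N)^{-1}\,\wmuL(A)\,\wmuL(C)\leq \wmuL(A\cap\tau^{-N}C)\leq \rho(N)\,\wmuL(A)\,\wmuL(C).
\]
Therefore
\[
  \sum_{A,C}\bigl|\wmuL(A\cap\tau^{-N}C)-\wmuL(A)\wmuL(C)\bigr|\leq (\rho(N)-1)\sum_{A,C}\wmuL(A)\wmuL(C)=\rho(N)-1,
\]
and choosing $N$ large enough that $\rho(N)-1<\epsilon$ shows $\parP$ is Weak Bernoulli. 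By the theorem of Ledrappier the process $(\Sigmab,\wmuL,\tau)$ is quasi-Bernoulli, and by the theorem of Ornstein and Friedmann it is isomorphic to a Bernoulli shift with finitely many symbols.
\end{proof}
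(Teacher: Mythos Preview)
There is a genuine gap. Your argument hinges on the claim that there is a function $\rho(N)\to 1$, \emph{uniform in $A,C,n$}, with
\[
  \rho(N)^{-1}\,\wmuL(A)\,\wmuL(C)\leq \wmuL(A\cap\tau^{-N}C)\leq \rho(N)\,\wmuL(A)\,\wmuL(C),
\]
and you cite \Cref{pro:propiedadesmedidamul} for this. But that proposition does \emph{not} give decay: the Gibbs inequality \eqref{eq: gibbs2fs} only provides a uniform constant $E$, and the mixing statement there is ordinary (non-uniform) mixing. Likewise, the product structure from \Cref{cor:productstructureofwmuL} gives a Radon--Nikodym derivative bounded between $K^{-1}$ and $K$; plugging that into your estimate yields $|\wmuL(A\cap\tau^{-N}C)-\wmuL(A)\wmuL(C)|\leq (K^2-1)\wmuL(A)\wmuL(C)$, and the sum is at best $K^2-1$, which does not go to zero with $N$. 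For weak Bowen potentials there is no a priori rate of mixing available, so the step where you pass from ``equivalent to a product with bounded density'' to ``ratio tends to $1$ uniformly'' simply fails.

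This is exactly the content you are trying to bypass, and it is precisely what Ledrappier's theorem supplies. The paper's proof is a one-liner: \Cref{cor:productstructureofwmuL} says $\wmuL\sim\muL\times\muLd$; since $\pi_*\wmuL=\muL$ and $\pi^{\dagger}_*\wmuL=\muLd$, the measure $\nu$ in Ledrappier's definition \emph{is} $\muL\times\muLd$, so the equivalence $\wmuL\sim\muL\times\muLd$ is literally the quasi-Bernoulli condition. Ledrappier's theorem then gives Weak Bernoulli, and Ornstein--Friedmann gives the Bernoulli isomorphism. In particular, your last two sentences invoke Ledrappier in the wrong direction: you do not need quasi-Bernoulli once you have Weak Bernoulli; rather, the whole point is that Ledrappier upgrades the constant bound (quasi-Bernoulli) to the summable estimate (Weak Bernoulli), which is the nontrivial step you attempted to do by hand.
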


\begin{proof}
Indeed, \Cref{cor:productstructureofwmuL} shows that $(\lift{\Sigma},\tau)$ is quasi-Bernoulli, and the statement follows combining both theorems cited above.
\end{proof}


\section{Proofs of Theorems A and B}
\label{sec:proofofmain}

In this section we summarize the constructions developed above and prove the main results \Cref{thm:A,thm:B}. 
The limit theorems (\Cref{thm:C,thm:D}) and their corollaries are discussed in \Cref{sec:the_central_limit_theorem_for_quasi_morphisms}. 
We also establish the auxiliary \Cref{thm:compactification}.

Let $M$ be a closed locally $\CAT$ space and consider $\lie{g}=(g_t)_{t\in \R}$ the associated geodesic flow, parametrized in such a way that it is a metric Anosov flow. Consider the continuous linear isomorphism $B_{*}:\QMt{\Gamma}\to \QMt[a]{\Per{\lie g}}$ given in \Cref{thm:quasimorphismgrouptoporbit}. Fix $\nu_{\lie g}\in \ETM{\lie g}{\ms E}$ of full support.

Let $\mathfrak{t}=(\tau_t)_{t\in\R}:\lift{\Sigma}_f\toit$ be a suspension flow as in \Cref{thm:symbolicrepresentation} with corresponding semi-conjugacy $h$. This is also a metric Anosov flow, and we emphasize that there exists an open and dense subset $\ms E_0\subset \ms E$ such that every $\alpha\in\Per{\mathfrak g}$ intersecting $\ms E_0$ lifts to a unique periodic orbit of $\mathfrak t$. Note also that there exists a natural involution $I_f:\lift{\Sigma}_f\toit$ that interchanges the direction of orbits, and $h\circ I_f=I\circ h$.

We now describe the sequence of constructions that associate to a class $[L_{\mathfrak g}]\in \QMt[a][]{\Per{\lie g}}$ a Bowen function on $\Sigma$. Given $[L_{\lie g}]\in \QMt[a][]{\Per{\lie g}}$, it induces a function $L_{\lie t}: \Per{h^{-1}(\ms{E}_0)}\to\R$, with the quasimorphism property
\[
	\forall \alpha,\beta \text{ such that } \alpha\star\beta\in \Per{h^{-1}(\ms E_0)}\Rightarrow |L_{\mathfrak t}(\alpha\star\beta)-L_{\mathfrak t}(\alpha)-L_{\mathfrak t}(\beta)|\leq \norm{\delta L_{\mathfrak g}}.
\]
Observe that if $L_{\mathfrak g}\sim L_{\mathfrak g}'$, then $\norml{L_{\mathfrak t}-L_{\mathfrak t}'}<\oo$.

Recall that by \Cref{lem:bijectionperiodic} there exists a bijection $\Per{\lie t}\leftrightarrow \Per{\tau}$, and thus there exists a bijection $S:\Per{\ms E_0}\to\Sigma_0 \subset \Sigma$, where $\Sigma_0$ is a dense set of periodic orbits of $\tau$. We use $S$ to transfer the quasimorphism structure to $\Sigma_0$, and then extend it to a locally constant quasicocycle $\bm L=(L^{(n)})_n$.

Below, for $\al\in \Per{\lie g}$ we write 
\[
	\mm_{\al}(L_{\lie g})=\lim_{n\to \oo}\frac{1}n L_{\lie g}(\al^n), \quad \al^n=\underbrace{\al\star\al\star\cdots \al}_{n\text{ times }} 
\]
for the value of the periodic measure supported on $\al$, evaluated on $L_{\lie g}$. Similarly for periodic measures on $\Sigma$.

By construction of $\bm{L}$, 
\[
	\alpha\in \Per{\ms E_0}\rightarrow \mm_\al(L_{\lie g})=\mm_{S(\al)}(\bm{L})
\]

\begin{lemma}\label{lem:cohpartecohtotal}
	Suppose that $L_{\lie g}, L_{\lie g}'\in\QM{\Per{\ms E}}$ are such that for every $\al\in\Per{\ms E_0}$, $\mm_\al(L_{\lie g})=\mm_\al(L_{\lie g}')$.
	Then $L_{\lie g}\sim L_{\lie g}'$. 
\end{lemma}

\begin{proof}
Arguing as in \Cref{ssec:Livsicquasicocycle},  it suffices to show that $\mm_\al(L_{\lie g})=\mm_\al(L_{\lie g}')$ for every $\al\in \Per{\ms E}$. After homogenizing $L_{\lie g}, L_{\lie g}'$ (thus $L_{\lie g}(\al^n)=nL_{\lie g}(\al)$) we can assume that if $\alpha\in \Per{\ms E}$ then $\mm_{\alpha}(L_{\lie g})=L_{\lie g}(\alpha)$, and similarly for $L_{\lie g}'$.

Fix $\alpha \in \Per{\ms E}$. Using densitiy of $\ms E_0$ we can choose $\beta\in \Per{\ms E_0}$ so that $\alpha\star\beta\in \Per{\ms E_0}$. Then for every $n\in\N$,	
    \begin{align*}
    |L_{\mathfrak g}(\alpha^n)-L_{\mathfrak g}'(\alpha^n)|&\leq |L_{\mathfrak g}((\alpha\star\beta)^n)-L_{\mathfrak g}(\alpha^n)-L_{\mathfrak g}(\beta^n)|+|L_{\mathfrak g}'((\alpha\star\beta)^n)-L_{\mathfrak g}'(\alpha^n)-L_{\mathfrak g}'(\beta^n)|\\
    &\leq \norm{\delta L_{\mathfrak g}}+\norm{\delta L_{\mathfrak g}'}
    \end{align*}
and thus $L_{\mathfrak g}(\alpha)=L_{\mathfrak g}'(\alpha)$.
\end{proof}

As a consequence, if $\bm{L}, \bm{L}'$ are two induced quasi-cocyles so that for every $\seq x\in \Sigma_0$, $\mm_{\seq x}(\bm{L})=\mm_{\seq x}(\bm{L}')$, then $\bm{L}\sim\bm{L}'$. This in particular shows that the quasicocycle corresponding to $L_{\lie g}$ is cohomologous to the one corresponding to $-L_{\lie g}\circ I$.

Fix $\nu_{\lie g}\in \PTM{\lie g}{\ms E}$ of full support, and let $\mu \in \PTM{\tau}{\Sigma}$ be the corresponding measure. Since $\mu$ is fully supported we can apply \Cref{thm:representation1} and use $\bm{L}$ to obtain $\varphi_L\in \Bow[][]{\Sigma}$. Recall that the cohomology class of Bowen function $\phi\in\Bow[][\mu]{\Sigma}$ is represented uniquely by the cohomology class of some Bowen function $\hat\phi\in \Bow[][\nu_{\lie t}]{\Sigma_f}$ (\Cref{cor:Livsicsuspensions}). By the previous paragraph, $\varphi_L$ has the following antisymmetry property.

\begin{definition}
A Bowen function $\phi\in \Bow[][]{\Sigma}$ is antisymmetric if $\hat\phi\sim -\hat\phi\circ I_f$. The set of cohomology classes of antisymmetric Bowen functions will be denoted as $\Bow[][a]{\Sigma}/\thicksim$. 	
\end{definition}

Note that $\Bow[][a]{\Sigma}/\thicksim \subset \Bow[][]{\Sigma}/\thicksim$ is closed, thus a Banach space itself.

Altogether, this construction defines a map 
\[
	\Phi:  \QMt[a][]{\Per{\lie g}}\to \Bow[a][\mu]{\Sigma}/\thicksim\qquad \qquad \Phi([L_{\mathfrak g}])=[\varphi_{\bm{L}}].
\]
given as
\[
	[L_{\lie g}]\mapsto [L_{\lie t}|h^{-1}(\ms E_0)]\mapsto [L|\Sigma_0]\mapsto [\bm{L}]\mapsto [\varphi_L].
\]

\begin{lemma}
The map $\Phi$ is a Banach isomorphism.
\end{lemma}

\begin{proof}
Linearity is immediate from the construction, and continuity is obtained by following the norms during the construction. It is also injective, due to \Cref{lem:cohpartecohtotal} and the discussion after its proof. 

It remains to show surjectivity. Consider the Banach isomorphism $\Omega$ obtained concatenating the maps $\Gamma_1: \Bow[][\mu]{\Sigma}/\thicksim\to \Bow[][\lift\mu]{\lift\Sigma}/\thicksim$ and $\Gamma^{-1}: \Bow[][\lift\mu]{\lift\Sigma}/\thicksim\to \Bow[][\nu_{\lie t}]{\lift\Sigma_f}/\thicksim$  and $h_*:\Bow[][\nu_{\lie t}]{\lift\Sigma_f}/\thicksim\to \Bow[][\nu_{\lie g}]{\ms E}/\thicksim$ given by \Cref{pro:Bowenonesidedcohtwosided}, \Cref{cor:Livsicsuspensions} and \Cref{cor:BowengBowent}. Observe that $\Omega\left(\Bow[a][\mu]/\thicksim\right) \subset  \QMt[a][]{\Per{\lie g}}$, and moreover for $[\varphi]\in \Bow[a][\mu]/\thicksim$, $\Phi(\Omega([\varphi]))$ and $[\varphi]$ have the same integrals with respect to all periodic measures supported on points in $\Sigma_0$. Since periodic measures supported on $\Sigma_0$ determine the cohomology class (see \Cref{lem:cohpartecohtotal}), it follows that $\Phi(\Omega([\varphi]))=[\varphi]$, and the proof is complete.
\end{proof}

\begin{proof}[Proof of \Cref{thm:mainAB}]
Let $\Gamma=\pi_1(M,*)$. Apply successively the Banach isomorphisms 
\begin{itemize}
	\item $\Ker\left(c_2:H^2_b(M;\R)\to H^2(M;\R)\right)\to \frac{\QMt{\Gamma}}{\Hom{\Gamma,\R}}$ (\Cref{pro:exactsequence}),

	\item $B_{*}:\QMt{\Gamma}\to \QMt[a][]{\Per{\lie g}}$ (\Cref{thm:quasimorphismgrouptoporbit}), and 
    
    \item $\Phi: \QMt[a][]{\Per{\lie g}}\to \Bow[a][\mu]{\Sigma}/\thicksim$
    
\end{itemize}
given above to conclude the claim.
\end{proof}

\begin{remark}
	In the proof above, $[L]\in \QMt{\Gamma}$ is trivial if and only if for every periodic orbit $\alpha$ of $\mathfrak g$, $\mrm{av}_{\alpha}(\Psi([L]))=0$. This is consequence of \Cref{thm:E}.
\end{remark}

Now we consider \Cref{thm:compactification}.

\begin{proof}[Proof of the Compactification of conjugacy classes theorem]
Define $X_{\Gamma}=\Sigma(R), \ms{F}_n=\ms{B}^n$. Identify $\mc{R}(\Gamma;S)\approx \mrm{Conj}(\Gamma)$ and proceed as before to obtain a bijection
$S:\Per{\ms E_0}\to\Sigma_0$, where $\Sigma_0$ consists of a dense set of periodic points. For each $\alpha \in \Per{\ms E}\setminus \ms{E}_0$ we choose
$\bm{a}\in \Per{\ms W}$ so that the periodic orbit of $\mathfrak t$ determined by $\bm{a}$ is mapped onto $\alpha$ by $h$. Altogether, we have a injective map $\Phi:(\Gamma;S)\to X_{\Gamma}$, whose image is $\Per{\ms W}$. Note that if $w\in (\Gamma;S)$ then the period of $\Phi(w)$ is uniformly comparable with its translation length, thus with $|w|_S$. This implies condition $2$ of the Theorem.

Consider the finite measures \(\nu_n=\frac{1}{\# B_n}\sum_{w\in B_n}\delta_{w}\)  and use $\Phi$ to define corresponding periodic measures $\nu_n'\in \PTM{\tau}{X_{\Gamma}}$.  By using the results of \Cref{ssub:invariantmeasureassociated} we conclude that $(\nu_n')_n$ converges weakly to $\mu^{\Gamma}:=\MME$, the entropy maximizing measure of $\tau$, which is well known to be a Markov measure (see next Section). This finishes the proof.
\end{proof}

\begin{remark}\label{rem:dependeS}
Note that $\mu^{\Gamma}$ as given above in principle depends on $S$. However, choosing $S'$ another (symmetric) finite generating set for $\Gamma$ and carrying the construction, one sees first that the geodesic flows are equivalent, while the length of the periodic orbits is uniformly comparable. From this we get that we can use the same space $X_{\Gamma}$ for both flows, and also that the corresponding periodic measures have the same limit measure $\mu^{\Gamma}$. On the other hand, the entropy maximizing measure of the flows are typically different. 	
\end{remark}

For the proof of \Cref{thm:B} (and of its generalization to negatively curved spaces), we need some preparations. The topological pressure for the flow $\mathfrak g$ corresponding to the potential $\phi\in \Bow{\ms E}$  is the number
\[
	\Ptop(\phi):=\sup_{\mm\in \PTM{\mathfrak{g}}{\ms E}}\left(h_{\mm}(\mathfrak{g})+\mm(\phi)\right) 
\]
and we say $\nu$ is an equilibrium measure for the pair $(\mathfrak{g}, \phi)$ if the supremum is attained at $\nu$. As discussed previously, if there exists an equilibrium measure then there exist an ergodic equilibrium measure. Similar definitions hold for the suspension flow $\mathfrak t$.

As in the last part of \Cref{sec:LivsicSFT} (see \eqref{eq:potencialintegrado}) consider
\[
\lift{\phi}(\seq x)=\int_0^{f(\seq x)} \psi([\seq x,s])\dd s	
\]
where we are tacitly assuming that $\psi$ is integrable with respect to the variable $s$: this is no loss of generality, because $\psi$ is cohomologous a function satisfying this property, and equilibrium measures for cohomologous potentials are the same.

Recall that $f:\lift\Sigma\to\R$ is strictly positive. From this, and arguing identically as in Proposition $3.1$ of \cite{BowenRuelle} we obtain:

\begin{proposition}[Bowen and Ruelle]
It holds that $\Ptop(\phi)=c$, where $c$ is the unique solution to the equation $\Ptop(\lift{\phi}-c\cdot f)=0$. Moreover, $\nu$ is an equilibrium measure for $\phi$ if and only its corresponding measure $\mu$ on $\lift\Sigma$ is an equilibrium measure for $\tilde{\phi}-\Ptop(\phi)\cdot f$.	
\end{proposition}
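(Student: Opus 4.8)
The plan is to reduce the variational problem for the suspension flow $\mathfrak{t}$ to the one already solved for the base shift $\tau$ in \Cref{thm:variationalprinciple}, by combining the Abramov formula for entropy with the change-of-variables formula \eqref{eq:medidasuspension} relating $\mathfrak{t}$-invariant measures on $\Sigmab_f$ to $\tau$-invariant measures on $\Sigmab$. This is the classical Bowen--Ruelle argument; the only points that require attention here are that the potentials involved are merely of weak Bowen class and that $\Sigmab$ is two-sided.

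First I would record the two identities that carry the proof. Let $\nu\in \PTM{\mathfrak{t}}{\Sigmab_f}$ and let $\wmu\in\PTM{\tau}{\Sigmab}$ be the unique $\tau$-invariant measure associated to it through \eqref{eq:medidasuspension}; recall that $\wmu\mapsto\nu$ is a bijection between $\PTM{\tau}{\Sigmab}$ and $\PTM{\mathfrak{t}}{\Sigmab_f}$ preserving ergodicity. Abramov's formula gives $h_\nu(\mathfrak{t})=h_{\wmu}(\tau)/\wmu(f)$, valid for any positive measurable roof function bounded away from $0$ and $\infty$, in particular for our Hölder $f$; and \eqref{eq:medidasuspension} applied to $H=\phi$ gives $\int\phi\,\der\nu=\wmu(\tilde\phi)/\wmu(f)$ --- here, without loss of generality and without changing equilibrium states, we replace $\phi$ by a cohomologous potential that is integrable along the flow direction, as noted before the statement. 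Since $\wmu(f)>0$, adding the two identities yields
\[
h_\nu(\mathfrak{t})+\int\phi\,\der\nu=\frac{h_{\wmu}(\tau)+\wmu(\tilde\phi)}{\wmu(f)}\qquad\text{for every }\nu\in\PTM{\mathfrak{t}}{\Sigmab_f}.
\]

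Now I would run the elementary pressure argument. Since $f$ is Hölder it has the Bowen property, so for every $c\in\Real$ the function $\tilde\phi-c\,f$ is a $\wmu$-weak Bowen function on the two-sided SFT $\Sigmab$; replacing it by a cohomologous weak Bowen function depending only on the non-negative coordinates (\Cref{pro:Bowenonesidedcohtwosided}) we may apply \Cref{thm:variationalprinciple} to it. Set $g(c):=\Ptop(\tilde\phi-c\,f)=\sup_{\wmu\in\PTM{\tau}{\Sigmab}}\big(h_{\wmu}(\tau)+\wmu(\tilde\phi)-c\,\wmu(f)\big)$. As a supremum of affine functions of $c$ with slopes $-\wmu(f)\in[-\max f,-\min f]\subset(-\infty,0)$, the map $g$ is convex, continuous and strictly decreasing, with $g(c)\to\mp\infty$ as $c\to\pm\infty$; hence it has a unique zero $c_0$. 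For $c=c_0$, \Cref{thm:variationalprinciple} gives $h_{\wmu}(\tau)+\wmu(\tilde\phi)\le c_0\,\wmu(f)$ for all $\wmu$, with equality exactly at the unique equilibrium state $\wmu_0$ of $\tilde\phi-c_0 f$. Dividing by $\wmu(f)$ and using the displayed identity, $h_\nu(\mathfrak{t})+\int\phi\,\der\nu\le c_0$ for every $\nu\in\PTM{\mathfrak{t}}{\Sigmab_f}$, with equality exactly at the measure $\nu_0$ associated to $\wmu_0$. Therefore $\Ptop(\phi)=c_0$, the unique solution of $\Ptop(\tilde\phi-c\,f)=0$, and $\nu$ is an equilibrium measure for $\phi$ if and only if its associated base measure is the equilibrium state of $\tilde\phi-c_0 f=\tilde\phi-\Ptop(\phi)\,f$; in particular $\nu_0$ is the unique such measure. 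If one prefers the statement for the geodesic flow $\mathfrak{g}$, transport everything through the semi-conjugacy $h$ (\Cref{cor:BowengBowent}), which preserves pressure and equilibrium states.

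The only real obstacle is bookkeeping: one must check that Abramov's formula and \eqref{eq:medidasuspension} are invoked in precisely the generality at hand --- a Hölder roof function, an $\Lp[\oo]$ potential on $\Sigmab_f$, and a suspension over a two-sided SFT --- and that the reduction from the two-sided to the one-sided shift in \Cref{pro:Bowenonesidedcohtwosided} disturbs neither the pressure nor the set of equilibrium states. Both facts are built into the coarse Livsic cohomology developed in the previous sections, so no new ideas are needed.
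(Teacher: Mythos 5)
Your proof is correct and follows the same approach as the paper: the paper cites Proposition~3.1 of Bowen--Ruelle and notes that the only place their regularity hypothesis enters is the uniqueness of equilibrium states, which was established independently; you simply unpack that cited argument (Abramov's formula, the change-of-variables identity from~\eqref{eq:medidasuspension}, the convexity and strict monotonicity of $c\mapsto\Ptop(\tilde\phi-cf)$, and the variational principle of \Cref{thm:variationalprinciple}) in full detail.
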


\begin{remark}\label{rem:BRextended}
In \cite{BowenRuelle} the proof is given assuming some regularity of $\phi$. This property is only used for the uniqueness of equilibrium states for $\lift{\phi}$, which is true in our context.	
\end{remark}

We bring to the attention of the reader that \Cref{thm:A} (\Cref{thm:mainAB}) implies in particular that for every ergodic measure $\lift{\nu} \in \PTM{\mathfrak g}{\ms E}$ of full support, the spaces $\Bow[][]{\ms E}/\thicksim, \Bow[][\lift \nu]{\ms E}/\thicksim$ are Banach isomorphic; a similar statement holds for the suspension flow $\mathfrak t$.

In particular, due to \Cref{cor:BowengBowent} we get:

\begin{lemma}
There exists a Banach isomorphism $\Gamma_3:\Bow[][]{\ms E}/\thicksim \to \Bow[][]{\lift\Sigma_f}/\thicksim$.
\end{lemma}

From this we deduce:

\begin{corollary}
Let $\phi\in \Bow[][]{\ms E}, \phi'\in \Bow[][]{\lift\Sigma_f}$ so that $\Gamma_3([\phi])=[\phi']$.

Then $(\mathfrak g, \phi)$ has a unique equilibrium state if and only if $(\mathfrak t, \phi')$ has a unique equilibrium state.	
\end{corollary}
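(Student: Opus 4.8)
The plan is to push the whole question through the semi-conjugacy $h:\Sigmab_f\to\cE$ of \Cref{thm:symbolicrepresentation}, using that $h$ is surjective, at most $k$-to-one, and one-to-one over an open dense $\mathfrak g$-saturated set. First I would record the two elementary consequences of this. Pushing measures forward gives a \emph{surjective} affine map $h_*:\PTM{\mathfrak t}{\Sigmab_f}\to\PTM{\mathfrak g}{\cE}$, and since the fibres of $h$ are finite, $h$ has zero fibre entropy, so $h_\mu(\mathfrak t)=h_{h_*\mu}(\mathfrak g)$ for every $\mu\in\PTM{\mathfrak t}{\Sigmab_f}$ (Abramov--Rokhlin). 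Next I would unwind $\Gamma_3$: it is assembled from the isomorphism $h_*:\mathrm{Bow}_{\nu}(\Sigmab_f)/\thicksim\to\mathrm{Bow}_{\tilde\nu}(\cE)/\thicksim$ of \Cref{cor:BowengBowent}, which on representatives is pullback along $h$. Thus $\Gamma_3([\phi])=[\phi']$ says precisely that $\phi'$ is Livsic cohomologous to $\phi\circ h$; since Livsic coboundaries integrate to zero against invariant measures, $\int\phi'\der\mu=\int(\phi\circ h)\der\mu=\int\phi\der(h_*\mu)$ for every $\mu\in\PTM{\mathfrak t}{\Sigmab_f}$, so this integral depends only on $h_*\mu$.

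Combining the two points, for every $\mu\in\PTM{\mathfrak t}{\Sigmab_f}$ one has
\[
h_\mu(\mathfrak t)+\int\phi'\der\mu=h_{h_*\mu}(\mathfrak g)+\int\phi\der(h_*\mu).
\]
Taking suprema and using surjectivity of $h_*$ gives $\Ptop(\phi')=\Ptop(\phi)$, and moreover $\mu$ is an equilibrium state for $(\mathfrak t,\phi')$ iff $h_*\mu$ is one for $(\mathfrak g,\phi)$. Equivalently, the set of equilibrium states of $(\mathfrak t,\phi')$ is exactly $h_*^{-1}$ of the set of equilibrium states of $(\mathfrak g,\phi)$, and (since the pressure integrand is constant on $h_*$-fibres) $h_*$ maps the former onto the latter.

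The equivalence then follows. If $(\mathfrak t,\phi')$ has a unique equilibrium state, its image under $h_*$ is the unique equilibrium state of $(\mathfrak g,\phi)$ (the image is nonempty; two distinct ones downstairs would have disjoint nonempty preimages, forcing two upstairs). Conversely, assume $(\mathfrak g,\phi)$ has a unique equilibrium state $\tilde\nu$. Since entropy and the integral $\mm\mapsto\mm(\phi)$ are affine (and weakly upper semicontinuous, resp. continuous, cf. \Cref{pro:integracioncontinua}), applying this to the ergodic decomposition of $\tilde\nu$ shows $\tilde\nu$ is ergodic; and $\tilde\nu$ is fully supported, because the Bowen--Ruelle reduction together with the variational principle on the shift (\Cref{thm:variationalprinciple}, where the canonical equilibrium measure $\muL$ is mixing and fully supported, \Cref{pro:propiedadesmedidamul}) produces a fully supported equilibrium state, necessarily equal to $\tilde\nu$. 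By point (5) of \Cref{thm:symbolicrepresentation}, $\tilde\nu$ has a unique lift $\mu\in\PTM{\mathfrak t}{\Sigmab_f}$; since every $\mathfrak t$-invariant lift of $\tilde\nu$ is an equilibrium state for $(\mathfrak t,\phi')$, that flow has exactly one equilibrium state.

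The step I expect to be the main obstacle is showing that the unique equilibrium state $\tilde\nu$ downstairs is ergodic and, crucially, fully supported, so that the uniqueness-of-lift statement applies: this requires tracing $\tilde\nu$ back through the Bowen--Ruelle correspondence to the symbolic model, where full support and mixing were established for $\muL$. Everything else is bookkeeping with the isomorphisms and variational identities already in place.
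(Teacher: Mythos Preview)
Your argument is correct and in fact supplies considerably more detail than the paper itself, which treats the corollary as immediate from the preceding lemma (it is introduced only by the phrase ``From this we deduce:''). The route you take---preservation of the pressure functional under $h_*$ via finite-to-one factors, surjectivity of $h_*$ on invariant measures, and for the converse direction the observation that the unique equilibrium state on $\cE$ must be ergodic and fully supported so that part~(5) of \Cref{thm:symbolicrepresentation} applies---is precisely the content the paper elides. Note that in the application (proof of \hyperlink{theoremD}{Theorem D}) only the direction ``unique for $\mathfrak t$ $\Rightarrow$ unique for $\mathfrak g$'' is actually used, which is the easier of the two; your treatment of the harder converse, bootstrapping full support from the symbolic construction in \Cref{pro:propiedadesmedidamul}, is the right way to close that circle and is a genuine addition to what is written.
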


\begin{proof}[Proof of \Cref{thm:B}]
We fix some reference (ergodic) measure $\nu_{\lie g}\in \PTM{\lie g}{\ms E}$ of full support, and consider the Banach isomorphism $\Psi: \Bow[][\nu_{\lie g}]{\ms E}/\thicksim \approx \Bow[][\nu_{\lie t}]{\lift{\Sigma}_f}/\thicksim \to \Bow[][\mu]{\Sigma}/\thicksim$.

Given $\phi \in  \Bow[][\nu_{\lie t}]{\lift{\Sigma}_f}$ take $\varphi \in \Bow[][\mu]{\Sigma}$ so that $\Psi([\phi-\Ptop(\phi)f])=[\varphi]$. As discussed above, $\mu_{\varphi} \in \PTM{\tau}{\Sigma}$ is the equilibrium state of $\varphi$ if and only if its extension to $\lift\Sigma_f$ \Cref{eq:medidasuspension} is the unique equilibrium state of $\phi$. This proves uniqueness of the equilibrium state.

The Bernoulli part concerning equilibrium measures is consequence of \Cref{cor:Bernoulli}, since this implies that if $\nu_{\lie t}$ is a given equilibrium measure for a weak Bowen potential for the suspension flow, then $(\mathfrak t,\nu_{\lie t})$ is a Bernoulli flow. Since $h:(\mathfrak t,\nu_{})\to (\mathfrak g,\nu_{\lie g})$ is a metric isomorphism, we deduce the same for the later system. 
\end{proof}


\section{The Central Limit Theorem for Bowen quasimorphisms and statistics for the Patterson-Sullivan measure} 
\label{sec:the_central_limit_theorem_for_quasi_morphisms}

The last two sections are devoted to the statistical properties of quasimorphisms. In this part we prove \Cref{thm:C,thm:D}. 

We start introducing some notation.  Fix $\varphi:\Sigma\to\R$ a function with the Bowen property  and denote $\Rop[\varphi]$ its corresponding transfer operator. We assume that $\Rop[\varphi]\one=\one$,  and consider its associated stationary measure $\mu_{\varphi}$: $\mu_{\varphi}$ is invariant and is the unique equilibrium state corresponding to the system $(\varphi,\tau)$ (Cf.\@ \cite{EquSta} and compare with \Cref{cor:uniquestationary}).

We now specialize in measures associated to Markov potentials.

\begin{definition}\label{def:Markov}
	$\varphi:\Sigma\to\R$ is called a Markov potential of memory $s\geq 0$ if it is constant on cylinders of size $s+1$. The space of Markov potentials of memory $s$ is denoted by $\mrm{LC}_s$. The associated equilibrium state is called a Markov measure.
\end{definition}

\begin{example}
	The zero potential $\varphi_0\equiv 0$ is clearly a Markov potential of memory $0$. Its associated equilibrium state is $\mu=\MME$, the entropy maximizing measure of $\tau$  (also called the Parry measure of $\tau$). See \cite{Parry1964}. 

We remark however that $\varphi_0$ is not normalized, that is $\Rop[\varphi]\one\neq\one$. Instead, denoting $\lambda=e^{\htop(\tau)}$, there exists some function $u:\Sigma\to\R$ which only depends on the first coordinate, and so that 
\[
	\varphi(\seq{x})=u(\seq{x})-u(\tau \seq{x})-\htop(\tau)=u(x_0)-u(x_1)-\htop(\tau)
\]
is the (unique) normalized potential cohomologous to $\varphi_0$ (in particular, its equilibrium state is $\MME$).
\end{example}

In general, it is a consequence of the Perron-Frobenius-Ruelle theorem that any $\varphi\in\mrm{LC}_s$ is cohomologous to some normalized Markov potential $\varphi'\in \mrm{LC}_{s+1}$. With no loss of generality, we assume that $\varphi$ is a normalized Markov potential of memory $s$.

\begin{theorem}\label{thm:cohomologakernel}
	For every $\psi\in\Bow[][\mu_{\varphi}]{\Sigma}$ with $\int\psi \dd\mu_{\varphi}=0$, there is $h\in \Lp{\infty}{\mu_{\varphi}}$ such that $(\Id-\mu_{\varphi}h=\psi$.  Moreover $\norm[L^{\oo}]{h}\leq \Lambda(\varphi)\sqrt{s+1}\tnorm[B]{\psi}$, for some constant $\Lambda(\varphi)$.
\end{theorem}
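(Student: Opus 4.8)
The statement asserts that the transfer operator $\cL_{\varphi}$ of a normalized Markov potential has a spectral gap on $\Lp[\oo](\mu_\varphi)$ strong enough to invert $\Id-\cL_\varphi$ on the codimension-one subspace of mean-zero Bowen functions, with an explicit norm bound. The natural approach is to write, at least formally, $h=\sum_{n\ge 0}\cL_\varphi^n\psi$ and prove this series converges absolutely in $\Lp[\oo](\mu_\varphi)$; the convergence should come from exponential decay of $\|\cL_\varphi^n\psi\|_{\oo}$, and the square-root dependence on $s+1$ and the factor $\normBB{\psi}$ (rather than $\normL{\psi}{\oo}$) signals that we must track how the Bowen/variation data of $\psi$ controls the decay. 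So the first step is to recall (or reprove briefly, as it is essentially classical Ruelle–Perron–Frobenius theory for locally constant potentials, cf.\@ \cite{EquSta}) that since $\varphi$ depends only on the first $s+1$ coordinates, $\cL_\varphi$ acts nicely: for a function $g$ with $\var_n(g)\le V$ one has $\var_{n-1}(\cL_\varphi g)\le \theta V$ for some $\theta=\theta(\varphi)<1$ once $n>s$, because applying $\cL_\varphi$ averages over preimages weighted by $e^{\varphi}$ and contracts one coordinate of oscillation. Coupled with $\cL_\varphi\one=\one$ and $\int \cL_\varphi g\,\der\mu_\varphi=\int g\,\der\mu_\varphi$, this yields, for $g$ of mean zero, a bound of the shape $\normL{\cL_\varphi^n g}{\oo}\le C\theta^{n}$ where $C$ depends on $\var_\bullet(g)$ and $\normL{g}{\oo}$.

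\textbf{Key steps in order.} First, I would set up the variation-contraction estimate for $\cL_\varphi$ acting on functions of finite variation, being careful to separate the ``transient'' regime $n\le s$ (where one only gets $\var$ is nonincreasing and $\normL{\cL_\varphi g}{\oo}\le \normL{g}{\oo}$) from the ``contracting'' regime $n>s$; this is exactly where the $\sqrt{s+1}$ will enter, since one loses a factor controlled by $s+1$ passing through the memory window before geometric decay kicks in. Second, I would establish a Lasota–Yorke / Doeblin–Fortet type inequality: $\var_n(\cL_\varphi^k g)+\normL{\cL_\varphi^k g}{\oo}\le \alpha^k(\var_{n+k}(g)+\normL{g}{\oo})+\beta\normL{g}{\oo}$ and, specializing to mean-zero $g$ and using that $\cL_\varphi$ preserves the integral, upgrade this to genuine exponential decay $\normL{\cL_\varphi^k g}{\oo}\le \Lambda_0\theta_0^{k}\normBB{g}$; here it is essential that the hypothesis $\psi\in\mathrm{Bow}_{\mu_\varphi}(\Sigma)$ gives uniform control of $\var_n(S_n\psi)$ hence, via the Bowen constant, of the relevant variations of the iterates. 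Third, with that decay in hand I set $h:=\sum_{n\ge 0}\cL_\varphi^n\psi$, which converges in $\Lp[\oo](\mu_\varphi)$ with $\normL{h}{\oo}\le \sum_n \Lambda_0\theta_0^n\normBB{\psi}=\frac{\Lambda_0}{1-\theta_0}\normBB{\psi}$, and by construction $(\Id-\cL_\varphi)h=\psi$ after the telescoping $\sum_{n=0}^{N}(\cL_\varphi^n\psi-\cL_\varphi^{n+1}\psi)=\psi-\cL_\varphi^{N+1}\psi\to\psi$. Finally, I would bookkeep the constants so that $\Lambda(\varphi):=\Lambda_0/(1-\theta_0)$ and the dependence on the memory is isolated as the claimed $\sqrt{s+1}$ factor; if the natural bound comes out as $(s+1)$ rather than $\sqrt{s+1}$, one optimizes by balancing the $\Lp[\oo]$-loss against the variation-loss over the memory window (a Cauchy–Schwarz-type trade-off between the two seminorms comprising $\normBB{\cdot}$), which is the standard source of such square-root gains.

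\textbf{Main obstacle.} The delicate point is that $\psi$ is only a \emph{weak} Bowen function in $\Lp[\oo](\mu_\varphi)$, not Hölder or even continuous, so the classical RPF machinery — which is usually phrased for Hölder or bounded-variation potentials \emph{and} observables on the whole space — does not apply off the shelf to the observable $\psi$. One must check that $\cL_\varphi^n\psi$ makes sense $\mu_\varphi$-a.e.\@ (using that $\mu_\varphi$ is fully supported and $\cL_\varphi^*\mu_\varphi=\mu_\varphi$, so $\cL_\varphi$ is well defined on $\Lp[\oo](\mu_\varphi)$, cf.\@ \Cref{cor:uniquestationary}), and that the finite-variation estimates transfer to the essential-variation quantities packaged in $\normBB{\psi}$; concretely, one should first replace $\psi$ by the locally constant quasi-cocycle data $\bB^\psi=(\ie{\mu_\varphi}{S_n\psi|\xi^n})_n$ from \Cref{eq:Basociadovarphi}, prove the estimate at the level of this genuinely locally constant object where all the variation bounds are honest, and then transfer back using \Cref{lem:boundonpotential} and \Cref{thm:livsicWeakBowen} to control the coboundary correction. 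Getting the constant $\Lambda(\varphi)$ to depend only on $\varphi$ (through $\theta=\theta(\varphi)$ and $\normL{\varphi}{\oo}$) and not on $\psi$, while keeping the clean $\sqrt{s+1}$, is where most of the care will go.
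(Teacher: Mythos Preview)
Your proposal has a genuine gap at its core. You want to establish exponential decay $\normL{\cL_\varphi^n\psi}{\oo}\le \Lambda_0\theta_0^n\normBB{\psi}$ via variation-contraction (Lasota--Yorke), but this machinery requires control of the \emph{individual} variations $\var_n(\psi)$, and for a weak Bowen function these need not decay at all: the Bowen property only bounds $\var_n(S_n\psi)$, which is a completely different quantity. A weak Bowen $\psi$ can have $\var_n(\psi)$ of order one for every $n$ (think of a function that genuinely depends on all coordinates but whose Birkhoff sums telescope nicely). So the step ``the hypothesis $\psi\in\mathrm{Bow}_{\mu_\varphi}(\Sigma)$ gives uniform control of $\var_n(S_n\psi)$ hence, via the Bowen constant, of the relevant variations of the iterates'' is exactly where the argument breaks: there is no mechanism to convert a bound on $\var_n(S_n\psi)$ into termwise decay of $\cL_\varphi^n\psi$.

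The paper avoids this by never proving termwise decay. Instead it exploits the identity $\hat\cL_\varphi^{N-1}(S_N\psi)=P_s^{\perp}\bigl(\sum_{k=0}^{N-1}\cL_\varphi^k\psi\bigr)$, where $P_s^{\perp}$ is the projection off the finite-dimensional space $\operatorname{LC}_s$. Since $\cL_\varphi^{N-1}$ sends $\operatorname{LC}_N$ into $\operatorname{LC}_s$ (because $\varphi$ has memory $s$), and since the Bowen property gives $\normL{S_N\psi-P_N(S_N\psi)}{\oo}\le\normB{\psi}$, one gets $\normL{P_s^{\perp}\sum_{k=0}^{N-1}\cL_\varphi^k\psi}{\oo}\le 2\normB{\psi}$ uniformly in $N$. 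This is \emph{boundedness of the projected partial sums}, not absolute convergence. From there a Ces\`aro/weak-$\Lp[2]$ limit produces $h\in\operatorname{LC}_s^{\perp}$ solving $\cL_\varphi h-h+\psi\in\operatorname{LC}_s$, and the residual equation on the finite-dimensional block $\operatorname{LC}_s^0$ is inverted by elementary Perron--Frobenius. The $\sqrt{s+1}$ is not a Cauchy--Schwarz trade-off between the two pieces of $\normBB{\cdot}$ as you conjecture; it is the dimension factor $\sqrt{\dim\operatorname{LC}_s}$ arising when comparing the $\ell^\infty$ and spectral norms of $(\Id-\cL_\varphi)^{-1}$ on that finite-dimensional space.
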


\begin{corollary}\label{cor:cohomologakernel}
		For every $\psi\in\Bow[][\mu_{\varphi}]{\Sigma}$ with $\int\psi \dd\mu_{\varphi}=0$, there is $\cl{h}\in\Lp{\oo}{\mu_{\varphi}}$ with $\norm[L^\oo]{\cl h}\leq \Lambda(\varphi)\sqrt(s+1)\tnorm[B]{\psi}$ and such that $\cl{h}\circ\tau-\cl{h}-\psi\in\Ker\Rop[\varphi]$.
\end{corollary}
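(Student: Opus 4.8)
The plan is to derive this corollary directly from \Cref{thm:cohomologakernel} by a short manipulation of the transfer operator $\cL_{\phi}=\Lcal_{\varphi}$ and the cohomological equation it provides. Given $\psi\in\mathrm{Bow}_{\muL[\varphi]}(\Sigma)$ with $\int\psi\der\mu_{\varphi}=0$, \Cref{thm:cohomologakernel} furnishes $h\in\Lp[\oo](\mu_{\varphi})$ with $(\Id-\cL_{\phi})h=\psi$ and $\normL{h}{\oo}\leq \Lambda(\varphi)\sqrt{s+1}\normBB{\psi}$. The natural candidate is $\bar h:=h\circ\tau$; since $\tau$ preserves $\mu_{\varphi}$ we have $\normL{\bar h}{\oo}=\normL{h}{\oo}$, so the norm bound is immediate. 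It remains to check that $\bar h\circ\tau-\bar h-\psi\in\ker\cL_{\phi}$.

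First I would record the elementary identity $\cL_{\phi}(g\circ\tau)=g\cdot\cL_{\phi}(\one)=g$ for any $g$, which is the transference property \eqref{eq:transferenceLcal} together with the normalization $\cL_{\phi}\one=\one$ (this is exactly \Cref{lem:RPFnormalizado} applied to the potential $\varphi$). Using this, I compute
\[
\cL_{\phi}\bigl(\bar h\circ\tau-\bar h-\psi\bigr)=\cL_{\phi}\bigl(h\circ\tau^2-h\circ\tau-\psi\bigr)=(h\circ\tau)-\cL_{\phi}(h\circ\tau)-\cL_{\phi}\psi=(h\circ\tau)-h-\cL_{\phi}\psi.
\]
Now apply $\cL_{\phi}$ to the cohomological equation $(\Id-\cL_{\phi})h=\psi$, i.e. $h-\cL_{\phi}h=\psi$, to get $\cL_{\phi}h-\cL_{\phi}^2 h=\cL_{\phi}\psi$; but also $\cL_{\phi}(h\circ\tau)=h$, and more relevantly I rewrite $h\circ\tau-h-\cL_{\phi}\psi$ and substitute $\cL_{\phi}\psi=h-\cL_{\phi}h$ from the original equation to obtain $h\circ\tau-h-(h-\cL_{\phi}h)=h\circ\tau-2h+\cL_{\phi}h$. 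To see this vanishes I would instead organize the computation around the fact that $h$ solves $h=\psi+\cL_{\phi}h$: then $\bar h\circ\tau-\bar h-\psi=h\circ\tau^2-h\circ\tau-\psi$, and applying $\cL_{\phi}$ gives $h\circ\tau-h-\cL_{\phi}\psi$; since $\psi=h-\cL_{\phi}h$ we have $\cL_{\phi}\psi=\cL_{\phi}h-\cL_{\phi}^2h$, and also $h\circ\tau-h=h\circ\tau-\psi-\cL_{\phi}h$. Combining, $\cL_{\phi}(\bar h\circ\tau-\bar h-\psi)=h\circ\tau-\psi-\cL_{\phi}h-\cL_{\phi}h+\cL_{\phi}^2h$. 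This still needs care, and the cleanest route is to note that $\bar h\circ\tau-\bar h-\psi=(h\circ\tau - h)\circ\tau-(h\circ\tau-h)-\psi$ is itself of the form $w\circ\tau-w-\psi$ with $w=h\circ\tau-h=-\,( \psi - (h-\cL_\phi h)) + \cdots$; rather than chase this, I would simply verify directly that $w:=h\circ\tau-h$ satisfies $\cL_{\phi}(w\circ\tau-w-\psi)=0$ by writing $\cL_\phi(w\circ\tau)=w$, $\cL_\phi w=\cL_\phi(h\circ\tau)-\cL_\phi h=h-\cL_\phi h=\psi$, so $\cL_\phi(w\circ\tau-w-\psi)=w-\psi-\cL_\phi\psi$, and then using $w=h\circ\tau-h$ and $\cL_\phi\psi = \cL_\phi h-\cL_\phi^2 h$ ...

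\textbf{The clean argument.} I realize the efficient bookkeeping is: set $\bar h=h\circ\tau$. Then $\bar h\circ\tau-\bar h-\psi = h\circ\tau^2 - h\circ\tau-\psi$. Apply $\cL_\phi$: the first term gives $\cL_\phi(h\circ\tau^2)=(h\circ\tau)\cdot\cL_\phi\one = h\circ\tau$ (transference, one step), wait — $h\circ\tau^2 = (h\circ\tau)\circ\tau$, so $\cL_\phi((h\circ\tau)\circ\tau)=h\circ\tau$. The second term: $\cL_\phi(h\circ\tau)=h$. The third: $\cL_\phi\psi$. Hence $\cL_\phi(\bar h\circ\tau-\bar h-\psi)=h\circ\tau-h-\cL_\phi\psi$. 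Finally, from $\psi=(\Id-\cL_\phi)h$ we get $\cL_\phi\psi=\cL_\phi h-\cL_\phi^2 h=\cL_\phi(h-\cL_\phi h)=\cL_\phi\psi$ — circular — so instead apply $\cL_\phi$ to $\psi = h-\cL_\phi h$ after composing with $\tau$: we have $\psi\circ\tau^{-1}$ is not available, so compose the cohomological equation differently. The correct closing step is to observe $h\circ\tau-h = -(\psi) + (h - \cL_\phi h)\circ\text{(nothing)}$... The honest resolution: since $h-\cL_\phi h=\psi$, we have $h\circ\tau - h = (h\circ\tau-\cL_\phi h\circ\tau) + (\cL_\phi h\circ\tau - h)$, and $\cL_\phi h\circ\tau$ does not simplify. \emph{Therefore} the main obstacle is precisely pinning down the right algebraic identity; I expect that the intended statement is obtained by taking $\bar h = h\circ\tau$ and using that $\cL_\phi\psi = \cL_\phi h - \cL_\phi^2 h = (\Id - \cL_\phi)(\cL_\phi h)$, so that $h\circ\tau - h - \cL_\phi\psi = h\circ\tau - h - \cL_\phi h + \cL_\phi^2 h = (h\circ\tau - h) - (\cL_\phi h - \cL_\phi^2 h)$; applying $\cL_\phi$ again and iterating shows the quantity $\bar h\circ\tau-\bar h-\psi$ lies in $\bigcap_{n\geq 1}\cL_\phi^n(\Lp[\oo])$ up to controlled error, and by the spectral gap of $\cL_\phi$ on the relevant space (Perron–Frobenius–Ruelle, as invoked before \Cref{def:Markov}) combined with $\int(\bar h\circ\tau-\bar h-\psi)\der\mu_\varphi = 0$ this forces it into $\ker\cL_\phi$. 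I would present this last step via: $g:=\bar h\circ\tau-\bar h-\psi$ has $\cL_\phi g = g'$ where $g'$ again has zero integral and $\normL{g'}{\oo}\le \normL{g}{\oo}$; writing $g = (\psi - \psi) + \dots$ — the genuinely short proof simply takes $\bar h$ to be the function supplied by \Cref{thm:cohomologakernel} \emph{already}, i.e. $\bar h = h$, and computes $\bar h\circ\tau-\bar h-\psi = h\circ\tau - h - \psi = h\circ\tau - h - (h-\cL_\phi h) = h\circ\tau - 2h + \cL_\phi h$; applying $\cL_\phi$: $\cL_\phi(h\circ\tau) - 2\cL_\phi h + \cL_\phi^2 h = h - 2\cL_\phi h + \cL_\phi^2 h = (\Id-\cL_\phi)^2 h = (\Id - \cL_\phi)\psi$. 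This is generally nonzero, so $\bar h=h$ does not work either and the corollary as literally stated needs the extra twist above. The plan, then, is: take $\bar h := h\circ\tau$ with $h$ from \Cref{thm:cohomologakernel}; establish $\cL_\phi(\bar h\circ\tau - \bar h - \psi) = h\circ\tau - h - \cL_\phi\psi$; and then \emph{choose $h$ more carefully} — namely replace $h$ by $h - c$ and exploit that the cohomological equation determines $h$ only up to $\ker(\Id-\cL_\phi)=$ constants — to arrange $\cL_\phi\psi = h\circ\tau - h$, which is possible because both sides have the same integral and $\cL_\phi\psi\in\mathrm{Bow}$; invoking the uniqueness part of \Cref{thm:cohomologakernel}-type reasoning (or Livsic, \Cref{thm:livsicWeakBowen}) gives such an $h$, completing the proof. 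The main obstacle is executing this choice of $h$ consistently with the norm bound $\Lambda(\varphi)\sqrt{s+1}\normBB{\psi}$, which should follow since $\normBB{\cL_\phi\psi}\leq C\normBB{\psi}$ for the Markov transfer operator.

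\textbf{Summary of the route.} Concretely: (i) recall $\cL_\phi\one=\one$ and the transference identity $\cL_\phi(g\circ\tau)=g$; (ii) apply \Cref{thm:cohomologakernel} to $\psi$ to get $h$ with $(\Id-\cL_\phi)h=\psi$ and the stated bound; (iii) set $\bar h=h\circ\tau$ and compute $\cL_\phi(\bar h\circ\tau-\bar h-\psi)=h\circ\tau-h-\cL_\phi\psi$; (iv) apply \Cref{thm:cohomologakernel} a second time to the zero-mean Bowen function $\cL_\phi\psi$ to write $\cL_\phi\psi=(\Id-\cL_\phi)k$ and adjust $h$ by the cohomology so that $h\circ\tau-h=\cL_\phi\psi$ holds (using that solutions of the cohomological equation for $\tau$ in $\Lp[\oo]$ differ by constants, \Cref{thm:livsicWeakBowen}), whence $\cL_\phi(\bar h\circ\tau-\bar h-\psi)=0$; (v) track constants through (ii)–(iv) to obtain $\normL{\bar h}{\oo}\leq \Lambda(\varphi)\sqrt{s+1}\normBB{\psi}$ after possibly enlarging $\Lambda(\varphi)$. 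I expect step (iv) to be the delicate one, since it requires the compatibility of the two coboundary representations and control of $\normBB{\cL_\phi\psi}$ in terms of $\normBB{\psi}$ for Markov potentials.
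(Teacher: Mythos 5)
There is a genuine gap: you never land on the right choice of $\bar h$. The correct definition is $\bar h := \cL_{\phi}h$ (not $h\circ\tau$ and not $h$, both of which you try and correctly reject). With $\bar h=\cL_{\phi}h$, the transference identity $\cL_{\phi}(g\circ\tau)=g$ (from $\cL_{\phi}\one=\one$) gives
\[
\cL_{\phi}\bigl(\bar h\circ\tau-\bar h-\psi\bigr)
= \cL_{\phi}h-\cL_{\phi}^{2}h-\cL_{\phi}\psi
= \cL_{\phi}\bigl((\Id-\cL_{\phi})h-\psi\bigr)=0,
\]
so $\bar h\circ\tau-\bar h-\psi\in\ker\cL_{\phi}$ immediately; the norm bound is preserved because $\normOP{\cL_{\phi}\colon\Lp[\oo]\to\Lp[\oo]}=1$, hence $\normL{\bar h}{\oo}\leq\normL{h}{\oo}\leq\Lambda(\varphi)\sqrt{s+1}\normBB{\psi}$. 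This is a two-line argument.

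Your final plan (step (iv)) proposes to ``adjust $h$ by the cohomology so that $h\circ\tau-h=\cL_{\phi}\psi$ holds,'' invoking that $\Lp[\oo]$ solutions of the cohomological equation differ by constants. That cannot work: if $h$ is replaced by $h+c$ with $c$ a constant, then $h\circ\tau-h$ is unchanged, so there is no freedom left to match $\cL_{\phi}\psi$. In other words, $h\circ\tau-h$ is rigidly determined (up to a $\mu_{\varphi}$-null set) once $h$ is chosen, and in general it does not equal $\cL_{\phi}\psi$, so the identity $\cL_{\phi}(\bar h\circ\tau-\bar h-\psi)=h\circ\tau-h-\cL_{\phi}\psi=0$ you are aiming for is false for $\bar h=h\circ\tau$. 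The exploratory part of your write-up shows you understood what fails, but the closing route does not repair it; the simple substitution $\bar h=\cL_{\phi}h$ does.
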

\begin{proof}[Proof of Corollary]
	Let $h$ be as in the theorem and $\cl{h}=\Rop[\varphi] h$, then 
	\[
   \Rop[\varphi]\paren{(\Rop[\varphi]h)\circ \tau-\Rop[\varphi]h-\psi}=(\Id-\Rop[\varphi]h)\Rop[\varphi]h-\Rop[\varphi]\psi=\Rop[\varphi]\paren*{(\Id-\Rop[\varphi])h-\psi}=0.
 	\]
\end{proof}

We now prove \Cref{thm:cohomologakernel}.

\begin{lemma}
	$\Rop[\varphi](\mrm{LC}_{s})\subset \mrm{LC}_{s}$ and $\Rop[\varphi](\mrm{LC}_{s+k})\subset \mrm{LC}_{s+k-1}$ for every $k\geq 1$. 

	In particular for $m\geq 0$, $\Rop[\varphi]^m(\mrm{LC}_{s+k})\subset \mrm{LC}_{s+k-m}$ for every $k\geq m$ and, if $k\leq m$ then $\Rop[\varphi]^m(\mrm{LC}_{s+k})\subset \mrm{LC}_{s}$. Thus, for  every $N\geq 1$ it holds $\Rop[\varphi]^{N-1}(\mrm{LC}_{N})\subset \mrm{LC}_{s}$.
\end{lemma}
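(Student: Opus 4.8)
The plan is to prove the inclusions directly from the definition of the transfer operator $\cL_\varphi$ and the fact that $\varphi$ is a normalized Markov potential of memory $s$.

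First I would recall that for $\psi:\Sigma\to\Real$,
\[
\cL_\varphi\psi(\ux)=\sum_{\tau\uy=\ux} e^{\varphi(\uy)}\psi(\uy)=\sum_{a\in\cA:\,R(a,x_0)=1} e^{\varphi(a x_0 x_1\cdots)}\psi(a x_0 x_1\cdots).
\]
The key observation is the following: if $a\in\cA$, then the value $\varphi(a x_0 x_1 \cdots)$ depends only on $a$ and $x_0,\ldots,x_{s-1}$ (since $\varphi$ is constant on cylinders of length $s+1$), whereas $R(a,x_0)$ depends only on $a$ and $x_0$. Hence if $\psi$ depends only on the first $t$ coordinates $y_0,\ldots,y_{t-1}$, then the summand $e^{\varphi(ax_0\cdots)}\psi(ax_0\cdots)$ depends only on $a, x_0,\ldots,x_{\max(s,t)-1}$, and the sum over $a$ depends only on $x_0,\ldots,x_{\max(s,t)-1}$. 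So $\cL_\varphi(\operatorname{LC}_t)\subset\operatorname{LC}_{\max(s,t)}$ in general, and more precisely, when $t\geq s$ one gets $\cL_\varphi\psi$ depends on $x_0,\ldots,x_{t-2}$, i.e.\ $\cL_\varphi(\operatorname{LC}_t)\subset\operatorname{LC}_{t-1}$; this is exactly the statement $\cL_\varphi(\operatorname{LC}_{s+k})\subset\operatorname{LC}_{s+k-1}$ for $k\geq 1$, and $\cL_\varphi(\operatorname{LC}_s)\subset\operatorname{LC}_s$ for $k=0$. I would write this out carefully once: a function in $\operatorname{LC}_{s+k}$ depends on coordinates $x_0,\ldots,x_{s+k-1}$; pushing forward by $\cL_\varphi$ adds one coordinate $a$ at the front, producing dependence on $a,x_0,\ldots,x_{s+k-2}$, and the $e^\varphi$ weight contributes dependence on only $a,x_0,\ldots,x_{s-1}$ which is absorbed since $k\geq 1$; summing over $a$ then leaves dependence on $x_0,\ldots,x_{s+k-2}$.

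Next, the iterated statements follow by an immediate induction on $m$. Applying the case $k\geq 1$ repeatedly: $\cL_\varphi^m(\operatorname{LC}_{s+k})\subset\operatorname{LC}_{s+k-1}\subset\cdots\subset\operatorname{LC}_{s+k-m}$ as long as each intermediate index stays $\geq s+1$, i.e.\ as long as $k\geq m$; at the step where the index first reaches $s$ we use the stable case $\cL_\varphi(\operatorname{LC}_s)\subset\operatorname{LC}_s$, which shows that once $k\leq m$ we land in $\operatorname{LC}_s$ and stay there. Specializing to $N\geq 1$ with $k=N-m$ appropriately, or simply taking $\operatorname{LC}_N=\operatorname{LC}_{s+(N-s)}$ and applying $\cL_\varphi^{N-1}$: the index decreases by one at each step until it reaches $s$, so $\cL_\varphi^{N-1}(\operatorname{LC}_N)\subset\operatorname{LC}_s$ (this uses $N-1\geq N-s$, i.e.\ $s\geq 1$; the case $s=0$ is handled by the convention that a memory-$0$ potential is constant and the statement is trivial, or by noting $\operatorname{LC}_0\subset\operatorname{LC}_s$ only makes sense for $s\geq 0$ and the chain still terminates correctly).

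I do not expect any serious obstacle here — this is a bookkeeping lemma about how the range of coordinate-dependence shrinks under $\cL_\varphi$. The only point requiring a little care is making the coordinate-counting precise and consistent (distinguishing "depends on the first $t$ coordinates" conventions, and tracking the boundary case when the index hits $s$), and checking the edge cases $s=0$ and small $N$; these I would dispatch with a single clean inductive statement rather than case analysis.
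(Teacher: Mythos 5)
Your argument is correct and matches the paper's proof exactly: both are a direct unfolding of $\cL_\varphi\psi(\ux)=\sum_a e^{\varphi(a\ux)}\psi(a\ux)$ followed by tracking which coordinates each summand can depend on — the paper writes this out for indicator functions of cylinders, your general coordinate-counting is the same computation — and the iterated inclusions then follow by the immediate induction you describe. The one inaccuracy is your handling of $s=0$: the final inclusion $\cL_\varphi^{N-1}(\operatorname{LC}_N)\subset\operatorname{LC}_s$ is \emph{not} trivially true when $s=0$, it is actually false (already at $N=1$ it would require $\operatorname{LC}_1\subset\operatorname{LC}_0$, and in general the descent only reaches $\operatorname{LC}_1$), but this does not matter for the lemma's use: since $\operatorname{LC}_0\subset\operatorname{LC}_1$, a memory-$0$ potential can always be regarded as having memory $1$ and the lemma applied with $s=1$.
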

\begin{proof}
Compute directly,
\begin{align*}
&\Rop[\varphi]\one_{[w_1w_2\dots w_{s}]}(\seq x)=\sum_j e^{\varphi(j\seq x)}\one_{[w_1w_2\dots w_{s}]}(j\seq x)=e^{\varphi(w_1x_0\dots x_{s-1})}\one_{[w_2\dots w_{s}]}(\seq x)
\intertext{and}
&\Rop[\varphi](\one_{[w_1w_2\dots w_{s+k}]})(\seq x)=\sum_je^{\varphi(j\seq x)}\one_{[w_1w_2\dots w_{s+k}]}(j\seq x)=e^{\varphi(w_1x_0\dots x_{s-1})}\one_{[w_2\dots w_{s+k}]}(\seq x).
\end{align*}
\end{proof}

The subspace $\mrm{LC}_{s} \subset \Lp{2}{\mu_{\varphi}}$ is closed. Let $P_s^{\perp}, P_s=\Id-P_s^{\perp}$ be the orthogonal projections of the splitting $\Lp{2}{\mu_{\varphi}}=\mrm{LC}_{s}^{\perp}\oplus \mrm{LC}_{s}$.

\begin{remark}
	We remind the reader that
	\[
	P_s(\rho)=\sum_{|\bm w|=s}\left(\frac{1}{\mu_\varphi([\bm w])}\int_{[\bm w]}\rho \dd\mu_\varphi\right)\one_{[\bm w]}= \Emu{\mu_\varphi}{\rho | \xi^s}.
	\]
\end{remark}

\begin{lemma}
	$P^{\perp}_s\circ \Rop[\varphi]\circ P^{\perp}_s =P^{\perp}_s\circ \Rop[\varphi]$.
\end{lemma}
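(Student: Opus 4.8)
The claim is that $P_s^{\perp}\circ \cL_{\varphi}\circ P_s^{\perp}=P_s^{\perp}\circ \cL_{\varphi}$, or equivalently $P_s^{\perp}\circ \cL_{\varphi}\circ P_s=0$, i.e. $\cL_{\varphi}$ maps $\operatorname{LC}_s$ into $\operatorname{LC}_s$. But this is exactly the first assertion of the lemma just proven: $\cL_{\varphi}(\operatorname{LC}_s)\subset \operatorname{LC}_s$. So the proof is a one-line reduction to the previous lemma. First I would observe that $\operatorname{Ran}(P_s)=\operatorname{LC}_s$, so for any $\rho\in\Lp[2]$ we have $P_s\rho\in\operatorname{LC}_s$, hence $\cL_{\varphi}(P_s\rho)\in\operatorname{LC}_s$ by the previous lemma (case $k=0$, equivalently the explicit computation $\cL_{\varphi}(\one_{[w_1\cdots w_s]})=e^{\varphi(w_1x_0\cdots x_{s-1})}\one_{[w_2\cdots w_s]}$, which shows the image of a basis element of $\operatorname{LC}_s$ lies in $\operatorname{LC}_s$ since $\varphi$ has memory $s$). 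Therefore $P_s^{\perp}\cL_{\varphi}P_s\rho=0$ because $P_s^{\perp}$ annihilates $\operatorname{LC}_s$.

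\textbf{Details of the writeup.} Writing $\cL_{\varphi}=\cL_{\varphi}(P_s+P_s^{\perp})=\cL_{\varphi}P_s+\cL_{\varphi}P_s^{\perp}$ and composing on the left with $P_s^{\perp}$ gives $P_s^{\perp}\cL_{\varphi}=P_s^{\perp}\cL_{\varphi}P_s+P_s^{\perp}\cL_{\varphi}P_s^{\perp}$; so the identity to be proven is equivalent to vanishing of the first summand $P_s^{\perp}\cL_{\varphi}P_s$. Since $\operatorname{Ran}(P_s)=\operatorname{LC}_s$ and $\cL_{\varphi}(\operatorname{LC}_s)\subset\operatorname{LC}_s=\ker(P_s^{\perp})$, this term is zero, which finishes the proof. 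One should remark that the invariance $\cL_{\varphi}(\operatorname{LC}_s)\subset\operatorname{LC}_s$ hinges precisely on $\varphi$ being a normalized Markov potential of memory $s$ (so that $e^{\varphi(jx_0\cdots x_{s-1})}$ depends only on the coordinates $x_0,\dots,x_{s-1}$ after reindexing), an assumption in force throughout this subsection.

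\textbf{Main obstacle.} There is essentially no obstacle here; this lemma is a formal consequence of the preceding one together with the definition of the orthogonal projections $P_s,P_s^{\perp}$ associated to the closed subspace $\operatorname{LC}_s\subset\Lp[2](\mu_{\varphi})$. The only thing to be careful about is the identification $\operatorname{Ran}(P_s)=\operatorname{LC}_s$ as subspaces of $\Lp[2]$ (rather than, say, viewing $\operatorname{LC}_s$ merely as a space of continuous functions); since $\operatorname{LC}_s$ is finite-dimensional it is automatically closed in $\Lp[2](\mu_{\varphi})$, so the orthogonal splitting $\Lp[2]=\operatorname{LC}_s^{\perp}\oplus\operatorname{LC}_s$ is legitimate and $P_s$ is genuinely a projection with range $\operatorname{LC}_s$. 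With that in hand the argument is immediate.
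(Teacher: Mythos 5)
Your proof is correct and uses the same key fact as the paper — the invariance $\cL_{\varphi}(\operatorname{LC}_s)\subset\operatorname{LC}_s$ from the preceding lemma — with a marginally cleaner algebraic packaging: you reduce directly to $P_s^{\perp}\cL_{\varphi}P_s=0$ by splitting $\cL_{\varphi}=\cL_{\varphi}P_s+\cL_{\varphi}P_s^{\perp}$, whereas the paper expands $(\Id-P_s)\cL_{\varphi}(\Id-P_s)$ and uses $P_s\cL_{\varphi}P_s=\cL_{\varphi}P_s$ to cancel terms. These are the same proof up to routine rearrangement.
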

\begin{proof}
It holds that $P_s\circ\Rop[\varphi]\circ P_s=\Rop[\varphi]\circ P_s$, since $\Rop[\varphi]$ preserves $\mrm{LC}_{s}$. Thus
 \begin{eqnarray*}
		(\Id-P_s)\circ\Rop[\varphi]\circ (\Id-P_s)&=&\Rop[\varphi]\circ (\Id-P_s)-P_s\circ\Rop[\varphi]\circ (\Id-P_s)\\
		&=&\Rop[\varphi]-\Rop[\varphi]\circ P_s-P_s\circ\Rop[\varphi]+P_s\circ\Rop[\varphi]\circ P_s\\
		&=&\Rop[\varphi]-\Rop[\varphi]\circ P_s-P_s\circ\Rop[\varphi]+\Rop[\varphi]\circ P_s\\
		&=&\Rop[\varphi]-P_s\circ\Rop[\varphi]=P^{\perp}_s\circ \Rop[\varphi].
	\end{eqnarray*}
\end{proof}

Letting $\Roph[\varphi]=P^{\perp}_s\circ \Rop[\varphi],$ we get for every $N\geq 1$,
\[
	\Roph[\varphi]^N=P^{\perp}_s\circ \Rop[\varphi]^N.
\]

\begin{lemma}
For every $N\geq 1$, $\mrm{LC}_{N}\subset\Ker\Roph[\varphi]^{N-1}$.
\end{lemma}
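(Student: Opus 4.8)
The statement to prove is that $\operatorname{LC}_N \subset \ker(\hat\cL_{\varphi}^{N-1})$ for every $N \geq 1$. The plan is to combine the two lemmas immediately preceding it. By the first lemma, $\cL_{\varphi}^{N-1}(\operatorname{LC}_N) \subset \operatorname{LC}_s$ (this is exactly the last assertion of that lemma, applied with $k = N$, so that $\cL_\varphi$ lowers the memory by one at each of the $N-1$ steps, stopping at memory $s$). By the second displayed identity, $\hat\cL_{\varphi}^{N-1} = P_s^{\perp} \circ \cL_{\varphi}^{N-1}$. Therefore, for $\rho \in \operatorname{LC}_N$,
\[
\hat\cL_{\varphi}^{N-1}(\rho) = P_s^{\perp}\left(\cL_{\varphi}^{N-1}\rho\right) = 0,
\]
since $\cL_{\varphi}^{N-1}\rho \in \operatorname{LC}_s$ and $P_s^{\perp}$ is the orthogonal projection onto $\operatorname{LC}_s^{\perp}$, which annihilates $\operatorname{LC}_s$. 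This gives the inclusion.

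\textbf{Remarks on the steps.} The only point requiring a small amount of care is the bookkeeping in the first step: one must check that for $N - 1 \geq 1$ the iterate $\cL_\varphi^{N-1}$ indeed lands in $\operatorname{LC}_s$ and not merely in some $\operatorname{LC}_{s+j}$ with $j > 0$. This is precisely the content of the cited lemma: applying $\cL_\varphi$ once to $\operatorname{LC}_{s+k}$ produces $\operatorname{LC}_{s+k-1}$ when $k \geq 1$, and $\cL_\varphi(\operatorname{LC}_s) \subset \operatorname{LC}_s$, so iterating $N-1$ times starting from $\operatorname{LC}_N = \operatorname{LC}_{s + (N-s)}$ — and using that the memory can never drop below $s$ — lands us in $\operatorname{LC}_s$ after at most $N - s$ steps, hence certainly after $N - 1 \geq N - s$ steps (recall $s \geq 0$). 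For $N = 1$ the claim reads $\operatorname{LC}_1 \subset \ker(\hat\cL_\varphi^0) = \ker(\Id)$, which forces $\operatorname{LC}_1 = \{0\}$ only if interpreted naively; in fact $\hat\cL_\varphi^0$ should be read as the identity and the inclusion is vacuous/trivial precisely when $N - 1 = 0$, so the interesting range is $N \geq 2$, and there the argument above applies verbatim.

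\textbf{Main obstacle.} There is essentially no obstacle here: this lemma is a purely formal consequence of the preceding two lemmas (the memory-lowering property of $\cL_\varphi$ on locally constant functions, and the commutation identity $\hat\cL_\varphi^N = P_s^\perp \circ \cL_\varphi^N$). The one thing to state explicitly is that $P_s^\perp$ vanishes on $\operatorname{LC}_s$ by definition of the orthogonal splitting $\Lp[2](\muL[\varphi]) = \operatorname{LC}_s^\perp \oplus \operatorname{LC}_s$. I would write the proof as a two-line computation, citing the two previous lemmas by name, with a parenthetical note handling the degenerate case $N = 1$. The real work — both for this lemma and for what follows — is to leverage it: once $\operatorname{LC}_N \subset \ker \hat\cL_\varphi^{N-1}$, one approximates an arbitrary $\psi \in \mathrm{Bow}_{\mu_\varphi}(\Sigma)$ of zero integral by its conditional expectations $P_N \psi \in \operatorname{LC}_N$, controls the tail $\psi - P_N\psi$ using the Bowen property (the defect $\normB{\psi}$ bounds $\var_N$ uniformly), and sums the Neumann-type series $h = \sum_{k\geq 0} \hat\cL_\varphi^k \psi$, whose terms for the locally constant part truncate after finitely many steps thanks to this lemma, yielding the $\Lp[\infty]$ bound with the $\sqrt{s+1}$ factor. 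But that is the content of \Cref{thm:cohomologakernel} proper, not of the lemma at hand.
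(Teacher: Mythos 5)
Your proof is correct and is exactly the paper's argument: apply the preceding lemma to get $\cL_\varphi^{N-1}(\operatorname{LC}_N)\subset\operatorname{LC}_s$, then use $\hat\cL_\varphi^{N-1}=P_s^\perp\circ\cL_\varphi^{N-1}$ and the fact that $P_s^\perp$ annihilates $\operatorname{LC}_s$. One small slip in your bookkeeping remark: the inequality $N-1\geq N-s$ requires $s\geq 1$, not merely $s\geq 0$; this is harmless because the paper works with a \emph{normalized} Markov potential, whose memory is at least $1$ (normalization raises $\operatorname{LC}_s$ to $\operatorname{LC}_{s+1}$), and that same observation disposes of the $N=1$ case you flagged, since then $\operatorname{LC}_1\subset\operatorname{LC}_s=\ker P_s^\perp$ under the reading $\hat\cL_\varphi^0=P_s^\perp$ implicit in the identity $\hat\cL_\varphi^N\circ T^N=P_s^\perp$.
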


\begin{proof}
	Since $\Rop[\varphi]^{N-1}(\mrm{LC}_{N})\subset \mrm{LC}_{s}$, we get $\Roph[\varphi]^{N-1}(\mrm{LC}_{N})=P^{\perp}_s\Rop[\varphi]^{N-1}(\mrm{LC}_{N})=\{0\}$.
\end{proof}

It is not difficult to check that for every $1\leq p\leq \infty$, $\norm[\mrm{OP}]{\Rop[\varphi]:\Lp{p}{\mu_\varphi}\toit}=1$. On the other hand, the conditional expectation $P_s$ extends to a linear map $P_s:\Lp{p}{\mu_\varphi}\toit$ of norm one, hence we can also extend $P_s^{\perp}:=\Id-P_s: \Lp{p}{\mu_\varphi}\toit$. It follows in particular that $\norm[\mrm{OP}]{P_s^{\perp}:\Lp{\oo}{\mu_\varphi}}\leq 2$.

\begin{proposition}
	If $\psi\in \Bow[][\mu_{\varphi}]{\Sigma}$, then for every $N\geq 1$, 
	\[
	\norm[L^{\oo}]{\Roph[\varphi]^{N-1}\left(S_N\psi\right)}\leq 2\norm[B]{\psi}.
	\]
\end{proposition}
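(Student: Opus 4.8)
The goal is to bound $\normL{\hat\cL_\varphi^{N-1}(S_N\psi)}{\oo}$ uniformly in $N$ using only the Bowen constant of $\psi$. The plan is to decompose the Birkhoff sum $S_N\psi=\sum_{k=0}^{N-1}\psi\circ\tau^k$ and exploit the transference property \eqref{eq:transferenceLcal} together with the fact that each summand $\psi\circ\tau^k$, after enough iterations of $\hat\cL_\varphi$, either gets pushed into $\operatorname{LC}_s$ (and hence killed by $P_s^\perp$) or is converted back into $\psi$ itself. Concretely, for each $k$ with $0\le k\le N-1$ I would compute $\hat\cL_\varphi^{N-1}(\psi\circ\tau^k)=P_s^\perp\cL_\varphi^{N-1}(\psi\circ\tau^k)$. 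Writing $\cL_\varphi^{N-1}=\cL_\varphi^{N-1-k}\circ\cL_\varphi^{k}$ and using \eqref{eq:transferenceLcal} in the form $\cL_\varphi^k(\psi\circ\tau^k\cdot\one)=\cL_\varphi^k(\one)\cdot\psi=\psi$ (since $\cL_\varphi\one=\one$), the contribution collapses to $P_s^\perp\cL_\varphi^{N-1-k}(\psi)$.

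So the expression telescopes to $\hat\cL_\varphi^{N-1}(S_N\psi)=\sum_{j=0}^{N-1}P_s^\perp\cL_\varphi^{j}(\psi)=\sum_{j=0}^{N-1}\hat\cL_\varphi^{j}(\psi)$. Now I would use the locally constant quasi-cocycle $\bB^\psi=(\ie{\muL[\varphi]}{S_n\psi|\xi^n})_n$ (or rather its finite-memory truncations): the point is that $\ie{\muL[\varphi]}{S_n\psi|\xi^n}$ lies in $\operatorname{LC}_n$, and by \Cref{lem:boundonpotential} the difference $S_n\psi-\ie{\muL[\varphi]}{S_n\psi|\xi^n}$ is bounded by $2\normB{\psi}$ in sup norm. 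Since $\operatorname{LC}_{n}\subset\ker(\hat\cL_\varphi^{n-1})$ by the lemma just proved, applying $\hat\cL_\varphi^{N-1}$ to $S_N\psi$ and to its conditional expectation $\ie{\muL[\varphi]}{S_N\psi|\xi^N}\in\operatorname{LC}_N\subset\ker\hat\cL_\varphi^{N-1}$ shows $\hat\cL_\varphi^{N-1}(S_N\psi)=\hat\cL_\varphi^{N-1}(S_N\psi-\ie{\muL[\varphi]}{S_N\psi|\xi^N})$. Then I bound
\[
\normL{\hat\cL_\varphi^{N-1}(S_N\psi)}{\oo}\le\normOP{\hat\cL_\varphi^{N-1}:\Lp[\oo]\toit}\cdot\normL{S_N\psi-\ie{\muL[\varphi]}{S_N\psi|\xi^N}}{\oo}.
\]
For the operator norm I use $\hat\cL_\varphi^{N-1}=P_s^\perp\cL_\varphi^{N-1}$, with $\normOP{\cL_\varphi:\Lp[\oo]\toit}=1$ (preservation of constants and positivity) and $\normOP{P_s^\perp:\Lp[\oo]\toit}\le 2$; this gives the factor $2$, and combined with the $\normB{\psi}$ bound on the tail this yields exactly $\normL{\hat\cL_\varphi^{N-1}(S_N\psi)}{\oo}\le 2\normB{\psi}$. (A slightly more careful bookkeeping using that $P_s^\perp$ applied after $\cL_\varphi$ has norm $\le 1$ on the relevant subspace may be needed to pin down the constant exactly at $2$ rather than $4$; I would use the identity $P_s^\perp\cL_\varphi=P_s^\perp\cL_\varphi P_s^\perp$ from the lemma above to reduce to estimating on $\operatorname{LC}_s^\perp$ where the oscillation is already controlled.)

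\textbf{Main obstacle.} The delicate point is getting the constant to be exactly $2\normB{\psi}$ rather than a larger multiple: naively $\normOP{P_s^\perp}\le 2$ on $\Lp[\oo]$ only gives $4\normB\psi$. The trick is that $S_N\psi-\ie{\muL[\varphi]}{S_N\psi|\xi^N}$ already has the form $\rho-\ie{\muL[\varphi]}{\rho|\xi^N}$ with $\rho=S_N\psi$, so its $\xi^N$-conditional expectation vanishes, meaning it lies in $\operatorname{LC}_N^\perp$; applying $\hat\cL_\varphi^{N-1}=P_s^\perp\cL_\varphi^{N-1}$ to such an element, and using that on these elements the relevant composition is a contraction in sup-norm (the conditional-expectation subtraction is precisely what realizes the Bowen bound without the factor $2$ loss), gives the sharp constant. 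I would verify this by a direct cylinder-by-cylinder estimate: for any cylinder $[\w]$ of length $s$, $\hat\cL_\varphi^{N-1}(S_N\psi)$ restricted to $[\w]$ is an average (with weights summing to $1$) of values of $S_N\psi(\uy)-S_N\psi(\uy')$ over pairs $\uy,\uy'$ agreeing on the first $N$ coordinates, each of which is $\le\normB{\psi}$... and the factor $2$ comes from two such averagings (forward and the $P_s^\perp$ correction). This bookkeeping is routine but is where the precise value of the constant is won or lost.
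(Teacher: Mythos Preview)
Your approach is essentially the same as the paper's: decompose $S_N\psi=(S_N\psi-P_N(S_N\psi))+P_N(S_N\psi)$, kill the second piece using $\operatorname{LC}_N\subset\ker\hat\cL_\varphi^{N-1}$, and bound the first via $\normOP{P_s^\perp}\le 2$ and $\normOP{\cL_\varphi^{N-1}}\le 1$ on $\Lp[\oo]$.

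Your worry about the constant is misplaced, and the ``main obstacle'' paragraph is unnecessary. The Bowen property gives directly $\normL{S_N\psi-P_N(S_N\psi)}{\oo}\le\normB{\psi}$ (not $2\normB{\psi}$): for $\ux$ in the domain, $P_N(S_N\psi)(\ux)$ is a $\muL[\varphi]$-average of values $S_N\psi(\uy)$ with $\uy\in[\ux]_N$, each satisfying $|S_N\psi(\uy)-S_N\psi(\ux)|\le\normB{\psi}$, so the average differs from $S_N\psi(\ux)$ by at most $\normB{\psi}$. (Your citation of \Cref{lem:boundonpotential} with its constant $2$ is misleading here; that lemma is bounding something slightly different.) Hence $\normL{\hat\cL_\varphi^{N-1}(S_N\psi)}{\oo}\le 2\cdot 1\cdot\normB{\psi}$, with the factor $2$ coming solely from $P_s^\perp=\Id-P_s$. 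No further trick is needed.

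Also, the telescoping in your first paragraph ($\hat\cL_\varphi^{N-1}(S_N\psi)=\sum_{j=0}^{N-1}\hat\cL_\varphi^{j}\psi$) is correct but not used in this argument; the paper records that identity as a separate lemma \emph{after} the proposition, for later use.
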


 \begin{proof}
 	Let us write 
 	\[
 	S_N\psi=\left(S_N\psi-P_N(S_N\psi)\right)+ P_N(S_N\psi)
 	\]
  and note that $\Roph[\varphi]^{N-1}(P_N(S_N\psi))=0$. On the other hand, by the Bowen property, we get that 
      \[
     \norm[L^{\oo}]{S_N\psi-P_N(S_N\psi)}{\oo}\leq \norm[B]{\psi}, 
     \]
    hence
     \begin{align*}
     \norm[L^{\oo}]{\Roph[\varphi]^{N-1}\left(S_N\psi\right)}=\norm[L^{\oo}]{P_s^{\perp}\Rop[\varphi]^{N-1}\left(S_N\psi-P_N(S_N\psi)\right)}\leq 2\norm[B]{\psi}.
 	\end{align*}
 \end{proof}

 Recall that we are denoting by $T$ the Koopman operator of $\tau$. The transference property of $\Roph[\varphi]$ now gives:

 \begin{lemma}
 It holds that for $k\geq 0$, $\Roph[\varphi]^n\circ T^k=\Roph[\varphi]^{n-k}$ for $N>k$ and $\Roph[\varphi]^N\circ T^N=P_s^{\perp}$. Hence for $N\geq 1$, 
   \[
    \Roph[\varphi]^{N-1}\left(S_N\psi\right)=P_s^{\perp}\psi+\sum_{k=1}^{N-1}\Roph[\varphi]^k\psi=P_s^{\perp}\left(\sum_{k=0}^{N-1}\Rop[\varphi]^k\psi\right).
    \]
 \end{lemma}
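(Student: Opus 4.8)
The statement to prove is the final displayed Lemma: for $k\geq 0$ and $N>k$ one has $\hat\cL_{\varphi}^{N}\circ T^k=\hat\cL_{\varphi}^{N-k}$, that $\hat\cL_{\varphi}^N\circ T^N=P_s^{\perp}$, and consequently the telescoping identity for $\hat\cL^{N-1}_{\varphi}(S_N\psi)$. The approach is entirely algebraic, building on the transference property \eqref{eq:transferenceLcal} together with the two identities already proven in the preceding lemmas, namely $\hat\cL_{\varphi}^N=P^{\perp}_s\circ \cL_{\varphi}^N$ and $P^{\perp}_s\circ \cL_{\varphi}\circ P^{\perp}_s =P^{\perp}_s\circ \cL_{\varphi}$. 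First I would record the elementary fact that $\cL_{\varphi}\circ T=\Id$ (take $\phi=\one$ in the transference property: $\cL(\psi\cdot\one\circ\tau)=\cL(\psi)\cdot\one$ gives $\cL_{\varphi}T=\Id$ after renaming, using $\cL_{\varphi}\one=\one$). Iterating, $\cL_{\varphi}^k\circ T^k=\Id$ for every $k\geq 0$, and more generally $\cL_{\varphi}^N\circ T^k=\cL_{\varphi}^{N-k}$ for $N\geq k$.

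From here, the first identity is immediate: for $N>k$,
\[
\hat\cL_{\varphi}^{N}\circ T^k=P_s^{\perp}\cL_{\varphi}^{N}T^k=P_s^{\perp}\cL_{\varphi}^{N-k}=\hat\cL_{\varphi}^{N-k},
\]
using $\hat\cL_{\varphi}^{N}=P_s^{\perp}\cL_{\varphi}^{N}$ (and likewise for $N-k$, noting $N-k\geq 1$). The boundary case $k=N$ gives $\hat\cL_{\varphi}^N\circ T^N=P_s^{\perp}\cL_{\varphi}^N T^N=P_s^{\perp}\circ\Id=P_s^{\perp}$.

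For the telescoping formula, I would expand $S_N\psi=\sum_{j=0}^{N-1}T^j\psi$ (recalling $S_n^{\tau}\psi=\sum_{j=0}^{n-1}\psi\circ\tau^j$), apply $\hat\cL_{\varphi}^{N-1}$ term by term, and use the identity just established: for $0\leq j\leq N-2$ one has $\hat\cL_{\varphi}^{N-1}T^j=\hat\cL_{\varphi}^{N-1-j}$, while for $j=N-1$ one has $\hat\cL_{\varphi}^{N-1}T^{N-1}=P_s^{\perp}$. Reindexing the sum via $k=N-1-j$ yields
\[
\hat\cL^{N-1}_{\varphi}\left(S_N\psi\right)=P_s^{\perp}\psi+\sum_{k=1}^{N-1}\hat\cL^{k}_{\varphi}\psi=P_s^{\perp}\psi+\sum_{k=1}^{N-1}P_s^{\perp}\cL^{k}_{\varphi}\psi=P_s^{\perp}\left(\sum_{k=0}^{N-1}\cL^{k}_{\varphi}\psi\right),
\]
where the last equality uses linearity of $P_s^{\perp}$ and $\hat\cL_{\varphi}^0$ being interpreted as $\Id$ on the $k=0$ term (so that $P_s^{\perp}\psi=P_s^{\perp}\cL_{\varphi}^0\psi$). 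This is a purely formal manipulation, so I do not anticipate any genuine obstacle; the only point demanding a little care is the bookkeeping in the reindexing and making sure the exponents stay in the valid range $N>k$ at each application of the transference identity, together with checking that $\hat\cL_{\varphi}^0$ is read consistently as $P_s^{\perp}$ or $\Id$ depending on whether the outer $P_s^{\perp}$ has already been applied. The substantive content has all been front-loaded into the earlier lemmas (particularly $\hat\cL_{\varphi}^N=P_s^{\perp}\cL_{\varphi}^N$ and the idempotent-type relation $P_s^{\perp}\cL_{\varphi}P_s^{\perp}=P_s^{\perp}\cL_{\varphi}$), so this lemma is essentially their corollary.
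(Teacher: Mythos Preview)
Your proposal is correct and follows essentially the same approach as the paper: both derive $\cL_{\varphi}^N\circ T^k=\cL_{\varphi}^{N-k}$ from the transference property (with $\cL_{\varphi}\one=\one$), compose with $P_s^{\perp}$ to obtain the $\hat\cL_{\varphi}$ identities, then expand $S_N\psi=\sum_{j=0}^{N-1}T^j\psi$ and reindex. Your write-up is in fact slightly more explicit than the paper's one-line computation.
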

 \begin{proof}
 Compute
 \begin{align*}
\Roph[\varphi]^{N-1}\left(S_N\psi\right)&=\sum_{k=0}^{N-1}\Roph[\varphi]^{N-1}\left(T^k\psi\right)=P_s^{\perp}\psi+\sum_{k=0}^{N-2}\Roph[\varphi]^{N-1-k}\psi=P_s^{\perp}\psi+\sum_{k=1}^{N-1}\Roph[\varphi]^{k}\psi\\
&=P_s^{\perp}\left(\sum_{k=0}^{N-1}\Rop[\varphi]^{k}\psi\right).
 \end{align*}
\end{proof}

For $N\geq 1$ define $u_N=\Roph[\varphi]^{N-1}\left(S_N\psi\right)$: then $\Roph[\varphi] u_N=u_{N+1}-P_s^{\perp}\psi$,  and by the previous Proposition, $\norm[L^2]{u_{N}}\leq 2\norm[B]{\psi}$. Denote
\[
 	h_{M}=\frac{1}{M}\sum_{N=1}^M u_N.
\]
 By direct computation, 
 \[
 	\Roph[\varphi] h_{M}=h_{M}-P_s^{\perp}\psi-\frac{u_{N+1}-u_1}{M}.
 \]
Now take $h$ any weak-$\Lp{2}$ accumulation point of $(h_{M})_{M}$. Using that $\Roph[\varphi]$ is weakly continuous (in fact, strongly continuous), we deduce
\[
 	\Roph[\varphi]h=h-P_s^{\perp}\psi.
 \]
This tells us in particular that $h=P_s^{\perp}\left(\Rop[\varphi]h+\psi\right)$, hence $P_s^{\perp}h=h$ and so 
\[
 	P_s^{\perp}\left(\Rop[\varphi]h-h+\psi\right)=0 \qquad \Rop[\varphi]h-h+\psi\in \mrm{LC}_s.
 \]

\begin{remark}\label{rem:integralh}
By construction, $\int h\dd\mu=0$.
\end{remark}

\begin{lemma}
$\norm[L^{\oo}]{h}\leq 2\norm[B]{\psi}$.
\end{lemma}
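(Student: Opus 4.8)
The goal is to bound $\normL{h}{\oo}$ where $h$ is a weak-$\Lp[2]$ limit of the averages $h_M=\frac1\ell\sum_{N=1}^\ell u_N$ with $u_N=\hat\cL_{\varphi}^{N-1}(S_N\psi)$, and we have just established via the previous Proposition that $\normL{u_N}{2}\leq 2\normB{\psi}$ for every $N$. The essential point is that the bound on $u_N$ must be upgraded from an $\Lp[2]$ bound to an $\Lp[\oo]$ bound, after which the averages $h_M$ and their weak limit inherit it.

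The plan is as follows. First I would revisit the Proposition giving $\normL{u_N}{2}\leq 2\normB{\psi}$ and observe that its proof actually produces an $\Lp[\oo]$ estimate: $u_N=\hat\cL_{\varphi}^{N-1}(S_N\psi)=P_s^\perp\cL_{\varphi}^{N-1}(S_N\psi-P_N(S_N\psi))$, where $\normL{S_N\psi-P_N(S_N\psi)}{\oo}\leq\normB{\psi}$ by the weak Bowen property (the conditional expectation onto $\xi^N$ stays within $\normB{\psi}$ of $S_N\psi$ pointwise), then $\cL_{\varphi}$ is a contraction on $\Lp[\oo](\muL[\varphi])$ since $\varphi$ is normalized ($\Lcal_\varphi\one=\one$ forces $\normOP{\Lcal_\varphi:\Lp[\oo]\toit}=1$), and finally $\normOP{P_s^\perp:\Lp[\oo](\muL[\varphi])\toit}\leq 2$ as recorded above. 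Chaining these gives $\normL{u_N}{\oo}\leq 2\normB{\psi}$ uniformly in $N$. Consequently each average satisfies $\normL{h_M}{\oo}\leq\frac1\ell\sum_{N=1}^\ell\normL{u_N}{\oo}\leq 2\normB{\psi}$.

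It remains to pass this uniform $\Lp[\oo]$ bound to the weak-$\Lp[2]$ limit $h$. Since $\Lp[\oo](\muL[\varphi])$ is the dual of $\Lp[1](\muL[\varphi])$, the closed ball of radius $2\normB{\psi}$ in $\Lp[\oo]$ is weak-$*$ closed; but on norm-bounded sets of $\Lp[\oo]$ the weak-$*$ topology and the weak-$\Lp[2]$ topology agree (both are tested against $\Lp[2]\subset\Lp[1]$, and $\Lp[2]$ is norm-dense in $\Lp[1]$ on a probability space). Hence any weak-$\Lp[2]$ accumulation point of the sequence $(h_M)_M$, which all lie in that ball, again lies in that ball, so $\normL{h}{\oo}\leq 2\normB{\psi}$. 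Alternatively, and more elementarily, for every Borel set $A$ one has $|\int_A h\der\muL[\varphi]|=\lim_i|\int_A h_{M_i}\der\muL[\varphi]|\leq 2\normB{\psi}\,\muL[\varphi](A)$ along the subsequence realizing $h$ as a weak limit, and letting $A$ shrink appropriately (or invoking the Lebesgue differentiation / martingale argument along $\xi^n$) yields $|h|\leq 2\normB{\psi}$ almost everywhere.

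There is essentially no serious obstacle here; the only point requiring a little care is the identification of the weak-$\Lp[2]$ limit's norm bound, i.e.\ making sure the limit genuinely lands in $\Lp[\oo]$ and not merely in $\Lp[2]$. This is handled by either of the two routes above — the duality/weak-$*$ compactness argument, or the direct set-testing argument — and in either case the constant $2\normB{\psi}$ is preserved. Once $h\in\Lp[\oo](\muL[\varphi])$ with $\normL{h}{\oo}\leq 2\normB{\psi}$ is in hand, combined with the already-established relation $\cL_{\varphi}h-h+\psi\in\operatorname{LC}_s$, one iterates (applying $\cL_{\varphi}^{s}$, which sends $\operatorname{LC}_s$ into $\ker$-type subspaces as in the earlier lemmas, or directly re-running the argument on the memory-$s$ correction term) to absorb the $\operatorname{LC}_s$ error and produce the function claimed in \Cref{thm:cohomologakernel}, with the stated dependence $\Lambda(\varphi)\sqrt{s+1}$ coming from the $s$-fold bookkeeping of these locally-constant corrections.
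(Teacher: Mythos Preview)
Your proposal is correct and follows essentially the same approach as the paper. You correctly observe that the earlier Proposition already yields the $\Lp[\oo]$ bound $\normL{u_N}{\oo}\leq 2\normB{\psi}$ (despite the text quoting only the $\Lp[2]$ consequence), from which $\normL{h_M}{\oo}\leq 2\normB{\psi}$ follows; the paper then passes to the weak limit via the terse remark that ``the weak limit of non-negative functions is non-negative'' (applied to $2\normB{\psi}\pm h_M$), which is precisely your set-testing argument in compressed form.
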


\begin{proof}
  Indeed, for every $M\geq 1$ we have that $\norm[L^\oo]{h_M}\leq 2\norm[B]{\psi}$.  Since the weak limit of non-negative functions is non-negative, we get that the the $\Lp{\oo}$ norm of $h$ is bounded by $2\norm[B]{\psi}$.
\end{proof}

Putting everything together, we have proved:

\begin{corollary}\label{cor:cohlocconstant}
	 If $\psi\in \Bow[][\mu_{\varphi}]{\Sigma}$ then there exists $h \in \mrm{LC}_s^{\perp}$ verifying:
	 \begin{enumerate}
	 	\item $\Rop[\varphi]h-h+\psi\in \mrm{LC}_s$.
	 	\item $\norm{L^{\oo}}{h}\leq 2\norm[B]{\psi}$.
	 \end{enumerate}
\end{corollary}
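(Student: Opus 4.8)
The statement to be established, Corollary \ref{cor:cohlocconstant}, is essentially a repackaging of the chain of lemmas that precede it in this subsection, so the plan is to assemble those ingredients in the right order rather than to prove anything new. First I would recall the setup: $\varphi$ is a normalized Markov potential of memory $s$, $\Lcal_\varphi$ is its (normalized) transfer operator acting on $\Lp[p](\muL[\varphi])$ for all $p\in[1,\oo]$, and $P_s=\ie{\muL[\varphi]}{\cdot|\xi^s}$, $P_s^\perp=\Id-P_s$ are the associated conditional-expectation projections, of norm $1$ and $\leq 2$ respectively on $\Lp[\oo](\muL[\varphi])$. The modified operator is $\hat\cL_\varphi=P_s^\perp\circ\Lcal_\varphi$, and the three lemmas above give: (i) $\hat\cL_\varphi^N=P_s^\perp\circ\Lcal_\varphi^N$ for all $N\geq 1$, hence $\operatorname{LC}_N\subset\ker\hat\cL_\varphi^{N-1}$; (ii) using the transference property $\Lcal_\varphi(\psi\cdot\phi\circ\tau)=\Lcal_\varphi(\psi)\cdot\phi$, one gets $\hat\cL_\varphi^{N-1}(S_N\psi)=P_s^\perp\big(\sum_{k=0}^{N-1}\Lcal_\varphi^k\psi\big)$; and (iii) for $\psi\in\mathrm{Bow}_{\muL[\varphi]}$ one has $\normL{\hat\cL_\varphi^{N-1}(S_N\psi)}{\oo}\leq 2\normB{\psi}$, since $S_N\psi-P_N(S_N\psi)$ has $\Lp[\oo]$-norm $\leq\normB{\psi}$ by the Bowen property and $P_N(S_N\psi)\in\operatorname{LC}_N\subset\ker\hat\cL_\varphi^{N-1}$.

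With these in hand the body of the proof is a Cesàro-averaging argument. Set $u_N=\hat\cL_\varphi^{N-1}(S_N\psi)$, so $\normL{u_N}{\oo}\leq 2\normB{\psi}$ uniformly in $N$ (and in particular $\sup_N\normL{u_N}{2}<\oo$), and note the recursion $\hat\cL_\varphi u_N=u_{N+1}-P_s^\perp\psi$, which follows from (ii) together with $\Lcal_\varphi^N T^N=\Id$ on the relevant subspace. Averaging, $h_M=\frac1M\sum_{N=1}^M u_N$ satisfies $\hat\cL_\varphi h_M=h_M-P_s^\perp\psi-\frac{u_{M+1}-u_1}{M}$, with $\normL{h_M}{\oo}\leq 2\normB{\psi}$. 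Now pass to a weak-$\Lp[2](\muL[\varphi])$ accumulation point $h$ of $(h_M)_M$ (the closed ball of radius $2\normB{\psi}$ in $\Lp[2]$ is weakly compact); since $\hat\cL_\varphi$ is weakly continuous on $\Lp[2]$ and $\frac{u_{M+1}-u_1}{M}\to 0$, the limit satisfies $\hat\cL_\varphi h=h-P_s^\perp\psi$, i.e. $P_s^\perp\Lcal_\varphi h=h-P_s^\perp\psi$. The identity $\hat\cL_\varphi h_M=P_s^\perp(\cdots)$ shows each $h_M$, hence $h$, lies in the range of $P_s^\perp$, so $P_s^\perp h=h$; substituting, $P_s^\perp(\Lcal_\varphi h-h+\psi)=0$, which is exactly $\Lcal_\varphi h-h+\psi\in\operatorname{LC}_s$, giving item (1). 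For item (2): each $h_M$ is a convex combination of functions bounded in modulus by $2\normB{\psi}$, so $\|h_M\|_{\Lp[\oo]}\leq 2\normB{\psi}$; since a weak-$\Lp[2]$ limit of functions with $|h_M|\leq 2\normB{\psi}$ a.e. again satisfies $|h|\leq 2\normB{\psi}$ a.e. (test against indicator functions of $\{h>2\normB{\psi}+\epsilon\}$), we conclude $\normL{h}{\oo}\leq 2\normB{\psi}$. Finally $h\in\operatorname{LC}_s^\perp$ is just the statement $P_s h=0$, i.e. $P_s^\perp h=h$, already noted.

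\textbf{Main obstacle.} None of the steps is deep; the one point that requires a little care is the passage to the weak limit and the transfer of the uniform $\Lp[\oo]$-bound through it — specifically, justifying that $\hat\cL_\varphi$ (equivalently $\Lcal_\varphi$ composed with a conditional expectation) is weakly sequentially continuous on $\Lp[2](\muL[\varphi])$, and that the pointwise bound $|h_M|\leq 2\normB{\psi}$ survives weak convergence. The first follows because $\Lcal_\varphi$ is a bounded operator on $\Lp[2]$ (so weak-weak continuous) and $P_s^\perp$ is a bounded projection; the second is the standard fact that a closed convex set (here $\{g:|g|\leq 2\normB{\psi}\ \muL[\varphi]\text{-a.e.}\}$) is weakly closed. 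I would also remark, as done in the surrounding text, that this construction feeds directly into Theorem \ref{thm:cohomologakernel} via the telescoping over memory-refinement steps, but that refinement is carried out separately and is not needed for the present corollary.
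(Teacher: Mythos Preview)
Your proposal is correct and follows essentially the same route as the paper: define $u_N=\hat\cL_\varphi^{N-1}(S_N\psi)$, use the recursion $\hat\cL_\varphi u_N=u_{N+1}-P_s^\perp\psi$, Ces\`aro-average to $h_M$, pass to a weak-$\Lp[2]$ accumulation point $h$, and read off both $\cL_\varphi h-h+\psi\in\operatorname{LC}_s$ and the $\Lp[\oo]$ bound from the uniform bound on $h_M$. Your justification of the $\Lp[\oo]$ bound via weak closedness of the convex set $\{|g|\leq 2\normB{\psi}\}$ is exactly the argument the paper uses (phrased there as ``weak limits of non-negative functions are non-negative'').
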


\paragraph{The projection on the locally constant part}  Dealing with functions in $\mrm{LC}_s$ is essentially linear algebra. Denote $\mrm{LC}_s^0=\{\phi\in\mrm{LC}_s:\int \phi\dd\mu_\varphi=0\}$.

\begin{lemma}
	$\Rop[\varphi]:\mrm{LC}_s\toit$ is a Perron-Frobenius operator with eigenvalue $1$ and eigenfunction $1$, and so $\restr{\Id-\Rop[\varphi]}{\mrm{LC}^0_s}$ is invertible. If $\rho\in \mrm{LC}_s^0$ then
	\[
	\norml{(\Id-\Rop[\varphi])^{-1}\rho}\leq \Lambda(\varphi)\sqrt{s+1} \norml{\rho}
	\]
\end{lemma}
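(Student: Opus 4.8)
The statement reduces to a finite-dimensional linear-algebra claim: on the finite-dimensional space $\operatorname{LC}_s$ of functions constant on cylinders of length $s+1$, the operator $\cL_\varphi$ acts as a stochastic (Perron–Frobenius) matrix. First I would identify $\operatorname{LC}_s$ with $\Real^{\#\mathscr{W}_{s+1}}$ by sending $\rho$ to the vector of its values on cylinders $[\w]$, $|\w|=s+1$. The computation in the preceding lemma, $\cL_\varphi(\one_{[w_1\cdots w_s]})(\ux)=e^{\varphi(w_1x_0\cdots x_{s-1})}\one_{[w_2\cdots w_s]}(\ux)$, shows that in these coordinates $\cL_\varphi$ is represented by a nonnegative matrix $Q$ whose $(\w,\w')$ entry is $e^{\varphi(\w')}$ when $\w'$ is a legal one-step extension on the left of the suffix of $\w$ (and $0$ otherwise); since $\varphi$ is normalized, $\cL_\varphi\one=\one$, i.e. the row sums of $Q$ are $1$, so $Q$ is a stochastic matrix. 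Irreducibility and aperiodicity of $R$ (hence of the induced transition rule on words of length $s+1$, using the specification constant $M$) give that $Q$ is irreducible and aperiodic, so by the classical Perron–Frobenius theorem $1$ is a simple eigenvalue of $Q$, the corresponding left eigenvector is the stationary distribution — which here is exactly the vector $(\muL[\varphi]([\w]))_{|\w|=s+1}$ by the Gibbs/stationarity property of $\muL[\varphi]$ — and all other eigenvalues have modulus $<1$.

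\textbf{Invertibility and the bound.} From simplicity of the eigenvalue $1$, the restriction $\Id-\cL_\varphi$ to the invariant complementary subspace $\operatorname{LC}_s^0=\{\rho\in\operatorname{LC}_s:\int\rho\,\der\muL[\varphi]=0\}$ (the annihilator of the stationary left eigenvector, which is $\cL_\varphi$-invariant since $\int\cL_\varphi\rho\,\der\muL[\varphi]=\int\rho\,\der\muL[\varphi]$) is invertible. For the quantitative estimate I would write, for $\rho\in\operatorname{LC}_s^0$,
\[
(\Id-\cL_\varphi)^{-1}\rho=\sum_{k\geq 0}\cL_\varphi^k\rho,
\]
the series converging because the spectral radius of $\cL_\varphi|\operatorname{LC}_s^0$ is $<1$; since $\normOP{\cL_\varphi:\Lp[\oo](\muL[\varphi])\toit}=1$, each term satisfies $\norml{\cL_\varphi^k\rho}{\oo}\leq\norml{\rho}{\oo}$, but this alone does not give convergence of the sum in sup-norm, so one needs exponential decay. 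The cleanest route is to use that on the finite-dimensional space $\operatorname{LC}_s^0$ all norms are equivalent: there is a constant depending only on the dimension (hence on $\#\cA$ and $s$) and on the spectral gap of $\cL_\varphi$ relating $\norml{\cdot}{\oo}$ to any norm in which $\cL_\varphi|\operatorname{LC}_s^0$ is a strict contraction (e.g. an adapted Hilbert norm, or directly the operator bound $\norml{\cL_\varphi^k|\operatorname{LC}_s^0}{}\le C\theta^k$ from Perron–Frobenius). This yields $\norml{(\Id-\cL_\varphi)^{-1}\rho}{\oo}\le \big(C\sum_k\theta^k\big)\norml{\rho}{\oo}$; absorbing $C\sum_k\theta^k$ and a harmless $\sqrt{s+1}$ into a constant $\Lambda(\varphi)$ gives the claimed inequality
\[
\norml{(\Id-\cL_\varphi)^{-1}\rho}{\oo}\leq\Lambda(\varphi)\sqrt{s+1}\,\norml{\rho}{\oo}.
\]

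\textbf{Conclusion of the theorem from the lemma.} Granting this lemma, \Cref{thm:cohomologakernel} follows by combining it with \Cref{cor:cohlocconstant}: given $\psi\in\mathrm{Bow}_{\muL[\varphi]}(\Sigma)$ with $\int\psi\,\der\muL[\varphi]=0$, take $h_0\in\operatorname{LC}_s^\perp$ with $\cL_\varphi h_0-h_0+\psi=\rho\in\operatorname{LC}_s$ and $\norml{h_0}{\oo}\le 2\normB{\psi}$; integrating against $\muL[\varphi]$ shows $\int\rho\,\der\muL[\varphi]=0$, so $\rho\in\operatorname{LC}_s^0$, and setting $h=h_0-(\Id-\cL_\varphi)^{-1}\rho\in\Lp[\oo](\muL[\varphi])$ gives $(\Id-\cL_\varphi)h=\psi$ with $\norml{h}{\oo}\le 2\normB{\psi}+\Lambda(\varphi)\sqrt{s+1}\norml{\rho}{\oo}$, and $\norml{\rho}{\oo}\le\norml{\psi}{\oo}+2\norml{h_0}{\oo}\lesssim\normBB{\psi}$, so after adjusting $\Lambda(\varphi)$ we get $\norml{h}{\oo}\le\Lambda(\varphi)\sqrt{s+1}\,\normBB{\psi}$.

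\textbf{Main obstacle.} The only genuinely non-trivial point is making the quantitative sup-norm bound honest: irreducibility/aperiodicity give a spectral gap on $\operatorname{LC}_s^0$, but converting the abstract gap into an explicit constant of the stated shape requires either an adapted-norm argument on the finite-dimensional space or a direct Perron–Frobenius mixing estimate for the stochastic matrix $Q$; the appearance of the factor $\sqrt{s+1}$ (rather than, say, the dimension $\#\mathscr{W}_{s+1}$) suggests the authors track the dependence through an $\Lp[2]$-Hilbert-norm comparison, where the dimension enters only through $\norml{\rho}{\oo}\le\norml{\rho}{2}\cdot(\min_{|\w|=s+1}\muL[\varphi]([\w]))^{-1/2}$ and the Gibbs bounds control that minimum; I would follow that route, keeping the constant $\Lambda(\varphi)$ explicit in terms of $\normB{\varphi}$ and the spectral gap.
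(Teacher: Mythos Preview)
Your proposal is correct and follows essentially the same route as the paper: identify $\cL_\varphi$ on $\operatorname{LC}_s$ with a stochastic matrix, invoke Perron--Frobenius for invertibility of $\Id-\cL_\varphi$ on $\operatorname{LC}_s^0$, and bound $(\Id-\cL_\varphi)^{-1}\rho=\sum_{n\ge 0}\cL_\varphi^n\rho$ via a norm comparison. Your guess in the final paragraph is exactly right --- the paper extracts the $\sqrt{s+1}$ factor via the inequality $\normOP{A}\le\sqrt{\dim}\,\norm{A}_{sp}$ between the $\ell^\infty$ operator norm and the spectral norm, writing $\dim\operatorname{LC}_s=d(s+1)$ so that $\sqrt{d(s+1)}=\sqrt{d}\sqrt{s+1}$ and setting $\Lambda(\varphi)=\sqrt{d}/(1-\lambda_2)$.
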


 \begin{proof}
The fact that $\Rop[\varphi]$ is a Perron-Frobenius operator (equivalently, an stochastic matrix on the $d\cdot (s+1)-$dimensional vector space $\mrm{LC}_s$) is direct, and thus by its spectral resolution we get the invertibility of $\Id-\Rop[\varphi]$ on the complementary of the eigenspace associated to the Perron eigenvalue. For a square matrix $A$ denote $\norm[\mrm{OP}]{A}$ its operator norm with respect to the $\ell^{\oo}$ norm on the vector space, and  $\norm[\mrm{sp}]{A}$ its spectral norm. Let $\lambda_2$ be the second largest eigenvalue of $\Rop[\varphi]$: $\lambda_2$ is strictly smaller than $1$, hence for $\rho\in \mrm{LC}_s^0$ 
\begin{align*}
 \sum_{n=0}^\oo \norml{\Rop[\varphi]^n\rho}&\leq \sum_{n=0}^\oo \norm[\mrm{OP}]{\restr{\Rop[\varphi]^n}{\mrm{LC}^0_s}}\cdot\norml{\rho}\leq \sqrt{d\cdot (s+1)} \sum_{n=0}^{\oo}\norm[\mrm{sp}]{\restr{\Rop[\varphi]^n}{\mrm{LC}^0_s}}\cdot\norml{\rho}\\
&\leq \sqrt{s+1} \frac{\sqrt{d}}{1-\lambda_2}\norml{\rho}.
 \end{align*}
Taking $\Lambda(\varphi)=\frac{\sqrt{d}}{1-\lambda_2}$ it follows that $\norml{(\Id-\Rop[\varphi])^{-1}\rho}\leq \sqrt{s+1}\Lambda(\varphi)\norml{\rho}$.
 \end{proof}

We are to prove that any $\psi\in\mrm{LC}_s^0$ is Livšic cohomologous to an element of $\Ker \Rop[\varphi]$.

\begin{proof}[Proof of \Cref{thm:cohomologakernel}]
For $\psi\in\mrm{LC}_s^0$ let $\rho=\Rop[\varphi]h-h+\psi$. Then $\rho\in \mrm{LC}_s^0$, so there is $h_1\in \mrm{LC}_s^0$ such that $h_1-\Rop[\varphi]h_1=\rho$. Letting $h_0=-h+h_1$, it follows that 
\begin{align*}
h_0-\Rop[\varphi]h_0&=-h+\Rop[\varphi]h+h_1-\Rop[\varphi]h_1=\psi-\rho+h_1-\Rop[\varphi]h_1=\psi,
\end{align*}
and moreover
\begin{align*}
\norm[L^\oo]{h_0}&\leq \norm[L^\oo]{h}+\norml{h_1}{\oo}\leq 2\norm[B]{\psi}+\Lambda(\varphi)\sqrt{s+1}\norml{\rho}\\
&\leq 2\norm[B]{\psi}+\Lambda(\varphi)\sqrt{s+1}(2\norm[L^\oo]{h}+\norm[L^\oo]{\psi})\leq 2\norm[B]{\psi}+\Lambda(\varphi)\sqrt{s+1}\left(4\norm[B]{\psi}+\norm[L^\oo]{\psi}\right)\\
&\leq \left(2+4\Lambda(\varphi)\sqrt{s+1}\right)\norm[B]{\psi}+\Lambda(\varphi)\sqrt{s+1}\norm[L^\oo]{\psi}\\
&\leq 5\Lambda(\varphi)\sqrt{s+1}\tnorm[B]{\psi}.
\end{align*}
\end{proof}

\paragraph{Properties of the variance}  At this stage we can use some classical machinery initially developed by Gordin to establish the CLT for dynamical system, based mostly on the CLT for martingales differences of Billingsley. We rely on Brown's version \cite{BrownCLT}; a thorough exposition is given in the book by Hall and Heyde \cite{MartingaleLimit}. To simplify the notation now $\mu$ denotes a fixed Markov measure, and 
$\Rop=\Rop[\varphi]$ the corresponding transfer operator.

Fix a locally constant quasicocycle $L=(L^{(n)})_n$, and consider its associated weak Bowen function $\varphi_L=\Bow[][\mu]{\Sigma}$ as given by \Cref{pro:potentialarbitrary}. Denote $\psi_L=\varphi_L-\mu(L)$, and we remind the reader that changing $L$ by a cohomologous cocycle has the effect of changing $\psi_L$ by a Livšic cohomologous potential, which also has the weak Bowen property. Define
\begin{align}\label{eq:variance}
 \sigma^2(L)=\limsup_n\frac{1}{n}\norm[L^2]{S_n\psi_{L}}^2
 \end{align}

 From the previous line one gets that $\sigma^2$ is constant on cohomology classes. Let $\lift{\psi}_L\in \Ker \Rop$  be as in \Cref{cor:cohomologakernel}, hence $\psi_L=\lift{\psi}_L+v-v\circ \tau$ for some $v\in\Lp{\oo}{\mu}$. It follows that there exists some constant $E>0$ independent of $N$ so that $\aee{\mu}$,
\begin{align}\label{eq:Lcohkernel}
|L^{(N)}(\seq x)-S_N\lift{\psi}_L-N\mu(L)|\leq E.
 \end{align}

On the other hand, for $\psi\in \Lp{2}{\mu}$
 \begin{align}\label{eq:sumaNvar}
 \frac{\norm[L^2]{S_N\psi}^2}{N}=\norm[L^2]{\psi}^2+2\sum_{n=1}^{N-1}\left(1-\frac{n}{N}\right)\inner{\psi\circ \tau^n}{\psi}
\end{align}
hence for $\psi=\lift\psi_L$ we obtain
\begin{align*}
 \frac{\norm[L^2]{S_N\psi}}{N}=\norm[L^2]{\lift\psi_L}^2 
 \end{align*}
and
 \[
 	\sigma^2(L)=\norm[L^2]{\lift \psi_L}^2. 
 \]

\begin{corollary}\label{eq:sigmaL}
If $L$ verifies $\mu(L)=0$, then $\sigma(L)=0$ if and only if $L$ is cohomologically trivial.
\end{corollary}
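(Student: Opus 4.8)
\textbf{Proof proposal for \Cref{eq:sigmaL}.}

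The plan is to show both implications directly using the decomposition $\psi_L=\tilde\psi_L+v-v\circ\tau$ with $\tilde\psi_L\in\ker\Lcal_\varphi$ and $v\in\Lp[\oo](\mu)$ provided by \Cref{cor:cohomologakernel}, together with the identity $\sigma^2(L)=\normL{\tilde\psi_L}{2}^2$ established just above. The \emph{only if} direction is the easy half: if $L$ is cohomologically trivial then $\varL$ is an $\Lp[\oo]$-coboundary (by \Cref{thm:livsicWeakBowen} applied to $\psi_L$, which has mean zero and $\mm(\psi_L)=0$ for every invariant measure since $\psi_L\sim\varL-\mu(L)$ and $L$ is bounded), so $\sigma^2(L)$, being constant on cohomology classes, equals $\sigma^2$ of the zero potential, which is $0$. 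Concretely I would observe that $\sigma^2(L)=\limsup_n\frac1n\normL{S_n\psi_L}{2}^2$ and that $\sup_n\normL{S_n\psi_L}{\oo}<\oo$ when $\psi_L$ is a coboundary with bounded transfer function, forcing the $\limsup$ to vanish.

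For the \emph{if} direction, suppose $\sigma^2(L)=0$, i.e.\@ $\normL{\tilde\psi_L}{2}=0$, so $\tilde\psi_L=0$ in $\Lp[2](\mu)$ (hence $\mu$-a.e.). Then $\psi_L=v-v\circ\tau$ with $v\in\Lp[\oo](\mu)$, so $\varL=\mu(L)+v-v\circ\tau$. Plugging this into \Cref{cor:SnvsL} (or directly into \eqref{eq:Lcohkernel}), we get for $\mu$-a.e.\@ $\ux$ and all $N$,
\[
|L^{(N)}(\ux)-N\mu(L)|\leq E+|v(\ux)-v(\tau^N\ux)|\leq E+2\normL{v}{\oo}.
\]
Since $\mu(L)=0$ by hypothesis, this says $\sup_N\normL{L^{(N)}}{\oo}$ is bounded on a dense set (a full $\mu$-measure set, and $\mu$ has full support), and as noted repeatedly in the paper (cf.\@ the proof of \Cref{cor:muLdeterminesclass}, or \Cref{cor:medidaceroimplicaacotado}) a uniform bound for the quasi-cocycle on a dense set propagates to a uniform bound everywhere, using $\norm{\delta L}<\oo$. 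Therefore $L$ is cohomologically trivial.

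The main obstacle, such as it is, is bookkeeping rather than conceptual: one must be careful that ``$\tilde\psi_L=0$ in $\Lp[2]$'' genuinely upgrades to the pointwise coboundary statement \eqref{eq:Lcohkernel} on an invariant full-measure set (the set $\Sigma_0$ where $\varL$ is defined is invariant by \Cref{rem:convergeeninvariante}), and that the passage from ``bounded on a $\mu$-full, hence dense, set'' to ``bounded everywhere'' is legitimate for the locally constant quasi-cocycle $(L^{(n)})_n$ — but this last point is exactly the argument already used in \Cref{cor:muLdeterminesclass} and in the proof of \hyperlink{theoremAa}{Theorem A'}, so it may simply be cited. Everything else is a direct substitution into formulas already derived in this section.
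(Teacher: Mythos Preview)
Your proof is correct and follows essentially the same approach as the paper's own argument: both use $\sigma^2(L)=\normL{\tilde\psi_L}{2}^2$ to reduce $\sigma(L)=0$ to $\psi_L$ being an $\Lp[\oo]$-coboundary, and then identify this with cohomological triviality of $L$. The paper handles the last equivalence in one line by appealing to \Cref{thm:livsicWeakBowen} and the established isomorphism $\widetilde{\Qm}(\Sigma)\cong \mathrm{Bow}_\mu(\Sigma)/\!\sim$, whereas you unpack it explicitly via \eqref{eq:Lcohkernel} and the dense-to-everywhere propagation for locally constant quasi-cocycles; this is more detailed but not a different idea.
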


 \begin{proof}
 Indeed, $\sigma(L)=0$ if and only if $\varphi_L=\psi_L=v-v\circ \tau$ in $\Lp{2}(\mu)$. Since $\psi_L$ has the weak Bowen property, this holds if and only if $\psi_L$ is a $\Lp{\oo}-$coboundary with $\Lp{\oo}$ transfer function (\Cref{thm:livsicWeakBowen}). This implies the claim.
 \end{proof}

Before moving further, we establish some properties of the variance, which are of interest. We have that $\psi_L=h-\Rop h$, where $h\in \Lp{\oo}(\mu)$. Recall (\Cref{rem:integralh}) that $\int h\dd\mu=0$.

\begin{lemma}
For every $u\in \Lp{\oo}{\mu_\varphi}$ with $\int u\dd\mu_\varphi=0$ it holds $\norm[L^\oo]{\Rop^n u} \xrightarrow[n\mapsto\oo]{}0$.
\end{lemma}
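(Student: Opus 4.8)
The plan is to exploit the spectral decomposition of the transfer operator $\cL_{\varphi}$ acting on $\Lp[2](\muL[\varphi])$. Since $\varphi$ is a normalized Markov potential of memory $s$, we know that $\cL_{\varphi}$ preserves the finite-dimensional space $\operatorname{LC}_s$, on which it acts as a Perron--Frobenius (stochastic) matrix with simple leading eigenvalue $1$ and eigenvector $\one$; every other eigenvalue has modulus $\leq\lambda_2<1$. First I would reduce to the case $u\in \operatorname{LC}_s^0$: given an arbitrary $u\in \Lp[\oo](\muL[\varphi])$ with $\int u\der\muL[\varphi]=0$, write $u=P_s u+P_s^{\perp}u$. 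For the piece $P_s^{\perp}u$ one applies $\cL_{\varphi}^n=\cL_{\varphi}^{n}\circ(P_s+P_s^{\perp})$ and uses the mixing/Gibbs property of $\muL[\varphi]$ together with the fact that $\cL_{\varphi}$ contracts the orthogonal complement of the locally constant functions (a quantitative decay-of-correlations statement, which for Bowen functions follows from the Perron--Frobenius--Ruelle theorem, cf.\@ \cite{EquSta} and \Cref{thm:cuneo}); for the piece $P_s u\in\operatorname{LC}_s^0$, which lies in the complement of the $1$-eigenspace, the matrix estimate $\sum_n\norm{\cL_{\varphi}^n|\operatorname{LC}_s^0}_{sp}<\infty$ from the lemma just proved gives $\normL{\cL_{\varphi}^n P_su}{\oo}\to 0$ exponentially.

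Concretely, I would argue as follows. Using $P_s^{\perp}\circ\cL_{\varphi}=P_s^{\perp}\circ\cL_{\varphi}\circ P_s^{\perp}$ and that $\cL_{\varphi}$ preserves $\operatorname{LC}_s$, one gets $\cL_{\varphi}^n u=\cL_{\varphi}^n(P_su)+P_s(\cL_{\varphi}^n P_s^{\perp}u)+P_s^{\perp}(\cL_{\varphi}^n P_s^{\perp}u)$. The first summand tends to $0$ by the finite-dimensional spectral estimate. For the last two summands, note $\cL_{\varphi}^n P_s^{\perp}u$ is the transfer of a function orthogonal to $\operatorname{LC}_s$ with zero integral; since $\int\cL_{\varphi}^n P_s^{\perp}u\der\muL[\varphi]=\int P_s^{\perp}u\der\muL[\varphi]=0$ and, by the transference property, $\int(\cL_{\varphi}^n P_s^{\perp}u)\cdot g\der\muL[\varphi]=\int P_s^{\perp}u\cdot(g\circ\tau^n)\der\muL[\varphi]$ for any $g\in\Lp[\oo]$, the decay of correlations for the Gibbs measure $\muL[\varphi]$ against the Bowen potential $\varphi$ forces $\normL{\cL_{\varphi}^n P_s^{\perp}u}{\oo}\to 0$. (Alternatively, and perhaps more cleanly, one invokes the exponential decay statement for $\cL_{\varphi}$ on the codimension-one subspace of zero-integral functions directly, which is standard for transfer operators of Bowen functions on mixing SFTs.) Since $P_s$ and $P_s^{\perp}$ are bounded on $\Lp[\oo]$, this controls the remaining two summands and yields $\normL{\cL_{\varphi}^n u}{\oo}\to 0$.

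The main obstacle I expect is making the decay statement for $\cL_{\varphi}$ honest in the $\Lp[\oo]$ norm when $u$ is merely bounded (not continuous or H\"older), since the classical Ruelle--Perron--Frobenius spectral gap is stated on H\"older spaces. The way around this is that $\varphi$ itself is locally constant (a Markov potential), so $\cL_{\varphi}$ maps $\Lp[\oo]$ functions to $\Lp[\oo]$ functions and, after the splitting $\Lp[\oo]=\operatorname{LC}_s\oplus\operatorname{LC}_s^{\perp}$ with the first factor finite-dimensional, the only genuinely infinite-dimensional behaviour happens on $\operatorname{LC}_s^{\perp}$, where $\cL_{\varphi}$ is a (weak-)contraction and the measure $\muL[\varphi]$ is mixing; thus $\cL_{\varphi}^n P_s^{\perp}u\to 0$ in $\Lp[2]$, and a further use of the memory-$s$ structure (iterating enough to land back in $\operatorname{LC}_s$) upgrades the convergence to $\Lp[\oo]$. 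I would close by noting this is exactly the input needed to run Gordin's argument: $\psiL=h-\cL_{\varphi}h$ together with $\normL{\cL_{\varphi}^n h}{\oo}\to 0$ shows $S_N\psiL$ is, up to a uniformly bounded coboundary term, a reverse martingale-difference sum, which feeds into Brown's martingale CLT \cite{BrownCLT}.
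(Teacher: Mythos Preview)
Your diagnosis of the obstacle is exactly right, but the workaround you sketch cannot succeed, because the lemma as stated is in fact \emph{false} for general $u\in\Lp[\oo](\muL[\varphi])$. Here is a counterexample on the full $2$-shift with the $(\tfrac12,\tfrac12)$ Bernoulli measure (so $\varphi\equiv-\log 2$, memory $s=0$). Set $D_n=\{\uz:z_k=0\text{ for all }k\in[2^n,2^{n+1})\}$, $A=\bigcup_{n\geq 1}D_n$, and $u=\one_A-\mu(A)\in\Lp[\oo]$ with $\int u\,\der\mu=0$ and $\mu(A)\leq\sum_n 2^{-2^n}<1$. Given $m\geq 1$ pick $n$ with $2^n\geq m$ and let $E=\{\uy:y_k=0\text{ for }k\in[2^n-m,2^{n+1}-m)\}$, so $\mu(E)=2^{-2^n}>0$. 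For every $\uy\in E$ and every word $w$ of length $m$ one has $w\uy\in D_n\subset A$, hence $\cL_{\varphi}^m u(\uy)=1-\mu(A)$ on $E$ and $\normL{\cL_{\varphi}^m u}{\oo}\geq 1-\mu(A)$ for all $m$. Thus neither your decay-of-correlations step on $\operatorname{LC}_s^{\perp}$ nor the vague ``iterate to land back in $\operatorname{LC}_s$'' can be made to work: correlations decay only in the weak/$\Lp[2]$ sense for generic bounded $u$, and that does not upgrade.

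The paper's own one-line proof has the same gap: it invokes a density argument (convergence for continuous $u$, uniformly bounded operators, ``therefore everywhere''), but continuous functions are not $\Lp[\oo]$-dense in $\Lp[\oo]$, so Banach--Steinhaus does not apply. What \emph{is} true---and what suffices for everything the paper uses this lemma for---is convergence in $\Lp[p]$ for all $p<\oo$ and $\muL[\varphi]$-a.e.: since $\cL_{\varphi}^n u\circ\tau^n=\ie{\muL[\varphi]}{u\mid\tau^{-n}\BM[\Sigma]}$ is a reverse martingale for the decreasing filtration $(\tau^{-n}\BM[\Sigma])_n$, and the tail $\sigma$-algebra is trivial by mixing, the martingale convergence theorem gives $\cL_{\varphi}^n u\to 0$ a.e.\ and in $\Lp[p]$, $p<\oo$. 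This, together with $\normL{\cL_{\varphi}^n u}{\oo}\leq\normL{u}{\oo}$ and dominated convergence, already yields $\int(\cL_{\varphi}^N h)\rho\,\der\muL[\varphi]\to 0$ for every $\rho\in\Lp[1]$, which is all that the subsequent identification $h=\sum_{n\geq 0}\cL_{\varphi}^n\psiL$ and the variance formula require. Your spectral-gap treatment of the $\operatorname{LC}_s$ part is correct; the flaw is solely in claiming $\Lp[\oo]$ control of the $P_s^{\perp}$ piece.
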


 	\begin{proof}
 		Indeed, the uniformly bounded sequence of operators $\{\Rop^n:\Lp{\oo}_0(\mu)\to \Lp{\oo}_0(\mu)\}$ converges pointwise to zero for every continuous function $u\in \Lp{\oo}_0(\mu)=\{v\in \Lp{\oo}{\mu}, \mu(u)=0\}$, therefore everywhere.  
 	\end{proof}

 The above permits us to identify the function $h$ in $\Lp{p}{\mu}$.

 \begin{lemma}
 For every $1\leq p\leq \oo$, $h \eqin{\Lp{p}}\sum_{n\geq 0}\Rop^n \psi_L$, i.e. $\norm[L^p]{h-\sum_{n=0}^{N-1}\Rop^n\psi_L} \xrightarrow[N\mapsto\oo]{}0$.
 \end{lemma}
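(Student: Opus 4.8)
The statement to be proved is that $h = \sum_{n\ge 0}\cL_{\varphi}^n\psiL$ with convergence in $\Lp[p](\mu)$ for every $1\le p\le\oo$. The plan is to combine the identity $\psiL = h - \cL_{\varphi}h$ (established just before the statement) with the decay lemma $\normL{\cL_{\varphi}^n u}{\oo}\to 0$ for mean-zero $u\in\Lp[\oo](\mu)$. First I would telescope: from $\psiL = h - \cL_{\varphi}h$ one gets by induction $\cL_{\varphi}^k\psiL = \cL_{\varphi}^k h - \cL_{\varphi}^{k+1}h$, hence the partial sums satisfy
\[
\sum_{n=0}^{N-1}\cL_{\varphi}^n\psiL = h - \cL_{\varphi}^{N}h .
\]
This is the key algebraic step and it is immediate.

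Next I would control the tail term $\cL_{\varphi}^{N}h$. Since $h\in\operatorname{LC}_s^{\perp}\subset\Lp[\oo](\mu)$ and (by construction, as remarked right before the statement) $\int h\der\mu = 0$, the previous lemma gives $\normL{\cL_{\varphi}^{N}h}{\oo}\xrightarrow[N\to\oo]{}0$. Because $\mu$ is a probability measure, the inclusion $\Lp[\oo](\mu)\hookrightarrow\Lp[p](\mu)$ is norm-nonincreasing, so $\normL{\cL_{\varphi}^{N}h}{p}\le\normL{\cL_{\varphi}^{N}h}{\oo}\to 0$ for every $p\in[1,\oo]$. Combining with the telescoping identity, $\normL{h-\sum_{n=0}^{N-1}\cL_{\varphi}^n\psiL}{p} = \normL{\cL_{\varphi}^{N}h}{p}\to 0$, which is exactly the claim. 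I would also note in passing that the series $\sum_n\cL_{\varphi}^n\psiL$ thus converges in $\Lp[p]$ to a limit, namely $h$, but strictly speaking only the partial-sum convergence statement is asserted.

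There is essentially no obstacle here: the lemma is a two-line consequence of two facts already in hand. The only point requiring a word of care is the justification that $\int h\der\mu=0$ so that the decay lemma applies — this was flagged as holding "by construction" in the paragraph preceding the statement (the weak-$\Lp[2]$ limit of the averages $h_M$, each of which lies in $\operatorname{Im}(P_s^{\perp})$ and hence has zero integral against $\mu$, retains zero integral), and I would simply cite that. If one wished to avoid relying on that remark, one could instead replace $h$ by $h - \mu(h)$ at the cost of an additive constant, but since $\psiL$ has zero mean and $\cL_{\varphi}\mathbf 1 = \mathbf 1$, the constant would have to vanish anyway; so the cleanest route is to invoke the mean-zero normalization of $h$ directly.
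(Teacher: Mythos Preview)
Your proof is correct and follows essentially the same approach as the paper: telescope $\sum_{n=0}^{N-1}\cL_{\varphi}^n\psiL = h - \cL_{\varphi}^{N}h$ using $\psiL = h - \cL_{\varphi}h$, then invoke the preceding decay lemma (with $\mu(h)=0$) to conclude $\cL_{\varphi}^{N}h\to 0$ in $\Lp[\oo]$, hence in every $\Lp[p]$. The paper's proof is exactly this computation written out in one displayed line.
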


 \begin{proof}
 \begin{align*}
 h-\sum_{n=0}^{N-1}\Rop^n\psi_L &=h-\sum_{n=0}^{N-1}\Rop^n(h-\Rop^n h)=h-\left(\sum_{n=0}^{N-1}\Rop^n h-\sum_{n=1}^{N}\Rop^n h\right)\\
 &= h-\left(h-\Rop^N h\right)=\Rop^N h \xrightarrow[N\mapsto\oo]{\Lp{p}{\mu}} 0.
 \end{align*}
 \end{proof}

 In particular we get:
 \begin{corollary}
	For every $\rho\in \Lp{1}{\mu}$, it holds that $(\int \psi_L\cdot\rho\circ \tau^n \dd\mu)_n$ is summable and the sum is 
\[
 	\sum_{n\geq 0}\int\psi_L\cdot \rho\circ \tau^n \dd\mu=\int h\rho \dd\mu.
 \]
 \end{corollary}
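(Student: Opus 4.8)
The statement to prove is the final Corollary: for every $\rho\in\Lp[1](\mu)$, the sequence $\left(\int\psiL\cdot\rho\circ\tau^n\der\mu\right)_n$ is summable with sum $\int h\rho\der\mu$.

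The plan is to exploit the transference property of the transfer operator $\cL_{\varphi}$ together with the two lemmas immediately preceding the statement. First I would recall that, since $\varphi$ is normalized, $\cL_{\varphi}$ is the dual (with respect to $\mu=\muL[\varphi]$) of the Koopman operator $T$: for $f\in\Lp[\oo](\mu)$ and $g\in\Lp[1](\mu)$ one has $\int (Tf)\cdot g\der\mu = \int f\cdot \cL_{\varphi}g\der\mu$, equivalently $\int \psi\cdot(\rho\circ\tau^n)\der\mu = \int (\cL_{\varphi}^n\psi)\cdot\rho\der\mu$ for every $n\geq 0$. Apply this with $\psi=\psiL$: then
\[
\int\psiL\cdot\rho\circ\tau^n\der\mu = \int\left(\cL_{\varphi}^n\psiL\right)\rho\der\mu.
\]
Now I invoke the penultimate Lemma, which states $h = \sum_{n\geq 0}\cL_{\varphi}^n\psiL$ with convergence in $\Lp[p](\mu)$ for every $1\leq p\leq\oo$; in particular the partial sums $\sum_{n=0}^{N-1}\cL_{\varphi}^n\psiL$ converge to $h$ in $\Lp[\oo](\mu)$ (this is the content of the lemma, via $h-\sum_{n=0}^{N-1}\cL_{\varphi}^n\psiL = \cL_{\varphi}^N h\to 0$ in $\Lp[\oo]$).

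The second step is to transfer this convergence into convergence of the numerical series. Since $\rho\in\Lp[1](\mu)$ and $\sum_{n=0}^{N-1}\cL_{\varphi}^n\psiL\to h$ in $\Lp[\oo](\mu)$, Hölder's inequality gives
\[
\left|\int h\rho\der\mu - \sum_{n=0}^{N-1}\int\left(\cL_{\varphi}^n\psiL\right)\rho\der\mu\right| = \left|\int\left(h-\sum_{n=0}^{N-1}\cL_{\varphi}^n\psiL\right)\rho\der\mu\right|\leq \normL{h-\sum_{n=0}^{N-1}\cL_{\varphi}^n\psiL}{\oo}\cdot\normL{\rho}{1}\xrightarrow[N\to\oo]{}0.
\]
Combining with the transference identity of the first step, the partial sums $\sum_{n=0}^{N-1}\int\psiL\cdot\rho\circ\tau^n\der\mu$ converge to $\int h\rho\der\mu$, which is exactly the assertion of the Corollary; summability is part of this conclusion since the partial sums converge.

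There is essentially no hard obstacle here: the statement is a clean consequence of duality plus the already-established $\Lp[\oo]$-summation of $h=\sum_n\cL_{\varphi}^n\psiL$. The only point requiring minor care is making sure the transference identity $\int(Tf)g\der\mu=\int f(\cL_{\varphi}g)\der\mu$ is applied in the right direction and to the right arguments — here $\psiL\in\Lp[\oo](\mu)$ plays the role of the bounded factor and $\rho\in\Lp[1](\mu)$ the integrable factor, so the pairing $\Lp[\oo]\times\Lp[1]$ is the correct one and no integrability issue arises. One should also note that $\mu(\psiL)=0$ (since $\psiL=\varL-\mu(L)$ and $\mu(\varL)=\mu(L)$ by \Cref{pro:integralalmostbowenfunctions}), so $\psiL$ lies in the subspace on which the series for $h$ converges; this is already used in the preceding lemmas and requires no new argument.
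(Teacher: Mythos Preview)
Your proof is correct and is exactly the argument the paper leaves implicit: the Corollary is stated without proof, and your combination of the transference identity $\int\psiL\cdot(\rho\circ\tau^n)\der\mu=\int(\cL_{\varphi}^n\psiL)\rho\der\mu$ with the $\Lp[\oo]$-convergence $\sum_{n=0}^{N-1}\cL_{\varphi}^n\psiL\to h$ from the preceding Lemma is precisely the intended one-line deduction. The only minor remark is that your phrasing of the duality (``$f\in\Lp[\oo]$, $g\in\Lp[1]$'') is stated in one direction but then applied with the roles swapped; since the identity holds equally for $f\in\Lp[1]$, $g\in\Lp[\oo]$ (and $\cL_{\varphi}$ preserves $\Lp[\oo]$), this causes no issue.
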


Next we get a formula for the variance.

 \begin{corollary}\label{cor:sigmaformula}
 It holds that $\sigma^2_{\mu}(\psi_L)=\norm[L^{2}]{\psi_L}^2+2\sum_{n\geq 1}\int\psi\cdot\psi\circ \tau^n \dd\mu$.
 \end{corollary}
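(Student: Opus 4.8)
The statement to prove is the identity $\sigma^2_{\mu}(\psiL)=\normL{\psiL}{2}^2+2\sum_{n\geq 1}\int\psi\cdot\psi\circ \tau^n \der\mu$ (with $\psi=\psiL$), so the task is essentially to unwind the definitions accumulated in this subsection and show the series converges. The plan is to start from \eqref{eq:sumaNvar}, which gives
\[
\frac{\normL{S_N\psiL}{2}^2}{N}=\normL{\psiL}{2}^2+2\sum_{n=1}^{N-1}\left(1-\frac{n}{N}\right)\prodi{\psiL\circ \tau^n}{\psiL},
\]
and to recall that from the decomposition $\psiL=\tilde\psiL+v-v\circ\tau$ with $\tilde\psiL\in\ker\Lcal_{\varphi}$ one has (as already noted in the excerpt) $\frac{1}{N}\normL{S_N\psiL}{2}^2=\normL{\tilde\psiL}{2}^2$ for every $N$, hence $\sigma^2(L)=\normL{\tilde\psiL}{2}^2$. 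Thus the left-hand side $\sigma^2_{\mu}(\psiL)$ equals $\lim_N \frac1N\normL{S_N\psiL}{2}^2$, and it only remains to pass to the limit on the right-hand side of the Cesàro expression.

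The key step is therefore the summability of $\left(\prodi{\psiL\circ\tau^n}{\psiL}\right)_n$ together with Abel/Cesàro convergence. Here I would invoke the corollary just proved: taking $\rho=\psiL\in\Lp[1](\mu)$ in ``$\sum_{n\geq 0}\int\psiL\,\rho\circ\tau^n\der\mu=\int h\rho\,\der\mu$'' shows that $\sum_{n\geq 0}\prodi{\psiL\circ\tau^n}{\psiL}$ converges absolutely (the corollary is stated as ``sumable''), with value $\int h\,\psiL\,\der\mu$. Once the tail sum $\sum_{n\geq 1}\prodi{\psiL\circ\tau^n}{\psiL}$ is known to converge, the elementary lemma that for a convergent series $\sum a_n$ one has $\sum_{n=1}^{N-1}(1-\tfrac nN)a_n\to\sum_{n\geq1}a_n$ (Cesàro/Kronecker) lets me take $N\to\oo$ in \eqref{eq:sumaNvar}, yielding $\sigma^2_{\mu}(\psiL)=\normL{\psiL}{2}^2+2\sum_{n\geq1}\prodi{\psiL\circ\tau^n}{\psiL}$, which is the asserted formula written with the inner product spelled out as $\int\psi\cdot\psi\circ\tau^n\der\mu$. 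As a sanity check one also recovers $\normL{\tilde\psiL}{2}^2=\normL{\psiL}{2}^2+2\sum_{n\geq1}\prodi{\psiL\circ\tau^n}{\psiL}$, consistent with the Green--Kubo type identity.

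The only mild subtlety — and what I would flag as the main point requiring care rather than a genuine obstacle — is justifying the interchange of limit and sum, i.e. that the Cesàro weights $(1-n/N)$ do not spoil convergence; this is exactly where absolute summability of $\prodi{\psiL\circ\tau^n}{\psiL}$ (obtained from the preceding corollary via $h\in\Lp[\oo](\mu)$ and the exponential-type decay of $\cL_\varphi^n$ on mean-zero functions) is used, so I would state that bound explicitly before passing to the limit. Everything else is a direct substitution, so the proof is short: expand $\normL{S_N\psiL}{2}^2/N$, use $\sigma^2(L)=\normL{\tilde\psiL}{2}^2=\lim_N\normL{S_N\psiL}{2}^2/N$, invoke summability from the corollary, and apply the Cesàro limit lemma.
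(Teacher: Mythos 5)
Your approach is correct, but it is a genuinely different route from the paper's. The paper proves the identity by a direct algebraic chain: starting from $\sigma^2_\mu(\psiL)=\normL{\bar\psiL}{2}^2$, substituting $\bar\psiL=h-(\cL_\varphi h)\circ\tau$ and $\psiL=h-\cL_\varphi h$, expanding the $\Lp[2]$ norm, and using the transference property $\prodiL{h}{(\cL_\varphi h)\circ\tau}=\normL{\cL_\varphi h}{2}^2$ to collapse everything to $\normL{\psiL}{2}^2+2\prodiL{\psiL}{\cL_\varphi h}$, finally unwrapping $\cL_\varphi h=\sum_{n\geq1}\cL_\varphi^n\psiL$. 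You instead go through the Ces\`aro/Green--Kubo route: you take $\sigma^2=\lim_N\frac1N\normL{S_N\psiL}{2}^2$, invoke \eqref{eq:sumaNvar} and the convergence of $\sum_n\prodiL{\psiL\circ\tau^n}{\psiL}$ (from the preceding corollary, taking $\rho=\psiL$), and pass to the limit by the Ces\`aro lemma. Both are valid; yours makes the probabilistic origin of the formula transparent, while the paper's is shorter and simultaneously establishes $\sigma^2=\normL{h}{2}^2-\normL{\cL_\varphi h}{2}^2$ along the way.

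Two small inaccuracies worth correcting, neither of which breaks the argument. First, the identity $\frac1N\normL{S_N\psi}{2}^2=\normL{\tilde\psiL}{2}^2$ holds for \emph{every} $N$ only when $\psi=\tilde\psiL$ (since $\tilde\psiL\in\ker\cL_\varphi$ makes the cross-terms vanish exactly); for $\psi=\psiL=\tilde\psiL+v-v\circ\tau$ you only get the equality in the limit $N\to\oo$, with a lower-order error $O(1/N)$ coming from the coboundary part. You only use the limiting statement, so the conclusion stands, but the phrase ``for every $N$'' is a misreading. Second, the corollary you invoke gives \emph{convergence} of $\sum_{n\geq0}\int\psiL\rho\circ\tau^n\,\der\mu$, not \emph{absolute} convergence --- it comes from the telescoping identity $\sum_{n=0}^{N-1}\cL_\varphi^n\psiL=h-\cL_\varphi^N h$ and the fact that $\normL{\cL_\varphi^Nh}{\oo}\to0$, which is not claimed to be summable. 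Fortunately, the Ces\`aro--Kronecker step $\sum_{n<N}(1-n/N)a_n\to\sum a_n$ holds for any convergent series, so ordinary convergence is all you need; just drop the word ``absolutely.''
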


 \begin{proof}
 Compute
 \begin{align*}
 \sigma^2_{\mu}(\psi_L)&= \norm[L^2]{\cl\psi_L}^2=\norm[L^2]{\psi+\Rop h-\left(\Rop h\right)\circ \tau}^2=\norm[L^2]{h-\left(\Rop h\right)\circ \tau}^2\\
&=\norm[L^2]{h}^2+\norm[L^2]{\left(\Rop h\right)\circ \tau}^2-2\inner{h}{\left(\Rop h\right)\circ \tau}\\
&=\norm[L^2]{h}^2+\norm[L^2]{\Rop h}^2-2\inner{\Rop h}{\Rop h}=\norm[L^2]{h}^2-\norm[L^2]{\Rop h}^2\\
&=\norm[L^2]{\psi_L+\Rop h}^2-\norm[L^2]{\Rop h}^2\\
&=\norm[L^2]{\psi_L}^2+2\inner{\psi_L}{\Rop h}=\norm[L^2]{\psi_L}^2+2\sum_{n\geq 1} \inner{\psi_L}{\Rop^n\psi_L}\\
&=\norm[L^2]{\psi_L}^2+2\sum_{n\geq 1} \inner{\psi_L}{\psi_L\circ \tau^n}.
 \end{align*}
 \end{proof}

We also deduce the usual formula:

 \begin{lemma}
$\frac{1}{n}\norm[L^2]{S_n\psi_L}^2 \xrightarrow[n\mapsto\oo]{}\sigma^2_{\mu}(\psi_L)=\norm[L^2]{\cl \psi_L}^2$.
 \end{lemma}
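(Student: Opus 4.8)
The plan is to prove that $\frac{1}{n}\normL{S_n\psiL}{2}^2$ converges to $\sigma^2_{\mu}(\psiL)=\normL{\bar\psiL}{2}^2$ by exploiting the decomposition $\psiL=\tilde\psiL+v-v\circ\tau$ with $\tilde\psiL\in\ker\Lcal_\varphi$ and $v\in\Lp[\oo](\mu)$, obtained from \Cref{cor:cohomologakernel} (here $\bar\psiL=\tilde\psiL$ in the notation established just above \Cref{eq:sigmaL}). First I would write $S_n\psiL=S_n\tilde\psiL+v-v\circ\tau^n$, so that $\normL{S_n\psiL-S_n\tilde\psiL}{2}\leq 2\normL{v}{\oo}$ is bounded uniformly in $n$; dividing by $\sqrt n$ and applying the triangle inequality in $\Lp[2](\mu)$, it suffices to show $\frac1n\normL{S_n\tilde\psiL}{2}^2\to\normL{\tilde\psiL}{2}^2$.

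The key observation is that $\tilde\psiL\in\ker\Lcal_\varphi$, which means the sequence $(\tilde\psiL\circ\tau^k)_{k\geq 0}$ is a \emph{reverse} martingale difference sequence with respect to the decreasing filtration $(\tau^{-k}\BM[\Sigma])_k$ and the measure $\mu=\muL[\varphi]$: indeed $\Lcal_\varphi$ being the transfer operator with $\Lcal_\varphi\one=\one$, the identity $\ie{\mu}{\rho\mid\tau^{-1}\BM[\Sigma]}=(\Lcal_\varphi\rho)\circ\tau$ holds, so $\Lcal_\varphi\tilde\psiL=0$ translates to $\ie{\mu}{\tilde\psiL\mid\tau^{-1}\BM[\Sigma]}=0$, and more generally $\ie{\mu}{\tilde\psiL\circ\tau^k\mid\tau^{-(k+1)}\BM[\Sigma]}=0$. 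Consequently, for $j<k$, $\prodiL{\tilde\psiL\circ\tau^j}{\tilde\psiL\circ\tau^k}=\int \tilde\psiL\circ\tau^j\cdot\ie{\mu}{\tilde\psiL\circ\tau^k\mid\tau^{-(k+1)}\BM[\Sigma]}\der\mu=0$ since $\tilde\psiL\circ\tau^j$ is $\tau^{-(k+1)}\BM[\Sigma]$-measurable (as $j\leq k$). Therefore the cross terms in $\normL{S_n\tilde\psiL}{2}^2=\sum_{j,k=0}^{n-1}\prodiL{\tilde\psiL\circ\tau^j}{\tilde\psiL\circ\tau^k}$ all vanish, leaving exactly $\normL{S_n\tilde\psiL}{2}^2=n\normL{\tilde\psiL}{2}^2$ since each diagonal term equals $\normL{\tilde\psiL}{2}^2$ by $\tau$-invariance of $\mu$. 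This gives the limit on the nose, with no error term.

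Finally I would combine the two pieces: $\bigl|\frac1{\sqrt n}\normL{S_n\psiL}{2}-\frac1{\sqrt n}\normL{S_n\tilde\psiL}{2}\bigr|\leq\frac{2\normL{v}{\oo}}{\sqrt n}\to 0$, and $\frac1{\sqrt n}\normL{S_n\tilde\psiL}{2}=\normL{\tilde\psiL}{2}=\normL{\bar\psiL}{2}$ is constant, so $\frac1{\sqrt n}\normL{S_n\psiL}{2}\to\normL{\bar\psiL}{2}$, hence $\frac1n\normL{S_n\psiL}{2}^2\to\normL{\bar\psiL}{2}^2=\sigma^2_\mu(\psiL)$, matching both the definition \eqref{eq:variance} (the $\limsup$ is in fact a genuine limit) and the series formula of \Cref{cor:sigmaformula}. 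The only point requiring a little care is justifying the orthogonality identity $\ie{\mu}{\rho\mid\tau^{-1}\BM[\Sigma]}=(\Lcal_\varphi\rho)\circ\tau$ for the normalized transfer operator — this is the transference property \eqref{eq:transferenceLcal} applied appropriately, together with $\Lcal_\varphi^\ast\mu=\mu$ (the analogue of the corollary after \Cref{lem:RPFnormalizado}) — but this is standard and essentially already recorded in \Cref{ssec:transfer}. I do not anticipate a serious obstacle; the main subtlety is simply bookkeeping the filtration direction correctly so that the martingale-difference cancellation is valid.
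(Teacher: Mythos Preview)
Your approach is correct in spirit and is actually a bit cleaner than the paper's. The paper expands $\normL{S_n\psiL}{2}^2$ directly using $S_n\psiL = S_n\bar\psiL + (\cL_\varphi h)\circ\tau^n - \cL_\varphi h$; this produces, besides the main term $n\normL{\bar\psiL}{2}^2$ and a bounded term, a cross term $\prodiL{S_n\bar\psiL}{\cL_\varphi h}$ that must be shown to be $o(n)$ via the Von Neumann ergodic theorem. Your route via the triangle inequality at the $\sqrt{n}$-level sidesteps this entirely: since $\normL{S_n\psiL - S_n\bar\psiL}{2} \leq 2\normL{v}{\oo}$ is uniformly bounded, no ergodic theorem is needed.

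There is, however, a slip in your orthogonality argument---precisely the bookkeeping you flagged. For $j < k$, the function $\tilde\psiL\circ\tau^j$ depends on coordinates $x_j, x_{j+1}, \ldots$ and is therefore \emph{not} $\tau^{-(k+1)}\BM[\Sigma]$-measurable (that $\sigma$-algebra only sees $x_{k+1}, x_{k+2}, \ldots$). The conditioning must go the other way: $\tilde\psiL\circ\tau^k$ is $\tau^{-(j+1)}\BM[\Sigma]$-measurable (since $k \geq j+1$ and the filtration is decreasing), while $\ie{\mu}{\tilde\psiL\circ\tau^j \mid \tau^{-(j+1)}\BM[\Sigma]} = 0$, so $\prodiL{\tilde\psiL\circ\tau^j}{\tilde\psiL\circ\tau^k} = 0$. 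Equivalently, and more directly, use invariance of $\mu$ together with the transfer duality $\int f\cdot(g\circ\tau)\,\der\mu = \int(\Lcal_\varphi f)\cdot g\,\der\mu$ to get $\prodiL{\tilde\psiL}{\tilde\psiL\circ\tau^m} = \prodiL{\Lcal_\varphi\tilde\psiL}{\tilde\psiL\circ\tau^{m-1}} = 0$ for $m \geq 1$. With this correction your argument goes through.
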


\begin{proof}
 For every $n$,
 \[
 	S_n\psi_L=S_n\cl{\psi}_L+\left(\Rop h\right)\circ\tau^n-\Rop h,
 \]
so
 \begin{align*}
 \norm[L^2]{S_n\psi_L}^2&=\norm[L^2]{S_n\cl{\psi}_L}^2+\norm[L^2]{\left(\Rop h\right)\circ\tau^n-\Rop h}^2+2\inner{S_n\cl{\psi}_L}{\left(\Rop h\right)\circ\tau^n-\Rop h}\\
&=n\norm[L^2]{\cl{\psi}_L}^2+\norm[L^2]{\left(\Rop h\right)\circ\tau^n-\Rop h}^2+2\inner{S_n\cl{\psi}_L}{\left(\Rop h\right)\circ\tau^n}-2\inner{S_n\cl{\psi}_L}{\Rop h}\\
&=n\norm[L^2]{\cl{\psi}_L}^2+\norm[L^2]{\left(\Rop h\right)\circ\tau^n-\Rop h}^2-2\inner{S_n\cl{\psi}_L}{\Rop h}
 \end{align*}
since $\cl{\psi}_L\in\Ker \Rop$ implies that $\Rop^n\left(S_n\cl{\psi}_L\right)=0$. Now, on the one hand, we have that $\norm[L^2]{\left(\Rop h\right)\circ\tau^n-\Rop h}^2\leq 2\norm[L^2]{\Rop h}^2$.  Since $\int\cl{\psi}_L \dd\mu=0$, by Von Neumann ergodic theorem the sequence $\frac{1}{n}S_n\cl{\psi}_L$ goes to zero in $\Lp{2}{\mu}$ and hence we get that $\frac{1}{n}\inner{S_n\cl{\psi}_L}{\Rop h} \xrightarrow[n\mapsto\oo]{}0$, so the lemma follows.
 \end{proof}

\begin{corollary}
It holds 
\[
	\frac{1}{N}\sum_{n=0}^{N-1}k\int\psi_L\cdot \psi_L\circ\tau^k\dd\mu\xrightarrow[N\mapsto\oo]{} 0
\]
and hence $(n\int\psi_L\cdot\psi_L\circ\tau^n\dd\mu)_{n\geq 0}$ tends to $0$ in the Ces\`aro sense. 
\end{corollary}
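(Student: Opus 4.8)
The plan is to read off the result from two facts already established: the exact expansion of the normalized variance of Birkhoff sums, and its convergence to $\sigma^2_{\mu}(\psiL)$. Write $c_n=\int\psiL\cdot\psiL\circ\tau^n\der\mu$, so that (reading the $k$ in the displayed formula as $n$, which is clearly intended) the assertion is that $\frac1N\sum_{n=0}^{N-1}n\,c_n\to 0$. Since the $n=0$ term equals $0\cdot c_0=0$, it is enough to show $\frac1N\sum_{n=1}^{N-1}n\,c_n\to 0$.

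First I would recall \eqref{eq:sumaNvar} with $\psi=\psiL$, namely $\tfrac1N\normL{S_N\psiL}{2}^2=\normL{\psiL}{2}^2+2\sum_{n=1}^{N-1}\bigl(1-\tfrac nN\bigr)c_n$, and split the weighted sum as $\sum_{n=1}^{N-1}\bigl(1-\tfrac nN\bigr)c_n=\sum_{n=1}^{N-1}c_n-\tfrac1N\sum_{n=1}^{N-1}n\,c_n$. Rearranging produces the identity
\[
\frac1N\sum_{n=1}^{N-1}n\,c_n=\sum_{n=1}^{N-1}c_n-\frac12\Bigl(\frac{\normL{S_N\psiL}{2}^2}{N}-\normL{\psiL}{2}^2\Bigr).
\]
Then I would let $N\to\oo$. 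By the lemma immediately preceding this corollary, $\tfrac1N\normL{S_N\psiL}{2}^2\to\sigma^2_{\mu}(\psiL)$; by \Cref{cor:sigmaformula}, $\normL{\psiL}{2}^2+2\sum_{n\geq1}c_n=\sigma^2_{\mu}(\psiL)$, so $\sum_{n=1}^{N-1}c_n\to\tfrac12\bigl(\sigma^2_{\mu}(\psiL)-\normL{\psiL}{2}^2\bigr)$. Plugging these two limits into the identity, the right-hand side tends to $\tfrac12\bigl(\sigma^2_{\mu}(\psiL)-\normL{\psiL}{2}^2\bigr)-\tfrac12\bigl(\sigma^2_{\mu}(\psiL)-\normL{\psiL}{2}^2\bigr)=0$, whence $\frac1N\sum_{n=0}^{N-1}n\,c_n\to0$, i.e. $(n\,c_n)_{n\geq0}$ is Cesàro-null.

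There is no genuine obstacle here: once the two convergence statements are available the proof is a single cancellation. The only point that merits a line of justification is that $\sum_n c_n$ converges as a series (not merely Abel-summable), so that $\sum_{n=1}^{N-1}c_n$ has a limit to subtract; this is exactly what the representation $h\igLp\sum_{n\geq0}\cL_{\varphi}^n\psiL$ supplies, since $c_n=\prodiL{\psiL}{\cL_{\varphi}^n\psiL}$ and the partial sums of $\cL_{\varphi}^n\psiL$ converge in $\Lp[2](\mu)$.
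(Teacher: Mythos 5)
Your proof is correct and is essentially the same argument as the paper's: both rearrange \eqref{eq:sumaNvar} to isolate $\frac1N\sum_{n<N} n\,c_n$, then invoke \Cref{cor:sigmaformula} together with the preceding lemma on $\frac1N\normL{S_N\psiL}{2}^2\to\sigma^2_\mu(\psiL)$ to see the two sides cancel. You also correctly flag the only subtlety — that $\sum_n c_n$ genuinely converges rather than being merely Ces\`aro/Abel summable, which is what \Cref{cor:sigmaformula} (via the $\Lp[2]$ representation of $h$) supplies.
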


\begin{proof}
By \Cref{cor:sigmaformula} and \eqref{eq:sumaNvar} we get
\begin{align*}
\sigma^2(\psi_L)=\norm[L^2]{\psi_L}^2+2\sum_{n\geq 1}\inner{\psi_L}{\psi_L\circ \tau^n}=\lim_{N\to \oo}\frac{\norm[L^2]{S_N\psi}^2}{N}\\
=\lim_{N\to\oo}\left(\norm[L^2]{\psi_L}^2+2\sum_{n=1}^{N-1}\left(1-\frac{n}{N}\right)\inner{\psi_L\circ \tau^n}{\psi_L}\right),
\end{align*}
which proves the Lemma.
\end{proof}

\begin{remark}
Convergence of the series $\sum_{n\geq 1} n\int\psi_L\cdot\psi_L\circ\tau^n\dd\mu$ remains to be proven. If this were the case one would get that $(S_n\psi_L-n\sigma^2(\psi_L))_{n\geq 1}$ is uniformly bounded in $\Lp{2}{\mu}$.
\end{remark}

\paragraph{Cohomology to a martingale difference} After this preparations we move to the proof of the Central Limit Theorem. Assume, with no loss of generality, that $\mu(L)=0$ and $\sigma:=\sigma(L)\neq 0$. Since $0=\Rop\psi_L(\tau \seq x)=\Emu{\mu}{\psi_L|\tau^{-1}\BorelM[\Sigma]}(\seq x)$, it follows that $\left(S_n\psi_L,\tau^{-n}\BorelM[\Sigma]\right)_{n\geq0}$ is a reverse Martingale, therefore by the Martingale Central Limit Theorem \cite{BrownCLT} we get

\[
	\mu\left(\frac{S_n\psi_L}{\sigma\sqrt{n}}\leq c\right)\xrightarrow[n\mapsto\oo]{} \frac{1}{\sigma\sqrt{2\pi}}\int_{-\oo}^c e^{-\frac{u^2}{2\sigma}} \dd u. 
\]

\begin{lemma}\label{lem:CLTqm}
It holds
\[
	\mu\left(\frac{L^{(n)}}{\sigma\sqrt{n}}\leq c\right)\xrightarrow[n\mapsto\oo]{} \frac1{\sigma\sqrt{2\pi}}\int_{-\oo}^c e^{-\frac{u^2}{2\sigma}} \dd u 
\]
\end{lemma}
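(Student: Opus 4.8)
The plan is to deduce Lemma~\ref{lem:CLTqm} from the already-established CLT for the martingale sums $S_n\psi_L$ by controlling the difference $L^{(n)}-S_n\psi_L$ and showing it is negligible after dividing by $\sqrt n$. First I would recall from \eqref{eq:Lcohkernel} that, $\mu$-almost everywhere,
\[
  |L^{(n)}(\ux)-S_n\tilde\psi_L(\ux)-n\mu(L)|\leq E+|v(\ux)-v(\tau^n\ux)|,
\]
and since we have normalized $\mu(L)=0$ this says
\[
  |L^{(n)}(\ux)-S_n\tilde\psi_L(\ux)|\leq E+2\normL{v}{\oo}=:C_0<\oo
\]
uniformly in $n$ and $\ux$. (Here $\tilde\psi_L\in\ker\Lcal_\varphi$ plays the role of the martingale-difference generator; note $S_n\tilde\psi_L$ rather than $S_n\psi_L$, but since $\psi_L=\tilde\psi_L+v-v\circ\tau$ the telescoping sum gives exactly the same bounded discrepancy, so I will phrase everything in terms of whichever of the two is convenient.)

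Next I would invoke the reverse-martingale CLT already proven above: with $\sigma=\sigma(L)\neq 0$,
\[
  \mu\Bigl(\frac{S_n\tilde\psi_L}{\sigma\sqrt n}\leq c\Bigr)\xrightarrow[n\to\oo]{}\frac1{\sigma\sqrt{2\pi}}\int_{-\oo}^c e^{-\frac{u^2}{2\sigma}}\der u,
\]
i.e.\ $S_n\tilde\psi_L/(\sigma\sqrt n)$ converges in distribution to the Gaussian. The standard tool to upgrade this to $L^{(n)}/(\sigma\sqrt n)$ is Slutsky's theorem: writing
\[
  \frac{L^{(n)}}{\sigma\sqrt n}=\frac{S_n\tilde\psi_L}{\sigma\sqrt n}+\frac{L^{(n)}-S_n\tilde\psi_L}{\sigma\sqrt n},
\]
the first term converges in distribution to $N(0,\sigma)$ and the second is bounded in absolute value by $C_0/(\sigma\sqrt n)\to 0$ uniformly, hence converges to $0$ in probability (indeed uniformly, which is stronger). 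Therefore the sum converges in distribution to $N(0,\sigma)$, and since the limiting distribution function $c\mapsto \frac1{\sigma\sqrt{2\pi}}\int_{-\oo}^c e^{-u^2/(2\sigma)}\der u$ is continuous, convergence in distribution gives pointwise convergence of the distribution functions at every $c\in\Real$, which is exactly the assertion of the lemma.

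I do not expect any serious obstacle here: the entire analytic content — the solution of the cohomological equation $\psi_L=\tilde\psi_L+v-v\circ\tau$ with $v\in\Lp[\oo](\mu)$ and $\tilde\psi_L\in\ker\Lcal_\varphi$, hence $\left(S_n\tilde\psi_L,\tau^{-n}\BM[\Sigma]\right)_n$ a reverse martingale, and the nondegeneracy $\sigma\neq 0$ (Corollary~\ref{eq:sigmaL}) — has already been carried out, and the martingale CLT has been applied. The only mild point worth spelling out is that $\sup_{n,\ux}|L^{(n)}(\ux)-S_n\tilde\psi_L(\ux)|<\oo$ follows from \eqref{eq:Lcohkernel} together with $v\in\Lp[\oo](\mu)$, so that the error term vanishes after the $\sqrt n$ normalization without needing any refined estimate. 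The remaining step — that convergence in distribution plus continuity of the limit distribution function yields pointwise convergence of distribution functions — is classical (this is the Portmanteau theorem applied to the half-lines $(-\oo,c]$, whose boundary $\{c\}$ is a null set for the Gaussian). Thus the proof is a short assembly of already-available ingredients via Slutsky's lemma.
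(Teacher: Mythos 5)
Your proof is correct and takes essentially the same route as the paper's: both reduce the lemma to the reverse-martingale CLT for $S_n\tilde\psi_L$ by observing that $L^{(n)}-S_n\tilde\psi_L$ is uniformly bounded (from \eqref{eq:Lcohkernel} with $v\in\Lp[\oo](\mu)$), so the difference vanishes after dividing by $\sqrt n$. The paper phrases the final step as an $\epsilon$-squeeze using an $\Lp[2]$/Chebyshev bound on $r_n=(L^{(n)}-S_n\psi_L)/(\sigma\sqrt n)$, whereas you invoke Slutsky's theorem directly with the stronger $\Lp[\oo]$ bound; these are the same argument, merely packaged differently.
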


\begin{proof}
Fix $\eps>0$ and write $r_n=\frac{L^{(n)}(\seq x)-S_n\psi_L}{\sigma\sqrt n}$, so that $\norm[L^2]{r_n}\leq \frac{E}{\sqrt{n}}$, and therefore
\begin{align*}
\mu\left(r_n>\epsilon\right)\leq \frac{E^2}{\epsilon n}. 
\end{align*}

Then for $c\in\R$ fixed,
\begin{align*}
 \mu\left(\frac{L^{(n)}}{\sigma\sqrt n}\leq c\right)&\leq  \mu\left(\frac{S_n\psi_L}{\sigma\sqrt n}\leq c-\eps\right)+\mu(r_n>\eps)\\
& \xrightarrow[n\mapsto\oo]{} \frac1{\sigma\sqrt{2\pi}}\int_{-\oo}^{c-\epsilon} e^{-\frac{u^2}{2\sigma}} \dd u\\
&\Rightarrow \limsup_n\mu\left(\frac{L^{(n)}}{\sigma\sqrt n}\leq c\right)\leq  \frac1{\sigma\sqrt{2\pi}}\int_{-\oo}^c e^{-\frac{u^2}{2\sigma}} \dd u
 \intertext{and likewise}
 &\liminf_n\mu\left(\frac{L^{(n)}}{\sigma\sqrt n}\leq c\right)\leq  \frac1{\sigma\sqrt{2\pi}}\int_{-\oo}^c e^{-\frac{u^2}{2\sigma}} \dd u
 \end{align*}
which implies the claim.
\end{proof}

By the same argument one obtains:

\begin{lemma}\label{lem:cohomologyCLT}
If $\psi_L\sim \psi_L'$, then for all $c\in\R$
\[
	\mu\left(\frac{S_n\psi_L'}{\sigma\sqrt{n}}\leq c\right)\xrightarrow[n\mapsto\oo]{} \frac{1}{\sigma\sqrt{2\pi}}\int_{-\oo}^c e^{-\frac{u^2}{2\sigma}} \dd u. 
\]
\end{lemma}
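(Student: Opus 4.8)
\textbf{Proof proposal for Lemma \ref{lem:cohomologyCLT}.}

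The plan is to reduce the statement for $\psiL'$ to the statement already established for $\psiL$ via the Martingale Central Limit Theorem, exactly mimicking the argument of \Cref{lem:CLTqm}. The point is that $\psiL \sim \psiL'$ means $\psiL - \psiL'$ is a Livsic coboundary: by \Cref{thm:livsicWeakBowen} there is $w \in \Lp[\oo](\mu)$ with $\psiL - \psiL' = w - w\circ\tau$, hence
\[
S_n\psiL' = S_n\psiL - w + w\circ\tau^n,\qquad \mu\text{-a.e.}
\]
First I would record that $\sigma^2(\psiL') = \sigma^2(\psiL)$, which is immediate from \eqref{eq:variance} (or \Cref{cor:sigmaformula}) since the variance is constant on Livsic cohomology classes; this justifies using the same $\sigma$ in both statements.

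Next, set $r_n = \dfrac{S_n\psiL' - S_n\psiL}{\sigma\sqrt n} = \dfrac{w\circ\tau^n - w}{\sigma\sqrt n}$. Since $w \in \Lp[\oo](\mu) \subset \Lp[2](\mu)$ and $\mu$ is $\tau$-invariant, $\normL{r_n}{2} \leq \dfrac{2\normL{w}{2}}{\sigma\sqrt n} \to 0$, so by Chebyshev, for every $\epsilon > 0$,
\[
\mu(|r_n| > \epsilon) \leq \frac{4\normL{w}{2}^2}{\epsilon^2\,\sigma^2\,n} \xrightarrow[n\to\oo]{} 0.
\]
Then, exactly as in the proof of \Cref{lem:CLTqm}, for fixed $c \in \Real$ and $\epsilon > 0$ one sandwiches
\[
\mu\!\left(\frac{S_n\psiL'}{\sigma\sqrt n}\leq c\right) \leq \mu\!\left(\frac{S_n\psiL}{\sigma\sqrt n}\leq c+\epsilon\right) + \mu(|r_n|>\epsilon),
\]
and the analogous lower bound with $c-\epsilon$. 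Taking $n\to\oo$ and using the CLT already proven for $S_n\psiL$ gives
\[
\frac{1}{\sigma\sqrt{2\pi}}\int_{-\oo}^{c-\epsilon} e^{-\frac{u^2}{2\sigma}}\der u \leq \liminf_n \mu\!\left(\frac{S_n\psiL'}{\sigma\sqrt n}\leq c\right) \leq \limsup_n \mu\!\left(\frac{S_n\psiL'}{\sigma\sqrt n}\leq c\right) \leq \frac{1}{\sigma\sqrt{2\pi}}\int_{-\oo}^{c+\epsilon} e^{-\frac{u^2}{2\sigma}}\der u.
\]
Letting $\epsilon \downarrow 0$ and invoking continuity of the Gaussian distribution function in $c$ yields the claim.

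There is essentially no serious obstacle here: the only thing to be careful about is that the transfer function $w$ genuinely lies in $\Lp[\oo]$ (hence in $\Lp[2]$), which is exactly the content of \Cref{thm:livsicWeakBowen}, and that $\psiL, \psiL'$ both have the weak Bowen property so that \Cref{thm:livsicWeakBowen} applies to their difference. The possibly delicate point — ensuring that the a.e. identity $S_n\psiL' = S_n\psiL - w + w\circ\tau^n$ holds on a common full-measure $\tau$-invariant set where all potentials are defined — is handled by the convention, already adopted in \Cref{sec:finer}, that all these functions are taken on a single invariant full-measure set; so this step is routine. The whole argument is a verbatim copy of \Cref{lem:CLTqm} with $L^{(n)}$ replaced by $S_n\psiL'$ and $v$ replaced by $w$.
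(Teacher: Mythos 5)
Your proof is correct and is exactly what the paper intends — the paper simply says ``By the same argument one obtains'' after the proof of \Cref{lem:CLTqm}, and you have reproduced that argument: express the coboundary difference via a bounded transfer function $w$, bound $r_n=(w\circ\tau^n-w)/(\sigma\sqrt n)$ in $\Lp[2]$ and use Chebyshev, then sandwich. One very small remark: you invoke \Cref{thm:livsicWeakBowen} to produce $w\in\Lp[\oo]$, but by \Cref{def:Livsiccoboundaries} the relation $\psiL\sim\psiL'$ already means the difference is an $\Lp[\oo]$-coboundary, so the bounded transfer function comes directly from the definition and that theorem is not needed here.
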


Next consider the natural extension $\pi:\lift\Sigma\to\Sigma$. The unique lift of $\mu$ to $\Sigma$ is the equilibrium state of $\varphi\circ \pi$, and will be denoted as $\lift \mu$

\begin{lemma}\label{lem:CLTtwosidedshift}
Let $\psi \in \Bow[][\lift\mu]{\lift\Sigma}$ be a function with zero integral. Then for all $c\in\R$,
\[
	\lift{\mu}\left(\frac{S_n\psi}{\sigma\sqrt{n}}\leq c\right)\xrightarrow[n\mapsto\oo]{} \frac{1}{\sigma\sqrt{2\pi}}\int_{-\oo}^c e^{-\frac{u^2}{2\sigma}} \dd u. 
\]
\end{lemma}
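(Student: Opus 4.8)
The statement is the two-sided version of \Cref{lem:CLTqm} and \Cref{lem:cohomologyCLT}: we want the CLT for a zero-mean weak Bowen function $\psi$ on $\Sigmab$, with respect to the lifted Markov measure $\tilde\mu$. The natural strategy is to reduce the two-sided situation to the one-sided one already handled, using the semi-conjugacy $\pi\colon\Sigmab\to\Sigma$ and the classical fact (Sinai / Bowen, recalled in \Cref{rem:onesidedvstwo}) that the relevant cohomology is governed by one-sided data. First I would apply the map $\Gamma_1$ of \Cref{pro:Bowenonesidedcohtwosided} (or rather the conditional-expectation construction $\bB=(\ie{\tilde\mu}{S_n\psi\mid\xi^n})_n$ and then the associated one-sided weak Bowen function $\varphi_{\bB,\mu}\in\mathrm{Bow}_\mu(\Sigma)$): this produces $\psi'=\varphi_{\bB,\mu}$ on $\Sigma$ with $\int\psi'\der\mu=0$ and, crucially, such that $\psi\circ\ (\text{shift iterates})$ and $(S_n\psi')\circ\pi$ differ by a coboundary in $\Lp[\oo](\tilde\mu)$ — i.e. $\psi = \psi'\circ\pi + w - w\circ\tau$ for some $w\in\Lp[\oo](\tilde\mu)$, after passing through $\Lp[\oo]$-Livsic cohomology (\Cref{thm:livsicWeakBowen}, which the paper notes applies verbatim on $\Sigmab$). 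Since $w$ is bounded, $S_n\psi$ and $(S_n\psi')\circ\pi$ differ by $w - w\circ\tau^n$, a uniformly bounded error that is killed after dividing by $\sqrt n$.

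Next I would observe that $\sigma^2$ is unchanged under this reduction: the variance $\sigma^2(\psi)=\lim_n \tfrac1n\normL{S_n\psi}{2}^2$ is a cohomology invariant (as established around \eqref{eq:variance}–\eqref{eq:sumaNvar} and \Cref{cor:sigmaformula}), and $\pi_\#\tilde\mu=\mu$ gives $\normL{S_n\psi'\circ\pi}{\Lp[2](\tilde\mu)}=\normL{S_n\psi'}{\Lp[2](\mu)}$, so $\sigma^2(\psi)=\sigma^2(\psi')$. If $\sigma=0$ the statement degenerates (both sides are the indicator of $c\geq0$, up to the usual convention), so assume $\sigma\neq0$. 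Then by \Cref{lem:CLTqm} applied to the one-sided potential $\psi'$ — more precisely by the Martingale CLT argument preceding it, since $\Lcal_{\varphi}(\psi')=0$ makes $(S_n\psi',\tau^{-n}\BM[\Sigma])_n$ a reverse martingale — we get
\[
\mu\left(\frac{S_n\psi'}{\sigma\sqrt n}\leq c\right)\xrightarrow[n\to\oo]{}\frac{1}{\sigma\sqrt{2\pi}}\int_{-\oo}^c e^{-\frac{u^2}{2\sigma}}\der u .
\]
Pushing this forward by $\pi$ (using $\pi_\#\tilde\mu=\mu$) gives the same limit for $\tilde\mu\left(\tfrac{(S_n\psi')\circ\pi}{\sigma\sqrt n}\leq c\right)$, and then the bounded-coboundary estimate $\bigl|S_n\psi-(S_n\psi')\circ\pi\bigr|\leq 2\normL{w}{\oo}$ lets us conclude via the standard squeeze argument used in the proof of \Cref{lem:CLTqm}: for fixed $\epsilon>0$, $\tfrac{(S_n\psi')\circ\pi}{\sigma\sqrt n}$ and $\tfrac{S_n\psi}{\sigma\sqrt n}$ differ by at most $\tfrac{2\normL{w}{\oo}}{\sigma\sqrt n}<\epsilon$ eventually, and one sandwiches the distribution function of the former between shifts of the Gaussian CDF by $\pm\epsilon$, finally letting $\epsilon\to0$ and using continuity of the Gaussian CDF.

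\textbf{Main obstacle.} The one genuinely non-trivial ingredient is making precise that the one-sided reduction only costs an $\Lp[\oo]$-coboundary: that $\psi$ really is $\Lp[\oo](\tilde\mu)$-cohomologous to $\psi'\circ\pi$ for a one-sided weak Bowen $\psi'$. This is exactly where the paper's investment in the \emph{coarse} Livsic theory pays off — $\psi$ need not be continuously cohomologous to anything depending only on nonnegative coordinates (\Cref{rem:onesidedvstwo}), so one must argue at the level of $\Lp[\oo]$ transfer functions. The cleanest route is: the conditional-expectation quasi-cocycle $\bB$ satisfies $\tilde\mm(\bB)=\tilde\mm(\psi)$ for every invariant $\tilde\mm$ (as in the proof of \Cref{pro:Bowenonesidedcohtwosided}), hence $\psi - \varphi_{\bB,\mu}\circ\pi$ has zero integral against every invariant measure, whence by \Cref{thm:livsicWeakBowen} it is an $\Lp[\oo](\tilde\mu)$-coboundary. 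Everything else — the variance identification, the martingale CLT input, the squeeze — is routine once this step is in place.
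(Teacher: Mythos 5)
Your proof follows the same route as the paper's: reduce to a one-sided potential via \Cref{pro:Bowenonesidedcohtwosided} (the conditional-expectation quasi-cocycle, which is $\Lp[\oo](\tilde\mu)$-cohomologous to $\psi$), apply the one-sided CLT, and absorb the bounded-coboundary error by the squeeze argument used in \Cref{lem:CLTqm} and \Cref{lem:cohomologyCLT}. One small imprecision: you write ``$\Lcal_{\varphi}(\psi')=0$'' as if $\psi'$ itself were automatically a reverse-martingale increment — in fact $\psi'$ need not be in $\ker\Lcal_{\varphi}$, and one still needs \Cref{cor:cohomologakernel} to pass to a cohomologous $\tilde\psi'\in\ker\Lcal_{\varphi}$ before invoking the Martingale CLT; but since you cite \Cref{lem:CLTqm}, which already performs exactly this step, the argument is sound.
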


\begin{proof}
Using \Cref{pro:Bowenonesidedcohtwosided} and the previous Lemma one gets that it is enough to prove the same for $\psi_L$ only depending on positive coordinates. This is checked directly, remembering that convergence in distribution for a sequence $(\mm_n)_n \in \ProbM[\R]$ to $\mm$ is equivalent to convergence of $(\mm_n(k))_n$ to $\mm(k)$, for every bounded continuous function $k:\R\to\R$. 	
\end{proof}

\begin{proof}[Proof \Cref{thm:D} and \Cref{cor:C}]

We use the \Cref{thm:compactification}, where $X_{\Gamma}=\Sigma$ and consider $\mu=\MME$. \Cref{cor:C} was proven in \Cref{lem:CLTqm}.  As for \Cref{thm:D}, we write	
\begin{align*}
 &S_n=S_n\psi_L\\
 &\lie{L}_t(n,x)=\frac{S_{([nt])}(x)+(nt-[nt])\psi_L(\tau^{[nt]+1})}{\sigma \sqrt{n}}
 \end{align*}
Then $(S_n)_{n\geq 1}$ is an stationary ergodic sequence cohomologous to $(L^{(n)})_{n\geq 1}$. As noted above $(S_n, \tau^{-n}\BorelM[\Sigma])$ is a reverse martingale with uniformly bounded differences, hence Donsker invariance principle and the Law of iterated logarithm holds for this sequence. See \cite{BrownCLT} for the first part and Corollary $4.1$ in \cite{MartingaleLimit} for the second. This establishes the first two parts of \Cref{thm:D}. 

The last part is direct consequence of the concentration inequalities for uniformly bounded Martingale differences. See for example Corollary $2.4.7$ in 
 \cite{Dembo_2010}.
 \end{proof}

\paragraph{CLT for flows} We now address flows.

\begin{notation}
We denote by $\nu_{\mathfrak t}, \nu_{\mathfrak g}$ the corresponding measure on $\lift{\Sigma}_f, \ms{E}$ for the flows $\mathfrak t, \mathfrak g$.
\end{notation}

\begin{definition}\label{def:Markovmeasuregeodesic}
An invariant measure $\nu_{\lie g}$ for $\lie g$ is said to be Markov if the corresponding measure on the symbolic model is a Markov measure.	
\end{definition}

 Since we working with $\mu=\MME$, $\nu_{\mathfrak g}$ is Markov. If $\psi\in \Bow[][\lie t]{\lift{\Sigma}_f}, T>0$  then 
 \[
 	\int_0^T \psi([\seq x,t]) \dd t = S_n\lift\psi(\seq x)+\int_n^T \psi([\seq x,t]) \dd t
 \]
 where $n$ is the unique natural number so that $S_nf(\seq x)\leq T <S_{n+1}f(\seq x)$, and $\lift{\psi}(\seq x)=\int_0^{f(\seq x)} \psi([\seq x,t]) \dd t$ (cf.\@ \eqref{eq:potencialintegrado}).

Due to \Cref{lem:CLTtwosidedshift} we deduce the CLT for $\nu_{\lie t}-$Bowen functions.

\begin{corollary}\label{cor:CLTsuspension}
If $\psi\in  \Bow[][\lie t]{\lift{\Sigma}_f}$ is not a coboundary, then there exists some $\sigma^2>0$ so that
	\[
    \forall c\in\R,\ \nu_{\mathfrak t}\left([\seq x,s]:\frac{\int_0^T\psi([\seq x,s+t])\dd t-T\nu_{\lie t}(\psi)}{\sigma\sqrt{T}}\leq c\right)\xrightarrow[T\to\oo]{}\frac{1}{\sigma\sqrt{2\pi}}\int_{-\oo}^c e^{-\frac{(u-\gamma)^2}{2\sigma}}\dd u
\]
\end{corollary}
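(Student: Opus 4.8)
\textbf{Proof plan for Corollary \ref{cor:CLTsuspension}.} The strategy is to transfer the Central Limit Theorem already established for $\nu_{\mathfrak t}$-Bowen functions on the \emph{base} dynamics (the return map / shift, via Lemmas \ref{lem:CLTqm} and \ref{lem:CLTtwosidedshift}) to the \emph{suspension flow} $\mathfrak t=(\tau_t):\Sigmab_f\toit$, using the standard renewal-type argument that the flow-Birkhoff integral $\int_0^T\psi([\ux,s+t])\,\der t$ is well approximated by a discrete Birkhoff sum of the induced potential $\tilde\psi(\ux)=\int_0^{f(\ux)}\psi([\ux,t])\,\der t$ over the number of returns to the section. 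Concretely, I would first normalize: replacing $\psi$ by a Livsic-cohomologous potential changes $\int_0^T\psi\circ\tau_t\,\der t$ by a uniformly bounded quantity (the transfer function evaluated at the endpoints), which is $o(\sqrt T)$, hence affects neither the existence of $\sigma^2$ nor the limiting distribution; by \Cref{lem:Livsicreturnmap} we may thus assume $\tilde\psi$ has the (weak) Bowen property on $(\Sigmab,\tau)$, and we set $e(L)=\nu_{\mathfrak t}(\psi)$ so that $\tilde\psi-e(L)f$ has zero integral against the base measure $\wmu$ corresponding to $\nu_{\mathfrak t}$ via \eqref{eq:medidasuspension}.

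\textbf{Key steps.} (1) Fix $[\ux,s]$ and $T>0$; let $n=n(T,[\ux,s])$ be the number of returns of the orbit segment $\{\tau_t[\ux,s]:0\le t\le T\}$ to the section $S$, i.e.\ the unique integer with $S_n f(\ux')\le s+T-s' < S_{n+1}f(\ux')$ where $[\ux',s']=\tau_s[\ux,0]$-normalized; then write
\[
\int_0^T\psi([\ux,s+t])\,\der t = S_n\tilde\psi(\ux') + O(\|\psi\|_\infty\|f\|_\infty),
\]
the error being the contribution of the two incomplete fibres, which is uniformly bounded. (2) Control the fluctuation of $n(T,\cdot)$ around its mean: by Birkhoff's ergodic theorem $S_n f(\ux')/n\to \bar f:=\int f\,\der\wmu>0$, so $n(T,[\ux,s])/T\to 1/\bar f$ $\nu_{\mathfrak t}$-a.e., and moreover $\sqrt{T}\bigl(n(T,\cdot)/T - 1/\bar f\bigr)$ is, after the usual manipulation, asymptotically Gaussian with the variance of the $\tilde f$-CLT on the base (here $\tilde f = f - \bar f$ also has the Bowen property). (3) Combine: since $S_n\tilde\psi - n e(L)\bar f$ obeys a CLT with some variance $\sigma_0^2\ge 0$ along $n\to\infty$ by \Cref{lem:CLTtwosidedshift} applied to $\tilde\psi - e(L)f$, and $n\approx T/\bar f$, an Anscombe-type theorem for the randomly-indexed martingale partial sums (the sequence is a reverse martingale by the argument around \eqref{eq:Lcohkernel}, so Anscombe applies) yields that $\bigl(\int_0^T\psi\circ\tau_t\,\der t - T e(L)\bigr)/\sqrt T$ converges in distribution to a centered Gaussian with variance $\sigma^2 := \sigma_0^2/\bar f$. (4) Finally, $\sigma^2>0$: if $\sigma^2=0$ then $\sigma_0^2=0$, so by \Cref{eq:sigmaL} the base potential $\tilde\psi - e(L)f$ is an $\Lp[\oo]$-coboundary, which by \Cref{lem:Livsicreturnmap} forces $\psi - e(L)$ to be a Livsic coboundary for $\mathfrak t$, contradicting the hypothesis that $\psi$ is not a coboundary (note $e(L)=\nu_{\mathfrak t}(\psi)$ so a constant shift is absorbed). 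This also pins down $e(L)=\int\psi\,\der\nu_{\mathfrak t}$.

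\textbf{Main obstacle.} The delicate point is step (3): making rigorous the passage from the CLT for $S_n\tilde\psi$ indexed by the deterministic parameter $n$ to the CLT for the flow integral indexed by the continuous parameter $T$, when the number of returns $n(T,\cdot)$ is itself a random, $\psi$-correlated quantity. The clean way is to invoke an Anscombe random-index central limit theorem for stationary sequences with good mixing / martingale structure — which is available here because $(S_n\psiL,\tau^{-n}\BM[\Sigma])$ is a reverse martingale with uniformly bounded increments, and because $\tilde\psi$ and $f$ are simultaneously Bowen so their joint CLT holds — together with the a.e.\ asymptotics $n(T,\cdot)\sim T/\bar f$. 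One must also check that the joint limiting covariance of $(S_n\tilde\psi, S_n f)$ does not conspire to cancel, but for the \emph{existence} and \emph{positivity} of $\sigma^2$ it suffices that $\sigma_0^2>0$, already handled in step (4); the covariance only enters if one wants the sharp constant, which for the suspension is the standard $\sigma^2=\sigma_0^2/\bar f$ after absorbing the $f$-fluctuation by the choice $e(L)=\nu_{\mathfrak t}(\psi)$, exactly as in Bowen--Ruelle \cite{BowenRuelle}. Everything else is routine bookkeeping using the boundedness of $f$, $\psi$, and the transfer functions.
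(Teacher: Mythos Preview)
Your approach is correct and follows the same skeleton as the paper: decompose the flow integral as $S_{n(T)}\tilde\psi + O(1)$ via the return-time count, invoke the base CLT from \Cref{lem:CLTtwosidedshift} for $\tilde\psi$, and deduce positivity of $\sigma^2$ from \Cref{lem:Livsicreturnmap} together with \Cref{eq:sigmaL}. The only packaging difference is that the paper handles the random-index passage by the same $\epsilon$-squeeze trick used in \Cref{lem:CLTqm} (bounding the $\limsup$ and $\liminf$ of the flow distribution function between shifted base distribution functions), whereas you name this step explicitly as an Anscombe-type theorem; since $\tilde\psi$ is bounded and $n(T)/T\to 1/\bar f$ almost everywhere by Birkhoff, Anscombe's condition is automatic and the two arguments are equivalent.
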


\begin{proof}
With no loss of generality assume $\nu_{\mathfrak t}(\psi)=0$: let $\sigma^2=\sigma^2(\lift\psi)>0$. Note that the hypothesis and \Cref{lem:Livsicreturnmap} imply that $\tilde{\psi}$ is not a coboundary. 

For $\eps>0$, and arguing analogously as in \Cref{lem:cohomologyCLT}, we get that
 \begin{align*}
 \limsup_{T\to\oo} \nu_{\mathfrak t}\left([\seq x,s]:\frac{\int_0^T\psi([\seq x,s+t])\dd t}{\sigma\sqrt{T}}\leq c\right)\leq \limsup_{n\to\oo} \lift{\mu}\left([\seq x,s]:\frac{S_n\psi}{\sigma\sqrt{n}}\leq c-\epsilon\right), 
 \end{align*}
 hence by \Cref{lem:CLTtwosidedshift}
 \begin{align*}
 	\limsup_{T\to\oo} \nu_{\mathfrak t}\left([\seq x,s]:\frac{\int_0^T\psi([\seq x,s+t])\dd t}{\sigma\sqrt{T}}\leq c\right)\leq\frac{1}{\sigma\sqrt{2\pi}}\int_{-\oo}^c e^{-\frac{(u-\gamma)^2}{2\sigma}}\dd u.
 \end{align*}

 Similarly, $\liminf_{T\to\oo} \nu_{\mathfrak t}\left([\seq x,s]:\frac{\int_0^T\psi([\seq x,s+t])\dd t}{\sigma\sqrt{T}}\leq c\right)\geq\frac{1}{\sigma\sqrt{2\pi}}\int_{-\oo}^c e^{-\frac{(u-\gamma)^2}{2\sigma}}\dd u$.
\end{proof}

\begin{proof}[Proof of \Cref{thm:C}]
 	Recall that the semi-conjugacy $h:\lift{\Sigma}_f\to \ms E$ induces an isomorphism $h^{*}:\Bow[][\nu_{\lie g}]{\ms E}/\thicksim\to \Bow[][\nu_{\lie t}]{\lift{\Sigma}_f}/\thicksim$.  Fix $\psi\in \Bow[][\nu_{\lie g}]{\ms E}$ that is not a coboundary and verifies $\nu_{\mathfrak g}(\psi)=0$. Let $\sigma^2=\sigma^2(h^*[\psi])$ and denote
	\begin{align*}
 	 &X_T(x)=\frac{S_T^{\mathfrak g}\psi(x)}{\sigma\sqrt{T}}\\
 	 &Y_T([\seq x,t])=\frac{S_T^{\mathfrak t}\psi\circ h([\seq x,t])}{\sigma\sqrt{T}}
	 \end{align*}

  Since $h(g_t)=\tau_t(h)$, $h\nu_{\mathfrak g}=\nu_{\mathfrak t}$, we get $X_T\nu_{\mathfrak g}=Y_T\nu_{\mathfrak t}$, while by the Corollary above we know that $(Y_T\nu_{\mathfrak t})_{T>0}$ converges in distribution to $\mc N(0,\sigma)$. This finishes the proof.
 \end{proof}

\begin{remark}
It would be interesting to establish the CLT for Bowen functions, when the equilibrium state is associated to non-Markov potentials. Particularly, the case of the potential $\varphi:\lift{\Sigma}_f\to\R, \varphi=-\htop(\mathfrak g)f$ corresponds to the entropy maximizing measure of $\mathfrak g$. When $M$ is a closed hyperbolic manifold this is the same as the Liouville measure on $T^1M$.
\end{remark}


\subsection{CLT for quasimorphisms with respect to the Patterson-Sullivan measure and spherical means}

We now reap some corollaries of the previous work, and obtain asymptotic information for unbounded quasimorphisms $[L]\in \QMt{\Gamma}$ with respect to the Patterson-Sullivan measure $\nu\in \ProbM[\partial\Gamma]$. To do this, we combine the general limit theorems of the previous part with techniques originally developed by Calegari-Fujiwara \cite{CALEGARI2009}, and further developed by Cantrell \cite{Cantrell2020,Cantrell_2020} (see also  Cantrell-Sert \cite{Cantrell_2023}) to study regular (e.g. combable) quasimorphisms. The content of this part was suggested to us by S. Cantrell.

The latter three articles even give Barry-Esseen type inequalities. Here we extend some of their results to arbitrary unbounded quasimorphisms, but we lose information on the speed of convergence. We will be following mainly \cite{Cantrell_2020}, where the computations are written in terms of uniform asymptotic estimates which are not present in our context. For convenience of the reader, we sketch the relevant parts in our context.

Take $\Gamma$ a non-elementary hyperbolic group (say, $\Gamma=\pi_1(M,*)$ where $M$ is a closed locally $\CAT$ space), and fix $S$ a finite symmetric generating set. We will use a different type of symbolic model for $\Gamma$ and $\partial\Gamma$.

\begin{theorem}[Cannon coding, \cite{GhysHarpe}]
	There exists a finite graph $(V,E)$ (vertices, edges) and $l:E\to S$ (the labeling map) so that
	\begin{enumerate}
		\item there exists an initial vertex $*\in V$;
		\item Finite paths $*\mapsto x_1\mapsto x_2\mapsto \cdots \mapsto x_n$ in the graph are in bijection with finite words in $S$ of the form
		\begin{align*}
		&l(*,x_1)l(x_1, x_2)\cdots l(x_{n-1}x_n)\\
		&|l(*,x_1)l(x_1, x_2)\cdots l(x_{n-1}x_n)|_S=n.
		\end{align*}
	\end{enumerate}
\end{theorem}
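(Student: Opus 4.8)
The plan is to prove this by Cannon's classical \emph{cone type} construction; a detailed account is in \cite{GhysHarpe}, so I will only lay out the structure. Throughout, $\Gamma$ carries the word metric of the fixed symmetric generating set $S$, I write $|g|_S=\dis{\Gamma}{1}{g}$, and I use that $\Gamma$ is $\delta$-hyperbolic for some $\delta$ (by the Milnor--Svarc theorem applied to $\tilde M$, as recalled in \Cref{sub:geodesic_flows}).

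First I would introduce the \emph{cone type} of an element $g\in\Gamma$,
\[
 C(g)=\{h\in\Gamma:\ |gh|_S=|g|_S+|h|_S\},
\]
the set of elements that prolong some geodesic word representing $g$, together with the \emph{$K$-type} of $g$ for $K\geq 0$: the function $\tau^K_g$ on the finite ball $\{h:|h|_S\leq K\}$ given by $\tau^K_g(h)=|gh|_S-|g|_S$, which takes values in $\{-K,\dots,K\}$. The geometric core of the argument, and the step I expect to be the main obstacle, is \textbf{Cannon's finiteness lemma}: there is $K=K(\delta,S)$ such that $C(g)$ is determined by $\tau^K_g$. One proves this by induction on $|h|_S$ to decide membership $h\in C(g)$: writing $h=h's$ with $|h'|_S=|h|_S-1$, if $h'\notin C(g)$ then $h\notin C(g)$, while if $h'\in C(g)$ one uses $\delta$-thinness of the triangle with vertices $1$, $gh'$ and the endpoint of a competing geodesic to reduce the question ``does $s$ prolong a geodesic at $gh'$'' to information about the geodesics through $g$ inside a ball of radius bounded in terms of $\delta$ alone --- hence to $\tau^K_g$. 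Since there are only finitely many $K$-types, there are only finitely many cone types; this is the one (and only) place where hyperbolicity enters.

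Granting finiteness, I would let $V$ be the set of cone types of $\Gamma$, with distinguished vertex $*=C(1)=\Gamma$, and put an edge labelled $s\in S$ from $[g]:=C(g)$ to $[gs]$ exactly when $|gs|_S=|g|_S+1$; by the finiteness lemma $[gs]$ depends only on $[g]$ and $s$, so the graph is well defined and deterministic (at most one $s$-edge leaves each vertex), giving the labelling map $l:E\to S$. A short induction on $n$ then shows that a word $s_1\cdots s_n$ spells a path $*\mapsto x_1\mapsto\cdots\mapsto x_n$ if and only if each prefix is geodesic, i.e.\ if and only if $|s_1\cdots s_n|_S=n$, in which case $x_k=C(s_1\cdots s_k)$ is uniquely determined. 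This yields precisely the stated bijection between length-$n$ paths from $*$ and length-$n$ words in $S$ that realize the word metric.

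Finally, since the applications below need each group element to have a \emph{unique} accepting path, I would record the standard refinement to ShortLex normal forms: fixing a linear order on $S$, the set of ShortLex-minimal geodesics is again a regular sublanguage --- a lexicographically smaller geodesic for the same element is detectable inside a window of bounded size by running the geodesic automaton against itself, using the $2\delta$-fellow-travelling of geodesics with common endpoints --- and a determinization $(V',E',l')$ of the resulting automaton has length-$n$ paths from its initial vertex in bijection with $\{g\in\Gamma:|g|_S=n\}$, with path length equal to $|g|_S$. This refined version is the one invoked for spherical means and for the Patterson--Sullivan measure in \hyperlink{corollaryC}{Corollary C}.
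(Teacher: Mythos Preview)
The paper does not prove this statement: it is quoted as a known result with a reference to \cite{GhysHarpe}, and the text immediately proceeds to use the coding. So there is no ``paper's own proof'' to compare against.

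That said, your sketch is the standard cone-type argument and is correct in outline. A couple of small comments. First, as stated the theorem asserts a bijection between paths from $*$ and \emph{geodesic words} in $S$ (words with $|\cdot|_S$ equal to their length), not with group elements; your inductive argument gives exactly that, and the ShortLex refinement you describe is indeed what is needed later when the authors implicitly want a unique representative per group element (for the spherical-mean and Patterson--Sullivan statements). Second, in the induction step of the finiteness lemma you should be a bit more explicit that knowing $h'\in C(g)$ lets you replace the $K$-type at $g$ by the $K$-type at $gh'$ (this is where one uses that the $K$-type propagates along geodesic extensions), since that is what makes the recursion terminate with a single constant $K$ rather than one growing with $|h|$. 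With that clarification your argument matches the account in \cite{GhysHarpe}.
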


In order to be able to represent elements of $\Gamma$ by finite words in $\Sigma(A)$, it is convenient to add and additional vertex $0\not\in V_S$ together with vertices $(v,0)$ for every $v\in V_S\setminus\{*\}$: the function $l$ is extended as $l(v,0)=1_{\Gamma}$. Consider the resulting graph $\mc G=(\mc V,\mc E)$, and let $A\in\Mat_{|\mc V|}(\{0,1\})$ be its adjacency matrix. We consider $(\Sigma(A),\tau)$ the corresponding dynamical system: note however that this is not irreducible in our definition. On the other hand, if $\lambda=e^{\htop(\tau)}$ is the spectral radius of $A$, then there exist square sub-matrices $B$ (of $A$) with spectral radius equal to $\lambda$, and each such sub-matrix verifies that $\tau|:\Sigma(B) \subset \Sigma(A)\to \Sigma(B)$ is transitive. Denote by $\{B_1, \cdots, B_k\}$ all these transitive components. We remark that $\tau|:\Sigma(B_i)\to \Sigma(B_i)$ has a unique measure of maximal entropy $\mu_i$, which is a Markov measure.

There is a natural embedding $\iota: \Gamma\to\Sigma(A)$, $\iota(g)=(*,x_1,\cdots ,x_n, 0,0,\cdots)$ where
\begin{align*}
&g=l(*,x_1)l(x_1, x_2)\cdots l(x_{n-1}x_n)\\
&|l(*,x_1)l(x_1, x_2)\cdots l(x_{n-1}x_n)|_S=n.
\end{align*}
Also, if $A'$ is the matrix obtained from $A$ by removing the row and column corresponding to $v=0$, and $Y=\{\seq x\in\Sigma_{A'}:x_0=*\}$, then there is a bijection $h:Y\to \partial\Gamma$. If $n\in\N, \seq x\in Y$, we write $h(\seq x)_n$ for the initial segment of size $n$ of the geodesic ray $h(\seq x)$.

\begin{proposition}[\cite{CALEGARI2009}]
	There exists a unique measure $\hat \nu$ on $\Sigma(A')$ so that $h\hat \nu=\nu$. In particular, $\hat\nu$ is supported on $Y$.
\end{proposition}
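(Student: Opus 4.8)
The plan is to produce $\hat\nu$ directly from the defining property of the Patterson--Sullivan measure, namely that it is (up to uniform multiplicative constants) the weak-star limit of the empirical measures on spheres of the Cayley graph, and then to transport this limit through the Cannon coding. First I would recall that for each $n$ the set $S_n=\{g\in\Gamma:|g|_S=n\}$ is in bijection, via $\iota$, with the set of admissible $A'$-words of length $n+1$ beginning at $*$ (the extra $0$-padding is irrelevant once we work inside $\Sigma(A')$), and that under this bijection the cylinder structure on $\Sigma(A')$ refines the partition of $\partial\Gamma$ into shadows of geodesic segments. Since $\nu$ assigns to the shadow of a segment a quantity comparable to $\lambda^{-n}$ (with $\lambda=e^{\htop(\tau)}$), one sees that the normalized counting measures
\[
  \hat\nu_n=\frac{1}{\#S_n}\sum_{g\in S_n}\delta_{\iota(g)}
\]
are tight on $Y$, and any weak-star accumulation point $\hat\nu$ of $(\hat\nu_n)_n$ pushes forward under $h$ to a measure comparable to $\nu$ on each shadow; by the standard uniqueness of Patterson--Sullivan measures in this comparison class (they form a single measure up to bounded density), after normalizing we get $h_*\hat\nu=\nu$ exactly.

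The second step is uniqueness of $\hat\nu$ itself. Here I would argue that because $h:Y\to\partial\Gamma$ is a bijection (as recorded in the excerpt) which is moreover a Borel isomorphism — the cylinders of $\Sigma(A')$ map onto a separating family of Borel subsets of $\partial\Gamma$ — the pushforward map $h_*$ is injective on Borel probability measures on $Y$. Consequently $h_*\hat\nu=\nu$ determines $\hat\nu$ uniquely as a measure on $Y$, and a fortiori as a measure on $\Sigma(A')$ once we note that any measure $\hat\nu$ on $\Sigma(A')$ with $h_*\hat\nu=\nu$ must be carried by $Y$: the complement $\Sigma(A')\setminus Y$ consists of sequences not beginning at $*$, and these correspond to no boundary point, so the only way the pushforward can be a probability measure is for them to have zero mass. (Strictly, one has to be mildly careful about what "$h_*$" means on $\Sigma(A')\setminus Y$; the clean formulation is that $h$ is defined on all of $\Sigma(A')$ by first projecting to the component beginning at $*$, or simply that we declare $h_*\hat\nu=\nu$ to force $\hat\nu(Y)=1$, which is the content of the last sentence of the proposition.)

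The remaining point is to upgrade the comparison "$h_*\hat\nu\asymp\nu$" obtained from tightness to an actual equality after normalization. This is where the argument is most delicate and where I expect the main obstacle to lie: one must control the multiplicative error in the estimate $\nu(\text{shadow of }g)\asymp\lambda^{-|g|_S}$ along a subsequence well enough to conclude that the limiting density of $h_*\hat\nu$ with respect to $\nu$ is constant. The standard way around this is to use the subadditive/renewal structure of the transitive components $\{B_1,\dots,B_k\}$ of $A$: on each $\Sigma(B_i)$ the measure of maximal entropy $\mu_i$ is Markov, and the empirical sphere measures, when restricted to the part of $\Gamma$ coded through $B_i$, converge to (a multiple of) $\mu_i$ by the Parry--Markov theory together with the estimate in \Cref{lem:medidamenos}; assembling these pieces over the finitely many components and using that $\partial\Gamma$ is covered up to a $\nu$-null set by the boundaries of the recurrent components gives the existence of a genuine weak-star limit, hence uniqueness of the accumulation point, hence $h_*\hat\nu=\nu$ exactly. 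I would cite \cite{CALEGARI2009} for the construction of the Cannon-coded lift and for the precise statement that the limiting measure decomposes as a convex combination of the $\mu_i$; the novelty here is only that no regularity of any quasi-morphism is used, so the argument is purely about the measure $\nu$ and the coding.
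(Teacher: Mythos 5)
The paper does not supply a proof of this proposition: it is cited directly to \cite{CALEGARI2009}, so there is no internal argument to compare against.

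Taking the paper's stated premise at face value — that $h:Y\to\partial\Gamma$ is a \emph{bijection} — your proposal is greatly over-engineered. The set $Y$ is a clopen (hence compact) subset of the subshift $\Sigma(A')$, the map $h$ is continuous (sequences agreeing on the first $n$ coordinates give geodesic rays fellow-traveling for time $n$, so the images are close in the Gromov boundary topology), and a continuous bijection from a compact metrizable space onto a Hausdorff space is a homeomorphism, hence a Borel isomorphism. Therefore $\hat\nu:=(h^{-1})_{*}\nu$ is a Borel probability measure on $Y$ with $h_{*}\hat\nu=\nu$; it is the unique such measure on $Y$ because $h_{*}$ is injective on Borel measures; and extending by zero gives the unique probability on $\Sigma(A')$ pushing forward to $\nu$, since any mass off $Y$ would make the pushforward either undefined or a subprobability. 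None of the sphere-average empirical measures, shadow-lemma comparisons, Coornaert-type uniqueness of Patterson--Sullivan measures, or the decomposition into recurrent components $B_1,\dots,B_k$ is needed for this statement; those enter only later when one wants to identify $\hat\nu$'s forward Ces\`aro averages with a combination of the Markov measures $\mu_i$.

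Your instinct that something delicate is lurking is not unfounded, but it points at a different worry: in \cite{CALEGARI2009} the boundary map from the Cannon coding is in general only surjective and uniformly finite-to-one, not injective, and constructing the lift $\hat\nu$ there genuinely requires the observation that the multi-valued locus is $\nu$-null (which is where shadow estimates and Coornaert's work are actually used). If one takes that reading, then your uniqueness step — injectivity of $h_{*}$ on measures because $h$ is a Borel isomorphism — is no longer available, and the argument as written would not close. So the proposal is internally inconsistent: the heavy existence machinery in the first and third paragraphs presumes the finite-to-one difficulty, while the uniqueness paragraph presumes the bijection. Pick one hypothesis and argue accordingly; under the paper's stated hypothesis, the three-line Borel-isomorphism argument is the whole proof.
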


Let 
\[
	\mu_N =\frac{1}{N}\sum_{k=0}^{N-1} \tau^k\hat\nu. 
\]

\begin{proposition}[Lemma $4.2$ and Proposition $4.2$ in \cite{Cantrell_2020}]
	The sequence $(\mu_N)$ converges to a measure $\mu$, which is a convex combination of the measures $\mu_1,\cdots, \mu_k$.
\end{proposition}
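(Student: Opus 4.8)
The statement asserts that the Ces\`aro averages $\mu_N = \frac1N\sum_{k=0}^{N-1}\tau^k\hat\nu$ converge to a measure $\mu$ which is a convex combination $\mu=\sum_{i=1}^k c_i\mu_i$ of the measures of maximal entropy on the transitive components. My plan is to follow the strategy of \cite{Cantrell_2020}, Lemma $4.2$ and Proposition $4.2$, adapting it to the present (less quantitative) setting. The argument has three ingredients: (i) relative compactness of $(\mu_N)_N$ in $\PM[\Sigma(A')]$, so that any accumulation point is $\tau$-invariant; (ii) a description of the set of $\tau$-invariant measures on $\Sigma(A')$, namely that every ergodic component is carried by one of the transitive pieces $\Sigma(B_1),\dots,\Sigma(B_k)$ (together with possibly the vertex $v=0$, which is excluded here since we removed it) and hence every invariant measure is a convex combination of invariant measures on the $\Sigma(B_i)$; (iii) identification of the accumulation point: showing that on each component $\Sigma(B_i)$ the restriction must be (a multiple of) the measure of maximal entropy $\mu_i$, and that the limit is unique.

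First I would establish relative compactness: $\PM[\Sigma(A')]$ is weak-$*$ compact since $\Sigma(A')$ is a compact metric space, so $(\mu_N)_N$ has convergent subsequences, and any limit $\mu_\infty$ is $\tau$-invariant by the standard telescoping estimate $\|\tau\mu_N-\mu_N\|\le \frac2N$. Next, for the structural step, I would recall that the recurrent part of $(\Sigma(A),\tau)$ decomposes into the irreducible pieces $\Sigma(B_1),\dots,\Sigma(B_k)$: any $\tau$-invariant probability measure gives zero mass to the non-recurrent (transient) vertices, hence is supported on $\bigcup_i\Sigma(B_i)$, and by ergodic decomposition is a convex combination of invariant measures supported on individual components. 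This reduces the problem to understanding, for each $i$, the quantity $c_i=\lim_N\mu_N(\Sigma(B_i))$ and the conditional limit $\mu_N(\cdot\mid\Sigma(B_i))$.

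The heart of the matter, and the step I expect to be the main obstacle, is identifying the conditional limit on each component as the measure of maximal entropy $\mu_i$. Here the key input is the construction of $\hat\nu$ from the Patterson--Sullivan measure: by \cite{CALEGARI2009}, $\hat\nu$ is the unique measure on $\Sigma(A')$ with $h\hat\nu=\nu$, and $\nu$ is (up to scaling) a Patterson--Sullivan density of dimension $\htop(\mathfrak g)=\log\lambda$, so the cylinder masses of $\hat\nu$ decay like $\lambda^{-n}$ along geodesic rays, with multiplicative constants controlled by the Gibbs property. Averaging over the shift and restricting to a transitive component $\Sigma(B_i)$ with spectral radius $\lambda$, these asymptotics force the limiting conditional measure to satisfy the Gibbs inequalities characterizing the unique measure of maximal entropy for $\tau|\Sigma(B_i)$ (via Parry's theorem, $\mu_i$ being the Parry measure of $B_i$). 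Uniqueness of such a Gibbs measure then gives that \emph{every} subsequential limit of $\mu_N(\cdot\mid\Sigma(B_i))$ equals $\mu_i$; combined with the fact that the total masses $\mu_N(\Sigma(B_i))$ themselves converge (which I would extract from the same cylinder estimates, counting the proportion of geodesic prefixes of length $N$ that pass through each $\Sigma(B_i)$, an estimate available from the growth asymptotics in \cite{Cantrell_2020} or directly from Perron--Frobenius for $A$), this pins down $\mu=\sum_i c_i\mu_i$ uniquely, so the whole sequence $(\mu_N)_N$ converges. The delicate point is that the non-quantitative hypotheses of our setting require replacing the uniform Barry--Esseen-type asymptotics of \cite{Cantrell_2020} with softer compactness-plus-uniqueness arguments; I would isolate exactly which growth estimate is needed (essentially: for each $i$, $\lim_N \lambda^{-N}\#\{\text{prefixes of length }N\text{ through }B_i\}$ exists and is positive) and cite or reprove it, after which the identification of $\mu$ as a convex combination of the $\mu_i$ follows.
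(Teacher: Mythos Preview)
The paper does not give its own proof of this proposition: it is stated as a direct citation of Lemma~4.2 and Proposition~4.2 in \cite{Cantrell_2020}, with no argument supplied in the present article. So there is nothing to compare your proposal against here.

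Your outline is a reasonable reconstruction of the cited argument: compactness gives subsequential limits which are $\tau$-invariant; invariant measures on the coded system are supported on the maximal (spectral-radius $\lambda$) components $\Sigma(B_i)$; and the Patterson--Sullivan/Gibbs asymptotics of $\hat\nu$ on cylinders force the conditional limit on each $\Sigma(B_i)$ to be its Parry measure. One point to tighten: your step (ii) as written says any invariant measure is supported on $\bigcup_i\Sigma(B_i)$, but in general the subshift $\Sigma(A')$ may have other irreducible components of strictly smaller spectral radius which also carry invariant measures. What you actually need (and what the growth/Perron--Frobenius estimate you mention in step (iii) provides) is that the \emph{specific} sequence $(\mu_N)$ gives asymptotically zero mass to those subcritical components, because $\hat\nu$-typical orbits are absorbed into maximal components with frequency governed by $\lambda$. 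That is the content of the ``$c_i$ converge'' step, and it does the work of ruling out the smaller components, not a general structural fact about $\PTM{\tau}{\Sigma(A')}$.
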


This measure $\mu$ is supported on $X=\bigcup_i^k\Sigma(B_i)$. The idea is then comparing the measure $\hat\nu$ with the limit measure $\mu$: one of the main difficulties to overcome is that these measure are supported in different sets of $\Sigma(A)$.

\smallskip 
 
Fix $0\neq [L]\in \QMt{\Gamma}$, which with no-loss of generality we assume to be homogeneous. The Patterson--Sullivan can be obtained as
\begin{align*}
\nu=\lim_n \nu_n,\quad \nu_n=\frac{1}{\# S_n}\sum_{g\in B_n}\delta_g
\end{align*}
where $B_n=\{g\in\Gamma:|g|_S\leq n\}$ \cite{PsmeasureCoor}. We use $\iota$ to lift the measures $\nu_n$ to measures $\eta_n$ on $Y$: it is direct to check that $(\eta_n)_n$ converges weakly to $\hat \nu$. Similarly, by identifying $Y=\Sigma(A')$ one uses $L$ to define $L\in \QM{Y}$, and considers  $\bm{L}=\{L^{(n)}:Y\to\R\}_n$ the corresponding locally constant quasicocycle. 

Since $B_n$ is symmetric and $L$ is homogeneous, it follows that for every $n, m\in \N$, $\eta_{n+m}(L^{(n)})=0$, and thus 
$\hat\nu(L^{(n)})=0$. Similarly, for every $j\geq 0$, $\hat\nu(L^{(n)}\circ \tau^j)=0$. From this we deduce that $\mu(L^{(n)})=0$, and since $\mu$ is invariant, $\mu(\bm L)=0$, which in turn implies that for every $1\leq i\leq k$, $\mu_i(\bm L)=0$ as well.

For $\sigma\neq 0$ denote $\mc N(0,\sigma)$ the centered normal distribution function with variance $\sigma^2$. As consequence of the results in the previous part we get:

\begin{proposition}\label{pro:CLTmui}
  There exists $\sigma^2=\sigma^2(L)>0$ so that for every $c\in\R$, 
	\[
  \frac{L^{(n)}(\seq x)}{\sigma\sqrt{n}} \xrightarrow[n\mapsto\oo]{\ms D} \mc N(0,\sigma)   
	\]
\end{proposition}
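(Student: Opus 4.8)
\textbf{Proof proposal for Proposition \ref{pro:CLTmui}.}

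The plan is to reduce the statement to the Central Limit Theorem already established for Markov measures, applied separately to each transitive component $\Sigma(B_i)$. First I would observe that, since $\mu=\sum_{i=1}^k c_i\mu_i$ with $c_i\geq 0$, $\sum c_i=1$, and each $\mu_i$ is a Markov measure for the transitive subshift $\tau\colon\Sigma(B_i)\toit$, it suffices to prove that for every $i$ with $c_i>0$ there is $\sigma_i^2>0$ and a \emph{common} value $\sigma^2>0$ such that
\[
	\mu_i\left(\ux\in\Sigma(B_i):\frac{L^{(n)}(\ux)}{\sigma\sqrt n}\leq c\right)\xrightarrow[n\to\oo]{}\cN_{\sigma}(c),
\]
because then the convex combination of the distribution functions converges to $\cN_\sigma$. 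For a fixed $i$, the restriction of the locally constant quasi-cocycle $\mathbb L=(L^{(n)})_n$ to $\Sigma(B_i)$ is again a locally constant quasi-cocycle on a mixing SFT (the transitive components $B_i$ are aperiodic after passing to a power, or one works with the natural period; in any case the thermodynamic machinery of \Cref{sec:the_central_limit_theorem_for_quasi_morphisms} applies). We already recorded that $\mu_i(\mathbb L)=0$. Applying \Cref{pro:potentialarbitrary} we obtain the associated weak Bowen potential $\psi_{L,\mu_i}\in\mathrm{Bow}_{\mu_i}(\Sigma(B_i))$ with $\mu_i(\psi_{L,\mu_i})=0$, and since $\mu_i$ is a Markov measure, \Cref{thm:cohomologakernel} and \Cref{cor:cohomologakernel} give $\widetilde\psi\in\ker\Lcal_{\varphi_i}$ and $v\in\Lp[\oo](\mu_i)$ with $\psi_{L,\mu_i}=\widetilde\psi+v-v\circ\tau$, whence the estimate \eqref{eq:Lcohkernel} holds. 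Then \Cref{lem:CLTqm} applies verbatim and yields the CLT for $L^{(n)}$ with respect to $\mu_i$, with variance $\sigma_i^2=\normL{\widetilde\psi}{2}^2$.

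The point that requires care is the positivity and the coincidence of the variances. For positivity: by \Cref{eq:sigmaL}, $\sigma_i(L)=0$ iff $L|\Sigma(B_i)$ is cohomologically trivial as a quasi-cocycle, i.e.\@ $\sup_n\normL{L^{(n)}|\Sigma(B_i)}{\oo}<\oo$; by the Livsic theorem for quasi-cocycles (\Cref{pro:weakLivsic}) this is equivalent to $\bar L(\al)=0$ for every periodic orbit $\al$ of $\tau|\Sigma(B_i)$. If this held for some $i$, then tracing back through the identification of $\Sigma(B_i)$ with a subsystem of the Cannon coding and the fact that $h$ sends periodic orbits to boundary fixed points of elements of $\Gamma$, one shows that the homogeneous quasi-morphism $\bar L=L$ vanishes on a set of conjugacy classes dense enough to force, via the defect inequality $|L(\gamma^n)-L((\gamma\star\beta)^n)+L(\beta^n)|\le 2\norm{\delta L}$ (exactly as in the proof of \Cref{lem:injetoraentrecohomo}), that $L\equiv 0$ in $\widetilde{\Qm}(\Gamma)$, contradicting $L\neq 0$. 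Hence $\sigma_i^2>0$ for every $i$. For the coincidence: one invokes that all the transitive components $B_i$ compute the same growth and, more to the point, that $\widehat\nu$ (hence $\mu_N=\frac1N\sum_{k=0}^{N-1}\tau^k\widehat\nu$, hence $\mu$) is obtained as a limit of the spherical empirical measures $\eta_n$; the variance $\sigma^2(L)$ is the one attached to the limit process and is the same on the support of $\mu$ by the standard argument (see \cite{Cantrell_2020}, Lemma $4.2$ and around Proposition $4.2$) that the $\mu_i$ are the ergodic components of a single Markov measure governing the limiting fluctuations. Concretely, $\sigma^2(L)=\lim_n\frac1n\normL[\mu]{S_n\psi_L}{2}^2$ exists and equals $\normL[\mu_i]{\widetilde\psi}{2}^2$ for every $i$ with $c_i>0$, because passing to the Ces\`aro limit $\mu$ of $\tau^k\widehat\nu$ and using $\mu(\mathbb L)=0$ homogenizes the asymptotic variance across components.

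The main obstacle I anticipate is precisely this last step: controlling the relation between $\widehat\nu$ (supported on $Y$, disjoint from $X=\bigcup_i\Sigma(B_i)$) and the limit measure $\mu$, and transferring the CLT from the stationary measure $\mu$ back to averaging over spheres in the Cayley graph and to $\widehat\nu$. This is the content of the comparison arguments of Calegari–Fujiwara and Cantrell, which rely on the fact that a $\widehat\nu$-random word spends an asymptotically full proportion of its length, in a suitable sense, inside one of the transitive components, together with a matching of the cylinder weights up to bounded multiplicative error. I would import these estimates as black boxes from \cite{CALEGARI2009,Cantrell_2020}, verifying only that they are compatible with replacing the regular quasi-morphisms considered there by our locally constant quasi-cocycle $\mathbb L$ and its weak Bowen potential; since $\mathbb L$ is itself locally constant and all the relevant bounds are in terms of $\norm{\delta L}$ and $\normL{v}{\oo}$, which are finite, no regularity of $L$ on $\Gamma$ is needed. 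Once \Cref{pro:CLTmui} is in place, \hyperlink{corollaryC}{Corollary C} follows by the same Chebyshev-truncation argument as in \Cref{lem:CLTqm}, comparing $L(h(\ux)_n)$ (resp.\@ $L(g)$ for $g\in S_n$) with $L^{(n)}(\ux)$ up to an $\Lp[2]$-small error and using that $\eta_n\to\widehat\nu$ weakly together with the comparison $\widehat\nu\leftrightarrow\mu$.
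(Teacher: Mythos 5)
Your reduction of the statement to the component-wise CLT for the Markov measures $\mu_i$, the use of \Cref{thm:cohomologakernel}, \Cref{cor:cohomologakernel}, and \Cref{lem:CLTqm} to get the CLT on each $\Sigma(B_i)$, and the deferral to Cantrell for the coincidence of variances are all sound and match the structure of the paper's proof. The place where you diverge, and where I see a genuine gap, is the positivity of the $\sigma_i^2$.

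You argue: if $\sigma_i^2=0$ then $\sup_n\|L^{(n)}|\Sigma(B_i)\|_\infty<\infty$, hence $\bar L(\al)=0$ for every periodic orbit of $\tau|\Sigma(B_i)$, and then you claim that by the $\star$-concatenation argument from \Cref{lem:injetoraentrecohomo} one forces $L\equiv 0$ in $\widetilde{\Qm}(\Gamma)$. That last implication is where the argument does not go through as stated. The $\star$ argument in \Cref{lem:injetoraentrecohomo} is made in the symbolic model of the geodesic flow coming from a Markov partition, where \emph{every} periodic orbit of the flow can be concatenated with a periodic orbit lying in the open dense good set, and the semi-conjugacy is one-to-one there. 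The Cannon automaton is a different object: its periodic paths represent only a proper subset of the conjugacy classes of $\Gamma$ (and a single transitive component $B_i$ contributes an even smaller subset), and the shift on $\Sigma(B_i)$ does not carry a $\star$-operation that relates to group multiplication in the way \Cref{thm:shadowing} does for the geodesic flow. So "vanishes on a set of conjugacy classes dense enough" is exactly the assertion that needs a proof, and I don't see how your route supplies one.

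The paper avoids this by a different mechanism: if $L^{(n)}$ is uniformly bounded on $\Sigma(B_i)$, then after using the comparison between $\mu$ and $\hat\nu$ one obtains that the set of boundary rays $[r]\in\partial\Gamma$ with $\sup_n|L(r_n)|<\infty$ has \emph{positive} $\nu$-measure. This set is (up to the uniformly bounded error $\|\delta L\|$) invariant under the $\Gamma$-action on $\partial\Gamma$, and $\nu$ is ergodic for that action, so the set is co-null; this then forces $L$ to be bounded, a contradiction. That is the Calegari--Fujiwara argument (Proposition~5.6 in \cite{Cantrell2020}), and it is this ergodicity-of-$\nu$ step — not a Livsic-type statement in a single $\Sigma(B_i)$ — that pins down positivity. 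The same ergodicity input is also what forces the $\sigma_i^2$ to coincide, which you quote but do not really derive from your framework. So you should replace your periodic-orbit/concatenation argument by the ergodicity-of-the-Patterson--Sullivan-measure argument, keeping the rest of your proof.
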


\begin{proof}
Applying \Cref{lem:CLTqm} to each component $\Sigma_i$ one deduces one of the two possibilities: either $L^{(n)}$ is bounded on $\Sigma(B_i)$, or $\left(\frac{L^{(n)}(\seq x)}{\sigma\sqrt{n}}\right)_n$ converges in distribution to $\mc N(0,\sigma_i)$, for some $\sigma_i$. If $\sigma_i$ is zero, then one can show that the set $\{[r]\in\partial \Gamma: \sup_{n\geq \N}|L(r_n)|<\oo\}$ has positive $\nu-$measure, hence by ergodicity of $\nu$ it implies that this set has full measure. A posteriori, $L$ is bounded, which contradicts what we assumed. See for example Proposition $5.6$ in  \cite{Cantrell2020}. This method is due to Calegari and Fujiwara \cite{CALEGARI2009}, where the ergodicity is $\nu$ is used further to guarantee that all $\sigma_i$ coincide (in page $17$ of \cite{Cantrell2020} substitute $f^n$ by $L^{(n)}$), hence $\sigma_i=\sigma>0$.   
\end{proof}

To continue, Proposition $4.6$ in \cite{Cantrell_2020} shows that $\norm[\mrm{TV}]{\mu_N-\mu}=O(N^{-1})$, therefore defining
\[
	E_n(c,X)=\left(\seq x\in X :\frac{L^{(n)}(\seq x)}{\sigma\sqrt{n}}\leq t \right),
\]
one gets:

\begin{corollary}\label{cor:cltX}
    For $n\in\N, c\in\R$, 
	\[
	\left|\mu_N(E_n(c,X))-\frac{1}{\sigma\sqrt{2\pi}}\int_{-\oo}^c e^{-\frac{u^2}{2\sigma}} \dd u\right|\leq \xi_n(c)+O(N^{-1}).
	\]
	where $\lim_{n\to\oo} \xi_n(t)=0$.
\end{corollary}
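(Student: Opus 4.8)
The statement to establish is \Cref{cor:cltX}: a quantitative comparison between $\mu_N(E_n(c,X))$ and the normal distribution $\cN_{\sigma}(c)$, with error decomposing into a term $\xi_n(c)\to 0$ (coming from the CLT along the limit measure $\mu$) and an $O(N^{-1})$ term (coming from the total variation approximation $\mu_N\to\mu$). The strategy is to split the difference through the intermediate quantity $\mu(E_n(c,X))$, i.e.\@ write
\[
\left|\mu_N(E_n(c,X))-\cN_{\sigma}(c)\right|\leq \left|\mu_N(E_n(c,X))-\mu(E_n(c,X))\right|+\left|\mu(E_n(c,X))-\cN_{\sigma}(c)\right|,
\]
and control each summand separately.

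\textbf{First summand.} Since $E_n(c,X)\subset X$ is a Borel set, $\left|\mu_N(E_n(c,X))-\mu(E_n(c,X))\right|\leq \norm{\mu_N-\mu}_{\scriptscriptstyle{TV}}$, and by Proposition $4.6$ of \cite{Cantrell_2020} (already invoked in the excerpt) this is $O(N^{-1})$, uniformly in $n$ and $c$. This is the easy half; it requires only that $L^{(n)}$ is $\BM[\Sigma(A)]$-measurable (it is locally constant), so that $E_n(c,X)$ is a legitimate measurable set to which the total variation bound applies.

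\textbf{Second summand.} Here one sets $\xi_n(c):=\left|\mu(E_n(c,X))-\cN_{\sigma}(c)\right|$ and must show $\xi_n(c)\to 0$ as $n\to\oo$, for each fixed $c$ (pointwise suffices for the statement as written; one gets uniformity for free by Pólya's theorem since the limit distribution function is continuous). But this is exactly the content of \Cref{pro:CLTmui}: under the running hypothesis that $L$ is unbounded, there is $\sigma^2=\sigma^2(L)>0$ with
\[
\mu\left(\ux\in X:\frac{L^{(n)}(\ux)}{\sigma\sqrt n}\leq c\right)\xrightarrow[n\to\oo]{}\cN_{\sigma}(c),
\]
and rescaling $c\mapsto c/\sigma$ (or, equivalently, reading $E_n(c,X)$ with the $\sqrt n$-normalization and absorbing the $\sigma$ into the definition of $\xi_n$) identifies $\mu(E_n(c,X))$ with the left-hand side. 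Hence $\xi_n(c)\to 0$, and combining with the first summand gives the claimed inequality.

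\textbf{Main obstacle.} Strictly speaking there is no new mathematical difficulty: both ingredients are quoted results (\Cref{pro:CLTmui} and the TV estimate of \cite{Cantrell_2020}). The only point requiring care — and the one I would treat as the "hard part" — is bookkeeping the normalization: \Cref{pro:CLTmui} is stated with the $\sigma\sqrt n$ normalization while $E_n(c,X)$ is defined with $\sqrt n$, so one must be consistent about whether $\sigma$ appears inside the event or inside $\cN_{\sigma}$, and verify that the substitution $c\mapsto c/\sigma$ does not disturb the uniformity claim. Since $\cN_{\sigma}$ is continuous and strictly increasing, this rescaling is harmless and the convergence $\xi_n(c)\to 0$ is preserved. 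No environments other than the inline displays above are opened, so nothing further needs closing.
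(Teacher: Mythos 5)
Your proof is correct and is exactly the argument the paper has in mind: split through $\mu(E_n(c,X))$ via the triangle inequality, control the first term by the total variation bound $\|\mu_N-\mu\|_{\scriptscriptstyle{TV}}=O(N^{-1})$, and identify the second term with the $\xi_n(c)\to 0$ furnished by \Cref{pro:CLTmui}. Your remark about the $\sqrt{n}$ versus $\sigma\sqrt{n}$ normalization (and the stray $t$ in the paper's display) correctly flags a harmless notational slip in the source; no further comment is needed.
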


The following measures are introduced to compare $\mu_N(E_n(c,X))$ with $\hat\nu(E_n(c,X))$. Consider the sets
\begin{align*}
&Y_i=\{y\in Y: \exists m\in\N\text{ such that }\forall n\geq m, \tau^ny\in \Sigma(B_i)\}\\
&\widetilde{Y}=\bigcup_i Y_i\\
&E_n(c,\widetilde{Y})=\left\{\seq x\in \widetilde{Y}: \frac{L^{(n)}(\seq x)}{\sigma\sqrt{n}}\leq c\right\}.
\end{align*}

\begin{definition}
	For $j\in\N$ let $A_j=\{\seq x\in\Sigma(A'):\tau^ky\notin X \forall k=0,\ldots, j-1\text{ and } \tau^j\seq{x}\in X\}$. For $n\geq 1$ define the measures
	\[
	\hat\nu_n(E)=\hat\nu(E\cap \bigcap_{j=0}^n A_j).
	\]
\end{definition}

Observe that if $E \subset X$ then $\tau^j\hat\nu(E)=\tau^j\hat\nu_j(E)$.

\begin{lemma}[Lemma $5.8$ in \cite{Cantrell_2020}]
	For every sequence $(N_n)$, 
	\[
	\frac{1}{N_n}\sum_{j=0}^{N_n-1}\hat\nu_j\left(E_n(t,\widetilde{Y})\right)=\hat{\nu}\left(E_n(t,\widetilde{Y})\right)+O(N_n^{-1})
	\]
\end{lemma}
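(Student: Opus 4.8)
The statement asserts a Cesàro-type averaging identity for the "trimmed" measures $\hat\nu_j$: namely that averaging $\hat\nu_j(E_n(t,\widetilde Y))$ over $j=0,\dots,N_n-1$ recovers $\hat\nu(E_n(t,\widetilde Y))$ up to an $O(N_n^{-1})$ error, uniformly in the sequence $(N_n)$. The approach I would take is a direct telescoping/reindexing argument exploiting the defining relation $\tau^j\hat\nu(E)=\tau^j\hat\nu_j(E)$ for $E\subset X$, together with the observation that $\widetilde Y=\bigcup_i Y_i$ is exactly the set of points that eventually land in $X$ under iteration, so that $E_n(t,\widetilde Y)$ decomposes along the "first entry time into $X$".

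First I would unwind the definitions. For a point $y\in\widetilde Y$ there is a well-defined first time $\rho(y)=\min\{k\ge 0:\tau^k y\in X\}$; the sets $A_j$ in \Cref{def:unstablejacobian}-adjacent notation are precisely $A_j=\{\rho(y)\ge j,\ \tau^j y\in X\}$, so $\{\rho = m\}=\bigcap_{j=0}^{m-1}A_j^{c?}\cdots$ — more carefully, I would re-express $\hat\nu_j(E)=\hat\nu(E\cap\bigcap_{j'=0}^{j}A_{j'})$ in terms of level sets of $\rho$, noting that $\bigcap_{j'=0}^{j}A_{j'}$ stabilizes to $\{\rho=0, \tau^0 y\in X\}$-type sets, and that in fact the $A_j$'s are nested in the relevant way so that $\hat\nu_j(E)\to\hat\nu(E\cap\widetilde Y^{(0)})$ where $\widetilde Y^{(0)}$ is the part of $\widetilde Y$ already handled. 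The key point is that for $E\subset X$, summing $\tau^j\hat\nu_j(E)=\tau^j\hat\nu(E)$ and using $\tau$-quasi-stationarity of the averaged object lets one write $\frac1{N}\sum_{j=0}^{N-1}\hat\nu_j(E_n)$ as $\hat\nu(E_n)$ plus a boundary term coming only from the finitely many indices near $0$ and near $N$, and the point $y=0$ (the absorbing vertex) contributes the $O(N^{-1})$.

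The cleanest route, and the one I would actually write, is to mimic the proof of Lemma $5.8$ in \cite{Cantrell_2020} verbatim, since the only structural inputs are: (i) $X=\bigcup_i\Sigma(B_i)$ is $\tau$-invariant and absorbing for $\widetilde Y$; (ii) $\hat\nu$ is a probability, so all the error terms are bounded by $1$ and the count of "bad" indices is $O(1)$ relative to the decay one extracts; (iii) the decomposition $\tau^j\hat\nu(E)=\tau^j\hat\nu_j(E)$ for $E\subset X$. I would state explicitly that $E_n(t,\widetilde Y)\subset\widetilde Y$ but its image under enough iterates lies in $X$, namely $\tau^k E_n(t,\widetilde Y)\subset X$ once $k$ exceeds the (unbounded, but $\hat\nu$-a.e. finite) first-entry time, and then split $\widetilde Y$ as $\{\rho\le R\}\cup\{\rho>R\}$, let $R\to\infty$ slowly, controlling the tail $\hat\nu(\rho>R)\to 0$; since the final error claimed is only $O(N_n^{-1})$ and not a rate, this tail control is qualitatively free.

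\textbf{Main obstacle.} The delicate point is bookkeeping the boundary indices and making sure the error is genuinely $O(N_n^{-1})$ and not merely $o(1)$: one must check that the contributions of indices $j$ near $0$ (where $\hat\nu_j$ has not yet "saturated") and near $N_n$ do not accumulate. This works because each $\hat\nu_j(E_n)$ differs from $\hat\nu(E_n\cap\widetilde Y)$ by at most $\hat\nu(\rho>j)$, and once one shows $\hat\nu(\rho>j)$ decays summably — which follows from the fact that $\hat\nu$-almost every point of $Y$ eventually enters $X$, together with the finite-state (hence exponentially-mixing on each component) structure of the Cannon coding, giving $\hat\nu(\rho>j)=O(\theta^j)$ for some $\theta<1$ — the sum $\frac1{N_n}\sum_{j}\hat\nu(\rho>j)$ is $O(N_n^{-1})$. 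So the real work is establishing the exponential (or at least summable) decay of the first-entry-time tail under $\hat\nu$; everything else is a reindexing of a telescoping sum, exactly as in \cite{Cantrell_2020}, and I would cite that lemma for the combinatorial skeleton while supplying the tail estimate in our setting.
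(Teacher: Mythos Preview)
The paper does not supply its own proof of this lemma: it is stated and attributed verbatim to Lemma~5.8 of \cite{Cantrell_2020}, with no argument given. So there is nothing in the present paper to compare your proof against beyond the citation itself, and since you explicitly propose to ``mimic the proof of Lemma~5.8 in \cite{Cantrell_2020} verbatim,'' you are in agreement with the paper's handling by construction.

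That said, your \emph{Main obstacle} paragraph correctly isolates the substance of the argument. With the (evident) correction that the definition of $\hat\nu_j$ should read $\hat\nu_j(E)=\hat\nu\big(E\cap\bigcup_{j'=0}^{j}A_{j'}\big)=\hat\nu(E\cap\{\rho\le j\})$ rather than an intersection (the $A_j$ are pairwise disjoint level sets of the first-entry time $\rho$), the identity rearranges to
\[
\hat\nu(E_n(t,\widetilde Y))-\frac{1}{N_n}\sum_{j=0}^{N_n-1}\hat\nu_j(E_n(t,\widetilde Y))
=\frac{1}{N_n}\sum_{j=0}^{N_n-1}\hat\nu\big(E_n(t,\widetilde Y)\cap\{\rho>j\}\big)
\le \frac{1}{N_n}\sum_{j\ge 0}\hat\nu(\rho>j),
\]
so the claim is exactly that $\mathbb E_{\hat\nu}[\rho]<\infty$, which follows from the finite-state structure (exponential tail for the hitting time of the maximal components). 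Your first two paragraphs meander around the typo in the definition and never quite land on the clean reformulation; the final paragraph has it right. I would cut the exploratory middle and keep only the tail-summability computation.
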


Fix $N_n=[\sqrt[4]{n}]$ and let $C>0$ so that $\norml{L^{(n)}}\leq Cn$. Define
\begin{align*}
C_n^{\pm}(c,X)=\left\{\seq x\in X: \frac{L^{(n)}(y)}{\sigma\sqrt{n}}\leq c\pm \frac{C}{\sqrt[4]{n}}\right\}
\end{align*}
Then for $0\leq j\leq N_n$, 
\[
	\tau^j\hat\nu(C_n^-(c,X))= \tau^j\hat\nu_j(C_n^-(c,X))\leq \hat\nu_j(E_n(c,\widetilde{Y}))\leq \tau^j\hat\nu_j(C_n^+(c,X))=\tau^j\hat\nu(C_n^+(c,X))
\]
and
\[
	\mu_{N_n}(C_n^-(c,X))\leq \hat{\nu}\left(E_n(c,\widetilde{Y})\right)+O(n^{-\frac{1}{4}})\leq \mu_{N_n}(C_n^+(c,X)).
\]

\begin{corollary}
For every $c\in\R$, $\lim_n \hat{\nu}\left(E_n(c,\widetilde{Y})\right)=\frac{1}{\sigma\sqrt{2\pi}}\int_{-\oo}^c e^{-\frac{u^2}{2\sigma}} \dd u$.
\end{corollary}

\begin{proof}
Direct consequence of \Cref{cor:cltX} and the previous inequality.	
\end{proof}

We are ready to prove \Cref{cor:D}, starting with its first part.

\begin{theorem*}
It holds
\[
	\nu\left([\tilde r]:\exists r\in [\tilde \ga]\text{ with }r_0=1_{\Gamma}\text{ and } \frac{L(r_n)}{\sigma^2\sqrt{n}}\leq c\right)\xrightarrow[n\mapsto\oo]{}\frac{1}{\sigma\sqrt{2\pi}}\int_{-\oo}^c e^{-\frac{u^2}{2\sigma}} \dd u.
\]
\end{theorem*}

\begin{proof}
	Indeed, denote $\hat{R}_n(c)$ the measure on the left hand side, and 
\[
	H_n(c)=\hat\nu\left(\left\{\seq x\in \widetilde{Y}: \frac{L(h(\seq x)_n)}{\sigma\sqrt{n}}\leq c\right\}\right).
\]
By the previous Corollary, $\lim_n H_n(c)=\frac{1}{\sigma\sqrt{2\pi}}\int_{-\oo}^c e^{-\frac{u^2}{2\sigma}} \dd u$. On the other hand, Lemma $5.7$ in  \cite{Cantrell_2020} shows the existence of some $K>0$ so that for every $n\geq 1, c\in\R$: $H_n(c)\leq \hat{R}_n(c)\leq H_n(c+\frac{K}{\sigma\sqrt n})$. As a consequence, $\lim_n \hat{R}_n(c)=\frac{1}{\sigma\sqrt{2\pi}}\int_{-\oo}^c e^{-\frac{u^2}{2\sigma}} \dd u$, which is what we wanted to show.
\end{proof}

The second  part of \Cref{cor:D} follows from the same skeleton of reasoning as above, this time following section $8$ of \cite{Cantrell_2023}, which is based on \cite{Gekhtman_2022}. The only difference is that one substitutes Theorem $3.8$ of that article by \Cref{pro:CLTmui}. We refer the reader to the above cited works for more details.


\printbibliography[title={References}]
\end{document}